\theoremstyle{plain}
\newtheorem{thm}{Theorem}[section]
\newtheorem{pro}[thm]{Proposition}
\newtheorem{`thm'}[thm]{``Theorem''}
\newtheorem{cor}[thm]{Corollary}
\newtheorem{lem}[thm]{Lemma}
\newtheorem{conj}[thm]{Conjecture}
\newtheorem*{mainthm}{Main-Theorem}
\theoremstyle{definition}
\newtheorem{prob}[thm]{Problem}
\newtheorem{dfn}[thm]{Definition}
\newtheorem{fact}[thm]{Fact}
\newtheorem{asm}[thm]{Assumption}
\newtheorem{dfn-thm}[thm]{Definition-Theorem}
\newtheorem{dfn-pro}[thm]{Definition-Proposition}
\newtheorem{nota}[thm]{Notations}
\theoremstyle{remark}
\newtheorem{rmk}[thm]{Remark}
\newtheorem{rmks}[thm]{Remarks}
\newtheorem{ex}[thm]{Example}
\newcommand{\bb}[1]{\mathbb{#1}}
\newcommand{\fra}[1]{\mathfrak{#1}}
\newcommand{\ca}[1]{\mathcal{#1}}
\newcommand{\scr}[1]{\mathscr{#1}}
\newcommand{\Cl}{\mathit{Cliff}}
\newcommand{\innpro}[3]{\left<#1,#2\right>_{#3}}
\newcommand{\inpr}[3]{\left(#1\,\middle|\,#2\right)_{#3}}
\newcommand{\Sine}[2]{\frac{\sin#1\theta}{#1^#2}}
\newcommand{\Cosi}[2]{\frac{\cos#1\theta}{#1^#2}}
\newcommand{\rot}{{\rm rot}}
\newcommand{\od}{{\rm od}}
\newcommand{\alg}{{\rm alg}}
\newcommand{\ind}{{\rm ind}}
\newcommand{\id}{{\rm id}}
\newcommand{\PD}{{\rm PD}}
\newcommand{\fgt}{{\rm fgt}}
\newcommand{\ran}{{\rm ran}}
\newcommand{\HKT}{{\rm HKT}}
\newcommand{\Hom}{{\rm Hom}}
\newcommand{\End}{{\rm End}}
\newcommand{\dom}{{\rm dom}}
\newcommand{\Lie}{{\rm Lie}}
\newcommand{\fin}{{\rm fin}}
\newcommand{\hol}{{\rm hol}}
\newcommand{\Hol}{{\rm Hol}}
\newcommand{\Op}{{\rm Op}}
\newcommand{\vac}{{\rm vac}}
\newcommand{\Ad}{{\rm Ad}}
\newcommand{\bra}[1]{\left(#1\right)}
\newcommand{\bbra}[1]{\left\{#1\right\}}
\newcommand{\bbbra}[1]{\left[#1\right]}
\newcommand{\Dirac}{\cancel{\partial}}
\newcommand{\ud}[1]{\underline{#1}}
\newcommand{\flip}{{\rm flip}}
\newcommand{\vol}{{\rm vol}}
\newcommand{\Aut}{{\rm Aut}}
\newcommand{\ev}{{\rm ev}}
\newcommand{\lt}{{\rm lt}}
\newcommand{\rt}{{\rm rt}}
\newcommand{\vep}{\varepsilon}
\newcommand{\grotimes}{\widehat{\otimes}}
\begin{document}
\title{Topological Aspects of the Equivariant Index Theory of Infinite-Dimensional $LT$-Manifolds}
\author{Doman Takata \\
The University of Tokyo}

\date{\today}

\maketitle
\begin{abstract}
Let $T$ be the circle group and let $LT$ be its loop group. We formulate and investigate several topological aspects of the $LT$-equivariant index theory for proper $LT$-spaces, where proper $LT$-spaces are infinite-dimensional manifolds equipped with ``proper cocompact'' $LT$-actions.
Concretely, we introduce ``$\ca{R}KK$-theory for infinite-dimensional manifolds'', and by using it, we formulate an infinite-dimensional version of the $KK$-theoretical Poincar\'e duality homomorphism, and an infinite-dimensional version of the $\ca{R}KK$-theory counterpart of the assembly map, for proper $LT$-spaces.

The left hand side of the Poincar\'e duality homomorphism is formulated by the ``$C^*$-algebra of a Hilbert manifold'' introduced by Guoliang Yu. Thus, the result of this paper suggests that this construction carries some topological information of Hilbert manifolds. In order to formulate the assembly map in a classical way, we need crossed products, which require an invariant measure of a group. However, there is an alternative formula to define them using generalized fixed-point algebras. We will adopt it as the definition of ``crossed products by $LT$''.\\\\
{\it Mathematics Subject Classification $(2010)$.} 19K56; 19K35, 19L47,  22E67, 46L52, 58B34.\\
{\it Key words}: infinite-dimensional manifolds, loop groups, Dirac operators, $KK$-theory, $\ca{R}KK$-theory, representable $K$-theory, $C^*$-algebras of Hilbert manifold, index theory.
\end{abstract}
\tableofcontents

\section{Introduction}\label{section introduction}

The Atiyah-Singer index theorem states that the analytic index of the Dirac operator on a closed manifold is determined by topological data \cite{ASi1,ASi2,ASe1}. The overall goal of our research is to establish an infinite-dimensional version of this theorem. As a first step of this project, we have been studying infinite-dimensional manifolds equipped with $LT$-actions, where $T:=S^1$ and $LT$ is its loop group.
In the present paper, we study topological aspects of this problem. 

Let us begin with the $KK$-theoretical description of a non-compact version of the index theorem given by Kasparov \cite{Kas83,Kas15}. The details will be explained in Section \ref{section index theorem}.

\begin{fact}[\cite{Kas83,Kas15}]\label{Intro index theorem}
Let $X$ be a complete Riemannian manifold and let $G$ be a locally compact second countable Hausdorff group acting on $X$ in an isometric, proper and cocompact way. 
Let $W$ be a $G$-equivariant Clifford bundle, and let $D$ be a $G$-equivariant Dirac operator on $W$. 

$(1)$ $D$ has an {\bf analytic index} $\ind(D)$ as an element of $KK(\bb{C},\bb{C}\rtimes G )$.

$(2)$ The $KK$-element $[D]:=[(L^2(X,W),D)]\in KK_G(C_0(X),\bb{C})$ determines $\ind(D)$ with a $KK$-theoretical procedure 
$$\mu_G:KK_G(C_0(X),\bb{C})\to KK(\bb{C},\bb{C}\rtimes G),$$
that is to say, $\ind(D)=\mu_G([D])$. This procedure is called the {\bf analytic assembly map}.

$(3)$ $[D]$ is determined by the topological data $[\sigma_D^{Cl}]=[(C_0(X,W),0)]\in \ca{R}KK_G(X;C_0(X),Cl_\tau(X))$, where $Cl_\tau(X)$ is the section algebra of the Clifford algebra of the tangent bundle. More generally, there is a homomorphism called the {\bf Poincar\'e duality homomorphism}
$$\PD:KK_G(C_0(X),\bb{C})\to\ca{R}KK_G(X;C_0(X),Cl_\tau(X))$$
and it is isomorphic.

$(4)$ Consequently, $\ind(D)$ is completely determined by the topological data $[\sigma_D^{Cl}]$. The concrete formula giving the index is called the {\bf topological assembly map}, and it is denoted by
$$\nu_G:\ca{R}KK_G(X;C_0(X),Cl_\tau(X))\to KK(\bb{C},\bb{C}\rtimes G).$$
\end{fact}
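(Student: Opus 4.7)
The plan is to reconstruct Kasparov's program from \cite{Kas83,Kas15} along four parallel threads corresponding to (1)--(4), ultimately reducing the whole fact to an unbounded--module calculation plus the Dirac/dual-Dirac duality on $X$.

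For (1) and (2), the starting point is that on a complete Riemannian manifold the Dirac operator $D$ is essentially self-adjoint by a Chernoff-type argument, so its bounded transform $F_D := D(1+D^2)^{-1/2}$, together with $L^2(X,W)$ and its natural $C_0(X)$- and $G$-actions, defines a $G$-equivariant Kasparov $(C_0(X),\bb{C})$-module, i.e.\ the class $[D]\in KK_G(C_0(X),\bb{C})$. To construct $\mu_G$, I would choose a cut-off function $c\in C_c(X)$ with $\int_G c(g^{-1}x)^2\,d\mu_G(g)=1$, made possible by properness and cocompactness; this yields a projection $p_c\in C_c(G,C_0(X))\subset C_0(X)\rtimes G$ whose $K_0$-class does not depend on $c$. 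The assembly map is then defined by
$$
\mu_G(x) := [p_c]\otimes_{C_0(X)\rtimes G} j^G(x),
$$
where $j^G:KK_G(C_0(X),\bb{C})\to KK(C_0(X)\rtimes G,\bb{C}\rtimes G)$ is Kasparov's descent. Setting $\ind(D):=\mu_G([D])$ makes (2) tautological, and one verifies that this reproduces the classical Fredholm index when $G$ is trivial and $X$ is compact.

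For (3), the strategy is to construct two mutually inverse $KK$-elements: the Dirac element $d_X\in KK_G(C_0(X,Cl_\tau(X)),\bb{C})$, represented by the de~Rham--Dirac operator on the Clifford-coefficient bundle; and the dual/Bott element $\Theta_X\in\ca{R}KK_G(X;\bb{C},C_0(X,Cl_\tau(X)))$, built from a fibrewise Bott class over a tubular neighbourhood of the diagonal in $X\times X$ via the exponential map of the Riemannian metric. The Poincar\'e duality map $\PD$ and its inverse are then realised as Kasparov products against these elements (with appropriate exterior tensoring and $\sigma$-homomorphism operations to match the $X$-labelling). Statement (4) then follows by setting $\nu_G:=\mu_G\circ\PD^{-1}$, so the analytic and topological sides commute by construction.

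The main obstacle is verifying the two duality identities underlying (3), namely
$$
\Theta_X\otimes_{C_0(X,Cl_\tau(X))} d_X = 1_X \in \ca{R}KK_G(X;\bb{C},\bb{C}),
$$
together with the complementary identity $d_X\otimes\Theta_X=1$ in the appropriate $KK$-group. The first reduces, via Kasparov's rotation trick, to the fibrewise assertion that the product of Dirac and dual-Dirac on each tangent space $T_xX$ equals the Bott element — a purely Euclidean calculation. The second is more delicate: it demands a $G$-equivariant unbounded-operator homotopy interpolating between the module supported near the diagonal and a degenerate one, and crucially uses the completeness of $X$ together with the properness and cocompactness of the $G$-action to control the behaviour at infinity throughout the homotopy. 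This fibrewise-to-global bootstrap is where I expect the bulk of the work to concentrate.
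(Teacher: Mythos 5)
Your outline follows Kasparov's program, which is exactly how the paper unpacks this Fact in Section~\ref{section index theorem} (Definitions~\ref{dfn of an ind}, \ref{dfn section 3 Dirac and local dual Dirac}, Fact~\ref{finite-dimensional index theorem}, Lemma~\ref{Lemma family version of the Bott periodicity}, Proposition~\ref{pro index thm big diagram}). The ingredients you list — the $L^2$ Kasparov module, the cut-off projection, the descent $j_G$, the Dirac element $[d_X]$, the local Bott element built over a geodesic tubular neighbourhood of the diagonal, the rotation trick — are all the right ones.

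However, you have paired the two duality identities with the wrong techniques. You claim the rotation trick reduces $\Theta_X\otimes d_X=1$ to the fibrewise Euclidean computation, and that $d_X\otimes\Theta_X=1$ is the delicate direction requiring a $G$-equivariant homotopy controlling behaviour at infinity. This is backwards. The delicate homotopy argument (exponential map, completeness, properness, deforming into a degenerate module) is what proves $\Theta_X\otimes_{C_0(X,Cl_\tau(X))}d_X=1_{X,C_0(X)}$; this is precisely \cite[Theorem~4.8]{Kas88}, which the paper cites in Lemma~\ref{Lemma family version of the Bott periodicity} for that direction. The rotation trick is then used to \emph{deduce} the complementary identity $d_X\otimes\Theta_X=1$ from the first: one shows the flip automorphism on a tubular neighbourhood of the diagonal in $U\times_X U$ is $G$-equivariantly homotopic to the reflection $j^*$, obtains $d_X\otimes\Theta_X=[j^*]$, and closes the loop via $[j^*]^2=1$ and the standard ``a one-sided inverse of an isomorphism is a two-sided inverse'' argument. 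The Euclidean prototype (Lemma~\ref{lem proof of bott period classical}) has the same shape: $b_V\otimes d_V=1$ is proved directly by identifying $\ker(D+C)$ with the Gaussian, and only then $d_V\otimes b_V=1$ follows by rotation. The rotation trick is not a localization device and cannot prove either identity from scratch; reduction to the fibre is achieved by the exponential map, a separate step. If you followed your sketch as written, you would stall at both identities. Separately, note that (2) of the Fact has real content that your tautological definition $\ind(D):=\mu_G([D])$ hides: the analytic index is defined directly as the class of $(\overline{C_c(X,W)}^{\,\bb{C}\rtimes G},D)$ (Definition~\ref{dfn of an ind}$(1)$), and the assertion is that this agrees with the assembly map applied to the index element.
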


Briefly speaking, what we will do in the present paper is to construct infinite-dimensional analogues of $\PD$ and $\nu_G$. For this aim, we need to overcome many problems including the following: {\it For an infinite-dimensional manifold, the $C_0$-algebra is trivial; For a non-locally compact space, we can not define $\ca{R}KK$-theory in a classical way.}

For the former problem, we will use a ``$C^*$-algebra of a Hilbert manifold $\ca{X}$'' denoted by $\ca{A}(\ca{X})$, introduced in \cite{Yu}, which is a wide generalization of a $C^*$-algebra of a Hilbert-Hadamard space defined in \cite{GWY}.
Strictly speaking, it is an infinite-dimensional analogue of, not  $C_0(X)$ but the ``graded suspension of $Cl_\tau(X)$''. However, $Cl_\tau(X)$ is $KK$-equivalent to $C_0(X)$ if $X$ is even-dimensional and $Spin^c$.
Thus, we can remove $C_0(X)$ from the index theorem for such manifolds, and we can reformulate it with the graded suspension of $Cl_\tau(X)$ which can be generalized to Hilbert manifolds.

For the latter one, by the same $KK$-equivalence, we have an isomorphism $\ca{R}KK(X;C_0(X),Cl_\tau(X))\cong \ca{R}KK(X;C_0(X),C_0(X))$. Moreover, it is isomorphic to $RK^0(X)$, which is the representable $K$-theory of Segal. While $\ca{R}KK(X;C_0(X),C_0(X))$ is defined by $C_0(X)$, $RK^0(X)$ is defined in a purely topological way. Thus, it might be possible to generalize $\ca{R}KK$-theory to an infinite-dimensional manifold in a topological way. With this observation, we will formulate $\ca{R}KK$-theory for infinite-dimensional spaces, via fields of $C^*$-algebras and Hilbert modules.

We can now state the main results of the present paper. A proper $LT$-space is, roughly speaking, an infinite-dimensional manifold equipped with a proper cocompact $LT$-action (see Section \ref{section problem spaces} for the details). We would like to study the index problem of a Spinor bundle twisted by a $\tau$-twisted $LT$-equivariant line bundle $\ca{L}$. 

\begin{thm}\label{Main theorem PD}
We can formulate a ``$KK$-theoretical Poincar\'e duality homomorphism'' for a Hilbert manifold satisfying a bounded geometry type condition equipped with a proper isometric group action. 
When we apply this construction to a proper $LT$-space, we obtain an appropriate answer: The Poincar\'e duality homomorphism assigns to the ``index element twisted by $\ca{L}$ constructed in \cite{T4}'' the $\ca{R}KK$-element corresponding to $\ca{L}$ .
\end{thm}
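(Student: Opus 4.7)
The plan is to mimic Kasparov's finite-dimensional construction, in which $\PD$ is realized as a Kasparov product against a canonical ``fundamental/duality class'' that witnesses the $KK$-duality between $C_0(X)$ and $Cl_\tau(X)$. In the finite-dimensional setting, $\PD$ sends the analytic Dirac class $[D]\in KK_G(C_0(X),\bb{C})$ to the symbol class $[\sigma_D^{Cl}]\in\ca{R}KK_G(X;C_0(X),Cl_\tau(X))$, and this can be expressed abstractly as exterior product with a canonical class over $X$ followed by contraction. Transporting this recipe to infinite dimensions requires three ingredients: an infinite-dimensional replacement for the source $C^*$-algebra, an infinite-dimensional replacement for the $\ca{R}KK$-target, and a canonical class that implements duality between them.

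First, I would replace $C_0(X)$, which is trivial for a Hilbert manifold $\ca{X}$, by Yu's $C^*$-algebra $\ca{A}(\ca{X})$, after passing to the even-dimensional $Spin^c$ reformulation in which $C_0(X)$ is $KK$-equivalent to the graded suspension of $Cl_\tau(X)$. Since $\ca{A}(\ca{X})$ is assembled out of local Clifford data along finite-dimensional approximations to $\ca{X}$, the group $KK_G(\ca{A}(\ca{X}),\bb{C})$ is the natural home for the twisted index element produced in \cite{T4}. Second, because $\ca{R}KK$-theory cannot be defined through $C_0$-algebras for a non-locally-compact base, I would define $\ca{R}KK_G(\ca{X};-,-)$ via $G$-equivariant continuous fields of $\bb{Z}/2$-graded $C^*$-algebras over $\ca{X}$ and Hilbert bimodules between them, as indicated in the introduction; the representable $K$-theoretic identification $\ca{R}KK(X;C_0(X),C_0(X))\cong RK^0(X)$ is the guiding model for why this topological formulation captures the right information.

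Next I would construct the $\PD$ homomorphism itself as Kasparov product with a locally defined duality class $\Delta\in\ca{R}KK_G(\ca{X};\bb{C},\ca{A}(\ca{X})\grotimes Cl_\tau(\ca{X}))$ (interpreted in the field-theoretic framework), which on each finite-dimensional chart restricts to Kasparov's classical fundamental class and is globalized by equivariant transition data. The bounded geometry hypothesis enters here as the technical condition ensuring that the local pieces glue in a $G$-equivariantly controlled way and that the Kasparov product with $\Delta$ is well defined on the field-theoretic $\ca{R}KK$-groups. The formal part of the theorem then amounts to checking that this product is natural, yields a homomorphism, and reduces to Kasparov's $\PD$ in the finite-dimensional case.

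For the second assertion, I would compute $\PD$ applied to the twisted index element of \cite{T4}. That element is a $KK_G$-class built from a family of Dirac-type operators twisted by the $\tau$-twisted line bundle $\ca{L}$, and its symbol data splits, up to the duality class $\Delta$, into a Clifford part and an $\ca{L}$-part. Pairing with $\Delta$ should cancel the Clifford contribution, leaving precisely the $\ca{R}KK$-class of $\ca{L}$. The hard part, I expect, is not the formal definition of $\PD$ but this final identification: one must match the analytic realization of the twisted index from \cite{T4} with the topological field-theoretic description of $[\ca{L}]$, and verify that the inverse system of finite-dimensional approximations underlying $\ca{A}(\ca{X})$ is compatible enough with the Clifford duality on each approximation for the Kasparov product to commute with the limit. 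Bounded geometry is what I would use to make this compatibility quantitative and to control the $G$-equivariant gluing throughout the argument.
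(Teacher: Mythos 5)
Your overall strategy — replace $C_0$ by Yu's $\ca{A}(\ca{X})$, formulate $\ca{R}KK$ via fields, and realize $\PD$ as a Kasparov product with a local Bott-type class — is the right one and agrees with the paper's framework. However, your proposed duality class $\Delta\in\ca{R}KK_G(\ca{X};\bb{C},\ca{A}(\ca{X})\grotimes Cl_\tau(\ca{X}))$ has a genuine gap: for a Hilbert manifold $\ca{X}$, the algebra $Cl_\tau(\ca{X})$ does not exist, any more than $C_0(\ca{X})$ does. Kasparov's classical local Bott element uses the vector field $\Theta_x$ as an unbounded multiplier on $Cl_\tau(U_x)$, and that description does not transport to infinite dimensions. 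The paper's key move (Proposition~\ref{new Bott is classical Bott} through Proposition~\ref{Prop reformulated local Bott element}, recast as Definition~\ref{dfn reformulated local boot element}) is to first prove, \emph{in finite dimensions}, that $[\widetilde{\Theta_{X,2}}]$ is represented not by a Clifford-multiplication Kasparov module but by the family of $*$-homomorphisms $\{\beta_x:\ca{S}_\vep\to\ca{A}(X)\}_{x\in X}$ with trivial operator, and only then to globalize: the field $\bra{\{\ca{A(X)}\}_{x\in\ca{X}},\{\beta_x\}_{x\in\ca{X}},\{0\}_{x\in\ca{X}}}$ makes sense because each Bott homomorphism $\beta_x:\ca{S}_\vep\to\ca{A}(\ca{X})$ is well defined via local Clifford operators, even though $Cl_\tau(\ca{X})$ is not. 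This lands in $\ca{R}KK_\Gamma(\ca{X};\ca{S}_\vep\grotimes\scr{C}(\ca{X}),\ca{A}(\ca{X})\grotimes\scr{C}(\ca{X}))$ rather than something involving $Cl_\tau(\ca{X})$, and the bounded-geometry hypothesis enters precisely through Lemma~\ref{Lemma 5.8 of this paper} to make the family $x\mapsto\beta_x$ continuous. Without this reformulation your proposal cannot be carried out.

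You also substantially underestimate the work in the second assertion. It is not just a matter of ``matching'' the index element of \cite{T4} with the $\ca{R}KK$-class of $\ca{L}$. The index element of \cite{T4} was built on the Higson--Kasparov--Trout algebra $\ca{A}_\HKT$, which is a different $C^*$-algebra from Yu's $\ca{A}(\ca{M}_{L^2_k})$; one must re-engineer the entire Hilbert module so that $\ca{A}(\ca{M}_{L^2_k})$ acts naturally. The paper does this in Section~\ref{section PD Cstar algebra HKT Yu} by building a new Hilbert space $\ud{L^2(U_{L^2_k},\ca{L})}$ from an inductive system of increasingly concentrated Gaussians, proving that the group action extends from $U_\fin$ to $U_{L^2_m}$ (Proposition~\ref{actions L and R are well defined}), establishing dense $C^1$-elements of $\ca{A}(U_{L^2_k})$ (Lemmas~\ref{lemma X+C has dense range}--\ref{lemma Sfinodd is dense in C1epsilon}), and verifying independence of the auxiliary parameter $l$ at the $\bb{KK}_{LT}^\tau$ level. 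Only after this reconstruction does the Kasparov product $[\widetilde{\Theta_{U_{L^2_k},2}}]\grotimes[\widetilde{\Dirac}]$ factor through an explicit harmonic-oscillator computation (Theorem~\ref{Poincare duality for LT manifold}), split by Lemma~\ref{lemma local Bott element can be factorized} into $U_{L^2_m}$ and $\widetilde{M}$ parts. None of this infrastructure is visible in your sketch.
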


\begin{thm}\label{Main theorem ass}
We can formulate a ``topological assembly map'' for proper $LT$-spaces. This map assigns to the $\ca{R}KK$-element corresponding to $\ca{L}$ the ``analytic index of the Dirac operator twisted by $\ca{L}$'' constructed in \cite{Thesis}.
\end{thm}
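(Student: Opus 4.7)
The plan is to construct $\nu_{LT}$ as a composition $\mu_{LT} \circ \PD^{-1}$, in direct analogy with the finite-dimensional identity of Fact~\ref{Intro index theorem}(4), and then to deduce the stated equality by chasing $[\mathcal{L}]$ around this triangle. Since Theorem~\ref{Main theorem PD} is already in hand, the substantive new input needed is a well-defined analytic assembly map $\mu_{LT}$ and a verification that it intertwines the two independent constructions of the index.

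The first step is to build $\mu_{LT}$. Because $LT$ carries no invariant measure, the classical Kasparov descent is unavailable; following the philosophy emphasized in the abstract, the target $\bb{C} \rtimes LT$ must be interpreted via a generalized fixed-point algebra. I would define $\mu_{LT}$ directly at the level of the underlying Hilbert module of a $KK$-cycle built over the Yu algebra $\ca{A}(\ca{X})$, taking its image to be the class of the associated fixed-point Fredholm module. The source of $\mu_{LT}$ is precisely the $KK$-group in which the ``index element twisted by $\ca{L}$'' of \cite{T4} naturally lives.

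The second step is the verification that $\mu_{LT}$ sends the index element of \cite{T4} to the analytic index of \cite{Thesis}. The analytic index of \cite{Thesis} is a specific $K$-theory class of $\bb{C} \rtimes LT$ obtained from a concrete Fredholm model for the $\ca{L}$-twisted Dirac operator, and I would show that the fixed-point descent underlying $\mu_{LT}$, applied to the $(L^2,D)$-type cycle of \cite{T4}, reproduces exactly this Fredholm model, up to the grading and suspension identifications forced by the replacement of $C_0$ by $\ca{A}(\ca{X})$.

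Given these pieces, the theorem follows at once: Theorem~\ref{Main theorem PD} gives $\PD^{-1}([\ca{L}]) = [\text{index element of \cite{T4}}]$, so defining $\nu_{LT} := \mu_{LT}\circ\PD^{-1}$ yields $\nu_{LT}([\ca{L}]) = \mu_{LT}([\text{index element}]) = [\text{analytic index of \cite{Thesis}}]$. The main obstacle I anticipate is precisely the compatibility check in the second step: both $\mu_{LT}$ and the analytic index of \cite{Thesis} invoke infinite-dimensional surrogates for standard operations (generalized fixed-point algebras on one side, renormalized Dirac-type operators on the other), and reconciling them demands a careful functorial identification rather than a direct calculation. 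Once this reconciliation is established, the rest of the argument is purely formal.
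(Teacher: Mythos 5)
Your plan hinges on two ingredients the paper explicitly declines to provide, and each is a genuine obstruction, not just an organizational difference.

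First, defining $\nu_{LT}$ as $\mu_{LT}\circ\PD^{-1}$ requires the analytic assembly map $\mu_{LT}$. Constructing $\mu_{LT}$ requires a descent homomorphism on $KK_{LT}(\ca{A}(\ca{M}),\ca{S}_\vep)$, i.e.\ descending a cycle that is \emph{not} fibered over $\ca{M}$. This is precisely the part the paper defers to Conjecture~\ref{Conjecture}(1), and the remark following Definition~\ref{dfn descent of Dirac element} warns that such a descent ``$\ud{\widetilde{j^\tau_{LT}}}$'' has not been defined. The descent that \emph{is} built in Definition-Proposition~\ref{descent and crossed product for LT theory} is only for $\ca{R}KK_{LT}(\ca{M}_{L^2_m};-,-)$, and it works exactly because a field over the proper $LT$-space can be reduced, via the global slice of Lemma~\ref{decomposition lemma on proper LT space}, to a $T\times\Pi_T$-equivariant field over $\widetilde{M}$ tensored with a Heisenberg-type compact-operator factor. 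That trick has no counterpart for cycles over the Yu algebra that carry no $\scr{C}(\ca{M})$-structure. So your ``first step'' is not a technicality to be ``built directly at the level of the underlying Hilbert module''; it is an open problem in this framework.

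Second, $\PD^{-1}$ does not exist in the paper. Theorem~\ref{Main theorem PD} constructs the forward map $\PD$ and computes $\PD([\widetilde{\ca{D}}])=\sigma_{\ca{S}_\vep}([\ca{L}])$; its invertibility is again part of Conjecture~\ref{Conjecture}(4). Your chase ``$\PD^{-1}([\ca{L}])=[\text{index element}]$'' therefore cannot be carried out. The paper's actual route is to \emph{start from the finite-dimensional formula for} $\nu_G$ of Definition~\ref{dfn reformulated KK elements}, namely the triple Kasparov product of the Mishchenko projection $\ud{[c_{\ca{M}_{L^2_m}}]}$, the $\ca{R}KK$-descent $\ud{j^\tau_{LT_{L^2_m}}}([\ca{L}])$, and an ad hoc cycle playing the role of $j_G(\fgt[S]\grotimes[d_X])$ (Definition~\ref{dfn descent of Dirac element} and Theorem~\ref{jLTDirac}); each ingredient is built directly via the fixed-point-module description, no $\mu_{LT}$ or $\PD^{-1}$ needed. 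The equality $\ud{\nu_{LT_{L^2_m}}^\tau}([\ca{L}])=\ud{\ind(\ca{D}_{\ca{L}})}$ is then verified by a concrete computation of that Kasparov product, matching it term by term against the Dirac cycle of \cite{Thesis}. In short: the paper defines the topological assembly map \emph{independently of} the analytic one precisely because the latter is out of reach, and your proposal would invert that logic and land on the two missing pieces.
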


\begin{cor}\label{Main corollary}
By the composition of the above two homomorphisms, the ``analytic index of the Dirac operator twisted by $\ca{L}$'' is assigned to the ``index element twisted by $\ca{L}$''.
\end{cor}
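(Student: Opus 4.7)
The plan is to deduce the corollary as an immediate formal consequence of the two main theorems, by chasing the distinguished elements through the composition $\nu_{LT}\circ\PD$. Write $[D_{\ca{L}}]$ for the ``index element twisted by $\ca{L}$'' of \cite{T4}, $[\sigma_{\ca{L}}]$ for the $\ca{R}KK$-element corresponding to $\ca{L}$, and $\ind(D_{\ca{L}})$ for the analytic index of the Dirac operator twisted by $\ca{L}$ of \cite{Thesis}. These live, respectively, in the infinite-dimensional $KK$-type group on which $\PD$ is defined, the infinite-dimensional $\ca{R}KK$-type group which is the target of $\PD$ and the source of $\nu_{LT}$, and $KK(\bb{C},\bb{C}\rtimes LT)$.

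First I would invoke Theorem \ref{Main theorem PD}, which precisely asserts $\PD([D_{\ca{L}}])=[\sigma_{\ca{L}}]$. Next I would invoke Theorem \ref{Main theorem ass}, which precisely asserts $\nu_{LT}([\sigma_{\ca{L}}])=\ind(D_{\ca{L}})$. Composing the two identities yields
\[
(\nu_{LT}\circ\PD)([D_{\ca{L}}])=\nu_{LT}([\sigma_{\ca{L}}])=\ind(D_{\ca{L}}),
\]
which is exactly the statement of the corollary.

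The one point of genuine bookkeeping is a compatibility check: the symbol $[\sigma_{\ca{L}}]$ produced on the right-hand side of Theorem \ref{Main theorem PD} must be the literally same element of the infinite-dimensional $\ca{R}KK$-group that is fed into $\nu_{LT}$ on the left-hand side of Theorem \ref{Main theorem ass}, and not merely a class that agrees with it after some further identification. This is a matter of fixing conventions consistently between the construction of $\PD$ and that of $\nu_{LT}$, and I expect it to be built into the definitions rather than to pose a substantive analytic obstacle. All the real work---the construction of the infinite-dimensional $\ca{R}KK$-theory, of the Poincar\'e duality homomorphism and of the topological assembly map, together with the explicit identification of the images of $[D_{\ca{L}}]$ and $[\sigma_{\ca{L}}]$---has already been packaged into the two main theorems, so beyond this compatibility check no additional analysis is required at this stage.
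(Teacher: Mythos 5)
Your proposal is correct and takes the same route as the paper: Corollary~\ref{Main corollary} is simply the formal composition of Theorem~\ref{Main theorem PD} and Theorem~\ref{Main theorem ass}, and the paper offers no separate argument beyond this (the precise version is the unlabeled corollary at the end of Section~\ref{section top ass map top ass map}, $\ud{\nu_{LT}^\tau}(\PD[\widetilde{\ca{D}}])=\sigma_{\ca{S}_\vep}(\ud{\ind(\ca{D}_{\ca{L}})})$). The ``compatibility check'' you flag is real and worth making explicit: $\PD$ lands in the $\ca{S}_\vep$-suspended group $\ca{R}KK_{LT}^\tau(\ca{M};\ca{S}_\vep\grotimes\scr{C}(\ca{M}),\ca{S}_\vep\grotimes\scr{C}(\ca{M}))$ with image $\sigma_{\ca{S}_\vep}([\ca{L}])$, while $\ud{\nu_{LT}^\tau}$ is defined on the unsuspended $\ca{R}KK_{LT}^\tau(\ca{M};\scr{C}(\ca{M}),\scr{C}(\ca{M}))$ with $\ud{\nu_{LT}^\tau}([\ca{L}])=\ud{\ind(\ca{D}_{\ca{L}})}$; one therefore composes via $\sigma_{\ca{S}_\vep}$-functoriality, which is exactly why the precise output carries the extra $\sigma_{\ca{S}_\vep}$ that the informal statement of Corollary~\ref{Main corollary} suppresses.
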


These results are obviously a part of the $LT$-equivariant index theorem. 
Moreover, they suggest that the $C^*$-algebra of a Hilbert manifold carries some topological information of the Hilbert manifold. For a locally compact Hausdorff space, such a result is in some sense automatic, because the category of locally compact Hausdorff spaces is equivalent to that of commutative $C^*$-algebras. On the other hand, in a non-locally compact setting, such results are highly non-trivial and quite interesting.

\vspace{0.5cm}

Let us explain possible applications of the $LT$-equivariant index theorem and the ideas used in the present paper (we have not completed the index theorem for proper $LT$-spaces, and we will summarize what we should do after the present paper in order to compete it in Section \ref{section unsolved}).

We begin with the quantization problem of Hamiltonian loop group spaces \cite{AMM,Mei,Son,LMS,LS}. A Hamiltonian loop group space is an infinite-dimensional symplectic manifold equipped with a loop group action and a proper moment map taking values in the dual Lie algebra of the loop group, where the loop group action on it is the gauge action. Since the moment map is equivariant and proper, and since the gauge action is proper and cocompact, the loop group action on a Hamiltonian loop group space is automatically proper and cocompact. 
Proper $LT$-spaces are obvious generalizations of Hamiltonian loop group spaces.

In general, a quantization is a procedure making a Hilbert space from a classical system. The quantization of a Hamiltonian $G$-space given by Bott is the equivariant index of the $Spin^c$-Dirac operator twisted by the pre-quantum line bundle. In \cite{Son}, this procedure was generalized to Hamiltonian $LG$-spaces by a formal and infinite-dimensional argument, using the one-to-one correspondence between Hamiltonian $LG$-spaces and quasi-Hamiltonian $G$-spaces given in \cite{AMM}. The loop group equivariant index theory will give a $KK$-theoretical description of this result, and the result of the present paper will enable us to compute the quantization in the topological language.

Another application is a noncommutative geometrical reformulation of the Freed-Hopkins-Telemen isomorphism (FHT isomorphism for short) \cite{FHTI,FHTII,FHTIII}. This result states that the Grothendieck completion of the semigroup consisting of isomorphism classes of positive energy representations of the loop group of $G$ (so called the Verlinde ring), is isomorphic to the twisted equivariant $K$-group of $G$ with respect to the conjugation action. This isomorphism is constructed as follows: For a given positive energy representation, the authors defined an $LG$-equivariant continuous field of Fredholm operators parameterized by $L\fra{g}^*$; Since the groupoid $L\fra{g}^*// LG$ is locally equivalent to $G// G$, they can pushforward the field to $G// G$; Since a positive energy representation is projective, the obtained field defines an element of the twisted $K$-group. Since the twisted $K$-theory of $L\fra{g}^*// LG$ should be isomorphic to the ``twisted $LG$-equivariant $K$-homology of $L\fra{g}^*$'' by the Poincar\'e duality, the FHT isomorphism might be related to the $LG$-equivariant index theory. See \cite{Loi} for a research on this line.



Finally, we add a comment on the Witten genus. The Witten genus of a compact manifold $M$ is defined ``using the fixed-point formula for the $S^1$-equivariant topological index of its free loop space $LM$'' \cite{Wit}. This theory is very interesting, but the definition is not quite satisfying, in that a Dirac operator whose index is the Witten genus has not been given. 

Although there are no direct connections between the Witten genus and the $LT$-equivariant index theory, the observations of the present paper can be quite useful for a noncommutative geometrical formulation of the Witten genus. 
In fact, if an $S^1$-equivariant index can be defined in an infinite-dimensional way, it should be realized as a homomorphism $\chi$ from the $S^1$-equivariant $K$-homology of $LM$ to $R(S^1)^*=\Hom(R(S^1),\bb{Z})$ or something, where $R(S^1)^*$ can be identified with the set of formal power series $\sum_{n\in\bb{Z}}a_nz^n$. Although $K$-homology theory does not work for non-locally compact spaces, we can define a substitute for it in the language of noncommutative geometry with the $C^*$-algebra of a Hilbert manifold. Moreover, by the Poincar\'e duality homomorphism constructed in the present paper, it might be possible to give a topological formula of $\chi$. If this formula coincides with the definition of the Witten genus, the index homomorphism $\chi$ will realize the Witten genus. This realization has an advantage compared with the original one, in that $\chi$ is defined as a homomorphism from an invariant of not $M$ but $LM$. It might be possible to use some global structure of $LM$.

\vspace{0.5cm}

From now on, we explain a few backgrounds of the present paper. Since we have explained possible applications, we focus on technical sides.

One of the main objects of the present paper is a $C^*$-algebra of a Hilbert manifold constructed by Guoliang Yu. It is a wide generalization of the construction of \cite{HKT}. In this paper, the authors defined a $C^*$-algebra for a Hilbert space using a finite-dimensional approximation and an inductive limit. In order to construct it, ``the suspension of Clifford algebra-valued function algebra'' was appropriate, as we have mentioned. This construction was used to prove the Baum-Connes conjecture for a-T-menable groups in \cite{HK}, and it was generalized to Hilbert bundles and Fredholm manifolds in \cite{Tro,DT}. Several years ago, the construction was reformulated and generalized to Hilbert-Hadamard spaces in \cite{GWY}. Guoliang Yu generalized this construction to Hilbert manifolds with positive injectivity radius, and we will use it in the present paper.

These two kinds of constructions have different advantages. The constructions of \cite{HKT,HK,Tro,DT} are rather algebraic, and hence they are convenient to compute several invariants. The construction of \cite{GWY,Yu} are rather geometric, and hence they look more natural and they are convenient to define group actions. In the present paper, we use both constructions.

There are many results on substitutes for ``$L^2$-spaces'' and ``Dirac operators'' on infinite-dimensional spaces related to loop groups, for example \cite{FHTII,Lan,Son,Was}. These results are based on representation theoretical ideas: The $L^2$-space on a compact group can be written using its representation theory by the Peter-Weyl theorem, and direction derivatives are written as infinitesimal representations. See also \cite{Kos} for algebraic Dirac operators.

In \cite{HKT,HK}, the authors constructed the ``$L^2$-space on a Hilbert space'' using the creation/annihilation operators. In \cite{T1}, we found out that it is strongly related to the above representation theoretical construction: The Peter-Weyl type construction for the infinite-dimensional Heisenberg group at fixed level gives a similar construction to \cite{HKT,HK}.
We will use it in the body of the present paper. 

There are probabilistic approaches using the Wiener measure. However, we would like to consider a Hilbert space like the ``$L^2$-space defined by the Lebesgue measure''. Thus, we do not discuss such a theory in the following.

An analytic index over a geometrical object $X$ should be a group homomorphism from the ``$K$-homology of $X$''. In this sense, the construction of an analytic index theory for $X$ is {\it formally} equivalent to the construction of a $C^*$-algebra substituting for a ``function algebra of $X$'' and a group homomorphism from the $K$-homology of this $C^*$-algebra. However, the {\it content} of this theory is the value of the analytic index homomorphism at a certain $K$-homology element. Thus, the explicit construction of an element of the $K$-homology group is quite important.
In order to define such a $K$-homology element for an infinite-dimensional space, one needs to define substitutes for an ``$L^2$-space'' and a ``Dirac operator'', {\it which is compatible with the ``function algebra''}. The Dirac element of \cite{HK}, and the index element and the Dirac element of \cite{T4} are such constructions. The constructions of \cite{T4} are less general, but much more direct, than that of \cite{HK}. We will reformulate the index element of \cite{T4} and we will use it in the present paper.

\vspace{0.5cm}

Let us explain the organization of the present paper. 

In Section \ref{section preliminaries}, we will review several standard facts on $KK$-theory. In particular, we will compare a classical formulation of the Bott periodicity and the reformulation using the Bott homomorphism explained in \cite[Section 2]{HKT}. This comparison will be necessary to generalize the local Bott element to infinite-dimensional manifolds.

In Section \ref{section index theorem}, we will describe the statement of the Kasparov index theorem in detail, and we will reformulate it for even-dimensional $Spin^c$-manifolds. Since $C_0(X)$ is $KK$-equivalent to $Cl_\tau(X)$ for an even-dimensional $Spin^c$-manifold $X$, we can remove $C_0(X)$ from the description of the index theory for such $X$. For the same reason, we can replace the right hand side of the Poincar\'e duality homomorphism with $\ca{R}KK_G(X;C_0(X),C_0(X))\cong RK^0_G(X)$, which is easy to generalize to proper $LT$-spaces. In addition, we will give an explicit definition of the topological assembly map, and the description of it under the above reformulation.

In Section \ref{section problem}, we will start the study of infinite-dimensional spaces. First, we will precisely set up the problem. The remainder of this section will be devoted to explain this setting in detail.
Second, we will review the representation theory of $LT$, and we will recall the substitute for the ``$\tau$-twisted group $C^*$-algebra of $LT$'' from \cite{T1}. Third, we will introduce $\ca{R}KK$-theory for non-locally compact action groupoids. The detailed study will be done in \cite{NT}, and we will just give definitions and a few necessary results. Finally, we will re-introduce loop group equivariant $KK$-theory, which was introduced in \cite{T4} in a naive way, and the detailed studies were put off. Thanks to the better definition given there, we will be able to prove desired properties on Kasparov products on this theory. With this collect $LT$-equivariant $KK$-theory, we will describe the results of the present paper.

In Section \ref{section PD}, we will give the definition of a $C^*$-algebra of a Hilbert manifold, following \cite{Yu}, and we will define an infinite-dimensional analogue of the Poincar\'e duality homomorphism. In order to indicate that this construction is appropriate, we apply it to proper $LT$-spaces. For this aim, we will reformulate the index element of \cite{T4} using the $C^*$-algebra of a Hilbert manifold given in \cite{Yu}. This reconstruction will contain a detailed study of this $C^*$-algebra. Then, we will compute the Poincar\'e duality homomorphism for this index element.

In Section \ref{section top ass map}, we will construct an infinite-dimensional analogue of the topological assembly map. The key of the construction will be the fixed-point algebra description of the crossed products and the descent homomorphisms obtained in Section \ref{section index theorem}. We will compute the topological assembly map for proper $LT$-spaces.
Finally, we will explain what we should do after the present paper, in order to complete the $LT$-equivariant index theory.

\subsection*{Notations}

\begin{itemize}
\setlength{\parskip}{0cm} 
  \setlength{\itemsep}{0cm} 
\item For a complex Hilbert space $V$, a complex bilinear form is denoted by angle brackets $\innpro{\bullet}{\bullet}{}$, and an inner product is denoted by parentheses $\inpr{\bullet}{\bullet}{}$. More generally, for a $C^*$-algebra $A$, an $A$-valued inner product on a Hilbert $A$-module $E$, is denoted by $\inpr{\bullet}{\bullet}{A}$ or $\inpr{\bullet}{\bullet}{E}$.
\item For a $C^*$-algebra $B$ and a Hilbert $B$-module $E$, the set of compact operators on $E$ is denoted by $\bb{K}_B(E)$ or simply by $\bb{K}(E)$, and that of bounded operators is denoted  by $\bb{L}_B(E)$ or simply by $\bb{L}(E)$. Associated to it, for a Hilbert space $V$, $\bb{K}(V)=\bb{K}_{\bb{C}}(V)$ denotes the set of all compact operators and $\bb{L}(V)=\bb{L}_{\bb{C}}(V)$ denotes the set of all bounded operators.
\item A group action on a set $X$ (e.g. a manifold, a vector space, a $C^*$-algebra) is often denoted by the common symbols $\alpha$. The automorphism corresponding to $g\in G$ is denoted by $\alpha_g$. When we need to emphasize $X$, we denote it by $\alpha^X_g$. 
\item A $G$-action induced by left translation on a function algebra on $G$, is often denoted by ``$\lt$'', namely $\lt_g(f)(x):=f(g^{-1}x)$. Similarly, $\rt_g(f)(x):=f(xg)$.
\item The left and right regular representations are denoted by $L$ and $R$, respectively.
\item $*$-homomorphisms to define left module structures on  Hilbert modules, are often denoted by the common symbol $\pi$ when it is not confusing. Associated to it, the $*$-homomorphism $A\rtimes G\to \bb{L}(E\rtimes G)$ induced by the descent homomorphism by $G$ from $\pi:A\to \bb{L}(E)$, is denoted by $\pi\rtimes \lt$.
\item For a Lie group $G$, its Lie algebra is denoted by $\Lie(G)$ or $\fra{g}$.
\item The symbol $\grotimes$ means the graded tensor product of Hilbert spaces, $C^*$-algebras or Hilbert modules. We often use this symbol even for trivially graded objects.
\item The algebraic tensor product is denoted by $\grotimes^\alg$, where for Hilbert spaces, $C^*$-algebras or Hilbert modules $V_1$ and $V_2$, $V_1\grotimes V_2$ stands for $\bbra{\sum_{\rm finite} v_1^i\grotimes v_2^i\mid v_j^i\in V_j}$. $V_1\grotimes V_2$ is the completion of $V_1\grotimes^\alg V_2$ with respect to an appropriate norm.
\item For a $\bb{Z}_2$-graded vector space $V=V_0\widehat{\oplus}V_1$, we denote the grading by $\partial$ and the graded homomorphism by $\epsilon$, that is to say, for $v\in V_i$ ($i=0$ or  $1$), $\partial v:=i$ and $\epsilon v=(-1)^iv$.
\item We always use graded commutators: For a $\bb{Z}_2$-graded vector space $V$ and two even or odd operators $F_1,F_2\in \End(V)$, we define $[F_1,F_2]:=F_1F_2-(-1)^{\partial F_1\partial F_2}F_2F_1$.
\item For a group $G$ equipped with a $U(1)$-central extension $1\to U(1)\xrightarrow{i}G^\tau\xrightarrow{p}G\to 1$, a $\tau$-twisted $G$-representation on a vector space $V$ is a group homomorphism $\rho:G^\tau\to U(V)$ so that $\rho\circ i(z)=z\id_V$ for $z\in U(1)$. Similarly, we define the concept of $\tau$-twisted $G$-actions on Hilbert modules or vector bundles.
\item We frequently use some objects which are substitutes for something that we can not define in a classical way. Such substitutes are denoted by the standard symbol with an underline.
For example, the Hilbert space substituting for the ``$L^2$-space of an infinite-dimensional manifold $\ca{M}$'' is denoted by $\ud{L^2(\ca{M})}$.
\item Infinite-dimensional objects are written in {\tt \textbackslash mathcal} (e.g. an infinite-dimensional manifolds are written as $\ca{X},\ca{M},\cdots$), and fields are written in {\tt \textbackslash mathscr} (e.g. continuous fields of Banach spaces are written as $\scr{A},\scr{B},\cdots$). The converse is not true: We use $\ca{S}$ to denote the $C^*$-algebra $C_0(\bb{R})$ equipped with a $\bb{Z}_2$-grading homomorphism $\epsilon f(t):=f(-t)$.
\item $\vep$ and $\delta$ are used to describe Assumption \ref{assumption related to bounded geometry}. Thus, we use $\epsilon$ and $\Delta$ as small positive real numbers. $\epsilon$ is often used as a grading homomorphism, but we believe it is not confusing.
\item All the locally compact group appearing in the present paper are assumed to be amenable for simplicity. Thus, the crossed products are uniquely determined.
\end{itemize}



\section{Preliminaries}\label{section preliminaries}


\subsection{Several concrete $C^*$-algebras}

In this subsection, we define several basic $C^*$-algebras appearing in the present paper.

\begin{dfn}
$(1)$ For a Euclidean space $V$, we define the {\bf Clifford algebra of $V$} by $\Cl_\pm(V):=T_{\bb{C}}(V)/\left< v^2\mp |v|^2\cdot 1\right>$, where $T_{\bb{C}}(V)$ is the tensor algebra over $\bb{C}$. They inherit the $\bb{Z}_2$-grading from $T_{\bb{C}}(V)$, because $\left< v^2\mp |v|^2\cdot 1\right>$ is contained in the even-part.

$(2)$ The metric of $V$ induces metrics on $\Cl_\pm(V)$. By left multiplications $\Cl_\pm(V)\to\End(\Cl_\pm(V))$, and the $\bb{Z}_2$-graded $C^*$-algebra structures on $\End(\Cl_\pm(V))$, we define a $\bb{Z}_2$-graded $C^*$-algebra structures on both of $\Cl_\pm(V)$. Note that $v^*=\pm v$ in $\Cl_\pm(V)$ for each $v\in V$.

$(3)$ A $\bb{Z}_2$-graded Hermite vector space $S$ is called a {\bf Clifford module of $V$}, if it is equipped with an even $*$-homomorphism $c:\Cl_-(V)\to \End(S)$. A Clifford module of $V$ is called a {\bf Spinor} if it is an irreducible representation of $\Cl_-(V)$. 
\end{dfn}

It is known that an even-dimensional vector space $V$ has two different Spinors up to equivalence \cite{Fur}. We fix one of them, and we denote it by $(S,c)$. The Clifford multiplication $c$ gives an isomorphism $\Cl_-(V)\cong \End(S)$ as $\bb{Z}_2$-graded $C^*$-algebras. Associated to it, we obtain the following.

\begin{lem}\label{Definitions in preliminaries, Clifford algebras}
$(1)$ The dual space $S^*=\Hom(S,\bb{C})$ is a $*$-representation space of $\Cl_+(V)$ by $c^*(v)\cdot f:=(-1)^{\partial f}f\circ c(v)$. It gives an isomorphism $c^*:\Cl_+(V)\cong \End(S^*)$.

$(2)$ We can define a right Hilbert $\Cl_+(V)$-module structure on $S$, by identifying $S$ with $S^{**}$. In fact, the operations are given by $s\cdot v:=s\circ c^*(v)$ and $\inpr{s_1}{s_2}{\Cl_+}:=s_1^{*}\grotimes s_2\in \End(S^*)\cong \Cl_+(V)$ for $s,s_1,s_2\in S$ and $v\in V$. 
\end{lem}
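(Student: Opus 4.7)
Both parts are direct verifications; the main subtlety is the bookkeeping of graded (Koszul) signs.

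For (1), the first step is to verify that $c^*$ respects the defining Clifford relation of $\Cl_+(V)$. Since $c(v)$ is odd, composing a functional with it flips parity, so $\partial(f\circ c(v)) = \partial f + 1$. A direct computation then gives
\[
c^*(v)^2 f = (-1)^{\partial f + 1}(-1)^{\partial f}\, f \circ c(v)^2 = -f\circ c(v)^2 = |v|^2 f,
\]
where the last step uses $c(v)^2 = -|v|^2 \id_S$ in $\Cl_-(V)$. This matches $v^2 = |v|^2$ in $\Cl_+(V)$, so $c^*$ extends to a graded algebra homomorphism $\Cl_+(V) \to \End(S^*)$. Equipping $S^*$ with the dual Hilbert structure (via the Riesz isomorphism $S\cong S^*$), the $*$-property $c^*(v)^* = c^*(v)$ (required because $v^*=v$ in $\Cl_+(V)$) follows from the identity $c(v)^* = -c(v)$ in $\End(S)$ (which holds since $v^* = -v$ in $\Cl_-(V)$): the sign flip coming from passing to the dual is compensated precisely by the sign $(-1)^{\partial f}$ built into $c^*$. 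Finally, the isomorphism $c^* : \Cl_+(V) \cong \End(S^*)$ follows from simplicity of $\Cl_+(V)$ together with the dimension count $2^{\dim V} = (\dim S)^2 = \dim \End(S^*)$, since $c^*$ is nonzero.

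For (2), the right action $s\cdot v := s\circ c^*(v)$ (using the identification $S\cong S^{**}$) extends from $V$ to a right $\Cl_+(V)$-action on $S$ because associativity of composition together with $c^*(v)^2 = |v|^2 \id_{S^*}$ gives $(s\cdot v)\cdot v = s\circ c^*(v)^2 = |v|^2 s = s\cdot v^2$, matching the Clifford relation. The $\Cl_+(V)$-valued inner product $\inpr{s_1}{s_2}{\Cl_+} := s_1^*\grotimes s_2$ is interpreted as the rank-one operator $f\mapsto s_2(f)\,s_1^*$ in $\End(S^*)$, and transferred to $\Cl_+(V)$ via the isomorphism of (1). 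Right-compatibility $\inpr{s_1}{s_2\cdot v}{\Cl_+} = \inpr{s_1}{s_2}{\Cl_+}\cdot v$ follows by composition,
\[
s_1^*\grotimes (s_2\circ c^*(v)) = (s_1^*\grotimes s_2)\circ c^*(v),
\]
which is exactly right multiplication by $c^*(v)$ under $c^*$. Positivity $\inpr{s}{s}{\Cl_+} \geq 0$ holds because the rank-one operator $s^*\grotimes s$ is a positive element of $\End(S^*)$, and positivity is preserved by the $*$-isomorphism of (1).

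The only real obstacle is pedantic sign-tracking: one must consistently fix the grading conventions on $\Cl_\pm(V)$, on $S$, on $S^*$ (both for parity and for the induced Hilbert inner product), and on the endomorphism algebras. Once these conventions are fixed and the compensation between $(-1)^{\partial f}$ and the odd/antisymmetric nature of $c(v)$ is understood, the entire argument reduces to routine algebraic verification with essentially no analytic content.
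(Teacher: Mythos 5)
The paper gives no proof of this lemma; it is stated as a direct consequence of the isomorphism $c:\Cl_-(V)\cong\End(S)$ recorded in the preceding paragraph, with the dualization viewed as an application of the graded opposite-algebra construction. Your direct verification is correct and supplies exactly the details the paper leaves implicit: the computation $c^*(v)^2 = |v|^2\id_{S^*}$ (using that $f\circ c(v)$ has parity $\partial f+1$) correctly produces the $\Cl_+(V)$ relation, the check that the Koszul sign $(-1)^{\partial f}$ simultaneously compensates for $c(v)^*=-c(v)$ so that $c^*(v)$ stays self-adjoint, and the simplicity-plus-dimension argument for surjectivity are all exactly right. For part (2), transferring compatibility and positivity through the $*$-isomorphism $\Cl_+(V)\cong\End(S^*)$ is the correct mechanism, and your reading of $s_1^*\grotimes s_2$ as the rank-one operator $f\mapsto s_2(f)s_1^*$ matches the intended identification $S^*\grotimes S^{**}\to\End(S^*)$. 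The one place you are slightly informal is the identification $S\cong S^{**}$, which in the graded setting can carry its own Koszul sign; you flag this as ``pedantic sign-tracking,'' and indeed fixing that convention is what makes the right-module axiom $(s\cdot a)\cdot b=s\cdot(ab)$ land correctly, but this does not affect the substance of the argument.
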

\begin{ex}
Let $V$ be a $2$-dimensional Euclidean space equipped with a complex structure $J$. It defines a complex base $\{z,\overline{z}\}$ of $V\otimes \bb{C}$. Let $S$ be $\bb{C}\widehat{\oplus}\bb{C}$ and we define $c:V\otimes \bb{C}\to \End(S)$ by
$$c(z):=\begin{pmatrix} 0 & -\sqrt{2} \\ 0 & 0\end{pmatrix} ,\ 
c(\overline{z}):=\begin{pmatrix} 0 & 0 \\ \sqrt{2} & 0\end{pmatrix} .$$
One can easily check that the restriction of $c$ to $V$ extends to $\Cl_-(V)$. The tensor product of copies of this example plays an important role in the present paper.
\end{ex}

There is a family version of these constructions.
Let $X$ be a Riemannian manifold. A pair of a Hermitian vector bundle $S$ and a bundle homomorphism $c:\Cl_+(TX)\to \End(S)$ preserving the $\bb{Z}_2$-grading is called a {\bf Spinor bundle $S$ over $X$}.
If $X$ admits such a bundle, $X$ is called a $Spin^c$-manifold.
For example, an almost complex manifold is $Spin^c$, whose Spinor bundle is given by the same construction of the above example.

We define a $C^*$-algebra $Cl_\tau(X):=C_0(X,\Cl_+(TX))$. 
The following is a family version of Lemma \ref{Definitions in preliminaries, Clifford algebras}.

\begin{lem}\label{Lemma S is a Hilbert Cltau module}
Let $(S,c)$ be a Spinor bundle over $X$. By the family version of Lemma \ref{Definitions in preliminaries, Clifford algebras}, $C_0(X,S)$ admits a Hilbert $Cl_\tau(X)$-module structure. Moreover, a Hilbert $C_0(X)$-module $C_0(X,S^*)$ admits a left $Cl_\tau(X)$-module structure $c^*:Cl_\tau(X)\to \bb{L}_{C_0(X)}(C_0(X,S^*))$.
\end{lem}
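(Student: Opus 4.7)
The plan is to upgrade Lemma \ref{Definitions in preliminaries, Clifford algebras} from a pointwise statement to a statement about continuous sections, by defining all operations fiberwise and then checking that the natural $C_0$-structure is preserved. Since both $\Cl_+(TX)$ and $S$ are continuous bundles over $X$ and $c$ is a continuous bundle map, everything we need will reduce to routine continuity checks plus the $C^*$-module axioms verified pointwise by Lemma \ref{Definitions in preliminaries, Clifford algebras}.

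First, for the right Hilbert $Cl_\tau(X)$-module structure on $C_0(X,S)$, I would define, for $\sigma,\sigma_1,\sigma_2 \in C_0(X,S)$ and $\xi \in Cl_\tau(X)$,
\[
(\sigma\cdot\xi)(x) := \sigma(x)\cdot \xi(x), \qquad \inpr{\sigma_1}{\sigma_2}{Cl_\tau(X)}(x) := \inpr{\sigma_1(x)}{\sigma_2(x)}{\Cl_+(T_xX)},
\]
where the fiberwise operations are those from Lemma \ref{Definitions in preliminaries, Clifford algebras}(2). Continuity of $x\mapsto (\sigma\cdot\xi)(x)$ follows because $s\mapsto s\circ c^*(v)$ is norm-continuous jointly in $(s,v)$ through the bundle map $c$. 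That these sections belong to $C_0$ is immediate from $\|(\sigma\cdot\xi)(x)\| \le \|\sigma(x)\|\,\|\xi(x)\|$ and analogously for the inner product. The module axioms (compatibility with the right $Cl_\tau(X)$-action and $\bb{C}$-sesquilinearity) hold fiberwise and therefore globally.

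Second, I must verify positivity and completeness. Positivity: $\inpr{\sigma}{\sigma}{Cl_\tau(X)}(x)$ is a positive element of $\Cl_+(T_xX)$ by Lemma \ref{Definitions in preliminaries, Clifford algebras}, and a continuous section of a $C^*$-algebra bundle whose fiber values are positive is itself positive in $Cl_\tau(X)=C_0(X,\Cl_+(TX))$. Completeness: the key identity, inherited fiberwise from Lemma \ref{Definitions in preliminaries, Clifford algebras}, is $\|\inpr{s}{s}{\Cl_+(T_xX)}\|_{\Cl_+(T_xX)} = \|s\|_{S_x}^2$, which forces the Hilbert-module norm $\|\sigma\|^2=\|\inpr{\sigma}{\sigma}{Cl_\tau(X)}\|_\infty$ to coincide with the sup norm on $C_0(X,S)$; the latter is complete, so we are done.

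For the left $Cl_\tau(X)$-action on the Hilbert $C_0(X)$-module $C_0(X,S^*)$, I would define $c^*(\xi)\cdot \sigma^*$ by $(c^*(\xi)\cdot\sigma^*)(x) := c^*(\xi(x))\cdot \sigma^*(x)$ using the fiberwise action from Lemma \ref{Definitions in preliminaries, Clifford algebras}(1). Again continuity in $x$ is automatic, the operator is $C_0(X)$-linear because scalar multiplication commutes with the fiberwise action, adjointability (with adjoint $c^*(\xi^*)$) and boundedness $\|c^*(\xi)\|\le \|\xi\|$ reduce to the analogous fiberwise statements, and $\xi\mapsto c^*(\xi)$ is a $*$-homomorphism because this is true pointwise in $x$.

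The only mildly delicate step is the norm-equality $\|\inpr{s}{s}{\Cl_+}\|=\|s\|^2$, which is needed to pin down the Hilbert-module topology on $C_0(X,S)$ and hence completeness; this is not spelled out in Lemma \ref{Definitions in preliminaries, Clifford algebras} but follows at once from the identification $c^*:\Cl_+(V)\cong \End(S^*)$ combined with $\inpr{s_1}{s_2}{\Cl_+}=s_1^*\grotimes s_2$. Once this observation is recorded, the whole lemma is a transparent bundle-of-Hilbert-modules construction over $X$.
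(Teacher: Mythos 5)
Your proof is correct, and it is exactly the expected fiberwise construction that the paper leaves implicit when it says ``by the family version of Lemma~\ref{Definitions in preliminaries, Clifford algebras}''; the paper itself gives no written proof here. You are also right to flag the norm identity $\|\inpr{s}{s}{\Cl_+}\|=\|s\|^2$ as the one point that needs a short argument beyond what Lemma~\ref{Definitions in preliminaries, Clifford algebras} states explicitly (it follows since $c^*$ is a $*$-isomorphism, hence isometric, and $s^*\grotimes s$ has operator norm $\|s\|^2$), and this is indeed what makes the Hilbert-module norm agree with the sup norm so that completeness is immediate.
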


The following easy $\bb{Z}_2$-graded $C^*$-algebras are related to almost all objects appearing in the present paper.

\begin{dfn}
$(1)$ We define a $\bb{Z}_2$-graded $C^*$-algebra $\ca{S}$ by $C_0(\bb{R})$ with the grading homomorphism $\epsilon f(t):=f(-t)$. In other words, an even and odd function are an even and odd element, respectively.
We define an unbounded multiplier $X$ on $\ca{S}$ by $Xf(t):=tf(t)$.

$(2)$ Similarly, we define a $\bb{Z}_2$-graded $C^*$-algebra $\ca{S}_\vep$ for $\vep>0$ by $C_0(-\vep,\vep)$ with the grading homomorphism $\epsilon f(t):=f(-t)$. 
We define a multiplier $X$ on $\ca{S}_\vep$ by $Xf(t):=tf(t)$.
\end{dfn}

\subsection{Equivariant $KK$-theory and equivariant $\ca{R}KK$-theory}\label{section KK and RKK for finite dimension}

We use the unbounded picture of equivariant $KK$-theory. We refer to \cite{Bla,Kas88} for the bounded picture.
For the following definition, see \cite{T4}.

\begin{dfn}\label{dfn in preparation unbounded picture of KK}
Let $G$ be a locally compact second countable Hausdorff group, and let $A$, $B$ be separable $G$-$C^*$-algebras. An {\bf unbounded $G$-equivariant Kasparov $(A,B)$-module} is a triple $(E,\pi,D)$ satisfying the following condition:
\begin{itemize}
\item $E$ is a countably generated $\bb{Z}_2$-graded $G$-equivariant Hilbert $B$-module;
\item $\pi:A\to \bb{L}_B(E)$ is an even $G$-equivariant $*$-homomorphism;
\item $D:E\supseteq \dom(D)\to E$ is a densely defined odd regular self adjoint operator satisfying the following conditions:
\begin{itemize}
\item There exists a $*$-subalgebra $A'\subseteq A$ such that every $a\in A'$ satisfies the following conditions: $\pi(a)$ preserves $\dom(D)$ and $[\pi(a),D]$ is bounded;
\item $\pi(a)(1+D^2)^{-1}\in \bb{K}_B(E)$;
\item $G$ preserves $\dom(D)$. The map $G\ni g\mapsto g(D)-D$  is $\bb{L}_B(E)$-valued, and it is norm-continuous.
\end{itemize}
\end{itemize}
When $D$ satisfies $g(D)=D$, the Kasparov module $(E,\pi,D)$ is said to be {\bf actually equivariant}.
\end{dfn}

The central tool of $KK$-theory is the Kasparov product. There is a criterion to be a Kasparov product in the unbounded picture \cite{Kuc}. For bimodules $E_1,E_2$ and $e\in E_1$, we define $T_{e_1}:E_2\to E_1\grotimes E_2$ by $e_2\mapsto e_1\grotimes e_2$.

\begin{pro}[\cite{Kuc}]\label{Kucs criterion}
Let $G$ be a locally compact second countable Hausdorff group, and let $A$, $A_1$, $B$ be separable $G$-$C^*$-algebras. 
Let $(E_1,\pi_1,D_1)$ be an unbounded $G$-equivariant Kasparov $(A,A_1)$-module, and $(E_2,\pi_2,D_2)$ be an unbounded $G$-equivariant Kasparov $(A_1,B)$-module. $x$ and $y$ are the corresponding $KK$-elements of $(E_1,\pi_1,D_1)$ and $(E_2,\pi_2,D_2)$, respectively.
A $G$-equivariant Kasparov $(A,B)$-module $(E_1\grotimes E_2,D)$ is a representative of the Kasparov product of $x$ and $y$, if the following conditions are fulfilled:
\begin{itemize}
\item For all $v$ in some dense subset of $AE_1$, the graded commutator
$$\bbbra{
\begin{pmatrix} D & 0 \\ 0 & D_2 \end{pmatrix},
\begin{pmatrix} 0 & T_v \\ T_v^* & 0 \end{pmatrix}
}$$
is bounded on $\dom(D\oplus D_2)$. We call it the {\bf connection condition}.
\item $\dom(D)$ is contained in $\dom(D_1\grotimes \id)$.
\item There exists $\kappa\in\bb{R}$ such that $\inpr{\id\grotimes D_1(e)}{D(e)}{}+\inpr{D(e)}{\id\grotimes D_1(e)}{}\geq \kappa\inpr{e}{e}{}$ for any $e\in \dom(D)$. We call it the {\bf positivity condition}.
\end{itemize}
\end{pro}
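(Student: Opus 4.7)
The plan is to reduce to the bounded picture and invoke the classical Connes--Skandalis criterion. Set $F := D(1+D^2)^{-1/2}$ and $F_i := D_i(1+D_i^2)^{-1/2}$ for $i=1,2$; these are bounded, odd, self-adjoint, and $G$-continuous. The compact-resolvent hypothesis $\pi_1(a)(1+D^2)^{-1}\in\bb{K}$ yields $\pi_1(a)(F^2-1)\in\bb{K}$, so $(E_1\grotimes E_2,\pi_1\grotimes 1,F)$ is a bounded Kasparov module. It then suffices to verify (i) that $F$ is an $F_1$-connection, i.e.\ the graded commutator of $F\oplus F_2$ with the off-diagonal matrix with entries $T_v,T_v^*$ is compact for $v$ in a dense subset of $AE_1$; and (ii) the positivity $\pi_1(a)[F_1\grotimes 1,F]\pi_1(a)^*\geq 0$ modulo compacts for $a$ in a dense subset of $A$. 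The central technical device for translating each unbounded hypothesis into its bounded counterpart is the integral formula
$$D(1+D^2)^{-1/2}=\frac{2}{\pi}\int_0^\infty D(1+\lambda^2+D^2)^{-1}\, d\lambda,$$
which represents each $F_\bullet$ as a norm-convergent integral of resolvents.

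For the connection condition, I would insert this integral representation into the graded commutator of $F\oplus F_2$ with the off-diagonal $T_v$-matrix. Commuting $T_v$ through each resolvent $R_\lambda:=(1+\lambda^2+(D\oplus D_2)^2)^{-1}$ via the identity $[R_\lambda,T_v]=-R_\lambda[(D\oplus D_2)^2,T_v]R_\lambda$ reduces the problem to showing that the product of such a resolvent with a factor of $\pi_1(a)$ (which appears after approximating $v$ by vectors of the form $av'$) is compact with operator norm decaying like $\lambda^{-2}$. This follows from the compact-resolvent hypothesis together with standard resolvent estimates, and a routine density argument finishes the verification on a dense subset.

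The positivity condition is the main obstacle. The strategy is to expand $[F_1\grotimes 1,F]$ through the analogous integral formulas so that the integrand, after being sandwiched by $\pi_1(a)$, contains up to positive resolvent factors the anticommutator $\pi_1(a)\{D_1\grotimes 1,D\}\pi_1(a)^*$, which by the unbounded positivity hypothesis dominates $\kappa\pi_1(a)\pi_1(a)^*$ on $\dom(D)$. Since the surrounding resolvents are positive and even, this positivity propagates through the $\lambda$-integral. The errors arising from commuting $\pi_1(a)$ past both the resolvents and past $D_1\grotimes 1$ are controlled by the boundedness of $[\pi_1(a),D_1]$ for $a\in A'$ together with the compactness of $\pi_1(a)(1+D^2)^{-1}$. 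Finally, actual $G$-equivariance in the norm-continuous sense for $F$ is inherited from the norm continuity of $g\mapsto g(D)-D$ via the same integral formula. The hardest step is precisely this error analysis in the positivity estimate: one must ensure that the various resolvent-commutator corrections are genuinely compact rather than merely bounded, which requires carefully combining the unbounded commutator hypothesis, the compact-resolvent hypothesis, and the $\lambda$-decay of the resolvents to produce a quantitative bound that is integrable in $\lambda$.
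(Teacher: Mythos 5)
The paper states this result as a citation to Kucerovsky (\cite{Kuc}) and gives no proof of its own, so there is no in-paper argument to compare against; your outline follows exactly Kucerovsky's strategy of passing to the bounded transform $D\mapsto D(1+D^2)^{-1/2}$ via the integral resolvent formula and then verifying the Connes--Skandalis criterion. One minor slip: the condition you verify with the off-diagonal $T_v$-matrix against $F\oplus F_2$ is the \emph{$F_2$-connection} condition, not an ``$F_1$-connection'' as you write; it is $F_2$ that Connes--Skandalis requires $F$ to be a connection for.

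The substantive gap is in your positivity argument. You claim that after inserting the integral representations into $[F_1\grotimes 1,F]$ the positivity of $\pi_1(a)\{D_1\grotimes 1,D\}\pi_1(a)^*$ (modulo $\kappa\pi_1(a)\pi_1(a)^*$) ``propagates through the $\lambda$-integral'' because the ``surrounding resolvents are positive and even.'' But the resolvents $(1+\lambda^2+D^2)^{-1}$ and $(1+\mu^2+(D_1\grotimes\id)^2)^{-1}$ do not commute, so the double integral for $[F_1\grotimes 1,F]$ does not organize itself into $R\,X\,R$-sandwiches with $R$ self-adjoint and $X\geq 0$, and conjugation by a resolvent of one operator does not preserve positivity expressions built from the other. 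Controlling the cross terms $[D_1\grotimes\id,\,(1+\lambda^2+D^2)^{-1}]$ is precisely the delicate core of Kucerovsky's proof, and it is where the second hypothesis, $\dom(D)\subseteq\dom(D_1\grotimes\id)$, enters essentially: it is what makes $D_1\grotimes\id$ relatively bounded against $D$ so that these commutators can be absorbed. You list that hypothesis but never invoke it in your error analysis, relying only on the bounded-commutator and compact-resolvent conditions, which is not sufficient. (This is also why subsequent treatments such as Lesch--Mesland \cite{LM} re-derive the criterion with cleaner estimates.)
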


We recall $\ca{R}KK$-theory which is a generalization of $KK$-theory. A generalization of it to infinite-dimensional manifolds is the central object of the present paper.

\begin{dfn}
Let $G$ be a locally compact second countable Hausdorff group and let $X$ be a $\sigma$-compact locally compact Hausdorff space equipped with a continuous $G$-action. 

$(1)$ A $C^*$-algebra $A$ is a {\bf $C_0(X)$-$C^*$-algebra} if it admits a $*$-homomorphism $\phi:C_0(X)\to Z(M(A))$ satisfying $\phi(C_0(X))A=A$. It is also called an {\bf $X\rtimes \{e\}$-$C^*$-algebra}. In the following, we regard this homomorphism as a left module structure and $\phi(f)a$ is denoted by $f\cdot a$.

$(2)$ In addition, if $A$ is a $G$-$C^*$-algebra and $\phi:C_0(X)\to Z(M(A))$ is $G$-equivariant, $A$ is called a {\bf $G$-$C_0(X)$-$C^*$-algebra}. It is also called an {\bf $X\rtimes G$-$C^*$-algebra} or an {\bf $X\rtimes G$-equivariant $C^*$-algebra}.
\end{dfn}
\begin{rmks}
$(1)$ The above concept is generalized to {\bf groupoid equivariant $C^*$-algebras} in \cite{LG}. Our terminology ``$X\rtimes G$-equivariant $C^*$-algebra'' comes from it. In fact, a continuous $G$-action on $X$ defines a topological groupoid $X\rtimes G$.

$(2)$ When we need to emphasize a $C^*$-algebra $A$ does not have a $C_0(X)$-structure (or we ignore it), we call $A$ a {\bf single $C^*$-algebra}. This is because we regard a $C_0(X)$-algebra as a field of $C^*$-algebras parameterized by $X$ in the present paper.
\end{rmks}

\begin{dfn}[{\cite[Definition 2.19]{Kas88}}]
Let $G$ be a locally compact second countable Hausdorff group and let $X$ be a $\sigma$-compact locally compact Hausdorff space equipped with a continuous $G$-action. Let $A,B$ be $X\rtimes G$-$C^*$-algebras. A $G$-equivariant Kasparov $(A,B)$-module $(E,\pi,F)$ is said to be {\bf $X\rtimes G$-equivariant} if it satisfies the condition
$$\pi(f\cdot a)(e\cdot b)=\pi(a)(e\cdot (f\cdot b)).$$
A homotopy between two $X\rtimes G$-equivariant Kasparov $(A,B)$-modules is an $X\rtimes G$-equivariant Kasparov $(A,BI)$-module by the obvious $X\rtimes G$-algebra structure on $BI$. The set of homotopy classes of $X\rtimes G$-equivariant Kasparov $(A,BI)$-modules is denoted by $\ca{R}KK_G(X;A,B)$.
\end{dfn}

We can define the concept of Kasparov products on $\ca{R}KK$-theory \cite[Proposition 2.21]{Kas88}. We can deal with $\ca{R}KK$-theory in the unbounded picture in an obvious way.
We omit the details. 

We recall several fundamental and necessary constructions, emphasizing the relationship between $KK$-theory and $\ca{R}KK$-theory.

\begin{dfn}
Let $G$ be a locally compact second countable Hausdorff group and let $X$ be a $\sigma$-compact locally compact Hausdorff space equipped with a continuous $G$-action. Let $A,B$ be $X\rtimes G$-equivariant $C^*$-algebras.

$(1)$ We define the {\bf forgetful homomorphism}
$$\fgt:\ca{R}KK_G(X;A,B)\to KK_G(A,B)$$
by forgetting the $C_0(X)$-module structure.

$(2)$ Let $D,D_1,D_2$ be $G$-equivariant $C^*$-algebras. We define
$$\sigma_D:\ca{R}KK_G(X;A,B)\to \ca{R}KK_G(X;D\grotimes {A},D\grotimes {B})$$
by $(E,\pi,F)\mapsto (D\grotimes E,\id\grotimes \pi, \id\grotimes F)$, and we define
$$\sigma_{X,A}:KK_G(D_1,D_2)\to \ca{R}KK_G(X;D_1\grotimes A,D_2\grotimes A)$$
by $(E,\pi,F)\mapsto ( E\grotimes A,\pi\grotimes \id,  F\grotimes \id)$.
\end{dfn}
\begin{rmks}
$(1)$ When we define a $C_0(X)$-algebra structure on the tensor product of two $C_0(X)$-algebras $A$ and $B$ by that of $A$, we denote it by $\uwave{A}\grotimes  B$ with a wave underline. For example, we will encounter both of $C_0(X)\grotimes \uwave{Cl_\tau(X)}$ and $\uwave{C_0(X)}\grotimes Cl_\tau(X)$ to define local Bott elements.

$(2)$ We can also define $\sigma_D:\ca{R}KK_G(X;A,B)\to \ca{R}KK_G(X;A\grotimes {D},B\grotimes {D})$ in an obvious way. When we distinguish these two homomorphisms, we use the symbol $\sigma_D^i$, where $D$ is tensored at the $i$-th factor, although there are no differences between them.

$(3)$ All the single $C^*$-algebras which we will encounter in the present paper are nuclear, and the tensor products are uniquely determined.
\end{rmks}

At first sight, $\ca{R}KK$-theory is almost the same with $KK$-theory. However, $\ca{R}KK$-theory has a strong topological flavor. Roughly speaking, an element of $\ca{R}KK_G(X;A,B)$ is a ``$G$-equivariant family of Kasparov modules parameterized by $X$''. We briefly explain this idea. With this observation, we will formulate a non-locally compact version of $\ca{R}KK$-theory in Section \ref{section RKK}.

Let $B$ be a $C_0(X)$-$C^*$-algebra. Then, we can define a $C^*$-algebra $B_x$ by $B/C_xB$, where $C_x$ is the ideal of $C_0(X)$ consisting of $f\in C_0(X)$ such that $f(x)=0$. Thus, $B$ is, roughly speaking, the ``continuous section algebra'' of this field. The field given in this say is called an {\bf upper semi-continuous field of $C^*$-algebras over $X$} (u.s.c. field for short). Conversely, a u.s.c. field of $C^*$-algebras over $X$ defines a $C_0(X)$-$C^*$-algebra, and these two operations are mutually inverse.
See \cite{Nil,Blan} for details on this result.

If $E$ is a Hilbert $B$-module, we can define the fiber of $E$ at $x$ by $E_x:=E/EB_x$. Then, a $B$-module homomorphism $F$ on $E$ defines a $B_x$-module homomorphism on $E_x$, because $F$ preserves $EB_x$. It is denoted by $F_x$.

Let $A$ be a $C_0(X)$-$C^*$-algebra. Suppose that a homomorphism $\pi:A\to \bb{L}_B(E)$ satisfying $\pi(fa)(eb)=\pi(a)(e(fb))$ is given. Then, the induced operator $\pi(a)_x$ is determined by $a_x$. In fact, if $a_x=0$, we can approximate it by finite sums of $fa'$ for $f\in C_x$ and $a'\in A$. Since $\pi(fa')(eb)=\pi(a')(e)\cdot fb\in EB_x$, $\pi(a)(E)\subseteq EB_x$, and hence $\pi(a)_x=0$. Consequently, $\pi$ determines a family of $*$-homomorphisms $\{\pi_x:A_x\to \bb{L}_{B_x}(E_x)\}_{x\in X}$. 

Combining the preceding three paragraphs, we find that an $X\rtimes \{e\}$-equivariant Kasparov module $(E,\pi,F)$ defines a ``continuous family'' $\{(E_x,\pi_x,F_x)\}_{x\in X}$, where $E_x$ is a Hilbert $B_x$-module, $\pi_x:A_x\to \bb{L}_{B_x}(E_x)$ is a $*$-homomorphism, and $F_x\in \bb{L}_{B_x}(E_x)$ satisfies the condition to be a Kasparov module. It is obvious that we can do the same constructions for equivariant situations.

In order to describe $\ca{R}KK$-element purely in the language of fields of Kasparov modules, that is to say, in order to obtain an $\ca{R}KK$-element from a field of Kasparov modules, we need to define the concept of (u.s.c.) fields of adjointable operators and compact operators in the language of fields of modules (strictly speaking, if we define the field of adjointable operators in a natural way, it is not upper semi-continuous). We postpone this problem until Section \ref{section RKK}.  Although we will use the terminology ``continuous field of Kasparov modules'' in Section \ref{section index theorem} to define several basic $\ca{R}KK$-elements, this terminology stands for ``the field defined from a Kasparov module'' in the above way.

\subsection{Bott periodicity and the $C^*$-algebra $\ca{S}$}\label{appendix S}

We prepare the Bott periodicity theorem and its reformulation using a $*$-homomorphism given in  \cite{HKT}. We will use it in order to generalize the ``local Bott element'' to infinite-dimensional manifolds.

\begin{dfn}\label{dfn Bott period by Dirac and dual Dirac}
Let $V$ be a finite-dimensional Euclidean space. $O(V)$ denotes the compact Lie group consisting of orthogonal transformation on $V$.

$(1)$ We define the {\bf Bott element} $[b_V]\in KK_{O(V)}(\bb{C},Cl_\tau(V))$ by the bounded transformation of
$$\bra{Cl_\tau(V),1,C},$$
where $1:\bb{C}\to \scr{C}(E)\subseteq \bb{L}_{\scr{C}(E)}(\scr{C}(E))$ is the $*$-homomorphism given by $z\mapsto z\id$, and 
$C$ is an unbounded multiplier on $Cl_\tau(V)$ defined by the pointwise Clifford multiplication: For $h\in Cl_\tau(V)$ and $v\in V$, we define $Ch(v):=v\cdot h(v)$. Note that $C$ is $O(V)$-invariant and the above $KK$-element is $O(V)$-equivariant.

$(2)$ We define the {\bf Dirac element} $[d_V]\in KK_{O(V)}(Cl_\tau(V),\bb{C})$ by the bounded transformation of
$$\bra{L^2(V,\Cl_+(TV)),\pi,D},$$
where $\pi:Cl_\tau(V) \to \bb{L}_{\bb{C}}(L^2(V,\Cl_+(TV)))$ is the $*$-homomorphism given by left multiplication, and $D$ is the Dirac operator given by 
$$\sum_i\widehat{e_i}\frac{\partial}{\partial x_i}$$
for an orthonormal base $\{e_i\}$ and the corresponding coordinate $\{x_i\}$, where $\widehat{e_i}v:=(-1)^{\partial v}v\cdot e_i$. Note that $D$ is $O(V)$-invariant and the above $KK$-element is $O(V)$-equivariant.
\end{dfn}

\begin{lem}\label{lem proof of bott period classical}
$[b_V]\grotimes_{Cl_\tau(V)}[d_V]={\bf 1}_{\bb{C}}$ and $[d_V]\grotimes_{\bb{C}}[b_V]={\bf 1}_{Cl_\tau(V)}$. Consequently, $Cl_\tau(V)$ is $KK_{O(V)}$-equivalent to $\bb{C}$. This result is called the {\bf Bott periodicity theorem}.
\end{lem}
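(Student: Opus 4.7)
The plan is to prove the two identities in the statement separately: the first, $[b_V]\grotimes_{Cl_\tau(V)}[d_V]=\mathbf{1}_{\bb{C}}$, by a direct verification of Kucera's criterion (Proposition \ref{Kucs criterion}) in the unbounded picture, and the second, $[d_V]\grotimes_{\bb{C}}[b_V]=\mathbf{1}_{Cl_\tau(V)}$, by reducing it to the first via the Atiyah rotation trick.

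For the first identity, I would identify $Cl_\tau(V)\grotimes_{Cl_\tau(V)}L^2(V,\Cl_+(TV))\cong L^2(V,\Cl_+(TV))$, on which the candidate Kasparov-product operator is $F:=\pi(x)+D$, the sum of pointwise Clifford multiplication by the position vector and the Dirac operator. On the dense subset of smooth compactly supported elements $v\in Cl_\tau(V)$, Kucera's connection condition reduces to boundedness of the graded commutator $[D,\pi(v)]$, which equals Clifford multiplication by $\nabla v$; the domain-inclusion and positivity conditions are immediate because the ``$D_1$-summand'' $\pi(x)$ already appears as a summand of $F$ itself. To identify the class of $(L^2(V,\Cl_+(TV)),1,F)$, I would compute $F^2=|x|^2-\Delta+\dim V\cdot\id$ using $C^2=|x|^2$ (from the $\Cl_+$ relation), $D^2=-\Delta$, and the graded Leibniz computation of the anticommutator $\{D,\pi(x)\}$ via the graded anticommutation of left and right Clifford multiplications. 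This is the harmonic oscillator with compact resolvent; its $O(V)$-invariant kernel is one-dimensional, spanned by the Gaussian ground state $e^{-|x|^2/2}\otimes 1_{\Cl_+(TV)}$, which carries the trivial $O(V)$-representation. Hence the class is $\mathbf{1}_{\bb{C}}$.

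For the second identity, I would apply the Atiyah rotation trick. Under the canonical isomorphism $Cl_\tau(V)\grotimes Cl_\tau(V)\cong Cl_\tau(V\oplus V)$, the product $[d_V]\grotimes_{\bb{C}}[b_V]$ is represented by the unbounded module on $L^2(V_1,\Cl_+(TV_1))\grotimes Cl_\tau(V_2)$ whose operator is the Dirac on $V_1$ plus the Clifford multiplication by the position vector on $V_2$. The one-parameter family $R_\theta\in SO(V_1\oplus V_2)$ commutes with the diagonal $O(V)$-action, so pulling back the representative along $R_\theta$ yields an $O(V)$-equivariant homotopy of unbounded Kasparov modules. At $\theta=\pi/2$ the roles of the two $V$-factors are exchanged, producing a representative which, by the first identity and functoriality of $\sigma_{Cl_\tau(V)}$, equals $\sigma_{Cl_\tau(V)}(\mathbf{1}_{\bb{C}})=\mathbf{1}_{Cl_\tau(V)}$. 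The hard part of the whole argument is controlling this rotation-trick homotopy in the unbounded picture: although the rotation of $V\oplus V$ is geometrically obvious, one must verify Kucera's criterion uniformly in $\theta$, and one must carefully track the $\bb{Z}_2$-grading and the intertwining of left and right Clifford multiplications on $\Cl_+(V\oplus V)=\Cl_+(V)\grotimes\Cl_+(V)$ during the rotation.
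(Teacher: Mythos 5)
Your argument for the first identity takes the same route as the paper --- verify Kucerovsky's criterion for the candidate $(L^2(V,\Cl_+(TV)),1,D+C)$ and then compute the kernel via the harmonic oscillator --- and usefully fills in the details the paper leaves as ``one can check.'' One algebraic slip: the graded anticommutator $\{D,C\}=\sum_i\widehat{e_i}\,e_i$ is a genuine Clifford endomorphism, not the scalar $\dim V\cdot\id$; on the degree-zero part of $\Cl_+(TV)$ it has eigenvalue $-\dim V$, which is precisely what makes the Gaussian $e^{-\|v\|^2/2}\cdot 1$ lie in $\ker(D+C)$. With the scalar $+\dim V\cdot\id$, $F^2=(|x|^2-\Delta)+\dim V$ would have spectrum starting at $2\dim V$ and hence trivial kernel, contradicting your own conclusion.

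Your plan for the second identity has a genuine gap. You want to conjugate the unbounded representative on $L^2(V_1,\Cl_+(TV_1))\grotimes Cl_\tau(V_2)$ by the rotations $R_\theta\in SO(V_1\oplus V_2)$ and claim this produces an $O(V)$-equivariant homotopy of Kasparov modules. But that bimodule is not rotation-symmetric: the norm is an $L^2$-norm in the $V_1$-variables and a $\sup$-norm in the $V_2$-variables, and for $0<\theta<\pi/2$ the pullback of a function $f(x_1)\grotimes g(x_2)$ with $f\in L^2$, $g\in C_0$ is no longer of mixed $L^2$--$C_0$ type with respect to the fixed splitting $V_1\oplus V_2$. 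So $R_\theta$ has no well-defined action on the module for intermediate $\theta$; it makes sense only at the endpoints, which is not a homotopy. This is not the technical ``must verify Kucerovsky uniformly in $\theta$'' issue you flag --- the intermediate objects simply do not exist. The paper avoids the problem by applying the rotation homotopy only to the genuinely rotation-symmetric $C^*$-algebra $Cl_\tau(V\oplus V)\cong Cl_\tau(V)\grotimes Cl_\tau(V)$, which yields $[\flip]=[j^*\grotimes\id]$ with $j(x)=-x$; the remainder is the purely algebraic $KK$-manipulation
$$[d_V]\grotimes_{\bb{C}}[b_V]=\bigl({\bf 1}_{Cl_\tau(V)}\grotimes_{\bb{C}}[b_V]\bigr)\grotimes[\flip]\grotimes\bigl({\bf 1}_{Cl_\tau(V)}\grotimes_{\bb{C}}[d_V]\bigr)=[j^*]\grotimes_{\bb{C}}\bigl([b_V]\grotimes_{Cl_\tau(V)}[d_V]\bigr)=[j^*],$$
followed by $[j^*]\grotimes[j^*]={\bf 1}_{Cl_\tau(V)}$ and the uniqueness of inverses. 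No homotopy of a mixed unbounded Dirac--Bott module is ever invoked. If you want a module-level rotation, you would have to move to a fully symmetric auxiliary module (e.g.\ replace $Cl_\tau(V_2)$ by $L^2(V_2,\Cl_+(TV_2))$), but then you would no longer be manipulating a representative of $[d_V]\grotimes_{\bb{C}}[b_V]$ and would need an additional argument to return to it.
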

\begin{proof}
One can check that the unbounded Kasparov module 
$$(L^2(V,\Cl_+(TV)),1,D+C)$$
satisfies the conditions to be a Kasparov product of $[b_V]$ and $[d_V]$. By the spectral theory of the harmonic oscillator, we notice that $\ker(D+C)$ is spanned by $e^{-\frac{1}{2}\|v\|^2}\cdot 1$, where ``$1$'' is the multiplicative identity of the Clifford algebra. Since it is rotation invariant, the Kasparov product is ${\bf 1}_{\bb{C}}$ in the sense of equivariant $KK$-theory.

In order to prove the opposite direction, we use the well-known rotation trick. We refer to \cite{Ati68,HKT}.
Let ``$\flip$'' be the automorphism on $Cl_\tau(V)\grotimes Cl_\tau(V)$ defined by $f_1\grotimes f_2\mapsto (-1)^{\partial f_1\partial f_2}f_2\grotimes f_1$. It is homotopic to $j^*\grotimes \id$ by the rotation homotopy, where $j:V\to V$ is defined by $x\mapsto -x$. Thus,
\begin{align*}
[d_V]\grotimes_\bb{C}[b_V]&=
\bra{{\bf 1}_{Cl_\tau(V)}\grotimes_\bb{C}[b_V]}\grotimes_{Cl_\tau(V)\grotimes Cl_\tau(V)}\bbra{[\flip]\grotimes_{Cl_\tau(V)\grotimes Cl_\tau(V)}\bra{{\bf 1}_{Cl_\tau(V)}\grotimes_\bb{C}[d_V]}}\\
&= \bra{{\bf 1}_{Cl_\tau(V)}\grotimes_\bb{C}[b_V]}\grotimes_{Cl_\tau(V)\grotimes Cl_\tau(V)}\bra{[j^*]\grotimes_\bb{C}[d_V]}\\
&= [j^*]\grotimes_\bb{C} \bra{[b_V]\grotimes_{Cl_\tau(V)}[d_V]}=[j^*].
\end{align*}
Since $[j^*]\grotimes_{Cl_\tau(V)} [j^*]={\bf 1}_{Cl_\tau(V)}$, we have the formula $[d_V]\grotimes_\bb{C}[b_V]\grotimes_{Cl_\tau(V)} [j^*]={\bf 1}_{Cl_\tau(V)}$. Since the left inverse coincides with the right inverse, we obtain the result.
\end{proof}

We now explain another proof of the Bott periodicity following \cite{HKT}, and we prove the relationship between these two proofs. We define $\ca{A}(V)$ by the graded tensor product $\ca{S}\grotimes Cl_\tau(V)$.

\begin{dfn}\label{dfn of the Bott hom}
The Bott homomorphism $\beta:\ca{S}\to \ca{A}(V)$ is defined by the functional calculus
$$f\mapsto f(X\grotimes \id+\id\grotimes C).$$
It defines a $KK$-element $[\beta]\in KK_{O(V)}(\ca{S},\ca{A}(V))$.
\end{dfn}

We will prove that $[\beta]=\sigma_{\ca{S}}([b_V])$ for even-dimensional $V$. Before that, we prove that $\ca{S}$ is $KK$-equivalent to the trivially graded $C^*$-algebra $\bb{C}^2$. More concretely, we will construct four $KK$-elements $[d_\pm]\in KK(\ca{S}, \bb{C})$ and $[b_\pm]\in KK(\bb{C},\ca{S})$, and we will prove that $[d_{\ca{S}}]:=[d_+]\oplus [d_-]\in KK(\ca{S},\bb{C}\oplus \bb{C})$ and $[b_{\ca{S}}]:=[b_+]\oplus [b_-]\in KK(\bb{C}\oplus \bb{C},\ca{S})$ are mutually inverse, where these $\oplus$'s are defined by the additivity of $KK$-theory with respect to both variables. For this aim, it suffices to prove that $[b_\pm]\grotimes [d_\pm]={\bf 1}_{\bb{C}}$, and $[b_\pm]\grotimes [d_\mp]=0$ (double signs are in the same order) and $[d_{\ca{S}}]\grotimes_{\bb{C}^2}[b_{\ca{S}}]={\bf 1}_{\ca{S}}$. We will prove the former two statements directly, and prove the last one by an easy algebraic argument.

Let us introduce several notations. On function spaces  ($C_0$ or $L^2$) on $\bb{R}$, we define $\bb{Z}_2$-grading homomorphisms $\epsilon$ by $\epsilon(f)(t):=f(-t)$.
$L^2(\bb{R})_{\rm gr}$ 
denotes the $\bb{Z}_2$-graded Hilbert space $L^2(\bb{R})$ by $\epsilon$.

\begin{dfn}
We define $[b_\pm]\in KK(\bb{C},\ca{S})$ by the bounded transformations of
$$(\ca{S},1,\pm X)$$
and $[d_\pm]\in KK(\ca{S},\bb{C})$ by the bounded transformations of
$$\bra{L^2(\bb{R})_{\rm gr},\mu,\pm\frac{d}{dx}\circ \epsilon},$$
where $\mu:\ca{S}\to \bb{L}_{\bb{C}}(L^2(\bb{R})_{\rm gr})$ is given by left multiplication.
\end{dfn}

\begin{lem}\label{Lemma Kasparov product of S}
$[b_\pm]\grotimes [d_\pm]={\bf 1}_{\bb{C}}$, and $[b_\pm]\grotimes [d_\mp]=0$.
\end{lem}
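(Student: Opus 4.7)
The plan is to apply Kucera's criterion (Proposition~\ref{Kucs criterion}) to exhibit an explicit representative of the Kasparov product, and then read off its class from the graded dimension of its kernel. Since $\mu$ is nondegenerate, the interior tensor product collapses to $\ca{S}\grotimes_\mu L^2(\bb{R})_{\rm gr}\cong L^2(\bb{R})_{\rm gr}$, and the natural candidate for a representative of $[b_\sigma]\grotimes[d_\tau]$ is
$$\bigl(L^2(\bb{R})_{\rm gr},\,1,\,D_{\sigma,\tau}\bigr),\qquad D_{\sigma,\tau}:=\sigma X+\tau\,\tfrac{d}{dx}\circ\epsilon,\qquad \sigma,\tau\in\{+1,-1\},$$
with $\sigma,\tau$ inherited from $[b_\sigma]$ and $[d_\tau]$.

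First I would verify the three conditions of Proposition~\ref{Kucs criterion} on the common core of Schwartz functions. The domain inclusion $\dom(D_{\sigma,\tau})\subseteq \dom(\sigma X\grotimes \id)$ is immediate. The connection condition, with $T_v=\mu(v)$ for $v\in C_c^\infty(\bb{R})$, reduces to the boundedness of the graded commutator $[\tfrac{d}{dx}\circ\epsilon,\mu(v)]$, which is multiplication by an $L^\infty$-function. For the positivity condition, the key algebraic identity is the graded anticommutator
$$\bigl\{X,\,\tfrac{d}{dx}\circ\epsilon\bigr\}=-\epsilon,$$
obtained from $\epsilon\circ\tfrac{d}{dx}=-\tfrac{d}{dx}\circ\epsilon$. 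Using it, the left-hand side of the positivity condition becomes $2\|Xf\|^2-\sigma\tau\inpr{f}{\epsilon f}{}\geq -\|f\|^2$, since $\|\epsilon\|=1$.

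Having identified the product, the remainder is purely spectral. The same anticommutator yields
$$D_{\sigma,\tau}^2=X^2-\tfrac{d^2}{dx^2}-\sigma\tau\,\epsilon=H-\sigma\tau\,\epsilon,$$
where $H$ is the quantum harmonic oscillator. On the Hermite basis, $Hh_n=(2n+1)h_n$ and $\epsilon h_n=(-1)^nh_n$, so
$$D_{\sigma,\tau}^2 h_n=\bigl((2n+1)-\sigma\tau(-1)^n\bigr)h_n.$$
When $\sigma\tau=+1$ (the case $[b_\pm]\grotimes[d_\pm]$), this coefficient vanishes only at $n=0$, so $\ker D_{\sigma,\tau}=\bb{C}\cdot h_0$ is a one-dimensional even-graded subspace and the $KK$-class is $+\mathbf{1}_\bb{C}$. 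When $\sigma\tau=-1$ (the case $[b_\pm]\grotimes[d_\mp]$), the coefficient $(2n+1)+(-1)^n\geq 2$ never vanishes, the kernel is trivial, and the class is $0$. The main obstacle I anticipate is the rigorous verification of Kucera's axioms---in particular, selecting a common invariant core on which $D_{\sigma,\tau}$ is essentially self-adjoint and on which the formal identity $D_{\sigma,\tau}^2=H-\sigma\tau\epsilon$ holds at the operator level; the subsequent spectral reading is the classical harmonic-oscillator computation.
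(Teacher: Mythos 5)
Your proposal is correct and follows essentially the same route as the paper: identify the unbounded representative $\sigma X+\tau\tfrac{d}{dx}\circ\epsilon$ on $L^2(\bb{R})_{\rm gr}$ of the Kasparov product and read off the class from harmonic-oscillator spectral theory. The paper computes the graded kernel by decomposing $\tfrac{d}{dx}\circ\epsilon\pm X$ into its creation/annihilation off-diagonal blocks, while you compute $D_{\sigma,\tau}^2=H-\sigma\tau\epsilon$ via the anticommutator identity and diagonalize on the Hermite basis; these are the same calculation in slightly different clothing, and your explicit verification of Kucerovsk\'y's three conditions merely fills in what the paper dismisses as ``easy to prove.''
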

\begin{proof}
It is easy to prove that the representatives of $[b_\pm]\grotimes [d_\pm]$ and $[b_\pm]\grotimes [d_\mp]$ are given by
$$\bra{L^2(\bb{R})_{\rm gr},1,\pm\bra{\frac{d}{dx}\circ \epsilon+X}}\text{ and }
\bra{L^2(\bb{R})_{\rm gr},1,\mp\bra{\frac{d}{dx}\circ \epsilon-X}}$$
respectively. It suffices to prove that $\ind(\frac{d}{dx}\circ \epsilon+X)=1$ and $\ind(\frac{d}{dx}\circ \epsilon-X)=0$. 
Under the decomposition with respect to the $\bb{Z}_2$-grading $L^2(\bb{R})_{\rm gr}=L^2(\bb{R})_\ev\widehat{\oplus} L^2(\bb{R})_\od$, the operators can be written as
$$\frac{d}{dx}\circ \epsilon+X=\begin{pmatrix}
0 & \frac{d}{dx}-X \\ \frac{d}{dx} +X & 0\end{pmatrix}\text{ and }\ \frac{d}{dx}\circ \epsilon-X=\begin{pmatrix}
0 & \frac{d}{dx}+X \\ \frac{d}{dx} -X & 0\end{pmatrix}.
$$
Thanks to the spectral theory of the harmonic oscillator, the index of the former operator is $1$, and that of the latter one is $0$, because the creation operator is injective on the whole of $L^2(\bb{R})$, and the annihilation operator has one-dimensional kernel which is contained in $L^2(\bb{R})_\ev$.
\end{proof}

It is clear that $KK(\bb{C}^2,\bb{C}^2)\cong M_2(\bb{Z})$ as a ring. 
Therefore,
$$[b_{\ca{S}}]\grotimes_{\ca{S}}[d_{\ca{S}}]=
\begin{pmatrix}
[b_+]\grotimes_{\ca{S}}[d_+] &[b_+]\grotimes_{\ca{S}}[d_-]\\
[b_-]\grotimes_{\ca{S}}[d_+] & [b_-]\grotimes_{\ca{S}}[d_-]
\end{pmatrix}=\begin{pmatrix} 1 & 0 \\ 0 & 1 \end{pmatrix}={\bf 1}_{\bb{C}^2}\in KK(\bb{C}^2,\bb{C}^2).$$

It is clear that $[d_{\ca{S}}]\grotimes_{\bb{C}^2}[b_{\ca{S}}]=[d_+]\grotimes_{\bb{C}}[b_+]+[d_-]\grotimes_{\bb{C}}[b_-]$. 
We will prove that it is ${\bf 1}_{\ca{S}}$ by computing $KK(\ca{S},\ca{S})$ in detail.

\begin{lem}\label{Lemma computation of KK(S,S)}
$KK(\ca{S},\ca{S})$ is isomorphic to $\bb{Z}^4$ and the base is given by
$$\{[d_+]\grotimes_{\bb{C}}[b_+],[d_+]\grotimes_{\bb{C}}[b_-],[d_-]\grotimes_{\bb{C}}[b_+],[d_-]\grotimes_{\bb{C}}[b_-]\}.$$
${\bf 1}_{\ca{S}}$ is given by $[d_+]\grotimes_{\bb{C}}[b_+]+[d_-]\grotimes_{\bb{C}}[b_-]=[d_{\ca{S}}]\grotimes_{\bb{C}^2}[b_{\ca{S}}]$.
\end{lem}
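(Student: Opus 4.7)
The strategy is to combine the matrix-unit relations forced by Lemma \ref{Lemma Kasparov product of S} with the one-sided inverse $[b_{\ca S}]\grotimes[d_{\ca S}]={\bf 1}_{\bb C^2}$ from the display preceding the lemma in order to identify $KK(\ca S,\ca S)$ with $M_2(\bb Z)$ and then read off the unit.

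First, I would verify the matrix-unit relations. Setting $e_{ij}:=[d_i]\grotimes_\bb C[b_j]$ for $i,j\in\{+,-\}$, associativity of the Kasparov product together with the identity $[b_j]\grotimes[d_k]=\delta_{jk}{\bf 1}_\bb C$ from Lemma \ref{Lemma Kasparov product of S} yields
$$e_{ij}\grotimes_{\ca S}e_{kl}=[d_i]\grotimes\bigl([b_j]\grotimes[d_k]\bigr)\grotimes[b_l]=\delta_{jk}\,e_{il}.$$
In particular $p:=e_{++}+e_{--}$ is an idempotent and, as noted before the lemma, coincides with $[d_{\ca S}]\grotimes_{\bb C^2}[b_{\ca S}]$.

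Next, I would introduce the candidate isomorphisms
$$\Phi\colon KK(\ca S,\ca S)\to M_2(\bb Z),\ x\mapsto[b_{\ca S}]\grotimes x\grotimes[d_{\ca S}],\qquad \Psi\colon M_2(\bb Z)\to KK(\ca S,\ca S),\ E_{ij}\mapsto e_{ij},$$
using the standard identification $M_2(\bb Z)=KK(\bb C^2,\bb C^2)=\bigoplus_{i,j}KK(\bb C,\bb C)$ with matrix units in the obvious positions. A bookkeeping calculation, in which $[b_{\ca S}]\grotimes[d_i]$ and $[b_j]\grotimes[d_{\ca S}]$ reduce respectively to the canonical projection and inclusion of $\bb C^2$ (using $[b_{\ca S}]\grotimes[d_{\ca S}]={\bf 1}_{\bb C^2}$ together with Lemma \ref{Lemma Kasparov product of S}), gives $\Phi\circ\Psi=\id_{M_2(\bb Z)}$. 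Hence $\Psi$ realises a free summand of rank $4$ in $KK(\ca S,\ca S)$ with basis $\{e_{ij}\}$.

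It remains to show $\Psi\circ\Phi=\id$, equivalently $p={\bf 1}_{\ca S}$. Since $\Psi\circ\Phi(x)=p\grotimes x\grotimes p$ and $p$ already acts as the identity on $\Psi(M_2(\bb Z))$, this amounts to ruling out any complementary summand in $KK(\ca S,\ca S)$. The route I would take is a direct Kasparov-product computation in the spirit of Lemma \ref{lem proof of bott period classical}: Proposition \ref{Kucs criterion} represents $[d_\pm]\grotimes_\bb C[b_\pm]$ on $L^2(\bb R)_{\rm gr}\grotimes\ca S$ with an operator $F_\pm$ of harmonic-oscillator type built from $\tfrac{d}{dx}\circ\epsilon$ and the multiplication $X$; the kernel of $F_+\oplus F_-$, computed via the spectral theory of the harmonic oscillator, is spanned by an explicit Gaussian, and the resulting Kasparov module is homotopic to $(\ca S,\id,0)$. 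This gives $p={\bf 1}_{\ca S}$, from which $KK(\ca S,\ca S)\cong M_2(\bb Z)\cong\bb Z^4$ with the stated basis and ${\bf 1}_{\ca S}=e_{++}+e_{--}$ both follow. The main obstacle is precisely this last verification: the matrix-unit bookkeeping gives $\bb Z^4$ as a summand essentially for free, but identifying it with all of $KK(\ca S,\ca S)$, i.e. promoting the one-sided KK-inverse to a two-sided one, requires honest analytic input.
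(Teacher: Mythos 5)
Your algebraic setup is correct and cleanly isolates the crux: the matrix-unit relations $e_{ij}\grotimes_{\ca S}e_{kl}=\delta_{jk}e_{il}$ (which do follow from associativity and Lemma~\ref{Lemma Kasparov product of S}), the maps $\Phi,\Psi$ with $\Phi\circ\Psi=\id_{M_2(\bb Z)}$, and the reduction of the whole lemma to the single identity $p:=e_{++}+e_{--}=\mathbf 1_{\ca S}$. The gap is in the analytic argument you propose for that last step. The external product $[d_\pm]\grotimes_{\bb C}[b_\pm]$ is represented on the Hilbert $\ca S$-module $L^2(\bb R)_{\rm gr}\grotimes\ca S$ by an operator of the form $(\tfrac{d}{dx}\circ\epsilon)\grotimes 1+\epsilon\grotimes X$, and since the two pieces act on \emph{different} tensor factors, the square at the fiber over $t\in\bb R$ is $-\tfrac{d^2}{dx^2}+t^2$ --- a free Laplacian shifted by a constant, not a harmonic oscillator --- so the fiberwise operator has no eigenfunctions in $L^2(\bb R)$ at all, and there is no Gaussian in its kernel. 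You are importing the picture from the \emph{internal} product $[b_\pm]\grotimes_{\ca S}[d_\pm]$ (where tensoring over $\ca S$ collapses $X$ to multiplication by $x$ on $L^2(\bb R)$ and one does get the harmonic oscillator used in Lemma~\ref{Lemma Kasparov product of S}); the external product $[d_\pm]\grotimes_{\bb C}[b_\pm]$ does not behave the same way. So the claimed homotopy of $p$ to $(\ca S,\mu,0)$ is unproved, and the harmonic-oscillator computation as written cannot supply it.

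The paper closes the gap by a structural argument that avoids any such direct Kasparov-product computation: it exhibits the short exact sequence $0\to Cl_\tau(0,\infty)\xrightarrow{\iota}\ca S\xrightarrow{\ev_0}\bb C\to 0$ and runs the six-term exact sequence to obtain $KK(\bb C,\ca S)\cong KK(\ca S,\bb C)\cong\bb Z^2$ and then $KK(\ca S,\ca S)\cong\bb Z^4$ abstractly, \emph{before} touching the matrix units. Once the rank is pinned down, the nondegenerate pairing against the four functionals $f_{\pm,\pm}(x)=[b_\pm]\grotimes x\grotimes[d_\pm]$ forces $\{e_{ij}\}$ to be a $\bb Z$-basis, and $\mathbf 1_{\ca S}=e_{++}+e_{--}$ drops out for free from $f_{i,j}(\mathbf 1_{\ca S})=[b_i]\grotimes[d_j]=\delta_{ij}$ (Lemma~\ref{Lemma Kasparov product of S} again). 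In short, the "honest input" the paper supplies is topological (an extension and exactness), not analytic. If you insist on the analytic route, you would need a genuinely different device --- for instance a rotation-type homotopy organized via the comultiplication $\Delta:\ca S\to\ca S\grotimes\ca S$ --- rather than the naive harmonic-oscillator argument, which simply does not apply to the operator you actually have.
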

\begin{proof}
We prove that $\ca{S}$ can be written as an extension of easy $C^*$-algebras. We define $\ev_0:\ca{S}\to \bb{C}$ by $f\mapsto f(0)$. It preserves the grading, because any odd function vanishes at $0$. Next, we define $\iota:Cl_\tau(0,\infty)\to \ca{S}$ as follows. First, we notice that $Cl_\tau(0,\infty)$ is isomorphic to $C_0(0,\infty)\widehat{\oplus}C_0(0,\infty)$ with the product 
$$(f_0\widehat{\oplus} f_1)\cdot (g_0\widehat{\oplus} g_1):=(f_0g_0+f_1g_1)\widehat{\oplus} (f_0g_1+f_1g_0).$$
Under this identification, $\iota $ is defined by $f_0\widehat{\oplus}f_1\mapsto[t\mapsto f_0(|t|)+{\rm sign}(t)f_1(|t|)]$. Then,
$$0\to Cl_\tau(0,\infty)\xrightarrow{\iota} \ca{S}\xrightarrow{\ev_0} \bb{C}\to 0$$
is obviously a short exact sequence.

In order to compute $KK(\ca{S},\ca{S})$, we note that $KK(\bb{C},\ca{S})$ and $ KK(\ca{S},\bb{C})$ are isomorphic to $ \bb{Z}^2$, and bases of these groups are given  by $\{[b_+],[b_-]\}$, and $\{[d_+],[d_-]\}$, respectively. In fact, the former statement is obvious from the above short exact sequence and  the six-term exact sequence for $K$-theory and $K$-homology. The latter one is obvious from the fact that $[b_\pm]\grotimes_{\ca{S}}[d_\pm]=1$ and $[b_\pm]\grotimes_{\ca{S}}[d_\mp]=0$ and the Kasparov product gives a bilinear form 
$KK(\bb{C},\ca{S})\times KK(\ca{S},\bb{C})\to \bb{Z}.$

By using the six-term exact sequence again, we notice that $KK(\ca{S},\ca{S})\cong \bb{Z}^4$.
 we have four group homomorphisms from $KK(\ca{S},\ca{S})$ to $\bb{Z}$ given by
$f_{\pm,\pm}:x\mapsto [b_\pm]\grotimes_{\ca{S}} x\grotimes_{\ca{S}} [d_\pm]$
(double signs are in arbitrary order). It is easy to see that $\{[d_{\pm}]\grotimes_{\bb{C}}[b_{\pm}]\}$ and $\{f_{\pm,\pm}\}$ are dual bases (double signs are in arbitrary order).

We must prove that ${\bf 1}_{\ca{S}}=[d_+]\grotimes_{\bb{C}}[b_+]+[d_-]\grotimes_{\bb{C}}[b_-]$. It is equivalent to the four equations
$$f_{\pm,\pm}({\bf 1}_{\ca{S}})=f_{\pm,\pm}([d_+]\grotimes_{\bb{C}}[b_+]+[d_-]\grotimes_{\bb{C}}[b_-])$$
(double signs are in arbitrary order).
They are satisfied by Lemma \ref{Lemma Kasparov product of S}.
\end{proof}

\begin{pro}\label{new Bott is classical Bott}
When $V$ is even-dimensional, $\sigma_{\ca{S}}([b_V])=[\beta]$ as elements of $KK_{O(V)}(\ca{S},\ca{A}(V))$.
\end{pro}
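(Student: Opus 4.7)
The plan is to reduce the statement to a Kasparov-product identity. By Lemma~\ref{lem proof of bott period classical}, $[b_V]$ and $[d_V]$ are mutually inverse $KK_{O(V)}$-equivalences; applying the functor $\sigma_\ca{S}$ (which is compatible with Kasparov products), $\sigma_\ca{S}([b_V])$ and $\sigma_\ca{S}([d_V])$ become mutually inverse $KK_{O(V)}$-equivalences between $\ca{S}$ and $\ca{A}(V)$. Therefore composition on the right with $\sigma_\ca{S}([d_V])$ is an isomorphism $KK_{O(V)}(\ca{S},\ca{A}(V))\cong KK_{O(V)}(\ca{S},\ca{S})$, and it is enough to prove
\[
[\beta]\grotimes_{\ca{A}(V)}\sigma_\ca{S}([d_V])\;=\;{\bf 1}_\ca{S}\;=\;\sigma_\ca{S}([b_V])\grotimes_{\ca{A}(V)}\sigma_\ca{S}([d_V]),
\]
the right equality being $\sigma_\ca{S}([b_V]\grotimes[d_V])=\sigma_\ca{S}({\bf 1}_\bb{C})={\bf 1}_\ca{S}$. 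Granting the left equality, $[\beta]=[\beta]\grotimes\sigma_\ca{S}([d_V])\grotimes\sigma_\ca{S}([b_V])=\sigma_\ca{S}([b_V])$ follows.

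Because $\beta$ is an honest $*$-homomorphism, the left-hand side above equals the pullback $\beta^{\ast}\sigma_\ca{S}([d_V])$ and is represented by the unbounded Kasparov $(\ca{S},\ca{S})$-module
\[
E\;=\;\bigl(\ca{S}\grotimes L^2(V,\Cl_+(TV)),\;f\mapsto f(X\grotimes 1+1\grotimes c),\;\id\grotimes D_V\bigr),
\]
with $c:=\pi(C)$ the Clifford multiplication on $L^2(V,\Cl_+(TV))$ furnished by the $*$-homomorphism $\pi$ of Definition~\ref{dfn Bott period by Dirac and dual Dirac}. On the other hand, the proof of Lemma~\ref{lem proof of bott period classical} provides the explicit unbounded representative $(L^2(V,\Cl_+(TV)),1,D_V+c)$ of $[b_V]\grotimes[d_V]={\bf 1}_\bb{C}$; tensoring externally with ${\bf 1}_\ca{S}$ yields a second representative
\[
M\;=\;\bigl(\ca{S}\grotimes L^2(V,\Cl_+(TV)),\;f\mapsto f\grotimes 1,\;1\grotimes(D_V+c)\bigr)
\]
of ${\bf 1}_\ca{S}$. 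The modules $E$ and $M$ share the same underlying Hilbert $\ca{S}$-module, and they have the \emph{same} total odd self-adjoint operator $D_{\mathrm{tot}}:=X\grotimes 1+1\grotimes(D_V+c)$; only the split between left action and Dirac differs. I would verify $[E]=[M]$ by identifying both as representatives of the Kasparov product ${\bf 1}_\ca{S}\grotimes_\bb{C}{\bf 1}_\bb{C}$ via Kucera's criterion (Proposition~\ref{Kucs criterion}), exploiting the graded anticommutations $\{X\grotimes 1,1\grotimes c\}=\{X\grotimes 1,1\grotimes D_V\}=0$ (both pairs consisting of odd operators living in different tensor factors) to get $D_{\mathrm{tot}}^2=X^2\grotimes 1+1\grotimes(D_V+c)^2$, and then reusing the harmonic-oscillator analysis of Lemma~\ref{lem proof of bott period classical} to handle the compactness, connection, domain, and positivity conditions. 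The $O(V)$-equivariance is automatic, since $X$, $C$, $c$, and $D_V$ are all $O(V)$-invariant.

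The main obstacle is precisely why Kucera's criterion is needed in place of a direct homotopy. The naive linear interpolation $\pi_s(f)=f(X\grotimes 1+s(1\grotimes c))$ together with $D_s=(1-s)(1\grotimes c)+\id\grotimes D_V$ for $s\in[0,1]$ connects $M$ at $s=0$ to $E$ at $s=1$, but it does not assemble into a Kasparov $(\ca{S},C([0,1])\grotimes\ca{S})$-module: the graded commutator $[\pi_s(f),1\grotimes c]$ has operator norm of order $1/s$ as $s\to 0^{+}$ yet vanishes identically at $s=0$, and $\pi_s(f)$ fails to converge to $\pi_0(f)=f\grotimes 1$ in norm (for instance fiberwise evaluation at $(t,v)=(0,1/s)$ produces a nonzero residual depending on $f(\pm 1)-f(0)$). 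Thus the equality $[E]=[M]$ must be extracted from the coincidence of the total squared operators via Kucera (or, equivalently, via an asymptotic-morphism reformulation) rather than via a one-parameter family.
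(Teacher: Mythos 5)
Your reduction is valid and is genuinely different from the paper's: you observe that since $\sigma_{\ca{S}}$ commutes with Kasparov products, $\sigma_{\ca{S}}([d_V])$ is a $KK_{O(V)}$-equivalence with inverse $\sigma_{\ca{S}}([b_V])$, and therefore it suffices to show $[\beta]\grotimes_{\ca{A}(V)}\sigma_{\ca{S}}([d_V])={\bf 1}_{\ca{S}}$. That is a cleaner starting point than the paper's, which instead expands both identities ${\bf 1}_{\ca{S}}=[d_+]\grotimes[b_+]+[d_-]\grotimes[b_-]$ and ${\bf 1}_{\ca{A}(V)}$ via Lemma~\ref{Lemma computation of KK(S,S)} and Lemma~\ref{lem proof of bott period classical}, thereby reducing the equality of elements in $KK_{O(V)}(\ca{S},\ca{A}(V))$ to four index computations $[b_\pm]\grotimes x\grotimes([d_\pm]\grotimes[d_V])\in KK_{O(V)}(\bb{C},\bb{C})$.

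However, the verification of $[\beta]\grotimes_{\ca{A}(V)}\sigma_{\ca{S}}([d_V])={\bf 1}_{\ca{S}}$ — i.e.\ that $[E]=[M]$ — is left genuinely incomplete, and your proposed fix does not work as stated. You correctly diagnose that the obvious linear interpolation fails; but then you appeal to Kucera's criterion (Proposition~\ref{Kucs criterion}) to present both $E$ and $M$ as representatives of ``${\bf 1}_{\ca{S}}\grotimes_{\bb{C}}{\bf 1}_{\bb{C}}$'' without carrying this out, and the naive attempt fails. If you try to exhibit $E=(\ca{S}\grotimes L^2(V,\Cl_+(TV)),\,f\mapsto f(X\grotimes 1+1\grotimes c),\,\id\grotimes D_V)$ as a Kasparov product of $(\ca{S},\id,0)$ and the harmonic-oscillator module $(L^2(V,\Cl_+(TV)),1,D_V+c)$, the connection condition already breaks: for a homogeneous $v\in\ca{S}$ and $\xi\in\dom(D_V)$ one has $(\id\grotimes D_V)T_v\xi - (-1)^{\partial v}T_v(D_V+c)\xi = -(-1)^{\partial v}\,v\grotimes c\,\xi$, and $c$ is unbounded, so $\id\grotimes D_V$ is \emph{not} a $(D_V+c)$-connection. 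Having the same ``total operator'' $X\grotimes 1+1\grotimes(D_V+c)$ on the underlying Hilbert module is not by itself a Kasparov-theoretic equivalence, because the split between the left action and the Fredholm operator matters.

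A telltale symptom of the gap is that your argument nowhere uses the hypothesis that $V$ is even-dimensional. But this hypothesis is essential: pairing $[\beta]\grotimes\sigma_{\ca{S}}([d_V])$ against $[b_\pm]$ produces $\ind(D_V\pm C)$, and while $\ind(D_V+C)=1$ always, $\ind(D_V-C)=(-1)^{\dim V}$, so for odd-dimensional $V$ the statement $[\beta]=\sigma_{\ca{S}}([b_V])$ is simply false. Any correct proof of $[E]={\bf 1}_{\ca{S}}$ must confront this sign; the paper does so at the very last step of its proof. To complete your route you would still need to carry out essentially the same harmonic-oscillator spectral computation against the basis $[b_\pm]$, $[d_\pm]$, which is what the paper's ``coordinate-wise'' argument does directly.
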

\begin{proof}
Thanks to the results so far, it suffices to prove the following four equalities:
$$\bbra{[b_{\pm}]\grotimes_{\ca{S}}\sigma_{\ca{S}}([b_V])}\grotimes_{\ca{A}(V)}\bbra{[d_{\pm}]\grotimes_{\bb{C}}[d_V]}=\bbra{[b_{\pm}]\grotimes_{\ca{S}}[\beta]}\grotimes_{\ca{A}(V)}\bbra{[d_{\pm}]\grotimes_{\bb{C}}[d_V]}\text{ and}$$
$$\bbra{[b_{\pm}]\grotimes_{\ca{S}}\sigma_{\ca{S}}([b_V])}\grotimes_{\ca{A}(V)}\bbra{[d_{\mp}]\grotimes_{\bb{C}}[d_V]}=\bbra{[b_{\pm}]\grotimes_{\ca{S}}[\beta]}\grotimes_{\ca{A}(V)}\bbra{[d_{\mp}]\grotimes_{\bb{C}}[d_V]},$$
(double signs are in the same order). Obviously, 
$$\bbra{[b_{\pm}]\grotimes_{\ca{S}}\sigma_{\ca{S}}([b_V])}\grotimes_{\ca{A}(V)}\bbra{[d_{\pm}]\grotimes_{\bb{C}}[d_V]}={\bf 1}_{\bb{C}} \text{ and}$$
$$\bbra{[b_{\pm}]\grotimes_{\ca{S}}\sigma_{\ca{S}}([b_V])}\grotimes_{\ca{A}(V)}\bbra{[d_{\mp}]\grotimes_{\bb{C}}[d_V]}=0.$$
We need to compute the right hand sides.
Let us compute $\bbra{[b_{\pm}]\grotimes_{\ca{S}}[\beta]}$. In the unbounded picture, they are given by
$$\bra{\ca{A}(V),1,\pm(X\grotimes \id+\id\grotimes C)},$$
respectively. Thus, we have the formulas $[b_{+}]\grotimes_{\ca{S}}[\beta]=[b_{+}]\grotimes_{\bb{C}}[b_V]$ and $[b_{-}]\grotimes_{\ca{S}}[\beta]=[b_{-}]\grotimes_{\bb{C}}(Cl_\tau(V),1,-C)$. 
This means that 
$$\bbra{[b_{\pm}]\grotimes_{\ca{S}}[\beta]}\grotimes_{\ca{A}(V)}\bbra{[d_{\mp}]\grotimes_{\bb{C}}[d_V]}=0.$$
Moreover, 
$$\bbra{[b_{\pm}]\grotimes_{\ca{S}}[\beta]}\grotimes_{\ca{A}(V)}\bbra{[d_{\pm}]\grotimes_{\bb{C}}[d_V]}=[(Cl_\tau(V),1,\pm C)]\grotimes_{Cl_\tau(V)}[d_V]=\ind(D\pm C).$$
Thanks to the spectral theory of the harmonic oscillator, we can compute the kernel of these operators: $\ker(D+C)=\bb{C} e^{-\frac{1}{2}\|v\|^2}\cdot 1$ and $\ker(D-C)=\bb{C} e^{-\frac{1}{2}\|v\|^2}\cdot \vol$. The index of the former one is $1$ as we have mentioned, and that of the latter one is $(-1)^{\dim(V)}$. Thus, the statement holds when $V$ is even-dimensional.
\end{proof}

\begin{rmk}\label{rmk excuse on the factor Svep}
The most natural homomorphism between $KK_G(A,B)$ and $KK_G(\ca{S}\grotimes A,\ca{S}\grotimes B)$ is $\sigma_{\ca{S}}$, which will appear in the following section.
Unfortunately, it can be not an isomorphism, because $KK_G(A,B)$ and $KK_G(\ca{S}\grotimes A,\ca{S}\grotimes B)$ can be not isomorphic.
Roughly speaking, this operation is given by
$$KK_G(A,B)\ni x\mapsto\begin{pmatrix}x & 0 \\ 0 & x \end{pmatrix}\in M_2(KK_G(A,B))\cong KK_G(\ca{S}\grotimes A,\ca{S}\grotimes B)$$
for $G$-$C^*$-algebras $A$ and $B$.

There is a trick to solve this non-triviality using the crossed product by $\bb{Z}_2$ (although we do not consider this kind of tricks after this remark). By the grading homomorphism $\epsilon_{\ca{S}}$, we can define a $\bb{Z}_2$-action on $\ca{S}$. We can also define a $\bb{Z}_2$-action on $\bb{C}^2$ by $(a,b)\mapsto (b,a)$. Then, $[b_{\ca{S}}]$ and $[d_{\ca{S}}]$ are $\bb{Z}_2$-equivariant $KK$-elements. In fact, the $\bb{Z}_2$-action on $\ca{S}\oplus \ca{S}$ is given by $(f_1,f_2)\mapsto (\epsilon_{\ca{S}}(f_2),\epsilon_{\ca{S}}(f_1))$, and similarly for $L^2(\bb{R})_{\rm gr}\oplus L^2(\bb{R})_{\rm gr}$.
Thus, by the operation $j_{\bb{Z}_2}$ and a $KK$-equivalence $\bb{C}^2\rtimes\bb{Z}_2\cong \bb{C}$, we have a $KK$-equivalence $\ca{S}\rtimes\bb{Z}_2\cong \bb{C}$. Thus, the operation
$$\sigma_{\ca{S}_\vep\rtimes\bb{Z}_2}:KK_G(A,B) \to KK_G(\ca{S}_\vep\rtimes\bb{Z}_2\grotimes A,\ca{S}_\vep\rtimes\bb{Z}_2\grotimes B)$$
is isomorphic.

Suppose that a $C^*$-algebra $B$ can be regarded as a ``graded suspension of $\ca{A}$''. The $C^*$-algebra of a Hilbert space \cite{HKT} is an example of such a $C^*$-algebra. If we can define a $\bb{Z}_2$-action on $B$ which can be regarded as ``$\epsilon_{\ca{S}}\grotimes \id$ on $\ca{S}\grotimes \ca{A}$'' in some sense, it will be possible define the ``$K$-homology of $\ca{A}$'' by
$KK(B\rtimes \bb{Z}_2,\ca{S}\rtimes \bb{Z}_2)$, by ``using'' the preceding paragraph.
\end{rmk}

\section{Equivariant index theorem for finite-dimensional manifolds}\label{section index theorem}

In this section, we recall the index theorem for complete Riemannian manifolds with proper cocompact actions \cite{Kas83,Kas15}. We will also prove several necessary results.
Then, we will reformulate it for even-dimensional $Spin^c$-manifolds. In particular, we will rewrite the $KK$-theoretical topological index following the idea explained at the end of Section \ref{section KK and RKK for finite dimension}. Finally, we will give a twisted equivariant versions of them.

\subsection{The precise statement of the index theorem}

We recall the precise statement of the index theorem \cite{Kas83,Kas15} and we prepare necessary $KK$-elements.
Let $X$ be a complete Riemannian manifold and let $G$ be a locally compact second countable Hausdorff group acting on $X$ in an isometric, proper, cocompact way. 
Let $W$ be a $G$-equivariant Clifford bundle: A $G$-equivariant Hermitian bundle over $X$ equipped with a $G$-equivariant bundle homomorphism $c:TX\to \End(W)$ such that $c(v)$ is skew-adjoint and $c(v)^2=-\|v\|^2\id_W$. Take a $G$-equivariant Dirac operator $D$ on $W$. In this setting, we have the following three objects.
$\mu$ denotes the modular function of $G$.


\begin{dfn}\label{dfn of an ind}
$(1)$ A dense subspace $C_c(X,W)$ of $L^2(X,W)$ admits a pre-Hilbert $C_c(G)$-module structure
$$\inpr{s_1}{s_2}{\bb{C}\rtimes G }(g):=\mu(g)^{-1/2}\int_X\inpr{s_1(x)}{g\cdot [s_2(g^{-1}\cdot x)]}{W_x}dx,$$
$$s\cdot b:=\int_G\mu(g)^{-1/2}(g\cdot s)b(g^{-1})dg$$
for $s_1,s_2,s\in C_c(X,W)$ and $b\in C_c(G)$.
The pair of the completion of this pre-Hilbert module and the extension of the operator $D:C_c^\infty(X,W)\circlearrowright$ defines an element of $KK(\bb{C},\bb{C}\rtimes G)$ and it is called the {\bf analytic index} of $D$, and it is denoted by $\ind_{\bb{C}\rtimes G}(D)$. See the beginning of \cite[Section 5]{Kas15}.

$(2)$ The triple $(L^2(X,W),\pi,D)$ is an unbounded $G$-equivariant Kasparov $(C_0(X),\bb{C})$-module, where $\pi$ is given by left multiplication. The corresponding $KK$-element is denoted by $[D]\in KK_G(C_0(X),\bb{C})$, and it is called the {\bf index element} of $D$. See \cite[Lemma 3.7]{Kas15}.

$(3)$ The triple $(C_0(X,W),\pi,0)$ is an unbounded $X\rtimes G$-equivariant Kasparov $(C_0(X),Cl_\tau(X))$-module, where $\pi$ is given by left multiplication. Note that $C_0(X,W)$ admits a left $\Cl_-(TX)$-module structure, and thus it admits a right Hilbert $Cl_\tau(X)$-module structure, thanks to Lemma \ref{Lemma S is a Hilbert Cltau module}. The corresponding $\ca{R}KK$-element is denoted by $[\sigma_D^{Cl}]\in\ca{R}KK_G(X;C_0(X),Cl_\tau(X))$, and it is called the {\bf Clifford symbol element} of $D$. See \cite[Definition 3.8]{Kas15}.
\end{dfn}

The following is the precise statement of the index theorem.
The task of the remainder of this subsection is to explain the  details on this result, and to prove several related equalities.

\begin{thm}[\cite{Kas83,Kas15}]\label{theorem index theorem in section 3}
$(1)$ $\ind_{\bb{C}\rtimes G}(D)$ is determined by $[D]$ by a $KK$-theoretical procedure called the {\bf analytic assembly map}.

$(2)$ $KK_G(C_0(X),\bb{C})$ and $\ca{R}KK_G(X;C_0(X),Cl_\tau(X))$ are isomorphic ({\bf Poincar\'e duality}). Under this isomorphism, $[D]$ corresponds to $[\sigma_D^{Cl}]$.

$(3)$ Consequently, $\ind_{\bb{C}\rtimes G}(D)$ is completely determined by $[\sigma_D^{Cl}]$. The explicit formula to compute $\ind_{\bb{C}\rtimes G}(D)$ from $[\sigma_D^{Cl}]$ is called the {\bf topological assembly map} in the present paper.
\end{thm}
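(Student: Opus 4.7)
Part (3) is immediate from (1) and (2), so I focus on the first two parts, following Kasparov's approach from \cite{Kas83,Kas15}.

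For part (1), I would construct the analytic assembly map $\mu_G$ via Kasparov descent composed with a cutoff class. Choose a cutoff function $\chi\in C_c(X)$ with $\int_G\mu(g)^{-1}\chi(g^{-1}x)^2\,dg=1$ (such $\chi$ exists by properness and cocompactness), form the projection $p_\chi\in C_0(X)\rtimes G$ by $p_\chi(g)(x):=\mu(g)^{-1/2}\chi(x)\chi(g^{-1}x)$, and let $[p_\chi]\in KK(\bb{C},C_0(X)\rtimes G)$ be its class. Define
$$\mu_G(x):=[p_\chi]\grotimes_{C_0(X)\rtimes G} j_G(x),$$
where $j_G\colon KK_G(C_0(X),\bb{C})\to KK(C_0(X)\rtimes G,\bb{C}\rtimes G)$ is the descent. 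To verify $\mu_G([D])=\ind_{\bb{C}\rtimes G}(D)$, a direct computation identifies $j_G(L^2(X,W),\pi,D)$ with $(L^2(X,W)\rtimes G,\pi\rtimes\lt,D)$, and the Kasparov product with $[p_\chi]$ produces precisely the pre-Hilbert $C_c(G)$-module structure of Definition 3.1(1).

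For part (2), the plan is to implement Poincar\'e duality by explicit Kasparov products with a dual pair of elements. The first is the global \emph{Dirac element} $[d_X]\in KK_G(Cl_\tau(X),\bb{C})$, represented by the Dirac operator on $L^2(X,\Cl_+(TX))$ viewed as a $Cl_\tau(X)$-linear operator; this is a global geometric version of Definition 2.9(2). The second is the global \emph{Bott element} $[b_X]\in \ca{R}KK_G(X;\bb{C},Cl_\tau(X))$, obtained by choosing a $G$-invariant tubular neighborhood $U$ of the diagonal $\Delta\subset X\times X$ of uniform positive width (available by cocompactness plus completeness), identifying $U$ with an open neighborhood of the zero section of $TX$ via the exponential map, and gluing the fiberwise local Bott elements of Definition 2.9(1) parameterized by the first factor of $X\times X$. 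Poincar\'e duality then reduces to verifying the two compositions $[b_X]\grotimes_{Cl_\tau(X)}[d_X]=\mathbf{1}$ (in the appropriate $\ca{R}KK$-group) and its partner, both of which reduce fiberwise to the pointwise identity of Lemma 2.10 together with the rotation trick recalled in its proof. The correspondence $[D]\leftrightarrow[\sigma_D^{Cl}]$ is then the Kasparov product identity $[\sigma_D^{Cl}]\grotimes_{Cl_\tau(X)}[d_X]=[D]$, which one checks using the $C_0(X)$-bilinear pairing of Lemma 2.2 to identify the underlying Hilbert module as $L^2(X,W)$ and showing that the product operator agrees with $D$ up to a bounded perturbation.

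The principal obstacle is the construction and Kasparov-product analysis of the global Bott element $[b_X]$ in the non-compact, proper $G$-equivariant setting. Two points must be controlled: a uniform positive lower bound on the injectivity radius along a fundamental domain (guaranteed by cocompactness and isometry), and the verification of the connection and positivity conditions of Proposition 2.4 in the parameterized setting so that the rotation-trick homotopy of Lemma 2.10 globalizes $X\rtimes G$-equivariantly. Once these are in place, part (3) follows by composition; the topological assembly map is then $\nu_G:=\mu_G\circ \PD^{-1}$, explicitly $\nu_G(y)=[p_\chi]\grotimes j_G\bigl(y\grotimes_{Cl_\tau(X)}[d_X]\bigr)$.
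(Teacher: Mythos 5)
Your plan is essentially the standard Kasparov argument (assembly via a cutoff projection composed with descent, Poincar\'e duality via a Bott--Dirac dual pair plus the rotation trick), which is exactly the route the paper follows: the cutoff class is the paper's $[c_X]$, the descent step matches the paper's $j_G$, the Dirac element is the paper's $[d_X]$, the ``global Bott element'' obtained by gluing fiberwise Clifford multiplications over a uniform tubular neighborhood of the diagonal is precisely the paper's local Bott element $[\Theta_{X,1}]$ (or $[\Theta_{X,2}]$ depending on the choice of $C_0(X)$-structure), and the mutual-inverse verification using the rotation trick is Lemma~\ref{Lemma family version of the Bott periodicity}. One technical imprecision worth flagging: you place the Bott element in $\ca{R}KK_G(X;\bb{C},Cl_\tau(X))$, but this group is not defined in the convention used here, since $\bb{C}$ carries no $C_0(X)$-algebra structure when $X$ is non-compact; the parameterizing factor must appear explicitly, so the Bott family lives in $\ca{R}KK_G(X;C_0(X),\uwave{C_0(X)}\grotimes Cl_\tau(X))$ as in the paper's $[\Theta_{X,1}]$, and the Poincar\'e duality homomorphism is then realized by pairing against $\sigma_{X,Cl_\tau(X)}(x)$ rather than a naive slant product.
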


Let us begin with Poincar\'e duality.
The geometrical setting defines the following $KK$-elements. See also Definition \ref{dfn Bott period by Dirac and dual Dirac} $(2)$.
\begin{dfn}\label{dfn section 3 Dirac and local dual Dirac}
$(1)$ The Dirac element of $X$ is defined by 
$$[d_X]:=[(L^2(X,\Cl_+(TX)),\pi,D)]\in KK_G(Cl_\tau(X),\bb{C}),$$
where $D=\sum_i \widehat{e_i}\nabla_{e_i}^{\rm LC}$ for an orthonomal base $\{e_i\}$ of each tangent space and the Levi-Civita connection $\nabla^{\rm LC}$, and $\pi$ is given by the Clifford multiplication by the left. See \cite[Definition 2.2]{Kas15}.

$(2)$ There is a positive real $\vep$ such that the injectivity radius at any $x\in X$ is greater than $2\vep$, because the group action is cocompact. 
Let $U_x$ be the $\vep$-ball centered at $x$ in $X$ and let $\Theta_x$ be the vector field on $U_x$ defined by
$$\Theta_x(y):=-\log_y(x)=
\bra{d\exp_x}_{\log_x(y)}{\rm Eul}_x(\log_x(y))=\text{``}\overrightarrow{xy}\text{''},$$
where $\log_x:U_x\to T_xX$ is the local inverse of $\exp_x:T_xX\to X$, and ${\rm Eul}_x$ is the Euler vector field on $T_xX$ centered at the origin. The {\bf local Bott element} $[\Theta_{X,1}]$ is defined by the element of $\ca{R}KK_G(X;C_0(X),\uwave{C_0(X)}\grotimes Cl_\tau(X) )$ represented by the family of Kasparov modules
$$\bra{\bbra{Cl_\tau(U_x)}_{x\in X},\{1_x\}_{x\in X},\{\vep^{-1}\Theta_x\}_{x\in X}},$$
where $1_x$ denotes the homomorphism $C_0(X)_x\cong \bb{C}\to \bb{L}_{Cl_\tau(X)}(Cl_\tau(U_x))$ given by $z\mapsto z\id$, and $\Theta_x$ denotes left multiplication by $\Theta_x$, namely $[\Theta_xf](y):=\Theta_x(y)f(y).$
This family is organized into a single Kasparov module $(C_0(U,\Cl_+(T^{\rm fiber} U),\pi,\Theta)$, where $U:=\bbra{(x,y)\ \middle|\  x\in X, y\in U_x}$ is a fiber bundle over $X$ equipped with the diagonal $G$-action $g\cdot (x,y)=(g\cdot x,g\cdot y)$, $T^{\rm fiber} U= \coprod_{x\in X}\coprod_{u\in U_x} T_uU_x$, and $\pi$ is left multiplication. When we define a $C_0(X)$-algebra structure on $C_0(X)\grotimes Cl_\tau(X)$ by the product over the second tensor multiple, we obtain another {\bf local Bott element} $[\Theta_{X,2}]\in \ca{R}KK_G(X;C_0(X),C_0(X)\grotimes \uwave{Cl_\tau(X)} )$. 
See \cite[Definition 2.3]{Kas15}.

\end{dfn}
\begin{rmk}
Local Bott elements come from other $\ca{R}KK$-elements of the ``narrower'' $C^*$-algebra:
$$[\Theta_{X,1}]\in \ca{R}KK_G\bra{X;C_0(X),C_0(U)\cdot \bra{\uwave{C_0(X)}\grotimes Cl_\tau(X) }},$$
$$[\Theta_{X,2}]\in \ca{R}KK_G\bra{X;C_0(X),C_0(U)\cdot \bra{C_0(X)\grotimes \uwave{Cl_\tau(X)} }}.$$
Strictly speaking, the local Bott elements given in the Definition should be denoted by $[\Theta_{X,1}]\grotimes[\iota_{U,X\times X}]$, and similarly for $[\Theta_{X,2}]$, where the canonical inclusion $C_0(U)\cdot \bra{C_0(X)\grotimes \uwave{Cl_\tau(X)} }\hookrightarrow C_0(X)\grotimes \uwave{Cl_\tau(X)}$  is denoted by $\iota_{U,X\times X}$.
In the present paper, by an abuse of notation, we use the same symbols to represent them unless we need to emphasize the difference, namely ``$[\Theta_{X,1}]\grotimes[\iota_{U,X\times X}]=[\Theta_{X,1}]$'' and ``$[\Theta_{X,2}]\grotimes[\iota_{U,X\times X}]=[\Theta_{X,2}]$''.
\end{rmk}

\begin{fact}[Theorem 4.1 and Theorem 4.6 of \cite{Kas15}]\label{finite-dimensional index theorem}
We define a homomorphism $\PD:KK_G(C_0(X),\bb{C})\to \ca{R}KK_G(X;C_0(X),Cl_\tau(X))$ by
$$\PD(x):=[\Theta_{X,2}]\grotimes_{X,C_0(X)\grotimes \uwave{Cl_\tau(X)}}\bbra{\sigma_{X,Cl_\tau(X)}\bra{x}},$$
and we call it the {\bf Poincar\'e duality homomorphism}.

$(1)$ $\PD$ is isomorphic and its inverse is given by the following composition:
$$\ca{R}KK_G(X;C_0(X),Cl_\tau(X))\xrightarrow{\fgt} KK_G(C_0(X),Cl_\tau(X))\xrightarrow{-\grotimes[d_X]} KK_G(C_0(X),\bb{C}).$$

$(2)$ $\PD([D])$ coincides with $[\sigma_D^{Cl}]$.
\end{fact}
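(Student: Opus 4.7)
The plan is to establish (1) first by computing both compositions of $\PD$ with the alleged inverse $\PD^{-1}: y \mapsto \fgt(y) \grotimes_{Cl_\tau(X)} [d_X]$, and then to deduce (2) by verifying $\PD^{-1}([\sigma_D^{Cl}]) = [D]$, using the Clifford-module structure of $W$.

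For the easier composition $\PD^{-1} \circ \PD = \id$, unwinding the definitions, using associativity of Kasparov products, the compatibility $\fgt \circ \sigma_{X,Cl_\tau(X)} = \sigma^1_{Cl_\tau(X)}$, and the external-product decomposition $x \grotimes_{\bb{C}} [d_X] = \sigma_{C_0(X)}([d_X]) \grotimes_{C_0(X)} x$, one rewrites
$$\PD^{-1}(\PD(x)) = \bigl[\fgt([\Theta_{X,2}]) \grotimes_{C_0(X) \grotimes Cl_\tau(X)} \sigma_{C_0(X)}([d_X])\bigr] \grotimes_{C_0(X)} x,$$
so it suffices to establish the global Bott-periodicity identity
$$\fgt([\Theta_{X,2}]) \grotimes_{C_0(X) \grotimes Cl_\tau(X)} \sigma_{C_0(X)}([d_X]) = 1_{C_0(X)} \in KK_G(C_0(X), C_0(X)).$$
This I would verify via Proposition \ref{Kucs criterion}: the product module is a completion of sections of $\Cl_+(T^{\rm fiber} U)$ over $U = \{(x,y): y \in U_x\}$ pushed down to $X$, and the product operator is $\vep^{-1}\Theta + D^{\rm fiber}$, where $D^{\rm fiber}$ is the fiberwise Dirac operator on each $U_x$. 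Fiberwise this is exactly the harmonic-oscillator-type Kasparov product of Lemma \ref{lem proof of bott period classical}, whose kernel is one-dimensional and spanned by a Gaussian centered at $x$, yielding the class of the trivial rank-one bundle. The opposite composition $\PD \circ \PD^{-1} = \id$ on $\ca{R}KK_G(X;C_0(X), Cl_\tau(X))$ is handled by a parallel (dual) global Bott computation, where the rotation trick of Lemma \ref{lem proof of bott period classical} is the crucial input.

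For part (2), granted (1), it is equivalent to check that $\PD^{-1}([\sigma_D^{Cl}]) = \fgt([\sigma_D^{Cl}]) \grotimes_{Cl_\tau(X)} [d_X]$ equals $[D]$. Since $W$ is a Clifford bundle, the $\Cl_-(TX)$-action together with Lemma \ref{Lemma S is a Hilbert Cltau module} gives a canonical isometric isomorphism $C_0(X, W) \grotimes_{Cl_\tau(X)} L^2(X, \Cl_+(TX)) \cong L^2(X, W)$. The representative of $\fgt([\sigma_D^{Cl}])$ has zero operator, so the connection condition of Proposition \ref{Kucs criterion} is automatic and the positivity condition is trivial. Hence the product operator may be taken to be $1 \grotimes D$, which under the above isomorphism is precisely the Dirac operator $D$ on $W$, giving the class $[D]$.

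The main obstacle will be the verification of the global Bott-periodicity identity in (1). While the fiberwise spectral picture is clean, assembling it into a global statement requires carefully handling the cutoff at the boundary of each $\vep$-ball $U_x$ (so that the ``narrower'' representative mentioned after Definition \ref{dfn section 3 Dirac and local dual Dirac} extends to a well-defined $X \rtimes G$-equivariant module over all of $X$), as well as checking the hypotheses of Kucera's criterion for the unbounded sum $\vep^{-1}\Theta + D^{\rm fiber}$, in particular the positivity condition, which follows from the fact that $\Theta$ is essentially a Clifford-multiplication by a position vector field. The opposite composition additionally demands a $G$-equivariant rotation trick, which is available because the $G$-action is isometric and proper and thus commutes with the fiberwise linear structure on $U$.
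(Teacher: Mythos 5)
Your plan is essentially the standard Dirac/dual-Dirac argument of Kasparov, and it is the right one; note, though, that the paper itself does not prove this statement (it records it as a \textbf{Fact} citing Theorems 4.1 and 4.6 of \cite{Kas15}), so there is no internal proof to compare against. What the paper \emph{does} prove is exactly the ingredient you single out as the ``main obstacle'': the family Bott-periodicity identity, including the $\ca{R}KK$-level reverse direction obtained by the rotation trick, appears as Lemma \ref{Lemma family version of the Bott periodicity} (with the forgetful image giving the $KK$-level identity your easier composition needs). Your reduction $\PD^{-1}(\PD(x)) = \bigl[\fgt([\Theta_{X,2}])\grotimes\sigma_{C_0(X)}([d_X])\bigr]\grotimes_{C_0(X)} x$ via associativity, commutativity of the external product, and $\fgt\circ\sigma_{X,-}=\sigma_{-}$ is correct, and once $\fgt([\Theta_{X,2}])=\fgt([\Theta_{X,1}])$ is observed, the needed identity is precisely $\fgt$ applied to the first equation of Lemma \ref{Lemma family version of the Bott periodicity}.

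One small imprecision in part (2): the Kasparov product of $(C_0(X,W),\pi,0)$ with $[d_X]=(L^2(X,\Cl_+(TX)),\pi',D_{\Cl})$ does give $L^2(X,W)$ as the module, but the product operator is not literally ``$1\grotimes D$''; it is the Dirac operator on $W$ built from the Clifford connection induced via the Leibniz rule (which is what makes the Kucerovsky connection condition hold, since $[D^W,T_w]-T_w D_{\Cl}$ is zeroth order). That Dirac-type operator agrees with the given $D$ only up to a bounded self-adjoint endomorphism-valued perturbation, so one needs the standard remark that such perturbations do not change the class $[D]$. With that added, the argument is complete; the positivity condition is trivial because the first operator is $0$, exactly as you say. (Also a typo: the criterion is Kucerovsky's, not ``Kucera's''.)
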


Let us explain the assembly maps.

\begin{dfn}[Theorem 3.14 of \cite{Kas88}]
$(1)$ Let $A$ and $B$ be $X\rtimes G$-$C^*$-algebras, and let $(E,\pi,F)$ be an $X\rtimes G$-equivariant Kasparov $(A,B)$-module. $C_c(G,E)$ admits a pre-Hilbert $C_c(G,B)$-module structure and a $*$-homomorphism $\pi\rtimes \lt:C_c(G,A)\to \bb{L}_{C_c(G,B)}(C_c(G,E))$ given by the following operations: For $e,e_1,e_2\in C_c(G,E)$, $a\in A\rtimes G$, $b\in B\rtimes G$ and $g\in G$,
$$\inpr{e_1}{e_2}{C_c(G,B)}(g):=\int_Gg'^{-1}.\inpr{e_1(g')}{e_2(g'g)}{E}dg',$$
$$[e\cdot b](g):=\int_G e(g')ga'.[b(g'^{-1}g)]dg',$$
$$\pi\rtimes \lt(a)(e)(g):=\int_G \pi(a(g'))g'.[e(g'^{-1}g)]dg'.$$
By the completion with respect to the above inner product, we obtain a Hilbert $B\rtimes G$-module $E\rtimes G$ and a $*$-homomorphism $\pi\rtimes \lt:A\rtimes G\to \bb{L}_{B\rtimes G}(E\rtimes G)$.
These operations are compatible with the $C(X/G)$-algebra structures on $A\rtimes G$ and $B\rtimes G$.
We define an operator $\widetilde{F}$ on $E\rtimes G$ by $[\widetilde{F}e](g):=F[e(g)]$. Then, the triple $(E\rtimes G,\pi\rtimes \lt,\widetilde{T})$ is an $(X/G)\rtimes\{e\}$-equivariant Kasparov $(A\rtimes G,B\rtimes G)$-module.
The correspondence $(E,\pi,F)\mapsto (E\rtimes G,\pi\rtimes \lt,\widetilde{F})$ induces a homomorphism
$$j_G:\ca{R}KK_G(X;A,B)\to \ca{R}KK(X/G,A\rtimes G,B\rtimes G)$$
and it is called the {\bf descent homomorphism}.

$(2)$ By the same construction, we can define a homomorphism
$$j_G:KK_G(A,B)\to KK(A\rtimes G,B\rtimes G)$$
for arbitrary $G$-$C^*$-algebras $A,B$. It is called the {\bf descent homomorphism}.
\end{dfn}

The following is obvious by definition.
\begin{lem}\label{Lemma j commutes with fgt}
The following diagram commutes:
$$\begin{CD}
\ca{R}KK_G(X;A,B) @>j_G>> \ca{R}KK(X/G,A\rtimes G,B\rtimes G)\\
@V\fgt VV @VV\fgt V \\
KK_G(A,B) @>j_G>> KK(A\rtimes G,B\rtimes G).
\end{CD}$$
\end{lem}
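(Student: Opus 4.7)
The plan is to show that both compositions in the diagram produce, on the nose, the same triple of data $(E\rtimes G,\pi\rtimes\lt,\widetilde{F})$, with the only difference being whether the resulting Kasparov module is remembered to carry a $C(X/G)$-module structure. Since the two operations involved act on independent pieces of structure---$\fgt$ only discards the $C_0$-compatibility condition, while $j_G$ only reorganises the group-action data into a crossed product---they manifestly commute. I would present this as a direct unpacking of definitions rather than a substantive computation.

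First I would fix a representative $(E,\pi,F)$ of a class in $\ca{R}KK_G(X;A,B)$, so that at the level of underlying objects $E$ is a $G$-equivariant Hilbert $B$-module, $\pi\colon A\to\bb{L}_B(E)$ is an even $G$-equivariant $*$-homomorphism, $F$ satisfies the Kasparov conditions, and in addition the $C_0(X)$-compatibility $\pi(f\cdot a)(e\cdot b)=\pi(a)(e\cdot(f\cdot b))$ holds. Applying $\fgt$ first simply discards this extra compatibility, and then $j_G$ produces $(E\rtimes G,\pi\rtimes\lt,\widetilde{F})$ viewed as a plain Kasparov $(A\rtimes G,B\rtimes G)$-module.

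Second, I would apply $j_G$ first. The formulas
\[
\inpr{e_1}{e_2}{C_c(G,B)}(g),\quad [e\cdot b](g),\quad \pi\rtimes\lt(a)(e)(g),\quad [\widetilde{F}e](g):=F[e(g)]
\]
that define the descent never reference the $C_0(X)$-structure on $A$ or $B$, so the underlying module, representation, and operator are identical to those obtained above. The $C_0(X)$-structures on $A$ and $B$ do descend to compatible $C(X/G)$-structures on $A\rtimes G$ and $B\rtimes G$, giving the triple the structure of an $(X/G)\rtimes\{e\}$-equivariant Kasparov module; applying $\fgt$ afterwards simply forgets this, leaving exactly the same triple as in the previous paragraph.

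Since both paths around the square produce literally the same representative, the induced classes in $KK(A\rtimes G,B\rtimes G)$ coincide, and the diagram commutes. There is no serious obstacle: the only point worth a sentence of justification is that the formulas defining $j_G$ use only the $G$-equivariant Hilbert module data, so $j_G$ can indifferently be applied before or after $\fgt$.
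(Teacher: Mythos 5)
Your proposal is correct and matches the paper's reasoning: the paper simply states that the lemma is ``obvious by definition,'' and your write-up spells out exactly why---both compositions yield the same triple $(E\rtimes G,\pi\rtimes\lt,\widetilde{F})$, with $\fgt$ merely discarding the $C_0(X)$- (resp. $C(X/G)$-) structure.
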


Since the $G$-action on $X$ is proper and cocompact, there is a compactly supported continuous function $\fra{c}:X\to \bb{R}_{\geq 0}$ satisfying
$\int_G\fra{c}(g^{-1}\cdot x)dg=1$
for every $x\in X$. Such a function is called a {\bf cut-off function}. It is easy to see that all such functions are homotopic by the linear homotopy.

\begin{dfn}
$(1)$ A cut-off function $\fra{c}$ defines a projection $c$ of $C_0(X)\rtimes G$ by $\{c(g)\}(x):=\mu(g)^{-1/2}\sqrt{\fra{c}(x)\fra{c}(g^{-1}\cdot x)}$. The corresponding $KK$-element is denoted by $[c_X]\in KK(\bb{C},C_0(X)\rtimes G)$. 

$(2)$ The {\bf analytic assembly map} 
$\mu_G:KK_G(C_0(X),\bb{C})\to KK(\bb{C},\bb{C}\rtimes G )$
is defined by 
$$\mu_G(x):=[c_X]\grotimes_{C_0(X)\rtimes G}\bbra{j_G(x)}.$$

$(3)$ The {\bf topological assembly map} 
$\nu_G:\ca{R}KK_G(X;C_0(X),Cl_\tau(X))\to KK(\bb{C},\bb{C}\rtimes G )$
is defined by 
$$\nu_G(y):=[c_X]\grotimes_{C_0(X)\rtimes G}\bbra{\fgt\circ j_G(y)}\grotimes_{Cl_\tau(X)\rtimes G}\{j_G([d_X])\}.$$
\end{dfn}

In Theorem \ref{theorem index theorem in section 3} $(3)$, we stated that there is an explicit formula to compute the analytic index of $D$ from its symbol. It is the topological assembly map. The proof is quite formal.

\begin{pro}[{\cite[Theorem 5.6]{Kas15}}]\label{pro index thm big diagram}
$\mu_G(x)=\nu_G\circ \PD(x)$.
\end{pro}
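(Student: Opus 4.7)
The plan is to chase the definitions and use the inverse formula for $\PD$ from Fact \ref{finite-dimensional index theorem} (1) together with the compatibility of the descent homomorphism with Kasparov products and with $\fgt$ (Lemma \ref{Lemma j commutes with fgt}). The argument is purely formal once these ingredients are in hand.

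First, I would expand $\nu_G\circ \PD(x)$ directly from the definitions:
\begin{equation*}
\nu_G(\PD(x))=[c_X]\grotimes_{C_0(X)\rtimes G}\bbra{\fgt\circ j_G(\PD(x))}\grotimes_{Cl_\tau(X)\rtimes G}\bbra{j_G([d_X])}.
\end{equation*}
By Lemma \ref{Lemma j commutes with fgt}, $\fgt\circ j_G=j_G\circ \fgt$, so the middle factor becomes $j_G(\fgt(\PD(x)))$. Since the descent homomorphism is a ring homomorphism with respect to Kasparov products over $G$-$C^*$-algebras (see \cite{Kas88}), I can combine the last two factors as
\begin{equation*}
j_G(\fgt(\PD(x)))\grotimes_{Cl_\tau(X)\rtimes G}j_G([d_X])=j_G\bra{\fgt(\PD(x))\grotimes_{Cl_\tau(X)}[d_X]}.
\end{equation*}

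Now I invoke Fact \ref{finite-dimensional index theorem} (1), which states that the inverse of $\PD$ is given by $\fgt$ followed by Kasparov product with $[d_X]$. Therefore
\begin{equation*}
\fgt(\PD(x))\grotimes_{Cl_\tau(X)}[d_X]=x\in KK_G(C_0(X),\bb{C}),
\end{equation*}
and substituting back gives $\nu_G(\PD(x))=[c_X]\grotimes_{C_0(X)\rtimes G}j_G(x)=\mu_G(x)$, which is the desired equality.

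The only step that requires care is the functoriality of $j_G$ with respect to Kasparov products in the $\ca{R}KK$/$KK$ setting: more precisely, that $j_G\bra{y\grotimes_{Cl_\tau(X)}[d_X]}=j_G(y)\grotimes_{Cl_\tau(X)\rtimes G}j_G([d_X])$ for $y\in KK_G(C_0(X),Cl_\tau(X))$. This is the standard compatibility of descent with Kasparov products established in \cite{Kas88}, so no genuine obstacle arises, and the proposition reduces to a short diagram chase.
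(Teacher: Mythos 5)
Your proof is correct and uses exactly the same ingredients as the paper's argument: the commutation of $\fgt$ with $j_G$ (Lemma \ref{Lemma j commutes with fgt}), the multiplicativity of the descent homomorphism with respect to Kasparov products from \cite{Kas88}, associativity, and the inverse formula for $\PD$ from Fact \ref{finite-dimensional index theorem}. The only difference is presentational (the paper organizes the same steps as a commutative diagram rather than an in-line computation), so this is essentially the paper's own proof.
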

\begin{proof}
we have the following commutative diagram:
{\small
$$\begin{CD}
KK_G(C_0(X),\bb{C}) @<-\grotimes[d_X]<< KK_G(C_0(X),Cl_\tau(X)) @<\fgt<< \ca{R}KK_G(X;C_0(X),Cl_\tau(X)) \\
@Vj_GVV @VVj_GV @VVj_GV \\
KK(C_0(X)\rtimes G,\bb{C}\rtimes G) @<-\grotimes j_G[d_X]<< KK(C_0(X)\rtimes G,Cl_\tau(X)\rtimes G) @<\fgt << \ca{R}KK(X/G;C_0(X)\rtimes G,Cl_\tau(X)\rtimes G) \\
@V[c_X]\grotimes-VV @V[c_X]\grotimes-VV \\
KK(\bb{C},\bb{C}\rtimes G) @<-\grotimes j_G[d_X]<< KK(\bb{C},Cl_\tau(X)\rtimes G).
\end{CD}$$}
The top right square commutes by Lemma \ref{Lemma j commutes with fgt}. The top left square commutes thanks to \cite[Theorem 3.11]{Kas88}. The bottom left square commutes thanks to the associativity of the Kasparov product. Thus, $\nu_G(y)=\mu_G(\fgt(y)\grotimes[d_X])$ for every $y\in \ca{R}KK_G(X;C_0(X),Cl_\tau(X))$. Thanks to Fact \ref{finite-dimensional index theorem} $(2)$, we obtain the result.
\end{proof}

Related to Fact \ref{finite-dimensional index theorem} $(1)$, we have the following result. We will use a corollary of it later.
For the details on differential geometrical facts used in the following, see \cite{KN1,KN2,Sak} for example.

\begin{lem}[{See \cite[Theorem 2.4]{Kas15}}]\label{Lemma family version of the Bott periodicity}
$[\Theta_{X,1}]\in \ca{R}KK_G\bra{X;C_0(X),C_0(U)\cdot \bra{\uwave{C_0(X)}\grotimes Cl_\tau(X) }}$ and $[d_X]$ are mutually inverse in the following sense: 
$$[\Theta_{X,1}]\grotimes_{X,C_0(U)[\uwave{C_0(X)}\grotimes Cl_\tau(X)]}\bbra{[\iota_{U,X\times X}]\grotimes \sigma_{X,C_0(X)}[d_X]}={\bf 1}_{X,C_0(X)} $$
$$\bbra{[\iota_{U,X\times X}]\grotimes \sigma_{X,C_0(X)}[d_X]}\grotimes_{X,C_0(X)}[\Theta_{X,1}]={\bf 1}_{X,C_0(U)\cdot(Cl_\tau(X)\grotimes \uwave{C_0(X)})}.$$
\end{lem}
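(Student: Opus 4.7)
The plan is to carry out a family version of the proof of Lemma \ref{lem proof of bott period classical}, where the ``parameter'' is the base point $x \in X$ of the tautological bundle $U \to X$ of normal $\vep$-balls. Both identities should reduce, fiberwise over $x$ via the exponential chart $\exp_x \colon B_\vep(0) \subset T_xX \to U_x$, to the pair of Kasparov products $[b_V] \grotimes [d_V] = {\bf 1}_{\bb{C}}$ and $[d_V] \grotimes [b_V] = {\bf 1}_{Cl_\tau(V)}$ for $V = T_xX$, and the task is to globalize this fiberwise picture.

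For the first identity I would produce an unbounded representative of the Kasparov product using Kucs' criterion (Proposition \ref{Kucs criterion}). The tensor product module is fiberwise $L^2(U_x,\Cl_+(TU_x))$, and the combined operator is $D + \vep^{-1}\Theta$, where $D$ is the fiberwise Dirac operator along the fibers of $U \to X$ and $\Theta$ is Clifford multiplication by the normalized radial vector field $\Theta_x$. The connection condition follows because $\Theta$ is a bounded Clifford multiplier whose commutator with $D$ is bounded; the positivity condition is automatic since $[D,\vep^{-1}\Theta]$ is (up to bounded curvature corrections) a positive multiple of the identity in each Clifford fiber. Via $\exp_x$ and the uniform curvature bound supplied by cocompactness, this is a smooth bounded perturbation of the Euclidean harmonic oscillator, and the same spectral analysis used in Lemma \ref{lem proof of bott period classical} shows that the kernel is fiberwise one-dimensional, spanned by the Gaussian $e^{-\|v\|^2/(2\vep)} \cdot 1_{\Cl_+}$, while the non-zero spectrum is bounded below uniformly in $x$. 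These kernels assemble into a $G$-equivariant rank-one sub-bundle trivialized by the canonical Gaussian-times-identity section, which represents ${\bf 1}_{X,C_0(X)} \in \ca{R}KK_G(X;C_0(X),C_0(X))$.

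For the second identity I would adapt Kasparov's rotation trick, exactly as in the second half of the proof of Lemma \ref{lem proof of bott period classical}. The flip automorphism on $Cl_\tau(X) \grotimes Cl_\tau(X)$ is homotopic, via the fiberwise $SO(2)$-rotation in each $T_xX \grotimes T_xX$, to the automorphism induced by the negation $j \colon v \mapsto -v$ on the first factor. Because this rotation is continuous in $x$, the homotopy makes sense in $\ca{R}KK_G(X;-,-)$; combining it with the first identity, the associativity of the Kasparov product, and the fact that $[j^*] \grotimes [j^*] = {\bf 1}$ (which depends on the even-dimensionality of $X$, mirroring Proposition \ref{new Bott is classical Bott}), we obtain the second identity.

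The principal obstacle is the analytic bookkeeping needed to assemble the fiberwise spectral picture into a genuine $\ca{R}KK$-element over $X$. Since the fibration $U \to X$ of normal $\vep$-balls is non-trivial and each fiber carries a non-Euclidean Riemannian metric, the operators $D_x + \vep^{-1}\Theta_x$ do not trivially commute with the $x$-variable, and one must verify uniform continuity of the resolvents $(1 + (D+\vep^{-1}\Theta)^2)^{-1}$ and of the spectral projections onto the kernel. This relies on cocompactness to obtain uniform bounds on curvature, injectivity radius, and derivatives of the exponential map; with these bounds in hand, all the estimates from Lemma \ref{lem proof of bott period classical} transfer to the family setting, and the rotation trick goes through as above.
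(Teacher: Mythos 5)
Your overall strategy — quote a Bott–periodicity-for-families result for the first identity and then derive the second by a rotation trick — matches the paper, which cites \cite[Theorem 4.8]{Kas88} for the first identity rather than re-proving it. Your fiberwise sketch for the first identity is plausible and essentially reconstructs Kasparov's argument; the remarks on uniform continuity of the resolvents are exactly where the work lies, and they are not controversial.

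The genuine gap is in the rotation step. The rotation $(u,v)\mapsto(u\cos t-v\sin t,\,u\sin t+v\cos t)$ only makes sense once you linearize near the diagonal by $\log_x$, and then it does \emph{not} preserve the neighborhood $U\times_X U=\{(u,v,x):d(u,x),d(v,x)<\vep\}$: a pair with both coordinates of length $\vep-\epsilon$ gets rotated to a pair with one coordinate of length up to $\sqrt{2}(\vep-\epsilon)$. So there is no homotopy from $\flip$ to $j$ \emph{on} $C_0(U\times_X U)\cdot[Cl_\tau(X)\grotimes Cl_\tau(X)\grotimes\uwave{C_0(X)}]$; the rotation must be carried out on a larger region. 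The paper's proof handles this precisely by introducing the nested neighborhoods $\bb{U}_\vep\subseteq U\times_X U\subseteq\bb{U}_{\sqrt{2}\vep}$, running the $SO(2)$-homotopy on $\bb{U}_{\sqrt{2}\vep}$ where it is genuinely defined, and then showing that the restriction map $i_2^*$ induced by $U\times_X U\hookrightarrow\bb{U}_{\sqrt{2}\vep}$ is an $\ca{R}KK$-isomorphism by exhibiting an explicit homotopy inverse $k=i_1\circ r$ (the fiberwise contraction by $2^{-1/2}$). Your proposal asserts the homotopy ``makes sense in $\ca{R}KK_G(X;-,-)$'' without addressing this domain issue, and without the transfer argument between neighborhoods the rotation trick does not close.

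A secondary inaccuracy: you claim $[j^*]\grotimes[j^*]={\bf 1}$ ``depends on the even-dimensionality of $X$.'' It does not — $j\circ j=\id$ at the level of maps, so $j^*\circ j^*=\id$ as a $*$-automorphism, and the $\ca{R}KK$-class is trivially idempotent-with-inverse regardless of parity; indeed the statement of the Lemma imposes no dimension or $Spin^c$ hypothesis, and none is used in the paper's proof. (The even-dimensionality in Proposition \ref{new Bott is classical Bott} enters for a different reason: identifying $\ind(D-C)$ as $(-1)^{\dim V}$.) Note also that the paper's concluding algebraic step is slightly cleaner than what you propose — once one has $ab={\bf 1}$ and $ba=[j^*]$ with $[j^*]$ invertible, associativity forces $[j^*]^2=[j^*]$ and hence $[j^*]={\bf 1}$, so one never needs the independent input $[j^*]^2={\bf 1}$.
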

\begin{proof}
The former is proved in \cite[Theorem 4.8]{Kas88}. We deduce the latter one from the former one by the rotation trick. 

Let $U\times_X U:=\bbra{(u,v,x)\in X^3\mid u,v\in U_x}$. On the $C_0(X)$-algebra 
$$C_0(U\times_X U)\cdot\bbra{Cl_\tau(X)\grotimes Cl_\tau(X)\grotimes \uwave{C_0(X)}},$$
we can define two automorphisms: $\flip^*$ is defined by  the pullback of the diffeomorphism $\flip:(u,v,x)\mapsto (v,u,x)$, and $j^*$ is defined by that of $j:(u,v,x)\mapsto (-u,v,x)$, where $-u:=\exp_x(-\log_x(u))$.
For the moment, we suppose that $[\flip^*]=[j^*]$ in the $\ca{R}KK$-group, which will be proved in the following  paragraphs.
We use a similar argument of the proof of Lemma \ref{lem proof of bott period classical}.
\begin{align*}
&\bbra{[\iota_{U,X\times X}]\grotimes \sigma_{X,C_0(X)}[d_X]}\grotimes_{X,C_0(X)}[\Theta_{X,1}]\\
&= \bbra{{\bf 1}_{X,C_0(U)\cdot(Cl_\tau(X)\grotimes \uwave{C_0(X)})}\grotimes_{X,C_0(X)}[\Theta_{X,1}]}\grotimes_{X,C_0(U\times_X U)\cdot [Cl_\tau(X)\grotimes Cl_\tau(X)\grotimes \uwave{C_0(X)}]}\\
&\ \ \ \bbra{[\flip^*]\grotimes_{X,C_0(U\times_X U)\cdot [Cl_\tau(X)\grotimes Cl_\tau(X)\grotimes \uwave{C_0(X)}]}{\bf 1}_{X,C_0(U)\cdot(Cl_\tau(X)\grotimes \uwave{C_0(X)})}\grotimes_{X,C_0(X)}[\iota_{U,X\times X}]\grotimes \bbra{\sigma_{X,C_0(X)}[d_X]}}\\
&=\bbra{{\bf 1}_{X,C_0(U)\cdot(Cl_\tau(X)\grotimes \uwave{C_0(X)})}\grotimes_{X,C_0(X)}[\Theta_{X,1}]}\grotimes_{X,C_0(U\times_X U)\cdot [Cl_\tau(X)\grotimes Cl_\tau(X)\grotimes \uwave{C_0(X)}]}\\
&\ \ \ \bbra{[j^*]\grotimes_{X,C_0(U\times_X U)\cdot [Cl_\tau(X)\grotimes Cl_\tau(X)\grotimes \uwave{C_0(X)}]}[\iota_{U,X\times X}]\grotimes \bbra{\sigma_{X,C_0(X)}[d_X]}}\\
&=[j^*]\grotimes_{X,C_0(U\times_X U)\cdot [Cl_\tau(X)\grotimes Cl_\tau(X)\grotimes \uwave{C_0(X)}]}\bbra{[\Theta_{X,1}]\grotimes_{X,\uwave{C_0(X)}\grotimes Cl_\tau(X)}\bbra{[\iota_{U,X\times X}]\grotimes \sigma_{X,C_0(X)}[d_X]}}=[j^*].
\end{align*}
Thus, $\bbra{[\iota_{U,X\times X}]\grotimes \sigma_{X,C_0(X)}[d_X]}\grotimes_{X,C_0(X)}[\Theta_{X,1}]$ is an isomorphism in $\ca{R}KK$-theory. Since the left and right inverse are the same, we obtain the result.

Let us prove the equality $[\flip^*]=[j^*]$. We would like to use the ``rotation homotopy''.
Take other neighborhoods of the diagonal
$$\bb{U}_{\sqrt{2}\vep}:=\bbra{(u,v,x)\in X^3\mid \rho(u,x)^2+\rho(v,x)^2< 2\vep^2}\text{ and}$$
$$\bb{U}_{\vep}:=\bbra{(u,v,x)\in X^3\mid \rho(u,x)^2+\rho(v,x)^2< \vep^2}.$$
Then, we can define continuous maps $\flip_{\bb{U}_{\sqrt{2}\vep}}$, $j_{\bb{U}_{\sqrt{2}\vep}}$,  $\flip_{\bb{U}_{\vep}}$ and $j_{\bb{U}_{\vep}}$ in an obvious way.
Each space admits a fiber bundle structure over $X$ by the projection onto the third factor, and it is obvious that $\bb{U}_{\vep}\subseteq U\times_X U\subseteq \bb{U}_{\sqrt{2}\vep}$.
On $\bb{U}_{\sqrt{2}\vep}$, we can consider the rotation homotopy
$$\bb{U}_{\sqrt{2}\vep}\times[0,\pi/2]\ni ((u,v,x),t)\mapsto (u\cos t-v\sin t,u\sin t+v\cos t,x)\in \bb{U}_{\sqrt{2}\vep},$$
between $\flip_{\bb{U}_{\sqrt{2}\vep}}$ and $j_{\bb{U}_{\sqrt{2}\vep}}$,
where $u\cos t-v\sin t$ is defined by the exponential map on the fiber $\bb{U}_{\sqrt{2}\vep}$ at $x$: $\exp_{(x,x)}^{\bb{U}_{\sqrt{2}\vep}|_x}(\log_x(u)\cos t-\log_x(v)\sin t)$, and similarly for $u\sin t+v\cos t$. 
Thus, $[\flip_{\bb{U}_{\sqrt{2}\vep}}^*]=[j_{\bb{U}_{\sqrt{2}\vep}}^*]$ in the $\ca{R}KK$-group.

The new neighborhoods of the diagonal are related to $U\times_XU$ by $\bb{U}_{\vep}\xrightarrow{i_1}U\times_X U\xrightarrow{i_2}\bb{U}_{\sqrt{2}\vep}$. Thus, we have $G$-equivariant commutative diagrams
\begin{center}
$\begin{CD}
U\times_X U @>i_2>> \bb{U}_{\sqrt{2}\vep}\\
@V\flip VV @VV\flip_{\bb{U}_{\sqrt{2}\vep}} V \\
U\times_X U @>i_2>> \bb{U}_{\sqrt{2}\vep},\end{CD}$\ \ \ \ \  \ \ \ and\ \ \ \ \ \ \ \ 
$\begin{CD}U\times_X U @>i_2>> \bb{U}_{\sqrt{2}\vep}\\
@Vj VV @VVj_{\bb{U}_{\sqrt{2}\vep}} V \\
U\times_X U @>i_2>> \bb{U}_{\sqrt{2}\vep}.\end{CD}$
\end{center}
These induce a $G$-equivariant homotopy equivalence
$$i_2\circ \flip=\flip_{\bb{U}_{\sqrt{2}\vep}} \circ i_2\sim j_{\bb{U}_{\sqrt{2}\vep}}\circ i_2=i_2\circ j,$$
and hence $[\flip^*]\grotimes [i_2^*]=[j^*]\grotimes [i_2^*].$
Thus, it suffices to prove that $[i_2^*]$ is invertible. It suffices to find a homotopy inverse. Let $r:\bb{U}_{\sqrt{2}\vep}\to \bb{U}_{\vep}$ be the map $(u,v,x)\mapsto (2^{-1/2}u,2^{-1/2}v,x)$. It is equivariant and continuous because the exponential maps are equivariant with respect to the isometric group action. We define $k:=i_1\circ r:\bb{U}_{\sqrt{2}\vep}\to U\times_X U$. It is a homotopy inverse of $i_2$. In fact, both of $i_2\circ k$ and $k\circ i_2$ are given by the same formula $(u,v,x)\mapsto (2^{-1/2}u,2^{-1/2}v,x)$. These are homotopic to the identity by $H_t(u,v,x):=((1-t+2^{-1/2}t)u,(1-t+2^{-1/2}t)v,x)$.
\end{proof}

\begin{cor}\label{cor -theta is theta}
Let $[-\Theta_{X,1}]$ be the $\ca{R}KK$-element defined by
$$\bra{\bbra{Cl_\tau(U_x)}_{x\in X},\{1_x\}_{x\in X},\{-\vep^{-1}\Theta_x\}_{x\in X}}.$$
If $X$ is even-dimensional and orientable, $[-\Theta_{X,1}]=[\Theta_{X,1}]$.
\end{cor}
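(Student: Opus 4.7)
The plan is to produce an explicit $G$-equivariant unitary isomorphism between the Kasparov bimodules representing $[\Theta_{X,1}]$ and $[-\Theta_{X,1}]$, implemented by left Clifford multiplication by the volume form. The underlying idea is the same Clifford-algebraic rotation that appears in Lemma \ref{lem proof of bott period classical}, but packaged as an intertwiner rather than a homotopy.

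First I would record the purely algebraic input. For an even-dimensional oriented Euclidean space $V$ with $\dim V = 2n$, the volume element $\omega_V := e_1 e_2 \cdots e_{2n} \in \Cl_+(V)$ (for any oriented orthonormal basis) is a unitary element of even degree, and a short count of anticommutations gives
$$\omega_V\, v\, \omega_V^{-1} = -v \qquad (v \in V),$$
so conjugation by $\omega_V$ realizes the grading automorphism on the Clifford generators. In particular $L_{\omega_V}$ is even, commutes with right multiplication, and implements $v \mapsto -v$ on left multiplication operators.

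Second I would globalize: since $X$ is oriented and even-dimensional, the pointwise volume elements assemble into a well-defined unitary section $\omega \in \Gamma(X, \Cl_+(TX)) \subseteq M(Cl_\tau(X))$. Assuming (as in the applications of the paper, where $G$ will be connected) that $G$ preserves the chosen orientation, $\omega$ is $G$-invariant and isometry-equivariant. Left Clifford multiplication by $\omega$ then defines, fiberwise, a unitary $L_\omega: Cl_\tau(U_x) \to Cl_\tau(U_x)$ which (i) commutes with the right $Cl_\tau(X)$-action, (ii) commutes with the $C_0(X)$-algebra structure on the target (pointwise multiplication commutes with pointwise Clifford multiplication), (iii) trivially commutes with the left homomorphism $1_x: \bb{C} \to \bb{L}(Cl_\tau(U_x))$, and (iv) is $G$-equivariant by $G$-invariance of $\omega$.

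Third I would conclude by conjugation: by the first step,
$$L_\omega \circ (\vep^{-1}\Theta_x) \circ L_\omega^{-1} = \vep^{-1} L_{\omega \Theta_x \omega^{-1}} = -\vep^{-1} \Theta_x.$$
Thus $L_\omega$ assembles into a $G$-equivariant isomorphism of $\ca{R}KK$-representatives between $[\Theta_{X,1}]$ and $[-\Theta_{X,1}]$, giving the equality in $\ca{R}KK_G(X;C_0(X),C_0(U)\cdot(\uwave{C_0(X)}\grotimes Cl_\tau(X)))$. The main obstacle is the equivariance of $\omega$: if $G$ contains orientation-reversing isometries one has only $g \cdot \omega = \pm \omega$, and the intertwining $L_\omega$ is $G$-equivariant only up to the orientation character. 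In that case the cleanest fix is either to restrict to the orientation-preserving subgroup of index two and induce the conclusion back, or to replace the direct implementation with a rotation-homotopy argument (as in the proof of Lemma \ref{Lemma family version of the Bott periodicity}) rotating $\Theta_x$ to $-\Theta_x$ through $G$-equivariant sections of $TX$.
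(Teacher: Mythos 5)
Your proposal is correct and in fact streamlines the paper's own argument. Both proofs ultimately implement the sign flip by conjugating with the Clifford volume element, using the same algebraic fact ($\omega_V v \omega_V^{-1}=-v$ on an even-dimensional oriented $V$, $\omega_V$ unitary and of even degree). The difference is where the conjugation is applied. The paper first forms the Kasparov products $[\pm\Theta_{X,1}]\grotimes\bbra{[\iota_{U,X\times X}]\grotimes\sigma_{X,C_0(X)}[d_X]}$, invokes the invertibility from Lemma~\ref{Lemma family version of the Bott periodicity} to reduce to comparing these products, and only then conjugates by $\{\vol_x\}$ on $L^2(U_x,\Cl_+(TU_x))$ (exploiting $\vol\circ\widehat{e_i}=\widehat{e_i}\circ\vol$ and $\vol\circ e_i=-e_i\circ\vol$ in even dimensions). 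You instead conjugate by $L_\omega$ directly on the local Bott module $Cl_\tau(U_x)$, giving an explicit unitary isomorphism of the Kasparov cycles representing $[\Theta_{X,1}]$ and $[-\Theta_{X,1}]$. This bypasses Lemma~\ref{Lemma family version of the Bott periodicity} and the Dirac element entirely and is strictly simpler; the verification that $L_\omega$ is unitary, even, module-linear, commutes with $1_x$ and with the $C_0(X)$-structure, and sends $\vep^{-1}\Theta_x$ to $-\vep^{-1}\Theta_x$ is all elementary. The one caveat, which both proofs share, is that $\{\vol_x\}$ (your $\omega$) is $G$-invariant only when $G$ preserves the chosen orientation; you flag this explicitly, while the paper leaves it tacit. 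In the paper's intended application (Lemma~\ref{lemma reformulated Theta 2 is in fact Theta 1}, where the $G$-action lifts to a Spinor bundle) orientation is automatically preserved, so the tacit assumption is harmless, but for the corollary as stated with the bare hypotheses it is a real restriction and one of your suggested fixes would be needed.
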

\begin{proof}
By the above proposition, it suffices to prove that
$$[-\Theta_{X,1}]\grotimes_{X,\uwave{C_0(X)}\grotimes Cl_\tau(X)}\bbra{[\iota_{U,X\times X}]\grotimes \sigma_{X,C_0(X)}[d_X]}=[\Theta_{X,1}]\grotimes_{X,\uwave{C_0(X)}\grotimes Cl_\tau(X)}\bbra{[\iota_{U,X\times X}]\grotimes \sigma_{X,C_0(X)}[d_X]}.$$
The right hand side is given by the bounded transformation of
$$\bra{\bbra{L^2(U_x,\Cl_+(TU_x))}_{x\in X},\{1_x\}_{x\in X},\left\{D-
\frac{\vep^{-1}\Theta_x}{\sqrt{1-\vep^{-2}\Theta_x^2}}\right\}_{x\in X}},$$
where $D$ is the Dirac operator given by the same formula of Definition \ref{dfn section 3 Dirac and local dual Dirac}.

Let us compare it with $[\Theta_{X,1}]\grotimes_{X,\uwave{C_0(X)}\grotimes Cl_\tau(X)}\bbra{[\iota_{U,X\times X}]\grotimes \sigma_{X,C_0(X)}[d_X]}$.
Fix an orientation of $X$.
On the Clifford algebra of an oriented vector space, we can define the volume element by $e_1e_2\cdots e_{\dim(V)}$ for an oriented orthonormal base $\{e_1,e_2,\cdots, e_{\dim(V)}\}$. The same construction on each tangent space defines a global section on $\Cl_+(TU_x)$, and it is denoted by $\{\vol_x\}_{x\in X}$. A simple computation on the Clifford algebra gives the following formulas:
$$\vol\circ e_i=(-1)^{\dim(X)-1}e_i\vol\ \text{ and }\ \vol\circ \widehat{e_i}=(-1)^{\dim(X)}\widehat{e_i}\circ \vol.$$
Note that $\vol_x\in Cl_\tau(U_x)$ commutes with $\nabla^{\rm LC}_v$ for any tangent vector $v$. 
Therefore, since $\dim(X)$ is even,
$$\bra{D-
\frac{\vep^{-1}\Theta_x}{\sqrt{1-\vep^{-2}\Theta_x^2}}}\circ \vol_x=\vol_x\circ \bra{D+
\frac{\vep^{-1}\Theta_x}{\sqrt{1-\vep^{-2}\Theta_x^2}}}.$$
Thus, the family $\{\vol_x\}_{x\in X}$ gives an isomorphism 
between $[-\Theta_{X,1}]\grotimes_{X,\uwave{C_0(X)}\grotimes Cl_\tau(X)}\bbra{[\iota_{U,X\times X}]\grotimes \sigma_{X,C_0(X)}[d_X]}$ and $[\Theta_{X,1}]\grotimes_{X,\uwave{C_0(X)}\grotimes Cl_\tau(X)}\bbra{[\iota_{U,X\times X}]\grotimes \sigma_{X,C_0(X)}[d_X]}.$
\end{proof}

\subsection{A reformulation of the index theorem for even-dimensional $Spin^c$-manifolds}\label{section index theorem Spinc 2n}

There are no $C^*$-algebras which play roles of $C_0$-algebras for infinite-dimensional manifolds. However, there are constructions of $C^*$-algebras which play roles of ``the suspension of the Clifford algebra-valued function algebra of infinite-dimensional manifolds'' \cite{HKT,HK,Tro,DT,GWY,Yu}. 

Although $C_0$ and $Cl_\tau$ are different, they are $\ca{R}KK$-equivalent for even-dimensional $Spin^c$-manifolds. Thus, there is a possibility to generalize $KK$-theoretical results on an even-dimensional $Spin^c$-manifolds formulated in the language of $C_0$-algebra, to some infinite-dimensional manifolds by using the $C^*$-algebras of Hilbert manifolds. 
This is the fundamental idea of \cite{T4}. In this paper, the author rewrote the latter half of the inverse of the Poincar\'e duality homomorphism, and formulate a substitute for it for proper $LT$-spaces. Moreover, he computed it for a special case.

In this subsection, we follow the same idea. Basically, we need to replace a single $C^*$-algebra $C_0(X)$ with $Cl_\tau(X)$.
Moreover, since it is easy to introduce the concept of an infinite-dimensional version of the ``$C_0(X)$-algebra $C_0(X)$'', we will remove the ``$C_0(X)$-algebra $Cl_\tau(X)$'' from the theory.

Let us begin with the $\ca{R}KK$-elements to identify $C_0$ and $Cl_\tau$ at the $KK$-theory level.
As mentioned in Fact \ref{Definitions in preliminaries, Clifford algebras}, an irreducible left $\Cl_-(V)$-module $S$ admits a right Hilbert $\Cl_+(V)$-module structure, and $S^*$ admits a left $\Cl_+(V)$-module structure. Thus, the following two $\ca{R}KK$-elements make sense.

\begin{dfn}
Let $X$ be an even-dimensional Riemannian $Spin^c$-manifold equipped with an isometric proper cocompact $G$-action. 
We suppose that the $G$-action lifts to a Spinor bundle $(S,c)$. Then its dual $(S^*,c^*)$ is also $G$-equivariant.

$(1)$ $C_0(X,S)$ has a Hilbert $Cl_\tau(X)$-module structure by the above observation, and it admits a left $C_0(X)$-module structure given by $[\pi(f)s](x):=f(x)s(x)$ for $f\in C_0(X)$, $s\in C_0(X,S)$ and $x\in X$. We define an $\ca{R}KK_G$-element $[S]$ by
$$[S]:=(C_0(X,S),\pi,0)\in \ca{R}KK_G(X;C_0(X),Cl_\tau(X)).$$

$(2)$ $C_0(X,S^*)$ has a Hilbert $C_0(X)$-module structure, and it admits a left $Cl_\tau(X)$-module structure given by $[c^*(f)s](x):=c^*(f(x))s(x)$ for $f\in Cl_\tau(X)$, $s\in C_0(X,S^*)$ and $x\in X$. We define an $\ca{R}KK_G$-element $[S^*]$ by
$$[S^*]:=(C_0(X,S^*),c^*,0)\in \ca{R}KK_G(X;Cl_\tau(X),C_0(X)).$$
\end{dfn}

Fact \ref{Definitions in preliminaries, Clifford algebras} shows that $Cl_\tau(X)\cong \End(S^*)$. Using it, one can prove the following result.

\begin{lem}
$(1)$ These two $\ca{R}KK_G$-elements give an $\ca{R}KK_G$-equivalence:
\begin{center}
$[S]\grotimes_{X,Cl_\tau(X)}[S^*]=1_{X,C_0(X)}$ and $[S^*]\grotimes_{X,C_0(X)}[S]=1_{X,Cl_\tau(X)}$.
\end{center}

$(2)$ Consequently, $\fgt([S])$ and $\fgt([S^*])$ gives a $KK_G$-equivalence between $C_0(X)$ and $Cl_\tau(X)$.
\end{lem}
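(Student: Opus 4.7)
The plan is to compute both Kasparov products directly, taking advantage of the fact that the operators in the representatives of $[S]$ and $[S^{*}]$ are zero, so that each product reduces to an interior tensor product of Hilbert modules with the zero operator. Since we work in $\ca{R}KK_G(X;-,-)$, it suffices to check equality of the resulting Kasparov modules; the $\ca{R}KK$-structure is automatic from the constructions.

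For $[S]\grotimes_{X,Cl_\tau(X)}[S^{*}]$, the Hilbert module is the interior tensor product $C_0(X,S)\grotimes_{Cl_\tau(X)}C_0(X,S^{*})$. I would analyze it fiberwise: at $x\in X$ the fiber is $S_x\grotimes_{\Cl_+(T_xX)}S_x^{*}$. By Lemma \ref{Definitions in preliminaries, Clifford algebras}, $\Cl_+(T_xX)\cong\End(S_x^{*})$, the right $\Cl_+$-action on $S_x$ is the one coming from $S_x\cong (S_x^{*})^{*}$, and $c^{*}$ is the left $\End(S_x^{*})$-action on $S_x^{*}$. Thus $(S_x,S_x^{*})$ is a graded Morita equivalence bimodule between $\bb{C}$ and $\End(S_x^{*})$, giving the canonical identification
\[
S_x\grotimes_{\End(S_x^{*})}S_x^{*}\;\cong\;\bb{C}, \qquad s\grotimes f\longmapsto f(s).
\]
Because the Spinor bundle is locally trivial, this identification is continuous in $x$ and assembles into a unitary isomorphism $C_0(X,S)\grotimes_{Cl_\tau(X)}C_0(X,S^{*})\cong C_0(X)$ of Hilbert $C_0(X)$-modules. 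Under this isomorphism the left action $\pi\grotimes 1$ becomes left multiplication of $C_0(X)$ on itself, hence the Kasparov module equals $1_{X,C_0(X)}$. The $G$-equivariance is automatic because the $G$-action lifts to $S$ (and hence to $S^{*}$) and the pairing $s\grotimes f\mapsto f(s)$ is natural.

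For $[S^{*}]\grotimes_{X,C_0(X)}[S]$, the interior tensor product over $C_0(X)$ is simply $C_0(X,S^{*}\grotimes S)$. Fiberwise, the standard identification $S_x^{*}\grotimes S_x\cong\End(S_x^{*})\cong\Cl_+(T_xX)$ (again Lemma \ref{Definitions in preliminaries, Clifford algebras}) transports the left $Cl_\tau(X)$-action via $c^{*}$ on the first factor to left multiplication in $Cl_\tau(X)$, and the right $Cl_\tau(X)$-action coming from the right $\Cl_+$-action on $S_x$ to right multiplication. The family of these identifications yields a unitary isomorphism of $Cl_\tau(X)$-bimodules $C_0(X,S^{*}\grotimes S)\cong Cl_\tau(X)$, so the Kasparov module represents $1_{X,Cl_\tau(X)}$.

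Part (2) then follows immediately by applying the forgetful homomorphism $\fgt$, which is a ring homomorphism with respect to Kasparov products and sends the identity $\ca{R}KK$-elements to the identity $KK$-elements. The main work is really in part (1), and the only potentially subtle point is checking that the fiberwise Morita identifications are continuous and $G$-equivariant as one varies $x\in X$; this is routine once one notes that everything is constructed canonically from the bundle $S$ and the pairing between $S$ and $S^{*}$.
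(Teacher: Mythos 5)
Your proposal is correct and follows the same route the paper itself indicates: the paper gives no explicit proof but observes that the result follows from the isomorphism $Cl_\tau(X)\cong\End(S^*)$ established in Lemma \ref{Definitions in preliminaries, Clifford algebras}, which is precisely the fiberwise Morita equivalence your argument works out in detail. Your identification of the canonical pairing $s\grotimes f\mapsto f(s)$ for the first product and $S^*\grotimes S\cong\End(S^*)\cong\Cl_+(TX)$ for the second, together with the remark that the remaining work is the (routine) continuity and $G$-equivariance of these fiberwise identifications, correctly fills in the details the paper leaves to the reader.
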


Using these $\ca{R}KK_G$- and $KK_G$- equivalences, we reformulate the Kasparov index theorem. We suppose that the injectivity radius is greater than $2\vep$ everywhere (it is always possible if $X$ admits an isometric cocompact group action), and we put $\ca{A}(X):=\ca{S}_\vep\grotimes Cl_\tau(X)$. Although $\ca{S}_\vep$ is not essential for the following definition (and it is possible to replace $\vep$ with another number), it plays an essential role to reformulate the theory further using the Bott homomorphisms of Definition \ref{dfn of the Bott hom}.

\begin{dfn}[See also Section 2.5 of \cite{T4}]\label{dfn reformulated KK elements}
Let $X$ be an even-dimensional complete Riemannian 
$Spin^c$-manifold and let $G$ be a locally compact second countable Hausdorff group acting on $X$ in an isometric, proper and cocompact way. We assume that the $G$-action lifts to the Spinor bundle.
Suppose that the injectivity radius is greater than $2\vep$ everywhere.
Let $W$ be a $G$-equivariant Clifford bundle and let $D$ be a $G$-equivariant Dirac operator on $W$. 

$(1)$ We reformulate $KK$-elements appearing in the index theorem as follows:
\begin{itemize}
\item $[\widetilde{D}]:=\sigma_{\ca{S}_\vep}\bra{\fgt[S^*]\grotimes [D]}\in KK_G(\ca{A}(X),\ca{S}_\vep)$;
\item $[\widetilde{\sigma_D^{Cl}}]:=\sigma_{\ca{S}_\vep}\bra{[\sigma_D^{Cl}]\grotimes [S^*]}\in \ca{R}KK_G(X;\ca{S}_\vep\grotimes C_0(X),\ca{S}_\vep\grotimes C_0(X))$;
\item $[\widetilde{d_X}]:=\sigma_{\ca{S}_\vep}\bra{[d_X]}\in KK_G(\ca{A}(X),\ca{S}_\vep)$;
\item $[\widetilde{c_X}]:=\sigma_{\ca{S}_\vep}\bra{[c_X]}\in KK(\ca{S}_\vep,\ca{S}_\vep\grotimes [C_0(X)\rtimes G])$;
\item $[\widetilde{\Theta_{X,1}}]:=\sigma_{\ca{S}_\vep}\bra{[\Theta_{X,1}]}\in \ca{R}KK_G(X;\ca{S}_\vep\grotimes C_0(X),\uwave{C_0(X)}\grotimes \ca{A}(X) )$; and
\item $[\widetilde{\Theta_{X,2}}]:=\sigma_{\ca{S}_\vep}\bra{[\Theta_{X,2}]\grotimes \sigma_{C_0(X)}^1\bra{[S^*]}\grotimes\sigma_{X,C_0(X)}^2\bra{\fgt[S]}}\in \ca{R}KK_G(X;\ca{S}_\vep\grotimes C_0(X),\ca{A}(X) \grotimes \uwave{C_0(X)})$.
\end{itemize}

$(2)$ The corresponding homomorphisms to $\PD$, $\mu_G$ and $\nu_G$ are denoted with tilde:
\begin{itemize}
\item $\widetilde{\PD}(\widetilde{x}):=[\widetilde{\Theta_{X,2}}]\grotimes\bra{\sigma_{X,C_0(X)}^2(\widetilde{x})}: KK(\ca{A}(X),\ca{S}_\vep)\to \ca{R}KK_G(X;\ca{S}_\vep\grotimes C_0(X),\ca{S}_\vep\grotimes C_0(X))$;
\item $\widetilde{\mu_G}(\widetilde{x}):=[\widetilde{c_X}]\grotimes \sigma_{\ca{S}_\vep}\bra{\fgt\bra{ j_G([S])}}\grotimes j_G(\widetilde{x}): KK(\ca{A}(X),\ca{S}_\vep)\to KK(\ca{S}_\vep,\ca{S}_\vep\rtimes G)$; and
\item $\widetilde{\nu_G}(\widetilde{y}):=[\widetilde{c_X}]
\grotimes\bra{\fgt\circ j_G([\widetilde{y}])}\grotimes 
\sigma_{\ca{S}_\vep}\bbra{j_G(\fgt[S]\grotimes [{d_X}])}: \ca{R}KK_G(X;\ca{S}_\vep\grotimes C_0(X),\ca{S}_\vep\grotimes C_0(X))\to KK(\ca{S}_\vep,\ca{S}_\vep\rtimes G)$.
\end{itemize}
\end{dfn}

\begin{pro}\label{prop index theorem for Spinc}
These reformulated objects satisfy the following index theorem type equalities:
\begin{center}
$[\widetilde{\sigma_D^{Cl}}]=\widetilde{\PD}[\widetilde{D}]$, 
$[\widetilde{D}]= \fgt\bra{[S^*]\grotimes [\widetilde{\sigma_D^{Cl}}]\grotimes [S]}\grotimes [\widetilde{d_X}]$ and
$\widetilde{\mu_G}([\widetilde{D}])=\widetilde{\nu_G}([\widetilde{\sigma_D^{Cl}}]).$

\end{center}
\end{pro}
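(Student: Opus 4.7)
The plan is to deduce each of the three identities from its classical counterpart in Fact~\ref{finite-dimensional index theorem} or Proposition~\ref{pro index thm big diagram}, by (a) applying $\sigma_{\ca{S}_\vep}$ throughout, and (b) conjugating the $C_0(X)$-slot of the relevant Kasparov modules by the $\ca{R}KK_G$- and $KK_G$-equivalence $[S]\leftrightarrow[S^*]$ of the preceding lemma. All required manipulations will follow from the naturality of $\sigma_{\ca{S}_\vep}$, $\fgt$, and $j_G$ with respect to Kasparov products, together with the ``Fubini'' property that $\sigma^1$ and $\sigma^2$ operations on disjoint tensor slots commute.

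For the first identity, I would begin with $[\sigma_D^{Cl}]=[\Theta_{X,2}]\grotimes\sigma_{X,Cl_\tau(X)}([D])$ from Fact~\ref{finite-dimensional index theorem}(2), post-compose with $[S^*]$ in the right $Cl_\tau(X)$-slot, and apply the Fubini identity $\sigma_{X,Cl_\tau(X)}([D])\grotimes[S^*]=\sigma^1_{C_0(X)}([S^*])\grotimes\sigma^2_{X,C_0(X)}([D])$; inserting $\fgt[S]\grotimes\fgt[S^*]={\bf 1}_{C_0(X)}$ before $\sigma^2_{X,C_0(X)}([D])$ and then applying $\sigma_{\ca{S}_\vep}$ converts the right-hand side into $[\widetilde{\Theta_{X,2}}]\grotimes\sigma^2_{X,C_0(X)}([\widetilde{D}])=\widetilde{\PD}([\widetilde{D}])$ by Definition~\ref{dfn reformulated KK elements}. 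For the second identity, I would start from the inverse formula $[D]=\fgt[\sigma_D^{Cl}]\grotimes[d_X]$ of Fact~\ref{finite-dimensional index theorem}(1), pre-compose with $\fgt[S^*]$, and apply $\sigma_{\ca{S}_\vep}$; the right-hand side $\fgt([S^*]\grotimes[\widetilde{\sigma_D^{Cl}}]\grotimes[S])\grotimes[\widetilde{d_X}]$ unfolds, via the naturality of $\fgt$ and $\sigma_{\ca{S}_\vep}$ and the identity $[S^*]\grotimes[S]={\bf 1}_{X,Cl_\tau(X)}$, to $\sigma_{\ca{S}_\vep}(\fgt[S^*]\grotimes\fgt[\sigma_D^{Cl}]\grotimes[d_X])=\sigma_{\ca{S}_\vep}(\fgt[S^*]\grotimes[D])=[\widetilde{D}]$.

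For the third identity, I would expand both sides through Definition~\ref{dfn reformulated KK elements} and commute $\sigma_{\ca{S}_\vep}$ past $j_G$ and $\fgt$; on the $\widetilde{\mu_G}$ side one collapses $j_G\fgt[S]\grotimes j_G\fgt[S^*]=j_G({\bf 1}_{C_0(X)})={\bf 1}_{C_0(X)\rtimes G}$, while on the $\widetilde{\nu_G}$ side one collapses $j_G\fgt[S^*]\grotimes j_G\fgt[S]=j_G({\bf 1}_{Cl_\tau(X)})={\bf 1}_{Cl_\tau(X)\rtimes G}$, and both sides reduce to $\sigma_{\ca{S}_\vep}([c_X]\grotimes\fgt j_G[\sigma_D^{Cl}]\grotimes j_G[d_X])=\sigma_{\ca{S}_\vep}([c_X]\grotimes j_G[D])$ via the classical equality $\nu_G([\sigma_D^{Cl}])=\mu_G([D])$ of Proposition~\ref{pro index thm big diagram}. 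The main obstacle will be the careful bookkeeping of the $C_0(X)$-algebra structures on the various tensor products---in particular distinguishing $\uwave{C_0(X)}\grotimes Cl_\tau(X)$ from $C_0(X)\grotimes\uwave{Cl_\tau(X)}$ and tracking which $\sigma^i$ operation is applied to which slot in the definition of $[\widetilde{\Theta_{X,2}}]$---but once this bookkeeping is in place, the three equalities are routine consequences of the classical index theorem and the associativity of the Kasparov product.
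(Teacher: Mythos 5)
Your proof is correct and follows exactly the route the paper intends: the paper's own proof simply lists the same ingredients (associativity of the Kasparov product, commutativity of $\sigma_{\ca{S}_\vep}$, $\fgt$, $j_G$ with products and with each other, the $[S]$/$[S^*]$ equivalences, and Fact~\ref{finite-dimensional index theorem}/Proposition~\ref{pro index thm big diagram}) and leaves the details to the reader, and you have correctly supplied those details. The bookkeeping of the $\uwave{\cdot}$-structures that you flag as the main subtlety is indeed the only delicate point, and your handling of it is consistent with Definition~\ref{dfn reformulated KK elements}.
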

\begin{proof}
The following facts show the statement: The Kasparov product is associative; $j_G$ commutes with the Kasparov product; $\fgt$ commutes with $j_G$; Fact \ref{finite-dimensional index theorem}; $\sigma_{\ca{S}_\vep}$ commutes with the Kasparov product; $\sigma_{\ca{S}_\vep}$ commutes with $\sigma_{X,C_0(X)}^2$'s; $[S]\grotimes [S^*]=1_{C_0(X)}$; $[S^*]\grotimes [S]=1_{Cl_\tau(X)}$; $\sigma_{\ca{S}_\vep}\bra{\mu_G([D])}=\widetilde{\mu_G}([\widetilde{D}])$ and
$\sigma_{\ca{S}_\vep}\bra{\nu_G([\sigma_D^{Cl}])}=\widetilde{\nu_G}([\widetilde{\sigma_D^{Cl}}])$. We leave the details to the reader.
\end{proof}

\begin{rmk}\label{Section 3 unsolved points}
The formulas on the inverse of the Poincar\'e duality and the analytic assembly map are not satisfying, because the factor $[S]$ or $[S^*]$ appears. Since there is no $C_0$ in the theory of  infinite-dimensional manifolds, what we can formulate is not ``$\fgt$'' itself but the composition of it and $[S^*]\grotimes-\grotimes[S]$, and similarly for $j$. We do not study infinite-dimensional versions of them in the present paper.
We will mention this problem in Section \ref{section unsolved} in order to explain what the next problem is.
\end{rmk}

\begin{nota}
From now on, the reformulated Poincar\'e duality homomorphims  and the assembly maps are denoted without tildes: 
$\PD([\widetilde{D}]):=\widetilde{\PD}([\widetilde{D}])$, 
$\mu_G([\widetilde{D}]):=\widetilde{\mu_G}([\widetilde{D}])$ and $\nu_G([\widetilde{\sigma_D^{Cl}}]):=\widetilde{\nu_G}([\widetilde{\sigma_D^{Cl}}]).$
\end{nota}

In order to generalize the reformulated $KK$-elements to infinite-dimensional manifolds, we will rewrite the local Bott elements using Proposition \ref{new Bott is classical Bott} and Corollary \ref{cor -theta is theta}.
We begin with the explicit description of $[\widetilde{\Theta_{X,2}}]$ in the language of fields of Kasparov modules. By an abuse of notation, $\ca{A}(U_x)$ denotes $\ca{S}_\vep\grotimes Cl_\tau(U_x)$ (note that we define $\ca{A}(X)$ only for manifolds whose injectivity radius is bounded below).

\begin{lem}\label{lemma reformulated Theta 2 is in fact Theta 1}
$[\widetilde{\Theta_{X,2}}]$ is represented by
$$\sigma_{\ca{S}_\vep}\bra{\{Cl_\tau(U_x)\}_{x\in X},\{1\}_{x\in X},\{\vep^{-1}\Theta_x\}_{x\in X}},$$
or equivalently $\sigma_{\ca{S}_\vep}[\Theta_{X,1}]$ after the isomorphism $\ca{A}(X) \grotimes \uwave{C_0(X)}\cong \uwave{C_0(X)}\grotimes \ca{A}(X)$.
\end{lem}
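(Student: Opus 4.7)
The plan is to verify the statement by computing the Kasparov product defining $[\widetilde{\Theta_{X,2}}]$ and matching it, up to the swap isomorphism $\ca{A}(X)\grotimes\uwave{C_0(X)}\cong\uwave{C_0(X)}\grotimes\ca{A}(X)$ of the target algebra, with $\sigma_{\ca{S}_\vep}$ applied to the standard Kasparov module representative of $[\Theta_{X,1}]$. Since $\sigma_{\ca{S}_\vep}$ commutes with Kasparov products, it suffices to work with the underlying $\ca{R}KK$-elements and to show $[\Theta_{X,2}]\grotimes\sigma^1_{C_0(X)}[S^*]\grotimes\sigma^2_{X,C_0(X)}\fgt[S]=[\Theta_{X,1}]$ in $\ca{R}KK_G(X;C_0(X),\uwave{C_0(X)}\grotimes Cl_\tau(X))$ after the analogous swap of the target algebra.

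First, I would compute the Kasparov product in the unbounded picture. Because both $\sigma^1_{C_0(X)}[S^*]$ and $\sigma^2_{X,C_0(X)}\fgt[S]$ carry the zero operator, the connection and positivity conditions in Proposition \ref{Kucs criterion} are automatically satisfied, and the product operator is just the lift of $\Theta$ from $[\Theta_{X,2}]$. Carrying out the tensor product over $C_0(X)\grotimes Cl_\tau(X)$ and then over $C_0(X)\grotimes C_0(X)$, the resulting Hilbert module over $U\subseteq X\times X$ has bundle fiber $S^*_y\otimes S_x$ at $(x,y)\in U$, and the product operator acts as $c^*(\Theta_x(y))\otimes\id_{S_x}$.

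Next, I would apply smooth $G$-equivariant parallel transport of the Spinor bundle along the unique minimizing geodesic from $x$ to $y$, which is well-defined for $(x,y)\in U$ since $2\vep$ is smaller than the injectivity radius, and which is $G$-equivariant since $G$ acts by isometries preserving $S$. Combined with the canonical isomorphisms $S^*_y\otimes S_y\cong\End(S_y)\cong\Cl_+(T_yX)$, this yields a smooth global identification of the product module with $C_0(U,\Cl_+(T^{\rm fib}U))$, under which the operator $c^*(\Theta_x(y))\otimes\id_{S_x}$ becomes left Clifford multiplication by $\Theta_x(y)$. This matches, fiberwise, the operator appearing in the standard representative of $[\Theta_{X,1}]$.

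The main obstacle will be carefully reconciling the $X\rtimes G$-equivariant structures on the two Kasparov modules: the left $C_0(X)$-action, the right $Cl_\tau(X)\grotimes\uwave{C_0(X)}$-action, and their compatibility with the swap isomorphism of target algebras must be checked over the $C_0(X)$-parameter fiber by fiber. A potential sign discrepancy in the operator (arising from the direction of parallel transport via the identity $P_{yp}(\Theta_p(y))=\log_p(y)=-\Theta_y(p)$, which follows from the geodesic relation $\Theta_p(y)=-\log_y(p)=P_{py}(\log_p(y))$) is resolved by Corollary \ref{cor -theta is theta}, which asserts $[-\Theta_{X,1}]=[\Theta_{X,1}]$ via the globally defined volume form, valid since $X$ is an even-dimensional $Spin^c$-manifold and hence orientable.
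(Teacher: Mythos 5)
Your proposal is correct and follows essentially the same route as the paper's proof: compute the triple Kasparov product explicitly in the field picture (the factors with zero operator make the connection and positivity conditions immediate), trivialize the resulting bundle via a $G$-equivariant Clifford-compatible parallel transport of the Spinor bundle along radial geodesics, observe the resulting sign $P^x_y(\Theta_y(x))=-\Theta_x(y)$, and invoke Corollary \ref{cor -theta is theta} to absorb it. The only cosmetic gap is that you take the existence of a $G$-invariant Clifford connection on $S^*$ for granted, which the paper spells out, and you could more carefully track which factor ($S_y\grotimes S_x^*$ versus $S_y^*\grotimes S_x$) appears — but neither affects the argument.
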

\begin{proof}
We first explicitly describe the collection terms $\sigma_{C_0(X)}^1([S^*])$ and $\sigma_{X,C_0(X)}^2\circ \fgt([S])$ in the language of fields.
Noticing that the $C_0(X)$-algebra $C_0(X)\grotimes \uwave{Cl_\tau(X)}$ is given by the family\\
$\{C_0(X)\grotimes \Cl_+(T_xX)\}_{x\in X}$, we find that $\sigma_{C_0(X)}^1([S^*])\in \ca{R}KK(X;C_0(X)\grotimes\uwave{Cl_\tau(X)},C_0(X)\grotimes \uwave{C_0(X)})$ 
is  given by the family 
$$\bra{\{C_0(X)\grotimes S^*_x\}_{x\in X},\{\id\grotimes c^*_x\}_{x\in X},\{0\}_{x\in X}}$$
as a family of Kasparov modules.
Similarly, 
$\sigma_{X,C_0(X)}^2\circ \fgt([S])\in \ca{R}KK(X;C_0(X)\grotimes \uwave{C_0(X)},Cl_\tau(X)\grotimes \uwave{C_0(X)})$ is given by
$$\bra{\{C_0(X,S)\grotimes \bb{C}_x\}_{x\in X},\{\mu\grotimes 1\}_{x\in X},\{ 0\}_{x\in X}}$$
as a family of Kasparov modules, where $\mu:C_0(X)\to \bb{L}(C_0(X,S))$ is given by the pointwise multiplication.

Therefore, at the level of fields of modules, thanks to $\Cl_+(T_xX)\grotimes_{\Cl_+(T_xX)} S_x^*\cong S_x^*$,
\begin{align*}
&\bbbra{C_0(U_x)\grotimes \Cl_{+}(T_xX)}
\grotimes_{C_0(X)\grotimes \Cl_+(T_xX)} 
\bbbra{C_0(X)\grotimes S_x^*}
\grotimes_{C_0(X)\grotimes \bb{C}_x} 
\bbbra{C_0(X,S)\grotimes \bb{C}_x} 
 \\
&\cong\bbbra{C_0(U_x)\grotimes S_x^*}
\grotimes_{C_0(X)\grotimes \bb{C}_x} 
\bbbra{C_0(X,S)\grotimes \bb{C}_x} 
 \\
 &\cong C_0(U_x,S)\grotimes S_x^*.
\end{align*}
The isomorphism is given by the formula
$$[f\grotimes u]\grotimes [g\grotimes v]\grotimes [h\grotimes w]\mapsto (-1)^{\partial h(\partial u+\partial v)}fgh\grotimes c^*(u)vw
$$
for $f\in C_0(U_x)$, $u\in \Cl_+(T_xX)$, $g\in C_0(X)$, $v\in S_x^*$, $h\in C_0(X,S)$ and $w\in \bb{C}_x$.

One can easily check that the family of operators $\{\vep^{-1}\Theta_x(\bullet)\grotimes \id\grotimes \id\}_{x\in X}$ satisfies the conditions to be the triple Kasparov product. Under the above identification, this operator is transformed into $1\grotimes \vep^{-1}\Theta_{x}(\bullet)$. More concretely
$$1\grotimes \vep^{-1}\Theta_{x}(\bullet)[f\grotimes s](y):=
(-1)^{\partial f}f(y)\grotimes c^*(\vep^{-1}\Theta_x(y))(s).$$

Next, we identify the trivial bundle $U_x\times S^*_x$ with $S^*|_{U_x}$ by the parallel transformations along the geodesics starting from $x$. We choose a connection $\nabla^{S^*}$ on $S^*$ such that it is compatible with the Levi-Civita connection $\nabla^{\rm LC}$ and the Clifford multiplication $c^*$ in the following sense
: For a section $s\in C^\infty(X,S^*)$ and vector fields $v,w\in \scr{X}(X)$, we have $\nabla^{S^*}_v[c^*(w)s]=c^*(\nabla^{\rm LC}_vw)s+c^*(w)(\nabla^{S^*}_vs)$. There always exists such a connection because there exists a Spin structure on each small open set and we can use the patchwork argument in this context \cite[Corollary 3.41]{BGV}. 
Let $P^x_y$ be the parallel transformations of $TX$, along the unique minimal geodesic traveling from $x$ to $y$, with respect to $\nabla^{\rm LC}$. We use the same symbol for that of $S^*$ with respect to $\nabla^{S^*}$. Thanks to the condition imposed on $\nabla^{S^*}$, we have $P^x_y[c^*(v)s]=c^*(P^x_y(v))P^x_y(s)$ for $v\in T_xX$ and $s\in S_x^*$. 

We must compute $\Theta_{y}(x)$ under this identification. Let $\gamma:[0,1]\to X$ be the minimal geodesic traveling from $x$ to $y$.
By definition, $\Theta_y(x)=-\dot{\gamma}(0)$, $\Theta_x(y)=\dot{\gamma}(1)$ and $P^x_y(\dot{\gamma}(0))=\dot{\gamma}(1)$. Thus, we have $P^x_y(\Theta_y(x))=-\Theta_x(y)$. Consequently, we have $P^x_y(c^*(\Theta_y(x))s)=-c^*(\Theta_x(y))P^x_y(s)$.
Since $c^*$ corresponds to the left Clifford multiplication under the identification $S^*\grotimes S\cong \Cl_+(TX)$, the following diagram commutes:
$$\begin{CD}
C_0(U_x,S)\grotimes S^*_x @>P_x>> C_0(U_x,S^*\grotimes S) \\
@V1\grotimes c^*(\Theta_{\bullet}(x))VV @VV-\Theta_x(\bullet)V \\
C_0(U_x,S)\grotimes S^*_x @>P_x>> C_0(U_x,S^*\grotimes S),
\end{CD}$$
where $P_x$ is defined by 
$$P_x[f\grotimes s](y):= (-1)^{\partial f(y) \partial s}P^x_y(s)\grotimes f(y)$$ and the right vertical arrow $-\Theta_x(\bullet)$ means left multiplication by $-\Theta_x(\bullet)$ under the identification $C_0(U_x,S^*\grotimes S)\cong Cl_\tau(U_x)$. Thus, $[\widetilde{\Theta_{X,2}}]$ is represented by
$$\sigma_{\ca{S}_\vep}\bra{\{Cl_\tau(U_x)\}_{x\in X},\{1\}_{x\in X},\{-\vep^{-1}\Theta_x\}_{x\in X}}.$$
Since $X$ is orientable (this is because $X$ is $Spin^c$), we can use Corollary \ref{cor -theta is theta}, and we finish the proof.
\end{proof}

We can further rewrite the above by a family of the Bott homomorphisms of Definition \ref{dfn of the Bott hom}.
We define a $*$-homomorphism $\beta_x:\ca{S}_\vep\to \ca{A}(X)$ by
$$
\beta_x(f)\phi(y):=
\begin{cases}
f(X\grotimes 1+1\grotimes \Theta_x(y))\phi(y) & y\in U_{x} \\
0 & y\notin U_x.
\end{cases}
$$
In order to prove that this is a continuous section, let us study $f(X\grotimes 1+1\grotimes \Theta_x(y))$.
We can divide $f$ into two parts: $f(t)=f_0(t^2)+tf_1(t^2)$ for $f_0,f_1\in C_0[0,\vep^2)$.
Then, the value of $f(X\grotimes 1+1\grotimes \Theta_x(\bullet))$ at $(s,y)\in (-\vep,\vep)\times X$ is given by
$$f_0(s^2+\rho(x,y)^2)+(s\cdot 1+\Theta_x(y))f_1(s^2+\rho(x,y)^2),$$
where $\rho$ is the distance function on $X$ and ``$1$'' in the second term is the multiplicative identity of the Clifford algebra.
Since $f_0$ and $f_1$ vanishes on $t^2\geq \vep^2$, $\beta_x(f)\phi(y)$ vanishes on
the boundary of $U_x$, and consequently $\beta_x(f)\phi$ is continuous on $X$.

\begin{pro}\label{Prop reformulated local Bott element}
$[\widetilde{\Theta_{X,2}}]$
is represented by the field of Kasparov modules
$$\bra{\{\ca{A}(X)\}_{x\in X},\{\beta_x\}_{x\in X},\{0\}_{x\in X}}.$$
\end{pro}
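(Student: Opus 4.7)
The plan is to use Lemma \ref{lemma reformulated Theta 2 is in fact Theta 1} to reduce to a fiberwise comparison, then invoke a ball-version of Proposition \ref{new Bott is classical Bott}, and finally lift to an $\ca{R}KK$-equivalence by continuity in $x$.

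First, by Lemma \ref{lemma reformulated Theta 2 is in fact Theta 1}, the class $[\widetilde{\Theta_{X,2}}]$ is already represented by the field $\sigma_{\ca{S}_\vep}(\{Cl_\tau(U_x)\}_{x\in X}, \{1\}_{x\in X}, \{\vep^{-1}\Theta_x\}_{x\in X})$. It therefore remains to produce an $\ca{R}KK$-equivalence with the claimed representative; viewing each $\ca{S}_\vep\grotimes Cl_\tau(U_x)$ as a Hilbert $\ca{A}(X)$-module via the ideal inclusion $\ca{S}_\vep\grotimes Cl_\tau(U_x)\hookrightarrow\ca{A}(X)$, both fields become fields of Kasparov $(\ca{S}_\vep,\ca{A}(X))$-modules. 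The claimed right-hand field is well-defined because $\beta_x(f)\in\ca{A}(X)=\bb{K}_{\ca{A}(X)}(\ca{A}(X))$ for every $f\in\ca{S}_\vep$, as guaranteed by the support analysis carried out just before the proposition.

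Second, at each fixed $x\in X$, the exponential map $\exp_x\colon B_\vep(0)\subset T_xX\to U_x$ together with parallel transport along radial geodesics identifies $Cl_\tau(U_x)$ with $Cl_\tau(B_\vep(0))$ in such a way that $\Theta_x(y)$ corresponds to the position vector $v=\log_x(y)\in B_\vep(0)$ (recall the identity $\Theta_x(y)=P^x_y(v)$ used in the proof of Lemma \ref{lemma reformulated Theta 2 is in fact Theta 1}). Under this identification, the fiberwise task reduces to the equality $\sigma_{\ca{S}_\vep}(Cl_\tau(B_\vep(0)),1,\vep^{-1}C)=[\beta_{B_\vep}]$ in $KK(\ca{S}_\vep,\ca{S}_\vep\grotimes Cl_\tau(B_\vep(0)))$, where $C$ is the Clifford multiplication by position and $\beta_{B_\vep}(f):=f(X\grotimes 1+1\grotimes C)$ is the Bott homomorphism for the ball. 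This is the ball analog of Proposition \ref{new Bott is classical Bott}; since $X$ is even-dimensional, so is $T_xX$, and the statement is deduced either by transporting along a diffeomorphism $B_\vep(0)\cong T_xX$ combined with the $\bb{Z}_2$-graded isomorphism $\ca{S}_\vep\cong\ca{S}$ (via an odd homeomorphism $(-\vep,\vep)\cong\bb{R}$), thereby reducing to Proposition \ref{new Bott is classical Bott} itself, or by rerunning its proof with $\ca{S}_\vep$ and $B_\vep(0)$ in place of $\ca{S}$ and $V$ (the key inputs, Lemmas \ref{Lemma Kasparov product of S} and \ref{Lemma computation of KK(S,S)}, carry over without change since $\ca{S}_\vep$ is $\bb{Z}_2$-graded isomorphic to $\ca{S}$).

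Finally, all ingredients --- $\exp_x$, the parallel transport, the vector field $\Theta_x$, and the homomorphism $\beta_x$ --- depend continuously on $x$, so the fiberwise $KK$-equivalence assembles into an $\ca{R}KK$-equivalence between the single Kasparov modules underlying the two fields. The main obstacle is making the ball-version of Proposition \ref{new Bott is classical Bott} fully rigorous: namely, verifying that the operator homotopies induced by the diffeomorphism $B_\vep(0)\cong T_xX$ preserve the $KK$-class and the relevant equivariance. This is routine but must be checked with care, and it is the step that absorbs most of the technical bookkeeping in the proof.
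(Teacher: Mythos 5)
Your strategy tracks the paper's quite closely: start from Lemma~\ref{lemma reformulated Theta 2 is in fact Theta 1}, reduce to a fiberwise Bott-periodicity comparison (the ``ball analog of Proposition~\ref{new Bott is classical Bott}''), and lift to the $\ca{R}KK$-level. Two points, however, deserve attention.

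First, the paper does not jump directly to fiber $\ca{A}(X)$ as you do. It first obtains (by ``the same argument as Proposition~\ref{new Bott is classical Bott}'') a representative whose fibers are $\ca{A}(U_x)=\ca{S}_\vep\grotimes Cl_\tau(U_x)$, and only then upgrades the fiber to $\ca{A}(X)$ by an explicit homotopy. That homotopy is not continuity-in-$x$ bookkeeping; it is the standard ``absorb an invariant submodule'' step, and the concrete reason it works is the inclusion $\beta_x(\ca{S}_\vep)\ca{A}(X)\subseteq\ca{A}(U_x)$ (i.e.\ the representation $\beta_x$ on the Hilbert $\ca{A}(X)$-module $\ca{A}(X)$ takes values inside the sub-module $\ca{A}(U_x)$, with complement on which it acts degenerately). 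You mention the ideal inclusion at the outset but never perform this step, so at the end of your argument you have compared the field from Lemma~\ref{lemma reformulated Theta 2 is in fact Theta 1} with $\bra{\{\ca{A}(U_x)\},\{\beta_x\},\{0\}}$, not with $\bra{\{\ca{A}(X)\},\{\beta_x\},\{0\}}$. You should state this last homotopy explicitly.

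Second, of the two options you offer for the ``ball version'' of Proposition~\ref{new Bott is classical Bott}, the first (pushing along a radial diffeomorphism $B_\vep(0)\cong T_xX$ together with an odd homeomorphism $(-\vep,\vep)\cong\bb{R}$) is more delicate than you let on: the Bott homomorphism on the ball is built from Clifford multiplication by the \emph{Euclidean} position vector together with the multiplier $X$ on $\ca{S}_\vep$, and neither of these is carried to its counterpart on $T_xX$ by the diffeomorphism --- you get a function $\psi(|v|)$ of the radius rather than $|v|$, so you still need an operator homotopy interpolating $\psi$ to the identity. The cleaner route, and the one the paper has in mind, is your second option: re-run the $[b_\pm]\grotimes-\grotimes[d_\pm]$ test of Proposition~\ref{new Bott is classical Bott} at the family level, which is exactly what the paper has already prepared in Lemma~\ref{Lemma family version of the Bott periodicity} and Corollary~\ref{cor -theta is theta} (note in particular that the relevant Dirac element is $[d_X]$ of the ambient manifold, not a Dirac element of the incomplete ball). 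With that route and the final $\ca{A}(U_x)\to\ca{A}(X)$ homotopy added, your argument becomes the paper's.
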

\begin{proof}
By the same argument of Proposition \ref{new Bott is classical Bott} and the above result, we notice that $[\widetilde{\Theta_{X,2}}]$ is represented by
$\bra{\{\ca{A}(U_x)\}_{x\in X},\{\beta_x\}_{x\in X},\{0\}_{x\in X}}$. 
It is homotopic to $\bra{\{\ca{A}(X)\}_{x\in X},\{\beta_x\}_{x\in X},\{0\}_{x\in X}}$ by $\beta_x(\ca{S}_\vep)\ca{A}(X)\subseteq \ca{A}(U_x)$.

\end{proof}

The following example explain the geometrical meaning of the reformulated index theorem.

\begin{ex}\label{ex computation of PD}
We have reformulated the Poincar\'e duality homomorphism, the index element and the Clifford symbol element.
Let us compute them for the following case. {\it Let $E$ be a $G$-equivariant Hermitian vector bundle with a $G$-invariant metric connection $\nabla^E$ over $X$. Let $D_E$ be the Dirac operator defined by
$$D_E:=\sum \id_E\grotimes c(v_n)\circ \nabla^{E\grotimes S}_{v_n}: C_c^\infty(X,E\grotimes S)\circlearrowright,$$
where $\{v_n\}$ is an orthonomal base of the tangent space.}

In this situation, the reformulated $KK$-elements are given by the  following:
\begin{itemize}
\item $[\widetilde{D_E}]=\sigma_{\ca{S}_\vep}\bra{L^2(X,E\grotimes \Cl_+(TX)),\pi,D_E'}$, 
where $D_E'=\sum \id\grotimes c(v_n)\circ \nabla^{E\grotimes \Cl_+(TX)}_{v_n}$ and $\pi$ denotes the Clifford multiplication on the left $c^*$; and
\item $[\widetilde{\sigma_{D_E}^{Cl}}]=\sigma_{\ca{S}_\vep}\bra{C_0(X,E),\pi,0}$.
\end{itemize}
Therefore, the reformulated Poincar\'e duality homomorphism is a map which assigns to the $KK$-element corresponding to the Dirac operator twisted by $E$ the $\ca{R}KK$-element corresponding to its coefficient $E$. 
\end{ex}


\subsection{A convenient description of the topological assembly map}\label{section index theorem top ass map}

The goal of this subsection is to describe all the ingredients of the topological assembly map in more convenient form. This is essential to define a substitute for the topological assembly map for proper $LT$-spaces.

Let us begin with the descent homomorphism for $\ca{R}KK$-theory.
The key ingredient is the formula to describe crossed products of $X\rtimes G$-$C^*$-algebras using the generalized fixed-point algebras \cite[Theorem 2.14]{EE}. It is easy to give the field description of this result.
Using it and its Hilbert module version, we can describe the descent homomorphism in the language of fields.

We first give a review of the proof of \cite[Section 2]{EE}. 
In order to explain it, we need the concept of {\bf generalized fixed-point algebras}. Let $G$ be a locally compact second countable Hausdorff group and let $X$ be a $\sigma$-compact locally compact Hausdorff space equipped with a proper $G$-action.
Let $A$ be an $X\rtimes G$-$C^*$-algebra, whose action is denoted by $\alpha^A:G\to \Aut(A)$.
With the $G$-action on $X\times X$ given by $g:(x,y)\mapsto (gx,gy)$, we can define a $C_0(X\times_G X)$-$C^*$-algebra 
$C_0(X\times_GA),$
by the set of all continuous functions $f:X\to A$ satisfying the following conditions: it is $G$-invariant $f(x)=\alpha_g[f(g^{-1}\cdot x)]$; and the norm function $X\times_G X\ni [(x,y)]\mapsto \|f(x)(y)\|\in\bb{R}_{\geq 0}$ vanishes at infinity, where we regard $A$ as the section algebra over $X$. The generalized fixed-point algebra of $A$ 
is defined by the restriction of the $C_0(X\times_G X)$-$C^*$-algebra $C_0(X\times_GA)$ to the orbit space of the diagonal set $\Delta(X)/G\cong X/G$:
$$A^{G,\alpha}:=C_0(X\times_GA)|_{\Delta(X)/G}.$$

\begin{ex}
When $X=G$ and $A=C_0(G)$, whose action is defined by left translation ``$\lt$'', $C_0(G\times_GC_0(G))$ is  $C_0(G)$. The isomorphism is induced by the homeomorphism $G\times G\ni [(g,h)] \mapsto h^{-1}g\in G$. The restriction to $\Delta(G)/G$ is the fiber at $[(e,e)]$, which is $\bb{C}$. Thus, the generalized fixed-point algebra $C_0(G)^{G,\lt}$ is, as everyone expects, given by $\bb{C}=C(G/G)$.
\end{ex}

Using the concept of generalized fixed-point algebras, we can describe crossed products.
We need the following ingredients:
the unitary representation $R$ of $G$ on $L^2(G)$ given by $R_g\phi(x):=\phi(xg)\sqrt{\mu(g)}$; the action $\Ad R:G\to \Aut(\bb{K}(L^2(G)))$ given by $\Ad R_g(k):=R_g\circ k\circ R_g^{-1}$ for  $k\in \bb{K}(L^2(G))$. By using it, we can define a $G$-action on $\bb{K}(L^2(G))\grotimes A$ by $(\Ad R\grotimes \alpha)_g(k\grotimes a):=\Ad R_g(k)\grotimes \alpha^A_g(a)$ for $a\in A$, $k\in \bb{K}(L^2(G))$ and $g\in G$.  

\begin{pro}[\cite{EE}]\label{crossed product and generalized fixed-point algebra}
Let $G$ be a locally compact second countable Hausdorff group and let $X$ be a $\sigma$-compact locally compact Hausdorff space equipped with a proper $G$-action. We assume that $G$ is amenable, for simplicity. 
Let $A$ be an $X\rtimes G$-$C^*$-algebra, whose action is denoted by $\alpha^A:G\to \Aut(A)$.
Then, the crossed product $A\rtimes_{\alpha^A} G$ is isomorphic to the generalized fixed-point algebra
$$(\bb{K}(L^2(G))\grotimes A)^{G,\Ad R\grotimes \alpha}$$
as $C_0(X/G)$-algebras.
\end{pro}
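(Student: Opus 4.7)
My plan is to follow the strategy of Echterhoff--Emerson and construct the isomorphism explicitly at the level of dense subalgebras. First I would write down a candidate $*$-homomorphism $\Phi: C_c(G,A) \to (\bb{K}(L^2(G))\grotimes A)^{G,\Ad R\grotimes\alpha}$, where $C_c(G,A)$ denotes the convolution $*$-algebra dense in $A\rtimes_{\alpha}G$. For $f\in C_c(G,A)$ and $x\in X$, I would set
$$\Phi(f)(x) := \int_G R_g\grotimes \alpha_g^A[f(g^{-1})(x)]\,\mu(g)^{-1/2}\,dg,$$
viewed as an element of $M(\bb{K}(L^2(G))\grotimes A)$ that is evaluated at the point $x$ via the $C_0(X)$-structure on $A$. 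A routine computation shows $\Phi(f)(g\cdot x)=(\Ad R_g\grotimes\alpha_g)[\Phi(f)(x)]$, so $\Phi(f)$ defines a $G$-equivariant continuous section $X\to \bb{K}(L^2(G))\grotimes A$.

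Next I would verify the remaining defining properties of $(\bb{K}(L^2(G))\grotimes A)^{G,\Ad R\grotimes\alpha}$. The key point is that the induced norm function on the orbit space $X/G$ vanishes at infinity: this uses the compact support of $f$ in the $G$-variable together with properness of the $G$-action on $X$, which ensures that $\Phi(f)(x)$ is nonzero only when $x$ lies in a certain $G$-saturated set whose quotient in $X/G$ is relatively compact. I would then check, by a direct calculation on convolutions, that $\Phi$ is a $*$-homomorphism (here the appearance of $R$ and the modular function are designed precisely so that convolution in $C_c(G,A)$ turns into composition in $\bb{L}(L^2(G))\grotimes A$). Because $G$ is assumed amenable, $C_c(G,A)$ is dense in a unique crossed product $A\rtimes G$, and $\Phi$ extends by continuity to a $*$-homomorphism on $A\rtimes G$.

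Finally I would prove that the extension is a $C_0(X/G)$-linear $*$-isomorphism. For surjectivity onto the generalized fixed-point algebra, one approximates a general equivariant section by convolutions: take a cut-off function $\fra{c}$ for the $G$-action on $X$ (which exists by properness), and approximate any $F\in (\bb{K}(L^2(G))\grotimes A)^{G,\Ad R\grotimes\alpha}$ by elements of the form $\Phi(f)$, where $f$ is built from $\fra{c}$ and a slice of $F$. Injectivity follows from the fact that, upon pairing with any faithful covariant representation of $(A,G,\alpha)$, the image of $\Phi$ recovers the integrated form. Compatibility with the $C_0(X/G)$-module structures is automatic from the construction, since on the left-hand side the $C_0(X/G)$-action comes from the central subalgebra $C_b(X)^G\subseteq M(A)^G$, and on the right-hand side from the restriction of $C_0(X\times_G X)$ to $\Delta(X)/G$; the formula for $\Phi$ respects both.

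The main obstacle I expect is the bookkeeping in step two—namely, verifying that the norm of $\Phi(f)(x)$ truly descends to a function on $X/G$ vanishing at infinity, and handling the subtleties when $A$'s $C_0(X)$-structure is only nondegenerate rather than unital. This requires carefully exploiting properness of the $G$-action on $X$ (not merely on the support of $f$), and using a cut-off function to rewrite $\Phi(f)$ in a manifestly controlled form. The $*$-homomorphism calculation and density argument are otherwise mechanical.
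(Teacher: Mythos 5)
Your approach is a genuinely different route from the one the paper takes. The paper (following Echterhoff--Emerson, as recalled in the text around Lemma~\ref{2.9}) derives Proposition~\ref{crossed product and generalized fixed-point algebra} structurally: first prove that $A^K\rtimes G\cong (A\rtimes G)^K$ for a compact subgroup $K$ acting by a $G$-commuting action (Lemma~\ref{2.9}(1)), then fiber over $X/G$ to get the statement for the associated bundles (Lemma~\ref{2.9}(2)), and finally deduce the proposition ``by a formal argument'' using $C_0(G)\rtimes G\cong \bb{K}(L^2(G))$. You instead construct the isomorphism directly at the level of dense subalgebras, writing down an explicit integral-kernel formula on $C_c(G,A)$. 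Both methods are legitimate; the structural route is shorter once Lemma~\ref{2.9}(1) is in hand and extends cleanly to the twisted case of Proposition~\ref{twisted twisted cp and fpa}, while the direct construction has the virtue of producing the explicit kernel formula (which is exactly what the paper records separately as Proposition~\ref{cp and fpa}).

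That said, your displayed formula needs repair before the verification goes through. If you read $\Phi(f)(x)$ fiberwise, the integrand $\alpha_g^A[f(g^{-1})(x)]$ lands in $A_{g\cdot x}$, which varies with $g$, so the integral is not valued in a single fiber. Compare with the kernel that the paper eventually records in Proposition~\ref{cp and fpa},
$$k_f(g,h;x)=\mu(h)^{-1}\alpha^A_{g^{-1}}\bigl[f(gh^{-1})(g\cdot x)\bigr],$$
which evaluates $f(gh^{-1})$ at $g\cdot x$ and then applies $\alpha_{g^{-1}}$ to return to $A_x$. Your $\alpha_g\circ(\text{evaluation at }x)$ should be $\alpha_{g^{-1}}\circ(\text{evaluation at }g\cdot x)$, or equivalently you should apply $\alpha_g$ in $A$ first and evaluate the result afterward, keeping track of how $\alpha_g$ shifts the $C_0(X)$-fibers. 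Once the formula is corrected, the rest of your outline---equivariance, the vanishing-at-infinity condition via properness and a cut-off function, the convolution-to-composition check, and density---is sound, though the surjectivity step deserves more than ``one approximates by convolutions'': this is precisely the point where the paper's route through $C_0(G)\rtimes G\cong\bb{K}(L^2(G))$ is doing real work, and a direct argument should make the appeal to that Stone--von~Neumann-type fact (or an explicit approximation by rank-one kernels) visible.
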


In the language of u.s.c. fields of $C^*$-algebras and integral kernels, this result is described as follows. 

\begin{pro}\label{cp and fpa}
Let $\scr{A}:=(\{A_x\}_{x\in X},\Gamma_{\scr{A}})$ be the u.s.c. field associated to $A$. 
Then,
$$A\rtimes G\cong C_0\bra{X\times_{G,\Ad R\grotimes \alpha^A}\bbra{\bb{K}(L^2(G))\grotimes A_x}_{x\in X}},$$
where a section of the field $\bbra{\bb{K}(L^2(G))\grotimes A_x}_{x\in X}$ is continuous if and only if it can be approximated by finite sums of $k\grotimes a$ for $k\in \bb{K}(L^2(G))$ and $a\in \Gamma_{\scr{A}}$ (See also Definition \ref{dfn of tensor product of usc field of modules} for details).

Let $f\in C_c(G,A)$. The integral kernel of the corresponding equivariant section $X\to \bbra{\bb{K}(L^2(G))\grotimes A_x}_{x\in X}$ is given by
$$x\mapsto\bbbra{(g,h)\mapsto \mu(h)^{-1}\alpha^A_{g^{-1}}[f(gh^{-1})(gx)]},$$
where $\mu$ is the modular function of $G$. When we regard it as a function on $X\times G\times G$, we denote it by $k_f(g,h;x)$.
\end{pro}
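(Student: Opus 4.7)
First, the isomorphism $A\rtimes G\cong C_0(X\times_{G,\Ad R\grotimes \alpha^A}\{\bb{K}(L^2(G))\grotimes A_x\}_{x\in X})$ is an immediate reformulation of Proposition~\ref{crossed product and generalized fixed-point algebra} in the language of u.s.c.\ fields of $C^*$-algebras. Under the dictionary between $C_0(X)$-algebras and u.s.c.\ fields over $X$, the $C_0(X)$-algebra $\bb{K}(L^2(G))\grotimes A$ corresponds to the field $\{\bb{K}(L^2(G))\grotimes A_x\}_{x\in X}$, whose continuous sections are generated, by Definition~\ref{dfn of tensor product of usc field of modules}, by finite sums of elementary tensors $k\grotimes a$ with $k\in\bb{K}(L^2(G))$ and $a\in \Gamma_{\scr{A}}$. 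The diagonal $G$-action $\Ad R\grotimes \alpha^A$ corresponds to the fiberwise action on this field, and forming the generalized fixed-point algebra---i.e.\ restricting the $C_0(X\times_G\bullet)$-construction to the diagonal---precisely assembles the $G$-equivariant continuous sections vanishing at infinity on $X/G$, which is the object on the right-hand side.

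For the explicit kernel formula, I would realize $A\rtimes G$ on the Hilbert $A$-module $L^2(G,A)$ through the standard regular covariant representation
\begin{align*}
\pi(a)\xi(g):=\alpha^A_{g^{-1}}(a)\xi(g),\qquad \lambda_h\xi(g):=\xi(h^{-1}g).
\end{align*}
For $f\in C_c(G,A)$ the integrated operator $(\pi\rtimes \lambda)(f)$ acts by
\begin{align*}
[(\pi\rtimes \lambda)(f)\xi](g)=\int_G \alpha^A_{g^{-1}}(f(h))\,\xi(h^{-1}g)\,dh.
\end{align*}
The change of variables $h=g(h')^{-1}$, together with the identity $d(g(h')^{-1})=\mu(h')^{-1}\,dh'$ for left Haar measure, transforms this into an integral operator with kernel $K_f(g,h';-):=\mu(h')^{-1}\alpha^A_{g^{-1}}\bra{f(g(h')^{-1})}\in A$. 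Since $\alpha^A_{g^{-1}}$ restricts on fibers to a $*$-isomorphism $A_{gx}\to A_x$, evaluating $K_f(g,h';-)$ at $x$ produces $\mu(h')^{-1}\alpha^A_{g^{-1}}[f(g(h')^{-1})(gx)]\in A_x$, which after renaming $h'$ to $h$ is exactly the formula in the statement.

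The computation itself is routine; the main obstacle is bookkeeping of the modular function, since a $\mu(\cdot)^{\pm 1/2}$ factor can enter or not depending on the convention (compare e.g.\ the normalizations in Definition~\ref{dfn of an ind}), and one must ensure that the formula chosen is compatible with the identification of Proposition~\ref{crossed product and generalized fixed-point algebra}. The $G$-equivariance of the section $x\mapsto K_f(\cdot,\cdot;x)$ under $\Ad R\grotimes \alpha^A$, which is needed to land in the generalized fixed-point algebra, is a direct consequence of the covariance of the regular representation and the definition of $R$, and is easy to verify once the conventions are fixed.
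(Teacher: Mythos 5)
The paper states this proposition without proof, presenting it as a mere translation of Proposition~\ref{crossed product and generalized fixed-point algebra} into the language of fields and integral kernels. Your derivation supplies that translation by the natural route: realize $A\rtimes G$ on $L^2(G,A)$ via the regular covariant pair, read off the Schwartz kernel of $(\pi\rtimes\lambda)(f)$, and evaluate at each fiber $A_x$. The change of variables $h=g(h')^{-1}$ with Jacobian $\mu(h')^{-1}$ is correct, and the resulting kernel $K_f(g,h';x)=\mu(h')^{-1}\alpha^A_{g^{-1}}[f(g(h')^{-1})(gx)]$ matches the paper's formula exactly after renaming $h'\mapsto h$.

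One remark on the hedge at the end: the $\Ad R\grotimes\alpha^A$-invariance of the section $x\mapsto k_f(\cdot,\cdot;x)$ is not merely "easy to verify in principle" — it is the crucial check that this kernel actually lands in the generalized fixed-point algebra, and you should spell it out. Concretely, using that $\Ad R_{g'}$ acts on kernels by $k(g,h)\mapsto \mu(g')k(gg',hg')$ (a one-line computation from $R_{g'}\phi(g)=\sqrt{\mu(g')}\phi(gg')$), one finds
\begin{align*}
(\Ad R_{g'}\grotimes \alpha^A_{g'})\bbbra{k_f(\cdot,\cdot;(g')^{-1}x)}(g,h)
&=\mu(g')\,\alpha^A_{g'}\bra{\mu(hg')^{-1}\alpha^A_{(gg')^{-1}}[f(gh^{-1})(gx)]}\\
&=\mu(h)^{-1}\alpha^A_{g^{-1}}[f(gh^{-1})(gx)]=k_f(g,h;x),
\end{align*}
since $\mu(hg')=\mu(h)\mu(g')$ and $\alpha^A_{g'}\circ\alpha^A_{(gg')^{-1}}=\alpha^A_{g^{-1}}$. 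With that verification included, your argument is complete and is essentially the only sensible proof.
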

\begin{rmk}
Note that $f(gh^{-1})(gx)$ is the evaluation of $f(gh^{-1})\in A$ at $g\cdot x$. Thanks to ``$\alpha^A_{g^{-1}}$'', $k_f(g,h;x)\in A_x$.
\end{rmk}

Under the above isomorphism, let us compute $[c_X]\in KK(\bb{C},C_0(X)\rtimes G)$.
Recall that $[c_X]$ is defined by the projection element $c(g)(x)=\mu(g)^{-1/2}\sqrt{\fra{c}(x)\fra{c}(g^{-1}\cdot x)}$ of $C_0(X)\rtimes G$ for a cut-off function $\fra{c}:X\to \bb{R}_{\geq 0}$. Thus, the integral kernel corresponding to $c$ is given by
$$k_{c}(g,h;x)=\mu(h)^{-1}c(gh^{-1})(g\cdot x)=\mu(g)^{-1/2}\sqrt{\fra{c}(g\cdot x)}\cdot \mu(h)^{-1/2}\sqrt{\fra{c}(h\cdot x)}.$$
Put $\fra{c}_x(g):=\mu(g)^{-1/2}\sqrt{\fra{c}(g\cdot x)}$. It is an $L^2$-unit vector; in fact, $\|\fra{c}_x\|^2_{L^2}=\int \fra{c}_x(g)\mu(g)^{-1}dg =\int \fra{c}(g\cdot x)\mu(g)^{-1}dg =\int \fra{c}(g^{-1}\cdot x)dg =1$.
Thus, the operator given by the above integral kernel is the rank one projection to $\bb{C}\sqrt{\fra{c}_x}$. This projection is denoted by $P_{\sqrt{\fra{c}_x}}$. Since the Hilbert $\bb{K}(L^2(G))$-module corresponding to a rank one projection is always isomorphic to $L^2(G)^*$ by $k\mapsto \sqrt{\fra{c}_x}^*\circ k$, we obtain the following.

\begin{lem}\label{lemma operator description of cX}
For a cut-off function $\fra{c}:X\to \bb{R}_{\geq 0}$, we put $\fra{c}_x(g):=\mu(g)^{-1/2}\sqrt{\fra{c}(g\cdot x)}$. Then, the corresponding $KK$-element $[c_X]$ is given by the family of rank one projections $x\mapsto P_{\sqrt{\fra{c}_x}}$.
In the language of modules, it is represented by
$$\bra{C_0\bra{X\times_{G,R}L^2(G)^*},1,0}.$$
\end{lem}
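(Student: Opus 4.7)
The plan is to unravel $[c_X]$ through the fixed-point algebra picture of the crossed product. By definition, $[c_X]$ is represented by the projection $c\in C_0(X)\rtimes G$ given by $c(g)(x)=\mu(g)^{-1/2}\sqrt{\fra{c}(x)\fra{c}(g^{-1}\cdot x)}$. First I would substitute $f=c$ and $\alpha^A=\lt$ into the integral kernel formula of Proposition \ref{cp and fpa}, and a direct simplification factorizes the result as
\[
k_c(g,h;x) \;=\; \mu(g)^{-1/2}\sqrt{\fra{c}(g\cdot x)}\cdot \mu(h)^{-1/2}\sqrt{\fra{c}(h\cdot x)} \;=\; \fra{c}_x(g)\,\fra{c}_x(h).
\]

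Next I would verify that $\fra{c}_x$ has unit $L^2$-norm. A change of variable involving the modular function reduces $\int_G |\fra{c}_x(g)|^2\,dg$ to $\int_G \fra{c}(g^{-1}\cdot x)\,dg$, which equals $1$ by the defining property of a cut-off function. Consequently, for each $x$ the fiber operator $k_c(\cdot,\cdot;x)\in \bb{K}(L^2(G))$ is the rank-one orthogonal projection onto $\bb{C}\fra{c}_x$, i.e.\ $P_{\sqrt{\fra{c}_x}}$ in the notation of the lemma, and the continuous section $x\mapsto P_{\sqrt{\fra{c}_x}}$ of the field $\{\bb{K}(L^2(G))\}_{x\in X}$ represents $c$ under the isomorphism of Proposition \ref{cp and fpa}.

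To pass from the operator description to the module description, I would invoke the standard fact that for a unit vector $v$ in a Hilbert space $H$, the Hilbert $\bb{K}(H)$-module $P_v\bb{K}(H)$ is canonically isomorphic to $H^*$ via $k\mapsto v^*\circ k$. Applying this fiberwise with $H=L^2(G)$ and $v=\fra{c}_x$ and then gluing along $X$ produces the Hilbert $(C_0(X)\rtimes G)$-module $C_0(X\times_{G,R}L^2(G)^*)$, giving the representative $(C_0(X\times_{G,R}L^2(G)^*),1,0)$ of $[c_X]$. The only delicate point will be ensuring $G$-equivariance when gluing: one checks that $\Ad R_g$ carries the projection onto $\bb{C}\fra{c}_x$ to the projection onto $\bb{C}\,R_g\fra{c}_x=\bb{C}\fra{c}_{g^{-1}\cdot x}$, which matches the diagonal $G$-action on $X\times L^2(G)^*$ used to form $X\times_{G,R}L^2(G)^*$, so the fiberwise isomorphisms descend to a continuous field over $X/G$.
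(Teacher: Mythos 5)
Your argument is essentially the same as the paper's: you compute the integral kernel of $c$ from Proposition~\ref{cp and fpa}, observe it factorizes as $\fra{c}_x(g)\fra{c}_x(h)$, check that $\fra{c}_x$ is an $L^2$-unit vector via the modular-function change of variable, identify the fiber operator as the rank-one projection onto $\bb{C}\fra{c}_x$, and pass to the module picture via $P_v\bb{K}(H)\cong H^*$, $k\mapsto v^*\circ k$.

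The added equivariance check is a reasonable supplement, but you have a sign slip: from $R_g\phi(h)=\sqrt{\mu(g)}\phi(hg)$ and $\fra{c}_x(h)=\mu(h)^{-1/2}\sqrt{\fra{c}(h\cdot x)}$ one gets $R_g\fra{c}_x=\fra{c}_{g\cdot x}$, not $\fra{c}_{g^{-1}\cdot x}$. This is the correct direction for the invariance condition $P(x)=\Ad R_g\bigl[P(g^{-1}\cdot x)\bigr]$ defining sections of the generalized fixed-point algebra, so the conclusion is unaffected, but the formula as written would not give an invariant section.
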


We would like to describe the descent homomorphism in the language of ``generalized fixed-point modules'' which is defined in an obvious way. Before that, we need to recall that Proposition \ref{crossed product and generalized fixed-point algebra} follows from the following lemma and the fact that $C_0(G)\rtimes G$ is isomorphic to  $\bb{K}(L^2(G))$.

\begin{lem}[{\cite[Lemma 2.9 and Lemma 2.8]{EE}}]\label{2.9}
$(1)$ Let $G$ be a locally compact second countable Hausdorff group and let $K$ be a compact subgroup of $G$. Let $A$ be a $G$-$C^*$-algebra, whose action is denoted by $\alpha$. Suppose that an action $\beta:K\to \Aut(A)$ commuting with $\alpha$ is given. Then, the inclusion $\iota:A^K\to A$ induces an isomorphism
$$A^K\rtimes G\to (A\rtimes G)^K,$$
where $B^K$ for a $K$-$C^*$-algebra $B$ is the subalgebra consisting of all $K$-invariant elements of $B$.

$(2)$ Let $\alpha$ and $\beta$ be commuting actions of $G$ on $A$. We define the following two $C^*$-algebras: Using the $G$-action on $A\rtimes_\alpha G$ given by $[\widetilde{\beta}_gF](h):=\beta_g[F(h)],$ we define $C_0(X\times_{G,\widetilde{\beta}}(A\rtimes_\alpha G))$; Using the $G$-action $\tau\grotimes\alpha$ on $C_0(X\times_{G,\beta}A)$ given by $[(\tau\grotimes \alpha)_gF](x):=\alpha_g[F(g^{-1}x)]$, we define the crossed product $C_0(X\times_{G,\beta}A)\rtimes_{\tau\grotimes \alpha} G$.
Then, they are isomorphic:
$$C_0(X\times_{G,\beta}A)\rtimes_{\tau\grotimes \alpha} G\cong C_0(X\times_{G,\widetilde{\beta}}(A\rtimes_\alpha G)).$$
\end{lem}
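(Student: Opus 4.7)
For part (1), my plan is to exhibit the inverse of $\iota \rtimes G$ via averaging over $K$. Since $\alpha$ and $\beta$ commute, the $\alpha$-action preserves $A^K$, so $A^K \rtimes G$ is well-defined and the inclusion $\iota:A^K\to A$ induces a $*$-homomorphism $\iota\rtimes G:A^K\rtimes G\to A\rtimes G$. The action $\beta$ extends to $A\rtimes G$ by $\widetilde{\beta}_k[F](g):=\beta_k[F(g)]$ on $C_c(G,A)$; since $\alpha$ and $\beta$ commute this is compatible with convolution and involution. Manifestly, the image of $\iota\rtimes G$ lies in $(A\rtimes G)^K$. Injectivity follows from the amenability assumption on $G$: full and reduced crossed products agree, and the regular representation pulls back faithfully along the faithful inclusion $\iota$.

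For surjectivity onto $(A\rtimes G)^K$, I will use the averaging map $E_K:A\rtimes G\to (A\rtimes G)^K$ defined by $E_K(F):=\int_K \widetilde{\beta}_k(F)\,dk$, which is a norm-one $K$-equivariant projection (compactness of $K$ makes this a Bochner integral in the $C^*$-norm). Restricted to $C_c(G,A)$ this lands in $C_c(G,A^K)$ and fixes every $\widetilde{\beta}$-invariant element; hence $C_c(G,A^K)$ is norm-dense in $(A\rtimes G)^K$, giving surjectivity.

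For part (2), my plan is to write down the isomorphism at the level of dense subalgebras and then extend. Elements of $C_c(G,C_0(X\times_{G,\beta}A))$ correspond to compactly supported (in $g$) continuous functions $\phi:G\times X\to A$ satisfying $\phi(g,g'x)=\beta_{g'}(\phi(g,x))$, while a dense subalgebra of $C_0(X\times_{G,\widetilde{\beta}}(A\rtimes_\alpha G))$ consists of continuous sections $\Phi:X\to A\rtimes_\alpha G$ with $\Phi(g'x)=\widetilde{\beta}_{g'}\Phi(x)$, represented by kernels $\psi(g,x)\in A$ satisfying the same equivariance. I map $\phi\mapsto \Phi_\phi$ with $\Phi_\phi(x)(g):=\phi(g,x)$. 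A direct computation shows that the $(\tau\grotimes\alpha)$-twisted convolution on the LHS precisely matches the pointwise $A\rtimes_\alpha G$-valued multiplication on the RHS: the shift $x\mapsto g_1^{-1}x$ in $\tau$ combines with $\alpha_{g_1}$ under $\beta$-equivariance to produce the ordinary $\alpha$-twisted convolution in the $g$-variable. The involution is checked analogously.

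The main obstacle will be the norm-continuity of this assignment so that it extends to a $C^*$-isomorphism of completions. Thanks to the standing amenability assumption there is no reduced/full ambiguity, so it suffices to show one can realize both sides inside a common faithful representation; for instance, inducing the regular representation of $A\rtimes_\alpha G$ fiberwise along a covering of $X/G$ by precompact open sets gives a faithful $*$-representation of both algebras under which the formula above is an intertwiner. Beyond this, the verification is a routine bookkeeping of Fubini-type interchanges and the commuting actions $\alpha,\beta$, with due attention to the modular function and compact support.
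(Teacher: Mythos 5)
The paper does not actually prove this statement; it is cited from [EE], and the only hint of method is the remark that (2) should follow from (1) by comparing fibres over $X/G$ (the fibre of the left side at $\bar x$ being $A^{G_x}\rtimes_\alpha G$, that of the right side being $[A\rtimes_\alpha G]^{G_x}$). Your proposal is therefore a genuinely different, self-contained route. For (1) it is essentially sound: the averaging $E_K(F)=\int_K\widetilde{\beta}_k(F)\,dk$ is a norm-one projection onto $(A\rtimes G)^K$ that sends $C_c(G,A)$ into $C_c(G,A^K)$, so the range of $\iota\rtimes G$ is dense, and density plus injectivity (hence closed range) gives surjectivity. For the injectivity step, your appeal to ``the regular representation pulling back faithfully'' is vague; the cleaner point is that $E_K$ on $A$ itself is an $\alpha$-equivariant conditional expectation onto $A^K$, hence descends to a conditional expectation $A\rtimes G\to A^K\rtimes G$ which is a left inverse to $\iota\rtimes G$ — amenability is needed only so that full and reduced crossed products coincide.

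For (2), your kernel map $\phi\mapsto\Phi_\phi$, $\Phi_\phi(x)(g):=\phi(g,x)$, is the right candidate, but the ``direct computation'' does \emph{not} go through with the formula $(\tau\grotimes\alpha)_gF(x)=\alpha_g[F(g^{-1}x)]$ as printed. Inserting the $\beta$-equivariance $\phi_2(g'^{-1}g,g'^{-1}x)=\beta_{g'^{-1}}\bigl[\phi_2(g'^{-1}g,x)\bigr]$ yields
$$(\phi_1*\phi_2)(g,x)=\int_G\phi_1(g',x)\,\alpha_{g'}\beta_{g'^{-1}}\bigl[\phi_2(g'^{-1}g,x)\bigr]\,dg',$$
which is an $(\alpha\circ\beta^{-1})$-twisted convolution, not an $\alpha$-twisted one; worse, that formula does not even define a $G$-action on $\beta$-equivariant functions when $G$ is non-abelian, since the verification requires $\beta_{g^{-1}g'g}=\beta_{g'}$. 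The correct action is $(\tau\grotimes\alpha)_gF(x)=\alpha_g\beta_g[F(g^{-1}x)]$ — this is the one consistent with the remark's claim that the induced action on the fibre $A^{G_x}$ is $\alpha$ — and with that correction the $\beta_{g'}$ from the action cancels the $\beta_{g'^{-1}}$ from the equivariance, leaving exactly the $\alpha$-twisted convolution in the $g$-variable. Your sentence about things ``combining under $\beta$-equivariance'' is right in spirit, but as stated the computation is off by a factor of $\beta$; writing out the corrected action and tracking the $\beta$'s is precisely what closes the gap.
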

\begin{rmk}
$(2)$ follows from $(1)$ by the following argument: The fiber of the left hand side at $x\in X$ is $A^{G_x}\rtimes_{\alpha}G$; The fiber of the right hand side at $x$ is $[A\rtimes_{\alpha}G]^{G_x}$; They are isomorphic thanks to $(1)$. Proposition \ref{crossed product and generalized fixed-point algebra} follows from $(2)$ by a formal argument.
\end{rmk}


Let us describe the descent homomorphism. For simplicity, we deal with only actually equivariant Kasparov modules. Note that every $\ca{R}KK_G$-element has such a representative because we can use the averaging procedure thanks to the properness of the $G$-action on $X$. See \cite[Section 5]{Kas15}. See also Corollary \ref{Cor positivity of the commutator implies the homotopy invariance}.

\begin{pro}\label{crossed product and generalized fixed-point module} 
$(1)$ Let $G$ be a locally compact second countable Hausdorff group and let $X$ be a $\sigma$-compact locally compact Hausdorff space equipped with a proper $G$-action. We assume that $G$ is amenable, for simplicity.
Let $A$ and $B$ be $X\rtimes G$-$C^*$-algebras, and let $(E,\pi,F)$ be an $X\rtimes G$-equivariant Kasparov $(A,B)$-module. The $G$-action on $E$ is denoted by $\alpha^E$.
We assume that $F$ is actually equivariant: $g(F)=F$ for every $g\in G$. Then, the descent homomorphism is given by
$$j_G(E,\pi,F)=\bra{[\bb{K}(L^2(G))\grotimes E]^{G,\Ad R\grotimes \alpha^E},[\id\grotimes \pi]^G,[\id\grotimes F]^G},$$
where $[\id\grotimes \pi]^G(a)$ and $[\id\grotimes F]^G$ are induced maps from $\id\grotimes \pi(a)$ and $\id\grotimes F$ to the fixed-point module, respectively.

$(2)$ In the language of u.s.c. fields, $j_G(E,\pi,F)$ can be written as follows. Let $\scr{A}:=(\{A_x\}_{x\in X},\Gamma_{\scr{A}})$, $\scr{B}:=(\{B_x\}_{x\in X},\Gamma_{\scr{B}})$, $\scr{E}:=(\{E_x\}_{x\in X},\Gamma_{\scr{E}})$ be the u.s.c. fields associated to $A$, $B$ and $E$, respectively. Let $\pi=\{\pi_x\}_{x\in X}$ and $F=\{F_x\}_{x\in X}$ be the field description of $\pi $ and $F$. Then, $j_G\bra{\scr{E},\{\pi_x\}_{x\in X},\{ F_x\}_{x\in X}}$ is given by
$$\bra{
C_0\bra{X\times_{G,\alpha^E\grotimes \Ad R }\bbra{E_x\grotimes\bb{K}(L^2(G))}_{x\in X}},
\{\pi_x\grotimes\id\}_{x\in X},\{ F_x\grotimes\id\}_{x\in X}}.$$
\end{pro}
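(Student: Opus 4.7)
The plan is to promote Proposition \ref{crossed product and generalized fixed-point algebra} from $C^*$-algebras to Hilbert modules and then to verify that both the left action $\pi$ and the operator $F$ transport correctly under the resulting isomorphism. Since every piece of structure on $E\rtimes G$ is assembled from the pre-Hilbert $C_c(G,B)$-module structure on $C_c(G,E)$, it suffices to work on this dense subspace and then extend by continuity.

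For part $(1)$, I will construct the candidate isomorphism $\Phi: E\rtimes G\to [\bb{K}(L^2(G))\grotimes E]^{G,\Ad R\grotimes \alpha^E}$ by the exact same kernel formula as in Proposition \ref{cp and fpa}, now with values in $E$ rather than $A$: for $f\in C_c(G,E)$ set $\Phi(f)(x)=k_f(\cdot,\cdot;x)$ with $k_f(g,h;x):=\mu(h)^{-1}\alpha^E_{g^{-1}}[f(gh^{-1})(gx)]$. A direct computation, essentially identical in bookkeeping to the one justifying Proposition \ref{cp and fpa} but using the $E$-valued inner product, shows that $\Phi$ preserves inner products, where the target is viewed as a Hilbert module over $(\bb{K}(L^2(G))\grotimes B)^G \cong B\rtimes G$. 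Compatibility with the right module action follows from the same type of kernel convolution identity. Density of $C_c(G,E)$ in $E\rtimes G$ and the analogous density on the generalized fixed-point side (obtained from finite sums of elementary tensors) then give an isomorphism of Hilbert modules.

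Next, for the left action, the same kernel algebra shows $k_{(\pi\rtimes\lt)(a)(f)}=[\id\grotimes \pi]^G(\Phi_A(a))\circ \Phi(f)$, where $\Phi_A$ is the corresponding isomorphism for $A$; hence $\Phi$ intertwines $\pi\rtimes \lt$ with $[\id\grotimes \pi]^G$. For the operator, the hypothesis $g(F)=F$ makes $\id\grotimes F$ commute with the diagonal action $\Ad R\grotimes \alpha^E$, so it descends to the fixed-point module. On the dense subspace, $[\widetilde{F}(f)](g)=F[f(g)]$, and applying $\Phi$ gives the kernel $\mu(h)^{-1}\alpha^E_{g^{-1}}[F(f(gh^{-1}))(gx)]$; the equivariance of $F$ and its compatibility with the $C_0(X)$-structure show that this equals $F_x$ applied fiberwise to $k_f(g,h;x)$, i.e.\ $[\id\grotimes F]^G\circ \Phi(f)$. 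The Kasparov module conditions are then inherited automatically: the graded commutator with $[\id\grotimes \pi]^G(a)$ is $[\id\grotimes [F,\pi(a)]]^G$, which is bounded, and the compactness $[\id\grotimes \pi]^G(a)(1+([\id\grotimes F]^G)^2)^{-1}$ follows from $\pi(a)(1+F^2)^{-1}\in \bb{K}_B(E)$ together with the fact that the generalized fixed-point construction sends compacts to compacts.

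For part $(2)$, this is purely a translation: the $C^*$-algebra $(\bb{K}(L^2(G))\grotimes E)^{G,\Ad R\grotimes \alpha^E}$ is, by construction, the continuous section algebra of the u.s.c.\ field $\{\bb{K}(L^2(G))\grotimes E_x\}_{x\in X}$ with diagonal $G$-action, and restricting to the quotient $X/G$ produces the displayed $C_0(X\times_{G,\alpha^E\grotimes \Ad R}\{E_x\grotimes \bb{K}(L^2(G))\}_{x\in X})$. The fiberwise descriptions of the operator and left action follow from the fiberwise nature of $[\id\grotimes F]^G$ and $[\id\grotimes \pi]^G$ in the field picture. The main obstacle is the inner product computation in step one, which is a routine but notation-heavy exercise with modular functions; everything else is a formal propagation of Proposition \ref{crossed product and generalized fixed-point algebra} through the definitions.
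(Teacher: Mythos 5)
Your overall architecture matches the paper's: transport the kernel isomorphism from the $C^*$-algebra case to Hilbert modules, then check that the left action and the operator go over fiberwise. Part $(2)$ as a translation of $(1)$ and the kernel computation for $\widetilde{F}$ using actual $G$-equivariance are also exactly what the paper does.

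However, there is a real gap in your argument for part $(1)$: you establish that $\Phi$ is an isometric bimodule homomorphism and then assert that ``density on the generalized fixed-point side (obtained from finite sums of elementary tensors)'' gives an isomorphism. This does not follow. An isometry sends a dense subset of $E\rtimes G$ to a subset whose closure is the \emph{image} of $\Phi$, not the whole target; to conclude surjectivity you would have to show that the dense generating set of $[\bb{K}(L^2(G))\grotimes E]^{G}$ lies in (the closure of) the image of $\Phi$, and no argument is given for that. The paper explicitly flags this as the nontrivial point (``This argument does not guarantee that the isometry is surjective'') and handles it by a linking-algebra trick borrowed from Le Gall: one forms the $C^*$-algebra $\bb{K}_B(E\oplus B)$, which contains $\bb{K}_B(E)$, $E$, $E^*$, and $B$ as its $2\times 2$ matrix blocks in a way compatible with the group action, and then applies the $C^*$-algebra statement (Lemma \ref{2.9} $(1)$) to $\bb{K}_B(E\oplus B)$. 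The resulting isomorphism of $C^*$-algebras respects the matrix decomposition, and reading off the off-diagonal block yields the desired isomorphism $E^K\rtimes G\cong (E\rtimes G)^K$ of Hilbert modules, which is precisely the surjectivity statement you need. Without some substitute for this step (the linking-algebra reduction, or an independent proof that finite-rank sections generate the fixed-point module from elements in the image of $\Phi$), the proposal does not close.

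A secondary point: the claim that ``the generalized fixed-point construction sends compacts to compacts,'' used to verify the Fredholm condition, is itself nonobvious and is usually derived as a consequence of the same linking-algebra machinery, since it rests on identifying the fixed-point algebra of $\bb{K}_B(E)$ with $\bb{K}_{B\rtimes G}(E\rtimes G)$. You should either cite this explicitly or recognize that it is part of what the matrix trick buys you.
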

\begin{proof}
$(2)$ is obvious from $(1)$.

We first prove that $E\rtimes G$ is isomorphic to the fixed-point module $[\bb{K}(L^2(G))\grotimes E]^{G,\Ad R\grotimes \alpha^E}$ as bimodules. By the ``same'' argument of the proof of Proposition \ref{crossed product and generalized fixed-point algebra}, we have an isometric homomorphism from the left hand side to the right hand side as bimodules. This is possible because all the algebraic operations on $E\rtimes G$ (right $B\rtimes G$-action, left $A\rtimes G$-action and the $B\rtimes G$-valued inner product) are given by the parallel formulas of the corresponding operations on crossed products. For example, the right action of $b\in C_c(G,B)$ on $e\in C_c(G,E)$ looks like the ``multiplication of $e$ and $b$ in a crossed product algebra'', although $e$ and $b$ live in different places. 

This argument does not guarantee that the isometry is surjective.
Thus, what we need to prove is the analogue of Lemma \ref{2.9} $(1)$ for Hilbert modules:
{\it Let $K$ be a compact subgroup of $G$. We suppose that $K$ acts on $B$ and $E$, whose actions are denoted by $\beta^B:K\to \Aut(B)$ and $\beta^E:K\to \Aut(E)$, and we also suppose that they are compatible in the following sense $\beta^E_k(eb)=\beta^E_k(e)\beta^B_k(b)$ and $\beta^B_k(\inpr{e_1}{e_2}{})=\inpr{\beta^E_k(e_1)}{\beta^E_k(e_2)}{}$.
We assume that the $K$-actions $\beta$'s commute with the $G$-actions $\alpha$'s. Then, the inclusion $\iota:E^K\to E$ induces an isomorphism
$$E^{K,\beta^E}\rtimes_\alpha G\to (E\rtimes_\alpha G)^{K,\beta^E}$$
as Hilbert modules.}

We borrow an idea from \cite[Lemma 4.1]{LG}. We prove that this statement is derived from Lemma \ref{2.9} $(1)$. Let us consider the $C^*$-algebra $\bb{K}_B(E\oplus B)$. It is decomposed into four components:
$$\bb{K}_B(E\oplus B)=\begin{pmatrix} \bb{K}_B(E) & E \\ E^* & B\end{pmatrix}.$$
Note that the product on the matrix algebra $\bb{K}_B(E\oplus B)$ contains all the information on the Hilbert module $E$ as follows: For $k\in\bb{K}_B(E)$, $e,e_1,e_2\in E$ and $b\in B$, 
{\small 
$$
\begin{pmatrix} 0 & e \\ 0 & 0\end{pmatrix}\begin{pmatrix} 0 & 0 \\ 0 & b\end{pmatrix}
=\begin{pmatrix} 0 & eb \\ 0 & 0\end{pmatrix}; \ 
\begin{pmatrix} k & 0 \\ 0 & 0\end{pmatrix}\begin{pmatrix} 0 & e \\ 0 & 0\end{pmatrix}
=\begin{pmatrix} 0 & ke \\ 0 & 0\end{pmatrix}; \text{ and }
\begin{pmatrix} 0 & 0 \\ e_1^* & 0\end{pmatrix}\begin{pmatrix} 0 & e_2 \\ 0 & 0\end{pmatrix}
=\begin{pmatrix} 0 & 0 \\ 0 & \inpr{e_1}{e_2}{B}\end{pmatrix},
$$}
where $e_1^*$ is the map $x\mapsto \inpr{e_1}{x}{B}$.

Moreover, the group action preserves the decomposition into $2\times 2$-matrices:
$$g\cdot \begin{pmatrix} k & e \\ f & b\end{pmatrix}=\begin{pmatrix} g(k) & g(e) \\ g(f) & g(b)\end{pmatrix}$$
for $g\in G$, $k\in\bb{K}_B(E)$, $e\in E$, $f\in E^*$ and $b\in B$, and similarly for the $K$-action.
Thus, we have
$$\bb{K}_B(E\oplus B)^K=\begin{pmatrix} \bb{K}_B(E)^K & E^K \\ [E^{*}]^K & B^K\end{pmatrix}\text{ and}$$
$$\bb{K}_B(E\oplus B)\rtimes G=\begin{pmatrix} \bb{K}_B(E)\rtimes G & E\rtimes G \\ E^*\rtimes G & B\rtimes G\end{pmatrix}.$$

Applying Lemma \ref{2.9} $(1)$ to $\bb{K}_B(E\oplus B)$, we obtain the isomorphism
$$\begin{pmatrix} \bb{K}_B(E)^K\rtimes G & E^K\rtimes G \\ [E^*]^K\rtimes G & B^K\rtimes G\end{pmatrix}\cong \begin{pmatrix} [\bb{K}_B(E)\rtimes G]^K & [E\rtimes G]^K \\ [E^*\rtimes G]^K & [B\rtimes G]^K\end{pmatrix}$$
as {\it $C^*$-algebras}. Therefore, we have an isomorphism between $E^K\rtimes G$ and $[E\rtimes G]^K$ as {\it Hilbert modules}.

Finally, we prove the property on $\widetilde{F}$. We use the language of fields and integral kernels. Let $F=\{F_x\}_{x\in X}$. For $e\in C_c(G,E)$, the integral kernel of the corresponding element of $[\bb{K}(L^2(G))\grotimes E]^G$ is given by
$k_e(g_1,g_2;x)=\mu(g_2)^{-1}g_1^{-1}[e(g_1g_2^{-1})(g_1x)].$
Thus,
\begin{align*}
k_{\widetilde{F}e}(g_1,g_2;x) 
&=\mu(g_2)^{-1}g_1^{-1}\bbbra{\bbra{\widetilde{F}e}(g_1g_2^{-1})(g_1x)} \\
&=\mu(g_2)^{-1}g_1^{-1}\bbbra{F_{g_1x}\bbra{e(g_1g_2^{-1})(g_1x)}} \\
&=F_x\bbra{\mu(g_2)^{-1}g_1^{-1}[e(g_1g_2^{-1})(g_1x)] } \\
&=F_x[k_{e}(g_1,g_2;x) ],
\end{align*}
where we have used the actual $G$-equivariance of $F$ at the third equality.
\end{proof}

The final tool to define the original topological assembly map is the ``descent of the Dirac element'' $j_G([d_X])$. 
According to Definition \ref{dfn reformulated KK elements}, for the reformulated one, we need 
$$\sigma_{\ca{S}_\vep}\circ j_G(\fgt[S]\grotimes [d_X])\in KK(\ca{S}_\vep\grotimes (C_0(X)\rtimes G),\ca{S}_\vep\grotimes (\bb{C}\rtimes G)).$$
Thus, we compute $j_G(\fgt[S]\grotimes [d_X])$.

The $KK$-element $\fgt[S]\grotimes [d_X]$ is given by the index element $[D]$ of the following $Spin^c$-Dirac operator $D$. In fact, since $\Cl_+(TX)\cong S^*\grotimes S$, we have $C_0(X,S)\grotimes L^2(X,\Cl_+(TX))=L^2(X,S)$. Put $D:=\sum_nc(v_n)\circ \nabla^S_{v_n}$ for an orthonormal base $\{v_n\}$. $\pi$ denotes left multiplication of $C_0(X)$ on $L^2(X,S)$. Then, $\fgt[S]\grotimes [d_X]$ can be represented by
$\bra{L^2(X,S),\pi,D}$. 

We explicitly compute $j_G([D])$ with the same spirit of Proposition \ref{crossed product and generalized fixed-point module}. We do not need to assume that $S$ is a Spinor bundle, and so we compute it in the following (slightly more general) situation: {\it On a Clifford bundle $E$ equipped with a $G$-equivariant Clifford multiplication $c:TX\to \End(E)$ and a $G$-equivariant Clifford connection $\nabla^E$, we have an equivariant Dirac operator $D= \sum_nc(v_n)\circ \nabla^E_{v_n}$.}
Note that 
$G $ has a unitary representation on $L^2(G )$ by $R _g \phi(h):=\sqrt{\mu(g)}\phi(h g)$, and $G$ acts on $(\bb{C}\rtimes G)  $ by $\rt_g (b)(h):=b(hg)$. The latter action gives Hilbert $\bb{C}\rtimes G$-module automorphisms with respect to the following Hilbert module structure: $b\cdot a:=a^\vee* b$ and $\inpr{b_1}{b_2}{\bb{C}\rtimes G}:=(b_2*b_1^*)^\vee$ for $a,b,b_1,b_2\in \bb{C}\rtimes G$, where $b^\vee(g):=\sqrt{\mu(g)}^{-1}b(g^{-1})$.
See Definition \ref{dfn of an ind} for the origin of this Hilbert module structure.

The following definition will be justified in the following proposition.


\begin{dfn}\label{def of descented Dirac element bimodule}
Let $X$ be a complete Riemannian manifold equipped with an isometric proper action of a locally compact second countable Hausdorff group $G $. For simplicity, we suppose that $G$ is amenable.
Let $E$ be a $G $-equivariant Clifford bundle over $X$ equipped with a $G $-equivariant Dirac operator $D$.

$(0)$ Let $C_c\bra{X\times_{G ,\alpha^E\grotimes R \grotimes \rt} \{E\grotimes L^2(G )\grotimes (\bb{C}\rtimes G)\}}$ be the set of compactly supported continuous sections $k:X\times G \times G \to E$  satisfying the equivariance condition 
$$k(g_1,g_2;x)=\sqrt{\mu(g)}\alpha^E_gk(g_1g,g_2g;g^{-1}x),$$
where ``compactly supported'' means that the closure of the set $\bbra{x\in X\,\middle|\, k(\bullet,\bullet,x)\neq 0}/G$ is compact.
It has a pre-Hilbert $C_c(G)$-module structure by the following operations: For $k,k_1,k_2\in C_c\bra{X\times_{G ,\alpha^E\grotimes R \grotimes \rt} \{E\grotimes L^2(G )\grotimes (\bb{C}\rtimes G) \}}$, $b\in C_c(G )$, $x\in X$ and $g,g_1,g_2\in G $.
$$\inpr{k_1}{k_2}{(\bb{C}\rtimes G)  }(g):=\sqrt{\mu(g)}^{-1}\int_Xc(x)\int_G \int_G  \inpr{k_1(\eta,\xi;x)}{k_2(\eta,g^{-1}\xi ;x)}{E}d\eta d\xi dx;$$
$$k\cdot b(g_1,g_2;x):=\int_G  k(g_1,\eta^{-1}g_2;x)b(\eta^{-1})\sqrt{\mu(\eta)}^{-1}d\eta.$$

$(1)$ We define a Hilbert $(\bb{C}\rtimes G)  $-module
$$L^2\bra{X\times_{G ,\alpha^E\grotimes R \grotimes \rt} \{E\grotimes L^2(G )\grotimes (\bb{C}\rtimes G) \}}$$
by the completion of $C_c\bra{X\times_{G ,\alpha^E\grotimes R \grotimes \rt} \{E\grotimes L^2(G )\grotimes (\bb{C}\rtimes G)\}}$ with respect to the above inner product.

$(2)$ We define a $*$-homomorphism 
$$\pi\rtimes \lt:C_0(X)\rtimes G \to \bb{L}_{\bb{C}\rtimes G}\bra{L^2\bra{X\times_{G ,\alpha^E\grotimes R \grotimes \rt} \{E\grotimes L^2(G )\grotimes (\bb{C}\rtimes G) \}}}$$
under the identification $C_0(X)\rtimes G \cong C_0(X\times_G  \bb{K}(L^2(G )))$, by the formula
$$[\pi\rtimes \lt  (a)k](g_1,g_2;x):=\int_G  k_a(g_1,\eta;x)k(\eta,g_2;x)d\eta .$$

$(3)$ For $k\in C_c\bra{X\times_{G ,\alpha^E\grotimes R \grotimes \rt} \{E\grotimes L^2(G )\grotimes (\bb{C}\rtimes G)\}}$, we define an element $k(g_1,g_2)\in C_c(X,E)$ by $k(g_1,g_2)(x):=k(g_1,g_2;x)$. We define $C_c^\infty\bra{X\times_{G ,\alpha^E\grotimes R \grotimes \rt} \{E\grotimes L^2(G )\grotimes (\bb{C}\rtimes G)\}}$ by the subset consisting of all smooth sections.
Associated to $D$, we define an unbounded operator $\overline{D}$ on $C_c^\infty\bra{X\times_{G ,\alpha^E\grotimes R \grotimes \rt} \{E\grotimes L^2(G )\grotimes (\bb{C}\rtimes G)\}}$ by
$$\overline{D}(k)(g_1,g_2;x):=D[k(g_1,g_2)](x).$$
Since $\overline{D}(k)$ is smooth and $G$-invariant, $\overline{D}$ is well-defined as a map on $C_c^\infty\bra{X\times_{G ,\alpha^E\grotimes R \grotimes \rt} \{E\grotimes L^2(G )\grotimes (\bb{C}\rtimes G)\}}$. Its extension to an appropriate domain is denoted by the same symbol.
\end{dfn}

\begin{pro}\label{formula on descent of Dirac element}
$(0)$ $C_c\bra{X\times_{G ,\alpha^E\grotimes R \grotimes \rt} \{E\grotimes L^2(G )\grotimes (\bb{C}\rtimes G)\}}$ is a pre-Hilbert $C_c(G)$-module and $\pi\rtimes \lt$ is actually a $*$-homomorphism. 

$(1)$ This $(C_0(X)\rtimes G, \bb{C}\rtimes G)  $-bimodule is isomorphic to $L^2(X,E)\rtimes G $ by the following correspondence: For $e\in C_c(G , C_c(X,E))\subseteq L^2(X,E)\rtimes G $, we define $k_e$ 
by
$$k_e(x)(g_1,g_2):=\sqrt{\mu(g_2)}^{-1}g_1^{-1}\bbbra{e(g_1g_2^{-1},g_1x)}.$$
We regard it as ``a function on $G\times G$ $\grotimes$ an element of $E_x$''.
This family defines an element of 
$C_c\bra{X\times_{G ,\alpha^E\grotimes R \grotimes \rt} \{E\grotimes L^2(G )\grotimes (\bb{C}\rtimes G)  \}}$.

$(2)$ For $e\in C_c(G,C_c(X,E))$, 
$$\overline{D}k_e=k_{\widetilde{D}e}.$$
Thus, we denote $\overline{D}$ by $\widetilde{D}$ from now on.
Consequently, $j_G ([D])$ is represented by
$$\bra{L^2\bra{X\times_{G ,\alpha^E\grotimes R \grotimes \rt} \{E\grotimes L^2(G )\grotimes (\bb{C}\rtimes G) \}},\pi\rtimes \lt  ,\widetilde{D}}.$$
\end{pro}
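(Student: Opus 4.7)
The three parts will be proved in the order $(1)$, $(0)$, $(2)$, so that the bimodule structure on the fixed-point side can be read off from the identification with $L^2(X,E)\rtimes G$ once $(1)$ is in place. The strategy parallels Proposition \ref{crossed product and generalized fixed-point module}, with the only new feature being that the scalar coefficient $\bb{C}$ is replaced by the right Hilbert $\bb{C}\rtimes G$-module $\bb{C}\rtimes G$ equipped with the $\rt$-action, so that instead of $\bb{K}(L^2(G))\grotimes L^2(X,E)$ one works with $E\grotimes L^2(G)\grotimes(\bb{C}\rtimes G)$. These are equivalent ways of writing the same $(\bb{K}(L^2(G))\grotimes C_0(X),\bb{C}\rtimes G)$-correspondence.

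For part $(1)$, I would verify directly that $e\mapsto k_e$ is a bimodule isomorphism onto the prescribed dense subspace. First, $k_e$ satisfies the $G$-equivariance condition: substituting the defining formula and using $\mu(g_2g)=\mu(g_2)\mu(g)$ together with $(g_1g)^{-1}=g^{-1}g_1^{-1}$ yields the required transformation law, and compact support modulo $G$ follows because the image of $\supp(e)$ under $(h,x)\mapsto x$ projects to a compact subset of $X/G$. Next I would check that $e\mapsto k_e$ intertwines the inner products, the right $\bb{C}\rtimes G$-actions, and the left $C_0(X)\rtimes G$-actions; each is a single integral manipulation. For the inner product, substituting the formula from Definition \ref{dfn of an ind} and performing the change of variables $\eta=g_1g_2^{-1}$ recovers the formula in Definition \ref{def of descented Dirac element bimodule}\,$(0)$. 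For the left action, one uses the identification of $C_0(X)\rtimes G$ with the generalized fixed-point algebra from Proposition \ref{cp and fpa}. Surjectivity onto a dense subspace is obtained by inverting the defining formula: given $k$, one recovers $e$ via $e(h,y):=h\cdot k(h,1;h^{-1}y)$ with $1$ the identity of $G$.

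Part $(0)$ is then a formal consequence of $(1)$: both the pre-Hilbert $C_c(G)$-module structure and the $*$-homomorphism property of $\pi\rtimes\lt$ on the fixed-point side coincide, via the isometry of $(1)$, with the corresponding structures on $C_c(G,C_c(X,E))\subseteq L^2(X,E)\rtimes G$, which are standard. Part $(2)$ is immediate from the locality and $G$-equivariance of $D$: applying $\overline{D}$ to $k_e$ differentiates only in the $X$-variable, and since $D$ commutes with the unitary action of $g_1^{-1}$ we have $\overline{D}(k_e)(g_1,g_2;x)=\sqrt{\mu(g_2)}^{-1}g_1^{-1}\bbbra{D_{g_1x}e(g_1g_2^{-1},g_1x)}$, which is exactly $k_{\widetilde{D}e}(g_1,g_2;x)$. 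The final sentence of the proposition is then a direct application of Proposition \ref{crossed product and generalized fixed-point module} to the Kasparov module $(L^2(X,E),\pi,D)$, combined with $(1)$ and $(2)$.

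The principal obstacle is bookkeeping rather than anything conceptual: every computation involves several conventions—left versus right translations, $R$ versus $\Ad R$, and the modular function entering through the Hilbert module normalization of Definition \ref{dfn of an ind}—and lining up signs and square roots of $\mu$ correctly requires care. Once this is done, no step goes beyond the method already established in Proposition \ref{crossed product and generalized fixed-point module}.
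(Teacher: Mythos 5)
Your proposal follows essentially the same route as the paper: define the map $e\mapsto k_e$ (the paper calls it $\Psi$), check that it intertwines the left $C_0(X)\rtimes G$-action, the right $\bb{C}\rtimes G$-action, and the inner product, and then invert it to get surjectivity onto a dense subspace. Your inversion formula $e(h,y)=h\cdot k(h,1;h^{-1}y)$ is the same function as the paper's $e(\eta;x)=\sqrt{\mu(\eta)}^{-1}k(1,\eta^{-1};x)$ once one rewrites it using the $G$-equivariance of $k$, and part $(2)$ is treated by the same one-line observation in both. The only genuine difference is one of organization: you propose to prove $(1)$ first and then deduce $(0)$ by transporting the structure across the isomorphism, whereas the paper establishes $(0)$ directly by computation. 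That is a legitimate reordering — you just need to phrase the statement of $(1)$ carefully (``$\Psi$ intertwines the prescribed bilinear formulas'' rather than ``intertwines the inner products'') until $(0)$ is secured — but it does not change the substance of the argument.
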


\begin{proof}
$(0)$ Simple computations show the statement. We leave it to the reader.

$(1)$ We denote the correspondence $e\mapsto k_e$ by $\Psi$. We need to check the following things: $(a)$ $\Psi$ is a left module homomorphism; $(b)$ $\Psi$ is a right module homomorphism; $(c)$ $\Psi$ is isometric; and $(d)$ The image of $\Psi$ is dense.

$(a)$ For $e\in C_c\bra{X\times_{G } \{E\grotimes L^2(G )\grotimes (\bb{C}\rtimes G)\}}$ and $a\in C_c(X\times_G \bb{K}(L^2(G )))$,
\begin{align*}
&k_{\pi\rtimes \lt  (a)(e)}(g_1,g_2;x)\\
&\ \ \ =\sqrt{\mu(g_2)}^{-1}\alpha^E_{g_1^{-1}}\bbbra{\pi\rtimes \lt  (a)(e)(g_1g_2^{-1};g_1x)} \\
&\ \ \ =\sqrt{\mu(g_2)}^{-1}\alpha^E_{g_1^{-1}}\bbbra{\int_G  a(\eta;g_1x)\alpha^E_\eta[e(\eta^{-1}g_1g_2^{-1};\eta^{-1}g_1x)]}d\eta \\
&\ \ \ = \sqrt{\mu(g_2)}^{-1}\alpha^E_{g_1^{-1}}\bbbra{\int_G 
\mu(\eta^{-1}g_1)\alpha^E_{g_1}[k_a(g_1,\eta^{-1}g_1;x)]\alpha^E_\eta\bra{
\sqrt{\mu(g_2)}\alpha^E_{\eta^{-1}}\alpha^E_{g_1}[k_e(\eta^{-1}g_1,g_2;x)]}d\eta} \\
&\ \ \ = \int_G  k_a(g_1,\eta^{-1}g_1;x)k_e(\eta^{-1}g_1,g_2;x)\mu(\eta^{-1}g_1)d\eta \\
&\ \ \ =\int_G  k_a(g_1,\eta^{-1};x)k_e(\eta^{-1},g_2;x)\mu(\eta^{-1})d\eta \\
&\ \ \ =\int_G  k_a(g_1,\eta;x)k_e(\eta,g_2;x)d\eta \\
&\ \ \ =[\pi\rtimes \lt  (a)k](x)(g_1,g_2),
\end{align*}
where we used the definitions of $k_a$ and $k_e$ at the second equality, the left invariance of the measure at the fifth equality, and the property of the modular function at the sixth equality.

$(b)$ It is obtained by a similar calculation of $(a)$.

$(c)$ 
For $e_1,e_2\in C_c(G ,L^2(X,E))$,
\begin{align*}
&\inpr{k_{e_1}}{k_{e_2}}{(\bb{C}\rtimes G)  }(g ) \\
&\ \ \ =\sqrt{\mu(g)}^{-1}\int_Xc(x)\int_G \int_G  \inpr{k_{e_1}(g_1,g_2;x)}{k_{e_2}(g_1,g^{-1}g_2 ;x)}{E_x}dg_1 dg_2 dx\\
&\ \ \ =\sqrt{\mu(g)}^{-1}\int_Xc(x)\int_G \int_G  \inpr{\sqrt{\mu(g_2)}^{-1}\alpha^E_{g_1^{-1}}e_1(g_1g_2^{-1};g_1 x)}{\sqrt{\mu(g^{-1}g_2)}^{-1}\alpha^E_{g_1^{-1}}e_2(g_1g_2^{-1}g ;g_1 x)}{E_x} dg_1 dg_2dx \\
&\ \ \ =\int_G \int_G \int_X c(x)\inpr{e_1(g_1g_2^{-1};g_1 x)}{e_2(g_1g_2^{-1}g ;g_1 x)}{E_{g_1x}} \mu(g_2)^{-1}dxdg_2 dg_1  \\
&\ \ \ =\int_G \int_G \int_X c(g_1^{-1}x)\inpr{e_1(g_2; x)}{e_2(g_2g ;x)}{E_x} dxdg_2 dg_1  \\
&\ \ \ =\int_G \inpr{e_1(g_2)}{e_2(g_2g )}{L^2(X,E)}dg_2 \\
&\ \ \ =\inpr{e_1}{e_2}{(\bb{C}\rtimes G)  }(g ),
\end{align*}
where we have used the definition of $k_e$ at the second equality, Fubini's theorem at the third one, the $G $-invariance of the measure on $X$ and the left invariance of the measure on $G $ at the fourth one, and the definition of the cut-off function at the fifth one.

$(d)$ For $k\in C_c\bra{X\times_{G ,\alpha^E\grotimes R \grotimes \rt} \{E\grotimes L^2(G )\grotimes (\bb{C}\rtimes G)\}}$, we prove that there exists $e\in C_c(G ,C_c(X,E))\subseteq L^2(X,E)\rtimes G$ such that $k_e=k$. In fact, if we put $e(\eta;x):=\sqrt{\mu(\eta)}^{-1}k(1,\eta^{-1};x)$, we obtain
\begin{align*}
k_e(g_1,g_2;x)
&=\sqrt{\mu(g_2)}^{-1}\alpha^E_{g_1^{-1}}e(g_1g_2^{-1};g_1x) \\
&=\sqrt{\mu(g_2)}^{-1}\alpha^E_{g_1^{-1}}\bbbra{\sqrt{\mu(g_2g_1^{-1})}k(1,g_2g_1^{-1};g_1x)}\\
&=\sqrt{\mu(g_2)}^{-1}\alpha^E_{g_1^{-1}}\bbbra{\sqrt{\mu(g_2g_1^{-1})}\sqrt{\mu(g_1)}\alpha^E_{g_1}k(1\cdot g_1,g_2g_1^{-1}g_1;g_1^{-1}g_1x)}\\
&=k(g_1,g_2;x),
\end{align*}
where we used the $G $-invariance of $k$ at the third equality.

$(2)$ One can easily prove this statement by using the definition of $k_e$ and the $G $-invariance of $D$. We leave it to the reader.

Moreover we can define descent homomorphism in terms of only unbounded Kasparov modules for actually equivariant unbounded Kasparov modules \cite[Definition-Proposition 2.10]{T4}, by the same formula. Thus, this result immediately follows from $(1)$ and $(2)$.
\end{proof}
\begin{rmks}\label{rmk exposition of bimodule structure on decent family descr}
$(1)$ On the fiber $E_x\grotimes L^2(G )\grotimes (\bb{C}\rtimes G)$ at $x\in X$, we can define a Hilbert $\bb{C}\rtimes G$-module structure by the following operations:
$$(e\grotimes \phi\grotimes b')\cdot b:=e\grotimes \phi\grotimes [b^\vee *b'],$$
$$\inpr{e_1\grotimes \phi_1\grotimes b_1}{e_2\grotimes \phi_2\grotimes b_2}{\bb{C}\rtimes G}:=\inpr{e_1}{e_2}{E_x}\inpr{\phi_1}{\phi_2}{L^2(G)}(b_2*b_1^*)^\vee,$$
where $b^\vee(g):=\sqrt{\mu(g)}^{-1}b(g^{-1})$. Moreover, $(\pi\rtimes \lt)_x(a_x)$ is given by $\id\grotimes a_x\grotimes \id$ for $a_x\in \bb{K}(L^2(G))$.
These operations vary continuously on $X$, and hence we obtain a locally trivial bundle of Hilbert $\bb{C}\rtimes G$-module. This structure naturally induces the above bimodule structure to the section space.

$(2)$ This Hilbert module bundle structure is given by the tensor product of the locally trivial Hilbert bundle $E\grotimes L^2(G)$ and the trivial Hilbert $\bb{C}\rtimes G$-module bundle $X\times \bb{C}\rtimes G$. Thus, in order to use the formulas, we will often denote  {\bf symbolically} a section of this bundle as $\phi\grotimes \psi$, where $\phi$ is regarded as a map $X\to E\grotimes L^2(G)$ and $\psi$ is regarded as a map $X\to \bb{C}\rtimes G$. In this notation, the above bimodule structure can be described as follows:
$$[\pi\rtimes \lt(a)\phi\grotimes \psi](x)=[\id_{E_x}\grotimes a(x)]\phi(x)\grotimes \psi(x),$$
$$[\phi\grotimes \psi\cdot b](x) =\phi(x)\grotimes [b^\vee*\psi(x)],$$
$$\inpr{\phi_1\grotimes \psi_1}{\phi_2\grotimes \psi_2}{(\bb{C}\rtimes G) }=\int_X c(x)\inpr{\phi_1(x)}{\phi_2(x)}{E_x\grotimes L^2(G)}[\psi_{2}(x)*\psi_{1}(x)^*]^\vee dx,$$
where $b^\vee(g):=\sqrt{\mu(g)}^{-1}b(g^{-1})$. These formulas will be adopted as the definition of the descent homomorphism for proper $LT$-spaces.
\end{rmks}



We can further rewrite these simple formulas in much more algebraic way, under the following assumptions.

\begin{asm}\label{principal bundle situation}
Until the next subsection, we suppose the following conditions on $X$ and $G$:
\begin{itemize}
\item $G$ is a finite-dimensional unimodular Lie group, and $H$ is a closed Lie subgroup of the center of $G$;
\item The $H$-action given by the restriction of that of $G$, is free and smooth; and
\item The orbit map $H\ni h\mapsto h\cdot x\in X$ is isometric for each $x\in X$.
\end{itemize}
\end{asm}

For $v\in \fra{h}=\Lie(H)$, we denote the infinitesimal action of $v$ on $L^2(X,E)$ by $d\alpha^E_v$: 
$$d\alpha^E_vs(x)=\left. \frac{d}{dt}\right|_{t=0}\alpha^E_{\exp(tv)}s(\exp(-tv)\cdot x).$$
The associated operator on $L^2(X,E)\rtimes G$ is denoted by $\widetilde{d\alpha^E_v}$ as usual.

\begin{lem}\label{Lemma base differential is fiber differential}
For $e\in C_c^\infty(G,C_c^\infty(X,E))$ and $v\in \fra{h}$, $k_{\widetilde{d\alpha^E_v}e}$ is computed as follows:
$$k_{{\widetilde{d\alpha^E_v}}e}(x)=
dR_{-v}k_e(x)+d\rt_{-v}k_e(x).$$
\end{lem}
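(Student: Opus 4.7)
The plan is to apply Proposition~\ref{formula on descent of Dirac element}(2) to the operator $d\alpha^E_v$ and then transfer the resulting $x$-derivative onto the two $G$-variables by using the built-in equivariance of $k_e$. First I would check that $d\alpha^E_v$ is $G$-equivariant in the sense required to invoke the intertwining $k_{\widetilde{D}e}=\overline{D}k_e$: since $v\in\fra{h}$ and $H$ lies in the center of $G$, the one-parameter subgroup $\exp(tv)$ commutes with each $\alpha^E_g$, and differentiating at $t=0$ gives $\alpha^E_g\circ d\alpha^E_v = d\alpha^E_v\circ\alpha^E_g$. Thus
\[
k_{\widetilde{d\alpha^E_v}e}(g_1,g_2;x) \;=\; d\alpha^E_v\bigl[k_e(g_1,g_2;\,\cdot\,)\bigr](x).
\]

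Next I would exploit the equivariance relation for $k_e$. Using unimodularity of $G$ (so $\sqrt{\mu(\exp(-tv))}=1$), the relation in Definition~\ref{def of descented Dirac element bimodule}(0) specializes to
\[
k_e\bigl(g_1,g_2;\exp(-tv)x\bigr) \;=\; \alpha^E_{\exp(-tv)}\,k_e\bigl(g_1\exp(-tv),\,g_2\exp(-tv);\,x\bigr).
\]
Plugging this into the definition $d\alpha^E_v[s](x)=\frac{d}{dt}\big|_{t=0}\alpha^E_{\exp(tv)}s(\exp(-tv)x)$ and using $\alpha^E_{\exp(tv)}\alpha^E_{\exp(-tv)}=\id$ yields
\[
d\alpha^E_v\bigl[k_e(g_1,g_2;\,\cdot\,)\bigr](x) \;=\; \left.\frac{d}{dt}\right|_{t=0} k_e\bigl(g_1\exp(-tv),\,g_2\exp(-tv);\,x\bigr).
\]

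Finally, the two-variable chain rule splits this derivative into a sum of two pieces, each differentiating a single variable along $\exp(-tv)$. Because $G$ is unimodular, the infinitesimal generators of $R$ on $L^2(G)$ and of $\rt$ on $\bb{C}\rtimes G$ both take the form $dR_w\phi(h)=\frac{d}{dt}\big|_{t=0}\phi(h\exp(tw))$, with no modular-function contribution; setting $w=-v$ identifies the two pieces with $(dR_{-v}k_e(x))(g_1,g_2)$ and $(d\rt_{-v}k_e(x))(g_1,g_2)$ respectively, yielding the claimed formula. The only real subtlety is the sign bookkeeping---verifying that the $g^{-1}$ on the $x$-variable in the equivariance relation is exactly what produces the $-v$ on the $g_1,g_2$ side, and that unimodularity of $G$ (which is part of Assumption~\ref{principal bundle situation}) is precisely what is needed to avoid stray modular-function derivatives on both sides.
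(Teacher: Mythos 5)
Your proof is correct and arrives at the same key intermediate expression $\frac{d}{dt}\big|_{t=0}k_e(g_1\exp(-tv),g_2\exp(-tv);x)$ as the paper, so the approach is essentially the same. The only cosmetic difference is that you keep everything packaged at the level of $k_e$ (by invoking the intertwining $\overline{D}k_e=k_{\widetilde{D}e}$ for the $G$-invariant operator $D=d\alpha^E_v$ and then the equivariance relation for $k_e$), whereas the paper substitutes the definition $k_e(g_1,g_2;x)=\alpha^E_{g_1^{-1}}[e(g_1g_2^{-1})(g_1x)]$ directly and pushes $\exp(-tv)$ through using centrality of $H$---these amount to the same centrality-plus-unimodularity computation.
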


\begin{proof}
The following computation shows it:
\begin{align*}
k_{\widetilde{d\alpha^E_v}e}(g_1,g_2;x)
&=\alpha^E_{g_1^{-1}}{d\alpha^E_v}e(g_1g_2^{-1};g_1x) \\
&=\alpha^E_{g_1^{-1}}\left. \frac{d}{dt}\right|_{t=0}{\alpha^E_{\exp(tv)}}e(g_1g_2^{-1};\exp(-tv)g_1x) \\
&=\alpha^E_{g_1^{-1}}\left. \frac{d}{dt}\right|_{t=0}{\alpha^E_{\exp(tv)}}e(g_1\exp(-tv)[g_2\exp(-tv)]^{-1};[g_1\exp(-tv)]x) \\
&=\alpha^E_{g_1^{-1}}\left. \frac{d}{dt}\right|_{t=0}{\alpha^E_{\exp(tv)}}\alpha^E_{g_1\exp(-tv)}k_e(g_1\exp(-tv),g_2\exp(-tv);x) \\
&= \left. \frac{d}{dt}\right|_{t=0}k_e(g_1\exp(-tv),g_2\exp(-tv);x) \\
&=dR_{-v}k_e(g_1,g_2;x)+d\rt_{-v}k_e(g_1,g_2;x).
\end{align*}
\end{proof}

\begin{ex}
Suppose that $H= G=\bb{R}$, $X=\bb{R}^2$ and $E=X\times\bb{C}$. Let us consider the $G$-action on $X$ given by $g:(x,y)\mapsto (x+g,y)$, and its lift on $E$ given by $g:((x,y),z)\mapsto ((x+g,y),z)$. We denote the infinitesimal generator $1\in\fra{g}$ of $G$ by $v$.
Then, $d\alpha^E_v=-\frac{\partial}{\partial x}$.

Let $e:G\to C_c^\infty(X,E)$ be a smooth function. Then, $k_e$ is a smooth function on $G\times G\times X=\bb{R}\times\bb{R}\times\bb{R}^2$. The coordinate is denoted by $(g_1,g_2;x,y)$. Then, $dR_v=\frac{\partial}{\partial g_1}$, $d\rt_v=\frac{\partial}{\partial g_2}$. Since $k_e(g_1,g_2;x,y)= e(g_1-g_2;x+g_1,y)$, we have
\begin{align*}
k_{\widetilde{d\alpha^E_v}e}(g_1,g_2;x,y)
&=-\frac{\partial}{\partial x}e(g_1-g_2;x+g_1,y)\\
&=-\frac{\partial}{\partial g_1}e(g_1-g_2;x+g_1,y)-\frac{\partial}{\partial g_2}e(g_1-g_2;x+g_1,y)\\
&=dR_{-v}k_e(g_1,g_2;x,y)+d\rt_{-v}k_e(g_1,g_2;x,y).
\end{align*}
This is the most fundamental case of the above formula.
\end{ex}
\begin{nota}
In order to emphasize how $\widetilde{d\alpha^E_v}$ acts on each fiber, we denote the above formula by $k_{\widetilde{d\alpha^E_v}e}
=\id_{E}\grotimes dR_{-v}\grotimes \id (k_e)+\id_{E}\grotimes \id\grotimes d\rt_{-v}(k_e)$.
\end{nota}

Let $D$ be the Dirac operator given by
$$\sum_ic(e_i)d\alpha^E_{-e_i}+D_{\text{base}},$$
where $c(e_i)$ is the Clifford multiplication of the vector field induced by $e_i\in\fra{h}$, and $D_{\text{base}}=\sum_{n}c(v_n)\circ \nabla^E_{v_n}$ is the ``Dirac operator for the base direction'', where $\{v_n\}$ is an orthonomal base of the normal bundle of the $H$-orbit.

The following is fundamental.

\begin{lem}\label{Lemma invariant section vs restr to quot space}
Let $V\to X$ be a $G$-equivariant vector bundle (it can be a $C^*$-algebra bundle, a Hilbert space bundle, or a Hilbert module bundle). We say two elements $v,w\in V$ are $H$-equivalent if $w=h\cdot v$ for some $h\in H$. Consequently, if $v\in V_x$, $w=h\cdot v\in V_{hx}$. Then, the quotient space under the $H$-equivalence admits a $G/H$-equivariant vector bundle structure over $X/H$. 
A $G$-invariant section on $V$ defines a $G/H$-invariant section on $V/H$ and vice versa.
\end{lem}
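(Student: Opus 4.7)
The plan is to exploit the fact that under Assumption \ref{principal bundle situation}, the free smooth isometric $H$-action makes $X\to X/H$ a principal $H$-bundle. First, I would check that the bundle map $\pi:V\to X$ descends to a surjection $\bar\pi:V/H\to X/H$: if $w=h\cdot v$ for some $h\in H$, then $\pi(w)=h\cdot \pi(v)$, so $[\pi(v)]=[\pi(w)]$. To endow $V/H$ with the claimed local trivializations, I would use local slices: for each $x\in X$, freeness and smoothness of the $H$-action give an open submanifold $S\subset X$ through $x$ such that $S\times H\to X$, $(s,h)\mapsto h\cdot s$, is a diffeomorphism onto an open $H$-invariant neighborhood of $H\cdot x$. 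The $H$-equivariance of $V$ then provides a canonical isomorphism $V|_{HS}\cong V|_S\times H$ with $H$ acting only on the second factor, so the quotient is $V/H|_{\bar S}\cong V|_S$. This exhibits the local triviality of $V/H\to X/H$ and transfers the fiberwise structure (vector space, $C^*$-algebra, or Hilbert module) from $V$; the transition functions on overlapping slices are those of $V$ and hence of the desired type.

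For the $G/H$-equivariant structure, I would set $g\cdot[v]:=[g\cdot v]$. Well-definedness uses the centrality of $H$ in $G$: if $v'=h\cdot v$, then $g\cdot v'=gh\cdot v=hg\cdot v$, which is $H$-equivalent to $g\cdot v$. By definition $h\cdot[v]=[h\cdot v]=[v]$ for $h\in H$, so $H$ acts trivially and the action factors through $G/H$. Continuity (or smoothness) is immediate in the local trivializations.

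For the correspondence between sections, a $G$-invariant section $s:X\to V$ is in particular $H$-invariant, so $[s(hx)]=[h\cdot s(x)]=[s(x)]$, and $s$ descends to a section $\bar s:X/H\to V/H$; its $G/H$-invariance is then tautological from the construction of the $G$-action on $V/H$. Conversely, because the $H$-action on $X$ is free, each fiber $(V/H)_{[x]}$ has a canonical representative in $V_x$ (the unique element of the equivalence class lying over $x$), so a $G/H$-invariant section $\bar s$ lifts to $s:X\to V$ by setting $s(x)$ to be this representative; $H$-invariance of $s$ follows because $h\cdot s(x)\in V_{hx}$ is the unique representative of $\bar s([x])=\bar s([hx])$ in $V_{hx}$, and full $G$-invariance follows from $G/H$-invariance of $\bar s$ together with the definition of the $G$-action on $V/H$. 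These two operations are mutually inverse. The only non-formal step is the local slice argument underlying the bundle structure, but this is a standard consequence of the freeness and smoothness imposed by Assumption \ref{principal bundle situation}.
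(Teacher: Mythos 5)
The paper states this lemma without proof, simply labeling it ``fundamental.'' Your argument is correct and is the standard one: local slices for the free (and, inherited from $G$, proper) smooth $H$-action give the principal-bundle picture and the local trivializations of $V/H\to X/H$; centrality of $H$ in $G$ makes $g\cdot[v]:=[g\cdot v]$ well-defined and factors the action through $G/H$; and the bijection between $G$-equivariant sections of $V$ and $G/H$-equivariant sections of $V/H$ follows exactly as you describe, the key point for the inverse direction being that freeness of the $H$-action on $X$ singles out a unique representative of each fiber $(V/H)_{[x]}$ inside $V_x$. No gap.
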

\begin{nota}
By regarding $V/H$ as the ``restriction of $V$ to local slices'', and taking into account the above lemma, we often denote $C_c(X\times_G V)$ by $C_c(X/H\times_{G/H}V)$.
Following this notation, the function spaces appearing in Definition \ref{def of descented Dirac element bimodule} are also denoted by
$C_0\bra{X/H\times_{G/H}\bb{K}(L^2(G))}$, \\
$C_0\bra{X/H\times_{G/H}\{E\grotimes \bb{K}(L^2(G))\}}$ and $L^2\bra{X/H\times_{G/H}\{E\grotimes L^2(G)\grotimes (\bb{C}\rtimes G) \}}.$
\end{nota}


Let us rewrite $j_G([D])$ in an algebraic way.
A cut-off function $\fra{c}:X\to\bb{R}_{\geq 0}$ induces a cut-off function $\overline{\fra{c}}$ on $X/H$ by the orbit integral $\overline{\fra{c}}(x):=\int_H\fra{c}(h^{-1}\widetilde{x})dh$, where $x\in X/H$ and $\widetilde{x}$ is a lift of $x$. The following is obvious by the arguments so far.

\begin{pro}\label{descented Dirac for principal bundle}
$(1)$ $L^2\bra{X/H\times_{G/H}\{E\grotimes L^2(G)\grotimes (\bb{C}\rtimes G) \}}$ admits a Hilbert $(\bb{C}\rtimes G)$-module structure given by the following operations: For 
$$\phi\grotimes \psi,\phi_1\grotimes \psi_1,\phi_2\grotimes \psi_2,\in L^2\bra{X/H\times_{G/H}\{E\grotimes L^2(G)\grotimes (\bb{C}\rtimes G) \}}$$
 (for the remark on this notation, see Remark \ref{rmk exposition of bimodule structure on decent family descr} (2)) and $b\in \bb{C}\rtimes G$,
\begin{itemize}
\item $[\phi\grotimes\psi]\cdot b(x)=\phi(x)\grotimes [b^\vee*\psi(x)]$; and
\item $\inpr{\phi_1\grotimes\psi_1}{\phi_2\grotimes\psi_2}{(\bb{C}\rtimes G)  }=\int_{X/H}\overline{\fra{c}}(x)\inpr{\phi_1(x)}{\phi_2(x)}{E_x\grotimes L^2(G)}[\psi_2(x)*\psi_1(x)^*]^\vee dx$.
\end{itemize}

$(2)$ This Hilbert module admits a left module structure
$$\pi\rtimes \lt :{C_0\bra{X/H\times_{G/H}\bb{K}(L^2(G))}}\to\bb{L}_{\bb{C}\rtimes G}\bra{L^2\bra{X/G\times_{G/H}\{E\grotimes L^2(G)\grotimes (\bb{C}\rtimes G) \}}}$$
given by $[\pi\rtimes \lt (a)\phi\grotimes\psi](x)=[\id_{E_x}\grotimes a(x)\phi(x)]\grotimes\psi(x)$. 

$(3)$ $\overline{D}$ denotes the operator
$$\sum_i[\id_{E}\grotimes dR_{v_i}\grotimes \id+ \id_{E}\grotimes \id\grotimes d\rt_{v_i}]\grotimes c(v_i)+D_{\rm base}.$$
Then, $j_G([D])$ is represented by
$$\bra{L^2\bra{X/H\times_{G/H}\{E\grotimes L^2(G)\grotimes (\bb{C}\rtimes G) \}},\pi\rtimes \lt ,\overline{D}}.$$
\end{pro}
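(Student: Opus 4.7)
The plan is to derive Proposition \ref{descented Dirac for principal bundle} from Proposition \ref{formula on descent of Dirac element} by translating the $G$-equivariant bimodule description over $X$ into a $(G/H)$-equivariant description over the quotient $X/H$, and then applying Lemma \ref{Lemma base differential is fiber differential} to rewrite $\widetilde{D}$ in the stated algebraic form.

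For parts (1) and (2), I start from the concrete bimodule and left action produced by Proposition \ref{formula on descent of Dirac element}. Assumption \ref{principal bundle situation} guarantees that $X \to X/H$ is a principal $H$-bundle whose base is a smooth Riemannian manifold on which $G/H$ acts; Lemma \ref{Lemma invariant section vs restr to quot space} then identifies the space of $G$-equivariant sections of the fibre bundle $\{E\grotimes L^2(G)\grotimes(\bb{C}\rtimes G)\}_{x\in X}$ with $(G/H)$-equivariant sections of the descended bundle over $X/H$, justifying the notational shift from $X\times_{G,\cdots}(\cdots)$ to $X/H\times_{G/H}(\cdots)$. The inner product in Proposition \ref{formula on descent of Dirac element} is an integral over $X$ against a cut-off $\fra{c}$; for any $H$-invariant integrable function $f$, the principal-bundle splitting of the measure together with the unimodularity of $H$ and the isometry assumption on the $H$-orbit map give
\begin{equation*}
\int_X \fra{c}(x) f(x)\,dx \;=\; \int_{X/H}\overline{\fra{c}}(\bar x)\,f(\bar x)\,d\bar x,
\end{equation*}
which yields the inner product formula in (1). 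The left-action formula in (2) is read off directly from the fibrewise description of $\pi\rtimes\lt$ recorded in Remark \ref{rmk exposition of bimodule structure on decent family descr}, since the trivial action of $C_0(X)\rtimes G\cong C_0(X\times_G\bb{K}(L^2(G)))$ on the third tensor factor $(\bb{C}\rtimes G)$ is unaffected by descent to $X/H$.

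For part (3), it remains to identify $\widetilde{D}$ of Proposition \ref{formula on descent of Dirac element} with $\overline{D}$ under this identification. Because each summand of $D = \sum_i c(e_i)\,d\alpha^E_{-e_i} + D_{\rm base}$ is separately $G$-invariant, the decomposition is preserved at the level of kernels. The base part $D_{\rm base}$ differentiates only in directions normal to the $H$-orbits, so by Lemma \ref{Lemma invariant section vs restr to quot space} it descends verbatim to sections over $X/H$ with the same formula. For each fibre direction $e_i\in\fra{h}$, Lemma \ref{Lemma base differential is fiber differential} (applied with $v=-e_i$) gives
\begin{equation*}
k_{\widetilde{d\alpha^E_{-e_i}}e} \;=\; \bra{\id_E\grotimes dR_{e_i}\grotimes\id + \id_E\grotimes\id\grotimes d\rt_{e_i}}(k_e);
\end{equation*}
multiplying by $c(e_i)$, which acts boundedly on the $E$-factor and commutes with operators on the $L^2(G)$- and $(\bb{C}\rtimes G)$-factors, and summing over $i$ reproduces the fibre-part of $\overline{D}$. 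Combining with the base part recovers the stated expression for $\overline{D}$.

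The only real subtlety I anticipate is the bookkeeping with measures: ensuring compatibility of Haar measures on $H$, $G/H$ and $G$ so that $\overline{\fra{c}}$ is genuinely a cut-off on the proper $G/H$-space $X/H$ and so that the displayed integral identity holds. Everything else is a direct transcription of Proposition \ref{formula on descent of Dirac element}, and in particular self-adjointness, regularity and domain issues for $\overline{D}$ transfer without change from those of $\widetilde{D}$ via Lemma \ref{Lemma invariant section vs restr to quot space}, so no new analysis is required.
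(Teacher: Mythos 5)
Your proposal is correct and takes essentially the same route the paper intends: the paper gives no proof here, asserting only that the statement ``is obvious by the arguments so far'' after defining the induced cut-off $\overline{\fra{c}}$, and you have filled in exactly what is meant — transporting the bimodule of Proposition \ref{formula on descent of Dirac element} to $X/H$ via Lemma \ref{Lemma invariant section vs restr to quot space}, pushing the inner-product integral to the quotient against $\overline{\fra{c}}$, and rewriting the $H$-direction terms of $\widetilde{D}$ via Lemma \ref{Lemma base differential is fiber differential}.
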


\subsection{Twisted equivariant version}\label{section index theorem twisted equivariant}

In the previous three subsections, we supposed that the $G$-action on $X$ lifts to the Clifford bundle. In this subsection, we will study the case of a Clifford bundle over $X$ which admits an action of a {\it $U(1)$-central extension of $G$} which is compatible with the $G$-action on $X$. It is a natural situation for Hamiltonian loop group spaces. 
The formulas proved this subsection will be adopted as definitions in Section \ref{section top ass map}.

More concretely, we will describe parallel results for the following setting.
For a continuous $U(1)$-central extension of $G$
$$1\to U(1)\xrightarrow{i} G^\tau \xrightarrow{p} G\to 1,$$
we suppose that $G$ (not $G^\tau$) acts on a complete Riemannian manifold $X$ in an isometric, proper and cocompact way, and that $X$ has a $G$-equivariant Spinor bundle $(S,c)$.
We consider a Hermitian vector bundle $\pi:F\to X$ equipped with a $G^\tau$-action satisfying $\alpha^F_{i(z)}=z\id_F$ and $\pi(g\cdot f)=p(g)\cdot \pi(f)$ for $z\in U(1)$, $g\in \widetilde{G}$ and $f\in F$. 
Such a vector bundle is said to be {\bf $\tau$-twisted $G$-equivariant}.
{\it We would like to study the $\tau$-twisted $G$-equivariant index of $S\grotimes F$.}
Needless to say, it is possible to deal with this problem as a $G^\tau$-twisted problem. However, we need more economical formulas in order to generalize them to infinite-dimensional manifolds.
We refer to \cite[Section 3.3]{Thesis} for the detailed arguments.

\begin{nota}
Let 
$1\to U(1)\xrightarrow{i} G^\tau \xrightarrow{p} G\to 1$ be a $U(1)$-central extension of $G$.

$(1)$ A function $f$ from $G^\tau$ to a vector space (Hilbert spaces, $C^*$-algebras, Hilbert modules, and so on) is said to be {\bf at level $q$} if $f(i(z)g)=z^qf(g)$. 

$(2)$ The set of compactly supported continuous function on $G^\tau$ at level $q$, is denoted by $C_c(G,q\tau)$. Other types of function spaces are denoted in the same way: $C_c^\infty(G,q\tau)$, $L^2(G,q\tau)$, and so on.

$(3)$ Let $A$ be a $G$-$C^*$-algebra. It is automatically a $G^\tau$-$C^*$-algebra through the homomorphism $p:G^\tau\to G$. The subalgebra $A\rtimes_{q\tau}G$ of $A\rtimes G^\tau$ is defined by the completion of $C_c(G,q\tau)$ in $A\rtimes G^\tau$. It is called the {\bf $q\tau$-twisted crossed product of $A$ by $G$}. For a Hilbert $A$-module $E$, we define $E\rtimes_{q\tau}G$ in the same way.
\end{nota}

\begin{dfn}\label{twisted equivariant KK theory}
We suppose the same conditions.

$(1)$ Let $A$ and $B$ be separable $G$-$C^*$-algebras. They are  automatically equipped with the $G^\tau$-actions.
A $G^\tau$-equivariant Kasparov module $(E,\pi,F)$ satisfying the following  is said to be {\bf $q\tau$-twisted $G$-equivariant}:
\begin{center}
$\alpha^E_{i(z)}(e)=z^qe$ for all $z\in U(1)$.
\end{center}
We define $KK_G^{q\tau}(A,B)$ by the set of homotopy classes of $q\tau$-twisted $G$-equivariant Kasparov $(A,B)$-modules for $q\in \bb{Z}$.
These are direct summands of $KK_{G^\tau}(A,B)$, that is to say, 
$KK_{G^\tau}(A,B)=\bigoplus_{q\in\bb{Z}} KK_G^{q\tau}(A,B).$

$(2)$ Let $(E,\pi,F)$ be a $q\tau$-twisted $G$-equivariant Kasparov $(A,B)$-module. Then, we can define a Kasparov $(A\rtimes_{(p-q)\tau}G,B\rtimes_{p\tau}G)$-module $(E\rtimes_{p\tau}G,\pi\rtimes \lt ,\widetilde{F}|_{E\rtimes_{p\tau}G})$. By the correspondence $(E,\pi,F)\mapsto (E\rtimes_{p\tau}G,\pi\rtimes \lt ,\widetilde{F}|_{E\rtimes_{p\tau}G})$, we define a homomorphism
$$j_G^{p\tau}:KK_G^{q\tau}(A,B)\to KK(A\rtimes_{(p-q)\tau}G,B\rtimes_{p\tau}G).$$
It is called the {\bf partial descent homomorphism}.
When we should emphasize ``$q$'' in the partial descent homomorphism, $j_G^{p\tau}$ is denoted by $j_{G,q}^{p\tau}$.

$(3)$ In the same situation, we define the analytic assembly map
\begin{align*}
\mu_G^{p\tau}&:KK_G^{p\tau}(C_0(X),\bb{C})\to 
KK(\bb{C},\bb{C}\rtimes_{p\tau}G)
\end{align*}
by $\mu_G^{p\tau}(x):=[c_X]\grotimes_{C_0(X)\rtimes G}j_{G,p}^{p\tau}(x)$ (see the following remark).

$(4)$ We do the parallel constructions for $\ca{R}KK$-theory: 
$$\ca{R}KK_G^{q\tau}(X;A,B)\subseteq \ca{R}KK_{G^\tau}(X;A,B),$$ 
$$j_G^{p\tau}:\ca{R}KK_G^{q\tau}(X;A,B)\to \ca{R}KK(X;A\rtimes_{(p-q)\tau}G,B\rtimes_{p\tau}G)\text{ and}$$
$$\nu_G^{p\tau}:\ca{R}KK_G^{p\tau}(X;C_0(X),Cl_\tau(X)) \to
KK(\bb{C},\bb{C}\rtimes_{p\tau}G).$$

\end{dfn}
\begin{rmks}
$(1)$ We use the notation of the twisted $K$-theory, because a central extension of a group acting on a space is a special example of a twisting. See \cite{FHTI} for the details.

$(2)$ The Kasparov product takes the following form:
$$KK_G^{p\tau}(A,A_1)\times KK_G^{q\tau}(A_1,B)\to KK_G^{(p+q)\tau}(A,B).$$
In particular, we can still define the Poincar\'e duality homomorphism by the same formula:
$$\PD:KK_G^{p\tau}(C_0(X),\bb{C})\cong \ca{R}KK_G^{p\tau}(X;C_0(X),Cl_\tau(X)).$$
Moreover, $\mu_G^{p\tau}=\nu_G^{p\tau}\circ \PD$.
The reformulated version 
$$\PD:KK_G^{p\tau}(\ca{A}(X),\ca{S}_\vep)\cong 
\ca{R}KK_G^{p\tau}(X;\ca{S}_\vep\grotimes C_0(X),\ca{S}_\vep\grotimes C_0(X)),$$
$$\mu_G^{p\tau}=\nu_G^{p\tau}\circ \PD:KK_G^{p\tau}(\ca{A}(X),\ca{S}_\vep)\to KK(\ca{S}_\vep,\ca{S}_\vep\rtimes_{p\tau}G)$$
can be easily proved.

$(3)$ Let us briefly explain the reason why $j^{p\tau}_{G,p}$ (which is defined on $KK_G^{p\tau}(C_0(X),\bb{C})$) takes values in $KK(C_0(X)\rtimes G,\bb{C}\rtimes_{p\tau}G)$. Let $a:G^\tau\to C_0(X)$ be at level $q$, and let $e:G^\tau \to E$ be at level $p$. Then
\begin{align*}
\pi\rtimes \lt (a)(e)(g)& =\int_{G^\tau} \pi(a(h))\alpha^E_h(e(h^{-1}g))dh\\
& =\int_G \int_{U(1)} \pi(a(z\overline{h}))\alpha^E_{z\overline{h}}(e([z\overline{h}]^{-1}g))dh \\
&= \int_G \int_{U(1)} z^q\pi(a(\overline{h}))z^p\bbra{\alpha^E_{\overline{h}}(z^{-p}e(\overline{h}^{-1}g))}dh.
\end{align*}
This integral vanishes unless $q=0$.

$(4)$ By definition, $[c_X]$ is an element of $KK(\bb{C},C_0(X)\rtimes G^\tau)$. However, since $i(U(1))$ acts on $C_0(X)$ trivially, the projection $c\in C_0(X)\rtimes G^\tau$ is at level $0$. Therefore, $[c_X]$ belongs to the direct summand $KK(\bb{C},C_0(X)\rtimes G)$. We used this fact to define $\mu_G^{p\tau}$.

\end{rmks}
The above assembly maps contain all the information of the assembly maps for $G^\tau$ in the following sense.
Note that $\bb{C}\rtimes_{p\tau}G$ is contained in $\bb{C}\rtimes G^\tau$ as a direct summand. Thus, we have an injection $KK(\bb{C},\bb{C}\rtimes_{p\tau}G) \hookrightarrow KK(\bb{C},\bb{C}\rtimes G^\tau)$. 

\begin{pro}
The following two diagrams commute:
$$\xymatrix{
KK_G^{p\tau}(C_0(X),\bb{C}) \ar@{^{(}->}[r] \ar_{\mu_G^{p\tau}}[d] & KK_{G^\tau}(C_0(X),\bb{C}) \ar_{\mu_{G^\tau}}[d] \\
KK(\bb{C},\bb{C}\rtimes_{p\tau}G) \ar@{^{(}->}[r] & KK(\bb{C},\bb{C}\rtimes G^\tau)}\ \ \xymatrix{
\ca{R}KK_G^{p\tau}(X;C_0(X),Cl_\tau(X)) \ar@{^{(}->}[r] \ar_{\nu_G^{p\tau}}[d] & \ca{R}KK_{G^\tau}(X;C_0(X),Cl_\tau(X)) \ar_{\nu_{G^\tau}}[d] \\
KK(\bb{C},\bb{C}\rtimes_{p\tau}G) \ar@{^{(}->}[r] & KK(\bb{C},\bb{C}\rtimes G^\tau).}$$
\end{pro}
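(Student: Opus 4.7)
The crux of both commutativities is a direct-sum decomposition of crossed products by $G^\tau$ into the twisted crossed products by $G$ of Definition~\ref{twisted equivariant KK theory}. Since $i(U(1))\subset G^\tau$ is compact and central, and acts trivially on every $G$-equivariant $C^*$-algebra $A$ (such as $C_0(X)$, $Cl_\tau(X)$, or $\bb{C}$), the $U(1)$-Fourier projections $f\mapsto \int_{U(1)}z^{-q}f(i(z)\bullet)\,dz$ on $C_c(G^\tau,A)$ induce an orthogonal direct-sum decomposition
\[
A\rtimes G^\tau \;\cong\; \bigoplus_{q\in\bb{Z}} A\rtimes_{q\tau}G,
\]
and analogously $E\rtimes G^\tau\cong\bigoplus_{r\in\bb{Z}} E\rtimes_{r\tau}G$ for any $G^\tau$-equivariant Hilbert $A$-module $E$. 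A change-of-variable computation in the formulas for $\pi\rtimes\lt$ and for the $\bb{C}\rtimes G^\tau$-valued inner product on $E\rtimes G^\tau$ shows, moreover, that when $E$ is $p\tau$-twisted, the left action of $A\rtimes_{s\tau}G$ on $E\rtimes_{r\tau}G$ vanishes unless $s=r-p$, and the inner product of two elements of $E\rtimes_{r\tau}G$ lies in $\bb{C}\rtimes_{r\tau}G$.

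The plan for the first diagram is: first, record the above level decomposition and the vanishing rules for the bimodule structure. Second, observe that the projection $c$ defining $[c_X]$ is $U(1)$-invariant (since $U(1)$ acts trivially on $C_0(X)$), so it lies in the level-$0$ summand; this gives $[c_X]\in KK(\bb{C},C_0(X)\rtimes G)$ and embeds into $KK(\bb{C},C_0(X)\rtimes G^\tau)$ via the summand inclusion $\iota$. Third, for $x\in KK_G^{p\tau}(C_0(X),\bb{C})$ represented by $(E,\pi,F)$, the bimodule $E\rtimes G^\tau$ inherits the level decomposition; the left action of $C_0(X)\rtimes G$ (the level-$0$ summand of $C_0(X)\rtimes G^\tau$) preserves and is nonzero only on the level-$p$ summand $E\rtimes_{p\tau}G$, and the restriction of $j_{G^\tau}(x)$ to this summand-pair is exactly $j_{G,p}^{p\tau}(x)\in KK(C_0(X)\rtimes G,\bb{C}\rtimes_{p\tau}G)$. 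Hence
\[
[c_X]\grotimes_{C_0(X)\rtimes G^\tau} j_{G^\tau}(x) \;=\; \iota_*\bigl([c_X]\grotimes_{C_0(X)\rtimes G} j_{G,p}^{p\tau}(x)\bigr) \;=\; \iota_*\mu_G^{p\tau}(x),
\]
which is the commutativity of the first diagram.

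For the second diagram the identical plan applies, with the extra observations that $[d_X]\in KK_G(Cl_\tau(X),\bb{C})$ is untwisted (level $0$), so $j_{G^\tau}[d_X]$ decomposes as $\bigoplus_r j_{G,0}^{r\tau}[d_X]$ in a level-preserving way, restricting on the level-$p$ summand to $j_{G,0}^{p\tau}[d_X]\in KK(Cl_\tau(X)\rtimes_{p\tau}G,\bb{C}\rtimes_{p\tau}G)$; and $\fgt$ commutes trivially with the decomposition. Chaining these through the threefold Kasparov product in the definition of $\nu$ yields $\iota_*\nu_G^{p\tau}(y)$ for $y\in\ca{R}KK_G^{p\tau}(X;C_0(X),Cl_\tau(X))$. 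The main obstacle is purely bookkeeping---tracking levels through the two-sided module action and the inner-product formula, and verifying the vanishing rules by $U(1)$-averaging; once these level-shift rules are in place, commutativity is a formal consequence of the associativity of the Kasparov product together with the direct-sum decomposition of the crossed products.
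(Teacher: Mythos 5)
Your proposal is correct, and it fills in exactly the routine verification the paper leaves implicit. The paper itself gives no proof of this proposition; it relies on the observations in the remarks around Definition~\ref{twisted equivariant KK theory} (in particular the $U(1)$-averaging computation showing that $\pi\rtimes\lt(a)(e)$ vanishes unless the levels balance, and the observation that the cut-off projection $c$ sits in the level-$0$ summand $C_0(X)\rtimes G$ of $C_0(X)\rtimes G^\tau$), and your argument makes precisely those observations into a proof via the Fourier-level decomposition of the crossed products and its compatibility with the bimodule structure. The level-shift rule you derive ($s=r-p$) matches the paper's convention that the partial descent of a $q\tau$-twisted module lands in $KK(A\rtimes_{(p-q)\tau}G,B\rtimes_{p\tau}G)$. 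Two cosmetic slips, neither affecting the argument: where you write ``Hilbert $A$-module $E$'' you mean a Hilbert $B$-module, and in the general statement the inner product of two elements of $E\rtimes_{r\tau}G$ lies in $B\rtimes_{r\tau}G$, not $\bb{C}\rtimes_{r\tau}G$ (the latter is the relevant special case $B=\bb{C}$).
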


Let us describe the twisted versions of Proposition \ref{cp and fpa}, Proposition \ref{crossed product and generalized fixed-point module} and Proposition \ref{descented Dirac for principal bundle}.

Note that $G$ acts on $\bb{K}\bra{L^2(G,q\tau)}$ by the formula
$$\Ad R_g(k):=R_{\widetilde{g}}\circ k\circ R_{\widetilde{g}^{-1}},$$
where $\widetilde{g}$ is a chosen lift of $g\in G$. The ambiguity of the choice of a lift is cancelled out. The same argument of  \cite[Proposition 4.8]{Loi} shows the following.

\begin{pro}\label{twisted twisted cp and fpa}
Let $A$ be an $X\rtimes G$-$C^*$-algebra and let $\scr{A}:=(\{A_x\}_{x\in X},\Gamma_{\scr{A}})$ be the u.s.c. field associated to $A$.
Then, we have an isomorphism
$$A\rtimes_{q\tau} G\cong C_0\bra{X\times_{G,\Ad R\grotimes \alpha^A}\bbra{\bb{K}(L^2(G,q\tau))\grotimes A_x}_{x\in X}}.$$

Let $a\in C_c(G,A)$. The integral kernel of the corresponding equivariant section is given by
$$x\mapsto\bbbra{(g,h)\mapsto \mu(h)^{-1}\alpha^A_{g^{-1}}[a(gh^{-1})(gx)]},$$
where $a(gh^{-1})(gx)$ is the evaluation of $a(gh^{-1})\in A$ at $gx$.
The above integral kernel is also denoted by
$k_a(g,h;x).$
\end{pro}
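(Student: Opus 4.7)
The plan is to follow the strategy of Proposition \ref{cp and fpa}, which is based on Lemma \ref{2.9}, but to carry the $U(1)$-twist through the argument along the lines of \cite[Proposition 4.8]{Loi}. The cleanest route is a direct-summand argument. Applying the untwisted Proposition \ref{cp and fpa} to the group $G^\tau$, which acts on $X$ via $p:G^\tau\to G$ so that the central $U(1)=\ker p$ acts trivially, produces an isomorphism
\[
A\rtimes G^\tau \;\cong\; C_0\bigl(X\times_{G^\tau,\Ad R\grotimes\alpha^A}\{\bb{K}(L^2(G^\tau))\grotimes A_x\}_{x\in X}\bigr).
\]
Both sides decompose as direct sums over $q\in\bb{Z}$ with respect to the central $U(1)$. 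On the left this is the decomposition $A\rtimes G^\tau = \bigoplus_q A\rtimes_{q\tau} G$ noted in Definition \ref{twisted equivariant KK theory}. On the right, $L^2(G^\tau)=\bigoplus_q L^2(G,q\tau)$ as $i(U(1))$-representations; the induced block-matrix decomposition of $\bb{K}(L^2(G^\tau))$ has its $i(U(1))$-fixed part (under $\Ad R$) equal to the diagonal $\bigoplus_q \bb{K}(L^2(G,q\tau))$, since $\Ad R_{i(z)}$ acts on the $(p,q)$-block as $z^{p-q}$. The plan is to check that the $q$-th summand on the left matches the $q$-th summand on the right; this reduces the twisted statement to the already proved untwisted one for $G^\tau$.

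The second step is to transfer the kernel formula. Starting from the untwisted formula of Proposition \ref{cp and fpa},
\[
k_a(g,h;x) = \mu(h)^{-1}\alpha^A_{g^{-1}}[a(gh^{-1})(gx)]\qquad (g,h\in G^\tau),
\]
one restricts to $a\in C_c(G^\tau,q\tau;A)$ and observes that the level-$q$ property of $a$ together with the $\Ad R$-equivariance on the fixed-point side forces $k_a$ to descend to a kernel on $G\times G$ with values in $\bb{K}(L^2(G,q\tau))\grotimes A_x$. The explicit formula displayed in the statement then drops out directly; one only has to verify that the ambiguity coming from the choice of lifts $\widetilde g, \widetilde h\in G^\tau$ of $g,h\in G$ cancels in $k_a(g,h;x)$, which it does because two lifts differ by a central $i(z)$ and the $z$-factors produced by the level-$q$ condition on $a$ and the $\Ad R$-action on $\bb{K}(L^2(G,q\tau))$ cancel exactly.

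The main obstacle I expect is bookkeeping of $U(1)$-characters: making sure that the $\Ad R$-action on $\bb{K}(L^2(G^\tau))$ picks out precisely the summand $\bb{K}(L^2(G,q\tau))$ corresponding to the twist $q\tau$ (rather than $-q\tau$ or an off-diagonal block), and that the level convention \emph{``$a(i(z)g)=z^q a(g)$''} used in this paper matches the convention implicit in \cite{EE} and \cite{Loi}. Once these conventions are aligned, no new $C^*$-algebraic input is required beyond the untwisted statement, because the twist affects only the convolution variable and not the manifold $X$ or the fibers $A_x$.
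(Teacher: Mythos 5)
Your approach is correct and genuinely self-contained, whereas the paper's ``proof'' is a one-sentence delegation to Loizides. The reduction you propose — apply the untwisted Proposition \ref{cp and fpa} to the locally compact amenable group $G^\tau$ (where $G^\tau$ acts on $X$ through $p$), and then Fourier-decompose both sides by the central $U(1)$ — is a valid alternative. On the left, $A\rtimes G^\tau = \bigoplus_q^{c_0} A\rtimes_{q\tau}G$ because $c_0(\bb{Z})\cong C^*(i(U(1)))$ embeds centrally in $M(A\rtimes G^\tau)$, giving mutually orthogonal central projections onto the $q$-graded ideals; on the right, the same central $c_0(\bb{Z})$ sits inside $M(\bb{K}(L^2(G^\tau)))$ via $\widehat{R|_{i(U(1))}}$, and it picks out the diagonal blocks $\bb{K}(L^2(G,q\tau))$ exactly as you observe. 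The Green--Echterhoff isomorphism intertwines these two $c_0(\bb{Z})$-structures, so it restricts to an isomorphism of the $q$-th summands. This is a cleaner proof than re-running the twisted argument of Loizides, since no new $C^*$-algebraic input is needed.

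One imprecision in your last paragraph: it is not the case that the scalar-valued kernel $k_a(g,h;x)\in A_x$ is independent of the lifts of $g,h$ from $G$ to $G^\tau$. What the level-$q$ condition on $a$ actually gives (using the computation you set up, and using that $i(U(1))$ acts trivially on $X$ and on $A$) is $k_a(i(z)\widetilde g,\widetilde h;x)=z^{q}k_a(\widetilde g,\widetilde h;x)$ and $k_a(\widetilde g,i(z)\widetilde h;x)=z^{-q}k_a(\widetilde g,\widetilde h;x)$: the kernel lives honestly on $G^\tau\times G^\tau$ with bi-weight $(q,-q)$, which is precisely the transformation law required for the associated integral operator $\Op(k_a(\cdot,\cdot;x))$ to be a well-defined element of $\bb{K}(L^2(G,q\tau))$. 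There is no cancellation in the kernel itself. What does cancel is the lift-ambiguity of the $G$-action $\Ad R\grotimes\alpha^A$ on the fiber $\bb{K}(L^2(G,q\tau))\grotimes A_x$ — since $R_{i(z)}$ acts on $L^2(G,q\tau)$ as the central scalar $z^q$, $\Ad R_{i(z)}=\id$ — and this is what makes the statement ``the section $x\mapsto\Op(k_a(\cdot,\cdot;x))$ is $G$-equivariant'' meaningful. With that distinction clarified, the rest of your argument goes through.
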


Note that 
$\bb{K}\bra{L^2\bra{G,p\tau},L^2\bra{G,(p-q)\tau}}$ is a Hilbert $\bb{K}\bra{L^2\bra{G,p\tau}}$-module by the following operations: For $k,k_1,k_2\in \bb{K}\bra{L^2\bra{G,p\tau},L^2\bra{G,(p-q)\tau}}$ and $b\in \bb{K}\bra{L^2\bra{G,p\tau}}$, 
\begin{center}
$k\cdot b:=k\circ b$, and $\inpr{k_1}{k_2}{\bb{K}\bra{L^2\bra{G,p\tau}}}:=k_1^*\circ k_2$.
\end{center}
This Hilbert module admits a left $\bb{K}\bra{L^2\bra{G,(p-q)\tau}}$-module structure
$$\pi:\bb{K}\bra{L^2\bra{G,(p-q)\tau}}\to
\bb{L}_{\bb{K}\bra{L^2\bra{G,p\tau}}}\bra{\bb{K}\bra{L^2\bra{G,p\tau},L^2\bra{G,(p-q)\tau}}}$$
given by $\pi(a)(k):=a\circ k$.

Let $\{V_x\}$ be a $q\tau$-twisted $X\rtimes G$-equivariant u.s.c. field of vector spaces.
Then, $G$ acts on neither $\bb{K}(L^2(G,p\tau),L^2(G,(p-q)\tau))$ nor $\{V_x\}_{x\in X}$, but it does act on $\{\bb{K}(L^2(G,p\tau),L^2(G,(p-q)\tau))\grotimes V_x\}_{x\in X}$ by the following: For $F\in \bb{K}(L^2(G,p\tau),L^2(G,(p-q)\tau))$, $v\in V_x$, $\widetilde{g}\in G^\tau$ and $z\in U(1)$,
$$R_{z\widetilde{g}}\circ F \circ R_{(z\widetilde{g})^{-1}}\grotimes (z\widetilde{g})\cdot v=[z^{p-q}R_{\widetilde{g}}]\circ F \circ [z^{-p}R_{\widetilde{g}^{-1}}]\grotimes z^{q}(\widetilde{g}\cdot v)
=R_{\widetilde{g}}\circ F \circ R_{\widetilde{g}^{-1}}\grotimes \widetilde{g}\cdot v.$$
With this observation, we can describe the twisted descent homomorphism for $\ca{R}KK$-theory in the language of fields.

\begin{pro}\label{twisted cp and fpa and cp and fpm}
Let $A$ and $B$ be $X\rtimes G$-$C^*$-algebras and let $(E,\pi,F)$ be a $q\tau$-twisted $X\rtimes G$-equivariant Kasparov $(A,B)$-module. We suppose that $F$ is actually equivariant.
Let $\scr{A}:=(\{A_x\}_{x\in X},\Gamma_{\scr{A}})$, $\scr{B}:=(\{B_x\}_{x\in X},\Gamma_{\scr{B}})$ and  $\scr{E}:=(\{E_x\}_{x\in X},\Gamma_{\scr{E}})$ be the u.s.c. fields associated to $A$, $B$ and $E$, respectively. 
Then, we have an isomorphism
$$E\rtimes_{p\tau} G\cong C_0\bra{X\times_{G,\alpha^E\grotimes \Ad R}\bbra{E_x\grotimes \bb{K}(L^2(G,p\tau),L^2(G,(p-q)\tau))}_{x\in X}}$$
as bimodules. Moreover, $\pi\rtimes_{(p-q)\tau}\lt$ corresponds to $\{\pi_x\grotimes \id\}_{x\in X}$ and $F$ corresponds to $\{F_x\grotimes \id\}_{x\in X}$. Therefore, $j_G^{p\tau}\bra{\scr{E},\{\pi_x\}_{x\in X},\{ F_x\}_{x\in X}}$ is represented by
$$\bra{
C_0\bra{X\times_{G,\alpha^E\grotimes \Ad R }\bbra{E_x\grotimes\bb{K}(L^2(G,p\tau),L^2(G,(p-q)\tau))}_{x\in X}},
\{\pi_x\grotimes\id\}_{x\in X},\{ F_x\grotimes\id\}_{x\in X}}.$$
\end{pro}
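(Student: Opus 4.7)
The plan is to directly adapt the proof of Proposition \ref{crossed product and generalized fixed-point module}, inserting the twisting bookkeeping needed for the algebra-level statement in Proposition \ref{twisted twisted cp and fpa}. Concretely, I would define a candidate isometric bimodule map
$$\Psi: E\rtimes_{p\tau}G\longrightarrow C_0\bra{X\times_{G,\alpha^E\grotimes \Ad R}\bbra{E_x\grotimes \bb{K}(L^2(G,p\tau),L^2(G,(p-q)\tau))}_{x\in X}}$$
by the same integral-kernel formula as in the untwisted case,
$$k_e(g_1,g_2;x):=\mu(g_2)^{-1}\alpha^E_{g_1^{-1}}\bbbra{e(g_1g_2^{-1})(g_1\cdot x)},$$
now with $e\in C_c(G^\tau,E)$ at level $p$ and $g_1,g_2\in G^\tau$. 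Combining the level-$p$ property of $e$, the level-$q$ property of $\alpha^E$, and the triviality of the $U(1)$-action on $X$ yields the equivariance
$$k_e(z_1g_1,z_2g_2;x)=z_1^{p-q}z_2^{-p}\,k_e(g_1,g_2;x),$$
which is exactly the condition characterizing integral kernels of compact operators from $L^2(G,p\tau)$ to $L^2(G,(p-q)\tau)$; the $G$-equivariance in the $X$-variable is identical to the untwisted computation.

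Next, I would verify the four properties $(a)$--$(d)$ from the proof of Proposition \ref{crossed product and generalized fixed-point module} for $\Psi$: compatibility with the left $A\rtimes_{(p-q)\tau}G$-action, compatibility with the right $B\rtimes_{p\tau}G$-action, isometry of inner products, and density of the image. Each is a near-verbatim repetition of the untwisted computation; the $U(1)$-phases introduced by the twisting cancel in every integrand because the levels of the two $C^*$-algebras together with the level of the bimodule $E$ are balanced so that the integrand has total $U(1)$-weight zero, which is the same mechanism already used in Proposition \ref{twisted twisted cp and fpa}. For density, the explicit preimage formula $e(\eta;x):=\sqrt{\mu(\eta)}^{-1}k(1,\eta^{-1};x)$ produces a section of the correct level $p$ thanks to the equivariance of $k$ in its second argument. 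Compatibility with the operator $F$ is immediate from its actual equivariance via the same one-line calculation $F_x[k_e(g_1,g_2;x)]=k_{\widetilde{F}e}(g_1,g_2;x)$ at the end of the proof of Proposition \ref{crossed product and generalized fixed-point module}.

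The main obstacle is not any single hard step but the combinatorics of the twisting levels, together with the observation that the matrix-algebra reduction used in the untwisted proof (viewing $E$ as a corner of $\bb{K}_B(E\oplus B)$) is less clean here: $\bb{K}_B(E\oplus B)$ carries four corners with twisting levels $0,q,-q,0$ respectively, so it is not a uniformly twisted $G$-$C^*$-algebra and one cannot simply invoke Proposition \ref{twisted twisted cp and fpa} as a black box. The direct verification sketched above circumvents this. A further technicality is to ensure that the set-theoretic lifts $G\to G^\tau$ implicit in several intermediate formulas enter only through $U(1)$-invariant combinations, which is automatic once the level conditions on $E$, $A$ and $B$ are used.
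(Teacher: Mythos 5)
The paper states Proposition \ref{twisted cp and fpa and cp and fpm} without an explicit proof, so there is no written argument to compare against; it is implicitly left as a combination of Proposition \ref{crossed product and generalized fixed-point module} and Proposition \ref{twisted twisted cp and fpa}. Your proposal is a correct way to carry this out, and the level-bookkeeping you perform is accurate: for $e\in C_c(G,p\tau)\cdot E$, the kernel $k_e(g_1,g_2;x)=\mu(g_2)^{-1}\alpha^E_{g_1^{-1}}[e(g_1g_2^{-1})(g_1 x)]$ transforms as $z_1^{p-q}z_2^{-p}$ under $g_i\mapsto z_ig_i$, which is precisely the defining equivariance for a kernel in $\bb{K}(L^2(G,p\tau),L^2(G,(p-q)\tau))$, and the left module action, right module action and inner product all produce $U(1)$-invariant integrands because the levels $p$, $p-q$, $q$, and $0$ balance. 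The preimage formula $e(\eta;x)=\sqrt{\mu(\eta)}^{-1}k(1,\eta^{-1};x)$ does give a section at level $p$ in $\eta$, so surjectivity follows directly.

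Your observation about $\bb{K}_B(E\oplus B)$ is a genuine technical point that the paper does not discuss: the four corners carry $U(1)$-weights $0$, $q$, $-q$, $0$, so that matrix algebra is not a $G$-$C^*$-algebra but only a $G^\tau$-$C^*$-algebra, and the argument of Proposition \ref{crossed product and generalized fixed-point module} cannot be quoted as a black box. One alternative to your direct verification, which keeps the matrix-algebra reduction, is to run the untwisted Proposition \ref{crossed product and generalized fixed-point module} for the group $G^\tau$ on $\bb{K}_B(E\oplus B)$ (a perfectly good $G^\tau$-$C^*$-algebra), obtaining $E\rtimes G^\tau\cong[\bb{K}(L^2(G^\tau))\grotimes E]^{G^\tau}$, and then pass to the level-$p$ direct summand on both sides; the $U(1)$-weight decomposition of $L^2(G^\tau)\cong\bigoplus_r L^2(G,r\tau)$ produces exactly the space $\bb{K}(L^2(G,p\tau),L^2(G,(p-q)\tau))$ after taking the $U(1)$-invariant block for the diagonal weight, recovering your formula. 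Either route is fine; yours has the advantage of displaying the kernel formulas explicitly, which is in fact what Section \ref{section top ass map} later imitates for proper $LT$-spaces.

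One small caution: you wrote $\mu(g_2)^{-1}$, which matches the kernel convention of Propositions \ref{cp and fpa} and \ref{crossed product and generalized fixed-point module}; Proposition \ref{formula on descent of Dirac element} uses $\sqrt{\mu(g_2)}^{-1}$ because it adopts the specific Hilbert $\bb{C}\rtimes G$-module structure built from $\ind_{\bb{C}\rtimes G}$, so the exponent differs. Keep these two normalizations apart when you write out the isometry check $(c)$.
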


The following is the twisted version of Proposition \ref{formula on descent of Dirac element}. Let $[D]=(L^2(X,E),\pi,D)$ be an index element of a $G$-equivariant Dirac operator.

\begin{pro}\label{prop descent of Dirac element for twisted version}
We define a $\bra{C_0(X)\rtimes_{p\tau} G,\bb{C}\rtimes_{p\tau}G}$-bimodule
$$\bra{L^2\bra{X\times_{G ,\alpha^E\grotimes R \grotimes \rt} \{E\grotimes L^2(G,p\tau) \grotimes (\bb{C}\rtimes_{-p\tau} G) \}},\pi\rtimes \lt ,\widetilde{D}}$$
in the same way of Definition \ref{def of descented Dirac element bimodule}. Then, it is isomorphic to $j_{G,0}^{p\tau}([D])$ by the correspondence $e\mapsto k_e$ given by
$$k_e(g_1,g_2;x):=\sqrt{\mu(g_2)}^{-1}g_1^{-1}\bbbra{e(g_1g_2^{-1},g_1x)}.$$
\end{pro}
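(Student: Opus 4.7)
The plan is to mimic the proof of Proposition \ref{formula on descent of Dirac element} while tracking the level conditions imposed by the central extension. Since every step in that proof involved only algebraic manipulations of integrals and the $G$-invariance of $D$, and since $U(1) = i(U(1)) \subseteq G^\tau$ acts trivially on $E$ (the $G^\tau$-action on $E$ being through $p$), the cocycle contributions factor out cleanly and the verifications carry over.

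First, I would check that the formula $e \mapsto k_e$ actually lands in the claimed bimodule. For $z \in U(1)$ and a level-$p$ function $e:G^\tau\to L^2(X,E)$, the central element $z$ commutes with everything, $(zg_1)^{-1}$ acts on $E$ the same way as $g_1^{-1}$, and $\mu(zg_2)=\mu(g_2)$, so a direct substitution yields
$k_e(zg_1,g_2;x) = z^p k_e(g_1,g_2;x)$ and $k_e(g_1,zg_2;x) = z^{-p} k_e(g_1,g_2;x),$
matching the factors $L^2(G,p\tau)\grotimes (\bb{C}\rtimes_{-p\tau}G)$. The $G$-equivariance condition of Definition \ref{def of descented Dirac element bimodule}(0) is verified by the same calculation as in Proposition \ref{formula on descent of Dirac element}(1)(c) using the left invariance of the Haar measure and the modular function identity.

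Next, I would reproduce the four verifications $(a)$--$(d)$ of Proposition \ref{formula on descent of Dirac element}. The key observation is that although the integrals now range over $G^\tau$, the integrand in $\pi\rtimes \lt(a)(e)$ and $\inpr{e_1}{e_2}{}$ is nonzero only when the total level of the integrand is zero, so the $U(1)$-integration is trivial and the computations reduce to the corresponding computations over $G$ already carried out in loc. cit. Concretely: $(a)$ the left-module homomorphism property follows from the same chain of equalities, since $a$ at level $0$ and $e$ at level $p$ make $\pi\rtimes \lt(a)(e)$ a well-defined level-$p$ function; $(b)$ the right-module property is analogous, using that the right action is by $b\in C_c(G,p\tau)$ of level $p$ and $\inpr{\cdot}{\cdot}{\bb{C}\rtimes_{p\tau}G}$ pairs two level-$p$ elements to give a level-$p$ element as required; $(c)$ the isometry check proceeds line for line as before, with Fubini, the $G$-invariance of the measure on $X$, and the definition of the cut-off function; $(d)$ the density argument is identical, since given $k$ in the target bimodule, defining $e(\eta;x):=\sqrt{\mu(\eta)}^{-1}k(1,\eta^{-1};x)$ (where $1 \in G^\tau$) gives a level-$p$ function because $k$ is at level $p$ in the first argument and $-p$ in the second, so $k(1,\eta^{-1};x)$ acquires the correct $U(1)$-behavior in $\eta$, and the computation of $k_e=k$ is unchanged.

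Finally, the identity $\overline{D} k_e = k_{\widetilde{D}e}$ is proved exactly as in Proposition \ref{formula on descent of Dirac element}(2), using only that $D$ commutes with the $G$-action on $L^2(X,E)$ (which is automatic: $D$ is $G$-invariant, hence $G^\tau$-invariant since $U(1)$ acts trivially on $E$). This gives the desired representative for $j^{p\tau}_{G,0}([D])$. The only real bookkeeping issue — the main (minor) obstacle — is to confirm at each step that the levels on both sides of each equality match; once this is checked in steps $(a)$ and $(b)$, the rest is routine.
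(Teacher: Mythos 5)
The paper gives no proof of this proposition — it is stated as ``the twisted version of Proposition \ref{formula on descent of Dirac element}'' and the argument is left implicit; your write-up supplies exactly the intended verification. Your level bookkeeping (checking $k_e(zg_1,g_2;x)=z^pk_e(g_1,g_2;x)$, $k_e(g_1,zg_2;x)=z^{-p}k_e(g_1,g_2;x)$, matching $L^2(G,p\tau)\grotimes(\bb{C}\rtimes_{-p\tau}G)$, and the observation that the integrands in (a)--(c) have total level zero so the $U(1)$-integration is trivial) is precisely what Remark \ref{Remark on the vee}(1) records after the statement, and your reduction of (a)--(d) and the computation $\overline{D}k_e=k_{\widetilde{D}e}$ to the untwisted case is the correct route; your argument is sound.
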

\begin{rmks}\label{Remark on the vee}
$(1)$ Since $e\in E\rtimes_{q\tau }G$ is at level $q$, $k_e$ is at level $q$ with respect to $g_1$, and it is at level $-q$ with respect to $g_2$.

$(2)$ The Hilbert $(\bb{C}\rtimes G)$-module structure on $\bb{C}\rtimes G$ used in Definition \ref{def of descented Dirac element bimodule} is defined by $e\cdot b:=b^\vee*e$ and $\inpr{e_1}{e_2}{\bb{C}\rtimes G}=[e_2*e_1^*]^\vee$ for $e,e_1,e_2,b\in \bb{C}\rtimes G$.
The correspondence $b\mapsto b^\vee$ exchanges $\bb{C}\rtimes_{p\tau}G$ and $\bb{C}\rtimes_{-p\tau}G$. Thus, $\bb{C} \rtimes_{-p\tau}G$ has a Hilbert $\bb{C}\rtimes_{p\tau}G$-module structure by these formulas.
\end{rmks}

Let us give the twisted version of Proposition \ref{descented Dirac for principal bundle}. We work on the same situation. 
We can compute $k_{\widetilde{d\alpha^E_v}e}$ by the same argument of Lemma \ref{Lemma base differential is fiber differential}.
Take a linear splitting $\fra{s}:\fra{g}\hookrightarrow \fra{g}^\tau$. For $v\in \fra{h}$ and $e\in C_c^\infty(G^\tau,C_c^\infty(X,E))$ at level $p$,
$$k_{\widetilde{d\alpha^E_{v}}e}(x)=
dR_{-\fra{s}(v)}k_e(x)+d\rt_{-\fra{s}(v)}k_e(x).$$
Note that the right hand side is independent of the choice of $\fra{s}$, because the infinitesimal generator of $i(U(1))$ acts on $L^2(G,p\tau)$ as $p\sqrt{-1}\id$, and on $\bb{C}\rtimes_{-p\tau}G$ as $-p\sqrt{-1}\id$, respectively.

\begin{pro}\label{twisted descented Dirac for principal bundle}
$(1)$ $L^2\bra{X/H\times_{G/H}\{E\grotimes L^2(G,p\tau)\grotimes (\bb{C}\rtimes_{-p\tau} G) \}}$ admits a Hilbert $\bb{C}\rtimes_{p\tau} G$-module structure, and it admits a $*$-homomorphism
$$\pi\rtimes \lt:C_0\bra{X/H\times_{G/H}\bb{K}(L^2(G))}\to
\bb{L}_{\bb{C}\rtimes_{p\tau} G}\bra{L^2\bra{X/H\times_{G/H}\{E\grotimes L^2(G,p\tau)\grotimes (\bb{C}\rtimes_{-p\tau} G) \}}}$$
by the same formulas of the untwisted cases Proposition \ref{descented Dirac for principal bundle}.

$(2)$ We define an operator $\overline{D}$ on $L^2\bra{X/H\times_{G/H}\{E\grotimes L^2(G,p\tau)\grotimes (\bb{C}\rtimes_{-p\tau} G) \}}$ by
$$\sum_i[\id_{E}\grotimes dR_{\fra{s}(v_i)}\grotimes \id+ \id_{E}\grotimes \id\grotimes d\rt_{\fra{s}(v_i)}]\grotimes c(v_i)+D_{\rm base}.$$
Then, $j_{G,0}^{p\tau}([D])$ is represented by
$$\bra{L^2\bra{X/H\times_{G/H}\{E\grotimes L^2(G,p\tau)\grotimes (\bb{C}\rtimes_{-p\tau} G) \}},\pi\rtimes \lt ,\overline{D}}.$$
\end{pro}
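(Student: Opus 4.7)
The plan is to transpose the proof of Proposition \ref{descented Dirac for principal bundle} to the twisted setting, substituting the twisted descent result of Proposition \ref{prop descent of Dirac element for twisted version} for its untwisted counterpart (Proposition \ref{formula on descent of Dirac element}) and carefully tracking levels under the $U(1)$-central extension. I proceed in three steps.

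First, for part (1), I would derive the Hilbert $\bb{C}\rtimes_{p\tau}G$-module structure and the left action from the field-theoretic descriptions already in hand: Proposition \ref{twisted twisted cp and fpa} with $q=0$ identifies $C_0(X)\rtimes G$ with the section algebra of $\{\bb{K}(L^2(G))\grotimes C_0(X)_x\}_{x\in X}$, and Proposition \ref{twisted cp and fpa and cp and fpm} (again with $q=0$, since $[D]$ is genuinely $G$-equivariant) gives the $X\rtimes G$-equivariant bimodule description of the descent. The formulas for the right action, the inner product, and $\pi\rtimes\lt$ are literally those of Remark \ref{rmk exposition of bimodule structure on decent family descr}(2); the only thing to check is that they are compatible with the $U(1)$-central structure, but this is automatic since pairing a level-$p$ function on $G^\tau$ with a level-$(-p)$ function yields a level-$0$ function on which $i(U(1))$ acts trivially.

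Next, for part (2), apply Proposition \ref{prop descent of Dirac element for twisted version} to get $j_{G,0}^{p\tau}([D])$ as a bimodule over $X$ with operator $\widetilde{D}$. Then invoke Lemma \ref{Lemma invariant section vs restr to quot space} to identify $G$-invariant sections over $X$ with $G/H$-invariant sections over $X/H$, and let the cut-off function $\fra{c}$ descend to $\overline{\fra{c}}:X/H\to\bb{R}_{\geq 0}$ via orbit integration along $H$. This recovers the ambient bimodule appearing in the statement.

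Finally, the substance is the description of the operator. Decompose $D=\sum_i c(e_i)d\alpha^E_{-e_i}+D_{\mathrm{base}}$ with $\{e_i\}$ a basis of $\fra{h}$. Since $D_{\mathrm{base}}$ is $H$-invariant and acts transversely to the $H$-orbits, it descends directly. For the fiber-direction terms, the key identity is the twisted analog of Lemma \ref{Lemma base differential is fiber differential}: for any smooth level-$p$ function $e:G^\tau\to C_c^\infty(X,E)$ and $v\in\fra{h}$,
\[
k_{\widetilde{d\alpha^E_v}e}=dR_{-\fra{s}(v)}k_e+d\rt_{-\fra{s}(v)}k_e,
\]
proved by the same chain of equalities as in the untwisted case, substituting the lift $\widetilde{\exp(tv)}=\exp(t\fra{s}(v))\in G^\tau$ for $\exp(tv)\in G$. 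Putting the pieces together yields the claimed formula for $\overline{D}$. The main obstacle, and essentially the only nontrivial point, is the level bookkeeping: one must verify that $\overline{D}$ preserves level-compatibility between the $L^2(G,p\tau)$-factor and the $\bb{C}\rtimes_{-p\tau}G$-factor, and that $dR_{\fra{s}(v_i)}\grotimes\id+\id\grotimes d\rt_{\fra{s}(v_i)}$ is independent of the splitting $\fra{s}$. Both follow from the observation that any two splittings differ by an infinitesimal generator of $i(U(1))$, which acts as $p\sqrt{-1}$ on $L^2(G,p\tau)$ and as $-p\sqrt{-1}$ on $\bb{C}\rtimes_{-p\tau}G$, so the two contributions cancel in the sum $dR+d\rt$. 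Once these compatibilities are confirmed, both (1) and (2) follow formally from the untwisted template.
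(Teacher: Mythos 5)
Your proposal is correct and follows the same route the paper intends: the paper states this proposition without a written proof, leaving it to follow "by the same formulas" from the untwisted Proposition \ref{descented Dirac for principal bundle} together with Proposition \ref{prop descent of Dirac element for twisted version} and the twisted analog of Lemma \ref{Lemma base differential is fiber differential}, and the key point you isolate — that $dR_{\fra{s}(v)}\grotimes\id+\id\grotimes d\rt_{\fra{s}(v)}$ is independent of the splitting $\fra{s}$ because the $i(U(1))$-generator acts as $p\sqrt{-1}$ on $L^2(G,p\tau)$ and as $-p\sqrt{-1}$ on $\bb{C}\rtimes_{-p\tau}G$ — is exactly the observation the paper records in the text immediately preceding the statement. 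You have simply made explicit what the paper leaves implicit.
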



\section{Index problem on proper $LT$-spaces}\label{section problem}

From this section, we start the study of infinite-dimensional spaces. The aim of this section is to explain the precise setting of the problem and formulate the main result. First, we will define proper $LT$-spaces and set up the concrete problem. We will clarify what is necessary to formulate. Second, we will review representation theory of $LT$ from \cite{FHTII}. In addition, we will  recall the substitute for the ``$\tau$-twisted group $C^*$-algebra of $LT$'' from \cite{Thesis}. 
Third, we will introduce $\ca{R}KK$-theory for non-locally compact spaces. The detailed study on this subject will be done in \cite{NT}. 
Finally, we will study $LT$-equivariant $KK$-theory, which was introduced in \cite{T4} but the study on general theory was left. In this subsection, we will prove that the Kasparov product is well-defined and associative for reformulated $LT$-equivariant $KK$-theory. Then, we will formulate the main results.

\subsection{The geometrical setting}\label{section problem spaces}

We define the loop group of the circle group $T=S^1$ as a Hilbert Lie group as follows. In this setting, the paper is full of circle groups, and hence we must distinguish all of them: The target of the loop group is denoted by $T$; The source of loops is denoted by $\bb{T}_\rot$; When we consider a central extension by a circle group, we denote it by $U(1)$.

\begin{dfn}
$(1)$ Let $U_{L^2_m}$ be the completion of 
$$\bbra{f\in C^\infty(\bb{T}_\rot,\bb{R})\ \middle|\  \int f(\theta)d\theta=0}$$
with respect to the ``$L^2_m$-metric''
$$\|f\|_{L^2_m}^2:=\frac{1}{\pi}\int_0^{2\pi} \left|  \left|\frac{d}{d\theta}
\right|^mf(\theta)\right|^2d\theta$$
for $m\geq\frac{1}{2}$, 
where $\left|\frac{d}{d\theta}\right|^m$ is given by the functional calculus of the self-adjoint operator $\frac{d}{d\theta}$ on $C^\infty(\bb{T}_\rot,\bb{R})$.

$(2)$ Let $\Pi_T\subseteq \fra{t}$ be the kernel of $\exp:\fra{t}\to T$, which can be identified with the set of homotopy classes of free loops on $T$: $\Pi_T\cong \pi_1(T)\cong \bb{Z}$.
We define the Hilbert Lie group $LT_{L^2_m}$ by
$$LT_{L^2_m}:= T\times \Pi_T\times U_{L^2_m}.$$
It is canonically identified with the set of ``$L^2_m$-loops'' by the following correspondence: For an $L^2_m$-loop $l$, there is an element  $n\in \Pi_T$ and an $L^2_m$-map $f:S^1\to \bb{R}$ satisfying $l(\theta)=\exp(\theta n)\exp(f(\theta))$; The $T$-component is given by $\exp\bra{\int_{S^1}f(\theta)d\theta}$; The $U_{L^2_m}$-component is given by $u(\theta):=f(\theta)-\int_{\bb{T}_\rot}f(\theta)d\theta$.

$(3)$ Each of $U_{L^2_m}$ and $LT_{L^2_m}$ has a rotation symmetry given by the same formula $\theta_0\cdot u(\theta):=u(\theta+\theta_0)$.

$(4)$ The Hilbert space $L\fra{t}_{L^2_{m-1}}^*:=\Omega^1_{L^2_{m-1}}(\bb{T}_\rot,\fra{t})$ consisting of $\fra{t}$-valued $1$-forms over $\bb{T}_\rot$ can be regarded as the set of connections on the trivial $T$-bundle over $\bb{T}_\rot$. Thus, it admits an $LT_{L^2_m}$-action defined by the gauge transformation: $l\cdot A(\theta):=A(\theta)+dl(\theta)l(\theta)^{-1}.$ We denote the holonomy map by $\hol:L\fra{t}_{L^2_{m-1}}^*\to T$.

$(5)$ If $m'>m$, there exist continuous homomorphisms $U_{L^2_{m'}}\to U_{L^2_{m}}$ and $LT_{L^2_{m'}}\to LT_{L^2_{m}}$, and these make two inverse systems. The inverse limits of them are denoted by $U$ and $LT$, and they are identified with the set of $C^\infty$-loops, thanks to the Sobolev embedding theorem\footnote{The resulting Lie groups are ILH-Lie groups. ``ILH'' stands for ``Inverse Limit of Hilbert''. See \cite{Omo} for details.}. We can do the same thing of $(4)$ for this case: We define $L\fra{t}^*:=\Omega^1_{C^\infty}(\bb{T}_\rot,\fra{t})$; It is equipped with the holonomy map $\hol:L\fra{t}^*\to T$; It is equipped with an $LT$-action given by the gauge transformation.
\end{dfn}

\begin{rmks}
$(1)$ Although the standard norm of $U_{L^2_m}$ is given by $\|f\|_{L^2_m}^2:=\frac{1}{\pi}\int_0^{2\pi} \left|  (1+\triangle)^{m/2}f(\theta)\right|^2d\theta$, we use the above, in order to simplify the formulas.
The same kind of simplification is mentioned in \cite{Fre}.

$(2)$ The gauge action given in $(4)$ is isometric, proper and cocompact. Thus, the $LT_{L^2_m}$-manifold $L\fra{t}_{L^2_{m-1}}^*$ satisfies the setting of the index theorem of \cite{Kas15}, except that the manifold and the group are infinite-dimensional. In fact,  manifolds which we study in the present paper, are different from it by  compact sets.

$(3)$ The Lie algebra $\Lie(U_{L^2_m})$ of $U_{L^2_m}$ can be identified with itself. That of $LT_{L^2_m}$ is isomorphic to $\fra{t}\oplus \Lie(U_{L^2_m})$.

$(4)$ The holonomy map is given by the following composition:
$L\fra{t}_{L^2_{m-1}}^*\xrightarrow{\int} \fra{t}\xrightarrow{\exp}T.$
\end{rmks}

\begin{dfn}[See \cite{Thesis}]
Let $M$ be an even-dimensional compact Riemannian $T$-equivariant $Spin^c$-manifold equipped with a $T$-invariant smooth map $\phi:M\to T$. We define an $LT_{L^2_m}$-manifold $\ca{M}_{L^2_m}$ for $1/2\leq m$ by the fiber product
$$\begin{CD}
\ca{M}_{L^2_m} @>\Phi>> L\fra{t}_{L^2_{m-1}}^* \\
@V\Hol VV @VV\hol V \\
M@>\phi>> T
\end{CD}$$
and we call a {\bf proper $LT$-space}. The induced maps are denoted by $\Hol$ and $\Phi$ as above.
\end{dfn}

\begin{rmks}
$(1)$ A proper $LT$-space is a Hilbert manifold equipped with an isometric, proper and cocompact $LT_{L^2_m}$-action, and it has a proper smooth equivariant map $\ca{M}_{L^2_m}\to L\fra{t}_{L^2_{m-1}}^*$ induced by $\phi$. Apply this construction for $m=\infty$, we obtain an ILH-manifold $\ca{M}$, where we omit the subscript $L^2_\infty$.

$(2)$ We can also say that $\phi$ is $T$-equivariant with respect to the adjoint action on $T$: $\phi(t_0.x)=t_0\phi(x)t_0^{-1}$, since $T$ is commutative. If one wants to replace $T$ with some noncommutative group, one needs to assume that $\phi:M\to G$ is equivariant with respect to the adjoint action. See \cite{AMM,Mei} for details.
\end{rmks}

\begin{ex}
Let $M$ be an $S^1$-symplectic manifold. Then, by perturbing the symplectic form, $M$ admits an $S^1$-valued moment map $\phi$ \cite{McD}. Then, the induced proper $LT$-space is a {\bf Hamiltonian $LT$-space} \cite{AMM}. In this case, the induced map $\ca{M}_{L^2_m}\to L\fra{t}^*_{L^2_{m-1}}$ is called the {\bf moment map}. In this sense, proper $LT$-spaces are obvious generalizations of Hamiltonian $LT$-spaces.
\end{ex}

A proper $LT$-space has the following simple topological type. Although there are no directly corresponding results for noncommutative loop groups, the following is related to the abelianization of \cite{LMS}.

\begin{lem}[\cite{Thesis}]\label{decomposition lemma on proper LT space}
Let $\ca{M}_{L^2_k}=M_\phi\times_\hol L\fra{t}^*_{L^2_{k-1}}$ be a proper $LT$-space.
Let $\fra{t}\subseteq L\fra{t}_{L^2_{k-1}}^*$ be the subspace consisting of constant sections. Then, $\widetilde{M}:=\Phi^{-1}(\fra{t})\subseteq \ca{M}_{L^2_k}$ is a smooth manifold and it is a principal $\Pi_T$-bundle over $M$. Moreover, it is a global slice of the $\fra{t}\times U_{L^2_k}$-action, and hence we have $\ca{M}_{L^2_k}\cong \widetilde{M}\times U_{L^2_k}$.
\end{lem}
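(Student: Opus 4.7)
The plan is to describe $\widetilde{M}$ concretely as a fiber product and then exhibit the diffeomorphism $\ca{M}_{L^2_k}\cong \widetilde{M}\times U_{L^2_k}$ via a Fourier-type splitting of a connection into its constant and mean-zero parts. First I would unwind the definition: the fiber-product description of $\ca{M}_{L^2_k}$ gives
$$\widetilde{M}=\Phi^{-1}(\fra{t})=\bbra{(x,v\,d\theta)\in M\times \fra{t}\ \middle|\ \phi(x)=\hol(v\,d\theta)},$$
and the restriction of $\hol$ to the constant forms $\fra{t}\subseteq L\fra{t}^*_{L^2_{k-1}}$ agrees up to normalization with the exponential map $\exp:\fra{t}\to T$. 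Thus $\widetilde{M}$ is the fiber product $M\times_{\phi,T,\exp}\fra{t}$. Since $\exp:\fra{t}\to T$ is the universal cover, i.e.\ a principal $\Pi_T$-bundle, its pullback along $\phi$ is automatically a principal $\Pi_T$-bundle over $M$, giving $\widetilde{M}$ the structure of a smooth finite-dimensional manifold of dimension $\dim M+1$.

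Next I would set up the splitting $L\fra{t}^*_{L^2_{k-1}}=\fra{t}\oplus \ca{V}$, where $\ca{V}$ is the closed subspace of mean-zero $1$-forms, via the projection $f(\theta)\,d\theta\mapsto \overline{f}:=\frac{1}{2\pi}\int_0^{2\pi}f(\theta)\,d\theta$. The derivative $d/d\theta:U_{L^2_k}\to \ca{V}$, $u\mapsto u'(\theta)\,d\theta$, is a topological isomorphism: injectivity uses that $u\in U_{L^2_k}$ has mean zero, and surjectivity follows by integrating a mean-zero form and subtracting off the resulting mean. Under this isomorphism, the gauge action of $U_{L^2_k}$ on $L\fra{t}^*_{L^2_{k-1}}$ becomes translation on the $\ca{V}$-summand and is trivial on the $\fra{t}$-summand; in particular it preserves $\hol$, and hence lifts to an action on $\ca{M}_{L^2_k}$ which is trivial on the $M$-factor. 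I would then define
$$\Psi:\widetilde{M}\times U_{L^2_k}\to \ca{M}_{L^2_k},\qquad \Psi\bra{(x,v\,d\theta),u}:=\bra{x,\,v\,d\theta+u'(\theta)\,d\theta},$$
which is well-defined because $\hol(v\,d\theta+u'\,d\theta)=\hol(v\,d\theta)=\phi(x)$. A smooth inverse is given by $(x,A)\mapsto \bra{(x,\overline{A}\,d\theta),\,(d/d\theta)^{-1}(A-\overline{A}\,d\theta)}$, which is well-defined since $\hol(\overline{A}\,d\theta)=\hol(A)=\phi(x)$ places $(x,\overline{A}\,d\theta)$ in $\widetilde{M}$. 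The ``global slice'' assertion is then precisely the bijectivity of $\Psi$: each $U_{L^2_k}$-orbit in $\ca{M}_{L^2_k}$ meets $\widetilde{M}$ in exactly one point, and the $\fra{t}$-direction inside $L\fra{t}^*_{L^2_{k-1}}$ parametrizes $\widetilde{M}$ transversely to these orbits.

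The main obstacle is not analytic but notational: one must keep track of the decomposition $LT_{L^2_k}=T\times \Pi_T\times U_{L^2_k}$ and of the normalization of $\hol$, and one must note that although $\fra{t}$ does not genuinely act on $\ca{M}_{L^2_k}$ by gauge (the putative translation by a constant form $v\,d\theta$ would change $\hol$ and violate the fiber-product constraint), the vector-space decomposition $L\fra{t}^*_{L^2_{k-1}}\cong \fra{t}\oplus \ca{V}\cong \fra{t}\times U_{L^2_k}$ is what gives content to the phrasing ``$\widetilde{M}$ is a global slice for the $\fra{t}\times U_{L^2_k}$-action''. Once these identifications are fixed, the substantive content is just the elementary Fourier splitting of a periodic $1$-form into its mean and mean-zero parts.
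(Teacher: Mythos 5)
The paper does not supply a proof of this lemma; it is cited to \cite{Thesis}, so there is no in-paper argument to compare against. Your proof is essentially correct and follows the natural route: identify $\widetilde{M}$ as the pullback of the covering $\exp:\fra{t}\to T$ along $\phi$, split $L\fra{t}^*_{L^2_{k-1}}=\fra{t}\oplus\ca{V}$ into mean and mean-zero parts, identify $\ca{V}$ with $U_{L^2_k}$ via $d/d\theta$ (a topological isomorphism precisely because of the Sobolev normalization $\|f\|_{L^2_m}^2=\frac1\pi\int|(d/d\theta)^mf|^2$, which makes $d/d\theta$ an isometry $U_{L^2_k}\to\ca{V}$), and use that the $U_{L^2_k}$-gauge action translates the $\ca{V}$-component and preserves $\hol$. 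The computation that $\Psi$ has the claimed two-sided inverse is correct.

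There is one concrete slip. You write that $\widetilde{M}$ is a smooth manifold of dimension $\dim M+1$. Since $\Pi_T$ is discrete, the pullback of $\exp:\fra{t}\to T$ along $\phi$ is a \emph{covering space} of $M$, so $\dim\widetilde{M}=\dim M$. Equivalently: at a point $(x_0,v_0)\in\widetilde{M}$, the constraint $d\phi_{x_0}(\xi)=d\exp_{v_0}(\eta)$ determines $\eta\in\fra{t}$ uniquely from $\xi\in T_{x_0}M$ because $d\exp_{v_0}$ is an isomorphism, so $T_{(x_0,v_0)}\widetilde{M}\cong T_{x_0}M$. This does not affect the rest of the argument, but it does mean your closing description of the $\fra{t}$-direction as ``parametrizing $\widetilde{M}$ transversely to the $U_{L^2_k}$-orbits'' is slightly off: the $\fra{t}$-component of a point of $\widetilde{M}$ is locally a function of the $M$-component, so $\fra{t}$ is not an independent parameter direction inside $\widetilde{M}$. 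What is true, and what your diffeomorphism $\Psi$ actually uses, is that inside $L\fra{t}^*_{L^2_{k-1}}$ the $\fra{t}$-summand is transverse to the $\ca{V}$-summand (the $U_{L^2_k}$-orbit direction), and $\widetilde{M}$ sits over $\fra{t}$. Your reading of the loosely phrased ``global slice of the $\fra{t}\times U_{L^2_k}$-action'' as referring to the decomposition $L\fra{t}^*_{L^2_{k-1}}\cong\fra{t}\oplus\ca{V}\cong\fra{t}\times U_{L^2_k}$, with $U_{L^2_k}$ acting by gauge and $\widetilde{M}$ as the actual slice, is a reasonable interpretation and yields the stated conclusion.
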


The goal of our project is to formulate and prove an $LT$-equivariant index theorem for proper $LT$-spaces. We investigate the topological aspects of this project in the present paper. In order to explain the problem precisely, we need to introduce a $U(1)$-central extension and a Spinor of $LT$. In the following, we identify $\fra{t}$ and $\bb{R}$ to simplify the notations. Inner products are represented like products of real numbers.

\begin{dfn}[See \cite{PS,FHTII,Thesis}]\label{def of LTtau}
$(1)$ For $f_1,f_2\in U_{L^2_m}$, we define a two-cocycle 
$$\tau(f_1,f_2):=\exp\bra{\frac{i}{\pi}\int_{\bb{T}_\rot}f_1(\theta)f_2'(\theta)d\theta}\in U(1).$$
The central extension $U_{L^2_m}^\tau$ is given by $U_{L^2_m}\times U(1)$ equipped with the multiplication given by $(f_1,z_1)\cdot (f_2,z_2):=(f_1+ f_2,z_1z_2\tau(f_1,f_2))$ for $f_1,f_2\in U_{L^2_m}$. The lift of the $\bb{T}_\rot$-action is given by $\theta \cdot (f,z)=(\theta \cdot f,z)$,

$(2)$ Let $\kappa^\tau:\Pi_T\to \Hom(\Pi_T,\bb{Z})$ be an injective homomorphism such that $\kappa^\tau(n,m)\in 2\bb{Z}$. Then, we can define a $1$-dimensional representation $\kappa^\tau(n/2):T\to U(1)$ for $n\in \Pi_T$ in an obvious way. This representation is also denoted by $\kappa^\tau_{n/2}$.
We define a $U(1)$-central extension $(T\times\Pi_T)^\tau$ of $T\times\Pi_T$ by $(T\times\Pi_T)\times U(1)$ whose multiplication is defined by $((t_1,n_1),z_1)\cdot((t_2,n_2),z_2)=((t_1t_2,n_1+n_2),z_1z_2\kappa^\tau_{n_1/2}(t_2)\kappa^\tau_{n_2/2}(t_1)^{-1})$ for $(t_1,n_1),(t_2,n_2)\in T\times \Pi_T$. 

$(3)$ The $U(1)$-central extension of $LT_{L^2_m}$ is defined by the exterior tensor product: $LT_{L^2_m}^\tau:=(T\times\Pi_T)^\tau\boxtimes U_{L^2_m}^\tau$.
\end{dfn}

In order to define a Spinor space, we need to specify a base of $L\fra{t}_{L^2_{m}}$. 

\begin{nota}
$(1)$ On $\Lie(U_{L^2_m})$, we take a complete orthonomal system
$$\cos\theta,\sin\theta,\Cosi{2}{m},\Sine{2}{m},\Cosi{3}{m},\Sine{3}{m},\cdots.$$
The corresponding tangent vectors are denoted by $(e_1,f_1,e_2,f_2,\cdots)$ and the coordinate with respect to this base is denoted by $(x_1,y_1,x_2,y_2,\cdots)$.

$(2)$ We define an unbounded operator $d$ on $\Lie(U_{L^2_m})$ by $du:=du/d\theta$, which is the infinitesimal generator of the $\bb{T}_\rot$-action.
We introduce the complex structure $J$ there by $d/|d|$: Concretely, $J(e_n)=-f_n$ and $J(f_n)=e_n$. We introduce the following complex orthonormal base:
$$z_n:=\frac{1}{\sqrt{2}}\bra{e_n+\sqrt{-1}f_n} \text{ and }\ \overline{z_n}:=\frac{1}{\sqrt{2}}\bra{e_{n}-\sqrt{-1}f_{n}}.$$

$(3)$ The finite-dimensional approximation $U_{L^2_m,N}$ is the subgroup whose Lie algebra is given by the linear span of 
$$e_1,f_1,e_2,f_2,\cdots,e_N,f_N.$$
The algebraic inductive limit with respect to the natural inclusion $U_{L^2_m,N}\hookrightarrow U_{L^2_m,N+1}$ is denoted by $U_\fin$. We define $d$, $J$ and the complex base on $U_{L^2_m,N}$ in the same way of $(1)$ and $(2)$.
\end{nota}

For convenience of the reader, we explicitly describe the Lie bracket on $\Lie(U_{L^2_m})$. One can prove the following by a simple calculation.

\begin{lem}
We denote the infinitesimal generator of $i(U(1))$ by $K$. Then,
$[e_n,f_n]=n^{1-2m}K$ and $[z_n,\overline{z_n}]=-\sqrt{-1}n^{1-2m}K$.
\end{lem}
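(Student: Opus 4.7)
The plan is a direct calculation reducing to an elementary trigonometric integral. Since $U_{L^2_m}$ is abelian, the bracket on $\Lie(U_{L^2_m}^\tau)$ takes values in the centre $\bb{R}K$, so $[X,Y]=c(X,Y)\,K$ for some bilinear $c$ on $\Lie(U_{L^2_m})$. Writing $\tau(f_1,f_2)=\exp(i\omega(f_1,f_2))$ with $\omega(f_1,f_2)=\tfrac{1}{\pi}\int_0^{2\pi}f_1(\theta)f_2'(\theta)\,d\theta$, integration by parts together with periodicity ($\int_0^{2\pi}(f_1f_2)'\,d\theta=0$) shows that $\omega$ is already antisymmetric, and the standard passage from a group $2$-cocycle on an abelian group to the associated Lie algebra cocycle identifies $c$ with $\omega$ (after fixing the normalization of $K$).

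Then I would compute $\omega(e_n,f_n)$. With $e_n=\cos(n\theta)/n^m$, $f_n=\sin(n\theta)/n^m$, and $f_n'(\theta)=\cos(n\theta)/n^{m-1}$,
\[
\omega(e_n,f_n)=\frac{1}{\pi}\int_0^{2\pi}\frac{\cos^2(n\theta)}{n^{2m-1}}\,d\theta=n^{1-2m},
\]
using $\int_0^{2\pi}\cos^2(n\theta)\,d\theta=\pi$. This gives the first formula $[e_n,f_n]=n^{1-2m}K$. The second formula is then pure $\bb{C}$-bilinear algebra: writing $z_n=(e_n+\sqrt{-1}\,f_n)/\sqrt{2}$ and $\overline{z_n}=(e_n-\sqrt{-1}\,f_n)/\sqrt{2}$ and expanding the bracket yields
\[
[z_n,\overline{z_n}]=\tfrac{1}{2}\bra{-\sqrt{-1}\,[e_n,f_n]+\sqrt{-1}\,[f_n,e_n]}=-\sqrt{-1}\,[e_n,f_n]=-\sqrt{-1}\,n^{1-2m}K,
\]
using $[e_n,e_n]=[f_n,f_n]=0$ and antisymmetry.

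There is no real obstacle; the whole statement is essentially a definition-tracking exercise. The only point requiring care is fixing the normalization convention relating the group cocycle $\tau$ to the Lie algebra bracket so that the resulting coefficient is exactly $n^{1-2m}$ rather than a conventional multiple of it.
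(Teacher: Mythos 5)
Your computation of $\omega(e_n,f_n)=\frac{1}{\pi}\int_0^{2\pi}\frac{\cos^2 n\theta}{n^{2m-1}}\,d\theta=n^{1-2m}$ is correct, and so is the purely algebraic reduction of $[z_n,\overline{z_n}]$ to $-\sqrt{-1}\,[e_n,f_n]$. The gap is exactly the step you flag as a routine normalization. The passage from a group $2$-cocycle to a Lie algebra $2$-cocycle does \emph{not} identify the latter with $\omega$, but with its antisymmetrization. Concretely, for the twisted product $(f_1,z_1)(f_2,z_2)=(f_1+f_2,\,z_1z_2\,e^{\sqrt{-1}\,\omega(f_1,f_2)})$, computing the group commutator of the one-parameter subgroups $s\mapsto(se_n,1)$ and $t\mapsto(tf_n,1)$ (or equivalently taking the bracket of the corresponding left-invariant vector fields) gives
$$[X,Y]=(\omega(X,Y)-\omega(Y,X))\,K.$$
You have already observed that $\omega$ is antisymmetric (integration by parts plus periodicity), so this is $2\omega(X,Y)\,K$, and your argument, carried out literally from the cocycle in Definition \ref{def of LTtau}, yields $[e_n,f_n]=2n^{1-2m}K$ --- twice the claimed value. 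The caveat ``after fixing the normalization of $K$'' cannot absorb this factor of $2$: $K$ is not a free parameter, it is pinned down in the statement as the infinitesimal generator of $i(U(1))$.

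What actually forces $n^{1-2m}$ is the infinitesimal representation of Section \ref{section problem rep of LT}: from $d\rho(e_n)=n^{1/2-m}\partial_{x_n}$, $d\rho(f_n)=\sqrt{-1}\,n^{1/2-m}x_n\times$, and $d\rho(K)=\sqrt{-1}$ (level $1$) one gets $[d\rho(e_n),d\rho(f_n)]=\sqrt{-1}\,n^{1-2m}=n^{1-2m}\,d\rho(K)$, which is the identity the lemma records and which the paper uses afterwards. The mismatch with your cocycle computation therefore traces back to a normalization discrepancy in the cocycle of Definition \ref{def of LTtau} itself: for the displayed Heisenberg operators to exponentiate to a representation of $U^\tau_{L^2_m,N}$ with the stated twisted multiplication, the exponent of $\tau$ would need an extra factor $\tfrac{1}{2}$, i.e.\ $\frac{\sqrt{-1}}{2\pi}\int f_1f_2'\,d\theta$. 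To make your proof complete you should either (i) verify the bracket directly through $d\rho$, which is a genuine one-line check, or (ii) carry the cocycle-to-bracket factor of $2$ honestly and note the normalization of $\tau$ that is needed for consistency.
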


Let us introduce the Spinor space. In addition, we introduce its dual and other two Clifford multiplications on the tensor product of the Spinor and its dual, which will be used to define a $KK$-element substituting for the index element in the next section.

\begin{dfn}
$(1)$ The Spinor space $S_U$ of $\Lie(U_{L^2_m})$ is defined by the exterior algebra of the negative part of the complexfication of $\Lie(U_{L^2_m})$:
\begin{itemize}
\item $S_{U,\fin}:=\bigwedge^\alg \bigoplus_{n>0}\bb{C}\overline{z_n}$.
\item $S_{U}$ is the completion of $S_{U,\fin}$ with respect to the metric given by 
$$\inpr{\overline{z_{i_1}}\wedge \overline{z_{i_2}}\wedge \cdots \wedge \overline{z_{i_n}}}{\overline{z_{j_1}}\wedge \overline{z_{j_2}}\wedge \cdots \wedge \overline{z_{j_m}}}{S_U}=\begin{cases}
1 & (n=m \text{ and }i_1=j_1,i_2=j_2,\cdots,i_n=j_m) \\
0 & (\text{otherwise}),\end{cases}$$
where we have assumed that $i_1<i_2<\cdots<i_n$ and $j_1<j_2<\cdots<j_m$.
\item The Clifford multiplication is defined by
\begin{align*}
\gamma(\overline{z_n})&:= \sqrt{2}\overline{z_n}\wedge \\
\gamma(z_n)&:= -\sqrt{2}\overline{z_n}\rfloor,
\end{align*}
where $\rfloor$ is the interior product $(\overline{z_n}\wedge)^*$.
\end{itemize}
The unit vector corresponding  to ``$1$'' in the exterior algebra $S_U$ is denoted by ${\bf 1}_f$, where ``$f$'' comes from ``fermion''.

$(2)$ The dual of $(S_U,\gamma)$ is naturally a left $\Cl_+(\Lie(U_{L^2_m}))$-module. We explicitly construct it as follows:
\begin{itemize}
\item $S_{U,\fin}^*:=\bigwedge^\alg \bigoplus_{n>0}\bb{C}z_n$.
\item $S_{U}^*$ is the completion of $S_{U,\fin}^*$ with respect to the parallel way of $S_U$.
\item $S_U$ and $S_U^*$ are mutually dual by the {\it bilinear} pairing
$$\innpro{\overline{z_{i_1}}\wedge \overline{z_{i_2}}\wedge \cdots \wedge \overline{z_{i_n}}}{{z_{j_1}}\wedge {z_{j_2}}\wedge \cdots \wedge {z_{j_m}}}{}:=\begin{cases}
1 & (n=m \text{ and }i_1=j_1,i_2=j_2,\cdots,i_n=j_m) \\
0 & (\text{otherwise}),\end{cases}$$
where we have assumed that $i_1<i_2<\cdots<i_n$ and $j_1<j_2<\cdots<j_m$.
\item The Clifford multiplication is given by
\begin{align*}
\gamma^*(\overline{z_n})&= -\sqrt{2}z_n\rfloor \circ \epsilon_{S^*_U}\\
\gamma^*(z_n)&= -\sqrt{2}z_n\wedge\circ \epsilon_{S^*_U}.
\end{align*}
\end{itemize}
The unit vector corresponding  to ``$1$'' in the exterior algebra $S_U^*$ is denoted by ${\bf 1}^*_f$.

$(3)$ We define two other Clifford multiplications on $S_U^*\grotimes S_U$ by
\begin{align*}
c(v)&:= \frac{1}{\sqrt{2}}\bra{\id\grotimes\gamma(v)-\sqrt{-1}\gamma^*(v)\grotimes \id}\\
c^*(v)&:=\frac{\sqrt{-1}}{\sqrt{2}}\bra{\id\grotimes\gamma(v)+\sqrt{-1}\gamma^*(v)\grotimes \id}
\end{align*}
for $v\in \Lie(U_\fin)$. Note that $c(v)^2=-\|v\|^2\id$ and $\{c^*(v)\}^2=\|v\|^2\id$.
See \cite{FHTII,T4} for details.
\end{dfn}

The geometrical situation of the problem of the present paper is the following.

\begin{prob}
Let $M$ be an even-dimensional compact Riemannian $T$-equivariant $Spin^c$-manifold equipped with a $T$-invariant smooth map $\phi:M\to T$ and let $\ca{M}=M_\phi\times_\hol L\fra{t}^*$ be the corresponding proper $LT$-space.
Suppose that a $T$-equivariant Spinor bundle $S_M$ and a $\tau$-twisted $LT_{L^2_k}$-equivariant line bundle $\ca{L}$ over $\ca{M}$, are given. The pullback of $S_M$ to $\widetilde{M}$ is denoted by $S_{\widetilde{M}}$.
We define an $LT$-equivariant Spinor bundle $ S_{\ca{M}}$ over $\ca{M}$ by the exterior tensor product of $S_{\widetilde{M}}$ and the trivial bundle $U\times S_U$. Then, {\it formulate and prove the index theory for the $\tau$-twisted $LT$-equivariant Clifford module bundle $\ca{L}\grotimes  S_{\ca{M}}$.}
\end{prob}

\begin{rmk}
The assumption that $M$ is even-dimensional and $Spin^c$ is not essential. One can remove this assumption by considering ``$K$-theoretical orientation sheaf'' $\Cl_+(\Hol^*TM)$.
\end{rmk}

The main result of the present paper is the topological side of the above problem: {\it a construction of two homomorphisms substituting for the Poincar\'e duality homomorphism and the topological assembly map}. In order to explain them, we need to introduce the substitute for the $\tau$-twisted group $C^*$-algebra of $LT$, a non-locally compact version of $\ca{R}KK$-theory, and $LT$-equivariant $KK$-theory. The following three subsections are devoted to them.

\subsection{A review of the representation theory of $LT$}\label{section problem rep of LT}

In this subsection, we give a review of the representation theory of $LT$ and recall the construction of a substitute for the $\tau$-twisted group $C^*$-algebra of $LT$. 

We have defined the central extension 
$$1\to U(1)\xrightarrow{i} LT_{L^2_m}^\tau \xrightarrow{p} LT_{L^2_m}\to 1$$
for any $m\geq 1/2$ in Definition \ref{def of LTtau}. Recall that $\bb{T}_\rot$-action of $LT_{L^2_m}$ lifts to the central extension $LT_{L^2_m}^\tau$. 
With this lift, we define the concept of positive energy representation. 
The unitary group $U(V)$ of a Hilbert space $V$ is equipped with the compact-open topology

\begin{dfn}[\cite{PS}]
A {\bf positive energy representation} (PER for short) of $LT_{L^2_m}$ at level $\tau$ on a separable Hilbert space $H$, is a continuous homomorphism $\rho:LT_{L^2_m}^\tau\to U(H)$ satisfying the following:
\begin{itemize}
\item $\rho$ is at level $1$, that is to say, $\rho(e_{LT},z)=z\id_H$ for $(e_{LT},z)\in i(U(1))\subseteq LT_{L^2_m}^\tau$;
\item $\rho$ lifts to $LT_{L^2_m}^\tau\rtimes \bb{T}_\rot$; and
\item The orthogonal decomposition $H=\oplus H_n$ given by the weight of the circle action defined by the restriction of $\rho$ to $\bb{T}_\rot$, satisfies the following: $\dim H_n<\infty$ for all $n$ and $H_n=0$ for all sufficiently small $n$'s. We may impose that $H_n=0$ for all $n<0$, by retaking the lift to $ LT_{L^2_m}^\tau\rtimes\bb{T}_\rot$.
\end{itemize}
\end{dfn}

We introduce several standard notions of representation theory (the irreducibility, the direct sum and so on) in an obvious way. In particular, the dual representation of a PER is at level $-1$ and of ``negative energy''.

We can also define the concept of PERs of $T\times\Pi_T$, $U_{L^2_m}$ and its subgroups in an obvious way.
It is known that $U_{L^2_m}^\tau$ has the unique PER up to isomorphism as an infinite-dimensional version of the Stone-von Neumann theorem (See \cite[Theorem 2.4]{Kir}). See also \cite{PS,FHTII,T1} for details.
We will construct it by the following recipe: First,we define a PER of the finite-dimensional approximation $U_{L^2_m,N}^\tau$; Second, we define a homomorphism from the PER of $U_{L^2_m,N}^\tau$ to that of $U_{L^2_m,M}^\tau$ for $N<M$; Finally, we take the Hilbert space inductive limit of this system and it is the desired PER. We define it at the infinitesimal level.

\begin{dfn}
$(1)$ On $L^2(\bb{R}^N)$, we define an infinitesimal representation $d\rho$ of $U_{L^2_m,N}^\tau$ and an action of the infinitesimal generator $d$ of $\bb{T}_\rot$, by the following operators:
\begin{align*}
d\rho\bra{e_n}&:=n^{\frac{1}{2}-m}\frac{\partial}{\partial x_n};\\
d\rho\bra{f_n}&:=\sqrt{-1}n^{\frac{1}{2}-m}x_n\times; \\
d\rho(d)&:=\frac{i}{2}\sum_{n=1}^Nn\bbbra{
\bra{-\frac{\partial^2}{\partial x_n^2}+x_n^2-1}}.
\end{align*}

$(2)$ Let $L^2(\bb{R}^N)_\fin$ be the subspace which is algebraically spanned by functions of the form ``polynomial $\times$ $e^{-\frac{\|x\|^2}{2}}$''. Obviously, the operators $d\rho\bra{e_n}$'s, $d\rho\bra{f_n}$'s and $d\rho(d)$ preserve it.
\end{dfn}

\begin{rmk}
By using the complex base of the Lie algebra, we can rewrite $d\rho(d)$ as
$$d\rho(d)=-\sqrt{-1}\sum_nn^{2m}d\rho(z_n)d\rho(\overline{z_n}).$$
This expression is more appropriate than the above for the infinite-dimensional case.
\end{rmk}

With the above representation of $U_{L^2_m,N}$, we define a PER of $U_{L^2_m}$ as follows:

\begin{dfn}
$(1)$ We define an isometric embedding 
$$I_N:L^2(\bb{R}^N)\ni f\mapsto f\grotimes \frac{1}{\pi^{1/4}}e^{-\frac{x_{N+1}^2}{2}}\in L^2(\bb{R}^{N+1})$$
for each $N$.
Note that $I_N$'s are equivariant: $d\rho(d)\circ I_N=I_N\circ d\rho(d)$ and $I_N\circ d\rho(v)=d\rho(v)\circ I_N$ for $v\in U_{L^2_m,N}$. The former equivariance is because $\frac{1}{\pi^{1/4}}e^{-\frac{x_{N+1}^2}{2}}$ belongs to the kernel of $d\rho(z_{N+1})d\rho(\overline{z_{N+1}})$. Note that $I_N$'s preserve $L^2(\bb{R}^N)_\fin$'s.

$(2)$ We define $\ud{L^2(\bb{R}^\infty)}$ by the {\it Hilbert space} inductive limit $\varinjlim L^2(\bb{R}^N)$, and $\ud{L^2(\bb{R}^\infty)_\fin}$ by the {\it algebraic} inductive limit $\varinjlim^\alg L^2(\bb{R}^N)_\fin$. 
On $\ud{L^2(\bb{R}^\infty)_\fin}$, we can define the operators
$$d\rho\bra{e_n},\ d\rho\bra{f_n} \text{ and } \ d\rho(d)$$
since the embeddings $I_N$'s are equivariant. The extension of these operators are denoted by the same symbols.
\end{dfn}

\begin{nota}
The ``infinite tensor product'' $\frac{1}{\pi^{1/4}}e^{-\frac{x_1^2}{2}}\grotimes \frac{1}{\pi^{1/4}}e^{-\frac{x_2^2}{2}}\grotimes \cdots$ defines a unit vector denoted by ${\bf 1}_b$, where ``$b$'' comes from ``boson''.
\end{nota}

We can do the same things for the dual representation, that is to say, we can define a continuous homomorphism $\rho^*:U_{L^2_m}^\tau\to U(\ud{L^2(\bb{R}^\infty)^*})$ in the standard way, which is at level $-1$, and $\ud{L^2(\bb{R}^\infty)^*}$ has the ``highest weight vector'' ${\bf 1}_b^*$.
Then, $\rho^*$ induces a $*$-isomorphism $\Op:\bb{C}\rtimes_\tau U_{L^2_m,N}\to \bb{K}(L^2(\bb{R}^N)^*)$.\footnote{A function $f$ on $U_{L^2_m,N}^\tau$ at level $p$ defines the trivial operator on a representation space $(V,\sigma)$ at level $q$, unless $p+q=0$. This is because the integral
$$\int_{U_{L^2_m,N}^\tau} f(g)\sigma(g) dg=\int_{U_{L^2_m,N}}\int_{U(1)} z^pf(\overline{g})z^q\sigma(\overline{g}) dzd\overline{g}$$
vanishes unless $p+q=0$.}
 On the right hand side, we have a natural connecting homomorphism $\bb{K}(L^2(\bb{R}^N)^*)\to \bb{K}(L^2(\bb{R}^{N+1})^*)$ given by $k\mapsto k\grotimes P$, where $P$ is the one-dimensional projection onto $\bb{C}e^{-\frac{1}{2}x_{N+1}^2}$. With this idea, a substituting $C^*$-algebra for the $\tau$-twisted group $C^*$-algebra of $U_{L^2_m}$ has been defined in \cite{T1}.

\begin{dfn}\label{definition of twisted group Cstar algebra of LT}
$(1)$ We define a $C^*$-algebra $\ud{\bb{C}\rtimes_\tau U_{L^2_m}}$ by the $C^*$-algebra inductive limit:
$$\ud{\bb{C}\rtimes_\tau U_{L^2_m}}:=\varinjlim_{N\to \infty} \bb{K}(L^2(\bb{R}^N)^*).$$
It is naturally isomorphic to $\bb{K}\bra{\ud{L^2(\bb{R}^\infty)^*}}$. When we regard an element $b\in \ud{\bb{C}\rtimes_\tau U_{L^2_m}}$ as an element of $\bb{K}\bra{\ud{L^2(\bb{R}^\infty)^*}}$ by the natural identification, it is denoted by $\Op(b)$. This symbol comes from ``Operator''. The rank one projection onto $\bb{C}{\bf 1}_b^*$ is denoted by $P_{\bb{C}{\bf 1}_b^*}$.

$(2)$ We define a $C^*$-algebra $\ud{\bb{C}\rtimes_\tau LT_{L^2_m}}$ by
$$ \ud{\bb{C}\rtimes_\tau LT_{L^2_m}}:=\bb{C}\rtimes_\tau (T\times \Pi_T)\grotimes \ud{\bb{C}\rtimes_\tau U_{L^2_m}}.$$

$(3)$ For every $N\in \bb{N}$, we define $ [\bb{C}\rtimes_\tau U_{L^2_m,N}]_\fin\subseteq \bb{K}(L^2(\bb{R}^N)^*)$ by the set of finite-rank operators preserving $L^2(\bb{R}^N)_\fin^*$. These subspaces are preserved by the connecting homomorphisms. Hence the algebraic inductive limit $\varinjlim^\alg [\bb{C}\rtimes_\tau U_{L^2_m,N}]_\fin$ makes sense, and it is denoted by $[\bb{C}\rtimes_\tau U_{L^2_m}]_\fin$.
We define a dense subalgebra $ \ud{[\bb{C}\rtimes_\tau LT_{L^2_m}]_\fin}$  of $ \ud{\bb{C}\rtimes_\tau LT_{L^2_m}}$ by the algebraic tensor product
$$\ud{[\bb{C}\rtimes_\tau LT_{L^2_m}]_\fin}:= C_c^\infty(T\times\Pi_T,\tau)\grotimes^\alg
\ud{[\bb{C}\rtimes_\tau U_{L^2_m}]_\fin}.$$

\end{dfn}
\begin{nota}

A PER of $LT_{L^2_m}$ is given by the tensor product of those of $T\times\Pi_T$'s and $U_{L^2_m}$'s. Thus, we can define a $*$-homomorphism $\ud{\bb{C}\rtimes_\tau LT_{L^2_m}}\to \bb{L}(H)$ for a PER $H$. This homomorphism is denoted by ``$\Op$'', following the notation of Definition \ref{definition of twisted group Cstar algebra of LT} $(1)$.
\end{nota}

\begin{rmks}
$(1)$ The rank one operator appearing in the definition of the connecting homomorphism $\bb{K}(L^2(\bb{R}^N)^*)\to \bb{K}(L^2(\bb{R}^{N+1})^*)$ is given by the Gaussian 
$$\vac(a_{N+1},b_{N+1}):=\frac{N^{1-2m}}{2\pi}e^{-\frac{(N+1)^{1-2m}}{4}(a_{N+1}^2+b_{N+1}^2)}$$
on $U_{L^2_m,N+1}\ominus U_{L^2_m,N}$. This is proved by a simple calculation using the definition of $\rho$ and $\pi_\rho$, and we leave it to the reader.

$(2)$ A finite rank operator preserving $L^2(\bb{R}^N)_\fin$ is given by a finite linear combination of 
$$[d\rho(z_{1})]^{\alpha_1}[d\rho(z_{2})]^{\alpha_2}\cdots [d\rho(z_{N})]^{\alpha_N}\circ P_{\bb{C}{\bf 1}_b^*} \circ [d\rho(\overline{z_{1}})]^{\beta_1}[d\rho(\overline{z_{2}})]^{\beta_2}\cdots [d\rho(\overline{z_{N}})]^{\beta_N}.$$
One can prove the following formulas by integration by parts and the Leibniz rule:
$$d\rho(z_{k})\circ\Op(f)=\Op\bra{
\bbbra{-k^{-\frac{1}{2}+m}\bra{\frac{\partial}{\partial a_k}+\frac{1}{2}k^{1-2m}a_k}+\sqrt{-1}k^{-\frac{1}{2}+m}\bra{\frac{\partial}{\partial b_k}+\frac{1}{2}k^{1-2m}b_k}}f},$$
$$\Op(f)\circ d\rho(\overline{z_{k}})=\Op\bra{\bbbra{
-k^{-\frac{1}{2}+m}\bra{\frac{\partial}{\partial a_k}+\frac{1}{2}k^{1-2m}a_k}-\sqrt{-1}k^{-\frac{1}{2}+m}\bra{\frac{\partial}{\partial b_k}+\frac{1}{2}k^{1-2m}b_k}}f}.$$
By this formula, one finds that $[\bb{C}\rtimes U_{L^2_m,N}]_\fin$ is the set of functions of the form ``polynomial $\times$ Gaussian''.
Therefore, $\ud{[\bb{C}\rtimes U_{L^2_m}]_\fin}$ is regarded as the set of ``functions'' of the form ``polynomial $\times$ Gaussian''.
\end{rmks}

We have used the $G$-action ``$\rt$'' on $\bb{C}\rtimes G$ to compute the descent homomorphism in Definition \ref{def of descented Dirac element bimodule}, which is given by $\rt_gf(h):=f(hg)$ for $f\in C_c(G)$ and $g,h\in G$. In addition, we define another action ``$\lt$'' by $\lt_gf(h):=f(g^{-1}h)$.
For a unitary representation $\rho:G\to U(V)$, we can define a $*$-homomorphism $\pi_\rho:\bb{C}\rtimes G\to \bb{L}(V)$. By simple calculations, we have $\pi_\rho(\lt_gf)=\rho_g\circ \pi_\rho(f)$ and $ \pi_\rho(\rt_gf)=\pi_\rho(f)\circ\rho_{g^{-1}}$. If $G$ is a Lie group, we can also define ``$d\lt$'' and  ``$d\rt$'' and they satisfy $\pi_\rho(d\lt_Xf)=d\rho_X\circ \pi_\rho(f)$ and $ \pi_\rho(d\rt_Xf)=\pi_\rho(f)\circ d\rho_{-X}$ for $X\in \fra{g}$.

Since $\ud{\bb{C}\rtimes_{\tau} LT_{L^2_m}}$ is not defined by the completion of $C_c(LT_{L^2_m},\tau)$, the original definitions of ``$\lt$'' and  ``$\rt$'' do not work. However, the right hand sides of $\pi_\rho(\lt_gf)=\rho_g\circ \pi_\rho(f)$ and $ \pi_\rho(\rt_gf)=\pi_\rho(f)\circ\rho_{g^{-1}}$ still make sense. With this observation, we introduce the $LT$-counterparts of ``$\lt$'' and  ``$\rt$''.

\begin{dfn}\label{dfn of roup Cstar algebra of LT}
$(1)$ For $b\in \ud{\bb{C}\rtimes_\tau U_{L^2_m}}$ and $g\in U_{L^2_m}^\tau$, we define $\lt_gb:=\Op^{-1}\bra{ \rho_g\circ \Op(b)}$ and $\rt_gb:=\Op^{-1}\bra{ \Op(b)\circ \rho_{g^{-1}}}$.
We define the infinitesimal versions of ``$\lt$'' and ``$\rt$'' in an obvious way.

$(2)$ They and the corresponding actions on $T\times\Pi_T$, induce two $LT_{L^2_m}^\tau$-actions on $\ud{\bb{C}\rtimes_\tau LT_{L^2_m}}$. We use the same symbols ``$\lt$'' and ``$\rt$'' to denote these new actions.
\end{dfn}


\subsection{$\ca{R}KK$-theory for non-locally compact action groupoids}\label{section RKK}

The goal of this subsection is to define ``$\ca{R}KK$-theory for non-locally compact action groupoids'' based on the idea which we have explained in the last several paragraphs of Section \ref{section KK and RKK for finite dimension}. In the present paper, we deal with only action groupoids. We do not explain the whole story and we will merely define necessary concepts and prove several easy results. For more general cases and details, see \cite{NT}. I emphasize that the primitive idea of this theory is due to Shintaro Nishikawa. I thank him for allowing me to introduce this theory before the collaboration paper \cite{NT}.


For simplicity, in this subsection, we work under the following assumption. It is satisfied for proper $LT$-spaces.

\begin{asm}
Throughout this subsection, $\ca{X}$ is assumed to be a metrizable space. In particular, we have Urysohn's lemma: {\it For any closed subsets $C,C'\subseteq \ca{X}$ with $C\cap C'=\emptyset$, there exists a continuous function $f:\ca{X}\to [0,1]$ such that $f|_C=0$ and $f|_{C'}=1$.}
We also suppose that $\ca{G}$ is a topological metrizable group and it acts on $\ca{X}$ continuously. Then, $\ca{G}\times \ca{X}$ is also metrizable.
\end{asm}

Let us introduce the substitute for the concept of ``$\ca{X}\rtimes \ca{G}$-$C^*$-algebras''. We begin with the definition of upper semi-continuous fields (u.s.c. fields, for short) of Banach spaces.

\begin{dfn}[{\cite[Definition 10.1.2]{Dix}}]\label{former definition of usc fields}
$(1)$ A {\bf u.s.c. field of Banach spaces over $\ca{X}$} is a pair $\scr{E}=(\{E_x\}_{x\in \ca{X}},\Gamma_{\scr{E}})$ of a family of Banach spaces $\{E_x\}_{x\in \ca{X}}$ parameterized by $\ca{X}$, and a subset $\Gamma_{\scr{E}}$ of $\bbra{s:\ca{X}\to \coprod_{x\in \ca{X}}E_x\mid s(x)\in E_x}$ satisfying the following conditions:
\begin{itemize}
\item $\Gamma_{\scr{E}}$ is a complex vector space by the pointwise operations;
\item The evaluation homomorphism $\Gamma_{\scr{E}}\ni s\mapsto s(x)\in E_x$ is surjective;
\item The function $\ca{X}\ni x\mapsto \|s(x)\|\in \bb{R}_{\geq 0}$ is upper semi-continuous for each $s\in \Gamma_{\scr{E}}$; and
\item A section $s:\ca{X}\to \coprod_{x\in \ca{X}}E_x$ is an element of $\Gamma_{\scr{E}}$ if the following is satisfied: For every $x\in \ca{X}$ and every $\epsilon>0$, these exists a neighborhood $U_{x,\epsilon}$ of $x$ and an $s'\in \Gamma_{\scr{E}}$ such that $\|s(y)-s'(y)\|\leq \epsilon$ for $y\in U_{x,\epsilon}$.
\end{itemize}

$(2)$ A u.s.c. field $\scr{E}=(\{E_x\}_{x\in \ca{X}},\Gamma_{\scr{E}})$ is {\bf $\bb{Z}_2$-graded} if every $E_x$ is $\bb{Z}_2$-graded: $E_x= E_{0,x}\widehat{\oplus}E_{1,x}$, and both of $\scr{E}_i=(\{E_{i,x}\}_{x\in \ca{X}},\Gamma_{\scr{E}_i})$ ($i=0,1$) are u.s.c. fields, where 
$$\Gamma_{\scr{E}_i}:=\bbra{s\in\Gamma_{\scr{E}}\mid s(x)\in E_{i,x}\text{ for every }x\in \ca{X}}.$$

\end{dfn}
\begin{rmk}
If the function $\ca{X}\ni x\mapsto \|s(x)\|$ is continuous for every $s\in\Gamma_{\scr{E}}$, the field is called a {\bf continuous field of Banach spaces}.
\end{rmk}

\begin{dfn}\label{dfn of homomorphism of usc field of B-spaces}
Let $\scr{E}_1=(\{E_{1,x}\}_{x\in \ca{X}},\Gamma_{\scr{E}_1})$ and $\scr{E}_2=(\{E_{2,x}\}_{x\in \ca{X}},\Gamma_{\scr{E}_2})$ be two u.s.c. fields of Banach spaces over $\ca{X}$. 

$(1)$ A family of bounded linear maps $\phi=\{\phi_x:E_{1,x}\to E_{2,x}\}$ is called a {\bf homomorphism between $\scr{E}_1$ and $\scr{E}_2$ over $\ca{X}$} if it satisfies the following: For every $s\in \Gamma_{\scr{E}_1}$, the resulting section $\phi(s):x\mapsto \phi_x(s_x)$ also belongs to $\Gamma_{\scr{E}_2}$. It is said to be injective/surjective/isomorphic/isometric if every $\phi_x$ is injective/surjective/isomorphic/isometric.

$(2)$ A homomorphism $\phi:\scr{E}_1\to \scr{E}_2$ over $\ca{X}$ between $\bb{Z}_2$-graded u.s.c. fields of Banach spaces is {\bf even} if it preserves the $\bb{Z}_2$-grading, and is {\bf odd} if it reveres the $\bb{Z}_2$-grading.
\end{dfn}
\begin{rmk}
A $\bb{Z}_2$-graded u.s.c. field of Banach space $\scr{E}$ admits an isometric isomorphism $\epsilon=\{\epsilon_x\}:\scr{E}\to \scr{E}$ such that $\epsilon_x(e_{0,x}+e_{1,x})=e_{0,x}-e_{1,x}$, where $e_{i,x}\in E_{i,x}$ for $i\in \{0,1\}$ and $x\in \ca{X}$. It is called the grading homomorphism. A homomorphism is even and odd if and only if it commutes and anti-commutes with $\epsilon$, respectively.
\end{rmk}

As mentioned in \cite[Appendix]{TXLG}, the ``total space'' $\widetilde{\scr{E}}:=\coprod_{x\in \ca{X}}E_x$ admits the unique topology so that $\Gamma_{\scr{E}}$ is the set of all continuous sections of the bundle $\widetilde{\scr{E}}\to \ca{X}$. Then, the above homomorphism $\{\phi_x\}$ defines a continuous bundle map $\phi:\widetilde{\scr{E}_1}\to \widetilde{\scr{E}_2}$.
We can construct the pullback of a u.s.c. field of Banach spaces by a continuous map $F:\ca{Y}\to \ca{X}$. This field over $\ca{Y}$ is denoted by $F^*\scr{E}$. 

Using this construction, we can define the concept of continuous group actions. See also \cite[Definition 3.5]{LG}. By the $\ca{G}$-action on $\ca{X}$, we define two continuous maps $s,r: \ca{G}\times \ca{X}\to  \ca{X}$ by $s(g,x):=x$ and $r(g,x):=g\cdot x$. 

\begin{dfn}
$(1)$ Let $\scr{E}=(\{E_x\}_{x\in \ca{X}},\Gamma_{\scr{E}})$ be a u.s.c. field of Banach spaces over $\ca{X}$.
A {\bf continuous $\ca{G}$-action on $\scr{E}$} is an isomorphism $\alpha:s^*\scr{E}\to r^*\scr{E}$ over $\ca{G}\times\ca{X}$ satisfying the following commutative diagram for every $x\in \ca{X}$ and $g\in \ca{G}$:
$$\xymatrix{
E_x \ar^-{\alpha_{(g,x)}}[rr] \ar_-{\alpha_{(hg,x)}}[rd]&& E_{g\cdot x}\ar^-{\alpha_{(h,g\cdot x)}}[ld] \\
& E_{(hg)\cdot x}&}$$
An isomorphism $\alpha_{(g,x)}$ is also denoted by $(\alpha_g)_x$, by regarding $\alpha_g$ as an automorphism on $\scr{E}$. If every isomorphism is isometric, this action is said to be isometric. A u.s.c. field of Banach spaces over $\ca{X}$ equipped with a $\ca{G}$-action is said to be {\bf $\ca{X}\rtimes \ca{G}$-equivariant}.

$(2)$ For $\ca{X}\rtimes \ca{G}$-equivariant u.s.c. fields of Banach spaces $\scr{E}_1$ and $\scr{E}_2$, a homomorphism $\phi:\scr{E}_1\to \scr{E}_2$ over $\ca{X}$ is said to be $\ca{X}\rtimes \ca{G}$-equivariant if it satisfies $(\alpha_g)_x\circ \phi_x=\phi_{g\cdot x}\circ (\alpha_g)_x$.
\end{dfn}

More interesting types of fields, including fields of $C^*$-algebras or those of Hilbert modules, can be defined.

\begin{dfn}
$(1)$ Let $\scr{A}=(\{A_x\},\Gamma_{\scr{A}})$ be a u.s.c. field of Banach spaces. It is a {\bf u.s.c. field of $C^*$-algebras over $\ca{X}$} if the following conditions are satisfied:
\begin{itemize}
\item Each fiber $A_x$ is a $C^*$-algebra;
\item $\Gamma_{\scr{A}}$ is closed under the multiplication and the adjoint; and
\item The norm satisfies the $C^*$-condition $\|a^*a\|=\|a\|^2$ for each $x\in \ca{X}$ and $a\in A_x$.
\end{itemize}

For u.s.c. fields of $C^*$-algebras $\scr{A}_1$ and $\scr{A}_2$ over $\ca{X}$, a homomorphism $\phi:\scr{A}_1\to\scr{A}_2$ over $\ca{X}$ is called a {\bf $*$-homomorphism} if it preserves the multiplication and the adjoint. We define the concepts of $\bb{Z}_2$-graded u.s.c. fields of $C^*$-algebras, even/odd $*$-homomorphisms, in the same way for fields of Banach spaces.

$(2)$ A u.s.c. field of $C^*$-algebras $\scr{A}=(\{A_x\},\Gamma_{\scr{A}})$ over $\ca{X}$ is said to be {\bf locally separable} if the following condition is satisfied: For every $x\in \ca{X}$, there exists an open neighborhood $U_x$ and a countable set of continuous sections $\{a_n\}\subseteq \Gamma_{\scr{A}}$ such that $\{a_n(y)\}\in A_y$ is dense for every $y\in U_x$.

$(3)$ Let $\scr{A}=(\{A_x\},\Gamma_{\scr{A}})$ be a u.s.c. field of $C^*$-algebras.
A u.s.c. field of Banach spaces $\scr{E}=(\{E_x\},\Gamma_{\scr{E}})$ is a {\bf u.s.c. field of Hilbert $\scr{A}$-modules over $\ca{X}$} if the following conditions are satisfied:
\begin{itemize}
\item Each fiber $E_x$ is a Hilbert $A_x$-module;
\item $\Gamma_{\scr{E}}$ is closed under the multiplication by $\Gamma_{\scr{A}}$, and the inner product of two elements $e_1,e_2\in \Gamma_{\scr{E}}$ defines an element of $\Gamma_{\scr{A}}$; and
\item The Banach space norm of $e\in E_x$ is given by $\|e\|_{E_x}=\sqrt{\|\inpr{e}{e}{E_x}\|_{A_x}}$.
\end{itemize}

$(4)$ A u.s.c. field $\scr{E}$ of Hilbert $\scr{A}$-modules over $\ca{X}$ is said to be {\bf locally countably generated} if the following condition is satisfied: For any $x\in X$, there exist a countable set $\{s_n\}_{n\in \bb{N}}\subseteq \Gamma_{\scr{E}}$ and an open neighborhood $U_x$ such that the $A_y$-linear span of $\{s_n(y)\}_{n\in \bb{N}}\subseteq E_y$ is dense in $E_y$ for every $y\in U_x$.

$(5)$ A u.s.c. field of $C^*$-algebras $\scr{A}$ over $\ca{X}$ is said to be {\bf $\ca{X}\rtimes \ca{G}$-equivariant} if an isometric $\ca{G}$-action on $\scr{A}$ is given and each $(\alpha_g)_x$ preserves the multiplication and the adjoint. We define the concept of $\ca{X}\rtimes \ca{G}$-equivariant Hilbert $\scr{A}$-modules in a parallel way.
\end{dfn}

\begin{rmk}
Let $\scr{E}$ be a locally countably generated u.s.c. field of Hilbert $\scr{A}$-modules over $\ca{X}$. Then, thanks to the proof of \cite[Lemma 5.9]{MS} and a partition of unity, we can find a countable set $\{s_n\}\subseteq \Gamma_{\scr{E}}$ so that $\{s_n(x)\}$ spans $E_x$ over $A_x$ for every $x\in \ca{X}$. Moreover, $\scr{E}$ can be written as an orthogonally complementable submodule of the trivial bundle $l^2(\bb{N})\grotimes \scr{A}$. This result will be proved in \cite{NT}.
\end{rmk}

Let us give typical examples.

\begin{ex}
$(1)$ Let $C(\ca{X})$ be the $*$-algebra consisting of $\bb{C}$-valued continuous functions on $\ca{X}$. Then,
$$\scr{C}(\ca{X})=(\{\bb{C}_x\}_{x\in \ca{X}},C(\ca{X}))$$
is an $\ca{X}\rtimes \ca{G}$-equivariant continuous field of $C^*$-algebras by the ``trivial'' action $(\alpha_g)_x:\bb{C}_x\ni z\mapsto z\in\bb{C}_{g\cdot x}$.
Note that the single $C^*$-algebra $C_0(\ca{X})$ is trivial if $\ca{X}$ is not locally compact. In this sense, the concept of u.s.c. fields of $C^*$-algebras is more general than single $C^*$-algebras. 

$(2)$ Let $\ca{E}=\coprod_{x\in\ca{X}}E_x\to \ca{X}$ be a $\ca{G}$-equivariant locally trivial Hilbert space bundle whose fibers are separable, and let $C(\ca{X},\ca{E})$ be the set of continuous sections. Then, 
$$\scr{E}=(\{E_x\}_{x\in \ca{X}},C(\ca{X},\ca{E}))$$
is an $\ca{X}\rtimes \ca{G}$-equivariant continuous field of Hilbert $\scr{C}(\ca{X})$-modules.

$(3)$ For a $\ca{G}$-$C^*$-algebra $A$ and a $\ca{G}$-equivariant Hilbert $A$-module $E$, $A\grotimes \scr{C}(\ca{X})=(\{A\},C(\ca{X},A))$ is an $\ca{X}\rtimes \ca{G}$-equivariant continuous field of $C^*$-algebras and $E\grotimes\scr{C}(\ca{X})=(\{E\},C(\ca{X}, E))$ is an $\ca{X}\rtimes \ca{G}$-equivariant continuous field of Hilbert $A\grotimes \scr{C}(\ca{X})$-module by the diagonal action. These fields are said to be {\bf trivial}.
\end{ex}

Let us define the filed of adjointable operators and that of compact operators for a u.s.c. field of $C^*$-algebras $\scr{B}$ and a u.s.c. field of Hilbert $\scr{B}$-modules $\scr{E}$.

\begin{dfn}\label{dfn:  filed of scrB-compact operators}
Let $\scr{B}$ be an $\ca{X}\rtimes\ca{G}$-equivariant u.s.c. field of $C^*$-algebras and let $\scr{E}$ be a locally countably generated $\ca{X}\rtimes\ca{G}$-equivariant u.s.c. field of Hilbert $\scr{B}$-modules.

$(1)$ The filed of adjointable operators $\scr{L}(\scr{E})=\scr{L}_{\scr{B}}(\scr{E})$ over $\scr{E}$ is defined by the family of $C^*$-algebras $\{\bb{L}(E_x)\}_{x\in \ca{X}}$ and the set of sections $\Gamma_{\scr{L}(\scr{E})}$ given by the following: A section $x\mapsto T_x$ is continuous if and only if, for any continuous section $\xi=\{\xi_x\}_{x\in \ca{X}}\in \Gamma_{\scr{E}}$, the new sections $x\mapsto T_x\xi_x$ and $x\mapsto (T_x)^*\xi_x$ are again continuous.

$(2)$ We define a single $C^*$-algebra $\bb{L}_{\scr{B}}(\scr{E})$ by the set of bounded sections
$$\bbra{\{T_x\}\in \Gamma_{\scr{L}(\scr{E})}\mid \|T_x\|\text{ is bounded}}$$
by the pointwise operations and the norm $\|\{T_x\}\|:=\sup_{x\in \ca{X}}\|T_x\|$.

$(3)$ A continuous section $\{ F_x\}\in \Gamma_{\scr{L}(\scr{E})}$ is a {\bf locally finite rank operator} if the following condition is satisfied: For each point $x\in \ca{X}$, there exists an open neighborhood $U_x$ and continuous sections $e_1,f_1,\cdots,e_n,f_n$ of $\scr{E}$ such that $F_y=\sum e_i(y)\grotimes [f_i(y)]^*$ on $U_x$, where the operator $e_i(y)\grotimes [f_i(y)]^*$ is defined by $E_y\ni v\mapsto e_i(y)\inpr{f_i(y)}{v}{B_y}\in E_y$.

$(4)$ The filed of $\scr{B}$-compact operators $\scr{K}(\scr{E})=\scr{K}_{\scr{B}}(\scr{E})$ over $\scr{E}$ is defined by the family of $C^*$-algebras $\{\bb{K}(E_x)\}_{x\in \ca{X}}$ and the set of sections $\Gamma_{\scr{K}(\scr{E})}$ given by the following. A section $x\mapsto T_x\in \bb{K}(E_x)$ is continuous if and only if there exists a net of locally finite rank operators $\{\{(F_\lambda)_x\}_{x\in \ca{X}}\}_{\lambda\in \Lambda}$ satisfying the following condition: For any $\epsilon>0$ and $x\in X$, there exists an open neighborhood $U_{\epsilon,x}$ and $\lambda_0$ such that $\lambda\geq \lambda_0$ implies $\|T_y-(F_\lambda)_y\|<\epsilon$ on $y\in U_{\epsilon,x}$.
\end{dfn}

The field $\scr{K}(\scr{E})$ is an $\ca{X}\rtimes\ca{G}$-equivariant u.s.c. field of $C^*$-algebras, where the $\ca{G}$-action is defined by $(\alpha_g^{\scr{K}(\scr{E})})_x(k_x):=(\alpha_g^{\scr{E}})_x\circ k_x\circ(\alpha_{g^{-1}}^{\scr{E}})_{g\cdot x}$. This will be proved in \cite{NT}

\begin{dfn}\label{dfn of tensor product of usc field of modules}
Let $\scr{A}=(\{A_x\},\Gamma_{\scr{A}})$ and $\scr{B}=(\{B_x\},\Gamma_{\scr{B}})$ be $\ca{X}\rtimes \ca{G}$-equivariant u.s.c. fields of $C^*$-algebras.

$(1)$ Let $\scr{E}_1=(\{E_{1,x}\},\Gamma_{\scr{E}_1})$ be an $\ca{X}\rtimes \ca{G}$-equivariant u.s.c. field of Hilbert $\scr{A}$-modules, and let $\scr{E}_2=(\{E_{2,x}\},\Gamma_{\scr{E}_2})$ be an $\ca{X}\rtimes \ca{G}$-equivariant u.s.c. field of Hilbert $\scr{B}$-modules. Suppose that an $\ca{X}\rtimes \ca{G}$-equivariant $*$-homomorphism $\pi=\{\pi_x\}_{x\in \ca{X}}:\scr{A}\to \scr{L}_{\scr{B}}(\scr{E})$ is given. Then, we define an $\ca{X}\rtimes \ca{G}$-equivariant u.s.c. field of Hilbert $\scr{B}$-modules $\scr{E}_1\grotimes_{\scr{A}}\scr{E}_2$ by $(\{E_{1,x}\grotimes_{A_x}E_{2,x}\}_{x\in \ca{X}},\Gamma_{\scr{E}_1\grotimes_{\scr{A}}\scr{E}_2})$, where a section $s$ is an element of $\Gamma_{\scr{E}_1\grotimes_{\scr{A}}\scr{E}_2}$, if for every $x\in \ca{X}$ and for every $\epsilon>0$ there exists a neighborhood $U_{x,\epsilon}$ of $x$ and finitely many sections $\{s_{1,i}\}\subseteq \Gamma_{\scr{E}_1}$ and $\{s_{2,i}\}\subseteq \Gamma_{\scr{E}_2}$ such that $\|s(y)-\sum_i s_{1,i}(y)\grotimes s_{2,i}(y)\|<\epsilon$ on $y\in U_{x,\epsilon}$. For an adjointable operator $T$ on $\scr{E}_1$, we can define $T\grotimes \id$ in an obvious way.

$(2)$ In particular, we can define the pushout as follows. Let $\sigma:\scr{A}\to \scr{B}$ be an $\ca{X}\rtimes \ca{G}$-equivariant $*$-homomorphism. It gives a Hilbert $\scr{B}$-module $\scr{B}$ equipped with a $*$-homomorphism $\sigma:\scr{A}\to \scr{B}\subseteq \scr{L}_{\scr{B}}(\scr{B})$. We define the pushout $\sigma_*(\scr{E})$ by $\scr{E}\grotimes_{\scr{A}}\scr{B}$.
\end{dfn}
\begin{rmk}
Let $I$ be the closed interval $[0,1]$.
For an $\ca{X}\rtimes \ca{G}$-equivariant u.s.c. field of $C^*$-algebras $\scr{A}$, we define another equivariant u.s.c. field $\scr{A}I$ by $(\{A_x\grotimes C(I)\}_{x\in \ca{X}},\Gamma_{\scr{A}I})$, where $\Gamma_{\scr{A}I}$ is defined by the same way of $(1)$ of the above definition.
Then, we have a $*$-homomorphism 
$$\ev_t=\{\ev_{t,x}:A_xI=A_x\grotimes C(I)\ni a\grotimes f\mapsto a\cdot f(t)\in A_x\}.$$
Therefore, for an $\ca{X}\rtimes \ca{G}$-equivariant u.s.c. field of Hilbert $\scr{A}I$-module $\scr{E}$, we can define the ``evaluation at $t$ of $\scr{E}$'' by $\ev_{t,*}\scr{E}$. We will use this construction to define the homotopy equivalence.

For this tensor product construction and $T\in \bb{L}_{\scr{A}I}(\scr{E})$, $T\grotimes \id$ is denoted by $\ev_{t,*}(T)\in \bb{L}_{\scr{A}}(\ev_{t,*}(\scr{E}))$.
\end{rmk}

Now, we can extend $\ca{R}KK$-theory to our infinite-dimensional setting.

\begin{dfn}\label{dfn homotopy of RKK element}
Let $\scr{A}=(\{A_x\},\Gamma_{\scr{A}})$ and $\scr{B}=(\{B_x\},\Gamma_{\scr{B}})$ be locally separable $\ca{X}\rtimes \ca{G}$-equivariant u.s.c. fields of $C^*$-algebras. 

$(1)$ An {\bf $\ca{X}\rtimes \ca{G}$-equivariant Kasparov $(\scr{A},\scr{B})$-module} is a triple $(\scr{E},\pi,F)$ satisfying the following conditions:
\begin{itemize}
\item $\scr{E}=(\{E_x\},\Gamma_{\scr{E}})$ is a locally countably generated $\ca{X}\rtimes \ca{G}$-equivariant u.s.c. field of $\bb{Z}_2$-graded Hilbert $\scr{B}$-modules.
\item $\pi=\{\pi_x\}$ is an $\ca{X}\rtimes \ca{G}$-equivariant even $*$-homomorphism $\scr{A}\to \scr{L}_{\scr{B}}(\scr{E})$.
\item $F=\{F_x\}\in \bb{L}_{\scr{B}}(\scr{E})$ is an odd element and it satisfies the following conditions: For any $a\in \Gamma_{\scr{A}}$, all the sections 
\begin{center}
$x\mapsto [\pi_x(a_x),F_x]$, $x\mapsto \pi_x(a_x)(T_x-T_x^*)$ and $x\mapsto\pi_x(a_x)\{1-(T_x)^2\}$
\end{center}
belong to $\Gamma_{\scr{K}_{\scr{B}}(\scr{E})}$, and the section 
$$\ca{G}\times \ca{X} \ni (g,x)\in a_{g\cdot x}\{F_{g\cdot x}-(\alpha_g)_x\circ F_{x}\circ (\alpha_{g^{-1}})_{g\cdot x}\}\in \bb{K}_{B_{g\cdot x}}(E_{g\cdot x})$$
belongs to $\Gamma_{r^*\scr{K}_{\scr{B}}(\scr{E})}$.
\end{itemize}

$(2)$ The set of isomorphism classes of $\ca{G}$-equivariant Kasparov $(\scr{A},\scr{B})$-modules is denoted by $\bb{E}_{\ca{X}\rtimes\ca{G}}(\scr{A},\scr{B})$. It is an abelian semigroup, whose addition is defined by $(\scr{E}_1,\pi_1,F_1)+(\scr{E}_2,\pi_2,F_2):=(\scr{E}_1\oplus \scr{E}_2,\pi_1\oplus\pi_2,F_1\oplus F_2)$.

$(3)$ Two elements $(\scr{E}_0,\pi_0,F_0), (\scr{E}_1,\pi_1,F_1)\in \bb{E}_{\ca{X}\rtimes\ca{G}}(\scr{A},\scr{B})$ are said to be {\bf homotopic} if there exists $(\scr{E},\pi,F)\in \bb{E}_{\ca{X}\rtimes\ca{G}}(\scr{A},\scr{B}I)$ such that $(\scr{E}_i,\pi_i,F_i)$ is isomorphic to $\ev_{i,*}(\scr{E},\pi,F)$ for $i=0,1$. 
This is obviously an equivalence relation. 
Such a triple is called a homotopy between  $(\scr{E}_0,\pi_0,F_0)$ and $(\scr{E}_1,\pi_1,F_1)$. 

$(4)$ The set of homotopy classes of $\ca{X}\rtimes \ca{G}$-equivariant Kasparov $(\scr{A},\scr{B})$-modules is denoted by $\ca{R}KK_{\ca{G}}(\ca{X};\scr{A},\scr{B})$. The homotopy class of $(\scr{E}_0,\pi_0,F_0)$ is denoted by $[(\scr{E}_0,\pi_0,F_0)] \in \ca{R}KK_{\ca{G}}(\ca{X};\scr{A},\scr{B})$.

$(5)$ If $\ca{G}$ is equipped with a $U(1)$-central extension, we define $\ca{R}KK_{\ca{G}}^\tau(\ca{X};\scr{A},\scr{B})$ in the same way of Definition \ref{twisted equivariant KK theory}.
\end{dfn}

$\ca{R}KK_{\ca{G}}(\ca{X};\scr{A},\scr{B})$ is an abelian group, whose addition is induced by the addition in $\bb{E}_{\ca{X}\rtimes \ca{G}}(\scr{A},\scr{B})$. In fact, the direct sum operation is homotopy invariant, because the direct sum of the homotopies gives a homotopy between the direct sums. 

We can also define the concept of Kasparov products.
\begin{dfn}
Let $\scr{A}$, $\scr{A}_1$ and $\scr{B}$ be locally separable $\ca{X}\rtimes\ca{G}$-equivariant u.s.c. fields of $C^*$-algebras. Let $(\scr{E}_1,\pi_1,F_1)\in \bb{E}_{\ca{X}\rtimes \ca{G}}(\scr{A},\scr{A}_1)$ and let $(\scr{E}_2,\pi_2,F_2)\in \bb{E}_{\ca{X}\rtimes \ca{G}}(\scr{A}_1,\scr{B})$. 

$(1)$ For $s_1\in \Gamma_{\scr{E}_1}$, we define $T_{s_1}: \scr{E}_2\to \scr{E}_1\grotimes_{\scr{A}_1} \scr{E}_2$ by $\Gamma_{\scr{E}_2}\ni e_2\mapsto [x\mapsto s_{1,x}\grotimes s_{2,x}\in E_{1,x}\grotimes_{A_{1,x}} E_{2,x}] \in\Gamma_{\scr{E}_1\grotimes_{\scr{A}_1}\scr{E}_2}$.
An element $F\in \bb{L}_\scr{B}(\scr{E}_1\grotimes \scr{E}_2)$ is a {\bf $F_2$-connection} if
$$\bbbra{\begin{pmatrix}F & 0 \\ 0 & F_2\end{pmatrix}, \begin{pmatrix}0& T_{s_{1}} \\ T_{s_{1}}^* & 0\end{pmatrix}}\in \Gamma_{\scr{K}_{\scr{B}}(\scr{E}\oplus \scr{E}_2)}$$
for any $s_1\in \Gamma_{\scr{E}_1}$

$(2)$ $(\scr{E}_1\grotimes \scr{E}_2,\pi,F)\in \bb{E}_{\ca{X}\rtimes \ca{G}}(\scr{A},\scr{B})$ is a {\bf Kasparov product of $(\scr{E}_1,\pi_1,F_1)$ and $(\scr{E}_2,\pi_2,F_2)$} if $F$ is an $F_2$-connection satisfying the following: For arbitrary $a\in \Gamma_{\scr{A}}$, there exists a continuous section $P\in \scr{L}_{\scr{B}}(\scr{E})$ whose value at each point is positive, such that the section $x\mapsto \pi_x(a_x)[F_x,F_{1,x}\grotimes \id]\pi_x(a_x)^*-P_x$ belongs to $\Gamma_{\scr{K}_{\scr{B}}(\scr{E}_1\grotimes \scr{E}_2)}$.
\end{dfn}


\begin{thm}[\cite{NT}]
We suppose that the $\ca{G}$-action on $\ca{X}$ is proper and there exists a local slice at every point of $\ca{X}$.
We further suppose that $\scr{A},\scr{A}_1$ and $\scr{B}$ are locally separable. Then, for arbitrary $(\scr{E}_1,\pi_1,F_1)\in \bb{E}_{\ca{X}\rtimes \ca{G}}(\scr{A},\scr{A}_1)$ and $(\scr{E}_2,\pi_2,F_2)\in \bb{E}_{\ca{X}\rtimes \ca{G}}(\scr{A}_1,\scr{B})$, there exists a Kasparov product $(\scr{E}_1\grotimes \scr{E}_2,\pi,F)\in \bb{E}_{\ca{X}\rtimes \ca{G}}(\scr{A},\scr{B})$. It is well-defined at the $\ca{R}KK$-level, and it is associative. 
\end{thm}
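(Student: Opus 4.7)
The approach is to mimic the classical Kasparov product construction via a local-to-global strategy enabled by the local slice assumption. First I would establish a fielded stabilization theorem: under the local countable generation hypothesis, any $\ca{X}\rtimes\ca{G}$-equivariant u.s.c. field of Hilbert $\scr{B}$-modules is orthogonally complementable in $l^2(\bb{N})\grotimes\scr{B}$ (as already alluded to in the remark following the definition of locally countably generated fields). This lets me work throughout with stabilized modules and reduces the topological complications to the level of projections in $\scr{L}_{\scr{B}}(l^2(\bb{N})\grotimes \scr{B})$.

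Second, using the local slice hypothesis around each $\ca{G}$-orbit in $\ca{X}$, I can choose a $\ca{G}$-invariant open cover $\{\ca{G}\cdot U_\alpha\}$ of $\ca{X}$ where each $U_\alpha$ is a slice with compact stabilizer $K_\alpha\subseteq \ca{G}$. On each such $U_\alpha$, the restricted data $(\scr{E}_i|_{U_\alpha},\pi_i|_{U_\alpha},F_i|_{U_\alpha})$ becomes a $K_\alpha$-equivariant field parameterized by a locally compact base (after passage to a sufficiently small neighborhood we may arrange local triviality), so the construction of a local Kasparov product $F^{(\alpha)}$ on $\scr{E}_1\grotimes\scr{E}_2|_{U_\alpha}$ reduces to the classical Kasparov machinery, e.g. via Kucs's criterion (Proposition \ref{Kucs criterion}) together with the Connes--Skandalis technical lemma applied to $\bbra{\pi_{2,x}(a_x)\{1-F_{2,x}^2\}}$-type approximate units in $\Gamma_{\scr{K}_{\scr{B}}(\scr{E}_1\grotimes \scr{E}_2)}$. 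Averaging $F^{(\alpha)}$ over $K_\alpha$ yields an actually $K_\alpha$-equivariant choice.

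Third, to globalize, I would patch the $F^{(\alpha)}$'s via a $\ca{G}$-invariant partition of unity $\{\chi_\alpha\}$ subordinate to $\{\ca{G}\cdot U_\alpha\}$, setting $F:=\sum_\alpha \chi_\alpha^{1/2} F^{(\alpha)}\chi_\alpha^{1/2}$ (interpreted fiberwise). The connection condition and the positivity condition are preserved under convex combinations modulo locally compact operators, so the field version of the Connes--Skandalis technical lemma, suitably adapted to produce elements of $\Gamma_{\scr{K}_{\scr{B}}(\scr{E})}$ rather than merely compact operators fiberwise, finishes the construction. Well-definedness at the $\ca{R}KK$-level then follows from the analogous patching argument applied to a homotopy between two local choices, and associativity reduces, as in the classical setting, to the uniqueness (up to operatorial homotopy) of an $F_3$-connection that is also an $(F_1\grotimes\id)$-connection and satisfies positivity simultaneously.

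The main obstacle is the fielded Connes--Skandalis lemma. In the classical theory one constructs, inside the multiplier algebra of a separable $C^*$-algebra, commuting positive elements $M,N$ with $M+N=1$ satisfying two $\sigma$-unital conditions; one must now run this argument inside $\bb{L}_{\scr{B}}(\scr{E})$ modulo $\Gamma_{\scr{K}_{\scr{B}}(\scr{E})}$ in a way that is uniform in the base point $x\in\ca{X}$ and compatible with the continuity structure of the field. The local separability of $\scr{A},\scr{A}_1,\scr{B}$ is precisely what makes the $\sigma$-unitality available locally, and the local slice assumption guarantees that the patching partition of unity is $\ca{G}$-equivariant and thus compatible with the equivariance condition on $F$; checking that the resulting $F$ lies in $\bb{L}_{\scr{B}}(\scr{E}_1\grotimes\scr{E}_2)$ (not merely in $\Gamma_{\scr{L}_{\scr{B}}(\scr{E}_1\grotimes\scr{E}_2)}$) and that the equivariance defect is a section of $r^*\scr{K}_{\scr{B}}(\scr{E}_1\grotimes \scr{E}_2)$ is the technical core that \cite{NT} must carry out in detail.
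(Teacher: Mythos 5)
This statement is attributed in the paper to the reference \cite{NT} (``Non-locally compact groupoid equivariant $KK$-theory,'' Nishikawa--Takata, in preparation), and the present paper contains no proof of it whatsoever --- the text around the theorem merely says ``The detailed study will be done in \cite{NT}, and we will just give definitions and a few necessary results.'' There is therefore no proof in this paper against which to check your argument.

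That said, your outline is a reasonable first approximation of what such a proof would have to look like, and the two load-bearing ingredients you identify --- a field-level stabilization/Kasparov stabilization theorem for locally countably generated fields (the paper's remark after the definition of local countable generation explicitly flags this as a result to be proved in \cite{NT}), and a field-level Connes--Skandalis technical lemma producing sections of $\Gamma_{\scr{K}_{\scr{B}}(\scr{E})}$ rather than merely fiberwise compact operators --- are exactly the sort of thing the hypotheses (properness, local slices, local separability) are designed to feed. A few cautions worth noting. First, your averaging of $F^{(\alpha)}$ over $K_\alpha$ to get an actually $K_\alpha$-equivariant local product has to be compatible with the fact that the definition of an $\ca{X}\rtimes\ca{G}$-equivariant Kasparov module in this paper (Definition \ref{dfn homotopy of RKK element}) only requires the equivariance defect of $F$ to be a section of $r^*\scr{K}_{\scr{B}}(\scr{E})$, not to vanish; you should verify that averaging does not break the other Kasparov conditions. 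Second, your claim that one may ``arrange local triviality'' after shrinking the slice is not automatic for an arbitrary u.s.c.\ field; this is where the stabilization theorem and local countable generation must be invoked carefully. Third, the patching $F:=\sum_\alpha \chi_\alpha^{1/2}F^{(\alpha)}\chi_\alpha^{1/2}$ is only a starting point --- by itself it typically produces an element that satisfies the connection condition modulo locally compact operators and is positive modulo locally compact operators, but one still needs the technical lemma to show this is enough and that the result is independent of choices up to homotopy, which is what makes the product well-defined at the $\ca{R}KK$ level. None of these concerns point to an error; they point to the places where a full proof must do real work, and they match the places the paper itself defers to \cite{NT}.
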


The Kasparov product of $x\in \ca{R}KK_{\ca{G}}(\ca{X};\scr{A},\scr{A}_1)$ and $y\in\ca{R}KK_{\ca{G}}(\ca{X};\scr{A}_1,\scr{B})$ is denoted by $x\grotimes_{\scr{A}_1}y\in \ca{R}KK_{\ca{G}}(\ca{X};\scr{A},\scr{B})$.

We have used the fact that an equivariant $\ca{R}KK$-element is represented by an actually equivariant $\ca{R}KK$-cycle in Proposition \ref{crossed product and generalized fixed-point module}. An analogous result for non-locally compact groupoids which satisfies a certain condition, will be proved in \cite{NT}. For this time, we prove it for a special case, which will be necessary for the main theorem.

We begin with a useful criterion to be operator homotopic.

\begin{lem}[{See \cite[Lemma 2.1.18]{JT}}]\label{Lemma positivity of the commutator implies the homotopy invariance}
Let $\scr{A}$ and $\scr{B}$ be $\ca{X}\rtimes\ca{G}$-equivariant u.s.c. fields of $C^*$-algebras. Suppose that $(\scr{E},\pi,F),(\scr{E},\pi,F')\in \bb{E}_{\ca{X}\rtimes \ca{G}}(\scr{A},\scr{B})$ satisfy the following: For any $a\in\Gamma_{\scr{A}}$, there exists a continuous section $P\in \scr{L}_{\scr{B}}(\scr{E})$ whose value at each point is positive such that the section $x\mapsto \pi_x(a_x)[F_x,F'_x]\pi_x(a_x)^*-P_x$ belongs to $\Gamma_{\scr{K}_{\scr{B}}(\scr{E})}$. Then, $(\scr{E},\pi,F)$ is homotopic to $(\scr{E},\pi,F')$ as equivariant Kasparov modules.
\end{lem}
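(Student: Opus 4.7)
The plan is to exhibit the operator homotopy directly: define the straight-line interpolation $F_t := (1-t)F + tF'$ on the Hilbert $\scr{B}I$-module $\scr{E}I$, and show that $(\scr{E}I, \pi\otimes 1, \{F_t\}_{t\in[0,1]})$ is an $\ca{X}\rtimes\ca{G}$-equivariant Kasparov $(\scr{A}I,\scr{B}I)$-module. By construction the endpoints $t=0,1$ recover the two given cycles, so once the Kasparov conditions are verified, the restriction-to-endpoints gives the required homotopy in the sense of Definition \ref{dfn homotopy of RKK element}.

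The conditions that are immediate are those involving the graded commutator and self-adjointness: since
\[
[\pi(a), F_t] = (1-t)[\pi(a),F] + t[\pi(a),F'], \qquad \pi(a)(F_t-F_t^*) = (1-t)\pi(a)(F-F^*) + t\pi(a)(F'-F'^*),
\]
and both are sections of $\scr{K}_{\scr{B}}(\scr{E})$ by hypothesis on $F,F'$, the resulting sections of $\scr{L}_{\scr{B}I}(\scr{E}I)$ lie in $\scr{K}_{\scr{B}I}(\scr{E}I) \cong \Gamma_{\scr{K}_{\scr{B}}(\scr{E})}\otimes C[0,1]$. Equivariance of the family $\{F_t\}$, in the sense required by the third bullet of Definition \ref{dfn homotopy of RKK element}, is similarly linear in $F$ and $F'$.

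The delicate verification is that $\pi(a)(1-F_t^2)$ is a compact-valued section. A direct calculation, using that the graded commutator $[F,F'] = FF' + F'F$ of two odd operators is the anticommutator, gives
\[
1 - F_t^2 = (1-t)^2(1-F^2) + t^2(1-F'^2) + t(1-t)\bigl(2\cdot\id - [F,F']\bigr).
\]
The first two terms are compact after multiplication by $\pi(a)$ by the Kasparov conditions on $F$ and $F'$. The main obstacle is the third term $t(1-t)\pi(a)(2\cdot\id-[F,F'])$: here the positivity hypothesis enters. Writing $\pi(a)[F,F']\pi(a)^* = P + K$ with $P \ge 0$ continuous and $K$ compact-valued, one has $\pi(a)(2\cdot\id-[F,F'])\pi(a)^* = 2\pi(a)\pi(a)^* - P - K$. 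The right-hand side is bounded above modulo compacts by $2\pi(a)\pi(a)^*$, and the positivity of $P$ allows one to absorb the non-compact piece by enlarging the module: replace $(\scr{E}I,\pi\otimes 1,\{F_t\})$ by its direct sum with the degenerate cycle $(\scr{E}I, \pi\otimes 1, F_{\rm deg})$ on which a suitably chosen odd self-adjoint contraction $F_{\rm deg}$ with $F_{\rm deg}^2 = 1$ cancels the positive contribution $P$. Adding degenerate summands does not change the homotopy class of either endpoint, so the resulting enlarged homotopy produces the required equivalence.

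The main obstacle, as just indicated, is the bookkeeping for the positive part $P$: one must ensure that the degenerate correction can be chosen $\ca{X}\rtimes\ca{G}$-equivariantly and locally finitely generated over $\scr{B}$, so that the enlarged field remains a legitimate Kasparov module over $(\scr{A}I,\scr{B}I)$ in our u.s.c.\ field framework. Because $P$ is a positive continuous section of $\scr{L}(\scr{E})$ that is compact modulo $\pi(a)$-multiples, this is achievable by a partition-of-unity argument along $\ca{X}$ together with the local countable generation of $\scr{E}$; the details are parallel to the classical argument in \cite{JT}, with continuous families replacing single operators, and with the equivariance automatic because all constructions proceed fiber-wise from the hypothesis data.
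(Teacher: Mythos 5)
Your straight-line interpolation $F_t=(1-t)F+tF'$ and the decomposition
\[
1 - F_t^2 = (1-t)^2(1-F^2) + t^2(1-F'^2) + t(1-t)\bigl(2\cdot\mathrm{id}-[F,F']\bigr)
\]
are the correct starting point, but the way you handle the problematic third term is where the argument breaks down. Adding an orthogonal degenerate summand $(\scr{E}I,\pi\otimes 1,F_{\rm deg})$ with $F_{\rm deg}^2=1$ does nothing to repair the failure of compactness on the original summand: one has
\[
\pi(a)\bigl(1-(F_t\oplus F_{\rm deg})^2\bigr) = \pi(a)(1-F_t^2)\oplus 0,
\]
so if $\pi(a)(1-F_t^2)$ is not compact-valued, neither is the direct sum. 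There is no cancellation between summands in a direct sum, and consequently the enlarged triple is still not a Kasparov $(\scr{A}I,\scr{B}I)$-module. The classical mechanism in \cite[Lemma 2.1.18]{JT} is different: one passes to the quotient $C^*$-algebra $Q_{\scr{A}}(\scr{E})/I_{\scr{A}}(\scr{E})$ (operators commuting with $\pi(\scr{A})$ modulo locally compact ones, modulo the ideal of locally compact ones). In that quotient $F$ and $F'$ become odd self-adjoint unitaries, and the positivity $[F,F']\ge 0$ there forces $(1-t)F+tF'$ to be \emph{invertible}, so one can normalize the straight-line path by its absolute value to obtain a path of odd self-adjoint unitaries, which then lifts to an operator homotopy. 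It is the normalization — not a degenerate summand — that fixes the compactness failure.

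You have also skipped the step that carries the genuine content in the u.s.c.\ field setting: establishing that $[F,F']\ge 0$ in $Q_{\scr{A}}(\scr{E})/I_{\scr{A}}(\scr{E})$. The hypothesis only gives, for each section $a$, that $\pi_x(a_x)[F_x,F'_x]\pi_x(a_x)^*-P_x$ is a compact-valued section with $P$ fiberwise positive; turning this pointwise/section-wise information into positivity in the single quotient $C^*$-algebra is not automatic. The paper handles it by exhibiting a faithful family of positive functionals $\mu(\omega,a,x)$ on $Q_{\scr{A}}(\scr{E})/I_{\scr{A}}(\scr{E})$, built from states $\omega$ of $\bb{L}_{B_x}(E_x)/\bb{K}_{B_x}(E_x)$ and sections $a$, and checking that each of them is nonnegative on the class of $[F,F']$. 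Your proposal treats this positivity as immediately usable without verifying that it holds in the right algebra, which is exactly the nontrivial part of adapting the classical lemma to fields.
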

\begin{proof}
We define two $C^*$-subalgebras $Q_{\scr{A}}(\scr{E})$ and $I_{\scr{A}}(\scr{E})$ of $\bb{L}_{\scr{B}}(\scr{E})$ by
$$Q_{\scr{A}}(\scr{E}):=\bbra{T\in \bb{L}_{\scr{B}}(\scr{E})\mid \text{The section }x\mapsto [\pi_x(a(x)),T_x]\text{ belongs to }\Gamma_{\scr{K}(\scr{E})}\text{ for any }a\in \Gamma_{\scr{A}}}$$
$$I_{\scr{A}}(\scr{E}):=\bbra{T\in Q_{\scr{A}}(\scr{E})\mid \text{The section }x\mapsto \pi_x(a(x))T_x\text{ belongs to }\Gamma_{\scr{K}(\scr{E})}\text{ for any }a\in \Gamma_{\scr{A}}}$$
Note that $I_{\scr{A}}(\scr{E})$ is a two-sided ideal of $Q_{\scr{A}}(\scr{E})$.
Since $F$ and $F'$ belongs to $Q_{\scr{A}}(\scr{E})$, so does $[F,F']$. We check that $[F,F']\geq 0 \mod I_{\scr{A}}(\scr{E})$.
For each state $\omega$ of $\bb{L}_{B_x}(E_x)/\bb{K}_{B_x}(E_x)$ and each $a\in A_x$ for $x\in X$, let us consider the state of $Q_{\scr{A}}(\scr{E})/I_{\scr{A}}(\scr{E})$ defined by $[T]\mapsto \omega(q_x(\pi_x(a)T_x\pi_x(a)^*))$, where $[T]$ is the equivalence class of $T$ in $Q_{\scr{A}}(\scr{E})/I_{\scr{A}}(\scr{E})$ and $q_x$ is the natural projection $\bb{L}_{B_x}(E_x)\to \bb{L}_{B_x}(E_x)/\bb{K}_{B_x}(E_x)$. This new state is denoted by $\mu(\omega,a,x)$. Then, the set of all such states $\{\mu(\omega,a,x)\}$ is a faithful family of states of $Q_{\scr{A}}(\scr{E})/I_{\scr{A}}(\scr{E})$. This is because $\prod_{x\in\ca{X}}\ev_x:Q_{\scr{A}}(\scr{E})/I_{\scr{A}}(\scr{E})\to \prod_{x\in\ca{X}}Q_{A_x}(E_x)/I_{A_x}(E_x)$ is injective and the set $\{\mu(\omega,a,x)\}_{\omega,a}$ is a faithful family of $Q_{A_x}(E_x)/I_{A_x}(E_x)$ for each $x\in\ca{X}$.
Since $\mu(\omega,a,x)([F,F'])=\omega(\pi_x(a)[F,F']\pi_x(a))=\omega(q_x(P_x))\geq 0$ for each $a,\omega,x$, the commutator $[F,F'] \geq 0 \mod I_{\scr{A}}(\scr{E})$. The remainder of the proof is the same with that of \cite[Lemma 2.1.18]{JT} and left to the reader.
\end{proof}

\begin{dfn}
Let $\scr{A},\scr{B}$ be locally separable $\ca{X}\rtimes\ca{G}$-equivariant u.s.c. fields of $C^*$-algebras and let $C$ and $D$ be separable $\ca{G}$-$C^*$-algebras. Suppose that all of $C\grotimes \scr{A}$, $D\grotimes \scr{A}$, $C\grotimes \scr{B}$ and $D\grotimes \scr{B}$ are $\ca{X}\rtimes\ca{G}$-equivariant u.s.c. fields of $C^*$-algebras (see the following remark).
We define the following two operations:
$$\sigma_{\ca{X},\scr{A}}:KK_{\ca{G}}(C,D)\to \ca{R}KK_{\ca{G}}(\ca{X};C\grotimes \scr{A},D\grotimes \scr{A})$$
by $(E,\pi,F)\mapsto (\{E\grotimes_{\bb{C}}A_x\}_{x\in\ca{X}},\{\pi\grotimes \id_{A_x}\}_{x\in\ca{X}},\{T\grotimes \id_{A_x}\}_{x\in\ca{X}})$, and
$$\sigma_{C}:\ca{R}KK_{\ca{G}}(\ca{X};\scr{A}, \scr{B})\to \ca{R}KK_{\ca{G}}(\ca{X};C\grotimes \scr{A},C\grotimes \scr{B})$$
by $(\{E_x\},\{\pi_x\},\{F_x\})\mapsto (\{C\grotimes_{\bb{C}} E_x\},\{\id_C\grotimes \pi_x\},\{\id_C\grotimes F_x\})$.
\end{dfn}
\begin{rmk}
Even for locally compact cases, it is non-trivial whether tensor product of a u.s.c. field of $C^*$-algebras and a single $C^*$-algebra is again a u.s.c. field \cite{KW}. However, even for infinite-dimensional cases,
it is obvious that the tensor product of a single nuclear $C^*$-algebra and a locally trivial field of $C^*$-algebras is again u.s.c. This is the case which we will deal with in the following sections.
\end{rmk}

\begin{cor}\label{Cor positivity of the commutator implies the homotopy invariance}
Let $C$ and $D$ be separable $\ca{G}$-$C^*$-algebras, and let  $(E,\pi,F)$ be a $\ca{G}$-equivariant Kasparov $(C,D)$-module. Then,  $\sigma_{\ca{G},\scr{C}(\ca{G})}(E,\pi,F)$ is operator homotopic to the family $(\{E\}_{g\in \ca{G}},\{\pi\}_{g\in \ca{G}},\{g(F)\}_{g\in \ca{G}})$.
\end{cor}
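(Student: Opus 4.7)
The plan is to apply Lemma~\ref{Lemma positivity of the commutator implies the homotopy invariance} directly with $\scr{A}=C\grotimes \scr{C}(\ca{G})$, $\scr{B}=D\grotimes \scr{C}(\ca{G})$, the constant field $\scr{E}=\{E\}_{g\in\ca{G}}$ equipped with the diagonal $\ca{G}$-action (left translation on the base, $\alpha^E$ on fibres), the two operator families $\{F\}_{g\in\ca{G}}$ and $\{g(F)\}_{g\in\ca{G}}$, and the trivial positive section $P=0$. That the second family really is an equivariant Kasparov cycle is immediate: the equivariance identity $\alpha_h\circ g(F)\circ \alpha_{h^{-1}}=(hg)(F)$ follows from $\alpha_h\alpha_g=\alpha_{hg}$, and the fibrewise Kasparov conditions at each $g$ are obtained from those for $(E,\pi,F)$ by conjugation by $\alpha_g$.

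The crux is to verify that for every continuous section $a:\ca{G}\to C$ of $\scr{A}$, the section
\[
g\longmapsto \pi(a(g))\,[F, g(F)]\,\pi(a(g))^*
\]
lies in $\Gamma_{\scr{K}_{\scr{B}}(\scr{E})}$. Setting $k_g:=g(F)-F$, the equivariance of the original Kasparov module provides (i) $g\mapsto k_g$ is norm continuous, and (ii) $\pi(c)\,k_g\in \bb{K}(E)$ for every $c\in C$. Since $[F,g(F)]=[F,k_g]$, one expands
\[
\pi(a(g))\,[F, k_g] = F\bigl(\pi(a(g))\,k_g\bigr) - [F,\pi(a(g))]\,k_g - \pi(a(g))\,k_g\,F,
\]
where the first and third summands are compact by (ii) and the second is compact by the Kasparov condition $[F,\pi(C)]\subset \bb{K}(E)$. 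Thus the value at each $g$ is compact, and norm continuity of $a$ together with (i) gives norm continuity of the whole section. For the trivial field $\scr{E}$, any norm-continuous compact-valued section is locally approximable by finite-rank sections (given $\epsilon>0$, approximate the compact value at $g_0$ by a finite-rank operator within $\epsilon/2$, then use norm continuity on a neighbourhood of $g_0$), so the section belongs to $\Gamma_{\scr{K}_{\scr{B}}(\scr{E})}$.

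With this verification, Lemma~\ref{Lemma positivity of the commutator implies the homotopy invariance} applies with $P=0$ and yields the desired operator homotopy. The only nontrivial step is the compactness identity for $\pi(a(g))[F,g(F)]\pi(a(g))^*$; everything else is bookkeeping once that identity and its continuity in $g$ are in place.
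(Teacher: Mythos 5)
There is a genuine error in your argument, concentrated at the point where you write ``Since $[F,g(F)]=[F,k_g]$'' and then conclude that $\pi(a(g))[F,g(F)]\pi(a(g))^*$ lies in $\Gamma_{\scr{K}_{\scr{B}}(\scr{E})}$, allowing you to take $P=0$. The identity $[F,g(F)]=[F,k_g]$ would be true for \emph{ungraded} commutators, but the paper uses graded commutators throughout, and $F$ is odd, so $[F,F]=F^2-(-1)^{1\cdot 1}F^2=2F^2$. Hence
\[
[F,g(F)]=[F,F]+[F,k_g]=2F^2+[F,k_g],
\]
and therefore
\[
\pi(a(g))\,[F,g(F)]\,\pi(a(g))^* = 2\pi(a(g))F^2\pi(a(g))^* + \pi(a(g))\,[F,k_g]\,\pi(a(g))^*.
\]
Your compactness expansion for $\pi(a(g))[F,k_g]$ is essentially fine (modulo a sign slip in the last term), but the summand $2\pi(a(g))F^2\pi(a(g))^*$ is emphatically \emph{not} compact in general — we only know $\pi(a)(1-F^2)$ is compact, so $\pi(a)F^2\pi(a)^*$ differs from $\pi(a)\pi(a)^*$ by a compact, and $\pi(a)\pi(a)^*$ is typically noncompact. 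Consequently the whole section is not in $\Gamma_{\scr{K}_{\scr{B}}(\scr{E})}$, and the lemma cannot be applied with $P=0$.

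The fix is exactly what the paper does: take the positive section to be $P_g:=2\pi(a(g))F^2\pi(a(g))^*$, which is fibrewise a positive bounded operator and varies continuously in $g$. Then
\[
\pi(a(g))\,[F,g(F)]\,\pi(a(g))^* - P_g = \pi(a(g))\,[F,k_g]\,\pi(a(g))^*
\]
is the term your compactness argument actually handles, and it does belong to $\Gamma_{\scr{K}_{\scr{B}}(\scr{E})}$. With this nonzero $P$, Lemma~\ref{Lemma positivity of the commutator implies the homotopy invariance} applies and gives the operator homotopy. Everything else in your proposal — the equivariance check and the local finite-rank approximation for norm-continuous compact-valued sections over a trivial field — is correct and matches the paper in spirit. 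The essential omission is the graded term $[F,F]=2F^2$, which is precisely what forces the positive section $P$ into the hypotheses of the lemma and is the reason the lemma is stated with positivity-modulo-compacts rather than plain compactness.
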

\begin{proof}
Let $c\in \Gamma_{C\grotimes \scr{C}(\ca{G})}$.  By an easy computation, $\pi(c_g)[F,g(F)]\pi(c_g)^*$ is given by
$$2\pi(c_g)F^2\pi(c_g)^*+\pi(c_g)F\cdot \{\pi(c_g^*)(g(F)-F)\}^*+\pi(c_g)(g(F)-F)\cdot F\pi(c_g)^*.$$
The first term is a positive element of $\bb{L}_{C\grotimes \scr{C}(\ca{G})}(E\grotimes \scr{C}(\ca{G}))$. The second and the third ones are continuous sections of $\scr{K}(C\grotimes \scr{C}(\ca{G}))$.  By Lemma \ref{Lemma positivity of the commutator implies the homotopy invariance}, we obtain the result.
\end{proof}


\subsection{Loop group equivariant $KK$-theory and the main result}\label{subsection statement of the main result}

Loop groups are non-locally compact. Although the $LT$-equivariant Kasparov modules and $LT$-equivariant $KK$-groups make sense, and although the phrase ``a Kasparov module is a Kasparov product of other two Kasparov modules'' makes sense, we encounter a serious trouble when we try to study the Kasparov product at the level of $KK$-theory. 
In the previous paper \cite{T4}, such problems are put off.

In this subsection, we re-introduce ``$LT$-equivariant $KK$-theory'' using the inductive limit of $LT_{L^2_m}$-equivariant $KK$-theory, and we will prove the desired properties on the Kasparov product. This definition shares the same spirit with the concept of ILH-Lie groups of \cite{Omo}. 

Let us begin with a fundamental construction on equivariant $KK$-theory. Let $\frac{1}{2}\leq m\leq m'$. Suppose that $A$ and $B$ are $LT_{L^2_m}$-$C^*$-algebras. They are at the same time $LT_{L^2_{m'}}$-$C^*$-algebras by the inclusion $i_{m',m}:LT_{L^2_{m'}}\hookrightarrow LT_{L^2_m}$. Thus, we can define a  homomorphism
$$i_{m',m}^*:KK_{LT_{L^2_{m}}}(A,B)\to KK_{LT_{L^2_{m'}}}(A,B)$$
by the pullback of the group action by $i_{m',m}$. 
The family of homomorphisms $\{i_{m',m}\}$ satisfies the condition to be a directed system $i_{m',m}^*\circ i_{m'',m'}^*= i_{m'',m}^*$ for $m''\geq m'\geq m$.
Using it, we re-define ``$LT$-equivariant $KK$-theory'' as follows.

\begin{dfn}\label{definition of LT KK theory}
$(1)$ Let $A$ and $B$ be $LT$-$C^*$-algebras. Suppose that the $LT$-action on them extends to $LT_{L^2_m}$ for some $m\geq 1/2$.
By the inductive limit of this system, we define
$$\bb{KK}_{LT}(A,B):=\varinjlim_{m'\to \infty} KK_{LT_{L^2_{m'}}}(A,B).$$
We call it the {\bf $LT$-equivariant $\bb{KK}$-theory}.

$(2)$ We define {\bf $\ca{M}\rtimes LT$-equivariant $\bb{KK}$-group} by the same construction. Let $\scr{A}$ and $\scr{B}$ be $\ca{M}\rtimes LT$-equivariant u.s.c. fields of $C^*$-algebras, which are obtained by $\ca{M}_{L^2_{m}}\rtimes LT_{L^2_{m}}$-equivariant u.s.c. fields of $C^*$-algebras for some $m\geq 1/2$.
Then, we define 
$$\ca{R}\bb{KK}_{LT}(\scr{A},\scr{B}):=\varinjlim_{m'\to \infty} \ca{R}KK_{LT_{L^2_{m'}}}(\scr{A},\scr{B}).$$
\end{dfn}

We can define the concept of Kasparov products on $\bb{KK}_{LT}$-theory and on $\ca{R}\bb{KK}_{LT}(\ca{M})$-theory. For the latter one, we can do that at the level of $\ca{R}KK_{LT_{L^2_m}}(\ca{M}_{L^2_m})$-theory \cite{NT}. For the former one, we need an appropriate version of the Kasparov technical theorem. 

The following statement is the copy of Theorem 1.4 of \cite{Kas88} except that the two non-locally compact groups $LT_{L^2_{m'}}$ and $LT_{L^2_{m}}$ appear. 
Note that the assumption of the statement is about the $LT_{L^2_{m}}$-equivariance and the conclusion is about the  $LT_{L^2_{m'}}$-equivariance {\it which is weaker than the $LT_{L^2_{m}}$-equivariance} since $m'> m$. This result is sufficient for $\bb{KK}_{LT}$-theory.

\begin{thm}
Let $m'>m\geq 1/2$.

Let $J$ be a $\sigma$-unital $LT_{L^2_m}$-algebra, $A_1$ and $A_2$ $\sigma$-unital subalgebras in $\ca{M}(J)$ such that $A_1$ is an $LT_{L^2_m}$-algebra. Let $\Delta$ be a subset in $\ca{M}(J)$ which is separable in the norm topology and derives\footnote{On a $\bb{Z}_2$-graded algebra $B$, a subset $S$ derives a subalgebra $B'$ if the graded commutator $[d,b]\in B'$ for every  $d\in \Delta$ and $b\in B'$.} $A_1$ and $\phi:LT_{m'}\to \ca{M}(J)$ a bounded function. Assume that $A_1\cdot A_2\subseteq J$, $A_1\cdot \phi(LT_{L^2_m})\subseteq J$, $\phi(LT_{L^2_m})\cdot A_1\subseteq J$, and the functions $g\mapsto a\phi(g)$ and $g\mapsto \phi(g)a$ are norm-continuous on $LT_{L^2_m}$ for any $a\in A_1+J$. Then, there are $LT_{L^2_{m'}}$-continuous positive even elements $M_1,M_2\in \ca{M}(J)$ such that $M_1+M_2=1$, all elements $M_ia_i,[M_i,d],M_2\phi(g),\phi(g)M_2,g(M_i)-M_i$ belong to $J$ for any $a_i\in A_i$, $d\in \Delta$, $g\in LT_{L^2_{m'}}$ $(i=1,2)$, and the functions $g\mapsto M_2\phi(g)$ and $g\mapsto \phi(g)M_2$ are norm-continuous on $LT_{L^2_{m'}}$.
\end{thm}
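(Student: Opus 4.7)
The proof is an adaptation of Kasparov's original proof of Theorem 1.4 of \cite{Kas88}, with modifications needed to accommodate the non-local compactness of the loop groups involved. The crucial observation is that the hypothesis involves the larger, less regular group $LT_{L^2_m}$, while the conclusion demands only properties with respect to the smaller, more regular subgroup $LT_{L^2_{m'}}$ (the inclusion $LT_{L^2_{m'}}\hookrightarrow LT_{L^2_m}$ being the natural Sobolev inclusion). This asymmetry is what makes the argument possible, and it is exploited by approximating $LT_{L^2_{m'}}$ by an exhaustion of locally compact subgroups on which the classical Kasparov technical theorem directly applies.

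First, by a standard separability argument that makes no reference to the group action, construct a sequential approximate unit $\{u_n\}_{n\geq 1}$ in $J$ which is quasi-central with respect to $A_1\cup A_2\cup\Delta$: pick countable norm-dense subsets of $A_1+A_2$ and of $\Delta$, use the $\sigma$-unitality of $J$ to pick $u_n\in J$ positive of norm at most one satisfying $\|[u_n,x]\|<1/n$ on the first $n$ elements of each chosen dense subset, and passing to a subsequence one also arranges that $u_nu_{n+1}=u_n$.

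Second, exhaust $LT_{L^2_{m'}}$ by the increasing sequence of finite-dimensional closed Lie subgroups $H_N:=T\times\Pi_T\times U_{L^2_{m'},N}$. Each $H_N$ is locally compact (and finite-dimensional), and $\bigcup_N H_N=LT_{\mathrm{fin}}$ is dense in $LT_{L^2_{m'}}$. The assumed $LT_{L^2_m}$-norm-continuity of the maps $g\mapsto a\phi(g)$ and $g\mapsto\phi(g)a$ restricts to $H_N$-norm-continuity, so the classical Kasparov technical theorem of \cite[Theorem~1.4]{Kas88} applied to the $H_N$-action produces positive even elements $M_1^{(N)},M_2^{(N)}\in\ca{M}(J)$ with $M_1^{(N)}+M_2^{(N)}=1$, satisfying the commutator and ideal conditions with respect to $A_i,\Delta,\phi$ as stated, and $H_N$-equivariant modulo $J$.

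Third, pass to a pair $(M_1,M_2)$ valid for all of $LT_{L^2_{m'}}$ by a diagonal argument. Organizing the inductive construction carefully, one can arrange that $M_i^{(N+1)}$ is obtained from $M_i^{(N)}$ by a small perturbation via the quasi-central unit $\{u_n\}$, so the $N\to\infty$ limit $M_i$ exists in the strict topology of $\ca{M}(J)$, satisfies $M_1+M_2=1$, and inherits the commutator and ideal conditions. The equivariance $g(M_i)-M_i\in J$ holds for all $g\in\bigcup_NH_N$ by construction, and extends to all $g\in LT_{L^2_{m'}}$ by density of $\bigcup_N H_N$ in $LT_{L^2_{m'}}$ together with the norm-continuity of the $LT_{L^2_m}$-action on $J$, restricted to $LT_{L^2_{m'}}$. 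The same density argument yields the norm-continuity of $g\mapsto M_2\phi(g)$ and $g\mapsto\phi(g)M_2$ on $LT_{L^2_{m'}}$.

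The main obstacle is the diagonal limit itself: one must keep very careful track of the perturbations $M_i^{(N+1)}-M_i^{(N)}$ so that, on the one hand, the strict-topology limit exists in $\ca{M}(J)$, and on the other hand, the equivariance defects $g(M_i)-M_i$ remain in $J$ uniformly as $N\to\infty$ for $g$ ranging in $H_N$. The ``room'' provided by $m'>m$ enters precisely at this point, since the topology on $LT_{L^2_{m'}}$ is strictly finer than the one induced from $LT_{L^2_m}$, so the continuity demand on $g\mapsto M_2\phi(g)$ along $LT_{L^2_{m'}}$ is weaker than what would be needed along $LT_{L^2_m}$, and the density of the locally compact subgroups $H_N\subseteq LT_{L^2_{m'}}$ suffices to conclude. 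This restriction-then-diagonal strategy is adequate for $\bb{KK}_{LT}$-theory as defined by the inductive limit in Definition~\ref{definition of LT KK theory}, which only needs compatibility for some $m'>m$.
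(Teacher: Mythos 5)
Your approach differs fundamentally from the paper's, and the diagonal limit at its core cannot be completed as stated. The paper does not exhaust $LT_{L^2_{m'}}$ by finite-dimensional subgroups; instead it modifies Kasparov's proof of Theorem 1.4 at one point, inside the Lemma constructing the quasi-central quasi-invariant approximate unit. Kasparov exhausts the group by relatively compact open sets, and the paper's key observation, which your proposal omits entirely, is that one may instead exhaust $LT_{L^2_{m'}}$ by \emph{bounded} sets $X_i$ whose images $i_{m',m}(X_i)$ in $LT_{L^2_m}$ are \emph{relatively compact by the Rellich lemma}, since the Sobolev embedding $L^2_{m'}\hookrightarrow L^2_m$ is compact. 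That compactness is what allows an approximate unit $\{u_i\}$ to be chosen satisfying $\|[u_i,\phi(y)]\|<i^{-1}$ on $\overline{Y_i}$ and $\|g(u_i)-u_i\|<i^{-1}$ for $g\in\overline{i_{m',m}(X_i)}$ simultaneously, after which the rest of Kasparov's machinery applies unchanged.

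Your step three fails for several reasons. Each application of \cite[Theorem 1.4]{Kas88} to the $H_N$-action must internally construct its own $H_N$-quasi-invariant quasi-central approximate unit; your $\{u_n\}$ from step one is only quasi-central, not quasi-invariant, so there is no canonical sense in which $M_i^{(N+1)}$ is a ``small perturbation via $\{u_n\}$'' of $M_i^{(N)}$. Even granting strict convergence $M_i^{(N)}\to M_i$ in $\ca{M}(J)$, the conditions $M_ia_i\in J$, $[M_i,d]\in J$, $M_2\phi(g)\in J$, and $g(M_i)-M_i\in J$ do not pass to strict limits, since $J$ is norm-closed but not strictly closed in $\ca{M}(J)$. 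Finally, your density argument from $\bigcup_N H_N$ to all of $LT_{L^2_{m'}}$ requires $g\mapsto g(M_i)-M_i$ to be a norm-continuous $J$-valued function on $LT_{L^2_{m'}}$, but that norm-continuity is part of the conclusion you are trying to prove, not a consequence of the merely strict continuity of the action on $\ca{M}(J)$; the step is circular. Your observation that the finer topology of $LT_{L^2_{m'}}$ provides ``room'' is correct but vacuous without the Rellich lemma to convert boundedness into relative compactness: density of $LT_{\mathrm{fin}}$ alone cannot produce a compact exhaustion of a non-locally-compact group, and without one neither Kasparov's approximate-unit construction nor any finite-stage variant can be carried out uniformly.
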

\begin{proof}
In order to prove it, we follow the argument of \cite[Theorem 1.4]{Kas88}. We explain only the necessary changes from it.
See also \cite{JT,Bla} for the detailed expositions on the technical theorem.

The proof of \cite[Theorem 1.4]{Kas88}  is outlined as follows: $(1)$ A lemma related to the quasi-central and quasi-invariant approximate unit is verified; $(2)$ Using this result, the technical theorem is verified. 

The corresponding result to $(1)$ is the following (it is again the copy of Lemma of Theorem 1.4 of \cite{Kas88} except that the two groups $LT_{L^2_{m'}}$ and $LT_{L^2_{m}}$ appear): {\it Let $B$ be a $C^*$-algebra with an $LT_{L^2_m}$-action, $A$ a $\sigma$-unital $LT_{L^2_m}$-$C^*$-algebra which is an $LT_{L^2_m}$-subalgebra of $B$, $Y$ a $\sigma$-compact, locally compact space, $\phi:Y\to B$ a function satisfying the following condition: $[\phi(y),a]\in A$ for all $a\in A$ and $y\in Y$, and all the functions $y\mapsto [\phi(y),a]$ are norm-continuous for  all $a\in A$. Then, there is a countable approximate unit $\{u_i\}$ for $A$ which has the following properties: $\lim_{i\to \infty }\|[u_i,\phi(y)]\|=0$ for all $y\in Y$ and $\lim_{i\to \infty}\|g(u_i)-u_i\|=0$ for all $g\in LT_{L^2_{m'}}$. Both limits are uniform on compact subsets of $Y$ and bounded subsets of $LT_{L^2_{m'}}$}. Note that $A$ is automatically $LT_{L^2_{m'}}$-subalgebra because the inclusion $LT_{L^2_{m'}}\to LT_{L^2_{m}}$ is continuous.

In the proof of the lemma in \cite{Kas88}, an exhaustive sequence consisting of relatively compact open subsets of $G$, was chosen. Then, the set $Z$ in the proof of the lemma is compact, and hence the proof works.
In our case, the same argument does not work, because $LT_{L^2_m}$ is non-locally compact. However, there is an exhaustive sequence $X_1\subseteq X_2\subseteq \cdots LT_{L^2_{m'}}$ consisting of {\bf bounded sets}. Then, thanks to the Rellich lemma, $i_{m',m}(X_i)$'s are relatively compact. Take an exhaustive sequence $Y_1\subseteq Y_2\subseteq \cdots \subseteq Y$ such that $\overline{Y_i}$ is compact and $\bigcup Y_i=Y$, just like the proof of \cite[Lemma 1.4]{Kas88}. Using these sequences and the same argument in that proof, we obtain an approximate unit $\{u_i\}$ of $A$ such that $\|[u_i,\phi(y)]\|<i^{-1}$ for all $y\in \overline{Y_i}$ and $\|g(u_i)-u_i\|<i^{-1}$ for all $g\in \overline{i_{m',m}(X_i)}$. Note that the closure $\overline{i_{m',m}(X_i)}$ is taken in the $L^2_m$-topology. Since $i_{m',m}(\overline{X_i})\subseteq \overline{i_{m',m}(X_i)}$, the approximate unit $\{u_i\}$ satisfies the desired properties.

The argument to prove $(2)$ from $(1)$ still works for our case. We leave the details to the reader.
\end{proof}
\begin{rmk}\label{Remark Montel group}
The above argument works for a topological group $\ca{G}$ which has an ``approximation from the outside by the compact homomorphism''. This concept is defined as follows: $(1)$ For each $i\in \bb{N}$, suppose that a topological group $\ca{G}_i$ with a metric function which contains $\ca{G}$ as a dense subgroup, is given; $(2)$ Suppose that a continuous homomorphism $j_{i',i}:\ca{G}_{i'}\to \ca{G}_i$ such that all bounded sets are mapped to relatively compact sets, is given for every $i<i'\in \bb{N}$; $(3)$ Suppose that $j_{i',i}\circ j_{i'',i'}=j_{i'',i}$ for all $i<i'<i''\in \bb{N}$; and $(4)$ $\ca{G}=\cap_{i\in \bb{N}} \ca{G}_i$. For such a group, we can define $\bb{KK}_{\ca{G}}$-theory by the inductive limit of $KK_{\ca{G}_i}$-theory, and this theory has the Kasparov product. Needless to say, we can replace $\bb{N}$ with a more general directed system including $\bb{R}_{\geq 1/2}$.

Thanks to the Rellich lemma, any mapping group $C^\infty(M,G)$ for a compact manifold $M$ and a compact Lie group $G$ satisfies the above conditions. In particular, $\bb{KK}_{C^\infty(M,G)}$-theory can be defined.
\end{rmk}

By using this theorem, we can prove the desired properties on the Kasparov product in an appropriate sense. For example, for $x\in KK_{LT_{L^2_m}}(A,A_1)$ and $y\in KK_{LT_{L^2_m}}(A_1,B)$, there exists a $KK$-element $z\in KK_{LT_{L^2_{m'}}}(A,B)$ which is a Kasparov product of $i_{m',m}^*(x)$ and $i_{m',m}^*(y)$ for $m'>m$. Such desired properties and standard arguments on inductive limits guarantee the following.

\begin{cor}
Let $A,B,A_1$ and $A_2$ be separable $LT$-$C^*$-algebras whose actions continuously extend to $LT_{L^2_m}$ for some $m<\infty$. Then, the Kasparov product
$$\bb{KK}_{LT}(A,A_1)\times \bb{KK}_{LT}(A_1,B)\to \bb{KK}_{LT}(A,B)$$
is well-defined. This product is denoted by $(x,y)\mapsto x\grotimes_{A_1}y$. This Kasparov product is associative, that is to say, for $x\in \bb{KK}_{LT}(A,A_1)$, $y\in \bb{KK}_{LT}(A_1,A_2)$ and $z\in \bb{KK}_{LT}(A_2,B)$, we have
$(x\grotimes_{A_1} y)\grotimes_{A_2}z=x\grotimes_{A_1} (y\grotimes_{A_2}z)$.
\end{cor}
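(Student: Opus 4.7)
The plan is to lift the standard machinery from \cite{Kas88} to the inductive limit, using the preceding $LT$-equivariant technical theorem as the only non-formal input. Throughout, I will use the notation $i_{m',m}^*$ for the pullback of $KK$-groups along the inclusion $LT_{L^2_{m'}}\hookrightarrow LT_{L^2_{m}}$ for $m'\geq m$.

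First, to define the product, I take representatives. An element $x\in\bb{KK}_{LT}(A,A_1)$ comes from some $\widetilde{x}\in KK_{LT_{L^2_{m_x}}}(A,A_1)$ and an element $y\in \bb{KK}_{LT}(A_1,B)$ from some $\widetilde{y}\in KK_{LT_{L^2_{m_y}}}(A_1,B)$; replacing $m_x,m_y$ by $m:=\max(m_x,m_y)$ via the pullback maps, I may assume both are defined at the common Sobolev level $m$. The Kasparov technical theorem just stated guarantees the existence of a Kasparov $(A,B)$-module $(E_1\grotimes_{A_1}E_2,F)$ that is only $LT_{L^2_{m'}}$-equivariant for some (any) chosen $m'>m$, and which satisfies the connection and positivity conditions. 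Its class in $KK_{LT_{L^2_{m'}}}(A,B)$ defines an element of $\bb{KK}_{LT}(A,B)$; I declare this to be $x\grotimes_{A_1}y$.

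Second, I verify that this is well-defined. Two representatives of $x$ agree in $KK_{LT_{L^2_{m''}}}(A,A_1)$ for some sufficiently large $m''\geq m$; taking a homotopy between them, one repeats the above construction with the Sobolev parameter $m''$ in place of $m$ and, at a weaker level $m'''>m''$, produces a homotopy of Kasparov $(A,B\grotimes C[0,1])$-modules. Since homotopy equivalence is respected by the directed system $i_{m_1,m_2}^*$, the classes coincide in the inductive limit. The same argument handles the ambiguity of the Kasparov product at fixed $m'$ (different choices of the operator $F$ differing by a Kasparov module over $B\grotimes C[0,1]$), and the dependence on the auxiliary parameter $m'>m$ itself (if $m''>m'>m$, the two candidates are related via $i_{m'',m'}^*$). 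Together these show that the class in the inductive limit depends only on $x$ and $y$.

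Third, associativity follows by the same two-step routine. Given $x\in\bb{KK}_{LT}(A,A_1)$, $y\in \bb{KK}_{LT}(A_1,A_2)$, $z\in \bb{KK}_{LT}(A_2,B)$, I pick a common level $m$ at which representatives exist, form the iterated products at level $m'>m$, and invoke the associativity of the finite-dimensional-group Kasparov product \cite[Theorem 2.14]{Kas88} applied with group $LT_{L^2_{m'}}$ (whose proof only uses the technical theorem and the uniqueness-up-to-homotopy of Kasparov products, both of which hold here). Two different orderings of associations yield Kasparov products that are homotopic as $LT_{L^2_{m'}}$-equivariant Kasparov modules, hence equal already at the level $m'$ and a fortiori in $\bb{KK}_{LT}(A,B)$.

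The main obstacle is purely bookkeeping of the Sobolev indices: every operation loses a little regularity (we must pass from $m$ to some $m'>m$), and one must ensure that each equivalence used in the proof only costs finitely many such steps so that everything lands in the same term of the directed system. This is automatic because the inductive limit is taken over a totally ordered set of levels, so any finite diagram of representatives and homotopies stabilizes at a single level $m^{\mathrm{big}}$ beyond which all the required $LT_{L^2_{m^{\mathrm{big}}}}$-equivariant identifications hold on the nose. No new analytic input beyond the technical theorem above is required.
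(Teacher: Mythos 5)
Your proposal fills in exactly what the paper leaves to the reader (the paper's proof is the single sentence that the technical theorem plus ``standard arguments on inductive limits'' guarantee the statement), so the approach is essentially the same. One imprecision worth flagging: you cannot literally ``invoke \cite[Theorem 2.14]{Kas88} applied with group $LT_{L^2_{m'}}$'' — that theorem is stated for locally compact second countable groups and $LT_{L^2_{m'}}$ is not locally compact, and moreover the modified technical theorem in the paper only supplies $M_1,M_2$ at a strictly weaker regularity level $m''>m'$, not at level $m'$ itself. So the Connes–Skandalis uniqueness of the Kasparov product and the associativity argument each cost you one more step down the directed system, rather than holding ``already at the level $m'$.'' Since you correctly observe in your last paragraph that any finite chain of such steps stabilizes at a single level $m^{\mathrm{big}}$ and hence in the inductive limit, this is only a phrasing slip, not a gap; tightening that sentence to say ``re-run Kasparov's associativity proof with the modified technical theorem, accepting a finite loss of Sobolev regularity at each step'' would make it precise.
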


Since $LT^\tau$ satisfies the condition explained in Remark \ref{Remark Montel group}, the same arguments work for $LT^\tau$-equivariant theory, and hence $\tau$-twisted $LT$-equivariant $KK$-theory can be defined. 

\begin{cor}
Let $A,B$ and $A_1$ be separable $LT$-$C^*$-algebras whose actions continuously extend to $LT_{L^2_m}$ for some $m<\infty$. We can define $\bb{KK}_{LT}^{p\tau}$-theory for $p\in \bb{Z}$, and we can prove that the Kasparov product
$$\bb{KK}_{LT}^{p\tau}(A,A_1)\times \bb{KK}_{LT}^{q\tau}(A_1,B)\to \bb{KK}_{LT}^{(p+q)\tau}(A,B)$$
is well-defined and associative.
\end{cor}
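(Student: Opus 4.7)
The plan is to reduce the twisted statement to the untwisted $LT^\tau$-equivariant theory and then exploit the fact that $p\tau$-twisted $LT$-equivariant Kasparov modules appear as direct summands of $LT^\tau$-equivariant Kasparov modules, as recorded in Definition \ref{twisted equivariant KK theory}. First I would verify that $LT^\tau$ itself satisfies the ``approximation from the outside by compact homomorphisms'' condition of Remark \ref{Remark Montel group}: the inverse system $\{LT_{L^2_m}^\tau\}_{m\geq 1/2}$ is obtained from $\{LT_{L^2_m}\}$ by only changing the group law through a continuous cocycle, so the connecting maps $i_{m',m}:LT^\tau_{L^2_{m'}}\to LT^\tau_{L^2_m}$ still send bounded sets to relatively compact sets (the $U(1)$-factor is already compact and the $LT_{L^2_m}$-component is controlled by the Rellich lemma). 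Hence the Kasparov technical theorem above applies verbatim after replacing $LT_{L^2_m}$ by $LT^\tau_{L^2_m}$, and the same inductive-limit argument yields a well-defined associative Kasparov product on $\bb{KK}_{LT^\tau}$.

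Next I would define
$$\bb{KK}_{LT}^{p\tau}(A,B):=\varinjlim_{m'\to \infty} KK^{p\tau}_{LT_{L^2_{m'}}}(A,B),$$
which makes sense because the pullback homomorphisms $i_{m',m}^*$ preserve the level with respect to the central circle $i(U(1))$ (the inclusion $i_{m',m}$ is the identity on the $U(1)$-factor). Since each group $KK_{LT^\tau_{L^2_m}}(A,B)=\bigoplus_{q\in\bb{Z}}KK^{q\tau}_{LT_{L^2_m}}(A,B)$, and the connecting maps respect this decomposition, the inductive limit inherits it: $\bb{KK}_{LT^\tau}(A,B)=\bigoplus_{q\in\bb{Z}}\bb{KK}^{q\tau}_{LT}(A,B)$.

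Finally, I would verify that the Kasparov product on $\bb{KK}_{LT^\tau}$ refines to the bilinear map stated in the corollary. For representatives $(E_1,\pi_1,F_1)$ at level $p$ and $(E_2,\pi_2,F_2)$ at level $q$, the central element $i(z)\in LT^\tau$ acts on $E_1\grotimes_{A_1}E_2$ by $z^p\cdot z^q=z^{p+q}$ since the $U(1)$-actions on the factors commute with the module structures; hence any Kasparov product representative is automatically at level $p+q$, producing the desired map into $\bb{KK}^{(p+q)\tau}_{LT}(A,B)$. Associativity is inherited from associativity on $\bb{KK}_{LT^\tau}$ by projecting to the relevant direct summand. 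The main point to be careful about will be ensuring that the approximate units produced by the technical theorem can be chosen $i(U(1))$-invariant, so that the resulting connection and positivity data lie in the appropriate twisted summand; this is automatic via the usual averaging trick over the compact group $U(1)$, which preserves both the level and the $LT^\tau_{L^2_{m'}}$-continuity established in the technical theorem above.
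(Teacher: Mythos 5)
Your proposal is correct and follows essentially the same route as the paper: verify that $LT^\tau$ satisfies the approximation-by-compact-homomorphisms condition of Remark \ref{Remark Montel group}, invoke the technical theorem to get Kasparov products on $\bb{KK}_{LT^\tau}$, and then decompose into twist levels using the direct-sum structure from Definition \ref{twisted equivariant KK theory}. The paper simply states this in one sentence before the corollary; your version supplies the details (level preservation under $i_{m',m}^*$, additivity of levels under graded tensor products, $U(1)$-averaging of approximate units) that the paper leaves implicit.
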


With this theory, we can state the main result of the present paper.
$\ca{A}(\ca{M}_{L^2_k})$ is a $C^*$-algebra playing the role of ``$\ca{S}_\vep\grotimes Cl_\tau(\ca{M}_{L^2_k})$'', which will be defined in Section \ref{section PD}.

\begin{mainthm}
Let $\ca{M}$ be a proper $LT$-space.

$(1)$ We can define a homomorphism substituting for the Poincar\'e duality homomorphism
$$\PD :\bb{KK}_{LT}^\tau(\ca{A}(\ca{M}_{L^2_k}),\ca{S}_\vep)\to \ca{R}\bb{KK}_{LT}^\tau(\ca{M};\ca{S}_\vep\grotimes\scr{C}(\ca{M}),\ca{S}_\vep\grotimes\scr{C}(\ca{M})).$$
This homomorphism assigns $\sigma_{\ca{S}_\vep}([\ca{L}])$ to the ``index element $[\widetilde{\ca{D}}]$ of the Dirac operator twisted by a $\tau$-twisted $LT$-equivariant line bundle $\ca{L}$'' constructed in \cite{T4}.

$(2)$ We can define a homomorphism substituting for the topological assembly map 
$$\ud{\nu_{LT_{L^2_m}}^\tau}:\ca{R}KK_{LT_{L^2_m}}^\tau(\ca{M}_{L^2_m};\scr{C}(\ca{M}_{L^2_m}),\scr{C}(\ca{M}_{L^2_m}))\to  KK(\bb{C},\ud{\bb{C}\rtimes_\tau LT_{L^2_m}}).$$
This homomorphism assigns to $[\ca{L}]$ the ``analytic index of the Dirac operator twisted by $\ca{L}$'' constructed in \cite{Thesis}, which is denoted by $\ud{\ind(\ca{D}_{\ca{L}})}$.

$(3)$ Consequently, we have $\ud{\nu_{LT_{L^2_m}}^\tau}(\PD([\widetilde{\ca{D}}]))=\ud{\ind(\ca{D}_{\ca{L}})}$.
\end{mainthm}

\begin{rmk}
In fact, what we will define for $(1)$ is a homomorphism
$$KK_{LT_{L^2_m}}^\tau(\ca{A(M}_{L^2_k}),\ca{S})\to \ca{R}KK_{LT_{L^2_m}}^\tau(\ca{M}_{L^2_m};\ca{S}\grotimes \scr{C}(\ca{M}_{L^2_m}),\ca{S}\grotimes \scr{C}(\ca{M}_{L^2_m}))$$
for $k\geq 1/2$ and $m$ which is sufficiently larger than $k$. We will reformulate the ``index element of the Dirac operator twisted by a $\tau$-twisted $LT$-equivariant line bundle $\ca{L}$'' in the next section in order to fit the new construction of $\ca{A}(\ca{M}_{L^2_k})$.

\end{rmk}


\section{The Poincar\'e duality homomorphism for infinite-dimensional manifolds}\label{section PD}

In this section, we formulate the Poincar\'e duality homomorphism for infinite-dimensional Hilbert manifolds, and we compute it for proper $LT$-spaces. Strictly speaking, a modification of the Poincar\'e duality homomorphism for the case of proper $LT$-spaces will be formulated and computed (see Section \ref{section unsolved} for this modification). 

For this aim, we begin with an explanation of the concept of ``$C^*$-algebras of Hilbert manifolds'' which was announced in \cite{Yu}. Since we think this $C^*$-algebra is quite important, we will study it for proper $LT$-spaces in details.


\subsection{A $C^*$-algebra of a Hilbert manifold and Poincar\'e duality homomorphism}\label{section PD PD}

The goal of this subsection is to formulate an infinite-dimensional version of the reformulated version of the Poincar\'e duality (see also Definition \ref{dfn reformulated KK elements} and Proposition \ref{Prop reformulated local Bott element}). In the present paper, we do not prove that it is isomorphic. Instead, we will show that the homomorphism constructed in this section, gives an appropriate result for proper $LT$-spaces in the sense that the answer is parallel to Example \ref{ex computation of PD}. It will be done in the following two subsections.

Let us begin with the concept of ``$C^*$-algebras of Hilbert manifolds'' following \cite{GWY,Yu}.

\begin{dfn}[See \cite{GWY}]\label{def field of Clifford algebras}
Let $\ca{X}$ be a Hilbert manifold and let $\vep>0$.
First, we consider the space
$$\Pi(\ca{X}):=\prod_{(x,t)\in \ca{X}\times [0,\vep)}\Cl_+(T_x\ca{X}\oplus t\bb{R}),$$
where 
$$t\bb{R}:=\begin{cases}
\bb{R} & (t\neq 0) \\
0 & (t=0).\end{cases}
$$
This is a space of possibly non-continuous Clifford algebra-valued functions.
Then, we consider the $C^*$-algebra
$$\Pi_b(\ca{X}):=\bbra{s\in \prod(\ca{X})\ \middle|\  \|s(x,t)\|\text{ is bounded.}}$$
equipped with pointwise algebraic operations (addition, multiplications and the adjoint) and the uniform norm.
\end{dfn}

\begin{dfn}[\cite{GWY,Yu}]
$(1)$ Let $\ca{X}$ be a Hilbert manifold. {\it Suppose that its injectivity radius is greater than $2\varepsilon$ everywhere for some $0<\vep\leq \infty$}. Let $x_0,x\in \ca{X}$, and suppose that $d(x,x_0)<2\vep$. Then, $x_0$ is contained in the image of $\exp_{x}:B_{2\vep}(T_{x}\ca{X})\to \ca{X}$, and hence it is contained in the domain of $\log_x:\exp_x(B_{2\vep}(T_{x}\ca{X}))\to T_{x}\ca{X}$.
The {\bf local Clifford operator at $x_0$} is defined by
$$C_{x_0}(x,t):=(-\log_{x}(x_0),t)\in T_x\ca{X}\oplus t\bb{R},$$
or equivalently $C_{x_0}(x,t)=\bra{(d\exp_{x_0})_x(\log_{x_0}(x)),t}=$``$(\overrightarrow{x_0x},t)$''.

$(2)$ The {\bf Bott homomorphism $\beta_{x_0}:\ca{S}_\vep\to \Pi_b(\ca{X})$ centered at $x_0\in \ca{X}$} is defined by
$$\beta_{x_0}(f)(x,t):=
\begin{cases}
f(C_{x_0}(x,t)) & (d(x,x_0)<2\vep) \\
0 & (d(x,x_0)\geq\vep),
\end{cases}$$
for $f\in \ca{S}_\vep$, where $f(C_{x_0}(x,t))$ is the functional calculus in the $C^*$-algebra $\Cl_+(T_x\ca{X}\oplus t\bb{R})$.
\end{dfn}

\begin{dfn}[\cite{Yu}]
The $C^*$-algebra $\ca{A(X)}$ is defined by the $C^*$-subalgebra of $\Pi_b(\ca{X})$ generated by the image of the Bott homomorphisms:
$$\ca{A(X)}:=C^*\bra{\bbra{
\beta_{x_0}(f)\ \middle|\  x_0\in\ca{X}, f\in \ca{S}_\vep}}.$$
\end{dfn}
\begin{rmk}
This $C^*$-algebra depends on the choice of $\vep$.
If $\ca{X}$ is at the same time a Hilbert-Hadamard space, and if we chose $\infty$ as $\vep$, we obtain the same $C^*$-algebra constructed in \cite{GWY}.
\end{rmk}

Since this definition is parallel to \cite[Definition 5.14]{GWY}, $\ca{A(X)}$ has similar properties. 
The same arguments work, except for \cite[Lemma 5.8]{GWY}. It is due to the fact that Hilbert-Hadamard spaces are ``non-positively curved''.
We  prove a corresponding result by the infinite-dimensional version of the Rauch comparison theorem \cite[Theorem 19]{Bil}. See also  \cite{KN2,Sak}.


\begin{lem}
Let $\ca{Y}$ and $\widetilde{\ca{Y}}$ be Hilbert manifolds. Fix $p\in \ca{Y}$ and $\widetilde{p}\in \widetilde{\ca{Y}}$, and take an isometric embedding $I:T_{\widetilde{p}} \widetilde{\ca{Y}}\hookrightarrow T_p \ca{Y}$. Suppose that both injectivity radii of $\ca{Y}$ and $\widetilde{\ca{Y}}$ are greater than $2\vep$, and suppose that all the sectional curvatures of $\widetilde{\ca{Y}}$ are not greater than those of $\ca{Y}$. For a smooth curve $\widetilde{c}:[0,1]\to B_{\vep}(\widetilde{p})\subseteq \widetilde{\ca{Y}}$, we define ${c}:=\exp_{{p}}\circ I \circ \exp_{\widetilde{p}}^{-1}\circ \widetilde{c}:[0,1]\to B_{\vep}(p)\subseteq\ca{Y}$. Then, we have an inequality
$$\int_0^1\|\dot{c}(t)\|dt\leq \int_0^1\|\dot{\widetilde{c}}(t)\|dt.$$
\end{lem}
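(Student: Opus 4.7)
The plan is to reduce this to an infinite-dimensional Rauch comparison theorem applied pointwise along $\widetilde{c}$, then integrate. First, set $\Phi:=\exp_p\circ I\circ \exp_{\widetilde{p}}^{-1}:B_{2\vep}(\widetilde{p})\to B_{2\vep}(p)$, so that $c=\Phi\circ \widetilde{c}$; both exponential maps are diffeomorphisms onto their images by the assumption on injectivity radii, so $\Phi$ is a well-defined smooth map. Writing $\widetilde{v}(t):=\exp_{\widetilde{p}}^{-1}(\widetilde{c}(t))\in B_\vep(T_{\widetilde{p}}\widetilde{\ca{Y}})$, we obtain
\[
\dot{\widetilde{c}}(t)=(d\exp_{\widetilde{p}})_{\widetilde{v}(t)}\bra{\dot{\widetilde{v}}(t)},\qquad
\dot{c}(t)=(d\exp_{p})_{I\widetilde{v}(t)}\bra{I\dot{\widetilde{v}}(t)},
\]
because $I$ is linear, so its differential equals itself under the canonical identification $T_v(T_{\widetilde{p}}\widetilde{\ca{Y}})\cong T_{\widetilde{p}}\widetilde{\ca{Y}}$.

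The next step is the pointwise comparison $\|\dot{c}(t)\|\leq \|\dot{\widetilde{c}}(t)\|$. For fixed $t$, consider the geodesics $\widetilde{\gamma}(s):=\exp_{\widetilde{p}}(s\widetilde{v}(t))$ in $\widetilde{\ca{Y}}$ and $\gamma(s):=\exp_p(sI\widetilde{v}(t))$ in $\ca{Y}$, which are defined for $s\in[0,1]$ because $\|\widetilde{v}(t)\|<\vep$ and $I$ is isometric. The Jacobi fields $\widetilde{J}(s):=(d\exp_{\widetilde{p}})_{s\widetilde{v}(t)}(s\dot{\widetilde{v}}(t))$ and $J(s):=(d\exp_{p})_{sI\widetilde{v}(t)}(sI\dot{\widetilde{v}}(t))$ along these geodesics vanish at $s=0$ and satisfy $\widetilde{J}'(0)=\dot{\widetilde{v}}(t)$, $J'(0)=I\dot{\widetilde{v}}(t)$; by isometry of $I$ we have $\|\widetilde{J}'(0)\|=\|J'(0)\|$ and $\|\widetilde{\gamma}'(0)\|=\|\gamma'(0)\|$, and in the basepoints we can also match the angle between $\widetilde{J}'(0)$ and $\widetilde{\gamma}'(0)$ with the corresponding angle on $\ca{Y}$. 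Since the sectional curvatures of $\widetilde{\ca{Y}}$ along $\widetilde{\gamma}$ are bounded above by the corresponding ones of $\ca{Y}$ along $\gamma$ (the hypothesis is that $K_{\widetilde{\ca{Y}}}\leq K_{\ca{Y}}$), the Rauch comparison theorem \cite[Theorem 19]{Bil} for Hilbert manifolds yields $\|J(s)\|\leq \|\widetilde{J}(s)\|$ for $0\leq s\leq 1$. Evaluating at $s=1$ gives exactly $\|\dot{c}(t)\|\leq\|\dot{\widetilde{c}}(t)\|$.

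Finally, integrating this inequality in $t$ over $[0,1]$ produces the desired bound on lengths. The main issue to verify carefully is that the infinite-dimensional Rauch theorem cited applies to the configuration here: namely, that we may compare Jacobi fields starting with matching initial data (norm of velocity, norm of the derivative of the transverse vector field, and angle between them) along geodesics whose curvature hypothesis is controlled only along the specific two-planes swept out by those Jacobi fields. Since $I$ is an isometric linear embedding of $T_{\widetilde{p}}\widetilde{\ca{Y}}$ into $T_p\ca{Y}$, both initial data and initial two-plane on $\widetilde{\ca{Y}}$ correspond exactly, via $I$, to those on $\ca{Y}$, and the global curvature hypothesis supplies the pointwise curvature comparison along the two geodesics. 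No further analytic subtlety arises because the injectivity radius bound guarantees that all $\exp$-maps involved are diffeomorphisms on the relevant balls, so the Jacobi equation arguments proceed as in the finite-dimensional case.
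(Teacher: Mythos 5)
Your proof is correct and takes exactly the route the paper intends: the paper states this lemma immediately after asserting that it follows from the infinite-dimensional Rauch comparison theorem \cite[Theorem 19]{Bil} and gives no further details, and your argument simply fills in that reduction — pulling back the two curves to the tangent spaces, recognizing $\dot{c}(t)$ and $\dot{\widetilde{c}}(t)$ as endpoint values of Jacobi fields along the radial geodesics, applying Rauch with the correct direction (smaller curvature $\Rightarrow$ larger Jacobi field), and integrating. The one place you wave your hands slightly is the usual decomposition of each Jacobi field into its tangential and normal components before applying Rauch (the tangential parts grow linearly and match via $I$; Rauch compares the normal parts); you gesture at this by ``matching the angle'' and by flagging the issue in your closing paragraph, which is adequate since $I$ is a linear isometry and the injectivity-radius hypothesis rules out conjugate points on the relevant geodesics.
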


For simplicity, we impose the following ``bounded geometry type'' assumption. Note that it is automatically satisfied if $\ca{X}$ admits an isometric cocompact group action. In particular, a proper $LT$-space satisfies this assumption.

\begin{asm}\label{assumption related to bounded geometry}
We suppose that $\ca{X}$ is a Hilbert manifold whose injectivity radius is bounded below by $2\vep$. We also suppose that all the sectional curvatures of $\ca{X}$ are bounded above by $ \delta$. When $\delta>0$, by retaking $\vep$ if necessary, we suppose that $\vep<{\pi}/{2\sqrt{\delta}}$ from the beginning.
\end{asm}

\begin{lem}[{Compare with \cite[Lemma 5.8]{GWY}}]\label{Lemma 5.8 of this paper}
Let $\ca{X}$ be a Hilbert manifold satisfying Assumption \ref{assumption related to bounded geometry}.
For $x,x_0,x_1\in \ca{X}$ satisfying $d(x,x_0)$, $d(x,x_1)<\vep$, we have an inequality
$$\|C_{x_0}(x,t)-C_{x_1}(x,t)\|\leq 2d(x_0,x_1).$$
\end{lem}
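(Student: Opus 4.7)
The first step is the simple algebraic identity
$$C_{x_0}(x,t)-C_{x_1}(x,t) = \bra{\log_x(x_1)-\log_x(x_0),\,0}\in T_x\ca{X}\oplus t\bb{R},$$
so the inequality to be proved reduces to the tangent-space estimate
$$\|\log_x(x_1)-\log_x(x_0)\|_{T_x\ca{X}} \leq 2\,d(x_0,x_1).$$
When $\delta\leq 0$ the space $\ca{X}$ is $\mathrm{CAT}(0)$ at the relevant scale and \cite[Lemma 5.8]{GWY} already yields the stronger bound with constant $1$, so I focus on the case $\delta>0$ and compare with a model $M_\delta$ of constant sectional curvature $\delta$. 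Because only three points are involved, I may take $M_\delta$ to be an infinite-dimensional space form of curvature $\delta$ (a sphere in a sufficiently large Hilbert space, rescaled so that $K_{M_\delta}=\delta$), chosen so that there is an isometric embedding $I\colon T_x\ca{X}\hookrightarrow T_{\widetilde{x}}M_\delta$ at a base point $\widetilde{x}\in M_\delta$.

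The curvature assumption $K_\ca{X}\leq\delta=K_{M_\delta}$ is exactly the hypothesis of the preceding Rauch-type lemma with $\widetilde{\ca{Y}}=\ca{X}$ and $\ca{Y}=M_\delta$. Set $\widetilde{x_i}:=\exp_{\widetilde{x}}\bra{I\log_x(x_i)}$ for $i=0,1$. Since $\vep<\pi/(2\sqrt{\delta})$ lies below both the injectivity radius and the quarter-circle bound, the ball $B_\vep(x)\subseteq\ca{X}$ is geodesically convex, so the minimal geodesic $\widetilde{c}\colon[0,1]\to\ca{X}$ from $x_0$ to $x_1$ stays inside $B_\vep(x)$. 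Applying the preceding lemma to $\widetilde{c}$ produces a curve $c$ in $M_\delta$ from $\widetilde{x_0}$ to $\widetilde{x_1}$ satisfying $\mathrm{length}(c)\leq\mathrm{length}(\widetilde{c})=d(x_0,x_1)$, and in particular $d_{M_\delta}(\widetilde{x_0},\widetilde{x_1})\leq d(x_0,x_1)$. Because $I$ is isometric we also have $\|\log_x(x_1)-\log_x(x_0)\|=\|\log_{\widetilde{x}}(\widetilde{x_1})-\log_{\widetilde{x}}(\widetilde{x_0})\|_{T_{\widetilde{x}}M_\delta}$, so the whole task reduces to showing that the map $\log_{\widetilde{x}}\colon B_\vep(\widetilde{x})\to T_{\widetilde{x}}M_\delta$ is $2$-Lipschitz.

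The final step is an explicit Jacobi-field computation in a space of constant curvature $\delta=1/R^2$: for $v\in T_{\widetilde{x}}M_\delta$ with $|v|=r<\pi R/2$, the singular values of $(d\exp_{\widetilde{x}})_v$ lie in $[\sin(r/R)/(r/R),\,1]$, so $(d\log_{\widetilde{x}})_y$ at $y=\exp_{\widetilde{x}}(v)$ has operator norm at most $(r/R)/\sin(r/R)$. This quantity is monotone increasing in $r$ and is bounded, for $r\leq\vep<\pi R/2$, by $(\vep/R)/\sin(\vep/R)<\pi/2<2$. Integrating along the minimal geodesic from $\widetilde{x_0}$ to $\widetilde{x_1}$ in $M_\delta$ (which again lies in the convex ball $B_\vep(\widetilde{x})$) then gives $\|\log_{\widetilde{x}}(\widetilde{x_1})-\log_{\widetilde{x}}(\widetilde{x_0})\|\leq 2\,d_{M_\delta}(\widetilde{x_0},\widetilde{x_1})\leq 2\,d(x_0,x_1)$, as required. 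The main obstacle is the infinite-dimensional bookkeeping: choosing $M_\delta$ large enough to carry an isometric copy of $T_x\ca{X}$, verifying geodesic convexity of $B_\vep$, and justifying the Jacobi-field estimate in a Hilbert setting. Each of these is standard, and together they guarantee that the constant $2$ depends only on $\delta$ and $\vep$, not on the dimension.
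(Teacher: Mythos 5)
Your argument reproduces the paper's proof almost step for step: the same reduction to the $2$-Lipschitz estimate for $\log_x$, the same application of the Rauch-type comparison lemma against the sphere $S^\infty(\delta^{-1/2})$, and the same Jacobi-field bound $\|(d\log_{\widetilde{x}})\|\leq \pi/2<2$ inside the open hemisphere. The only cosmetic additions (handling $\delta\leq 0$ by reverting to \cite[Lemma 5.8]{GWY}, and recording the tighter constant $\pi/2$) do not change the argument; one small caveat shared with the paper is that the claimed convexity of $B_\vep(x)$ in $\ca{X}$ is not immediate from Assumption \ref{assumption related to bounded geometry}, but the geodesic does stay in $B_{2\vep}(x)$, which is still within the range where the comparison lemma applies, so nothing is lost.
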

\begin{proof}
We would like to apply the Rauch comparison theorem to the case when $\widetilde{M}=\ca{X}$ and $M=S^\infty(\delta^{-1/2})$, where $S^\infty(\delta^{-1/2})$ is the infinite-dimensional sphere with radius $\delta^{-1/2}$.
Pick up $y\in S^\infty(\delta^{-1/2})$ and take an isometric embedding $I:T_x\ca{X}\to T_yS^\infty(\delta^{-1/2})$. We may assume that $y$ is the north pole.
By the composition $\exp_y\circ I\circ \log_x$, we can define a local embedding from the $\vep$-neighborhood of $x$ in $\ca{X}$ to that of $y$ in $S^\infty(\delta^{-1/2})$. Note that this neighborhood of $y$ in $S^\infty(\delta^{-1/2})$ is contained in the northern hemisphere, thanks to the condition $\vep<\frac{\pi}{2\sqrt{\delta}}$.
Let $y_i:=\exp_y\circ I\circ \log_x(x_i)$ for $i=0,1$. 
Let $\widetilde{c}$ be the unique minimal geodesic connecting $x_0$ and $x_1$, and let $c:= \exp_y\circ I\circ \log_x\circ \widetilde{c}$. Then, thanks to the comparison theorem, we have the inequality 
$L(c)\leq L(\widetilde{c}).$
Since $\widetilde{c}$ is a geodesic, $L(\widetilde{c})=d_{\ca{X}}(x_0,x_1)$. Clearly, $d_{S^\infty(\delta^{-1/2})}(y_0,y_1)\leq L(c)$. Combining them, we obtain the inequality $d_{S^\infty(\delta^{-1/2})}(y_0,y_1)\leq d_{\ca{X}}(x_0,x_1)$.

By definition of the Clifford operator and $y_i$'s, $\exp_yI[-C_{x_i}(x,t)]=y_i$.
Since $I$ is isometric, we have $\|C_{x_0}(x,t)-C_{x_1}(x,t)\|=\|I[C_{x_0}(x,t)]-I[C_{x_1}(x,t)]\|$. Therefore, in order to obtain the result, it is sufficient to prove that $d_{S^\infty(\delta^{-1/2})}(y_0,y_1)\geq \frac{1}{2}\|I[C_{x_0}(x,t)]-I[C_{x_1}(x,t)]\|$.

Take the minimal geodesic $c_1$ connecting $y_0$ and $y_1$ in $S^\infty(\delta^{-1/2})$. Put $c_2:=\log_y\circ c_1$. Then, $L(c_1)=d_{S^\infty(\delta^{-1/2})}(y_0,y_1)$ and $L(c_2)\geq \|I[C_{x_0}(x,t)]-I[C_{x_1}(x,t)]\|$. Thus, it is sufficient to prove that $L(c_1)\geq \frac{1}{2}L(c_2)$.
By definition of $c_1$ and $c_2$, we have $\dot{c}_1(t)=(d\exp_y)_{c_2(t)}[\dot{c}_2(t)]$. 
Since $L(c_i)=\int_{0}^1\|\dot{c}_i(t)\|dt$, it suffices to prove that $\|(d\exp_y)_z(v)\|\geq \frac{1}{2}\|v\|$ for all $z$ which belongs to the northern hemisphere and $v\in T_zS^\infty(\delta^{-1/2})$.
Now, this is clear by the following arguments:
Consider the two dimensional sphere $S^2(\delta^{-1/2})\subseteq S^\infty(\delta^{-1/2})$ which contains $y,z$ and whose tangent space contains $v$; Then, calculate the differential of $\exp_y$ at $x$ using the polar coordinate system. We leave the details to the reader.
\end{proof}

In the same way of Corollary 5.12 of \cite{GWY}, we can prove the following.

\begin{cor}
If a net $\{x_i\}_{i\in I}\subseteq \ca{X}$ converges to $x_0\in \ca{X}$, we have
$$\lim_{i\to \infty}\|\beta_{x_0}(f)-\beta_{x_i}(f)\|=0$$
for every $f\in \ca{S}_\vep$.
\end{cor}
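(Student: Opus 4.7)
The plan is to follow the strategy of \cite[Corollary 5.12]{GWY}, using Lemma \ref{Lemma 5.8 of this paper} in place of the Hilbert--Hadamard estimate used there. The starting point is the scalar decomposition $f(s) = f_0(s^2) + s\cdot f_1(s^2)$ with $f_0,f_1\in C_0[0,\vep^2)$, which always exists for $f\in \ca{S}_\vep$. For any $v$ lying in the Euclidean summand $V\subseteq \Cl_+(V)$ one has $v^2=\|v\|^2\cdot 1$, so the spectral calculus gives
$$f(v) = f_0(\|v\|^2)\cdot 1 + v\cdot f_1(\|v\|^2).$$
Applied to $v=C_{x_0}(x,t)\in T_x\ca{X}\oplus t\bb{R}$, whose norm is $\sqrt{d(x,x_0)^2+t^2}$, this yields
$$\beta_{x_0}(f)(x,t) = f_0\bigl(d(x,x_0)^2+t^2\bigr) + C_{x_0}(x,t)\cdot f_1\bigl(d(x,x_0)^2+t^2\bigr),$$
and this section automatically vanishes when $d(x,x_0)^2+t^2 \geq \vep^2$ since $\supp f_j\subseteq [0,\vep^2)$. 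The analogous formula holds for $\beta_{x_i}(f)$.

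Given $\eta>0$, I would choose $i_0$ so that $d(x_0,x_i)<\eta$ for $i\geq i_0$. Outside the set where $\min\{d(x,x_0),d(x,x_i)\}<\vep$ both sections vanish, so it is enough to control $\|\beta_{x_0}(f)(x,t)-\beta_{x_i}(f)(x,t)\|$ on that region. Where $d(x,x_0)<\vep$ and $d(x,x_i)<\vep$ simultaneously hold, Lemma \ref{Lemma 5.8 of this paper} applies, and I would split the difference as
\begin{align*}
\beta_{x_0}(f)(x,t)-\beta_{x_i}(f)(x,t)
&=\bigl[f_0(d(x,x_0)^2+t^2)-f_0(d(x,x_i)^2+t^2)\bigr]\\
&\quad+\bigl[C_{x_0}(x,t)-C_{x_i}(x,t)\bigr]\cdot f_1(d(x,x_0)^2+t^2)\\
&\quad+C_{x_i}(x,t)\cdot\bigl[f_1(d(x,x_0)^2+t^2)-f_1(d(x,x_i)^2+t^2)\bigr].
\end{align*}
The first and third pieces are controlled by the uniform continuity of $f_0,f_1\in C_0[0,\vep^2)$ together with the bound $|d(x,x_0)^2-d(x,x_i)^2|\leq 2\vep\cdot d(x_0,x_i)$, while the middle piece is controlled by the key estimate $\|C_{x_0}(x,t)-C_{x_i}(x,t)\|\leq 2d(x_0,x_i)$ from Lemma \ref{Lemma 5.8 of this paper} combined with $\|f_1\|_\infty<\infty$.

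The step I expect to require the most care is the \emph{transition} region, where exactly one of $d(x,x_0),d(x,x_i)$ lies below $\vep$ and the three--term splitting above is not directly applicable. There the triangle inequality forces both distances to lie within $\eta$ of $\vep$, hence both arguments $d(x,x_\bullet)^2+t^2$ lie within $2\vep\eta$ of $\vep^2$; since $f_0$ and $f_1$ vanish at $\vep^2$, each term of the difference is then uniformly small by the modulus of continuity of the $f_j$ near the boundary of $[0,\vep^2)$. Taking the supremum over $(x,t)$ in both the interior and the transition regions and letting $\eta\to 0$, i.e.\ $i\to \infty$, gives $\|\beta_{x_0}(f)-\beta_{x_i}(f)\|\to 0$, as required.
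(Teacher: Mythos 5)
Your overall strategy---express $\beta_{x_\bullet}(f)$ through the scalar/Clifford decomposition, split the difference into three pieces, and control the Clifford piece with Lemma \ref{Lemma 5.8 of this paper}---is precisely what the paper intends in citing \cite[Corollary 5.12]{GWY}, and the transition-region bookkeeping near $d(x,x_\bullet)=\vep$ is sound.

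The one genuine gap is the opening claim that every $f\in\ca{S}_\vep$ can be written as $f(s)=f_0(s^2)+s\,f_1(s^2)$ with $f_0,f_1\in C_0[0,\vep^2)$. The even part always produces $f_0\in C_0[0,\vep^2)$, but $f_1(u)=\bigl(f(\sqrt u)-f(-\sqrt u)\bigr)/(2\sqrt u)$ extends continuously to $u=0$ only when the odd part of $f$ is differentiable at the origin; for $f(s)=\mathrm{sgn}(s)\sqrt{|s|}(\vep-|s|)\in\ca{S}_\vep$ one gets $f_1(u)=u^{-1/4}(\vep-\sqrt u)$, which is unbounded near $0$. Because your middle piece is estimated by $2d(x_0,x_i)\,\|f_1\|_\infty$, the argument as written breaks down when $\|f_1\|_\infty=\infty$. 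The repair is routine: the estimate does hold on the dense subset of $f\in\ca{S}_\vep$ for which $f_1\in C_0[0,\vep^2)$ (e.g.\ those differentiable at $0$), and since each $\beta_x$ is a $*$-homomorphism of norm at most $1$, the inequality $\|\beta_{x_0}(f)-\beta_{x_i}(f)\|\le 2\|f-g\|+\|\beta_{x_0}(g)-\beta_{x_i}(g)\|$ transports the conclusion from that dense class to all of $\ca{S}_\vep$. You should add this approximation step before passing to the three-term estimate.
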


By this corollary, we can prove several important properties.
Let $\Gamma$ be a Hausdorff (possibly non-locally compact) group. Suppose that $\Gamma$ acts on $\ca{X}$ in an  isometric and continuous way. Then, $\Gamma$ acts on $\prod_b(\ca{X})$ as follows: For an isometry $\phi:\ca{X}\to \ca{X}$ and $f\in \prod_b(\ca{X})$, we define
$$\phi_*(f)(x,t):=\Cl_+\{(d\phi)_{\phi^{-1}(x)}\oplus\id_{t\bb{R}}\}{f(\phi^{-1}(x),t)}\in \Cl_+(T_x\ca{X}\oplus t\bb{R}),$$
where $\Cl_+\{F\}$ for a linear map $F$ on a Hilbert space $V$ is the induced homomorphism on $\Cl_+(V)$.
This $\Gamma$-action on $\prod_b(\ca{X})$ gives a continuous action on $\ca{A(X)}$ as follows. 

\begin{pro}\label{YusCstar algebra admits a group action}
$(1)$ For an isometry $\phi:\ca{X}\to \ca{X}$ and $x_0\in \ca{X}$,
$$\phi_*\circ \beta_{x_0}=\beta_{\phi(x_0)}.$$
Consequently, $\phi_*$ preserves $\ca{A(X)}$. 

$(2)$ Therefore, $\Gamma$ acts on $\ca{A}(\ca{X})$. This action is continuous in the point-norm topology. 
\end{pro}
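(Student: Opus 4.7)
The plan is to derive (1) directly from the naturality of the exponential and logarithm maps under smooth isometries. For any smooth isometry $\phi\colon\ca{X}\to\ca{X}$, the identity $\phi\circ\exp_y=\exp_{\phi(y)}\circ\,d\phi_y$ holds on the injectivity domain (isometries carry geodesics to geodesics with matched initial data), hence $\log_{\phi(y)}\circ\phi=d\phi_y\circ\log_y$ there. Setting $y=\phi^{-1}(x)$, one obtains the identity
$$C_{\phi(x_0)}(x,t)=(-\log_x(\phi(x_0)),\,t)=(d\phi_y\oplus\id_{t\bb{R}})\bigl(C_{x_0}(y,t)\bigr)$$
in $T_x\ca{X}\oplus t\bb{R}\subset\Cl_+(T_x\ca{X}\oplus t\bb{R})$. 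Since $d\phi_y\oplus\id_{t\bb{R}}$ is orthogonal, the induced $\Cl_+\{d\phi_y\oplus\id\}$ is a $*$-isomorphism of Clifford algebras and hence commutes with the continuous functional calculus applied to the self-adjoint degree-one element $C_{x_0}(y,t)$. Combined with the fact that both $\phi_*(\beta_{x_0}(f))$ and $\beta_{\phi(x_0)}(f)$ vanish outside the $2\vep$-ball about $\phi(x_0)$ (as $\phi$ is isometric), this gives the equality $\phi_*(\beta_{x_0}(f))(x,t)=\beta_{\phi(x_0)}(f)(x,t)$ pointwise. Consequently $\phi_*$ sends generators of $\ca{A}(\ca{X})$ to generators and restricts to a $*$-automorphism.

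For (2), once (1) is in hand, each $\gamma\in\Gamma$ induces an isometric $*$-automorphism $\gamma_*$ of $\ca{A}(\ca{X})$, and I will verify point-norm continuity by showing that the set
$$\ca{A}_0:=\bbra{a\in\ca{A}(\ca{X})\ \middle|\ \Gamma\ni\gamma\mapsto\gamma_*(a)\text{ is norm continuous at }e}$$
is a closed $*$-subalgebra containing every generator $\beta_{x_0}(f)$. Closure of $\ca{A}_0$ under the algebraic operations is routine, since the $\gamma_*$ are uniformly bounded (in fact isometric) $*$-homomorphisms, and closure under norm limits follows from a standard $\vep/3$-argument. By (1), $\gamma_*(\beta_{x_0}(f))=\beta_{\gamma\cdot x_0}(f)$; the continuity of the $\Gamma$-action on $\ca{X}$ gives $\gamma_i\cdot x_0\to x_0$ whenever $\gamma_i\to e$, and then the corollary immediately preceding the proposition yields $\beta_{\gamma_i\cdot x_0}(f)\to\beta_{x_0}(f)$ in norm. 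Thus $\ca{A}_0=\ca{A}(\ca{X})$, and continuity at an arbitrary $\gamma\in\Gamma$ follows from continuity at $e$ via $\gamma_{i,*}(a)-\gamma_*(a)=\gamma_*\bigl((\gamma^{-1}\gamma_i)_*(a)-a\bigr)$ together with $\|\gamma_*\|=1$.

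The only delicate input is the infinite-dimensional naturality $\phi\circ\exp_y=\exp_{\phi(y)}\circ\,d\phi_y$ for smooth Hilbert-manifold isometries, the analogue of the classical fact in finite dimensions; it holds by the usual characterization of geodesics via parallel transport, provided that the $\Gamma$-action is smooth enough for $d\phi$ to be defined (which is implicit in the very definition of $\phi_*$ appearing in the statement). Granted this, the proof reduces to the bookkeeping in the two paragraphs above and presents no further analytic obstacle.
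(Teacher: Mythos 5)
Your argument is correct and is essentially a worked-out version of what the paper leaves to the reference: the paper's own ``proof'' of this proposition is simply the citation ``See [Section 6] of \cite{GWY}'', so there is no in-text argument to compare against line by line. Your key steps — the naturality $\phi\circ\exp_y=\exp_{\phi(y)}\circ d\phi_y$ for Riemannian isometries, the resulting identity $C_{\phi(x_0)}(x,t)=(d\phi_y\oplus\id)\bigl(C_{x_0}(y,t)\bigr)$, and the compatibility of the induced Clifford $*$-isomorphism with continuous functional calculus — are exactly the ingredients one needs, and they correctly yield $\phi_*\circ\beta_{x_0}=\beta_{\phi(x_0)}$. Your treatment of the support condition (both sides vanish outside the $\vep$-ball about $\phi(x_0)$ because $\phi$ is isometric and $f\in\ca{S}_\vep$) closes the pointwise check properly. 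For part (2), the closed-subalgebra/$\vep/3$ argument combined with the corollary immediately preceding the proposition (norm-continuity of $x\mapsto\beta_x(f)$) is the standard and correct route, and the reduction of continuity at arbitrary $\gamma$ to continuity at $e$ via $\gamma_{i,*}-\gamma_*=\gamma_*\circ\bigl((\gamma^{-1}\gamma_i)_*-\id\bigr)$ is right since each $\gamma_*$ is isometric.

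The one caveat you flag yourself — that $\phi$ must be at least $C^1$ so that $d\phi$ and hence $\phi_*$ make sense — is a genuine implicit hypothesis, but it is already baked into the paper's definition of $\phi_*$, so it is fair to take it as given. No gaps.
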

\begin{proof}
See  \cite[Section 6]{GWY}.
\end{proof}

We summarize other necessary properties of $\ca{A}(\ca{X})$. Thanks to Lemma \ref{Lemma 5.8 of this paper} , one can prove them in the same way of \cite{GWY}.

\begin{pro}\label{properties of GWY algebra}
Let $\ca{X}$ be a Hilbert manifold satisfying Assumption \ref{assumption related to bounded geometry}.

$(1)$ $\ca{A(X)}$ is separable whenever $\ca{X}$ is separable.


$(2)$ When $\ca{X}$ is finite-dimensional, $\ca{A(X)}$ coincides with the set of continuous sections vanishing at infinity
$$\bbra{f\in C_0(\ca{X}\times[0,\vep),\Cl_+(TX\oplus t\bb{R}))}.$$
This $C^*$-algebra is isomorphic to $C_0([0,\vep),\Cl_+(t\bb{R}))\grotimes Cl_\tau(\ca{X})$.

$(3)$ For a subset $\ca{Y}$ of $\ca{X}$, we define $\ca{A}(\ca{X},\ca{Y})$ by the $C^*$-subalgebra of $\ca{A}(\ca{X})$ generated by 
$$\bbra{\beta_x(f)\, \middle|\, x\in \ca{Y},f\in \ca{S}_\vep},$$ following Lemma 7.2 of \cite{GWY}. Then,
\begin{itemize}
\item $\ca{A(X,X)}=\ca{A(X)}$;
\item If $\ca{Y}\subseteq \ca{Z}$, we have $\ca{A(X,Y)}\subseteq \ca{A(X,Z)}$;
\item $\ca{A(X,Y)}=\ca{A(X,\overline{Y})}$; and
\item If $\ca{Y}_1\subseteq \ca{Y}_2\subseteq \cdots$,  we have $\ca{A(X},\overline{\cup_i\ca{Y}_i})= \varinjlim_i \ca{A(X},\ca{Y}_i)$.
\end{itemize}

Consequently, if $\overline{\cup_i\ca{Y}_i}=\ca{X}$, we have $\ca{A(X)}=\varinjlim_i \ca{A(X},\ca{Y}_i)$.
\end{pro}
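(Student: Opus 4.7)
The plan is to adapt the arguments of Sections 5--7 of \cite{GWY} to the present setting, with Lemma \ref{Lemma 5.8 of this paper} and its Corollary (continuity of $x\mapsto \beta_x(f)$) playing the role of the non-positive-curvature estimates used there. Since $\ca{A}(\ca{X})$ is generated by $\{\beta_x(f) \mid x\in \ca{X},\,f\in \ca{S}_\vep\}$, every property reduces to a statement about this generating family.

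For (1), I would pick a countable dense subset $\{x_i\}\subseteq \ca{X}$ and a countable dense subset $\{f_j\}\subseteq \ca{S}_\vep$ (available since $\ca{S}_\vep$ is separable). Since each $\beta_x$ is a $*$-homomorphism, $\|\beta_x(f)-\beta_x(f_j)\|\leq \|f-f_j\|$, and by the Corollary to Lemma \ref{Lemma 5.8 of this paper}, $\beta_{x_i}(f_j)\to \beta_x(f_j)$ in norm whenever $x_i\to x$. A standard triangle-inequality argument then shows that the countable set $\{\beta_{x_i}(f_j)\}_{i,j}$ is norm-dense in the generating family, so the $C^*$-algebra it generates --- which is $\ca{A}(\ca{X})$ --- is separable.

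For (2), in finite dimensions the inclusion $\ca{A}(\ca{X})\subseteq C_0(\ca{X}\times [0,\vep),\Cl_+(T\ca{X}\oplus t\bb{R}))$ is clear because each $\beta_{x_0}(f)$ is a continuous section that vanishes as $(x,t)$ approaches either $\{d(x,x_0)\geq 2\vep\}$ or $t\to \vep$. For the reverse inclusion, I would argue via a Stone--Weierstrass-type density result: using a partition of unity on $\ca{X}$ to localize, one reduces to showing that elements of the form $\beta_{x_0}(f)$, together with their products and $C_0(\ca{X})$-multiples (which also lie in $\ca{A}(\ca{X})$ since they can be built from Bott elements), separate points of the bundle and exhaust each fiber. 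The identification with $C_0([0,\vep),\Cl_+(t\bb{R}))\grotimes Cl_\tau(\ca{X})$ is then the standard graded isomorphism $\Cl_+(T\ca{X}\oplus t\bb{R})\cong Cl_\tau(\ca{X})\grotimes \Cl_+(t\bb{R})$ applied fiberwise.

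For (3), the first two bullets are immediate from the definition of $\ca{A}(\ca{X},\ca{Y})$; the third bullet $\ca{A}(\ca{X},\ca{Y})=\ca{A}(\ca{X},\overline{\ca{Y}})$ follows directly from the continuity of $x\mapsto \beta_x(f)$ (Corollary following Lemma \ref{Lemma 5.8 of this paper}), which ensures every generator $\beta_x(f)$ with $x\in \overline{\ca{Y}}$ lies in $\ca{A}(\ca{X},\ca{Y})$; and the fourth bullet on nested unions is a general fact about $C^*$-subalgebras generated by an increasing union of sets, combined again with the closure property just established. The concluding statement $\ca{A}(\ca{X})=\varinjlim_i \ca{A}(\ca{X},\ca{Y}_i)$ is then a direct consequence. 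The only genuinely nontrivial point is (2); everything else is a formal consequence of the continuity estimate, which is precisely why establishing Lemma \ref{Lemma 5.8 of this paper} by the Rauch comparison theorem was the main geometric input.
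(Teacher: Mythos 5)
Your proposal is correct and takes essentially the same approach as the paper, which provides no detailed proof here but simply asserts that the arguments of \cite{GWY} carry over once Lemma \ref{Lemma 5.8 of this paper} (and the resulting norm-continuity of $x\mapsto\beta_x(f)$) is in hand; your write-up is a faithful fleshing out of exactly that route, with the noncommutative Stone--Weierstrass step for $(2)$ being the only part that, as in \cite{GWY}, requires genuine work beyond formal manipulation.
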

\begin{rmks}\label{Cstar algebra S and Cl}
$(1)$ $\ca{S}_\vep$ is isomorphic to
$$C_0([0,\vep),\Cl_+(t\bb{R}))$$
by the following homomorphism. For $f\in \ca{S}_\vep$, we have the even-odd decomposition $f=f_0+f_1$, where $f_0(t)=\frac{f(t)+f(-t)}{2}$ and $f_1=f-f_0$. Then, we can define an element of $C_0([0,\vep),\Cl_+(t\bb{R}))$ by $s\mapsto f_0(s){\bf 1}_f+f_1(s)v$, where ${\bf 1}_f$ is ``$1\in \Cl_+(t\bb{R}))$'' and $v$ is the unit vector of ``$t\bb{R}$'' for $s>0$. Note that $f_1(0)=0$, and hence $f_1(s)v$ makes sense for any $s\geq 0$. One can prove that the correspondence $f\mapsto [s\mapsto f_0(s){\bf 1}_f+f_1(s)v]$ is a $*$-isomorphism. By this description, we have a short exact sequence
$$0\to Cl_\tau(0,\vep)\to \ca{S}_\vep\to \bb{C}\to 0.$$
We have used a similar exact sequence in the proof of \ref{Lemma computation of KK(S,S)}.

$(2)$ Therefore, $(2)$ above gives an isomorphism $\ca{A(X)}\cong \ca{S}_\vep\grotimes Cl_\tau(\ca{X})$ for finite-dimensional $\ca{X}$. This isomorphism has been used in Section \ref{section index theorem Spinc 2n} as a convention.
Moreover, by $(3)$, we have $\ca{A}(l^2(\bb{N}))\cong\varinjlim_N \ca{S}\grotimes Cl_\tau(\bb{R}^N)$, where we choose $\vep=\infty$. The right hand side is the $C^*$-algebra defined in \cite{HKT}.

$(3)$ Since all of $\Cl_+(\bb{R}^N)$, $C_0(\bb{R}^N)$, $Cl_\tau(\bb{R})$ and $\bb{C}$ are nuclear, so is $\ca{S}\grotimes Cl_\tau(\bb{R}^N)$ (see \cite[Section 6.3--6.5]{Mur}). Thus, its limit $\ca{A}(l^2(\bb{N}))$ is again nuclear. This fact will be used in Section \ref{section PD LT spaces}.
\end{rmks}

We can formulate an infinite-dimensional version of the Poincar\'e duality homomorphism, by imitating Proposition \ref{Prop reformulated local Bott element}. 

\begin{dfn}\label{dfn reformulated local boot element}
Let $\ca{X}$ be a Hilbert manifold satisfying Assumption \ref{assumption related to bounded geometry}. Suppose that a Hausdorff group $\Gamma$ acts on $\ca{X}$ in a proper and isometric way.
Then, the {\bf reformulated local Bott element} is defined by
$$[\widetilde{\Theta_{\ca{X},2}}]:=(\{\ca{A(X)}\}_{x\in \ca{X}},\{\beta_x\}_{x\in \ca{X}},\{0\}_{x\in \ca{X}})\in \ca{R}KK_\Gamma(\ca{X},\ca{S}_\vep\grotimes \scr{C}(\ca{X}),\ca{A(X)}\grotimes \scr{C}(\ca{X})),$$
where the $\Gamma$-action on the filed $\ca{A(X)}\grotimes \scr{C}(\ca{X})$ is given by the diagonal action
$$\ca{A(X)}_x\ni a\mapsto \gamma_*(a)\in \ca{A(X)}_{\gamma.x}.$$
\end{dfn}

\begin{dfn}
We define the Poincar\'e duality homomorphism by the Kasparov product
$$\PD:KK_\Gamma(\ca{A(X)},\ca{S}_\vep)\ni [D]\mapsto 
[\widetilde{\Theta_{\ca{X},2}}]\grotimes \sigma_{\ca{X},\scr{C}(\ca{X})}^2([D])\in \ca{R}KK_\Gamma(\ca{X},\ca{S}_\vep\grotimes \scr{C}(\ca{X}),\ca{S}_\vep\grotimes\scr{C}(\ca{X})).$$
We call the image of this map $\PD ([D])$ the {\bf symbol element} of $[D]$.
\end{dfn}

\subsection{The index element of \cite{T4} and a study on $C^*$-algebras of Hilbert manifolds}\label{section PD Cstar algebra HKT Yu}

We would like to apply the above construction for proper $LT$-spaces. The value of ``$\PD$'' at the index element of the ``Dirac operator twisted by the $\tau$-twisted $LT$-equivariant line bundle $\ca{L}$'' should be $\sigma_{\ca{S}_\vep}([\ca{L}])$, according to Example \ref{ex computation of PD}.

The task of this subsection is to define a $\bb{KK}_{LT}^\tau$-element substituting for the reformulated index element studied in Section \ref{section index theorem Spinc 2n}. 
Although we have defined a similar $KK_{LT}^\tau$-element in \cite{T4}, we will reconstruct it for the following two reasons. First, we adopted the $C^*$-algebra of a Hilbert space as a substitute for the function algebra in \cite{T4}, which does not coincides with $\ca{A}(\ca{M}_{L^2_k})$ in the present paper. Second, the definition of the $\ca{A(M})$-module structure of the previous paper was not quite natural, although the Hilbert space is natural. We would like to reconstruct it so that the module structure looks natural. Instead, we must modify the Hilbert space. 

Let us outline this subsection.
Fix $k>1/2$, $l>4$ and $m\geq k+l$.
We first study the $C^*$-algebra $\ca{A}(\ca{M}_{L^2_k})$. We will introduce a possibly bigger $C^*$-algebra $\ca{A}_\HKT(\ca{M}_{L^2_k})$ which is more convenient to study the $K$-homological element. Thanks to it, we can deal with the $U_{L^2_k}$-part and the $\widetilde{M}$-part separately. After that, we will reconstruct an equivariant unbounded Kasparov module which plays a role of the reformulated index element, by the following steps: $(1)$ We will define a Hilbert space substituting for ``$L^2(U_{L^2_k},\ca{L})$''; $(2)$ We will prove that it admits a continuous $U_{L^2_m}$-action $L$ which looks like the left regular representation; $(3)$ We will define the $\ca{A}_\HKT(U_{L^2_k})$-module structure $\pi$ on the Hilbert $\ca{S}$-module $\ca{S}\grotimes \ca{H}$ substituting for ``$\ca{S}\grotimes L^2(U_{L^2_k},\ca{L}\grotimes S_U^*\grotimes S_U)$''; $(4)$ We will define an operator substituting for the Dirac operator $\Dirac$ on $\ca{H}$; $(5)$ We will prove that the triple $(\ca{S}\grotimes \ca{H},\pi,\id\grotimes \Dirac)$ is an unbounded $U_{L^2_m}$-equivariant Kasparov $(\ca{A}_\HKT(U_{L^2_k}),\ca{S})$-module; $(6)$ We will prove that this unbounded Kasparov module can be restricted to $\ca{A}(U_{L^2_k})$; and $(7)$ We will finally prove that the constructed element is independent of $l$ as an element of $\bb{KK}_{LT}^\tau(\ca{A}(\ca{M}_{L^2_k}),\ca{S}_{\vep})$. $(1)$--$(5)$ have been essentially done in \cite{T4}, but we will clarify the proofs. We will also give several different arguments in order to investigate the $C^*$-algebra of a Hilbert space from the viewpoint of global analysis more. When we use the same arguments or estimates of \cite{T4}, we will refer to the corresponding results and we will omit the details.

At the $KK$-theory level, the index element we are going to construct is, roughly speaking, the pullback of the one constructed in \cite{T4} via the inclusion $\ca{A}(\ca{M}_{L^2_k})\to \ca{A}_\HKT(\ca{M}_{L^2_k})$. 
However, the fact that $\ca{A}(\ca{M}_{L^2_k})$ has a natural unbounded $KK$-element, is itself interesting. In addition, it is important to know ``how an element of $\ca{A}(\ca{M}_{L^2_k})$ can be seen as a function on $\ca{M}_{L^2_k}$''. Our result is useful to consider this problem.

Let us begin with the construction of $\ca{A}_\HKT(\ca{M}_{L^2_k})$ inspired by \cite{HKT}. See also \cite{Tro}.
We will also introduce the alternative description of $\ca{A}(\ca{M}_{L^2_k})$ related to $\ca{A}_\HKT(\ca{M}_{L^2_k})$ with the idea of Proposition \ref{properties of GWY algebra} $(3)$ and \cite[Remark 7.7]{GWY}.

Recall that each $C^*$-algebra $\ca{S}$ and $\ca{S}_\vep$ has an (un)bounded multiplier $X$ given by $Xf(t):=tf(t)$.
They correspond to one another under the zero-extension $\iota:\ca{S}_\vep\hookrightarrow \ca{S}$.
We denote the coordinate of $U_{L^2_k,N}$ by $(x_1,y_1,\cdots,x_N,y_N)$, and we denote the corresponding base of the Lie algebra by $\{e_1,f_1,\cdots,e_N,f_N\}$. The Clifford multiplication by $v\in \Lie(U_{L^2_k,N})$ from the left is denoted by $v:\Cl_+(\Lie(U_{L^2_k,N}))\to\Cl_+(\Lie(U_{L^2_k,N}))$.

\begin{dfn}
$(1)$ We define an unbounded multiplier on $Cl_\tau(U_{L^2_k,M}\ominus U_{L^2_k,N})$ by $C_{N+1}^M:=\sum_{j=N+1}^M(x_j\grotimes e_j+y_j\grotimes f_j)$. Using it, we define a Bott-homomorphism
$\beta_N^M:\ca{S}\grotimes Cl_\tau(U_{L^2_k,N}\times \widetilde{M})\to \ca{S}\grotimes Cl_\tau(U_{L^2_k,M}\times \widetilde{M})$
for $N\leq M$ by 
\begin{align*}
\beta_N^M:f\grotimes h\mapsto f(X\grotimes 1+1\grotimes C_{N+1}^M)\grotimes h\in &\ca{S}\grotimes Cl_\tau(U_{L^2_k,M}\ominus U_{L^2_k,N})\grotimes Cl_\tau(U_{L^2_k,N}\times \widetilde{M})\\
&\cong \ca{S}\grotimes Cl_\tau(U_{L^2_k,M}\times \widetilde{M}).
\end{align*}
These make a directed system, namely $\beta_{N'}^M\circ \beta_N^{N'}=\beta_N^M$ for $N\leq N'\leq M$.
We define a $C^*$-algebra $\ca{A}_\HKT(\ca{M}_{L^2_k})$ by the inductive limit of this system:
$$\ca{A}_\HKT(\ca{M}_{L^2_k}):=\varinjlim_N \ca{S}\grotimes Cl_\tau(U_{L^2_k,N}\times \widetilde{M}).$$
The canonical homomorphisms are denoted by $\beta_N^\infty:\ca{S}\grotimes Cl_\tau(U_{L^2_k,N}\times \widetilde{M})
\to \ca{A}_\HKT(\ca{M}_{L^2_k})$.

$(2)$ In the same way, we define $\ca{A}_\HKT(U_{L^2_k}):=\varinjlim_N \ca{S}\grotimes Cl_\tau(U_{L^2_k,N}).$

$(3)$ Let $\ca{S}_\fin$ be the subalgebra generated by the following two functions: 
$$f_{\rm ev}(t):=\frac{1}{t^2+1}\  \text{ and }\ f_{\rm od}(t):=\frac{t}{t^2+1}.$$
Note that $\ca{S}_\fin$ is a dense subalgebra of $\ca{S}$ thanks to the Stone-Weierstrass theorem.
We define the dense subalgebra $\ca{A}_\HKT(\ca{M}_{L^2_k})_\fin$ by the subalgebra generated by 
$$\cup_N \beta_N^\infty\bra{\ca{S}_\fin\grotimes^\alg Cl_{\tau,\scr{S}}(U_{L^2_k,N}\times \widetilde{M})},$$
where $Cl_{\tau,\scr{S}}$ stands for the set of Clifford algebra-valued Schwartz class functions.
\end{dfn}

Let us introduce an alternative description of $\ca{A}(\ca{M}_{L^2_k})$ using Proposition \ref{properties of GWY algebra} $(3)$. Let $\beta_N^M:\ca{S}_\vep\grotimes Cl_\tau(U_{L^2_k,N}\times \widetilde{M})\to \ca{S}_\vep\grotimes Cl_\tau(U_{L^2_k,M}\times \widetilde{M})$ be the homomorphism given by 
$$\beta_N^M:f\grotimes h\mapsto f(X\grotimes 1+1\grotimes C_{N+1}^M)\grotimes h$$
for $N\leq M$. These make a directed system and its limit
$$\varinjlim_N \ca{S}_\vep\grotimes Cl_\tau(U_{L^2_k,N}\times \widetilde{M})$$
is isomorphic to $\ca{A}(\ca{M}_{L^2_k})$ thanks to Proposition \ref{properties of GWY algebra} $(3)$.
The canonical homomorphisms are denoted by $\beta_N^\infty:\ca{S}_\vep\grotimes Cl_\tau(U_{L^2_k,N}\times \widetilde{M})
\to \ca{A}(\ca{M}_{L^2_k})$. 

This alternative description has the following advantages compared to the original one.

\begin{lem}
$(1)$ Thanks to the canonical isomorphism $Cl_\tau(U_{L^2_k,N}\times \widetilde{M})\cong Cl_\tau(U_{L^2_k,N})\grotimes Cl_\tau( \widetilde{M})$, we have a $*$-isomorphism
$$\ca{A(M}_{L^2_k})\cong \ca{A}(U_{L^2_k})\grotimes Cl_\tau(\widetilde{M}).$$
For the same reason, we have $\ca{A}_\HKT(\ca{M}_{L^2_k})\cong \ca{A}_\HKT(U_{L^2_k})\grotimes Cl_\tau(\widetilde{M}).$

$(2)$ By the zero-extension $\iota:\ca{S}_\vep\hookrightarrow \ca{S}$, the unbounded multiplier $X$ on $\ca{S}$ corresponds to the bounded multiplier $X$ on $\ca{S}_\vep$. Consequently, we have the following commutative diagram
$$\begin{CD}
\ca{S}_\vep\grotimes Cl_\tau(U_{L^2_k,N}\times \widetilde{M})
@>\beta_N^M>> \ca{S}_\vep\grotimes Cl_\tau(U_{L^2_k,M}\times \widetilde{M}) \\
@V\iota\grotimes \id  VV @VV\iota\grotimes \id  V \\
\ca{S}\grotimes Cl_\tau(U_{L^2_k,N}\times \widetilde{M})
@>\beta_N^M>> \ca{S}\grotimes Cl_\tau(U_{L^2_k,M}\times \widetilde{M}).
\end{CD}$$
Therefore, we have a $*$-homomorphism $\ca{A}(\ca{M}_{L^2_k})\to \ca{A}_\HKT(\ca{M}_{L^2_k})$ defined by the limit of 
$$\iota\grotimes \id:\ca{S}_\vep\grotimes Cl_\tau(U_{L^2_k,N}\times \widetilde{M})\to \ca{S}_\vep\grotimes Cl_\tau(U_{L^2_k,M}\times \widetilde{M}).$$
This homomorphism preserves the tensor product decomposition given in $(1)$. The constructed homomorphism is denoted by $\iota\grotimes \id$.
\end{lem}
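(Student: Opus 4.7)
For part $(1)$, I will work entirely with the alternative description of $\ca{A}(\ca{M}_{L^2_k})$ as the inductive limit $\varinjlim_N \ca{S}_\vep \grotimes Cl_\tau(U_{L^2_k,N}\times \widetilde{M})$. The key observation is that the product structure $U_{L^2_k,N}\times \widetilde{M}$ gives the canonical factorization $Cl_\tau(U_{L^2_k,N}\times \widetilde{M}) \cong Cl_\tau(U_{L^2_k,N}) \grotimes Cl_\tau(\widetilde{M})$, where the tensor product on $C^*$-algebra side corresponds to exterior product of bundles (and the Clifford algebra of a direct sum is the graded tensor product of the individual Clifford algebras). Under this identification, the connecting map $\beta_N^M$ sends $f \grotimes h \grotimes k \mapsto f(X\grotimes 1 + 1\grotimes C_{N+1}^M)\grotimes h \grotimes k$, so it only acts on the first two tensor factors and is the identity on the $Cl_\tau(\widetilde{M})$-factor. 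Hence $\beta_N^M = \beta_N^{M,U}\grotimes \id_{Cl_\tau(\widetilde{M})}$, where $\beta_N^{M,U}$ is the Bott homomorphism building $\ca{A}(U_{L^2_k})$. Taking the inductive limit and using the fact that $Cl_\tau(\widetilde{M})$ is nuclear (so tensoring commutes with $C^*$-inductive limits) yields $\ca{A}(\ca{M}_{L^2_k}) \cong \ca{A}(U_{L^2_k})\grotimes Cl_\tau(\widetilde{M})$. The argument for $\ca{A}_\HKT$ is identical, with $\ca{S}_\vep$ replaced by $\ca{S}$.

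For part $(2)$, I will first verify the commutativity of the square at each finite level $N\leq M$. The zero-extension $\iota:\ca{S}_\vep \hookrightarrow \ca{S}$ is a $*$-homomorphism of $\bb{Z}_2$-graded $C^*$-algebras and pulls back the unbounded multiplier $X$ on $\ca{S}$ to the bounded multiplier $X$ on $\ca{S}_\vep$ (both are given by pointwise multiplication by $t$). Consequently, for any $f \in \ca{S}_\vep$, the functional calculus $f(X\grotimes 1 + 1\grotimes C_{N+1}^M)$ — which makes sense inside the multiplier algebra of $\ca{S}\grotimes Cl_\tau(U_{L^2_k,M}\ominus U_{L^2_k,N})$ as well — agrees under $\iota \grotimes \id$ with the same expression viewed in the $\ca{S}_\vep$-version. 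On generators of the form $f\grotimes h$, this is the assertion $(\iota\grotimes \id)\circ \beta_N^{M,\vep} = \beta_N^{M,\ca{S}} \circ (\iota\grotimes \id)$, which is immediate from the definition. By density and norm-continuity of all maps involved the diagram commutes on the whole algebra. The inductive limits of the two rows therefore receive a compatible family of maps, and universal property of $\varinjlim$ produces the desired $*$-homomorphism $\iota\grotimes \id:\ca{A}(\ca{M}_{L^2_k})\to \ca{A}_\HKT(\ca{M}_{L^2_k})$.

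Finally, I need to check that the homomorphism constructed in part $(2)$ respects the tensor product decomposition of part $(1)$. Since the Bott homomorphism at each finite level already factors as $\beta_N^{M,U}\grotimes \id_{Cl_\tau(\widetilde{M})}$ and since $\iota\grotimes \id$ only touches the $\ca{S}_\vep$-factor, the finite-level diagrams factor through the product $(\iota\grotimes \id_{U}) \grotimes \id_{Cl_\tau(\widetilde{M})}$. Passing to the limit and using the identifications of part $(1)$ on both sides gives the commutative diagram
\[
\xymatrix{
\ca{A}(\ca{M}_{L^2_k}) \ar[r]^-{\cong} \ar[d]_{\iota\grotimes\id} & \ca{A}(U_{L^2_k})\grotimes Cl_\tau(\widetilde{M}) \ar[d]^{(\iota\grotimes \id)\grotimes \id} \\
\ca{A}_\HKT(\ca{M}_{L^2_k}) \ar[r]^-{\cong} & \ca{A}_\HKT(U_{L^2_k})\grotimes Cl_\tau(\widetilde{M}).
}
\]

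There is no serious obstacle: the only subtlety is to keep track of the nuclearity needed to interchange $\grotimes$ with $\varinjlim$, which is provided by Remark \ref{Cstar algebra S and Cl} $(3)$ together with the nuclearity of $Cl_\tau(\widetilde{M})$ (a finite-dimensional-fiber bundle section algebra on a manifold). The bulk of the proof is bookkeeping of the tensor factors and the standard compatibility of functional calculus with $*$-homomorphisms.
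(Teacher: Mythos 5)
Your proposal is correct. The paper actually gives no proof for this lemma at all (it is stated and immediately followed by the next paragraph on group actions), treating it as immediate from the constructions just introduced. Your write-up supplies exactly the bookkeeping the paper leaves implicit: the fact that $C_{N+1}^M$ acts only in the $U$-direction, so that each connecting map factors as $\beta_N^{M,U}\grotimes \id_{Cl_\tau(\widetilde{M})}$, hence the inductive limit factors as $\ca{A}(U_{L^2_k})\grotimes Cl_\tau(\widetilde{M})$; and the compatibility of functional calculus with $\iota$, which gives the commuting square and the induced map on limits. One minor over-caution: commuting $\varinjlim$ with the minimal tensor product does not really require nuclearity — it holds for the spatial tensor product in general — but since all algebras here are nuclear (Remark \ref{Cstar algebra S and Cl} $(3)$) the point is moot and your citation is fine.
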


These two $C^*$-algebras admit $LT_{L^2_k}=U_{L^2_k}\times (T\times\Pi_T)$-actions which look like left translations. 

\begin{lem}
$(1)$ The isometric $LT_{L^2_k}$-action on $\ca{M}_{L^2_k}$ induces a continuous action denoted by ``$\lt$'' on $\ca{A}(\ca{M}_{L^2_k})$ thanks to Proposition \ref{YusCstar algebra admits a group action}. It is given by $\lt_g\circ \beta_x(f)=\beta_{g\cdot x}(f)$.

$(2)$ The dense subgroup $LT_\fin:= U_\fin \times (T\times\Pi_T)$ acts on $\ca{A}_\HKT(\ca{M}_{L^2_k})$ by the following: For $g\in  U_{L^2_k,M} \times (T\times\Pi_T)$, we define $\lt_g\in \Aut(\ca{A}(U_{L^2_k,M}))$ by left translation; For $g\in LT_{\fin}$ and $a\in \ca{S}\grotimes Cl_\tau(U_{L^2_k,N}\times\widetilde{M})$, we define $\lt_g[\beta_N^\infty(a)]:=\beta_{N'}^\infty(\lt_g(\beta_N^{N'}(a)))$, where $N'\geq N$ is chosen so that $g\in  U_{L^2_k,N'} \times (T\times\Pi_T)$. This action extends to $LT_{L^2_k}$ as a continuous action.

$(3)$ The action given above can be written as the tensor product of two actions on $\ca{A}_\HKT(U_{L^2_k})\grotimes Cl_\tau(\widetilde{M})$, that is to say, for $g=(u,\gamma)\in U_{L^2_k}\times (T\times\Pi_T)$, $a \in \ca{A}_\HKT(U_{L^2_k})$ and $f\in Cl_\tau(\widetilde{M})$,
$$\lt_g(a\grotimes f)=\lt_u(a)\grotimes \lt_{\gamma}(f).$$

$(4)$ The $*$-homomorphism $\iota\grotimes \id:\ca{A}(\ca{M}_{L^2_k})\to\ca{A}_\HKT(\ca{M}_{L^2_k})$ is $LT_{L^2_k}$-equivariant. Consequently, the $LT_{L^2_k}$-action on $\ca{A(M}_{L^2_k})$ is written as the tensor product of the actions of $U_{L^2_k}$ and $T\times\Pi_T$.
\end{lem}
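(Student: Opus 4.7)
The plan is to handle each of the four parts in turn, reducing everything to straightforward computations at the level of the finite approximations $U_{L^2_k,N}$ and then passing to the inductive limit.

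For part $(1)$, I would first note that since $\widetilde{M}$ is a finite-dimensional Riemannian manifold and $U_{L^2_k,N}$ is a finite-dimensional Euclidean space, the product metric on $U_{L^2_k,N}\times\widetilde{M}$ gives the canonical factorization $Cl_\tau(U_{L^2_k,N}\times\widetilde{M})\cong Cl_\tau(U_{L^2_k,N})\grotimes Cl_\tau(\widetilde{M})$, and under this identification the unbounded multiplier $C_{N+1}^M$ involves only coordinates on $U_{L^2_k,M}\ominus U_{L^2_k,N}$ and so acts trivially on the $Cl_\tau(\widetilde{M})$-factor. Therefore the Bott map factors as $\beta_N^M=(\beta_N^M)_U\grotimes\id_{Cl_\tau(\widetilde{M})}$, where $(\beta_N^M)_U$ is the corresponding Bott map for $U_{L^2_k}$. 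Since the graded tensor product with a fixed nuclear $C^*$-algebra commutes with inductive limits, taking $N\to\infty$ produces the desired isomorphism for $\ca{A}_\HKT$; the same argument, applied to the $\ca{S}_\vep$-version of the inductive system, gives the isomorphism for $\ca{A}(\ca{M}_{L^2_k})$.

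For part $(2)$, the zero-extension $\iota:\ca{S}_\vep\hookrightarrow\ca{S}$ intertwines the two multipliers $X$ (both given pointwise by multiplication by $t$), so $\iota(f(X))=(\iota f)(X)$ for any $f\in C_0(\mathbb{R})$ supported in $(-\vep,\vep)$. Applying continuous functional calculus in the commutative $C^*$-subalgebra generated by $X\grotimes 1+1\grotimes C_{N+1}^M$ (these summands graded-commute up to sign-compatible terms, but on functions of the self-adjoint square $X^2\grotimes 1+1\grotimes (C_{N+1}^M)^2$ the calculus is unambiguous, and the odd part follows by multiplication by $X\grotimes 1+1\grotimes C_{N+1}^M$ itself), the square diagram commutes at every finite level, and passage to the inductive limit produces the homomorphism $\iota\grotimes\id:\ca{A}(\ca{M}_{L^2_k})\to\ca{A}_\HKT(\ca{M}_{L^2_k})$ respecting the tensor decomposition of part $(1)$.

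For part $(3)$, well-definedness of the prescription $\lt_g[\beta_N^\infty(a)]:=\beta_{N'}^\infty(\lt_g(\beta_N^{N'}(a)))$ for $g\in U_{L^2_k,N'}\times(T\times\Pi_T)$ reduces to the obvious fact that on $U_{L^2_k,N'}$ the left translation by $g$ commutes with $\beta_{N'}^{N''}$ for any $N''\geq N'$; hence the definition is independent of $N'$. The tensor-product form of the action is immediate at each finite level, since $g=(u,\gamma)$ acts on the product $U_{L^2_k,M}\times\widetilde{M}$ diagonally and $\beta_N^M$ leaves the $\widetilde{M}$-factor alone by part $(1)$. The main obstacle is showing that this $LT_\fin$-action extends continuously to all of $LT_{L^2_k}$: the key estimate is that for fixed $a\in \ca{S}\grotimes Cl_\tau(U_{L^2_k,N}\times\widetilde{M})$, the map $g\mapsto \lt_g[\beta_N^\infty(a)]$ is norm continuous in $g\in U_{L^2_k,N'}\times(T\times\Pi_T)$ for each $N'\geq N$, and the resulting continuous maps are uniformly bounded in $N'$ because $\beta_N^\infty$ is isometric. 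Combining this with the density of $\cup_N U_{L^2_k,N}$ in $U_{L^2_k}$ and a standard $\epsilon/3$-argument (using that $\beta_N^\infty$-images exhaust a dense subalgebra and all $\lt_g$ are isometric), one extends to a strongly continuous action of $LT_{L^2_k}$ on $\ca{A}_\HKT(\ca{M}_{L^2_k})$.

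For part $(4)$, the equivariance of $\iota\grotimes\id$ is again verified at each finite level: left translation by $g$ does not touch the $\ca{S}_\vep$- or $\ca{S}$-factor and acts identically on the $Cl_\tau(U_{L^2_k,N}\times\widetilde{M})$-factor on both sides of the square in part $(2)$, so the vertical and horizontal arrows $LT_{L^2_k}$-equivariantly commute. Passing to the inductive limit and combining with part $(3)$ gives the $LT_{L^2_k}$-equivariance of $\iota\grotimes\id$ and, via part $(1)$, the tensor product decomposition of the $LT_{L^2_k}$-action on $\ca{A}(\ca{M}_{L^2_k})$.
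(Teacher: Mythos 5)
Your proposal confuses this lemma with the one immediately preceding it. Your ``part $(1)$'' (the tensor decomposition $\ca{A}_\HKT(\ca{M}_{L^2_k})\cong \ca{A}_\HKT(U_{L^2_k})\grotimes Cl_\tau(\widetilde{M})$) and your ``part $(2)$'' (the commutativity of the zero-extension $\iota$ with the Bott maps $\beta_N^M$) are the two parts of the \emph{previous} lemma in the paper. Part $(1)$ of the lemma you were asked to prove is a different statement: that the isometric $LT_{L^2_k}$-action on the proper $LT$-space $\ca{M}_{L^2_k}$ automatically yields a point-norm continuous $LT_{L^2_k}$-action on Yu's $C^*$-algebra $\ca{A}(\ca{M}_{L^2_k})$, satisfying $\lt_g\circ\beta_x=\beta_{g\cdot x}$. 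The paper dispatches this by a direct appeal to Proposition \ref{YusCstar algebra admits a group action}, which in turn rests on the Rauch-comparison estimate of Lemma \ref{Lemma 5.8 of this paper}. Your write-up never touches this assertion at all.

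The substantive gap is in your ``part $(3)$'' (which actually covers parts $(2)$--$(3)$ of the lemma). You correctly observe that the $LT_\fin$-action on $\ca{A}_\HKT(\ca{M}_{L^2_k})$ is well-defined because $\lt_g$ for $g\in U_{L^2_k,N'}$ commutes with $\beta_{N'}^{N''}$; but the hard part is the extension of this algebraic action to a \emph{continuous} $LT_{L^2_k}$-action. Your sketch says the finite-level maps are ``uniformly bounded in $N'$'' and invokes a ``standard $\epsilon/3$-argument''; but uniform boundedness is automatic (each $\lt_g$ is a $*$-automorphism, hence isometric) and gives you nothing. What you actually need is a uniform modulus of continuity: an estimate of the form $\|\lt_g(a)-a\|<\epsilon$ for all $g$ in an $L^2_k$-small neighbourhood of the identity, uniform over the approximating level $N'$. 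This is the place where the Lipschitz bound on the Bott map in the base point (or an equivalent quantitative estimate) has to enter, and it does not come for free. The paper does not reprove it here; it cites \cite[Theorem 4.18]{T4} for exactly this. Your sketch papers over the one analytically nontrivial step in the lemma.
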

\begin{proof}
$(1)$ is automatic from Proposition \ref{YusCstar algebra admits a group action}. For $(2)$, see \cite[Theorem 4.18]{T4}. $(3)$ is obvious by definition.
$(4)$ is also obvious by definition. Thanks to the continuity of the group actions proved in $(2)$, the $*$-homomorphism is $LT_{L^2_k}$-equivariant.
\end{proof}

Therefore, we can deal with the $U_{L^2_k}$-part and the $\widetilde{M}$-part separately. We concentrate on the former one until Definition \ref{dfn index element for the present paper for proper LT space}.

We re-define a Hilbert space substituting for the ``$L^2$-space of the line bundle $\ca{L}$ on $U_{L^2_k}$''. In \cite{T4}, we used the infinitely many copies of a Gaussian in order to define the ``$L^2$-space'' by the inductive limit. In the present paper, however, we use the infinite sequence of different Gaussians which tends to the Dirac $\delta$-function. This is because we would like to define the $\ca{A}(U_{L^2_k})$-module structure on a Hilbert $\ca{S}_\vep$-module in a natural way.

Since the restriction of $\ca{L}$ to $U_{L^2_k,N}$ is topologically trivial, we can fix a trivialization and we describe $L^2(U_{L^2_k,N},\ca{L})$ as the set of scalar-valued functions with a $\tau$-twisted $U_{L^2_k,N}$-action.

\begin{dfn}\label{def of L2LTL}
Let $l>4$. 

$(1)$ We define a dense subspace $L^2(U_{L^2_k,N},\ca{L})_\fin$ of $L^2(U_{L^2_k,N},\ca{L})$ by the set of
$$\text{polynomial } \times \frac{(N!)^{l/2}}{\pi^{N/2}}e^{-\frac{1}{2}\sum_{n=1}^N n^l(x_n^2+y_n^2)}.$$

$(2)$ We define an isometric embedding 
\begin{align*}
J_N:L^2(U_{L^2_k,N},\ca{L})_\fin\to &L^2(U_{L^2_k,N+1},\ca{L})_\fin \\
&\cong L^2(U_{L^2_k,N},\ca{L})_\fin\grotimes L^2(\bb{R}^2,\ca{L})_\fin
\end{align*}
by the following:
$$\phi\mapsto \phi\grotimes \sqrt{\frac{(N+1)^l}{\pi}}e^{-\frac{(N+1)^l}{2}(x_{N+1}^2+y_{N+1}^2)}.$$
Note that the Gaussian $\sqrt{\frac{(N+1)^l}{\pi}}e^{-\frac{(N+1)^l}{2}(x_{N+1}^2+y_{N+1}^2)}$ is an $L^2$-unit vector, and hence $J_N$ is an isometric embedding. We denote the composition of these maps by
$$J_N^M:=J_{M-1}\circ J_{M-2}\circ\cdots \circ J_N:L^2(U_{L^2_k,N},\ca{L})_\fin\to L^2(U_{L^2_k,M},\ca{L})_\fin.$$
These make a directed system.

$(3)$ The {\it algebraic} inductive limit of this system is denoted by 
$$\ud{L^2(U_{L^2_k},\ca{L})_\fin}.$$
We define a Hilbert space $\ud{L^2(U_{L^2_k},\ca{L})}$ by the completion of $\ud{L^2(U_{L^2_k},\ca{L})_\fin}$.
\end{dfn}
\begin{rmks}\label{rmk ugly Hilbert space}
$(1)$ $\ud{L^2(U_{L^2_k},\ca{L})}$ can be given by the {\it Hilbert space} inductive limit of the directed system
$$\cdots \xrightarrow{J_{N-1}} L^2(U_{L^2_k,N},\ca{L})\xrightarrow{J_N} L^2(U_{L^2_k,N+1},\ca{L})\xrightarrow{J_{N+1}} \cdots ,$$
where $J_N:L^2(U_{L^2_k,N},\ca{L})\to L^2(U_{L^2_k,N+1},\ca{L})$ is the canonical extension of $J_N:L^2(U_{L^2_k,N},\ca{L})_\fin\to L^2(U_{L^2_k,N+1},\ca{L})_\fin$. 
The canonical homomorphism is denoted by $J_N^\infty:L^2(U_{L^2_k,N},\ca{L})\hookrightarrow \ud{L^2(U_{L^2_k},\ca{L})}$.

$(2)$ We will construct an unbounded Kasparov module using this Hilbert space, which depends on $l$. However, the resulting $\bb{KK}_{LT}^\tau$-element is independent of $l$ as proved in Proposition \ref{prop KK element is independent of l}.

$(3)$ Incidentally, an element of $\ud{L^2(U_{L^2_k},\ca{L})}$ is an ``asymptotically Dirac $\delta$-function''. We will explain the reason why we should adopt such a strange Hilbert space in Section \ref{section unsolved}.
\end{rmks}

\begin{nota}
We use the following symbols about the ``Gaussians'':
\begin{align*}
\vac_N^N(x_N,y_N) &:=\sqrt{\frac{N^l}{\pi}}e^{-\frac{N^l}{2}(x_{N}^2+y_{N}^2)},\\
\vac_N^M &:=J_N^M(\vac_N^N)=
\vac_N^N\grotimes \vac_{N+1}^{N+1}\grotimes \cdots \grotimes \vac_M^M,\\
\vac_N^\infty&:=J_{M}^{\infty}(\vac_N^M)=\vac_N^N\grotimes \vac_{N+1}^{N+1}\grotimes \cdots,\\
\vac&:= \vac_1^\infty.
\end{align*}
\end{nota}

The Hilbert space $L^2(U_{L^2_k,N},\ca{L})$ admits a $\tau$-twisted continuous $U_{L^2_k,N}$-action $L$ defined by the following:
For $\phi\in L^2(U_{L^2_k,N},\ca{L})$ and $(g,z)\in U_{L^2_k,N}^\tau$, we define
$$[L_{({g},z)}\phi]({x}):=
z\phi({x}-{g})\tau({g},{x}).$$
Recall that $\tau(g,x)$ is given by
$$\tau({g},{x})
=e^{\sqrt{-1}\sum_{n=1}^Nn^{1-2k}(a_ny_n-b_nx_n)}$$
for ${x}=(x_1,y_1,\cdots,x_N,y_N),{g}=(a_1,b_1,\cdots,a_N,b_N)\in U_{L^2_k,N}$.

We need to prove that the action $L$ extends to $U_{L^2_m}$ for $m\geq k+l$. We begin with an elementary exercise of the Lebesgue integration.

\begin{lem}\label{Gaussian lemma}
On $\bb{R}^{2N}$, consider the Gaussian function $\vac(x):=\pi^{-N/2}e^{-{\|x\|^2}/{2}}$ and an element $a\in\bb{R}^{2N}$. For any $\epsilon>0$, there exists $\Delta>0$ such that $\|a\|<\Delta$ implies that 
$$\int_{\bb{R}^{2N}}|\vac(x-a)-\vac(x)|^2dx<\epsilon.$$
$\Delta$ can be chosen independently of $N$.
\end{lem}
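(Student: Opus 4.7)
The plan is to reduce the integral to an explicit computation using the fact that the Gaussian is normalized and its translates have an inner product that depends only on the displacement's norm. First, I would expand the square and use unitary-like identities:
\begin{align*}
\int_{\bb{R}^{2N}}|\vac(x-a)-\vac(x)|^2dx = \|\vac(\bullet-a)\|_{L^2}^2+\|\vac\|_{L^2}^2-2\inpr{\vac(\bullet-a)}{\vac}{L^2}.
\end{align*}
A quick check shows that $\|\vac\|_{L^2}^2=\pi^{-N}\int e^{-\|x\|^2}dx=1$, and by translation invariance of Lebesgue measure $\|\vac(\bullet-a)\|_{L^2}^2=1$ as well. So the whole expression equals $2-2\inpr{\vac(\bullet-a)}{\vac}{L^2}$, which already suggests that the dimension $N$ will drop out, because the two unit factors of $1$ carry no $N$-dependence.

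Next I would compute the cross term explicitly by completing the square. Since $\frac{1}{2}\|x-a\|^2+\frac{1}{2}\|x\|^2=\|x-\tfrac{a}{2}\|^2+\tfrac{1}{4}\|a\|^2$, a change of variables gives
\begin{align*}
\inpr{\vac(\bullet-a)}{\vac}{L^2}=\pi^{-N}\int_{\bb{R}^{2N}}e^{-\|x-a/2\|^2}e^{-\|a\|^2/4}dx=e^{-\|a\|^2/4}.
\end{align*}
Hence the integral in question equals $2-2e^{-\|a\|^2/4}$, which depends on $a$ only through $\|a\|$, in particular is independent of $N$.

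Finally, given $\epsilon>0$, I would choose $\Delta:=2\sqrt{-\log(1-\epsilon/2)}$ (assuming $\epsilon<2$; the case $\epsilon\geq 2$ is trivial since the integrand is bounded by $2$). Then $\|a\|<\Delta$ forces $2-2e^{-\|a\|^2/4}<\epsilon$, completing the argument. The only subtle point to keep track of is that the normalization $\pi^{-N}$ of $\vac$ is precisely what cancels the volume factor $\pi^N$ arising from the Gaussian integral, and this exact cancellation is responsible for the uniformity in $N$; no further analytic estimate is needed.
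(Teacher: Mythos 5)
Your proof is correct, and it takes a genuinely different — and cleaner — route than the paper. The paper first uses rotation invariance to reduce to the one‑dimensional case, then proves the estimate by splitting $\bb{R}$ into a compact core and two tails, bounding the tails by Gaussian decay and using uniform continuity of the Gaussian on the core; this yields the uniform‑in‑$N$ choice of $\Delta$ only after a few qualitative reductions. You instead compute the integral exactly: expanding the square and completing the square in the cross term gives
$$\int_{\bb{R}^{2N}}|\vac(x-a)-\vac(x)|^2dx=2-2e^{-\|a\|^2/4},$$
which is manifestly independent of $N$ and monotone in $\|a\|$, so the choice of $\Delta$ is explicit and the uniformity in $N$ is automatic rather than something you have to track through a truncation argument. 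Your approach buys precision and brevity; the paper's approach, while more roundabout here, is phrased so that its intermediate inequalities can be reused verbatim in the second half of the estimate in Proposition \ref{actions L and R are well defined}, where the integrand involves the cocycle factor $|\tau(g_i,x)-1|^2$ and no closed form is available. Since the lemma statement itself only asks for the qualitative bound, either proof is perfectly adequate, but one should note that your exact formula would not extend to that later estimate, so the paper's truncation argument still has to be carried out there.
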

\begin{proof}
This is clear from the dominated convergence theorem, but we describe the complete proof which will be useful in the following discussions.

We may assume that $a$ is of the form $(a_1,0,0,\cdots,0)$ for $a_1=\|a\|$, because the Gaussian is rotation invariant. 
Then, the integral we are estimating becomes
$$\int_{\bb{R}}\pi^{-1/2}\bbbra{e^{-\frac{(x-a)^2}{2}}-e^{-\frac{x^2}{2}}}^2dx.$$
We can choose a large number $K>0$ such that 
$$\int_{|x|\geq K}\pi^{-1/2}e^{-x^2}dx<\frac{\epsilon}{4}.$$
We may assume that $a_1<K$, and hence we have inequalities
$$\int_{x\geq 2K}\pi^{-1/2}\bbbra{e^{-\frac{(x-a_1)^2}{2}}-e^{-\frac{x^2}{2}}}^2dx
\leq \int_{x \geq2K}\pi^{-1/2}e^{-(x-a_1)^2}dx
<\frac{\epsilon}{4}; \text{ and}$$
$$\int_{x\leq -2K}\pi^{-1/2}\bbbra{e^{-\frac{(x-a_1)^2}{2}}-e^{-\frac{x^2}{2}}}^2dx
\leq \int_{x \leq -2K}\pi^{-1/2}e^{-x^2}dx<\frac{\epsilon}{4}.$$

Finally, choose $\Delta$ small enough so that the following holds:
$$4K\cdot \pi^{-1/2}\bbbra{e^{-\frac{(x-\Delta)^2}{2}}-e^{-\frac{x^2}{2}}}^2<\frac{\epsilon}{2}$$
for all $-2K\leq x\leq 2K$. This is possible because the Gaussian is uniformly continuous. If $|a|<\Delta$, we have an inequality$$4K\cdot \pi^{-1/2}\bbbra{e^{-\frac{(x-a)^2}{2}}-e^{-\frac{x^2}{2}}}^2<\frac{\epsilon}{2}$$
for all $-2K\leq x\leq 2K$, and we obtain the result.
\end{proof}

\begin{pro}[See also Proposition 4.7 of \cite{T4}]\label{actions L and R are well defined}
Let $m\geq k+l$.
The Hilbert space $\ud{L^2(U_{L^2_k},\ca{L})}$ admits a continuous $U_{L^2_m}^\tau$-action $L$
such that the restriction of it to $U_{L^2_m,N}^\tau$ on the image of $L^2(U_{L^2_k,N},\ca{L})$ in $L^2(U_{L^2_k},\ca{L})$ coincides with the left regular representation: $[L_{({g},z)}\phi]({x})=z\phi({x}-{g})\tau({g},{x}).$
\end{pro}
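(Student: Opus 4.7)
The plan is to construct $L_g$ first for $g$ in the algebraic dense subgroup $U^\tau_\fin$, and then extend to $U^\tau_{L^2_m}$ by showing that the truncations $g^{(N)} \in U^\tau_{L^2_k,N}$ of any $g \in U^\tau_{L^2_m}$ yield a Cauchy sequence $\{L_{g^{(N)}}\phi\}$ on each vector $\phi$ of the dense subspace. The main analytic input is a quantitative form of Lemma \ref{Gaussian lemma}, balanced against the hypothesis $m \geq k+l$ with $k>1/2$ and $l>4$.

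First I would verify compatibility on the algebraic level. For $g \in U^\tau_{L^2_k,N_0}$ and any $N \geq N_0$, the formula $[L_{(g,z)}\phi](x) = z\phi(x-g)\tau(g,x)$ defines a unitary on $L^2(U_{L^2_k,N},\ca{L})$. Because $g$ has vanishing components in the directions $N_0+1,\dots,N$, and because the cocycle $\tau$ factorizes as a product over coordinate indices, the operator $L_{(g,z)}$ on $L^2(U_{L^2_k,N+1},\ca{L}) \cong L^2(U_{L^2_k,N},\ca{L})\grotimes L^2(\bb{R}^2,\ca{L})$ equals $L_{(g,z)}\grotimes \id$; in particular it preserves the Gaussian $\vac_{N+1}^{N+1}$ and commutes with the embedding $J_N$. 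This yields a $\tau$-twisted isometric action of $U^\tau_\fin$ on $\ud{L^2(U_{L^2_k},\ca{L})_\fin}$.

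For general $g \in U^\tau_{L^2_m}$ with truncations $g^{(N)}$, I fix $\phi \in L^2(U_{L^2_k,N'},\ca{L})$ embedded in the limit via $J_{N'}^\infty$. For $N \geq N'$ the tensor factorization above yields
$$\|L_{g^{(N+1)}}\phi - L_{g^{(N)}}\phi\| = \|\phi\|\cdot\|L_{(a_{N+1},b_{N+1})}\vac_{N+1}^{N+1} - \vac_{N+1}^{N+1}\|_{L^2(\bb{R}^2)}.$$
Rescaling $(u,v) = (N+1)^{l/2}(x_{N+1},y_{N+1})$ converts the right-hand norm into the $L^2$-distance between the unit Gaussian and its translate by $(N+1)^{l/2}(a_{N+1},b_{N+1})$, multiplied by a phase whose oscillation scales as $(N+1)^{1-2k-l/2}$. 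Using the standard bound $\|\vac - T_a\vac\|_{L^2} \leq |a|\,\|\nabla\vac\|_{L^2}$ (a quantitative refinement extractable from the proof of Lemma \ref{Gaussian lemma}) and the analogous estimate for the phase factor, I obtain a uniform constant $C$ with
$$\|L_{g^{(N+1)}}\phi - L_{g^{(N)}}\phi\| \leq C\|\phi\|(N+1)^{l/2}\sqrt{a_{N+1}^2 + b_{N+1}^2}.$$
Summability now follows from Cauchy--Schwarz: for any $s > (l+1)/2$,
$$\sum_N (N+1)^{l/2}\sqrt{a_{N+1}^2+b_{N+1}^2} \leq \bra{\sum_N N^{l-2s}}^{1/2}\bra{\sum_N N^{2s}(a_N^2+b_N^2)}^{1/2}.$$
The first factor converges because $s > (l+1)/2$, and the second is finite whenever $m \geq s$. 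The hypothesis $m \geq k+l$ with $k>1/2$ gives $m > (l+1)/2$, so $\{L_{g^{(N)}}\phi\}$ is Cauchy. Setting $L_g\phi := \lim_N L_{g^{(N)}}\phi$ defines an isometry on the dense subspace that extends uniquely to a unitary on $\ud{L^2(U_{L^2_k},\ca{L})}$.

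The group law $L_{g_1}L_{g_2} = L_{g_1g_2}$ is then inherited from the finite-dimensional pieces once one observes that $g_1^{(N)}g_2^{(N)}$ and $(g_1g_2)^{(N)}$ differ only by the $U(1)$-phase $\tau(g_1^{(N)},g_2^{(N)})/\tau(g_1,g_2)$, which tends to $1$ by continuity of $\tau$; and strong continuity of $g \mapsto L_g$ follows from the same quantitative estimate applied to sequences $g_n \to g$ in the $L^2_m$-topology, combined with strong continuity on each $U^\tau_{L^2_k,N}$ and a uniform-boundedness $\vep/3$-argument. The main obstacle throughout is the calibration between the sharpness of the Gaussians $\vac_N^N$ (proportional to $N^l$) and the decay of loop coefficients controlled by the $L^2_m$-norm: the translation contribution to the one-step estimate grows as $N^{l/2}|g_{N+1}|$, forcing $m > (l+1)/2$, while the phase contribution from $\tau$ grows only as $N^{1-2k-l/2}|g_{N+1}|$, which is harmless under $l > 4$ and $k > 1/2$.
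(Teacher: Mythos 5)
Your proof is correct in its conclusions but takes a genuinely different route from the paper. The paper's proof reduces (after using density of $U_\fin$ and unitarity) to showing that $L_{g_i}\vac \to \vac$ for a net $g_i \to e$ in the $L^2_m$-topology, and controls the entire translation error \emph{in one shot} by rescaling the anisotropic Gaussian $\vac_1^{M_i}$ into the standard one and applying Lemma~\ref{Gaussian lemma}; the resulting rescaled translation vector $(A_j,B_j)=(\sqrt{j^l}a_j,\sqrt{j^l}b_j)$ has $\ell^2$-norm equal to the $L^2_{k+l}$-norm of $g_i$, which is controlled directly under $m\geq k+l$. You instead define $L_g\phi$ as the limit of $L_{g^{(N)}}\phi$ along truncations and prove a Cauchy estimate by summing a one-step bound $\|L_{g^{(N+1)}}\phi-L_{g^{(N)}}\phi\|\lesssim\|\phi\|\,(N+1)^{l/2}\sqrt{a_{N+1}^2+b_{N+1}^2}$. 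Your one-step bound is correct, but the triangle inequality forces you to control the \emph{$\ell^1$}-quantity $\sum_N N^{l/2}\sqrt{a_N^2+b_N^2}$, for which your Cauchy--Schwarz step loses roughly half a derivative; this means your argument needs $m-k>(l+1)/2$, a slightly different condition than the paper's $m\geq k+l$, though both hold since $l>4>1$. Two small corrections: in the Cauchy--Schwarz display, the second factor $\sum_N N^{2s}(a_N^2+b_N^2)$ is the $L^2_{k+s}$-norm squared, not $L^2_s$ (the $a_N$'s are coordinates in the $L^2_k$-basis, so carry an extra $N^k$ relative to Fourier coefficients); hence the finiteness condition is $m\geq k+s$, and the concluding inequality you need is $m-k>(l+1)/2$ rather than $m>(l+1)/2$. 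These do not affect validity under the stated hypotheses, but as written your Sobolev bookkeeping is off by a $k$. Finally, you gesture at strong continuity via an $\vep/3$-argument and at the group law via $\tau(g_1^{(N)},g_2^{(N)})\to\tau(g_1,g_2)$ without full verification; these are routine and plausible, and correspond to points the paper handles by working directly with the net $g_i\to g$, which sidesteps the need for a separate continuity check.
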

\begin{proof}
We have defined the $U_{L^2_m,N}$-action $L$ on $L^2(U_{L^2_k,N},\ca{L})$ for each $N$. These are compatible in the following sense: Let $i_{N,N+1}:U_{L^2_m,N}\hookrightarrow U_{L^2_m,N+1}$ be the canonical embedding; For ${g}\in U_{L^2_m,N}$ and $\phi\in L^2(U_{L^2_k,N},\ca{L})$,
\begin{center}
$L_{(i_{N,N+1}({g}),z)}J_N(\phi)=J_N(L_{({g},z)}\phi)$.
\end{center}
This means that $U_\fin$ acts on $\ud{L^2(U_{L^2_k},\ca{L})_\fin}$. Since this action is unitary, it extends to a $U_\fin$-action on $\ud{L^2(U_{L^2_k},\ca{L})}$. We would like to extend this action to the whole group $U_{L^2_m}$ by $L_{({g},z)}\phi:=\lim L_{({g_i},z)}\phi$ for $\phi\in \ud{L^2(U_{L^2_k},\ca{L})}$ and $g=\lim g_i$ in $U_{L^2_m}$.

For this aim, it suffices to check that the $U_\fin$-action is continuous in the $L^2_m$-topology. Concretely, we need to prove the following: For any $\phi\in \ud{L^2(U_{L^2_k},\ca{L})}$ and a net $\{\bra{g_i,z_i}\}\subseteq U_\fin^\tau$ converging to $\bra{g,z}\in U_\fin^\tau$ in the $L^2_m$-topology, the net $\{L_{({g_i},z_i)}\phi\}$ converges to $L_{(g,z)}\phi$ in norm.
We would like to find, for a positive real number $\epsilon>0$, a large element $i_0$ such that $i\geq i_0$ implies that $\|L_{({g_i},z)}\phi- L_{(g,z)}\phi\|<3\epsilon$. Since $\ud{L^2(U_{L^2_k},\ca{L})_\fin}$ is dense in $\ud{L^2(U_{L^2_k},\ca{L})}$, we can find $\phi'\in \ud{L^2(U_{L^2_k},\ca{L})_\fin}$ satisfying $\|\phi-\phi'\|<\epsilon$. Then, we have an inequality
\begin{align*}
\|L_{(g_i,z_i)}\phi- L_{(g,z)}\phi\|
&\leq \|L_{(g_i,z_i)}\phi- L_{(g_i,z_i)}\phi'\|+\|L_{(g_i,z_i)}\phi'- L_{(g,z)}\phi'\|+\|L_{(g,z)}\phi'- L_{(g,z)}\phi\|\\
&<2\epsilon+\|L_{(g_i,z_i)}\phi'- L_{(g,z)}\phi'\|.
\end{align*}
Thus, we may assume that $\phi \in \ud{L^2(U_{L^2_k},\ca{L})_\fin}$. By definition, $\phi$ is of the form
$\phi_0\grotimes \vac_{N}^\infty,$
for some $\phi_0\in L^2(U_{L^2_k,N-1},\ca{L})$ for sufficiently large $N$.

We may focus on $\vac_N^\infty$ and we may assume that $z=z_i=1$ for each $i$. This is because
$$L_{({g}_i,z_i)}(\phi_0\grotimes \vac_{N}^\infty)
=L_{((a_{i,1},b_{i,1},\cdots,a_{i,N-1},b_{i,N-1}),z_i))}\phi_0\grotimes L_{((a_{i,N},b_{i,N},\cdots),1)}(\vac_N^\infty)$$
for ${g_i}=(a_{i,1},b_{i,1},\cdots)\in U_\fin$ and ${g}=(a_{1},b_{1},\cdots)\in U_\fin$; The first component converges to $L_{((a_{1},b_{1},\cdots,a_{N-1},b_{N-1}),z))}\phi_0$ in norm as $i\to \infty$, thanks to a standard argument of Lebesgue integration. This also means that we may ignore the $z$-component.
Moreover, by a standard argument of group actions, we may assume that the limit $g$ is the identity element.

Combining the arguments so far, we notice that we need to prove that
$\|L_{({g_i},1)}\vac-\vac\|\to 0$. More concretely, we prove that
$$\int_{U_{L^2_k,{M_i}}}\left|
\vac_1^{M_i}({x}-{g_i})\tau({g},{x})-\vac_1^{M_i}({x})\right|^2dx_1dy_1\cdots dx_{M_i}dy_{M_i}\to 0 $$
as $i\to \infty$, where ${M_i}$ is chosen so that $g_i\in U_{L^2_k,M_i}$.

This integral converges to $0$ by the following argument.
First, we notice that
{\small\begin{align*}
&\int_{U_{L^2_k,M_i}}\left|
\vac_1^{M_i}(x-g_i)\tau(g_i,x)-\vac_1^{M_i}(x)\right|^2dx_1dy_1\cdots dx_{M_i}dy_{M_i}\\
&\ \ \ =\int_{U_{L^2_k,{M_i}}}\left|
\vac_1^{M_i}(x-g_i)\tau(g_i,x)-\vac_1^{M_i}(x)\tau(g_i,x)
+\vac_1^{M_i}(x)\tau(g_i,x)
-\vac_1^{M_i}(x)\right|^2dx_1dy_1\cdots dx_{M_i}dy_{M_i}\\
&\ \ \ \leq 2\int_{U_{L^2_k,{M_i}}}\left|
\vac_1^{M_i}(x-g_i)-\vac_1^{M_i}(x)\right|^2dx_1dy_1\cdots dx_{M_i}dy_{M_i} \\
&\ \ \ \ \ \ 
+2\int_{U_{L^2_k,{M_i}}}
\vac_1^{M_i}(x)^2\left|\tau(g_i,x)-1\right|^2dx_1dy_1\cdots dx_{M_i}dy_{M_i}.
\end{align*}}

For the former integral, by the change of variables given later,
\begin{align*}
&\int_{U_{L^2_k,{M_i}}}\left|
\vac_1^{M_i}({x}-{g_i})-\vac_1^{M_i}({x})\right|^2dx_1dy_1\cdots dx_{M_i}dy_{M_i}\\
&\ \ \ =\int_{U_{L^2_k,{M_i}}}\frac{({M_i}!)^l}{\pi^{N}}\left|
e^{-\frac{1}{2}\sum_{j=1}^{M_i}j^l[(x_j-a_{i,j})^2+(y_j-b_{i,j})^2]}-e^{-\frac{1}{2}\sum_{j=1}^{M_i}j^l[x_j^2+y_j^2]}\right|^2dx_1dy_1\cdots dx_{M_i}dy_{M_i} \\
&\ \ \ =\int_{U_{L^2_k,{M_i}}}\frac{1}{\pi^{N}}\left|
e^{-\frac{1}{2}\sum_{j=1}^{M_i}[(X_j-A_{i,j})^2+(Y_j-B_{i,j})^2]}-e^{-\frac{1}{2}\sum_{j=1}^{M_i}[X_j^2+Y_j^2]}\right|^2dX_1dY_1\cdots dX_{M_i}dY_{M_i},
\end{align*}
where $X_j:=\sqrt{j^l}x_j$ and $Y_j:=\sqrt{j^l}y_j$, and we introduced the notations $A_{i,j}:=\sqrt{j^l}a_{i,j}$ and $B_{i,j}:=\sqrt{j^l}b_{i,j}$. Then, thanks to Lemma \ref{Gaussian lemma}, this integral is arbitrary small when the norm $\bbra{\sum_j(A_{i,j}^2+B_{i,j}^2)}^{1/2}$ is sufficiently small. This quantity is the $L^2_{k+l}$-norm of ${g_i}$, and it is not greater than the $L^2_m$-norm of $g_i$ since $m\geq k+l$.

For the latter integral, 
we follow the same story of Lemma \ref{Gaussian lemma}. 
Let $\epsilon>0$.
Note that the function $x\mapsto \left|\tau({g_i},x)-1\right|^2$ is bounded. We can find $K>0$ such that 
$$\int_{\sum_{j=1}^{M_i}(x_j^2+y_j^2)\geq K}
\vac_1^{M_i}({x})^2\left|\tau({g_i},{x})-1\right|^2dx_1dy_1\cdots dx_{M_i}dy_{M_i}<\frac{\epsilon}{2}.$$

Thus, it suffices to prove that $\left|\tau({g_i},x)-1\right|<\frac{\epsilon}{2}$ on $\bbra{{x}\ \middle|\ \sum_{j=1}^{M_i}(x_j^2+y_j^2)\leq K}$. For this aim, it suffices to check that $\sum_{j=1}^{M_i}j^{1-2k}(a_{i,j}y_j-b_{i,j}x_j)$ is uniformly small there. This quantity is bounded above by
$$\sqrt{\sum_{j=1}^{M_i}(x_j^2+y_j^2)}\sqrt{\sum_{j=1}^{M_i}j^{2-4k}(a_{i,j}^2+b_{i,j}^2)}
$$
thanks to the Cauchy-Schwarz inequality. $\sqrt{\sum_{j=1}^Mj^{2-4k}(a_{i,j}^2+b_{i,j}^2)}$ is the $L^2_{1-k}$-norm and it is not greater than the $L^2_m$-norm of ${g}_i$.
\end{proof}

In order to define a substitute for the index element associated to the line bundle $\ca{L}$ as an unbounded $U_{L^2_m}$-equivariant Kasparov $(\ca{A}(U_{L^2_k}),\ca{S}_\vep)$-module, we set
\begin{align*}
\ca{H}&:=\ud{L^2(U_{L^2_k},\ca{L})}\grotimes S_U^*\grotimes S_U; \text{ and }\\
\ca{H}_\fin&:= \ud{L^2(U_{L^2_k},\ca{L})_\fin}\grotimes^\alg S_{U,\fin}^*\grotimes^\alg S_{U,\fin}
\end{align*}
and we consider the Hilbert $\ca{S}_\vep$-module
$\ca{S}_\vep\grotimes \ca{H}$.
The next task is to define a left module structure
$$\pi:\ca{A}(\ca{M}_{L^2_k})\to \bb{L}_{\ca{S}_\vep}(\ca{S}_\vep\grotimes \ca{H}).$$

We will construct it with the help of $\ca{A}_\HKT(U_{L^2_k})$ and the $*$-homomorphism $\iota\grotimes \id:\ca{A}(U_{L^2_k})\to \ca{A}_\HKT(U_{L^2_k})$.
As essentially proved in \cite{T4}, $\ca{A}_\HKT(U_{L^2_k})$ admits a $U_{L^2_m}$-equivariant $*$-homomorphism $\pi:\ca{A}_\HKT(U_{L^2_k})\to\bb{L}_{\ca{S}}(\ca{S}\grotimes \ca{H})$ as outlined in the following proposition.

Before that, we notice that we can define the ``multiplication operators by $\{x_n,y_n\}_{n\in\bb{N}}$'' on $\ud{L^2(U_{L^2_k},\ca{L})_\fin}$. Strictly speaking, for $\phi\in J_N^\infty(L^2(U_{L^2_k,N},\ca{L}))\subseteq \ud{L^2(U_{L^2_k},\ca{L})_\fin}$, $x_n\phi$ is defined by $J_{N'}^\infty(x_nJ_N^{N'}(\psi))$, where $N'$ is chosen so that $n\leq N'$.

\begin{pro}
Let $\pi(C_N^\infty)$ be the infinite sum of unbounded operators on $\ud{L^2(U_{L^2_k},\ca{L})}\grotimes S_U^*\grotimes S_U$
$$\pi(C_N^\infty):=\sum_{k=N}^\infty \bra{x_k\grotimes c^*(e_k)+y_k\grotimes c^*(f_k)}.$$
This infinite sum strongly converges to an unbounded self-adjoint operator on $\ud{L^2(U_{L^2_k},\ca{L})}\grotimes S_U^*\grotimes S_U$.
\end{pro}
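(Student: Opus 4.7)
The plan is to define $\pi(C_N^\infty)$ as a symmetric operator on the dense domain $\ca{H}_\fin$ via strongly convergent partial sums, and then upgrade its closure to a self-adjoint operator via the ``square trick''. The whole analysis rests on two structural facts: a Gaussian norm estimate that produces pairwise orthogonality of the individual summands, and the graded Clifford anticommutation $c^*(v)c^*(w)+c^*(w)c^*(v)=2\innpro{v}{w}{}\id$ on $S_U^*\grotimes S_U$.

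For strong convergence I would fix $\phi\grotimes s\in\ca{H}_\fin$ and write $\phi=\phi_0\grotimes \vac_{N_0}^{\infty}$ with $\phi_0$ a polynomial-times-Gaussian in finitely many variables and $N_0\geq N$. A direct Gaussian integral gives $\|x_k\vac_k^k\|_{L^2}^2=\|y_k\vac_k^k\|_{L^2}^2=\tfrac{1}{2k^l}$ for $k\geq N_0$, while $\|c^*(e_k)\|=\|e_k\|_{L^2_k}=1$. The crucial parity observation is that the odd functions $x_k\vac_k^k$ and $y_k\vac_k^k$ are mutually $L^2$-orthogonal and orthogonal to $\vac_k^k$, so the family $\bbra{(x_k\grotimes c^*(e_k))(\phi\grotimes s),\,(y_k\grotimes c^*(f_k))(\phi\grotimes s)}_{k\geq N_0}$ is orthogonal in $\ca{H}$. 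The Pythagorean identity therefore yields
$$\bigg\|\sum_{k=M_1}^{M_2}\bra{x_k\grotimes c^*(e_k)+y_k\grotimes c^*(f_k)}(\phi\grotimes s)\bigg\|^2\leq \|\phi\|^2\|s\|^2\sum_{k=M_1}^{M_2}\frac{1}{k^l},$$
which is Cauchy because $l>4>1$. Combined with the finitely many bounded contributions from $k<N_0$, this shows that $\pi(C_N^M)(\phi\grotimes s)$ converges in $\ca{H}$ as $M\to\infty$, so $\pi(C_N^\infty)$ is well-defined on $\ca{H}_\fin$. Termwise symmetry is immediate, since each $x_k,y_k$ is a real multiplication operator and each $c^*(e_k),c^*(f_k)$ is bounded self-adjoint.

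For self-adjointness I would compute $\pi(C_N^\infty)^2$: setting $A_k:=x_k\grotimes c^*(e_k)+y_k\grotimes c^*(f_k)$, the anticommutator identity together with the orthonormality of $\bbra{e_k,f_k}$ gives $A_k^2=(x_k^2+y_k^2)\grotimes\id$ and $A_kA_j+A_jA_k=0$ for $k\neq j$, whence
$$\pi(C_N^\infty)^2=\bra{\sum_{k\geq N}(x_k^2+y_k^2)}\grotimes\id=:R\grotimes\id,$$
a non-negative multiplication-type operator. Once one verifies essential self-adjointness of $R$ on $\ca{H}_\fin$ (for example by Nelson's analytic-vector theorem applied to the commuting family $\bbra{x_k^2+y_k^2}_{k\geq N}$ on the Gaussian-based domain), the standard square trick concludes: an eigenvector of $\pi(C_N^\infty)^*$ at $\pm i$ would give an eigenvector of the non-negative operator $(\pi(C_N^\infty)^2)^*$ at $-1$, which is impossible. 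The main technical obstacle will be justifying the identity $\pi(C_N^\infty)^2|_{\ca{H}_\fin}=R\grotimes\id|_{\ca{H}_\fin}$, since $A_k(\phi\grotimes s)$ generally leaves $\ca{H}_\fin$ and the formal manipulation $(\sum_k A_k)^2=\sum_{k,j}A_kA_j$ is not an algebraic identity on our domain. I would handle this by first verifying the identity for the truncated partial sums $\pi(C_N^M)$, where it reduces to a finite-dimensional Dirac-type computation, and then passing to the limit by using the anticommutation to kill off-diagonal cross terms and the $\sum 1/k^l$-summability to control the diagonal tails.
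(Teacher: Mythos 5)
Your strong-convergence estimate is sound and is essentially the paper's argument: the bound $\|x_k\vac_k^k\|^2_{L^2}=1/(2k^l)$, the pairwise $L^2$-orthogonality of the summands (a parity observation on the Gaussians), and the summability of $\sum 1/k^l$ for $l>1$ are precisely the facts encoded in the paper's one-line remark that the $L^2$-norms of $x_n\sqrt{n^l/\pi}e^{-n^l(x_n^2+y_n^2)/2}$ are ``small.'' So the first half matches the paper.

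The self-adjointness argument has a genuine gap that you flag but do not close. The classical square trick (a symmetric $T$ on a domain $D$ with $T(D)\subseteq D$ and $T^2$ essentially self-adjoint and non-negative on $D$ is essentially self-adjoint) requires the domain invariance $T(D)\subseteq D$ to justify the deficiency-vector step $\inpr{T^2\phi}{\psi}{}=\inpr{T\phi}{T^*\psi}{}$. But $\pi(C_N^\infty)$ does \emph{not} preserve $\ca{H}_\fin$: for $\phi\grotimes s\in \ca{H}_\fin$ the limit $\lim_M\pi(C_N^M)(\phi\grotimes s)$ has non-zero components in infinitely many polynomial and exterior degrees, so it lies in $\ca{H}$ but not in the algebraic tensor product $\ca{H}_\fin$. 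Hence the operator equation $\pi(C_N^\infty)^2|_{\ca{H}_\fin}=R\grotimes\id|_{\ca{H}_\fin}$ cannot even be stated, and passing to the limit from the truncations only yields the bilinear-form identity $\inpr{\pi(C_N^\infty)\phi'}{\pi(C_N^\infty)\phi}{}=\inpr{\phi'}{(R\grotimes\id)\phi}{}$ on $\ca{H}_\fin\times\ca{H}_\fin$, which is strictly weaker and not sufficient for the square trick as stated. The paper takes a different route: Proposition 4.24 of \cite{T4} shows directly that the ranges of $C_1^N\pm\sqrt{-1}\,\id$ are dense (the basic criterion for essential self-adjointness) and then passes to the limit, thereby sidestepping the domain-invariance problem entirely; that phrase resurfaces in the proof of Lemma \ref{lemma X+C has dense range}. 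To salvage your approach you would need either to enlarge the domain of $\pi(C_N^\infty)$ to an invariant one (e.g.\ the maximal domain where the series converges absolutely), re-establishing symmetry there, or to supply a separate argument that the form identity together with essential self-adjointness of $R\grotimes\id$ on $\ca{H}_\fin$ forces essential self-adjointness of $\pi(C_N^\infty)$; neither of these is automatic.
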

\begin{proof}
It is proved by the same argument of \cite[Proposition 4.24]{T4}. The necessary change is only the following: In the previous paper, we used the property that 
$$
\text{``the }L^2\text{-norms of }n^{-l}x_n\sqrt{\frac{1}{\pi}}e^{-\frac{1}{2}(x_n^2+y_n^2)}\ \text{ and }\ n^{-l}y_n\sqrt{\frac{1}{\pi}}e^{-\frac{1}{2}(x_n^2+y_n^2)}\ \text{ are small'',}
$$
but, in the present paper, we need to use the property that
$$
\text{``the }L^2\text{-norms of }x_n\sqrt{\frac{n^l}{\pi}}e^{-\frac{n^l}{2}(x_n^2+y_n^2)}\ \text{ and }\ y_n\sqrt{\frac{n^l}{\pi}}e^{-\frac{n^l}{2}(x_n^2+y_n^2)}\ \text{ are small''}
$$
instead. This property is guaranteed because the Gaussian $\sqrt{\frac{n^l}{\pi}}e^{-\frac{n^l}{2}(x_n^2+y_n^2)}$ is more localized as $n$ becomes bigger.
We leave the details to the reader.
\end{proof}

We have a $*$-representation of $Cl_\tau(U_{L^2_k,N})$ on $L^2(U_{L^2_k,N})\grotimes S_{U_N}^*\grotimes S_{U_N}$ by the combination of the multiplication of scalar-valued functions and the Clifford multiplication $c^*$. We denote it by
$$\mu_N:Cl_\tau(U_{L^2_k,N})\to \bb{L}_{\bb{C}}(L^2(U_{L^2_k,N})\grotimes S_{U_N}^*\grotimes S_{U_N}).$$

\begin{dfn-pro}\label{dfn-pro rep of AHKT}
$(1)$ For every $N\in \bb{N}$, we define a $*$-homomorphism $\pi_N:\ca{S}\grotimes Cl_\tau(U_{L^2_k,N})\to \bb{L}_{\ca{S}}(\ca{S}\grotimes \ca{H})$ by
$$\pi_N(f\grotimes h):=f(X\grotimes \id+\id\grotimes \pi(C_{N+1}^\infty))\grotimes \mu_N(h)$$
with the canonical isomorphism 
$$\ca{H}\cong \bbbra{\ud{L^2(U_{L^2_k}\ominus U_{L^2_k,N},\ca{L})}\grotimes S^*_{U\ominus U_N}\grotimes S_{U\ominus U_N}}\grotimes \bbbra{L^2(U_{L^2_k,N},\ca{L})\grotimes S^*_{U_N}\grotimes S_{U_N}}.$$
Then, $\pi_N$'s satisfy the following commutative diagram:
$$\xymatrix{
\ca{S}\grotimes Cl_\tau(U_{L^2_k,N}) \ar^{\beta_{N}^{M}}[rr] \ar_{\pi_N}[rd] && \ca{S}\grotimes Cl_\tau(U_{L^2_k,M}) \ar^{\pi_M}[ld]\\
& \bb{L}_{\ca{S}}(\ca{S}\grotimes \ca{H})&}$$
and hence it defines a $*$-homomorphism
$$\pi:\ca{A}_\HKT(U_{L^2_k})\to \bb{L}_{\ca{S}}(\ca{S}\grotimes \ca{H}).$$

$(2)$ $\pi\Bigl(\iota\grotimes\id(\ca{A}(U_{L^2_k}))\Bigr)(\ca{S}\grotimes \ca{H})\subseteq \ca{S}_\vep\grotimes \ca{H}$. 
Therefore,  we can regard $\pi\circ \iota \grotimes \id$ as a map $\ca{A}(U_{L^2_k})\to \bb{L}_{\ca{S}_\vep}(\ca{S}_\vep\grotimes \ca{H})$.
We use the same symbol $\pi$ to denote $\pi\circ \iota \grotimes \id$, in order to simplify the notation.

$(3)$ These homomorphisms are $U_{L^2_m}$-equivariant in the following sense: For $a\in \ca{A}_\HKT(U_{L^2_k})$, $\phi\in \ca{S}\grotimes \ca{H}$ and $g\in U_{L^2_m}$, we have $L_{(g,z)}[\pi(a)\phi]=\pi(\lt_g(a))(L_{(g,z)}\phi)$, and similarly for $\ca{A}(U_{L^2_k})$. 
\end{dfn-pro}
\begin{proof}
$(1)$ and $(3)$ are obvious from the definitions and the computation for $U_{L^2_k,\fin}$. See \cite[Definition-Theorem 4.26 and Lemma 5.1]{T4}.

We check $(2)$ in detail. First, we notice that $\ca{S}\cong C_0(\bb{R}_{\geq 0},\Cl_+(t\bb{R}))\subseteq C_0(\bb{R}_{\geq 0})\grotimes\Cl_+(\bb{R})$, just like Remark \ref{Cstar algebra S and Cl} $(1)$. Thus,
$$\ca{S}\grotimes \ca{H}\cong C_0([0,\infty),\ca{H}\grotimes \Cl_+(t\bb{R}));$$
$$\ca{S}_\vep\grotimes \ca{H}\cong C_0([0,\vep),\ca{H}\grotimes \Cl_+(t\bb{R})).$$
We denote the base of $\bb{R}=T_t\bb{R}_{\geq 0}$ for $t>0$ by $e_1$ in the following.

It is sufficient to prove that $\pi\circ \iota\grotimes\id(\beta_N^\infty(f\grotimes a))(g\grotimes \phi)\in \ca{S}_\vep\grotimes\ca{H}$ for $f\in \ca{S}_\vep$, $g\in \ca{S}$, $a\in Cl_\tau(U_{L^2_k,N})$  and $\phi\in J_M^\infty(L^2(U_{L^2_k,M},\ca{L}))\grotimes S_{U_M}^*\grotimes S_{U_M}$, for the following reason: $\ca{S}_\vep\grotimes\ca{H}$ is closed in $\ca{S}\grotimes \ca{H}$; $\cup_N \beta_N^\infty(\ca{S}_\vep\grotimes^\alg Cl_\tau(U_{L^2_k,N}))$ is dense in $\ca{A}(U_{L^2_k})$; $\ca{S}_\vep\grotimes^\alg\ca{H}_\fin$ is dense in $\ca{S}_\vep\grotimes\ca{H}$; and all the operations are continuous.
We may assume that $N\leq M$ and $f$ is smooth. By Lemma \ref{Lemma C is bounded on dom(D) to H} $(2)$ proved later, we can prove that
$$\pi\circ \iota\grotimes\id(\beta_N^\infty(f\grotimes a))(g\grotimes \phi)=\lim_{N'\to\infty}f(X\grotimes\id+\id\grotimes C_{N+1}^{N'})\grotimes\mu_N(a)(g\grotimes \phi),$$
and hence it is sufficient to prove that $f(X\grotimes\id+\id\grotimes C_{N+1}^{N'})\grotimes\mu_N(a)(g\grotimes \phi)\in \ca{S}_\vep\grotimes \ca{H}$ for all $N'$. We may assume that $\phi$ is of the form $\phi_1\grotimes\phi_2\grotimes \vac_{M+1}^\infty$, where $\phi_1\in L^2(U_{L^2_k,N},\ca{L})\grotimes S_{U_N}^*\grotimes S_{U_N}$ and $\phi_2\in L^2(U_{L^2_k,M}\ominus U_{L^2_k,N},\ca{L})\grotimes S_{U_M\ominus U_N}^*\grotimes S_{U_M\ominus U_N}$ and $N'\geq M$. Then,
\begin{align*}
f(X\grotimes\id+\id\grotimes C_{N+1}^{N'})\grotimes\mu_N(a)(g\grotimes \phi) =\pm f(X\grotimes\id+\id\grotimes C_{N+1}^{N'})(g\grotimes \phi_2\grotimes \vac^{N'}_{M+1})\grotimes \mu_N(a)\phi_1\grotimes \vac_{N'}^\infty.
\end{align*}
In order to compute $f(X\grotimes\id+\id\grotimes C_{N+1}^{N'})(g\grotimes \phi_2\grotimes \vac^{N'}_{M+1})$, we use the set-theoretical description. We regard this element as a section on $[0,\infty)\times (U_{L^2_k,N'}\ominus U_{L^2_k,N})$.
Suppose that $f(t)=f_0(t^2)+tf_1(t^2)\grotimes e_1$ for $f_0,f_1\in C_0([0,\vep^2))$ (it is possible since $f$ is smooth). Then,
\begin{align*}
&f(X\grotimes\id+\id\grotimes C_{N+1}^{N'})(g\grotimes \phi_2\grotimes \vac^{N'}_{M+1})(t,x) \\
&=f_0(t^2+\|x\|^2)g(t)\phi_2\grotimes \vac^{N'}_{M+1}(x)+tf_1(t^2+\|x\|^2)g(t)\phi_2\grotimes \vac^{N'}_{M+1}(x) \\
&\ \ \ +(-1)^{\partial g} f_1(t^2+\|x\|^2)g(t)c^*(x)\phi_2\grotimes \vac^{N'}_{M+1}(x).
\end{align*}
This function vanishes outside $[0,\vep)\times (U_{L^2_k,N'}\ominus U_{L^2_k,N})$, and thus $f(X\grotimes\id+\id\grotimes C_{N+1}^{N'})(g\grotimes \phi_2\grotimes \vac^{N'}_{M+1})\in \ca{S}_\vep\grotimes L^2(U_{L^2_k,M}\ominus U_{L^2_k,N},\ca{L})\grotimes S_{U_M\ominus U_N}^*\grotimes S_{U_M\ominus U_N}$. Consequently, $\pi\circ \iota\grotimes\id(\beta_N^\infty(f\grotimes a))(g\grotimes \phi)\in \ca{S}_\vep\grotimes\ca{H}$.
\end{proof}

Let us define ``the Dirac operator $\Dirac$ twisted by $\ca{L}$'' acting on $\ca{H}$.
We define two operators ${}^R\Dirac$ and ${}^L\Dirac$, and then we define $\Dirac$ by a linear combination of them following \cite[Definition 5.2]{T4}. See also \cite{Kos,Was,FHTII,Mei13} for details on algebraic Dirac operators.

\begin{dfn}[Definition 5.2 of \cite{T4}]\label{def of L' and R'}
$(1)$ We define two linear maps from $\Lie(U_\fin)$ to the set of unbounded operators on $\ud{L^2(U_{L^2_k},\ca{L})_\fin}$ as follows: For $\phi\in \ud{L^2(U_{L^2_k},\ca{L})_\fin}$,
$$dR'({e_n})\phi:= \frac{\partial \phi}{\partial x_n}+in^ly_n\phi,\text{ and }\ 
dR'({f_n})\phi:= \frac{\partial \phi}{\partial y_n}-in^lx_n\phi,
$$
$$
dL'({e_n})\phi:= -\frac{\partial \phi}{\partial x_n}+in^ly_n\phi, \text{ and }\ 
dL'({f_n})\phi:= -\frac{\partial \phi}{\partial y_n}-in^lx_n\phi.$$
The strict definition of $\frac{\partial \phi}{\partial x_n}$ is the following: For $\phi =J_N^\infty(\psi)\in J_N^\infty [L^2(U_{L^2_k,N},\ca{L})]$, $\frac{\partial \phi}{\partial x_n}:=J_{N'}^\infty(\frac{\partial J^{N'}_N(\psi)}{\partial x_n})$, where $N'$ is chosen to be greater than $N$ and $n$. We linearly extend the maps $dL'$ and $dR'$ to $\Lie(U_\fin)\grotimes \bb{C}$.

$(2)$ For $\phi\in \ca{H}_\fin$, we define
\begin{align*}
{}^L\Dirac\phi&:=\sum_nn^{-l/4}\bbbra{dL'(z_n)\grotimes \gamma^*(\overline{z_n})\grotimes \id+dL'(\overline{z_n})\grotimes \gamma^*(z_n)\grotimes \id}\phi, \\
{}^R\Dirac\phi&:=\sum_nn^{-l/4}\bbbra{dR'(z_n)\grotimes \id\grotimes \gamma(\overline{z_n})+dR'(\overline{z_n})\grotimes \id\grotimes \gamma(z_n)}\phi, \\
\Dirac\phi&:=\frac{1}{\sqrt{2}}{}^R\Dirac\phi+\frac{i}{\sqrt{2}}{}^L\Dirac\phi.
\end{align*}
\end{dfn}

\begin{rmks}

$(1)$ The summand  $dR'(z_n)\grotimes \id\grotimes \gamma(\overline{z_n})+dR'(\overline{z_n})\grotimes \id\grotimes \gamma(z_n)$ of ${}^R\Dirac$ can be rewritten as $dR'(e_n)\grotimes \id\grotimes \gamma(e_n)+dR'(f_n)\grotimes \id\grotimes \gamma(f_n)$, and similarly for ${}^L\Dirac$. Thus, they should be regarded as Dirac operators.

$(2)$ By the two Clifford multiplications 
$$c(v):= \frac{1}{\sqrt{2}}\bra{\id\grotimes\gamma(v)-i\gamma^*(v)\grotimes \id} \text{ and }\ c^*(v):=\frac{\sqrt{-1}}{\sqrt{2}}\bra{\id\grotimes\gamma(v)+i\gamma^*(v)\grotimes \id}$$
 for $v\in \Lie(U_{L^2_k})$, we can rewrite the summand of $\Dirac$
$$\frac{\sqrt{-1}}{\sqrt{2}}\bbbra{dL'(z_n)\grotimes \gamma^*(\overline{z_n})\grotimes \id+dL'(\overline{z_n})\grotimes \gamma^*(z_n)\grotimes \id}+\frac{1}{\sqrt{2}}\bbbra{dR'(z_n)\grotimes \id\grotimes \gamma(\overline{z_n})+dR'(\overline{z_n})\grotimes \id\grotimes \gamma(z_n)}$$
can be rewritten as, by a direct computation,
$$n^{-l/4}\bbra{\frac{\partial}{\partial x_n}\grotimes c(e_n)+\frac{\partial}{\partial y_n}\grotimes c(f_n)-n^lx_n\grotimes c^*(Je_n)-n^ly_n\grotimes c^*(Jf_n)}.$$
This operator resembles the Bott-Dirac operator studied in \cite{HK}. In fact, we can compute the spectrum of the square of this operator just like the Bott-Dirac case.

$(3)$ Each of ${}^L\Dirac\phi$, ${}^R\Dirac\phi$ and $\Dirac\phi$ is a finite sum for every $\phi\in\ca{H}_\fin$. This is because the element ``$\vac$'' is killed by $dR'(z_n)$ and $dL'(\overline{z_n})$ for all $n$, and ${\bf 1}^*_f\grotimes {\bf 1}_f$ is killed by $\gamma^*(\overline{z_n})\grotimes \id_{S_U}$ and $ \id_{S_U^*}\grotimes \gamma(z_n)$.
$\ca{H}_\fin$ is the common core of these operators.

$(4)$ ${}^R\Dirac$ is essentially self-adjoint and ${}^L\Dirac$ is essentially skew-adjoint.\footnote{Note that $[\gamma(z_n)]^*= -\gamma(\overline{z_n})$, $[\gamma^*(z_n)]^*= \gamma^*(\overline{z_n})$, $dL'(z_n)^*=-dL'(\overline{z_n})$ and $dR'(z_n)^*=-dR'(\overline{z_n})$.} They anti-commute with one another. Consequently, $\Dirac$ is essentially self-adjoint. The self-adjoint extension of $\Dirac$ is denoted by the same symbol.
\end{rmks}

Since our operator ``has a liner potential'', any element of the ``Sobolev space defined by $\Dirac$'' satisfies the following  estimate on the ``decay rate''. 

\begin{lem}\label{Lemma C is bounded on dom(D) to H}
$(1)$ The operator $C:\ca{H}_\fin\to \ca{H}$ extends to a bounded operator from $\dom(\Dirac)$ to $\ca{H}$.

$(2)$ $C_1^N$ also extends to a bounded operator from $\dom(\Dirac)$ to $\ca{H}$, and $\{C_1^N\}$ converges to $ C$ in $\bb{L}_{\bb{C}}(\dom(\Dirac),\ca{H})$.
\end{lem}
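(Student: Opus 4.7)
The plan is to reduce both claims to an explicit computation of $C^2$ combined with a Bott-Dirac type lower bound on $\Dirac^2$. For any $\phi \in \ca{H}_\fin$ the sum defining $C\phi$ is automatically finite, because $\phi$ involves only finitely many tensor factors from $S_U^*\grotimes S_U$ and lies in the image of some $L^2(U_{L^2_k,N},\ca{L})_\fin$. Using that $c^*$ is a $\Cl_+$-action on $S_U^*\grotimes S_U$, so that $\{c^*(e_n),c^*(e_m)\}=2\delta_{nm}$, $\{c^*(f_n),c^*(f_m)\}=2\delta_{nm}$ and $\{c^*(e_n),c^*(f_m)\}=0$, a direct expansion of $C^2$ as a double sum over $n,m$ shows that the cross-index terms cancel in pairs while the diagonal contributions collapse to
\[
C^2 \;=\; \sum_n (x_n^2+y_n^2)\cdot \id_{S_U^*\grotimes S_U}.
\]
Hence for $\phi\in\ca{H}_\fin$ one has $\|C\phi\|^2=\langle\phi,C^2\phi\rangle=\sum_n(\|x_n\phi\|^2+\|y_n\phi\|^2)$.

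Next, I would establish the quadratic-form inequality
\[
\Dirac^2 + K\cdot \id \;\geq\; \sum_n n^{3l/2}(x_n^2+y_n^2)
\]
on $\ca{H}_\fin$ for some constant $K$ independent of $\phi$. The essential input is the Bott-Dirac structure highlighted in Remark (2) after Definition \ref{def of L' and R'}: each index $n$ contributes, up to the prefactor $n^{-l/4}$, a harmonic-oscillator-Dirac operator at frequency $n^l$, whose square dominates $n^{2l}(x_n^2+y_n^2)$ modulo a bounded Clifford-valued correction. Using the decomposition $\Dirac^2=\frac{1}{2}\bigl(({}^R\Dirac)^2-({}^L\Dirac)^2\bigr)$, where both summands are non-negative, together with the graded anticommutation relations for $\gamma,\gamma^*$ and the commutator identities $[dR'(z_n),dR'(\overline{z_n})]=-2n^l$ and $[dL'(z_n),dL'(\overline{z_n})]=2n^l$, one verifies that the cross terms for $n\neq m$ cancel from the vanishing of the Clifford anticommutators, while the $n$-diagonal contributions yield the advertised lower bound $n^{-l/2}\cdot n^{2l}(x_n^2+y_n^2) = n^{3l/2}(x_n^2+y_n^2)$. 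This is parallel to the computation in \cite[Section 5]{T4}, to which I would refer for the detailed bookkeeping.

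Combining the two ingredients, since $n\geq 1$ gives $n^{3l/2}\geq 1$, one gets, for every $\phi\in\ca{H}_\fin$,
\[
\|C\phi\|^2 \;\leq\; \sum_n n^{3l/2}(\|x_n\phi\|^2+\|y_n\phi\|^2) \;\leq\; \|\Dirac\phi\|^2 + K\|\phi\|^2.
\]
Because $\ca{H}_\fin$ is a core for $\Dirac$ by Remark (3) after Definition \ref{def of L' and R'}, this bound extends by continuity to the graph-norm closure $\dom(\Dirac)$, establishing part (1). For part (2), the identical argument applied to the tail $C-C_1^N=\sum_{n>N}\bigl(x_n\grotimes c^*(e_n)+y_n\grotimes c^*(f_n)\bigr)$ gives
\[
\|(C-C_1^N)\phi\|^2 \;=\; \sum_{n>N}(\|x_n\phi\|^2+\|y_n\phi\|^2) \;\leq\; (N+1)^{-3l/2}\bigl(\|\Dirac\phi\|^2+K\|\phi\|^2\bigr),
\]
and since $l>4$ implies $(N+1)^{-3l/2}\to 0$, the operator norm $\|C-C_1^N\|_{\dom(\Dirac)\to\ca{H}}$ tends to zero. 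The main technical obstacle is the lower bound on $\Dirac^2$: the naive term-by-term expansion produces formally divergent Clifford-valued sums such as $\sum_n \gamma^*(\overline{z_n})\gamma^*(z_n)$, which must be grouped correctly with the $dL'$-contributions so that on $\ca{H}_\fin$ the result reduces to a finite sum per vector with a uniform lower bound on the quadratic form.
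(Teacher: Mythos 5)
Your overall strategy — compute $C^2 = \sum_n (x_n^2+y_n^2)$ and compare it as a quadratic form against $\Dirac^2$ — is a reasonable alternative to the paper's argument (which instead expands $C$ in the explicit orthonormal basis of $\dom(\Dirac)$ and bounds each summand's operator norm by $\mathrm{const}\cdot n^{2-l}$). However, the specific inequality you posit,
\[
\Dirac^2 + K\cdot\id \;\geq\; \sum_n n^{3l/2}(x_n^2+y_n^2)\qquad\text{on }\ca{H}_\fin,
\]
is \emph{false} for any finite $K$, and this is fatal to the argument as written. Test it on the vacuum vector $\vac\grotimes {\bf 1}_f^*\grotimes {\bf 1}_f$. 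On the one hand $\Dirac(\vac\grotimes{\bf 1}_f^*\grotimes {\bf 1}_f)=0$ (since $dR'(z_n)\vac=dL'(\overline{z_n})\vac=0$ and $\gamma(z_n){\bf 1}_f=\gamma^*(\overline{z_n}){\bf 1}_f^*=0$), so the left side has expectation $K$. On the other hand, $\|x_n\vac\|^2=\|y_n\vac\|^2=\tfrac{1}{2n^l}$, so the right side has expectation $\tfrac12\sum_n n^{3l/2}\cdot\tfrac{2}{n^l}=\sum_n n^{l/2}=\infty$. Your chain $\|C\phi\|^2\leq \sum_n n^{3l/2}(\ldots)\leq \|\Dirac\phi\|^2+K\|\phi\|^2$ therefore has a divergent middle term; the first $\leq$ only gives $\|C\phi\|^2\leq\infty$, which proves nothing. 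You flagged a worry about ``formally divergent Clifford-valued sums,'' but misidentified the source: the divergence here is a purely scalar zero-point contribution, not a Clifford cancellation issue.

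The approach can be repaired, but only with a strictly smaller exponent. You need an exponent $\kappa$ satisfying two constraints: $\kappa>0$ so that $\|(C-C_1^N)\phi\|^2\leq (N+1)^{-\kappa}\sum_n n^\kappa(\|x_n\phi\|^2+\|y_n\phi\|^2)$ gives decay, and $\kappa < l-1$ so that the zero-point contribution $\sum_n n^{\kappa}/n^l$ converges and $\sum_n n^\kappa(x_n^2+y_n^2)\leq c\,\Dirac^2 + K$ holds (the occupation part only needs $n^{\kappa-l}\leq c\,n^{l/2}$, i.e.\ $\kappa\leq 3l/2$, which is automatic). Since $l>4$, the window $0<\kappa<l-1$ is nonempty and, e.g., $\kappa=1$ works. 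Your choice $\kappa=3l/2$ lies outside this window for every $l>0$ and is precisely the choice that breaks the vacuum bound. Note also that the paper's proof gets both parts (1) and (2) in one stroke from the termwise norm bound $\|x_n\grotimes c^*(e_n)+y_n\grotimes c^*(f_n)\|_{\dom(\Dirac)\to\ca{H}}\leq \mathrm{const}\cdot n^{2-l}$ and absolute summability; your quadratic-form route, once corrected, requires the slightly more delicate bookkeeping above to extract the $(N+1)^{-\kappa}$ decay for part (2).
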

\begin{proof}
We prove both statements at the same time.

We first take complete orthonomal systems (CONS for short) of $\ca{H}$ and $\dom(\Dirac)$. For $\overrightarrow{\alpha}=(\alpha_1,\alpha_2,\cdots)$, $\overrightarrow{\beta}=(\beta_1,\beta_2,\cdots)$, $\overrightarrow{\xi}=(\xi_1,\xi_2,\cdots,\xi_M)$, and  $\overrightarrow{\eta}=(\eta_1,\eta_2,\cdots,\eta_N)$ such that $\alpha_i,\beta_i\in \bb{Z}_{\geq 0}$, $\alpha_i=\beta_i=0$ except for finitely many $i$'s, $\xi_i,\eta_i\in \bb{N}$, $N,M\in \bb{Z}_{\geq 0}$, $0<\xi_1<\xi_2<\cdots<\xi_M$, and $0<\eta_1<\eta_2<\cdots<\eta_N$, we put
\begin{align*}
\widetilde{\phi}_{\overrightarrow{\alpha},\overrightarrow{\beta},\overrightarrow{\xi},\overrightarrow{\eta}}
:=
\bra{dR'(\overline{z_1})^{\alpha_1}dL'(z_1)^{\beta_1}dR'(\overline{z_2})^{\alpha_2}dL'(z_2)^{\beta_2}\cdots }\vac\grotimes [z_{\xi_1}\wedge \cdots \wedge z_{\xi_M}]\grotimes [\overline{z_{\eta_1}}\wedge \cdots \wedge \overline{z_{\eta_N}}],
\end{align*}
and we put $\phi_{\overrightarrow{\alpha},\overrightarrow{\beta},\overrightarrow{\xi},\overrightarrow{\eta}}:=\|\widetilde{\phi}_{\overrightarrow{\alpha},\overrightarrow{\beta},\overrightarrow{\xi},\overrightarrow{\eta}}\|^{-1}\widetilde{\phi}_{\overrightarrow{\alpha},\overrightarrow{\beta},\overrightarrow{\xi},\overrightarrow{\eta}}$. Then, $\{\phi_{\overrightarrow{\alpha},\overrightarrow{\beta},\overrightarrow{\xi},\overrightarrow{\eta}}\}$ is a CONS of $\ca{H}$. 
We put $\lambda_{\overrightarrow{\alpha},\overrightarrow{\beta},\overrightarrow{\xi},\overrightarrow{\eta}}:=
\sum(4m^{l/2})^{\alpha_m}+\sum(4m^{l/2})^{\beta_m}+4\sum\xi_m^{l/2}+4\sum\eta_m^{l/2}$.
Since
\begin{align*}
\Dirac^2\phi_{\overrightarrow{\alpha},\overrightarrow{\beta},\overrightarrow{\xi},\overrightarrow{\eta}}&=\bra{\sum(4n^{l/2})^{\alpha_n}+\sum(4n^{l/2})^{\beta_n}+4\sum\xi_n^{l/2}+4\sum\eta_n^{l/2}}\phi_{\overrightarrow{\alpha},\overrightarrow{\beta},\overrightarrow{\xi},\overrightarrow{\eta}}\\
&=
\lambda_{\overrightarrow{\alpha},\overrightarrow{\beta},\overrightarrow{\xi},\overrightarrow{\eta}}\phi_{\overrightarrow{\alpha},\overrightarrow{\beta},\overrightarrow{\xi},\overrightarrow{\eta}}
\end{align*}
the following is a CONS of $\dom(\Dirac)$ with respect to the graph norm:
$$\bbra{
\bra{\sqrt{1+\lambda_{\overrightarrow{\alpha},\overrightarrow{\beta},\overrightarrow{\xi},\overrightarrow{\eta}}}}^{-1}\phi_{\overrightarrow{\alpha},\overrightarrow{\beta},\overrightarrow{\xi},\overrightarrow{\eta}}}.$$

In order to prove the statement, we will prove that each operator $x_n\grotimes c^*(e_n)+y_n\grotimes c^*(f_n)=z_n\grotimes c^*(\overline{z_n})+\overline{z_n}\grotimes c^*(z_n)$ is bounded, and that the infinite sum $\sum_n\|z_n\grotimes c^*(\overline{z_n})+\overline{z_n}\grotimes c^*(z_n)\|$ is finite. We consider $(z_n\grotimes c^*(\overline{z_n})+\overline{z_n}\grotimes c^*(z_n))\phi$ for
$$\phi=\sum_{\overrightarrow{\alpha},\overrightarrow{\beta},\overrightarrow{\xi},\overrightarrow{\eta}}c_{\overrightarrow{\alpha},\overrightarrow{\beta},\overrightarrow{\xi},\overrightarrow{\eta}}\sqrt{1+\lambda_{\overrightarrow{\alpha},\overrightarrow{\beta},\overrightarrow{\xi},\overrightarrow{\eta}}}^{-1}\phi_{\overrightarrow{\alpha},\overrightarrow{\beta},\overrightarrow{\xi},\overrightarrow{\eta}}\in \dom(\Dirac).$$
Note that the graph norm of $\phi$ is $\sum_{\overrightarrow{\alpha},\overrightarrow{\beta},\overrightarrow{\xi},\overrightarrow{\eta}}|c_{\overrightarrow{\alpha},\overrightarrow{\beta},\overrightarrow{\xi},\overrightarrow{\eta}}|^2$.

By a simple computation, we notice the following formulas:
$$z_n\phi=\frac{1}{2n^l}\bbra{dR'(z_n)+dL'(z_n)}\phi \ \text{  and  }\ \overline{z_n}\phi=-\frac{1}{2n^l}\bbra{dR'(\overline{z_n})+dL'(\overline{z_n})}\phi.$$
Thus, by the commutation relations on $dR'$s and $dL'$s,
$$z_ndR'(z_n)^{\alpha_n}dL'(\overline{z_n})^{\beta_n}\vac
=\frac{1}{2n^l}dR'(z_n)^{\alpha_n}dL'(\overline{z_n})^{\beta_n+1}\vac
-ndR'(z_n)^{\alpha_n-1}dL'(\overline{z_n})^{\beta_n}\vac,$$
$$\overline{z_n}dR'(z_n)^{\alpha_n}dL'(\overline{z_n})^{\beta_n}\vac
=-\frac{1}{2n^l}dR'(z_n)^{\alpha_n+1}dL'(\overline{z_n})^{\beta_n}\vac
-ndR'(z_n)^{\alpha_n}dL'(\overline{z_n})^{\beta_n-1}\vac.$$
By $\|\widetilde{\phi}_{\overrightarrow{\alpha},\overrightarrow{\beta},\overrightarrow{\xi},\overrightarrow{\eta}}\|^2=\overrightarrow{\alpha}!\overrightarrow{\beta}! \prod(2n^l)^{\alpha_n+\beta_n}$ (where $\overrightarrow{\gamma}!:=\gamma_1!\gamma_2!\cdots$ for a multi-index $\overrightarrow{\gamma}=(\gamma_{1},\gamma_2,\cdots)$),
$$z_n\phi_{\overrightarrow{\alpha},\overrightarrow{\beta},\overrightarrow{\xi},\overrightarrow{\eta}}
=\sqrt{\frac{\beta_n+1}{2n^l}}\phi_{\overrightarrow{\alpha},\overrightarrow{\beta}+e_n,\overrightarrow{\xi},\overrightarrow{\eta}}
-\frac{n}{\sqrt{2n^l\alpha_n}}\phi_{\overrightarrow{\alpha}-e_n,\overrightarrow{\beta},\overrightarrow{\xi},\overrightarrow{\eta}},$$
$$\overline{z_n}\phi_{\overrightarrow{\alpha},\overrightarrow{\beta},\overrightarrow{\xi},\overrightarrow{\eta}}
=-\sqrt{\frac{\alpha_n+1}{2n^l}}\phi_{\overrightarrow{\alpha}+e_n,\overrightarrow{\beta},\overrightarrow{\xi},\overrightarrow{\eta}}
-\frac{n}{\sqrt{2n^l\alpha_n}}\phi_{\overrightarrow{\alpha},\overrightarrow{\beta}-e_n,\overrightarrow{\xi},\overrightarrow{\eta}},$$
where, for a multi-index $\overrightarrow{\gamma}=(\cdots,\gamma_{n-1},\gamma_n,\gamma_{n+1},\cdots)$, we denote $(\cdots,\gamma_{n-1},\gamma_n\pm 1,\gamma_{n+1},\cdots)$ by $\overrightarrow{\gamma}\pm e_n$. If $\alpha_n=0$, we put $\frac{n}{\sqrt{2n^l\alpha_n}}\phi_{\overrightarrow{\alpha}-e_n,\overrightarrow{\beta},\overrightarrow{\xi},\overrightarrow{\eta}}:=0$, and similarly for $\frac{n}{\sqrt{2n^l\beta_n}}\phi_{\overrightarrow{\alpha},\overrightarrow{\beta}-e_n,\overrightarrow{\xi},\overrightarrow{\eta}}$.

Moreover, by definition,
\begin{align*}
c^*(\overline{z_n})
\phi_{\overrightarrow{\alpha},\overrightarrow{\beta},\overrightarrow{\xi},\overrightarrow{\eta}}
&=\bra{z_n\rfloor\circ \epsilon_{S^*}\grotimes \id+\id\grotimes \sqrt{-1}\overline{z_n}\wedge}\phi_{\overrightarrow{\alpha},\overrightarrow{\beta},\overrightarrow{\xi},\overrightarrow{\eta}} \\
&=:(-1)^M\bbra{\phi_{\overrightarrow{\alpha},\overrightarrow{\beta},\overrightarrow{\xi}\setminus\{n\},\overrightarrow{\eta}}+\sqrt{-1}\phi_{\overrightarrow{\alpha}+e_n,\overrightarrow{\beta},\overrightarrow{\xi},\overrightarrow{\eta}\cup\{n\}}}.
\end{align*}
Similarly,
\begin{align*}
c^*({z_n})
\phi_{\overrightarrow{\alpha},\overrightarrow{\beta},\overrightarrow{\xi},\overrightarrow{\eta}}
&=\bra{z_n\wedge\circ \epsilon_{S^*}\grotimes \id-\id\grotimes \sqrt{-1}\overline{z_n}\rfloor}\phi_{\overrightarrow{\alpha},\overrightarrow{\beta},\overrightarrow{\xi},\overrightarrow{\eta}} \\
&=:(-1)^M\bbra{\phi_{\overrightarrow{\alpha},\overrightarrow{\beta},\overrightarrow{\xi}\cup\{n\},\overrightarrow{\eta}}-\sqrt{-1}\phi_{\overrightarrow{\alpha},\overrightarrow{\beta},\overrightarrow{\xi},\overrightarrow{\eta}\setminus\{n\}}}.
\end{align*}

Therefore, $\bbra{z_n\grotimes c^*(\overline{z_n})+\overline{z_n}\grotimes c^*(z_n)}\phi_{\overrightarrow{\alpha},\overrightarrow{\beta},\overrightarrow{\xi},\overrightarrow{\eta}}$ is given by
\begin{align*}
&(-1)^M\bra{\sqrt{\frac{\beta_n+1}{2n^l}}\phi_{\overrightarrow{\alpha},\overrightarrow{\beta}+e_n,\overrightarrow{\xi}\setminus\{n\},\overrightarrow{\eta}}
-\frac{n}{\sqrt{2n^l\alpha_n}}\phi_{\overrightarrow{\alpha}-e_n,\overrightarrow{\beta},\overrightarrow{\xi}\setminus\{n\},\overrightarrow{\eta}}}\\
&+(-1)^M\bra{\sqrt{\frac{\beta_n+1}{2n^l}}\phi_{\overrightarrow{\alpha},\overrightarrow{\beta}+e_n,\overrightarrow{\xi},\overrightarrow{\eta}\cup\{n\}}
-\frac{n}{\sqrt{2n^l\alpha_n}}\phi_{\overrightarrow{\alpha}-e_n,\overrightarrow{\beta},\overrightarrow{\xi},\overrightarrow{\eta}\cup\{n\}}}\\
&+(-1)^M\bra{-\sqrt{\frac{\alpha_n+1}{2n^l}}\phi_{\overrightarrow{\alpha}+e_n,\overrightarrow{\beta},\overrightarrow{\xi}\cup\{n\},\overrightarrow{\eta}}
-\frac{n}{\sqrt{2n^l\alpha_n}}\phi_{\overrightarrow{\alpha},\overrightarrow{\beta}-e_n,\overrightarrow{\xi}\cup\{n\},\overrightarrow{\eta}}}\\
&+(-1)^M\bra{-\sqrt{\frac{\alpha_n+1}{2n^l}}\phi_{\overrightarrow{\alpha}+e_n,\overrightarrow{\beta},\overrightarrow{\xi},\overrightarrow{\eta}\setminus\{n\}}
-\frac{n}{\sqrt{2n^l\alpha_n}}\phi_{\overrightarrow{\alpha},\overrightarrow{\beta}-e_n,\overrightarrow{\xi},\overrightarrow{\eta}\setminus\{n\}}}.
\end{align*}
Thus, $\bbra{z_n\grotimes c^*(\overline{z_n})+\overline{z_n}\grotimes c^*(z_n)}\phi$ is given by{\small
\begin{align*}
&\sum_{\overrightarrow{\alpha},\overrightarrow{\beta},\overrightarrow{\xi},\overrightarrow{\eta}}(-1)^M\frac{c_{\overrightarrow{\alpha},\overrightarrow{\beta},\overrightarrow{\xi},\overrightarrow{\eta}}}{\sqrt{1+\lambda_{\overrightarrow{\alpha},\overrightarrow{\beta},\overrightarrow{\xi},\overrightarrow{\eta}}}}\bra{\sqrt{\frac{\beta_n+1}{2n^l}}\phi_{\overrightarrow{\alpha},\overrightarrow{\beta}+e_n,\overrightarrow{\xi}\setminus\{n\},\overrightarrow{\eta}}
-\frac{n}{\sqrt{2n^l\alpha_n}}\phi_{\overrightarrow{\alpha}-e_n,\overrightarrow{\beta},\overrightarrow{\xi}\setminus\{n\},\overrightarrow{\eta}}}\\
&\ \ \ +\sum_{\overrightarrow{\alpha},\overrightarrow{\beta},\overrightarrow{\xi},\overrightarrow{\eta}}(-1)^M\frac{c_{\overrightarrow{\alpha},\overrightarrow{\beta},\overrightarrow{\xi},\overrightarrow{\eta}}}{\sqrt{1+\lambda_{\overrightarrow{\alpha},\overrightarrow{\beta},\overrightarrow{\xi},\overrightarrow{\eta}}}}\bra{\sqrt{\frac{\beta_n+1}{2n^l}}\phi_{\overrightarrow{\alpha},\overrightarrow{\beta}+e_n,\overrightarrow{\xi},\overrightarrow{\eta}\cup\{n\}}
-\frac{n}{\sqrt{2n^l\alpha_n}}\phi_{\overrightarrow{\alpha}-e_n,\overrightarrow{\beta},\overrightarrow{\xi},\overrightarrow{\eta}\cup\{n\}}}\\
&\ \ \ +\sum_{\overrightarrow{\alpha},\overrightarrow{\beta},\overrightarrow{\xi},\overrightarrow{\eta}}(-1)^M\frac{c_{\overrightarrow{\alpha},\overrightarrow{\beta},\overrightarrow{\xi},\overrightarrow{\eta}}}{\sqrt{1+\lambda_{\overrightarrow{\alpha},\overrightarrow{\beta},\overrightarrow{\xi},\overrightarrow{\eta}}}}\bra{-\sqrt{\frac{\alpha_n+1}{2n^l}}\phi_{\overrightarrow{\alpha}+e_n,\overrightarrow{\beta},\overrightarrow{\xi}\cup\{n\},\overrightarrow{\eta}}
-\frac{n}{\sqrt{2n^l\alpha_n}}\phi_{\overrightarrow{\alpha},\overrightarrow{\beta}-e_n,\overrightarrow{\xi}\cup\{n\},\overrightarrow{\eta}}}\\
&\ \ \ +\sum_{\overrightarrow{\alpha},\overrightarrow{\beta},\overrightarrow{\xi},\overrightarrow{\eta}}(-1)^M\frac{c_{\overrightarrow{\alpha},\overrightarrow{\beta},\overrightarrow{\xi},\overrightarrow{\eta}}}{\sqrt{1+\lambda_{\overrightarrow{\alpha},\overrightarrow{\beta},\overrightarrow{\xi},\overrightarrow{\eta}}}}\bra{-\sqrt{\frac{\alpha_n+1}{2n^l}}\phi_{\overrightarrow{\alpha}+e_n,\overrightarrow{\beta},\overrightarrow{\xi},\overrightarrow{\eta}\setminus\{n\}}
-\frac{n}{\sqrt{2n^l\alpha_n}}\phi_{\overrightarrow{\alpha},\overrightarrow{\beta}-e_n,\overrightarrow{\xi},\overrightarrow{\eta}\setminus\{n\}}}\\
\end{align*}
\begin{align*}
&=-\sum_{\overrightarrow{\alpha},\overrightarrow{\beta},\overrightarrow{\xi},\overrightarrow{\eta}}(-1)^M\bra{\frac{c_{\overrightarrow{\alpha},\overrightarrow{\beta}-e_n,\overrightarrow{\xi}\cup\{n\},\overrightarrow{\eta}}}{\sqrt{1+\lambda_{\overrightarrow{\alpha},\overrightarrow{\beta}-e_n,\overrightarrow{\xi}\cup\{n\},\overrightarrow{\eta}}}}\sqrt{\frac{\beta_n}{2n^l}}
-\frac{c_{\overrightarrow{\alpha}+e_n,\overrightarrow{\beta},\overrightarrow{\xi}\cup\{n\},\overrightarrow{\eta}}}{\sqrt{1+\lambda_{\overrightarrow{\alpha}+e_n,\overrightarrow{\beta},\overrightarrow{\xi}\cup\{n\},\overrightarrow{\eta}}}}\frac{n}{\sqrt{2n^l(\alpha_n+1)}}}\phi_{\overrightarrow{\alpha},\overrightarrow{\beta},\overrightarrow{\xi},\overrightarrow{\eta}}\\
&\ \ \ +\sum_{\overrightarrow{\alpha},\overrightarrow{\beta},\overrightarrow{\xi},\overrightarrow{\eta}}(-1)^M\bra{\frac{c_{\overrightarrow{\alpha},\overrightarrow{\beta}-e_n,\overrightarrow{\xi},\overrightarrow{\eta}\setminus\{n\}}}{\sqrt{1+\lambda_{\overrightarrow{\alpha},\overrightarrow{\beta}-e_n,\overrightarrow{\xi},\overrightarrow{\eta}\setminus\{n\}}}}\sqrt{\frac{\beta_n}{2n^l}}
-\frac{c_{\overrightarrow{\alpha}+e_n,\overrightarrow{\beta},\overrightarrow{\xi},\overrightarrow{\eta}\setminus\{n\}}}{\sqrt{1+\lambda_{\overrightarrow{\alpha}+e_n,\overrightarrow{\beta},\overrightarrow{\xi},\overrightarrow{\eta}\setminus\{n\}}}}\frac{n}{\sqrt{2n^l(\alpha_n+1)}}}\phi_{\overrightarrow{\alpha},\overrightarrow{\beta},\overrightarrow{\xi},\overrightarrow{\eta}}\\
&\ \ \ -
\sum_{\overrightarrow{\alpha},\overrightarrow{\beta},\overrightarrow{\xi},\overrightarrow{\eta}}(-1)^M\bra{-\frac{c_{\overrightarrow{\alpha}-e_n,\overrightarrow{\beta},\overrightarrow{\xi}\cup\{n\},\overrightarrow{\eta}}}{\sqrt{1+\lambda_{\overrightarrow{\alpha}-e_n,\overrightarrow{\beta},\overrightarrow{\xi}\cup\{n\},\overrightarrow{\eta}}}}\sqrt{\frac{\alpha_n}{2n^l}}
-\frac{c_{\overrightarrow{\alpha},\overrightarrow{\beta}+e_n,\overrightarrow{\xi}\cup\{n\},\overrightarrow{\eta}}}{\sqrt{1+\lambda_{\overrightarrow{\alpha},\overrightarrow{\beta}+e_n,\overrightarrow{\xi}\cup\{n\},\overrightarrow{\eta}}}}\frac{n}{\sqrt{2n^l(\beta_n+1)}}}\phi_{\overrightarrow{\alpha},\overrightarrow{\beta},\overrightarrow{\xi},\overrightarrow{\eta}}\\
&\ \ \ +\sum_{\overrightarrow{\alpha},\overrightarrow{\beta},\overrightarrow{\xi},\overrightarrow{\eta}}(-1)^M\bra{-\frac{c_{\overrightarrow{\alpha}-e_n,\overrightarrow{\beta},\overrightarrow{\xi},\overrightarrow{\eta}\setminus\{n\}}}{\sqrt{1+\lambda_{\overrightarrow{\alpha}-e_n,\overrightarrow{\beta},\overrightarrow{\xi},\overrightarrow{\eta}\setminus\{n\}}}}\sqrt{\frac{\alpha_n}{2n^l}}
-\frac{c_{\overrightarrow{\alpha},\overrightarrow{\beta}+e_n,\overrightarrow{\xi},\overrightarrow{\eta}\setminus\{n\}}}{\sqrt{1+\lambda_{\overrightarrow{\alpha},\overrightarrow{\beta}+e_n,\overrightarrow{\xi},\overrightarrow{\eta}\setminus\{n\}}}}\frac{n}{\sqrt{2n^l(\beta_n+1)}}}\phi_{\overrightarrow{\alpha},\overrightarrow{\beta},\overrightarrow{\xi},\overrightarrow{\eta}},
\end{align*}}
where $c_{\overrightarrow{\alpha}-e_n,\overrightarrow{\beta},\cdots}:=0$ if $\alpha_n=0$, and $c_{\overrightarrow{\alpha},\overrightarrow{\beta}-e_n,\cdots}:=0$ if $\beta_n=0$.

Let $\star_{\overrightarrow{\alpha},\overrightarrow{\beta},\overrightarrow{\xi},\overrightarrow{\eta}}$ be{\small
$$\frac{\left|c_{\overrightarrow{\alpha},\overrightarrow{\beta}-e_n,\overrightarrow{\xi},\overrightarrow{\eta}\setminus\{n\}}\right|^2}{1+\lambda_{\overrightarrow{\alpha},\overrightarrow{\beta}-e_n,\overrightarrow{\xi},\overrightarrow{\eta}\setminus\{n\}}}\frac{\beta_n}{2n^l}
+\frac{\left|c_{\overrightarrow{\alpha}+e_n,\overrightarrow{\beta},\overrightarrow{\xi},\overrightarrow{\eta}\setminus\{n\}}\right|^2}{1+\lambda_{\overrightarrow{\alpha}+e_n,\overrightarrow{\beta},\overrightarrow{\xi},\overrightarrow{\eta}\setminus\{n\}}}\frac{n^2}{2n^l(\alpha_n+1)}$$
$$+\frac{\left|c_{\overrightarrow{\alpha}-e_n,\overrightarrow{\beta},\overrightarrow{\xi}\cup\{n\},\overrightarrow{\eta}}\right|^2}{1+\lambda_{\overrightarrow{\alpha}-e_n,\overrightarrow{\beta},\overrightarrow{\xi}\cup\{n\},\overrightarrow{\eta}}}\frac{\alpha_n}{2n^l}
+\frac{\left|c_{\overrightarrow{\alpha},\overrightarrow{\beta}+e_n,\overrightarrow{\xi}\cup\{n\},\overrightarrow{\eta}}\right|^2}{1+\lambda_{\overrightarrow{\alpha},\overrightarrow{\beta}+e_n,\overrightarrow{\xi}\cup\{n\},\overrightarrow{\eta}}}\frac{n^2}{2n^l(\beta_n+1)}$$
$$+\frac{\left|c_{\overrightarrow{\alpha}-e_n,\overrightarrow{\beta},\overrightarrow{\xi}\cup\{n\},\overrightarrow{\eta}}\right|^2}{1+\lambda_{\overrightarrow{\alpha}-e_n,\overrightarrow{\beta},\overrightarrow{\xi}\cup\{n\},\overrightarrow{\eta}}}\frac{\alpha_n}{2n^l}
+\frac{\left|c_{\overrightarrow{\alpha},\overrightarrow{\beta}+e_n,\overrightarrow{\xi}\cup\{n\},\overrightarrow{\eta}}\right|^2}{{1+\lambda_{\overrightarrow{\alpha},\overrightarrow{\beta}+e_n,\overrightarrow{\xi}\cup\{n\},\overrightarrow{\eta}}}}\frac{n^2}{2n^l(\beta_n+1)}$$
$$+\frac{\left|c_{\overrightarrow{\alpha}-e_n,\overrightarrow{\beta},\overrightarrow{\xi},\overrightarrow{\eta}\setminus\{n\}}\right|^2}{1+\lambda_{\overrightarrow{\alpha}-e_n,\overrightarrow{\beta},\overrightarrow{\xi},\overrightarrow{\eta}\setminus\{n\}}}\frac{\alpha_n}{2n^l}
+\frac{\left|c_{\overrightarrow{\alpha},\overrightarrow{\beta}+e_n,\overrightarrow{\xi},\overrightarrow{\eta}\setminus\{n\}}\right|^2}{1+\lambda_{\overrightarrow{\alpha},\overrightarrow{\beta}+e_n,\overrightarrow{\xi},\overrightarrow{\eta}\setminus\{n\}}}\frac{n^2}{2n^l(\beta_n+1)}.$$}
Then, $\|\bbra{z_n\grotimes c^*(\overline{z_n})+\overline{z_n}\grotimes c^*(z_n)}\phi\|^2$ is bounded by 
$$8\sum_{\overrightarrow{\alpha},\overrightarrow{\beta},\overrightarrow{\xi},\overrightarrow{\eta}}\star_{\overrightarrow{\alpha},\overrightarrow{\beta},\overrightarrow{\xi},\overrightarrow{\eta}}.$$
Since all of $\frac{\beta_n}{1+\lambda_{\overrightarrow{\alpha},\overrightarrow{\beta}-e_n,\overrightarrow{\xi},\overrightarrow{\eta}\setminus\{n\}}}$, $\frac{1}{(1+\lambda_{\overrightarrow{\alpha}+e_n,\overrightarrow{\beta},\overrightarrow{\xi},\overrightarrow{\eta}\setminus\{n\}})(\alpha_n+1)}$, $\cdots$ are bounded above by a constant (say $K$) which is independent of $\overrightarrow{\alpha},\overrightarrow{\beta},\overrightarrow{\xi},\overrightarrow{\eta}$ and $n$, $\star_{\overrightarrow{\alpha},\overrightarrow{\beta},\overrightarrow{\xi},\overrightarrow{\eta}}$ is bounded above by
$$K\bra{\left|c_{\overrightarrow{\alpha},\overrightarrow{\beta}-e_n,\overrightarrow{\xi},\overrightarrow{\eta}\setminus\{n\}}\right|^2
+\left|c_{\overrightarrow{\alpha}+e_n,\overrightarrow{\beta},\overrightarrow{\xi},\overrightarrow{\eta}\setminus\{n\}}\right|^2+\left|c_{\overrightarrow{\alpha}-e_n,\overrightarrow{\beta},\overrightarrow{\xi}\cup\{n\},\overrightarrow{\eta}}\right|^2
+\left|c_{\overrightarrow{\alpha},\overrightarrow{\beta}+e_n,\overrightarrow{\xi}\cup\{n\},\overrightarrow{\eta}}\right|^2}n^{2-l}$$
$$+K\bra{\left|c_{\overrightarrow{\alpha}-e_n,\overrightarrow{\beta},\overrightarrow{\xi}\cup\{n\},\overrightarrow{\eta}}\right|^2
+\left|c_{\overrightarrow{\alpha},\overrightarrow{\beta}+e_n,\overrightarrow{\xi}\cup\{n\},\overrightarrow{\eta}}\right|^2
+\left|c_{\overrightarrow{\alpha}-e_n,\overrightarrow{\beta},\overrightarrow{\xi},\overrightarrow{\eta}\setminus\{n\}}\right|^2
+\left|c_{\overrightarrow{\alpha},\overrightarrow{\beta}+e_n,\overrightarrow{\xi},\overrightarrow{\eta}\setminus\{n\}}\right|^2}n^{2-l}.$$
Therefore, the operator norm of $z_n\grotimes c^*(\overline{z_n})+\overline{z_n}\grotimes c^*(z_n)$ is bounded by $64Kn^{2-l}$. Since $l>4$, the infinite sum $\sum_n \|z_n\grotimes c^*(\overline{z_n})+\overline{z_n}\grotimes c^*(z_n)\|\leq \sum_n64n^{2-l}$ is finite.
\end{proof}

With this preliminaries, we can prove the following. Since we have proved almost the same result in \cite[Section 5.1]{T4}, we just outline the proof here for the convenience of the reader, and in order to prepare the proof of the next result.

\begin{lem}\label{index element for HKT algebra}
The triple $(\ca{S}\grotimes \ca{H},\pi,\id\grotimes \Dirac)$ defines an unbounded $U_{L^2_m}$-equivariant Kasparov \\
$(\ca{A}_\HKT(U_{L^2_k}),\ca{S})$-module. 
\end{lem}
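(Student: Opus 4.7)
The verification involves checking the axioms of Definition \ref{dfn in preparation unbounded picture of KK}. The Hilbert module structure, $\bb{Z}_2$-grading, and continuous $U_{L^2_m}$-action on $\ca{S}\grotimes \ca{H}$ (acting as $\id_{\ca{S}}\grotimes L$ with $L$ from Proposition \ref{actions L and R are well defined}) are inherited from the earlier construction; countable generation follows from the separability of $\ca{H}$. The evenness and $U_{L^2_m}$-equivariance of $\pi$ were established in Definition-Proposition \ref{dfn-pro rep of AHKT}. Since $\Dirac$ is essentially self-adjoint on the common core $\ca{H}_\fin$ with the explicit orthonormal eigenbasis $\{\phi_{\overrightarrow{\alpha},\overrightarrow{\beta},\overrightarrow{\xi},\overrightarrow{\eta}}\}$ constructed in the proof of Lemma \ref{Lemma C is bounded on dom(D) to H}, the operator $\id\grotimes \Dirac$ is a densely defined odd regular self-adjoint operator on $\ca{S}\grotimes \ca{H}$.

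The plan is to take $A':=\ca{A}_\HKT(U_{L^2_k})_\fin$ as the dense $*$-subalgebra witnessing the remaining axioms. For a generator $a=\beta_N^\infty(f\grotimes h)$ with $f\in \ca{S}_\fin$ and $h$ Clifford-Schwartz, the operator $\pi(a)=f(X\grotimes 1+1\grotimes \pi(C_{N+1}^\infty))\grotimes \mu_N(h)$ preserves $\ca{H}_\fin$, hence a core of $\Dirac$. I would decompose $[\id\grotimes\Dirac,\pi(a)]$ into a Clifford-multiplication term coming from $[\Dirac,\pi(C_{N+1}^\infty)]$ propagated through the smooth functional calculus of $f$, and a piece involving $\Dirac$ acting through derivatives of $h$; the latter is manifestly bounded since $h$ is Schwartz, and the former is bounded thanks to the uniform estimate $\sum_n\|z_n\grotimes c^*(\overline{z_n})+\overline{z_n}\grotimes c^*(z_n)\|<\infty$ from Lemma \ref{Lemma C is bounded on dom(D) to H}. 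Hence $[\id\grotimes\Dirac,\pi(a)]$ extends to a bounded operator.

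The main technical step is the compactness condition $\pi(a)(1+\Dirac^2)^{-1}\in \bb{K}_\ca{S}(\ca{S}\grotimes\ca{H})\cong \ca{S}\grotimes \bb{K}(\ca{H})$. Because $X\grotimes 1$ and $1\grotimes \pi(C_{N+1}^\infty)$ are both odd self-adjoint and thus anti-commute in the graded tensor product, and because the cross Clifford terms inside $\pi(C_{N+1}^\infty)^2$ anti-commute pairwise, one obtains
\begin{equation*}
\bigl(X\grotimes 1+1\grotimes \pi(C_{N+1}^\infty)\bigr)^2
= X^2\grotimes 1+1\grotimes \sum_{n\geq N+1}(x_n^2+y_n^2).
\end{equation*}
Thus the functional calculus by $f\in\ca{S}_\fin$ supplies a joint confining weight in both $X^2$ and in the coordinates $(x_n,y_n)_{n\geq N+1}$, and the Schwartz factor $h$ supplies a confining weight in the remaining coordinates $(x_n,y_n)_{n\leq N}$. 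Combined with the explicit diagonalization of $\Dirac^2$ from Lemma \ref{Lemma C is bounded on dom(D) to H}, whose eigenvalues $\lambda_{\overrightarrow{\alpha},\overrightarrow{\beta},\overrightarrow{\xi},\overrightarrow{\eta}}$ grow polynomially in the indices (at rate $n^{l/2}$, with $l>4$), one checks that $\pi(a)(1+\Dirac^2)^{-1}$ is a norm-limit of $\ca{S}$-finite-rank operators by bounding a direct summation of matrix coefficients of $\pi(a)$ against the Gaussian tails, yielding the required $\ca{S}$-compactness.

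Finally, since $U_{L^2_m}$ is abelian, $L_g$ commutes with the right-invariant pieces $dR'(v)$ appearing in ${}^R\Dirac$, and its commutator with the left-invariant pieces $dL'(v)$ is a bounded cocycle-derivative correction; the norm-continuity of $g\mapsto g(\id\grotimes\Dirac)-\id\grotimes\Dirac$ on $U_{L^2_m}$ then follows by the same $L^2_m$-continuity argument used in Proposition \ref{actions L and R are well defined}. The hard part will be the compactness step: one must carefully trade the polynomial growth of the $\Dirac^2$-spectrum against the Gaussian decay provided by $f\in\ca{S}_\fin$ and the Schwartz decay of $h$, and verify that the confinement produced by the anti-commuting square above is strong enough to force $\pi(a)(1+\Dirac^2)^{-1}$ into $\ca{S}\grotimes \bb{K}(\ca{H})$ rather than merely $\bb{L}_\ca{S}(\ca{S}\grotimes \ca{H})$.
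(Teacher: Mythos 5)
Your overall structure matches the paper's: you check (0) the module, grading, and regular self-adjointness; (1) the dense subalgebra $\ca{A}_\HKT(U_{L^2_k})_\fin$ of $C^1$-elements via the decomposition of the commutator into a Clifford-multiplication term (bounded by the estimate $\sum_n\|z_n\grotimes c^*(\overline{z_n})+\overline{z_n}\grotimes c^*(z_n)\|<\infty$) and a Schwartz-decay term; (2) $\ca{S}$-compactness from the discrete spectrum of $\Dirac^2$ with finite multiplicity; and (B) equivariance. Your anticommuting-square identity $(X\grotimes 1+1\grotimes\pi(C_{N+1}^\infty))^2 = X^2\grotimes 1+1\grotimes\sum_{n\geq N+1}(x_n^2+y_n^2)$ is correct and is precisely what underlies the confinement; the paper's version of (2) is lighter on detail and defers to \cite{T4}, but the content is the same. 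Your honest flagging of the compactness step as the place requiring care is also consistent with how the paper handles it.

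There is, however, a genuine error in your equivariance paragraph. You assert that ``since $U_{L^2_m}$ is abelian, $L_g$ commutes with the right-invariant pieces $dR'(v)$.'' This is false for the operators actually appearing in $\Dirac$. The differential of the projective right regular representation is $dR(e_n)\phi=\frac{\partial\phi}{\partial x_n}+in^{1-2k}y_n\phi$, and that operator does commute with $L_g$. But $dR'(e_n)\phi=\frac{\partial\phi}{\partial x_n}+in^{l}y_n\phi$ with $l>4$ (not $l=1-2k$), so the potential term has the wrong weight. The paper's computation gives
$$L_{(g,1)}\circ dR'(e_n)\circ L_{(-g,1)} = dR'(e_n)+i(n^{1-2k}-n^l)b_n,$$
which is a nonzero correction for $l>4$. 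The same happens for $dL'$, and the full correction
$$g(\Dirac)-\Dirac = \tfrac{i}{\sqrt{2}}\,\id\grotimes\id\grotimes\gamma\bra{\bra{-|d|^{1-2k-l/4}+|d|^{3l/4}}Jg}+\tfrac{-1}{\sqrt{2}}\,\id\grotimes\gamma^*\bra{\bra{|d|^{1-2k-l/4}+|d|^{3l/4}}Jg}\grotimes \id$$
is bounded precisely because $m\geq k+l$, which dominates both exponents $1-2k-l/4$ and $3l/4$. This is not merely a ``cocycle-derivative correction on the $dL'$ side'' as you describe; it is a correction on both sides, and the reason it is bounded is the mismatch between the $L^2_k$-geometry of $\ca{H}$, the $L^2_m$-topology of the group, and the artificial weight $n^l$ in $dR'$, $dL'$. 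In the $l=0$, $m=k$ setting (the one of \cite{T4}) your intuition would be correct; but the entire point of the present paper's modification is that $l>0$ breaks that exact invariance, so the verification must be the explicit estimate rather than the commutation argument you propose.
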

\begin{proof}
We need to check the following: $(A)$ The pair is actually an unbounded Kasparov module; and $(B)$ This Kasparov module is equivariant.

$(A)$ can be divided into the following steps: $(0)$ $\ca{S}\grotimes \ca{H}$ is a countably generated Hilbert $\ca{S}$-module, and $\id\grotimes \Dirac$ is a densely defined, regular, odd, and essentially self-adjoint operator; $(1)$ The set of $a\in \ca{A}_\HKT(U_{L^2_k})$ preserving $\dom(\Dirac)$ and satisfying $[a,\id\grotimes \Dirac]\in \bb{L}_{\ca{S}}(\ca{S}\grotimes \ca{H})$ is dense; and $(2)$ $a(\id\grotimes\id+\id\grotimes \Dirac^2)^{-1}$ belongs to $\bb{K}_{\ca{S}}(\ca{S}\grotimes \ca{H})$ for any $a\in \ca{A}_\HKT(U_{L^2_k})$.

$(0)$ is clear from the fact that $\ca{H}$ is a separable Hilbert space, and the arguments about $\Dirac$ so far.

$(1)$ is proved like \cite[Proposition 5.10]{T4}.
We give a slightly different and more concrete proof.\footnote{In \cite{T4}, we have used $\{e^{-X^2},Xe^{-X^2}\}$ as a generating set of $\ca{S}$. Then, $\pi(\beta(e^{-X^2}))$ is given by ``$e^{-C^2}$''. However, it is difficult to say that it is given by $\sum_n (n!)^{-1}C^n$, because the infinite sum should be in the sense of uniform convergence on compact sets, if we use similar arguments for finite-dimensional spaces. Instead, we use alternative generators in this proof, for the following reasons: The values at these generators are simple to define; We would like to give as many computations on $\ca{A}(\ca{X})$ as possible.} We prove that $\pi\bra{\ca{A}_\HKT(U_{L^2_k})_\fin}$ preserves $\dom(\id\grotimes\Dirac)$ and that the commutator is bounded.
Let us check the simplest case: $\beta_0^\infty(f)\in \ca{A}_\HKT(U_{L^2_k})_\fin$, and the general cases are left to the reader.
We notice that
$$\pi(\beta_0^\infty(f))\phi=f(X\grotimes\id+\id\grotimes \pi(C_{1}^\infty))\phi=\lim_{N\to\infty}f(X\grotimes\id+\id\grotimes \pi(C_1^{N}))\phi$$
for $\phi\in\ca{S}_\fin\grotimes^\alg \ca{H}_\fin$ and $f\in \ca{S}$, thanks to \cite[Lemma 5.8]{T4}.  Since $\ca{S}_\fin\grotimes^\alg \ca{H}_\fin$ is dense in $\ca{S}\grotimes \ca{H}$, and since $\|\pi(\beta_0^\infty(f))\|$ is bounded by $\|f\|$, we have $\pi(\beta_0^\infty(f))\psi=\lim_{N\to\infty}f(X\grotimes\id+\id\grotimes \pi(C^{N}_1))\grotimes \psi$ for arbitrary $\psi\in \ca{S}\grotimes \ca{H}$.
By the fact that $\id\grotimes\Dirac$ is a closed operator, it is sufficient to prove the following: $(a)$ $f(X\grotimes\id+\id\grotimes \pi(C_1^N))\phi\in \dom(\id\grotimes \Dirac)$ for every $N$ and $(b)$ $\{(\id\grotimes \Dirac)\circ  f(X\grotimes\id+\id\grotimes \pi(C_1^N))(\phi)\}_{N\in\bb{N}}$ converges. We will also prove that the commutator is bounded at the same time.
It is enough to prove these properties for $f=f_\ev,f_\od$.

We put $f=f_\ev$ in this paragraph. $(a)$ is obvious because $ f(X\grotimes\id+\id\grotimes \pi(C_1^N))$ is essentially a multiplication operator by a smooth function vanishing at infinity on a finite-dimensional space.
For $(b)$, we compute $(\id\grotimes \Dirac)\circ  f(X\grotimes\id+\id\grotimes \pi(C_1^N))(\phi)$. Thanks to the formula on the graded commutator $[A,B^{-1}]=-(-1)^{\partial A\partial B}B^{-1}[A,B]B^{-1}$, we have
\begin{align*}
&(\id\grotimes \Dirac)\circ  f_\ev(X\grotimes\id+\id\grotimes \pi(C_1^N))(\phi) \\
&\ \ \ =[\id\grotimes \Dirac ,f_\ev(X\grotimes\id+\id\grotimes \pi(C_1^N))]\phi +f_\ev(X\grotimes\id+\id\grotimes \pi(C_1^N))[\id\grotimes \Dirac (\phi)] \\
&\ \ \ =-f_\ev(X\grotimes\id+\id\grotimes \pi(C_1^N))\circ[\id\grotimes \Dirac,1+X^2\grotimes\id+\id\grotimes (C_1^N)^2]\circ f_\ev(X\grotimes\id+\id\grotimes \pi(C_1^N))(\phi) \\
&\ \ \ \ \ \ +f_\ev(X\grotimes\id+\id\grotimes \pi(C_1^N))[\id\grotimes \Dirac (\phi)] \\
&\ \ \ =-f_\ev(X\grotimes\id+\id\grotimes \pi(C_1^N))\circ\bbbra{\id\grotimes\bbra{\sum_{n=1}^Nn^{-\frac{l}{4}}[2x_n\grotimes c(e_n)+2y_n\grotimes c(f_n)]}}\circ f_\ev(X\grotimes\id+\id\grotimes \pi(C_1^N))(\phi)\\
&\ \ \ \ \ \ +f_\ev(X\grotimes\id+\id\grotimes \pi(C_1^N))[\id\grotimes \Dirac (\phi)]\\
&\ \ \ =:-\fra{C}_{l,N}\circ f_\ev(X\grotimes\id+\id\grotimes \pi(C_1^N))^2(\phi)+f_\ev(X\grotimes\id+\id\grotimes \pi(C_1^N))[\id\grotimes \Dirac (\phi)],
\end{align*}
where we put
$$\fra{C}_{l,N}:=\id\grotimes\bbra{\sum_{n=1}^Nn^{-\frac{l}{4}}[2x_n\grotimes c(e_n)+2y_n\grotimes c(f_n)]}.$$
By the same argument of the previous lemma, $\fra{C}_{l,N}$ converges in $\bb{L}_{\ca{S}}(\dom(\id\grotimes \Dirac),\ca{S}\grotimes \ca{H})$. 
The limit is denoted by $\fra{C}_{l}$.
Let us prove that $-\fra{C}_{l,N}\circ f_\ev(X\grotimes\id+\id\grotimes \pi(C_1^N))^2(\phi)+f_\ev(X\grotimes\id+\id\grotimes \pi(C_1^N))[\id\grotimes \Dirac (\phi)]$ converges to $-\fra{C}_{l}\circ f_\ev(X\grotimes\id+\id\grotimes \pi(C_1^\infty))^2(\phi)+f_\ev(X\grotimes\id+\id\grotimes \pi(C_1^\infty))[\id\grotimes \Dirac (\phi)]$. 
In fact,
\begin{align*}
&-\fra{C}_{l,N}\circ f_\ev(X\grotimes\id+\id\grotimes \pi(C_1^N))^2(\phi)+f_\ev(X\grotimes\id+\id\grotimes \pi(C_1^N))[\id\grotimes \Dirac (\phi)]\\
&\ \ \ +\fra{C}_{l}\circ f_\ev(X\grotimes\id+\id\grotimes \pi(C_1^\infty))^2(\phi)-f_\ev(X\grotimes\id+\id\grotimes \pi(C_1^\infty))[\id\grotimes \Dirac (\phi)]\\
&= f_\ev(X\grotimes\id+\id\grotimes \pi(C_1^N))^2\circ (\fra{C}_{l}-\fra{C}_{l,N})\phi\\
&\ \ \ +\bbra{f_\ev(X\grotimes\id+\id\grotimes \pi(C_1^\infty))^2-f_\ev(X\grotimes\id+\id\grotimes \pi(C_1^N))^2}\circ \fra{C}_{l}\phi\\
&\ \ \ \ \ \ +\bbra{f_\ev(X\grotimes\id+\id\grotimes \pi(C_1^N))-f_\ev(X\grotimes\id+\id\grotimes \pi(C_1^\infty))}[\id\grotimes \Dirac (\phi)].
\end{align*}
Since $\| f_\ev(X\grotimes\id+\id\grotimes \pi(C_1^N))^2\|\leq 1$, the third line converges to $0$. The fourth and fifth lines, converge to $0$ thanks to the strong convergence $f(X\grotimes\id+\id\grotimes \pi(C^{N}_1))\to \pi(\beta_0^\infty(f))$ on $\ca{S}\grotimes \ca{H}$. This means $\pi(\beta_0^\infty(f_\ev))\phi\in\dom(\id\grotimes \Dirac)$.
Moreover, by the above computation, the commutator $[\id\grotimes\Dirac,\pi(\beta_0^\infty(f_\ev))]$ is given by $-\fra{C}_l f_\ev(X\grotimes\id+\id\grotimes \pi(C_1^\infty))^2$, which is bounded. In fact, the operator $\id\grotimes x_n\grotimes c(e_n)\circ f_\ev(X\grotimes\id+\id\grotimes\pi(C_1^\infty))$ is bounded and its operator norm is not greater than $1$, and $l/4$ is greater than $1$.

For $f_\od=Xf_\ev$, one can prove $(a)$ and $(b)$ by the following formulas:
$$[\id\grotimes \Dirac,\pi(\beta_0^\infty(Xf_\ev))]=[\id\grotimes \Dirac,X\grotimes \id+\id\grotimes C]\pi(\beta_0^\infty(f_\ev))-(X\grotimes \id+\id\grotimes C)[\id\grotimes \Dirac,\pi(\beta_0^\infty(f_\ev))].$$
$$[\id\grotimes \Dirac,X\grotimes \id+\id\grotimes C]
= \id\grotimes\id\grotimes \sum_nn^{-l/4}\{c(e_n)c^*(e_n)+c(f_n)c^*(f_n)\}.$$
We leave the details to the reader.

 $(2)$ follows from the same argument of the proof of \cite[Proposition 5.11]{T4}. We have computed the spectrum of $\Dirac^2$ in Lemma \ref{Lemma C is bounded on dom(D) to H}. By this computation, the number of combinations of multi-indices $\overrightarrow{\alpha}$, $\overrightarrow{\beta}$, $\overrightarrow{\xi}$, and  $\overrightarrow{\eta}$ such that the corresponding eigenvalue is less than $K$ is finite, for every $K\in \bb{R}$. Thus, $(1+\Dirac^2)^{-1}$ is a compact operator on $\ca{H}$. Therefore, $\pi(a)\circ \bbra{\id\grotimes (1+\Dirac^2)^{-1}}$ is $\ca{S}$-compact. 

For $(B)$, we need to check the following: The $U_{L^2_m}$-action preserves $\dom(\id\grotimes \Dirac)$; $g(\id\grotimes \Dirac)-\id\grotimes \Dirac$ is bounded for each $g\in U_{L^2_m}$; and the map $g\mapsto g(\id\grotimes \Dirac)-\id\grotimes \Dirac$ is continuous.

In order to prove them, we compute the difference $g(\Dirac)-\Dirac$. For this aim, we compute $L_{(g,1)}\circ dL'(v)\circ L_{(-g,1)}$ and $L_{(g,1)}\circ dR'(v)\circ L_{(-g,1)}$ for $v=e_n,f_n$.
First,  we suppose $g\in U_{L^2_m,N}$ and we compute the difference on $\phi\in L^2(U_{L^2_k,N},\ca{L})$ for $n\leq N$. For $g=(a_1,b_1,\cdots,a_N,b_N)$,
\begin{align*}
&L_{(g,1)}\circ dL'(e_n)\circ L_{(-g,1)}\phi(x) \\
&\ \ \ =dL'(e_n)\circ L_{(-g,1)}\phi(x-g)\tau(g,x) \\
&\ \ \ =\bbra{-\frac{\partial L_{(-g,1)}\phi}{\partial x_n}(x-g)+in^l(y_n-b_n)L_{(-g,1)}\phi(x-g))}\tau(g,x)\\
&\ \ \ =\bbra{
-\frac{\partial \phi}{\partial x_n}(x)\tau(-g,x-g)-in^{1-2k}b_n\phi(x)\tau(-g,x-g)+in^l(y_n-b_n)\phi(x)\tau(-g,x-g)}\tau(g,x) \\
&\ \ \ =dL'(e_n)\phi(x)+i\bra{-n^{1-2k}-n^l}b_n\phi(x).
\end{align*}

By similar computations, we obtain the following formulas:
\begin{align*}
L_{(g,1)}\circ dL'(f_n)\circ L_{(-g,1)} &= dL'(f_n)+i\bra{n^{1-2k}+n^l}a_n; \\
L_{(g,1)}\circ dR'(e_n)\circ L_{(-g,1)} &= dR'(f_n)+i\bra{n^{1-2k}-n^l}b_n; \\
L_{(g,1)}\circ dR'(f_n)\circ L_{(-g,1)} &= dR'(f_n)+i\bra{-n^{1-2k}+n^l}a_n.
\end{align*}

Let $Jg$ be $\sum_{j=1}^N(-b_je_j+a_jf_j)\in\Lie(U_{L^2_m,N})$. Then,
\begin{align*}
g(\Dirac)-\Dirac
&= \frac{1}{\sqrt{2}}\id\grotimes\id \grotimes\gamma\bra{\sum_{n\leq N} n^{-l/4}\bbbra{-i\bra{-n^{1-2k}+n^l}b_ne_n+i\bra{-n^{1-2k}+n^l}a_nf_n}} \\
&\ \ \ +\frac{i}{\sqrt{2}}\id\grotimes\gamma^*\bra{\sum_{n\leq N} n^{-l/4}\bbbra{ -i\bra{n^{1-2k}+n^l}b_ne_n+i\bra{n^{1-2k}+n^l}a_nf_n}}\grotimes\id\\
&= \frac{i}{\sqrt{2}}\id\grotimes \id\grotimes \gamma\bra{\bra{-|d|^{1-2k-l/4}+|d|^{3l/4}}Jg}+\frac{-1}{\sqrt{2}}\id\grotimes \gamma^*\bra{\bra{|d|^{1-2k-l/4}+|d|^{3l/4}}Jg}\grotimes \id.
\end{align*}
Since $g\in U_\fin$, the above is a finite sum and it is bounded. The operator norm of this difference is bounded above by the $L^2_m$- norm of $Jg$ because $m\geq k+l\geq 1-2k-l/4,3l/4$.

More generally, if $g\in U_{L^2_m}$, $L_{(g,z)}$ preserves $\dom(\Dirac)$, and $g(\Dirac)-\Dirac$ is bounded, because both of $\bra{|d|^{1-2k-l/4}+|d|^{3l/4}}Jg$ and $\bra{-|d|^{1-2k-l/4}+|d|^{3l/4}}Jg$ belong to $\Lie(U_{L^2_{m}})$ .

Since all the maps 
\begin{center}
$U_{L^2_m}\ni g\mapsto \bra{|d|^{1-2k-l/4}+|d|^{3l/4}}Jg\in \Lie(U_{L^2_{k}})$,

$U_{L^2_m}\ni g\mapsto \bra{-|d|^{1-2k-l/4}+|d|^{3l/4}}Jg\in \Lie(U_{L^2_{k}})$,

$\Lie(U_{L^2_m})\ni v\mapsto \id\grotimes \gamma(v)\grotimes \id \in\bb{L}(\ca{H})$
\end{center}
are continuous, the map $g\mapsto g(\id\grotimes \Dirac)-\id\grotimes \Dirac$ is continuous.
\end{proof}

\begin{rmk}
We have chosen strange Hilbert space and operator in order to consider the $L^2_k$-topology for arbitrary $k> 1/2$ and make $\ca{A}(U_{L^2_k})$ act there. If $k$ were $1/2$ {\it and $l$ were $0$}, ${}^R\Dirac$ is actually equivariant and it is the setting dealt in \cite{T4}.
\end{rmk}

This lemma gives an element of $KK_{U_{L^2_m}}^\tau(\ca{A}_\HKT(U_{L^2_k}),\ca{S})$. By the pullback via $\iota\grotimes \id$, and thanks to Definition-Proposition \ref{dfn-pro rep of AHKT} $(2)$, it is possible to obtain a substitute for the index element at the {\it $KK$-theory level}. However, we explicitly describe the {\it unbounded Kasparov module} which is naturally defined by the pullback of the above index element via $\iota\grotimes \id$ for the following reasons. First, the Kasparov module we are constructing is an ingredient of the index element of the whole manifold $\ca{M}_{L^2_k}$ and it is defined by the tensor product of two $KK$-elements. Unbounded Kasparov modules have an advantage when dealing with exterior tensor products. Second, in our opinion, the $C^*$-algebra $\ca{A(X)}$ constructed by \cite{Yu} should be studied much more. It has an advantage comparing with that constructed in \cite{HKT}, in that the former looks more geometrical. On the other hand, the former has a big disadvantage, in that the definition is abstract. We believe the following constructions can be useful to study the $\ca{A(X)}$ overcoming this disadvantage.

\begin{dfn-pro}\label{index element}
The triple $(\ca{S}_\vep\grotimes \ca{H},\pi,\id\grotimes \Dirac)$ defines an unbounded $U_{L^2_m}$-equivariant Kasparov $(\ca{A}(U_{L^2_k}),\ca{S}_\vep)$-module. The corresponding $KK$-element is denoted by $[\widetilde{\Dirac}]$ and called the {\bf index element}.
\end{dfn-pro}

The non-trivial thing is only the following: {\it there is a dense subalgebra of $\ca{A}(U_{L^2_k})$ consisting of ``$C^1$-elements''}, where we say an element $a$ of $\ca{A}(U_{L^2_k})$ is a {\bf $C^1$-element} if it preserves $\dom(\id\grotimes \Dirac)$ and $[\id\grotimes\Dirac,\pi(a)]$ is bounded. It is highly non-trivial because $\ca{A}_\HKT(U_{L^2_k})_\fin\cap \iota\grotimes\id(\ca{A}(U_{L^2_k}))$ is empty. Notice that the set of all $C^1$-elements is a subalgebra.

We divide the proof into the following steps.

\begin{lem}\label{lemma X+C has dense range}
$X\grotimes\id+\id\grotimes C$ has dense range. 
\end{lem}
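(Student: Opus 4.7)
The plan is to identify the Hilbert $\ca{S}$-module $\ca{S} \grotimes \ca{H}$ with $C_0(\bb{R}, \ca{H})$, under which the operator $D := X \grotimes \id + \id \grotimes C$ becomes the fiberwise operator $(D\phi)(t) = (t \cdot \id_{\ca{H}} + C)\phi(t)$ on its natural domain. Since $X$ is a regular self-adjoint multiplier of $\ca{S}$ and $C$ is self-adjoint on $\ca{H}$, the sum $D$ is a regular self-adjoint operator on $\ca{S} \grotimes \ca{H}$, and so $\overline{\ran(D)} = \ker(D^*)^{\perp} = \ker(D)^{\perp}$. Therefore it suffices to show $\ker(D) = 0$.

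To establish this, suppose $\phi \in \dom(D)$ with $D\phi = 0$. I would approximate $\phi$ in $\ca{S} \grotimes \ca{H}$ by a sequence $\phi_n$ from the algebraic tensor product $\dom(X) \grotimes^{\alg} \dom(C)$, for which the fiberwise formula $(D\phi_n)(t) = (t + C)\phi_n(t)$ holds immediately. The uniform convergences $\phi_n \to \phi$ and $D\phi_n \to 0$ in $C_0(\bb{R},\ca{H})$ together with the closedness of the operator $t \cdot \id + C$ on $\ca{H}$ for each fixed $t$ would then imply $\phi(t) \in \dom(C)$ and $(t+C)\phi(t) = 0$ for every $t \in \bb{R}$; equivalently, $\phi(t) \in \ker(t \cdot \id + C)$.

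Next, I would invoke the standard fact that the point spectrum $\sigma_p(C)$ of a self-adjoint operator on the separable Hilbert space $\ca{H}$ is at most countable, because distinct eigenspaces are mutually orthogonal. For every $t \in \bb{R}$ outside the countable set $-\sigma_p(C)$, $\ker(t \cdot \id + C) = 0$, hence $\phi(t) = 0$. Since $\bb{R} \setminus (-\sigma_p(C))$ is dense in $\bb{R}$ and $\phi : \bb{R} \to \ca{H}$ is norm-continuous, it follows that $\phi \equiv 0$, completing the proof that $\ker D = 0$.

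The main obstacle is the passage from the abstract Hilbert-module equation $D\phi = 0$ to the pointwise identity $(t + C)\phi(t) = 0$ at every $t$. The domain of the closure of $X \grotimes \id + \id \grotimes C$ is defined by an abstract limiting process rather than a fiberwise condition, so a priori one does not know that $\phi(t) \in \dom(C)$ for each $t$. The tensor-product approximation combined with the closedness of $t \cdot \id + C$ resolves this technicality, after which the countability of $\sigma_p(C)$ and continuity of $\phi$ finish things cleanly.
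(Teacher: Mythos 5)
Your central reduction --- that $D := X\grotimes\id+\id\grotimes C$ being a regular self-adjoint operator on the Hilbert $\ca{S}$-module $\ca{S}\grotimes\ca{H}$ gives $\overline{\ran(D)}=\ker(D^*)^\perp=\ker(D)^\perp$, so that dense range follows from trivial kernel --- is false in the Hilbert $C^*$-module setting, and the proof does not survive this gap. For a regular operator $T$ on a Hilbert $B$-module one does have $\ran(T)^\perp=\ker(T^*)$, but the biorthogonality identity $S^{\perp\perp}=\overline{S}$ fails for general closed submodules (closed submodules of Hilbert $C^*$-modules need not be orthogonally complemented), so one cannot pass from $\ran(D)^\perp=\ker(D)$ to $\overline{\ran(D)}=\ker(D)^\perp$. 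The simplest test case already refutes the reduction: the multiplier $X$ alone on $\ca{S}=C_0(\bb{R})$ is regular self-adjoint and injective (continuity forces $Xf\equiv 0 \Rightarrow f\equiv 0$), yet $\ran(X)\subseteq\{f\mid f(0)=0\}$ is manifestly not dense in the sup-norm. The paper in fact explicitly flags this distinction in its proof of the very lemma you are proving: when recalling that $C$ has dense range on the Hilbert space $\ca{H}$ because it is self-adjoint and injective, it adds the parenthetical ``note that we are working on $L^2$-spaces not on $C_0$-spaces'' precisely because this implication holds for Hilbert spaces and \emph{fails} for Hilbert $C_0$-modules such as $\ca{S}\grotimes\ca{H}$.

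Your countability-of-point-spectrum argument does correctly show $\ker(X\grotimes\id+\id\grotimes C)=0$, but that is the wrong target. The paper instead proves the statement constructively: it first uses the Hilbert-space dense range of $C$ to reduce to approximating elements of the form $g\grotimes C\psi$ with $g$ compactly supported, and then explicitly builds an approximate preimage $A\in\ca{S}\grotimes\ca{H}$ from a piecewise-linear partition of unity $\{\rho_{x_n}\}$ (an approximate spectral decomposition of $X$) together with the bounded operators $(x_n^2+C^2)^{-1}$, verifying the approximation estimate term by term. This sidesteps the complementation problem entirely. To rescue your approach you would need an additional argument that $\overline{\ran(D)}$ is orthogonally complemented in $\ca{S}\grotimes\ca{H}$, which is not automatic and would essentially require reproving the lemma by other means.
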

\begin{proof}
We first notice that $C$ has dense range. We have essentially proved it as \cite[Proposition 4.24]{T4}. We need to modify the definition of the Clifford operator and to replace the  phrase ``$C_1^N\pm\sqrt{-1}\id$ has dense range'' with the  phrase ``$C_1^N$ has dense range''. It is correct because $C_1^N$ is self-adjoint, it is injective, and $\ran(C_1^N)^\perp=\ker(C_1^N)=0$ (note that we are working on ``$L^2$-spaces'' not on ``$C_0$-spaces'').

We prove that arbitrary element of $\ca{S}\grotimes \ca{H}$ can be approximated by elements in the range of $X\grotimes\id+\id\grotimes C$. 
It is sufficient to prove that $g\grotimes \phi$ can be approximated for $\phi\in \ca{H}$ and $g\in \ca{S}$ satisfying the following: $g$ is compactly supported and $\|g\|=1$. 
Let $\epsilon>0$. Since $C$ has dense range, there exists $\psi\in\ca{H}$ satisfying $\|C\psi-\phi\|<\epsilon/2$. Thus, if we can find $A\in \ca{S}\grotimes \ca{H}$ such that $\|(X\grotimes\id+\id\grotimes C)A-g\grotimes C\psi\|<\epsilon/2$, we have $\|(X\grotimes\id+\id\grotimes C)A-g\grotimes \phi\|<\epsilon$. Therefore, we may assume that $\phi=C\psi$, from the beginning. In addition, we suppose that $\|\psi\|\leq 1$ and $\|\phi\|\leq 1$.

Let us find such an approximate solution $A$ by the approximate spectral decomposition of the operator $X$. Pick up a positive real number $\Delta<\epsilon/16$ such that $|s-t|<\Delta$ implies $|g(t)-g(s)|<\epsilon/16$. This is possible because $g$ is compactly supported.
We define a bump function $\rho_0$ around $0$ by
$$\rho(t):=\begin{cases}
1+\frac{t}{\Delta} & t\in [-\Delta,0] \\
1-\frac{t}{\Delta} & t\in [0,\Delta] \\
0 & \text{otherwise}
\end{cases}$$
and we define a bump function at $x$ by $\rho_x(t):=\rho(t-x)$. Let $x_n:=n\Delta$ for $n\in\bb{Z}$. We notice the following properties:

$(1_\rho)$ $\|X\rho_x-x\rho_x\|\leq \Delta$;

$(2_\rho)$ $\{\rho_{x_n}\}_{n\in\bb{Z}}$ gives a continuous partition of unity; and

$(3_\rho)$ For all $t\in\bb{R}$, $\#\bbra{n\in\bb{Z}_{\geq0}\mid \rho_{x_n}(t)\neq 0}$ is at most $2$.

Let us consider
$$A:=g(0)\rho_0\grotimes \psi+\sum_{n\in\bb{Z}_{> 0}} \bra{\rho_{x_n}\grotimes \frac{g(x_n)x_n+g(-x_n)C}{x_n^2+C^2}\phi+\rho_{-x_n}\grotimes \frac{-g(-x_n)x_n+g(-x_n)C}{x_n^2+C^2}\phi}$$
Note that $A$ is a finite sum because $g$ is compactly supported.
We verify the inequality $\|(X\grotimes \id+\id\grotimes C)A-g\grotimes \phi\|<\epsilon$. Since the grading homomorphism on $\ca{S}$ exchanges $\rho_{x_n}$ and $\rho_{-x_n}$,
{\small
\begin{align*}
&(X\grotimes \id+\id\grotimes C)A-g\grotimes \phi\\
&=Xg(0)\rho_0\grotimes \psi+g(0)\rho_0\grotimes C\psi +
\sum_{n>0} \bra{X\rho_{x_n}\grotimes \frac{g(x_n)x_n+g(-x_n)C}{x_n^2+C^2}\phi+\rho_{-x_n}\grotimes \frac{g(x_n)x_nC+g(-x_n)C^2}{x_n^2+C^2}\phi} \\
&\ \ \ +\sum_{n>0}\bra{X\rho_{-x_n}\grotimes \frac{-g(-x_n)x_n+g(x_n)C}{x_n^2+C^2}\phi+\rho_{x_n}\grotimes \frac{-g(-x_n)x_nC+g(x_n)C^2}{x_n^2+C^2}\phi}-\sum_{n\in \bb{Z}}\rho_{x_n}g\grotimes \phi \\
%
%
%
%
&=Xg(0)\rho_0\grotimes \psi+g(0)\rho_0\grotimes C\psi-\rho_0g\grotimes \phi \\
&\ \ \ +\sum_{n>0} \bra{
X\rho_{x_n}\grotimes \frac{g(-x_n)C}{x_n^2+C^2}\phi
+\rho_{x_n}\grotimes \frac{-g(-x_n)x_nC}{x_n^2+C^2}\phi
+\rho_{-x_n}\grotimes \frac{g(x_n)x_nC}{x_n^2+C^2}\phi
+X\rho_{-x_n}\grotimes \frac{g(x_n)C}{x_n^2+C^2}\phi}\\
&\ \ \ \ \ \ +\sum_{n>0} \bra{
X\rho_{x_n}\grotimes \frac{g(x_n)x_n}{x_n^2+C^2}\phi
+\rho_{x_n}\grotimes \frac{g(x_n)C^2}{x_n^2+C^2}\phi
-\rho_{x_n}g\grotimes\phi} \\
&\ \ \ \ \ \ \ \ \ +\sum_{n>0} \bra{
X\rho_{-x_n}\grotimes \frac{-g(-x_n)x_n}{x_n^2+C^2}\phi
+\rho_{-x_n}\grotimes \frac{g(-x_n)C^2}{x_n^2+C^2}\phi
-\rho_{-x_n}g\grotimes\phi}\\
&=Xg(0)\rho_0\grotimes \psi+g(0)\rho_0\grotimes \phi-\rho_0g\grotimes \phi \\
&\ \ \ +\sum_{n>0} \bra{
(X\rho_{x_n}-x_n\rho_{x_n})\grotimes \frac{g(-x_n)C}{x_n^2+C^2}\phi
+(x_n\rho_{-x_n}+X\rho_{-x_n})\grotimes \frac{g(x_n)C}{x_n^2+C^2}\phi}\\
&\ \ \ \ \ \ +\sum_{n>0} \bra{
(X\rho_{x_n}-x_n\rho_{x_n})\grotimes \frac{g(x_n)x_n}{x_n^2+C^2}\phi
+g(x_n)\rho_{x_n}\grotimes \phi
-\rho_{x_n}g\grotimes\phi} \\
&\ \ \ \ \ \ \ \ \ +\sum_{n>0} \bra{
(X\rho_{-x_n}+x_n\rho_{x_n})\grotimes \frac{-g(-x_n)x_n}{x_n^2+C^2}\phi
+g(-x_n)\rho_{-x_n}\grotimes \phi
-\rho_{-x_n}g\grotimes\phi}.
\end{align*}}

The norm of the first line is less than $\epsilon/4$ because $\|g(0)X\rho_0\grotimes \psi\|<|g(0)|\cdot \Delta\cdot \|\psi\|\leq \epsilon/16$ and $\|g(0)\rho_0-\rho_0g\|\leq \epsilon/16 $. 

The norm of each summand of the second line is less than $\epsilon/8$ because $\|X\rho_{\pm x_n}\mp x_n\rho_{\pm x_n}\|<\Delta<\epsilon/16$, $|g(x_n)|\leq 1$ and $\|\frac{C}{x_n^2+C^2}\phi\|= \|\frac{C^2}{x_n^2+C^2}\psi\|\leq 1$ for $n\in\bb{Z}_{\neq 0}$. Thanks to the property $(3_\rho)$, we notice the norm of the sum is less than $\epsilon/4$.

The norm of each summand of the third line is less than $\epsilon/8$ because 
$\|g(x_n)\rho_{x_n}\grotimes \phi-\rho_{x_n}g\grotimes\phi\|
=\|g(x_n)\rho_{x_n}-\rho_{x_n}g\|\cdot \|\phi\|< \epsilon/16,$
and 
\begin{align*}
\left\|\frac{x_n}{x_n^2+C^2}\phi\right\|&=\left\|\frac{x_nC}{x_n^2+C^2}\psi\right\| 
=\left\|\frac{C/x_n}{1+\bra{C/x_n}^2}\psi\right\| 
\leq \|\psi\|\leq 1,
\end{align*}
where $\frac{{C}/{x_n}}{1+\bra{{C}/{x_n}}^2}$ is obtained by the operator calculus $F\bra{{C}/{x_n}}$ for $F(t)=\frac{t}{1+t^2}$. The norm of the sum is less than $\epsilon/4$ for the same reason of the second line.

The fourth line can be dealt with the same way. We omit the details.
\end{proof}

We have introduced the infinite sum $\fra{C}_l=\id\grotimes\bbra{\sum_{n=1}^\infty n^{-\frac{l}{4}}[2x_n\grotimes c(e_n)+2y_n\grotimes c(f_n)]}$. Thanks to the above lemma, we can define an operator $\fra{C}_l\{X\grotimes\id+\id\grotimes C\}^{-1}$ on the range of $X\grotimes\id+\id\grotimes C$ which is dense in $\ca{S}\grotimes \ca{H}$. It extends to a bounded operator on $\ca{S}\grotimes \ca{H}$ by the following lemma.

\begin{lem}
${\fra{C}_l}\{X\grotimes\id+\id\grotimes C\}^{-1}$ is bounded on $\ran\bra{X\grotimes\id+\id\grotimes C}$.
\end{lem}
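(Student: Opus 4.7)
The plan is to establish the operator-form inequality
$$\fra{C}_l^*\,\fra{C}_l\ \le\ 4\,(X\grotimes\id+\id\grotimes C)^2$$
as positive quadratic forms on a dense core, from which the boundedness of $\fra{C}_l(X\grotimes\id+\id\grotimes C)^{-1}$ on $\ran(X\grotimes\id+\id\grotimes C)$ with operator norm at most $2$ is immediate upon substituting $\phi=(X\grotimes\id+\id\grotimes C)^{-1}\psi$.

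First I would compute $\fra{C}_l^*\fra{C}_l$ directly on $\ca{S}_\fin\grotimes^\alg\ca{H}_\fin$. Each summand $2x_n\grotimes c(e_n)+2y_n\grotimes c(f_n)$ is odd and skew-adjoint (since $c(v)^*=-c(v)$ while the coordinate multiplications $x_n,y_n$ are real), and for distinct indices the Clifford generators $c(e_n),c(f_n),c(e_m),c(f_m)$ pairwise anticommute while the scalar multiplications commute, so the summands of $\fra{C}_l$ pairwise anticommute. Using $c(e_n)^2=c(f_n)^2=-\id$ and $c(e_n)c(f_n)+c(f_n)c(e_n)=0$, a direct expansion collapses all cross terms and yields
$$\fra{C}_l^*\,\fra{C}_l\ =\ \id\grotimes \Bigl(4\sum_{n\ge 1} n^{-l/2}(x_n^2+y_n^2)\Bigr).$$
A parallel calculation for $C=\sum_n\bra{x_n\grotimes c^*(e_n)+y_n\grotimes c^*(f_n)}$, using $c^*(v)^2=\|v\|^2\id$ and the analogous pairwise anticommutation of the $c^*(e_n),c^*(f_n)$, gives $C^2=\sum_n(x_n^2+y_n^2)$. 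Since $n^{-l/2}\le 1$ for every $n\ge 1$, we obtain $\fra{C}_l^*\fra{C}_l\le 4\,\id\grotimes C^2$ as quadratic forms.

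Next I would exploit the graded anticommutation of $X\grotimes\id$ and $\id\grotimes C$ in the $\bb{Z}_2$-graded tensor product: both are odd and (essentially) self-adjoint, so $(X\grotimes\id)(\id\grotimes C)=-(\id\grotimes C)(X\grotimes\id)$, and hence
$$(X\grotimes\id+\id\grotimes C)^2\ =\ X^2\grotimes\id+\id\grotimes C^2\ \ge\ \id\grotimes C^2.$$
Combining the two steps,
$$\|\fra{C}_l\phi\|^2\ \le\ 4\,\inpr{\phi}{(\id\grotimes C^2)\phi}{}\ \le\ 4\,\bigl\|(X\grotimes\id+\id\grotimes C)\phi\bigr\|^2$$
for every $\phi$ in the core, and setting $\phi=(X\grotimes\id+\id\grotimes C)^{-1}\psi$ yields $\|\fra{C}_l(X\grotimes\id+\id\grotimes C)^{-1}\psi\|\le 2\|\psi\|$ for every $\psi\in\ran(X\grotimes\id+\id\grotimes C)$.

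The main technical obstacle is domain bookkeeping: $\fra{C}_l$ was originally introduced only as an element of $\bb{L}_{\ca{S}}(\dom(\id\grotimes\Dirac),\ca{S}\grotimes\ca{H})$, whereas the inequality must be applied on $\dom(X\grotimes\id+\id\grotimes C)$, and the formal series defining $\fra{C}_l^*\fra{C}_l$ and $C^2$ involve infinite sums of unbounded operators. I would perform all the algebraic manipulations on $\ca{S}_\fin\grotimes^\alg\ca{H}_\fin$, where every series is a finite sum and the anticommutation identities are exact, derive the form inequality there, and then extend to the full domain by closure using the essential self-adjointness of $X\grotimes\id+\id\grotimes C$ together with the fact that $\ca{S}_\fin\grotimes^\alg\ca{H}_\fin$ is a core for it; the form inequality $\fra{C}_l^*\fra{C}_l\le 4(X\grotimes\id+\id\grotimes C)^2$ then automatically propagates to the closure, giving the required bounded extension.
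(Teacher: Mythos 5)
Your argument is correct, and it takes a genuinely different route from the paper's. The paper bounds each summand $x_n\grotimes c(e_n)\{X\grotimes\id+\id\grotimes C\}^{-1}$ individually: it computes, for a simple tensor $g\grotimes\phi$, that $\|\{\id\grotimes x_n\grotimes c(e_n)\}(g\grotimes\phi)\|^2 = \|g^*g\|\inpr{x_n^2\phi}{\phi}{}$ and $\|(X\grotimes\id+\id\grotimes C)(g\grotimes\phi)\|^2 \geq \|g^*g\|\inpr{C\phi}{C\phi}{} \geq \|g^*g\|\inpr{x_n^2\phi}{\phi}{}$, concluding that each summand has norm at most $1$; then (implicitly) it sums the operator norms, using $\sum_n 4n^{-l/4}<\infty$, which is exactly where $l>4$ enters. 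You instead exploit the pairwise anticommutation of the summands $n^{-l/4}(2x_n\grotimes c(e_n)+2y_n\grotimes c(f_n))$ to collapse all cross terms in $\fra{C}_l^*\fra{C}_l$ at once, reducing everything to the single form inequality $\fra{C}_l^*\fra{C}_l\leq 4\,\id\grotimes C^2\leq 4(X\grotimes\id+\id\grotimes C)^2$. Both proofs rely on the same underlying algebraic fact---that $C^2=\sum_n(x_n^2+y_n^2)\grotimes\id$, whose proof already uses the anticommutation you cite---but yours organizes it into a one-shot quadratic-form estimate. Two concrete dividends of your route: the resulting operator norm bound is $2$, independent of $l$, rather than the paper's $4\sum_n n^{-l/4}$; and the inequality itself only needs $l\geq 0$ (though of course the ambient construction of $\fra{C}_l$ as a limit in $\bb{L}_{\ca{S}}(\dom(\id\grotimes\Dirac),\ca{S}\grotimes\ca{H})$ in the earlier lemma still uses $l>4$). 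One small point you should tighten when writing this up: $\fra{C}_l\phi$ for $\phi\in\ca{S}_\fin\grotimes^\alg\ca{H}_\fin$ is an infinite (convergent) sum, not a finite one, so the expansion of $\inpr{\fra{C}_l\phi}{\fra{C}_l\phi}{}$ should be organized as a double sum $\sum_{m,n}\inpr{A_m\phi}{A_n\phi}{}$ with the off-diagonal pairs cancelled by anticommutation and absolute convergence justified before regrouping, rather than by literally squaring an operator on the core.
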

\begin{proof}
It suffices to prove that ${x_n\grotimes c(e_n)}\{X\grotimes\id+\id\grotimes C\}^{-1}$ is bounded and its norm is at most $1$. For any $g\grotimes \phi\in\ca{S}\grotimes \ca{H}$ such that $Xg\in \ca{S}$ and $C\phi\in\ca{H}$, we have
\begin{align*}
&\|\{\id\grotimes x_n\grotimes c(e_n)\}(X\grotimes\id+\id\grotimes C)^{-1}(X\grotimes\id+\id\grotimes C)(g\grotimes \phi)\|^2 \\
&\ \ \ =\|\{\id\grotimes x_n\grotimes c(e_n)\}(g\grotimes \phi)\|^2 \\
&\ \ \ =\|\inpr{\{\id\grotimes x_n\grotimes c(e_n)\}(g\grotimes \phi)}{\{\id\grotimes x_n\grotimes c(e_n)\}(g\grotimes \phi)}{\ca{S}}\|_{\ca{S}} \\
&\ \ \ =\|g^*g\|\inpr{x_n\grotimes c(e_n)(\phi)}{x_n\grotimes c(e_n)(\phi)}{\ca{H}} \\
&\ \ \ =\|g^*g\|\inpr{x_n^2\grotimes \id(\phi)}{\phi}{\ca{H}}.
\end{align*}
On the other hand,
\begin{align*}
&\|(X\grotimes\id+\id\grotimes C)(g\grotimes \phi)\|^2 \\
&\ \ \ =\|\inpr{(X\grotimes\id+\id\grotimes C)(g\grotimes \phi)}{(X\grotimes\id+\id\grotimes C)(g\grotimes \phi)}{\ca{S}}\| \\
&\ \ \ =\|(Xg)^*Xg\inpr{\phi}{\phi}{}+g^*g\inpr{C \phi}{C \phi}{}\|\\
&\ \ \ \geq \|g^*g\|\inpr{C \phi}{C \phi}{},
\end{align*}
where we have used the definition of the norm of $\ca{S}$ to prove the last inequality: The norm is defined by the maximum value, and hence the norm of the sum of two non-negative elements is not less than the norm of each summand. Noticing that
$$\inpr{C \phi}{C \phi}{}=\sum_{m}(\inpr{x_m^2\grotimes \id(\phi)}{\phi}{}+\inpr{y_m^2\grotimes \id(\phi)}{\phi}{})\geq \inpr{x_n^2\grotimes \id(\phi)}{\phi}{\ca{H}},$$
we have the inequality 
$$\|\{\id\grotimes x_n\grotimes c^*(e_n)\}\{X\grotimes\id+\id\grotimes C\}^{-1}(X\grotimes\id+\id\grotimes C)(g\grotimes \phi)\|^2\leq \|(X\grotimes\id+\id\grotimes C)(g\grotimes \phi)\|^2.$$
Since the subspace spanned by $\bbra{ g\grotimes \phi\in\ca{S}\grotimes \ca{H}\,\middle|\, \text{ such that } Xg\in \ca{S}\text{ and }C\phi\in\ca{H}}$ is dense, we find that ${\fra{C}_l}\{X\grotimes\id+\id\grotimes C\}^{-1}$ is bounded on $\ran\bra{X\grotimes\id+\id\grotimes C}$.
\end{proof}

We have defined $\pi$ in an abstract and algebraic way. We need the following analytic and quantitative property for even elements of $\ca{S}$. The even part of a $\bb{Z}_2$-graded algebra $A$ is denoted by $A_0$. The following proof reminds us of the chain rule.

\begin{lem}\label{lemma C1 functions are mapped to C1 elements}
If $g\in \ca{S}_0$ is the $C^1$-limit of a sequence of $(\ca{S}_\fin)_0$, $\pi(\beta_0^\infty(g))$ preserves $\dom(\id\grotimes\Dirac)$ and the commutator with $\id\grotimes\Dirac$ is bounded.
\end{lem}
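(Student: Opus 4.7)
\emph{Plan of proof.} The strategy is to approximate $g$ by a sequence $g_n\in(\ca{S}_\fin)_0$ with $\|g_n-g\|_{C^1}\to 0$, establish the required properties for each $g_n$ via an explicit commutator identity, and pass to the limit using the closedness of $\id\grotimes\Dirac$. This reduces the lemma to a quantitative estimate on a sequence of Kasparov $C^1$-elements that we already know to be well-behaved.

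First, I would exploit the structural fact that the generators of $\ca{S}_\fin$ satisfy $f_\od^2 = f_\ev - f_\ev^2 = f_\ev(1-f_\ev)$, so the even subalgebra $(\ca{S}_\fin)_0$ consists of polynomials without constant term in $f_\ev$: each $g_n = P_n\circ f_\ev$ for some polynomial $P_n$. Writing $A:=X\grotimes\id+\id\grotimes\pi(C_1^\infty)$, the identity
\[
[\id\grotimes\Dirac,\,f_\ev(A)] \;=\; -f_\ev(A)\,\fra{C}_l\,f_\ev(A)
\]
obtained in the proof of Lemma~\ref{index element for HKT algebra}, combined with the graded Leibniz rule applied to $g_n(A)=P_n(f_\ev(A))$, shows that $\pi(\beta_0^\infty(g_n))$ preserves $\dom(\id\grotimes\Dirac)$ with bounded commutator. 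Boundedness relies on Lemma~\ref{Lemma C is bounded on dom(D) to H}, which ensures that $\fra{C}_l$ extends to a bounded map $\dom(\Dirac)\to\ca{H}$, together with the observation that $f_\ev(A)$ sends $\ca{S}\grotimes\ca{H}$ into $\dom(\id\grotimes\Dirac)$.

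Next I would extract a uniform estimate enabling the passage to the limit. Since $\beta_0^\infty$ and $\pi$ are $\ast$-homomorphisms, $\|\pi(\beta_0^\infty(g_n))-\pi(\beta_0^\infty(g))\|\leq\|g_n-g\|_\infty\to 0$. For the commutators, the change of variable $s=f_\ev(t)\in[0,1]$ combined with $f_\ev'(t)=-2tf_\ev(t)^2$ gives $g_n'(t)=-2t\,P_n'(f_\ev(t))\,f_\ev(t)^2$, so $\|g_n'\|_\infty$ controls the quantities $\sup_{s\in[0,1]}|P_n'(s)s^2|$ that arise in the commutator-norm estimate (evenness makes sense of the ratio at $t=0$). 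It follows that the commutators $[\id\grotimes\Dirac,\pi(\beta_0^\infty(g_n))]$ are Cauchy in operator norm, converging to some bounded operator $T\in\bb{L}_{\ca{S}}(\ca{S}\grotimes\ca{H})$. Then for any $\phi\in\dom(\id\grotimes\Dirac)$ the identity
\[
(\id\grotimes\Dirac)\pi(\beta_0^\infty(g_n))\phi \;=\; \pi(\beta_0^\infty(g_n))(\id\grotimes\Dirac)\phi \;+\; [\id\grotimes\Dirac,\pi(\beta_0^\infty(g_n))]\phi
\]
shows the left-hand side converges in norm, and combined with $\pi(\beta_0^\infty(g_n))\phi\to\pi(\beta_0^\infty(g))\phi$ and the closedness of $\id\grotimes\Dirac$, this forces $\pi(\beta_0^\infty(g))\phi\in\dom(\id\grotimes\Dirac)$ with commutator equal to $T$.

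\emph{Main obstacle.} The delicate step is the quantitative commutator estimate. A naive chain rule produces a factor $g_n'(t)/t$ whose boundedness requires evenness of $g_n$ (so that $g_n'(0)=0$) plus control near the origin that, a priori, goes beyond $C^1$-continuity of $g_n'$; the damping $f_\ev'(t)=-2tf_\ev(t)^2$ and the identification of $P_n'(s)s^2$ with $-g_n'(t)/(2t)$ are exactly what is needed to absorb this. Compounding the difficulty, $\fra{C}_l$ does not commute with $f_\ev(A)$, so the Leibniz expansion does not telescope cleanly: the bound must be read off directly from the non-commutative sum $\sum_{j}f_\ev(A)^{j+1}\fra{C}_l f_\ev(A)^{k-j}$, with each intermediate $f_\ev(A)$-factor absorbed via the mapping property $f_\ev(A)\colon\ca{S}\grotimes\ca{H}\to\dom(\id\grotimes\Dirac)$.
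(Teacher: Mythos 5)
Your high-level plan---approximate $g$ by $g_n\in(\ca{S}_\fin)_0$ in the $C^1$-topology, derive an explicit commutator formula for each $g_n$, and pass to the limit via closedness of $\id\grotimes\Dirac$---is exactly the paper's strategy. But the key algebraic fact that makes the quantitative estimate work is the opposite of what you assert in your ``main obstacle'' paragraph: $\fra{C}_l$ \emph{does} commute with $f_\ev(X\grotimes\id+\id\grotimes C)$. Indeed $C^2=\sum_k(x_k^2+y_k^2)\grotimes\id_{S_U^*\grotimes S_U}$ has no Clifford part, so $1+X^2\grotimes\id+\id\grotimes C^2$ is a pure multiplication operator; the multiplication factors $x_n,y_n$ appearing in $\fra{C}_l$ commute with it, and the Clifford factors $c(e_n),c(f_n)$ commute with its trivial Clifford component. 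This is precisely the remark the paper makes, and it is what causes the graded Leibniz expansion of $[\id\grotimes\Dirac,\,f_\ev(A)^m]$ to telescope to a single term $-m\,\fra{C}_l\,f_\ev(A)^{m+1}$, which one then recognizes as $\fra{C}_l\,A^{-1}\pi(\beta_0^\infty((f_\ev^m)'))$ because $(f_\ev^m)'(t)=-2mt\,f_\ev(t)^{m+1}$.

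Without that commutativity, the estimate you propose does not go through. You suggest absorbing each factor of the non-commutative sum $\sum_{j=0}^{k-1}f_\ev(A)^{j+1}\fra{C}_l f_\ev(A)^{k-j}$ ``via the mapping property of $f_\ev(A)$,'' but that only bounds each term separately; for a polynomial $P_n$ of degree $k$ this gives a bound that grows at least linearly in $k$, not one controlled by $\|g_n'\|_\infty$. The whole point of the commutativity is that the $k$ terms recombine into a single expression $-k\,\fra{C}_l\,f_\ev(A)^{k+1}$, so that after summing over the monomials of $P_n$ the commutator becomes $\fra{C}_l\,A^{-1}\,\pi(\beta_0^\infty(g_n'))$, with norm at most $\|\fra{C}_l A^{-1}\|_{\bb{L}(\ca{S}\grotimes\ca{H})}\cdot\|g_n'\|_\infty$. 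This operator-norm continuity in $g_n'$ is exactly what is needed to make the $C^1$-approximants' commutators Cauchy and to identify the limiting bounded operator. You should therefore replace your discussion of the obstacle by verifying the commutation $[\fra{C}_l,\,f_\ev(A)]=0$ and then following the telescoping argument; once you do, the rest of your plan (closedness, limit passage) is sound and matches the paper.
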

\begin{proof}
As a preliminary, we study the commutator $[\pi(a),\id\grotimes\Dirac]$ more, for $a\in \ca{A}_\HKT(U_{L^2_k})_\fin$.
An element $f$ of $(\ca{S}_\fin)_0$ can be written as $f(t)=p(f_\ev(t))=p\bra{\frac{1}{1+t^2}}$ for some polynomials $p$. This is because $f$ is given by a linear combination of $f_\od^\alpha\cdot f_\ev^\beta$ for $\beta\in \bb{N}$ and $\alpha\in 2\bb{N}$, and $f_\od^2=f_\ev-f_\ev^2$. Let us compute the commutator $[\id\grotimes\Dirac,\pi(\beta_0^\infty(f))]$. First, we compute it for the monomial $p(X)=X^m$.
By a standard algebraic technique on commutators, 
{\small
\begin{align*}
&\bbbra{\id\grotimes \Dirac,\pi(\beta_0^\infty(p(f_\ev)))} \\
&\ \ \ =\sum_{\alpha=0}^{m-1} \bra{\frac{1}{1+X^2\grotimes \id+\id\grotimes C^2}}^{\alpha+1}\bra{-2\sum_n\id\grotimes n^{-l/4}\bra{x_n\grotimes c(e_n)+y_n\grotimes c(f_n)}}\bra{\frac{1}{1+X^2\grotimes \id+\id\grotimes C^2}}^{m-\alpha}\\
&\ \ \ =\bra{\fra{C}_l(X\grotimes\id+\id\grotimes C)^{-1}}\cdot (-2m)(X\grotimes\id+\id\grotimes C)\bra{\frac{1}{1+X^2\grotimes \id+\id\grotimes C^2}}^{m+1}.
\end{align*}}
Note that $\fra{C}_l$ commutes with $\bra{\frac{1}{1+X^2\grotimes \id+\id\grotimes C^2}}^{\alpha+1}$.
Since $[p\circ f_\ev]'(t)=-2mt(1+t^2)^{-m-1}$, we notice that 
\begin{align*}
\bbbra{\id\grotimes \Dirac,\pi(\beta_0^\infty(f))} 
&= \fra{C}_l\{X\grotimes\id+\id\grotimes C\}^{-1}f'(X\grotimes\id+\id\grotimes C) \\
&={\fra{C}_l}\{X\grotimes\id+\id\grotimes C\}^{-1}\pi(\beta_0^\infty(f')).
\end{align*}
Therefore, for general polynomial $p$ and $f=p\circ f_\ev$,
$$\bbbra{\id\grotimes \Dirac,\pi(\beta_0^\infty(p\circ f_\ev))} ={\fra{C}_l}\{X\grotimes\id+\id\grotimes C\}^{-1}\pi(\beta_0^\infty(f'))$$
for $f\in (\ca{S}_\fin)_0$.

Let us prove the statement. By the assumption, there exists a sequence $\{g_m\}\subseteq (\ca{S}_\fin)_0$ such that $g_m\to g$ in the $C^1$-topology. We need to prove that $\pi(\beta_0^\infty(g))$ is a $C^1$-element. We have proved that $\pi(\beta_0^\infty(g_m))\phi\in \dom(\id\grotimes\Dirac)$ for $\phi\in\dom(\id\grotimes\Dirac)$ in $(1)$ of $(A)$ of the proof of Lemma \ref{index element for HKT algebra}. By the above preliminary,
\begin{align*}
\id\grotimes\Dirac\circ\pi(\beta_0^\infty(g_m))\phi 
&=[\id\grotimes\Dirac,\pi(\beta_0^\infty(g_m))]\phi +\pi(\beta_0^\infty(g_m))\id\grotimes\Dirac \phi \\
&={\fra{C}_l}\{X\grotimes\id+\id\grotimes C\}^{-1}\pi(\beta_0^\infty(g_m'))\phi +\pi(\beta_0^\infty(g_m))\id\grotimes\Dirac \phi \\
&\to {\fra{C}_l}\{X\grotimes\id+\id\grotimes C\}^{-1}\pi(\beta_0^\infty(g'))\phi  +\pi(\beta_0^\infty(g))\id\grotimes\Dirac \phi
\end{align*}
as $m\to \infty$. It means that $\pi(\beta_0^\infty(g))\phi\in \dom(\id\grotimes\Dirac)$ and $\id\grotimes\Dirac \pi(\beta_0^\infty(g))\phi=\lim_m \id\grotimes\Dirac \pi(\beta_0^\infty(g_m))\phi$. Moreover, $[\id\grotimes\Dirac, \pi(\beta_0^\infty(g))]={\fra{C}_l}\{X\grotimes\id+\id\grotimes C\}^{-1}\pi(\beta_0^\infty(g'))$ is a bounded operator. Thus $\pi(\beta_0^\infty(g))$ is a $C^1$-element.
\end{proof}

The above lemma gives an analytic condition to judge whether an element of $\ca{A}_\HKT(U_{L^2_k})$ is of $C^1$ or not. Next, we need to know the $C^1$-closure of $\beta_0^\infty((\ca{S}_\fin)_0)$.

\begin{lem}\label{lemma Sfineven is dense in C1epsilon}
An even function $f\in C^1(\bb{R})$ such that $f\in\ca{S}$ and $(X^2+1)f'\in\ca{S}$, is the $C^1$-limit of a sequence of $(\ca{S}_\fin)_0$. In particular, any element of $\iota((\ca{S}_\vep)_0)\cap C^1(\bb{R})$ is the $C^1$-limit of a sequence of $(\ca{S}_\fin)_0$. \end{lem}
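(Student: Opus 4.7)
The plan is to convert the claim into a classical $C^1$-approximation theorem for $\pi$-periodic functions via the substitution $t=\cot\theta$, which identifies $f_\ev(t)=1/(1+t^2)$ with $\sin^2\theta$. Given $f$ as in the hypothesis, set $\tilde f(\theta):=f(\cot\theta)$ for $\theta\in(0,\pi)$. I will check the following: evenness of $f$ forces $\tilde f(\pi-\theta)=\tilde f(\theta)$; $f\in\ca{S}$ forces $\tilde f(\theta)\to 0$ as $\theta\to 0^+$ or $\pi^-$; and the chain rule gives $\tilde f'(\theta)=-(1+t^2)f'(t)\big|_{t=\cot\theta}$, so $(X^2+1)f'\in\ca{S}$ forces $\tilde f'(\theta)\to 0$ at the boundary. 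Extending $\pi$-periodically, $\tilde f$ becomes a $\pi$-periodic even $C^1$ function on $\bb{R}/\pi\bb{Z}$ with $\tilde f(0)=0$. On the other hand, as pointed out just before the lemma, $(\ca{S}_\fin)_0$ consists of polynomials $p(f_\ev)$ with $p(0)=0$, and under the substitution these correspond precisely to the even trigonometric polynomials in $\cos 2\theta=1-2\sin^2\theta$ vanishing at $\theta=0$.

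The task reduces to approximating $\tilde f$ in $C^1(\bb{R}/\pi\bb{Z})$ by such trigonometric polynomials. Since $\tilde f$ is even and $\pi$-periodic, $\tilde f'$ is odd, continuous, $\pi$-periodic, and of mean zero (because $\int_0^\pi \tilde f'\,d\theta=\tilde f(\pi)-\tilde f(0)=0$). I would apply Fej\'er's theorem to $\tilde f'$: the Fej\'er means $\sigma_n(\tilde f')$ are trigonometric polynomials converging uniformly to $\tilde f'$, and because the Fej\'er kernel is even and $\pi$-periodic, $\sigma_n(\tilde f')$ remains odd, $\pi$-periodic and of mean zero, hence of the form $\sum_{k=1}^n b_{n,k}\sin(2k\theta)$. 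Integrating from $0$ gives
\[
s_n(\theta):=\int_0^\theta \sigma_n(\tilde f')(\phi)\,d\phi=\sum_{k=1}^n\frac{b_{n,k}}{2k}\bra{1-\cos(2k\theta)},
\]
an even $\pi$-periodic trigonometric polynomial in $\cos 2\theta$ with $s_n(0)=0$. Then $s_n'=\sigma_n(\tilde f')\to\tilde f'$ uniformly, and $s_n\to\tilde f$ uniformly since $s_n(\theta)-\tilde f(\theta)=\int_0^\theta[\sigma_n(\tilde f')-\tilde f']\,d\phi$. Writing $s_n(\theta)=p_n(\sin^2\theta)$ with a polynomial $p_n$ satisfying $p_n(0)=0$ supplies the desired approximating sequence $p_n(f_\ev)\in(\ca{S}_\fin)_0$.

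Uniform convergence in $t$ then follows from the chain-rule identities $f'(t)=-\sin^2\theta\cdot\tilde f'(\theta)$ and $\tfrac{d}{dt}p_n(f_\ev(t))=-\sin^2\theta\cdot s_n'(\theta)$ (both taken with $\theta$ defined by $t=\cot\theta$), so that
\[
\left|\tfrac{d}{dt}p_n(f_\ev(t))-f'(t)\right|\leq \sin^2\theta\cdot\|s_n'-\tilde f'\|_\infty\leq \|s_n'-\tilde f'\|_\infty\to 0
\]
uniformly in $t\in\bb{R}$, and analogously for the function values. The ``in particular'' clause is immediate: any $f\in\iota((\ca{S}_\vep)_0)\cap C^1(\bb{R})$ has compact support in $(-\vep,\vep)$, so $f'$ is compactly supported and $(X^2+1)f'\in\ca{S}$ is automatic. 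The main delicate point is the boundary behavior $\tilde f'(0)=\tilde f'(\pi)=0$: without the hypothesis $(X^2+1)f'\in\ca{S}$, $\tilde f'$ would only be bounded on $(0,\pi)$ and the Fej\'er-based $C^1$ step would break down. The hypothesis is precisely what guarantees that $\tilde f'$ extends continuously to $[0,\pi]$, reducing the problem to classical trigonometric-polynomial approximation.
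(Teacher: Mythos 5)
Your proof is correct, and it is a trigonometric reformulation of the argument in the paper rather than a fundamentally different one. The paper stays in the variable $t$: it invokes the density of $\ca{S}_\fin$ in $\ca{S}$ (Stone--Weierstrass) to approximate $(X^2+1)f'$ uniformly by odd elements $f_\od\, p_n(f_\ev)$, then integrates the quotient $\frac{1}{s^2+1}f_\od(s)p_n(f_\ev(s))$ from $-\infty$, using the explicit antiderivative $\int_{-\infty}^t \frac{1}{s^2+1}f_\od(s)f_\ev(s)^\beta\,ds = -\frac{1}{2(\beta+1)}f_\ev(t)^{\beta+1}$ to conclude that the primitives lie in $(\ca{S}_\fin)_0$. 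Your substitution $t=\cot\theta$ transports exactly this argument to the circle $\bb{R}/\pi\bb{Z}$: the weight $\frac{1}{s^2+1}\,ds$ becomes the flat measure $d\theta$, the target $(X^2+1)f'$ becomes $-\tilde f'$, the approximating space $f_\od\,\bb{C}[f_\ev]$ becomes the odd $\pi$-periodic trigonometric polynomials, and abstract Stone--Weierstrass density is replaced by the concrete Fej\'er means. Both proofs then integrate the approximant of the derivative. What your version buys is transparency about the role of the hypothesis: $(X^2+1)f'\in\ca{S}$ is precisely the condition that $\tilde f'$ tends to $0$ at $\theta=0,\pi$, i.e.\ that $\tilde f$ extends to a $C^1$ function on the compact circle, after which classical Fourier approximation applies. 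The paper's version is shorter and stays in one variable, at the cost of the small explicit computation of the antiderivative. Both are complete; pick whichever exposition you prefer.
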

\begin{proof}
Since $f_\od^2=f_\ev-f_\ev^2$, 
$$\ca{S}_\fin=f_\ev\bb{C}[f_\ev]\oplus f_\od\bb{C}[f_\ev]$$
as a vector space. 
Since $(X^2+1)f'\in\ca{S}$, there is a sequence of polynomials $\{p_n\}$ such that $\{f_\od p_n(f_\ev)\}$ uniformly converges to $(X^2+1)f'$. Thus, for any $\epsilon$, there exists $n_0$ such that $n\geq n_0$ implies that $|(t^2+1)f'(t)-f_\od(t)p_n(f_\ev(t))|<\epsilon/\pi$ on $\bb{R}$, and hence
$$\left|f'(t)-\frac{1}{t^2+1}f_\od(t) p_n(f_\ev(t))\right|<\frac{\epsilon}{\pi(t^2+1)}.$$
It means that
\begin{align*}
\left|f(t)-\int_{-\infty}^t\frac{1}{s^2+1}f_\od(s) p_n(f_\ev(s))ds
\right|&=
\left|\int_{-\infty}^t\bra{f'(s)-\frac{1}{s^2+1}f_\od(s) p_n(f_\ev(s))}ds\right| \\
&\leq \int_{-\infty}^t\left|f'(s)-\frac{1}{s^2+1}f_\od(s) p_n(f_\ev(s))\right|ds\\
&<\int_{-\infty}^t\frac{\epsilon}{\pi(s^2+1)}ds\\
&=\frac{\epsilon}{\pi}\bra{\arctan(t)+\frac{\pi}{2}}<\epsilon.
\end{align*}
Since $[f_\ev^\alpha]'=-2\alpha f_\od[f_\ev(t) ]^\alpha$, we have
$$\int_{-\infty}^t\frac{1}{s^2+1}f_\od(s) (f_\ev(s))^\beta ds
=\frac{-1}{2(\beta+1)}[f_\ev(t)]^{\beta+1}.$$
Hence, the function $t\mapsto \int_{-\infty}^t\frac{1}{s^2+1}f_\od(s) p_n(f_\ev(s)) ds$ belongs to $\ca{S}_\fin$. This sequence converges to $f$ in the $C^1$-topology.
\end{proof}

\begin{lem}\label{lemma Sfinodd is dense in C1epsilon}
Any element of $(\ca{S}_\vep)_1\cap C^2(\bb{R})$ is the $C^1$-limit of a sequence of $f_\od(\ca{S}_\fin)_0$. 
\end{lem}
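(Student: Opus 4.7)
The plan is to reduce this lemma to the previous Lemma \ref{lemma Sfineven is dense in C1epsilon} by factoring out $f_\od$. Given $h\in (\ca{S}_\vep)_1\cap C^2(\bb{R})$, I would look for an even $g$ such that $h=f_\od\cdot g$; then if the hypotheses of Lemma \ref{lemma Sfineven is dense in C1epsilon} are satisfied for $g$, I obtain $g_n\in(\ca{S}_\fin)_0$ with $g_n\to g$ in $C^1$, so $f_\od g_n\to f_\od g=h$ in $C^1$ because $f_\od$ and $f_\od'$ are bounded on $\bb{R}$.

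To construct $g$: since $h$ is odd and $C^1$, $h(0)=0$ and the function $u(t):=h(t)/t$ (with $u(0):=h'(0)$) admits the representation $u(t)=\int_0^1 h'(st)\,ds$, giving $u\in C^1(\bb{R})$ with $u'(t)=\int_0^1 s\,h''(st)\,ds$ thanks to $h\in C^2$. Set $g(t):=(t^2+1)\,u(t)$. Then $g$ is even, $C^1$, and $f_\od(t)\,g(t)=t\,u(t)=h(t)$. Since $h$ vanishes outside $[-\vep,\vep]$, so do $u$ and $u'$; in particular $g\in \ca{S}$. For the derivative condition, compute
\begin{equation*}
(X^2+1)g'=2X(X^2+1)u+(X^2+1)^2 u'=2(X^2+1)h+(X^2+1)^2 u',
\end{equation*}
where the first term lies in $\ca{S}$ because $h\in\ca{S}_\vep\subseteq\ca{S}$ and $(X^2+1)$ is bounded on $[-\vep,\vep]$, while the second term lies in $\ca{S}$ because $u'$ is continuous and compactly supported in $[-\vep,\vep]$ (here I use that $u'(\pm\vep)=0$, which follows from $h(\pm\vep)=h'(\pm\vep)=0$ combined with the formula for $u'$). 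Thus the hypotheses of Lemma \ref{lemma Sfineven is dense in C1epsilon} are met.

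Applying that lemma, we obtain $\{g_n\}\subseteq(\ca{S}_\fin)_0$ with $g_n\to g$ in the $C^1$-topology on $\bb{R}$. Then $f_\od g_n\in f_\od(\ca{S}_\fin)_0$, and the product estimate $(f_\od g_n - f_\od g)'=f_\od'(g_n-g)+f_\od(g_n'-g')$ together with $\|f_\od\|_\infty,\|f_\od'\|_\infty<\infty$ gives $f_\od g_n\to h$ in $C^1$, as required. The main subtlety is the regularity hypothesis $h\in C^2$ rather than $C^1$: it is precisely what is needed for $u'$ to be continuous (and vanishing at $\pm\vep$), which in turn is what makes $(X^2+1)g'$ land in $\ca{S}$ and lets us invoke Lemma \ref{lemma Sfineven is dense in C1epsilon}.
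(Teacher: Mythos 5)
Your proof is correct and follows exactly the same route as the paper's: write $g=f_\od^{-1}\cdot h=(t^2+1)\,h(t)/t$, check that $g$ satisfies the hypotheses of Lemma \ref{lemma Sfineven is dense in C1epsilon}, and pull the resulting $C^1$-approximation back through multiplication by $f_\od$. You merely spell out the verification of $(X^2+1)g'\in\ca{S}$ (via the identity $(X^2+1)g'=2(X^2+1)h+(X^2+1)^2u'$ and the vanishing of $u'$ at $\pm\vep$) that the paper leaves implicit.
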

\begin{proof}
Let $f\in (\ca{S}_\vep)_1\cap C^2(\bb{R})$. Then, $t^{-1}f(t)$ is defined on $\bb{R}$ and of $C^1$ (the value of this function at $0$ is defined by $f'(0)$). Hence, $f_\od^{-1}\cdot f(t):=\frac{1+t^2}{t}f(t)$ is of $C^1$. Note that it is an even element. Moreover, $(X^2+1)[f_\od^{-1}\cdot f]'\in \ca{S}_\vep\subseteq \ca{S}$. Thanks to the previous lemma, we have a sequence $\{f_n\}\subseteq (\ca{S}_\fin)_0$ converging to $f_\od^{-1}\cdot f$ in the $C^1$-topology. Then, the sequence $\{f_\od f_n\}$ converges to $f_\od\cdot f_\od^{-1}\cdot f=f$ in the $C^1$-topology. 
\end{proof}
\underline{\it Proof of Definition-Proposition \ref{index element}} :
The necessary change from the proof of Lemma \ref{index element for HKT algebra} is just the following: {\it The set of $C^1$-elements of $\ca{A}(U_{L^2_k})$ is dense.} Combining Lemma \ref{lemma X+C has dense range}--\ref{lemma Sfinodd is dense in C1epsilon}, we notice that all the elements of $\cup_N\beta_N^\infty([\ca{S}_\vep\cap C^2(\bb{R})]\grotimes^\alg Cl_{\tau,\scr{S}}(U_{L^2_k,N}))$ are of $C^1$. The other conditions are obvious.
\begin{flushright} $\square$ \end{flushright}

Let us define the substitute for the index element on the whole manifold $\ca{M}_{L^2_k}$. 

Recall the isomorphism $\ca{A(M}_{L^2_k})\cong \ca{A}(U_{L^2_k})\grotimes Cl_\tau(\widetilde{M})$.
We prepare the index element for the $\widetilde{M}$-direction. Let $\ca{L}|_{\widetilde{M}}$ be the restriction of $\ca{L}$ to $\widetilde{M}$, which is a $\tau$-twisted $T\times\Pi_T$-equivariant line bundle. It admits a $(T\times\Pi_T)^\tau$-invariant connection, by the averaging procedure using a cut-off function with respect to the $T\times\Pi_T$-action on $\widetilde{M}$. We have assumed that $(S_M,\gamma_M)$ is a $T$-equivariant Spinor bundle over $M$. We define $(S_{\widetilde{M}},\gamma_{\widetilde{M}})$ by the lift of this bundle to $\widetilde{M}$, and similarly for $(S_{\widetilde{M}}^*,\gamma^*_{\widetilde{M}})$.
Then, we define a $(T\times\Pi_T)^\tau$-equivariant Dirac operator $D_{\widetilde{M}}$ by
$$D_{\widetilde{M}}:=\sum_n c_{\widetilde{M}}(v_n)\circ \nabla_{v_n}^{\ca{L}|_{\widetilde{M}}\grotimes S_{\widetilde{M}}^*\grotimes S_{\widetilde{M}}},$$
where $c_{\widetilde{M}}(v):=\frac{1}{\sqrt{2}}\bra{\id\grotimes\gamma_{\widetilde{M}}(v)-\sqrt{-1}\gamma^*_{\widetilde{M}}(v)\grotimes \id}$ and $\{v_n\}$ is an orthonormal base of the tangent space. 
Moreover, $H:=L^2(\widetilde{M},\ca{L}|_{\widetilde{M}}\grotimes S_{\widetilde{M}}^*\grotimes S_{\widetilde{M}})$ admits a $*$-representation $\mu$ of $Cl_\tau(\widetilde{M})$ given by the Clifford multiplication $c^*_{\widetilde{M}}(v):=\frac{\sqrt{-1}}{\sqrt{2}}\bra{\id\grotimes\gamma_{\widetilde{M}}(v)+\sqrt{-1}\gamma_{\widetilde{M}}^*(v)\grotimes \id}$ for $v\in T\widetilde{M}$. We denote the $KK$-element corresponding to $(H,\mu,D_{\widetilde{M}})$ by $[\widetilde{D_{\widetilde{M}}}]\in KK_{T\times \Pi_T}^\tau(Cl_\tau(\widetilde{M}),\bb{C})$ (the reformulated index element for even-dimensional $Spin^c$-manifold). 
 
We define $\ca{D}:=\Dirac\grotimes \id+\id\grotimes D_{\widetilde{M}}$ on $\ca{H}\grotimes H$.
  
\begin{dfn}\label{dfn index element for the present paper for proper LT space}
The index element $[\widetilde{\ca{D}}]\in KK_{LT_{L^2_m}}^\tau(\ca{A(M}_{L^2_k}),\ca{S}_\vep)$ is the corresponding $KK$-element to the unbounded $\tau$-twisted $LT_{L^2_m}$-equivariant  Kasparov $(\ca{A(M}_{L^2_k}),\ca{S}_\vep)$-module
$$\bra{\ca{S}_\vep\grotimes \ca{H}\grotimes H,\, \pi\grotimes \mu,\, \id\grotimes \ca{D}}.$$
\end{dfn}

Recall that our construction has an artificial parameter $l$ and $m$. Although the conditions that $l>4$ and $m\geq l+k$ are the keys of our quantitative arguments, we hope that our object is independent of them. As we expect, at the $\bb{KK}_{LT}^\tau$-theory level, the index element is independent of them.

We have fixed $l$ and $m$ so far. From now on, we compare two Kasparov modules defined using two different parameters $(l,m)$ and $(l',m')$. In order to distinguish them, $\star_l$ denotes an object $\star$ which is defined with $(l,m)$, for example, $\ca{H}_l$, $\Dirac_l$, $\ca{D}_l$, $\pi_l$, $\vac_l$, $dL'_l(z_n)$ and so on. 
The objects defined independently $l$ is denoted without the subscript $l$, for example $S_U$ and $z_n$ depend only on $k$.

\begin{pro}\label{prop KK element is independent of l}
For $l,l'>4$, the equivariant unbounded Kasparov module constructed with $(l,m)$ is homotopic to that constructed with $(l',m')$, as $LT_{L^2_{m''}}$-equivariant Kasparov modules for $m''\geq \max\{m,m'\}$, that is to say, $i_{m'',m}^*[\widetilde{\ca{D}}_l]=i_{m'',m'}^*[\widetilde{\ca{D}}_{l'}]$
in $KK_{LT_{L^2_{m''}}}^\tau(\ca{A}(\ca{M}_{L^2_k}),\ca{S}_\vep)$.
\end{pro}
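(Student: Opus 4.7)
The plan is to construct an operator homotopy between the two equivariant unbounded Kasparov modules. Choose a smooth path $l: [0,1] \to (4, \infty)$ with $l(0) = l$ and $l(1) = l'$, together with a compatible path $m: [0,1] \to \bb{R}$ satisfying $k + l(s) \leq m(s) \leq m''$. To trivialize the varying bundle of Hilbert spaces $\{\ca{H}_{l(s)}\}_{s \in [0,1]}$, define, for $l_1, l_2 > 4$, the unitary $U_{l_1, l_2}: \ca{H}_{l_1} \to \ca{H}_{l_2}$ by the fiberwise variable rescaling
\[
  (U_{l_1, l_2}\phi)(x_1, y_1, \ldots, x_N, y_N) := \prod_{n=1}^N n^{(l_2 - l_1)/2}\, \phi\!\left(n^{(l_2 - l_1)/2} x_n,\, n^{(l_2 - l_1)/2} y_n\right)
\]
on each finite-level $L^2(U_{L^2_k, N}, \ca{L})_\fin$, extended trivially to the factors $S_U^* \grotimes S_U$. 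A direct calculation shows $U_{l_1, l_2}\vac_{l_1} = \vac_{l_2}$ and that $U_{l_1, l_2}$ commutes with the embeddings $J_N$, so it extends to the completions. Setting $U_s := U_{l, l(s)}^{-1}$ and transporting all structures to the constant Hilbert $\ca{S}_\vep I$-module $\ca{S}_\vep \grotimes \ca{H}_l \grotimes H \grotimes C([0,1])$, define $\pi(s) := U_s \pi_{l(s)} U_s^{-1}$, $\ca{D}(s) := U_s \ca{D}_{l(s)} U_s^{-1}$, and the transported $LT_{L^2_{m''}}$-action $L(s) := U_s \circ L^{l(s)} \circ U_s^{-1}$.

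The main analytical task is to verify that $(\pi(s), \ca{D}(s))$ assembles into a single unbounded $LT_{L^2_{m''}}$-equivariant Kasparov $(\ca{A}(\ca{M}_{L^2_k}), \ca{S}_\vep I)$-module. All quantitative estimates used in the proofs of Lemma \ref{Lemma C is bounded on dom(D) to H} and Lemma \ref{index element for HKT algebra}---boundedness of the Clifford operator from $\dom(\Dirac)$ to $\ca{H}$, compactness of $(1 + \Dirac^2)^{-1}$, and boundedness of the commutators $[\id \grotimes \Dirac, \pi(a)]$ on the dense set of $C^1$-elements---depend on $l$ only through polynomial weights of the form $n^{\pm l/4}$, whose logarithmic derivatives stay uniformly bounded on any compact subinterval of $(4, \infty)$. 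This yields the joint continuity of $\pi(s)$ in point-norm topology and of $\ca{D}(s)$ in the regular self-adjoint sense appropriate for unbounded Kasparov modules.

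The hard step, and the reason why we must pass to the higher regularity $m''$, is uniform equivariance of the family. The explicit formula derived in the proof of Lemma \ref{index element for HKT algebra} expresses the defect
\[
  g(\Dirac_{l(s)}) - \Dirac_{l(s)} = \tfrac{i}{\sqrt{2}}\,\id \grotimes \id \grotimes \gamma\!\left(\bigl(-|d|^{1 - 2k - l(s)/4} + |d|^{3l(s)/4}\bigr)Jg\right) + \tfrac{-1}{\sqrt{2}}\,\id \grotimes \gamma^{*}\!\left(\bigl(|d|^{1 - 2k - l(s)/4} + |d|^{3l(s)/4}\bigr)Jg\right) \grotimes \id.
\]
For this to be bounded and to vary norm-continuously in $(g, s)$, the exponents $3l(s)/4$ and $|1 - 2k - l(s)/4|$ must be dominated by the regularity of $g$, i.e.\ we need $m'' \geq k + l(s)$ for all $s$. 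Since $l(s)$ ranges over $[\min\{l, l'\}, \max\{l, l'\}]$, this is ensured by the hypothesis $m'' \geq \max\{m, m'\} \geq k + \max\{l, l'\}$. Granted this, the transported action $L(s)$ is a jointly continuous $LT_{L^2_{m''}}$-action and the assembled triple provides the required homotopy in $KK_{LT_{L^2_{m''}}}^\tau\bigl(\ca{A}(\ca{M}_{L^2_k}), \ca{S}_\vep\bigr)$ between $i_{m'', m}^*[\widetilde{\ca{D}}_l]$ and $i_{m'', m'}^*[\widetilde{\ca{D}}_{l'}]$.
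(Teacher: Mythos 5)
Your proposal is correct in its essentials and follows the same underlying route as the paper's proof: a homotopy obtained by continuously interpolating the weight parameter from $l$ to $l'$, with a compatible family of Hilbert modules $\ca{H}_{l(s)}$. The one genuine difference in packaging is that you \emph{trivialize} this bundle through explicit rescaling unitaries $U_{l_1,l_2}$ and then view everything as an $s$-dependent family of structures on the fixed module $\ca{S}_\vep\grotimes\ca{H}_l\grotimes H\grotimes C(I)$, whereas the paper builds the Hilbert $\ca{S}_\vep\grotimes C(I)$-module $E$ directly as the completion of the $C(I)$-span of the sections $s\mapsto f\grotimes\widetilde{\phi}_{\overrightarrow{\alpha},\overrightarrow{\beta},\overrightarrow{\xi},\overrightarrow{\eta}}(s)$, computes the inner products explicitly to see that they land in $\ca{S}_\vep\grotimes C(I)$, and then shows surjectivity of the evaluations via isometric right-inverses $j_s$. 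Your unitary $U_{l_1,l_2}$ and the coefficient rescaling hidden in the paper's $j_s$ are the same $n^{(l_2-l_1)/2}$-scaling, just presented at the level of the Hilbert space versus at the level of the canonical basis; so the two constructions are, after translation, identical families of operators. Your observation that the equivariance-defect formula from the proof of Lemma~\ref{index element for HKT algebra} dictates the constraint $m''\ge k+l(s)$ for all $s$ is correct and is the right justification for passing to the weaker topology $L^2_{m''}$.

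The one place I would push back is the single sentence ``whose logarithmic derivatives stay uniformly bounded \ldots\ this yields the joint continuity of $\pi(s)$ in point-norm topology and of $\ca{D}(s)$ in the regular self-adjoint sense.'' After conjugation, $U_s\,\pi(C)\,U_s^{-1}=\sum_n n^{-(l(s)-l)/2}(x_n\grotimes c^*(e_n)+y_n\grotimes c^*(f_n))$, and the weight $n^{-(l(s)-l)/2}$ is \emph{not} a uniformly small perturbation of $1$ for $s$ near $0$: it deviates arbitrarily far from $1$ for large $n$. Thus $\ca{D}(s)$ is not a bounded perturbation of $\ca{D}(0)$ and the functional calculi $\pi(s)(a)$ do not obviously vary norm-continuously; what one really needs is strong-resolvent continuity of $\ca{D}(s)$ (the eigenbasis is fixed after conjugation and the eigenvalues vary continuously, so this does hold, but the argument is a genuine spectral computation), and strong continuity of $\pi(s)(a)$ on the common dense subspace $\ca{H}_{l,\fin}$ plus uniform boundedness. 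The heuristic about logarithmic derivatives does not directly deliver this. To be fair, the paper compresses the analogous verification into ``the triple $(E,\sigma,\cancel{D})$ is obviously a $U_{L^2_{m''}}$-equivariant unbounded Kasparov module,'' so neither text spells it out; but since your trivialized formulation forces the question into the guise of $s$-continuity of operators on a fixed module, it deserves at least a remark acknowledging what has to be checked.

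One minor simplification: the auxiliary path $m(s)$ you introduce plays no role --- only $m''$ enters, and the correct condition, which you do eventually state, is $m''\ge k+l(s)$ for every $s$; you can drop $m(\cdot)$.
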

\begin{rmk}
By this result, the corresponding element of $\bb{KK}_{LT}^\tau(\ca{A(M}_{L^2_k}),\ca{S}_\vep)$ to $[\widetilde{D}_l]$ is independent of $l$. The resulting element is denoted by $[\widetilde{\ca{D}}]$.
\end{rmk}
\begin{proof}
It suffices to deal with the $U_{L^2_k}$-part. For $\overrightarrow{\alpha}=(\alpha_1,\alpha_2,\cdots)$, $\overrightarrow{\beta}=(\beta_1,\beta_2,\cdots)$, $\overrightarrow{\xi}=(\xi_1,\xi_2,\cdots,\xi_M)$, and  $\overrightarrow{\eta}=(\eta_1,\eta_2,\cdots,\eta_N)$ such that $\alpha_i,\beta_i\in \bb{Z}_{\geq 0}$, $\alpha_i=\beta_i=0$ except for finitely many $i$'s, $\xi_i,\eta_i\in \bb{N}$, $N,M\in \bb{Z}_{\geq 0}$, $0<\xi_1<\xi_2<\cdots<\xi_M$, and $0<\eta_1<\eta_2<\cdots<\eta_N$, we put
{\small
\begin{align*}
&\widetilde{\phi}_{\overrightarrow{\alpha},\overrightarrow{\beta},\overrightarrow{\xi},\overrightarrow{\eta}}(s)\\
&:=
\bra{dR'_{(1-s)l+sl'}(\overline{z_1})^{\alpha_1}dL'_{(1-s)l+sl'}(z_1)^{\beta_1}\cdots }\vac_{(1-s)l+sl'}\grotimes [z_{\xi_1}\wedge \cdots \wedge z_{\xi_M}]\grotimes [\overline{z_{\eta_1}}\wedge \cdots \wedge \overline{z_{\eta_N}}]\in \ca{S}_\vep\grotimes \ca{H}_{(1-s)l+sl'}.
\end{align*}}
We construct a homotopy, that is to say, an $U_{L^2_{m''}}$-equivariant Kasparov $(\ca{A}(U_{L^2_k}),\ca{S}_\vep\grotimes C(I))$-module $(E,\sigma,\cancel{D})$. The $U_{L^2_{m''}}$-equivariant Hilbert $\ca{S}_\vep\grotimes C(I)$-module $E$ is given as follows:
\begin{itemize}
\item Let $E_\fin$ be the pre-Hilbert $\ca{S}_\vep\grotimes C(I)$-module given by the $C(I)$-linear span of the \\
$\coprod_{s\in [0,1]} \ca{S}_\vep\grotimes \ca{H}_{(1-s)l+sl'}$-valued section
\begin{align*}
s\mapsto f\grotimes \psi_{\overrightarrow{\alpha},\overrightarrow{\beta},\overrightarrow{\xi},\overrightarrow{\eta}}(s)
\end{align*}
for $f\in\ca{S}_\vep$ and multi-indices $\overrightarrow{\alpha},\overrightarrow{\beta},\overrightarrow{\xi},\overrightarrow{\eta}$ satisfying the above condition.
\item The pre-Hilbert module structure is given by the following formulas: For $\psi,\psi_1,\psi_2\in E_\fin$ and $f\grotimes F\in \ca{S}_\vep\grotimes C(I)$,
$$[\psi\cdot f\grotimes F](s):= [\psi (s)\cdot f]\cdot F(s)\in \ca{S}_\vep\grotimes \ca{H}_{(1-s)l+sl'}$$
$$\inpr{\psi_1}{\psi_2}{E}(s):=\inpr{\psi_1(s)}{\psi_2(s)}{\ca{S}_\vep\grotimes \ca{H}_{(1-s)l+sl'}}.$$
\item The Hilbert module $E$ is the completion of $E_\fin$ with respect to the above inner product.
\item The $U_{L^2_{m''}}^\tau$-action is given by the pointwise action $\{(g,z)\cdot(\psi)\}(s):=L_{(g,z)}(\psi(s))$.
\end{itemize}
We need to prove that the inner product of two elements $\psi_1,\psi_2\in E_\fin$ is indeed an element of $\ca{S}_\vep\grotimes C(I)$. For this aim, it suffices to prove that the function
$$s\mapsto \inpr{f\grotimes \widetilde{\phi}_{\overrightarrow{\alpha},\overrightarrow{\beta},\overrightarrow{\xi},\overrightarrow{\eta}}(s)
}{f'\grotimes \widetilde{\phi}_{\overrightarrow{\alpha'},\overrightarrow{\beta'},\overrightarrow{\xi'},\overrightarrow{\eta'}}(s)}{}$$
is continuous. If $\overrightarrow{\alpha}=\overrightarrow{\alpha'}$, $\overrightarrow{\beta}=\overrightarrow{\beta'}$, $\overrightarrow{\xi}=\overrightarrow{\xi'}$, and $\overrightarrow{\eta}=\overrightarrow{\eta'}$, that function is given by
$$s\mapsto f^*f'\overrightarrow{\alpha}!\overrightarrow{\beta}!\prod_{m=1}^\infty (2m^{(1-s)l+sl'})^{\alpha_m+\beta_m},$$
and if not, the function is identically zero.
Both functions are continuous (note that $\alpha_m=0$ and $\beta_m=0$ except for finitely many $m$'s, and hence the above infinite product is in fact a finite product of continuous functions).

We define $\sigma:\ca{A}(U_{L^2_k})\to \bb{L}_{\ca{S}_\vep\grotimes C(I)}(E)$ by $\sigma(a)(\psi)(s):= \pi_{(1-s)l+sl'}(a)(\psi(s))$, and we define $\cancel{D}$ by $\cancel{D}(\psi)(s):=\Dirac_{(1-s)l+sl'}[\psi(s)]$.
Then, the triple $(E,\sigma,\cancel{D})$ is obviously a $U_{L^2_{m''}}$-equivariant unbounded Kasparov $(\ca{A}(U_{L^2_k}),\ca{S}_\vep\grotimes C(I))$-module.

Let us prove that the evaluation of $(E,\sigma,\cancel{D})$ at $s=0$ gives $(\ca{S}_\vep\grotimes \ca{H}_l,\pi_l,\ca{D}_l)$ and similarly for $s=1$. 
We prove it for arbitrary $s\in I$.
What we need to prove is that $\ev_s(E)$ is isomorphic to $\ca{S}_\vep\grotimes \ca{H}_{(1-s)l+sl'}$. For this aim, it suffices to prove that the evaluation at $s$ is surjective. 
It suffices to find an isometric embedding as a Banach space $j_s:\ca{S}_\vep\grotimes \ca{H}\hookrightarrow E$ satisfying $j_s(v)(s)=v$. We define it as follows: For $f\in \ca{S}_\vep$ and $ \psi_{\overrightarrow{\alpha},\overrightarrow{\beta},\overrightarrow{\xi},\overrightarrow{\eta}}(s)\in\ca{H}_{(1-s)l+sl'}$, we define $ j_s(f\grotimes \psi_{\overrightarrow{\alpha},\overrightarrow{\beta},\overrightarrow{\xi},\overrightarrow{\eta}}(s))$ by
$$t\mapsto \sqrt{\prod_m (2m)^{[(t-s)l+(s-t)l'](\alpha_m+\beta_m)}}f\grotimes \psi_{\overrightarrow{\alpha},\overrightarrow{\beta},\overrightarrow{\xi},\overrightarrow{\eta}}(t).$$
Since the function appearing as the coefficient is continuous, the above function belongs to $E$. In the remainder of this proof, we prove that $j_s$ is isometric.

If $\overrightarrow{\alpha}=\overrightarrow{\alpha'}$, $\overrightarrow{\beta}=\overrightarrow{\beta'}$, $\overrightarrow{\xi}=\overrightarrow{\xi'}$, and $\overrightarrow{\eta}=\overrightarrow{\eta'}$, the inner product of $ j_s(f\grotimes \psi_{\overrightarrow{\alpha},\overrightarrow{\beta},\overrightarrow{\xi},\overrightarrow{\eta}}(s))$ and $ j_s(f'\grotimes \psi_{\overrightarrow{\alpha'},\overrightarrow{\beta'},\overrightarrow{\xi'},\overrightarrow{\eta'}}(s))$ is given by the function on $t$
\begin{align*}
&\inpr{ j_s(f\grotimes \psi_{\overrightarrow{\alpha},\overrightarrow{\beta},\overrightarrow{\xi},\overrightarrow{\eta}}(s))}{ j_s(f'\grotimes \psi_{\overrightarrow{\alpha},\overrightarrow{\beta},\overrightarrow{\xi},\overrightarrow{\eta}}(s))}{\ca{S}_\vep\grotimes C(I)}(t) \\
&=f^*f'\overrightarrow{\alpha}!\overrightarrow{\beta}!\prod_m (2m)^{[(t-s)l+(s-t)l'](\alpha_m+\beta_m)}\prod_{m} (2m)^{[(1-t)l+tl'](\alpha_m+\beta_m)} \\
&= f^*f'\overrightarrow{\alpha}!\overrightarrow{\beta}!\prod_{m} (2m)^{[(1-s)l+sl'](\alpha_m+\beta_m)}\\
&=\inpr{f\grotimes  \psi_{\overrightarrow{\alpha},\overrightarrow{\beta},\overrightarrow{\xi},\overrightarrow{\eta}}(s)}{f'\grotimes  \psi_{\overrightarrow{\alpha},\overrightarrow{\beta},\overrightarrow{\xi},\overrightarrow{\eta}}(s)}{\ca{S}_\vep}.
\end{align*}
Taking the $C^*$-norm of both sides, we find that $j_s$ is isometric.\footnote{The $C^*$-norm of the right hand side is the maximum norm of the function $u\mapsto \overline{f(u)}f'(u)\inpr{\psi_{\overrightarrow{\alpha},\overrightarrow{\beta},\overrightarrow{\xi},\overrightarrow{\eta}}(s)}{ \psi_{\overrightarrow{\alpha},\overrightarrow{\beta},\overrightarrow{\xi},\overrightarrow{\eta}}(s)}{}$, and that of the left hand side is the maximum norm of the function $(u,t)\mapsto \inpr{ j_s(f\grotimes \psi_{\overrightarrow{\alpha},\overrightarrow{\beta},\overrightarrow{\xi},\overrightarrow{\eta}}(s))}{ j_s(f'\grotimes \psi_{\overrightarrow{\alpha},\overrightarrow{\beta},\overrightarrow{\xi},\overrightarrow{\eta}}(s))}{\ca{S}_\vep\grotimes C(I)}(t,u)$, where $u$ is the variable for $\ca{S}_\vep$ and $t$ is that for $C(I)$. Since it is constant with respect to $t$, $j_s$ is isometric.}
Consequently, $j_s$ extends to $\ca{S}_\vep\grotimes \ca{H}_{(1-s)l+sl'}$, and it satisfies $\ev_s(j_s(v))=v$ for all $v\in \ca{S}_\vep\grotimes \ca{H}_{(1-s)l+sl'}$.

If not, both sides are zero, and we finish the proof.
\end{proof}

In the following, we again fix $l$ and $m$, and we do not study this kind of problems.

\subsection{The Poincar\'e duality homomorphism for proper $LT$-spaces}\label{section PD LT spaces}

The task of this subsection is to apply the Poincar\'e duality homomorphism to the index element constructed in the previous subsection.

We consider the restriction of the local Bott element $[\widetilde{\Theta_{\ca{M}_{L^2_k},2}}]$ to $\ca{M}_{L^2_m}$ and we denote it by the same symbol: 
$$[\widetilde{\Theta_{\ca{M}_{L^2_k},2}}]\in \ca{R}KK_{LT_{L^2_m}}(\ca{M}_{L^2_m};\ca{S}_\vep\grotimes \scr{C}(\ca{M}_{L^2_m}),\ca{A}(\ca{M}_{L^2_k})\grotimes \scr{C}(\ca{M}_{L^2_m})).$$
Let $[\ca{L}]\in \ca{R}KK_{LT_{L^2_m}}^\tau(\ca{M}_{L^2_m};\scr{C}(\ca{M}_{L^2_m}),\scr{C}(\ca{M}_{L^2_m}))$ be the $\ca{R}KK$-element defined by 
$$\bra{\{\ca{L}_x\}_{x\in \ca{M}_{L^2_m}},\{1\}_{x\in \ca{M}_{L^2_m}},\{0\}_{x\in \ca{M}_{L^2_m}}}.$$
The goal of this subsection is to prove that 
$\PD([\widetilde{\ca{D}}])=\sigma_{\ca{S}_\vep}([\ca{L}])$.
For this aim, it is convenient to divide the problem into the $U_{L^2_m}$-part and the $\widetilde{M}$-part. Thus, we rewrite the local Bott element by the tensor product of those of $U_{L^2_m}$ and $\widetilde{M}$. We begin with a technical lemma.

\begin{lem}\label{Lem technical lemma on S and coprod}
We define the following $*$-homomorphisms: $\Delta:\ca{S}\to \ca{S}\grotimes \ca{S}$ by $\Delta(f):=f(X\grotimes \id+\id\grotimes X)$ and $\ev_0:\ca{S}\to \bb{C}$ by $\ev_0(f)=f(0)$. The same formulas define homomorphisms on $\ca{S}_\vep$.

$(1)$ $(\id\grotimes \ev_0)\circ \Delta=\id$. The same formula holds for $\ca{S}_\vep$.

$(2)$ Let $A$ and $B$ be $\bb{Z}_2$-graded $C^*$-algebras equipped with odd unbounded multipliers with compact resolvent $D_A$ and $D_B$. Suppose that $\Delta(f)=\sum f_i^1\grotimes f_i^2$. Then, we have $f(D_A\grotimes \id+\id\grotimes D_B)=\sum_i f_i^1(D_A)\grotimes f_2^i(D_B)$ on $A\grotimes B$,
\end{lem}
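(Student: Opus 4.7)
My plan hinges on the standard universal property of the $\bb{Z}_2$-graded $C^*$-algebra $\ca{S}=C_0(\bb{R})$: giving a graded $*$-homomorphism from $\ca{S}$ into a graded $C^*$-algebra $D$ amounts to specifying an odd self-adjoint regular (possibly unbounded) multiplier of $D$, the homomorphism being the functional calculus at that multiplier.

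For (1), I would identify $\ca{S}\grotimes\ca{S}$ (as an ungraded $C^*$-algebra) with $C_0(\bb{R}^2)$. Under this identification the unbounded multipliers $X\grotimes 1$ and $1\grotimes X$ correspond to the coordinate functions $x_1$ and $x_2$; they anticommute in the graded sense, so their sum is an odd self-adjoint regular multiplier whose square is $X^2\grotimes 1+1\grotimes X^2$, exactly as in the commutative case. Functional calculus therefore shows that $\Delta(f)$ is the function $(x_1,x_2)\mapsto f(x_1+x_2)$. Since $\id\grotimes\ev_0$ corresponds to the pullback along $\bb{R}\ni x\mapsto (x,0)\in\bb{R}^2$, we get $(\id\grotimes\ev_0)(\Delta(f))(x)=f(x+0)=f(x)$. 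The same argument works verbatim for $\ca{S}_\vep$.

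For (2), I would define two graded $*$-homomorphisms $\psi_1,\psi_2\colon\ca{S}\to M(A\grotimes B)$ by
$$\psi_1(f):=f\bigl(D_A\grotimes 1+1\grotimes D_B\bigr),\qquad \psi_2(f):=(\phi_{D_A}\grotimes \phi_{D_B})\circ\Delta(f),$$
where $\phi_{D_A}:\ca{S}\to M(A)$ and $\phi_{D_B}:\ca{S}\to M(B)$ are the functional calculi (well-defined thanks to the compact resolvent hypothesis), and their graded tensor product $\phi_{D_A}\grotimes \phi_{D_B}:\ca{S}\grotimes\ca{S}\to M(A\grotimes B)$ is a $*$-homomorphism. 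By the universal property recalled above, each $\psi_i$ is determined by the odd self-adjoint multiplier $\psi_i(X)$. On the one hand, $\psi_1(X)=D_A\grotimes 1+1\grotimes D_B$ by definition. On the other hand, extending $\phi_{D_A}\grotimes\phi_{D_B}$ to multipliers shows $\psi_2(X)=\phi_{D_A}(X)\grotimes 1+1\grotimes\phi_{D_B}(X)=D_A\grotimes 1+1\grotimes D_B$; this can alternatively be verified directly on the resolvents $(t\pm i)^{-1}$, which generate $\ca{S}$ as a $C^*$-algebra. Hence $\psi_1=\psi_2$. Unpacking this for an expansion $\Delta(f)=\sum_i f_i^1\grotimes f_i^2$ convergent in $\ca{S}\grotimes\ca{S}$ and using the continuity of $\phi_{D_A}\grotimes\phi_{D_B}$ yields the desired formula $f(D_A\grotimes 1+1\grotimes D_B)=\sum_i f_i^1(D_A)\grotimes f_i^2(D_B)$.

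The main technical point will be the identification $\psi_2(X)=D_A\grotimes 1+1\grotimes D_B$: it requires extending the tensor product of functional calculus homomorphisms to unbounded multipliers and checking compatibility with the graded structure (in particular, the anticommutation of $X\grotimes 1$ and $1\grotimes X$). Fortunately, as noted in (1), $(X\grotimes 1+1\grotimes X)^2=X^2\grotimes 1+1\grotimes X^2$, which reduces the computation to the commutative case and lets the compact-resolvent hypothesis on $D_A,D_B$ transfer cleanly to $D_A\grotimes 1+1\grotimes D_B$.
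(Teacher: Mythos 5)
Your proof of $(1)$ contains a genuine error. You claim that, under an identification $\ca{S}\grotimes\ca{S}\cong C_0(\bb{R}^2)$, the multipliers $X\grotimes\id$ and $\id\grotimes X$ become the coordinate functions $x_1$ and $x_2$, and that $\Delta(f)$ is the function $(x_1,x_2)\mapsto f(x_1+x_2)$. This cannot be right: as you yourself observe, $X\grotimes\id$ and $\id\grotimes X$ \emph{anticommute} in the graded tensor product, while multiplication by scalar coordinate functions \emph{commutes}, so no such identification is available. Your comment that $(X\grotimes\id+\id\grotimes X)^2=X^2\grotimes\id+\id\grotimes X^2$ holds ``exactly as in the commutative case'' is also false — in the commutative case the square would contain the cross term $2x_1x_2$. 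The correct consequence of this square formula is, for instance, that $\Delta(e^{-t^2})=e^{-X^2}\grotimes e^{-X^2}$, i.e.\ the function $e^{-x_1^2-x_2^2}$, which is \emph{not} $e^{-(x_1+x_2)^2}$. So the explicit description of $\Delta(f)$ you rely on in $(1)$ is wrong, even though the conclusion $(\id\grotimes\ev_0)\circ\Delta=\id$ is correct. The clean fix is exactly the universal-property argument you deploy in $(2)$: the extension of $\id\grotimes\ev_0$ to multipliers sends $X\grotimes\id+\id\grotimes X$ to $X$, because $\ev_0$ kills $X$, and therefore $(\id\grotimes\ev_0)\circ\Delta(f)=f(X)=f$.

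For $(2)$, your approach — invoke the universal property of $\ca{S}$ and reduce to an equality of the pushforward multipliers $\psi_1(X)=\psi_2(X)$, checking this on resolvents — is a genuinely different route from the paper's. The paper instead picks the concrete generators $f_e(t)=e^{-t^2}$ and $f_o(t)=te^{-t^2}$, uses the explicit formulas $\Delta(f_e)=f_e\grotimes f_e$ and $\Delta(f_o)=f_o\grotimes f_e+f_e\grotimes f_o$, and then cites the functional-calculus computations in \cite[Appendix A.4]{HKT}. Your structural version is valid in spirit and arguably cleaner, but it front-loads the technical burden into the single step ``extending $\phi_{D_A}\grotimes\phi_{D_B}$ to multipliers shows $\psi_2(X)=D_A\grotimes\id+\id\grotimes D_B$'', which is essentially equivalent to the lemma to be proved and still requires the same kind of resolvent or heat-kernel computation that the paper delegates to \cite{HKT}; you should spell that computation out rather than gesture at it. You should also remove the final paragraph's phrase ``reduces the computation to the commutative case'', since, as above, the square formula does not give a commutative reduction.
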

\begin{proof}
$(1)$ It is mentioned in \cite[P. 14]{GH} as an exercise. For $\ca{S}_\vep$, one can prove it by considering the commutative diagram
$$\begin{CD}
\ca{S}_\vep @>\Delta >> \ca{S}_\vep\grotimes \ca{S}_\vep \\
@V\iota VV @VV\iota \grotimes \iota V \\
\ca{S} @>\Delta >> \ca{S}\grotimes \ca{S}.
\end{CD}$$

$(2)$ The homomorphism $\ca{S}\ni f\mapsto f(D_A)\in A$ is denoted by $\beta_A$, and similarly for $B$. With these notations, the statement can be written as $\beta_A\grotimes \beta_B\circ \Delta=\beta_{A\grotimes B}$, where $D_{A\grotimes B}:=D_A\grotimes \id+\id\grotimes D_B$.
Since both sides are $*$-homomorphisms, it suffices to prove it for generators. We choose $f_e(t):=e^{-t^2}$ and $f_o(t)=te^{-t^2}$ as generators. Thanks to
\begin{center}
$\Delta(f_e)=f_e\grotimes f_e$ and $\Delta(f_o)=f_o\grotimes f_e+f_e\grotimes f_o$,
\end{center}
and thanks to computations on functional calculus in \cite[Appendix A.4]{HKT},
we have
$$\beta_A\grotimes \beta_B\circ \Delta(f_e)=f_e(D_A)\grotimes f_e(D_B)=\beta_{A\grotimes B}(f_e);$$
$$
\beta_A\grotimes \beta_B\circ \Delta(f_o)
=f_o(D_A)\grotimes f_e(D_B)+f_e(D_A)\grotimes f_o(D_B)=\beta_{A\grotimes B}(f_o).
$$
\end{proof}

The local Bott homomorphism for a Hilbert manifold $X$ at $x$ is denoted by $\beta^X_x$ when it is necessary to emphasize the manifold to consider it.

\begin{lem}\label{lemma local Bott element can be factorized}
Let $[\widetilde{\Theta_{\ca{M}_{L^2_k},2}}']$ be the $\ca{M}_{L^2_m}\rtimes LT_{L^2_m}$-equivariant Kasparov $(\ca{S}_\vep\grotimes \scr{C}(\ca{M}_{L^2_m}),\ca{A}(\ca{M}_{L^2_k})\grotimes \scr{C}(\ca{M}_{L^2_m}))$-module defined by the exterior tensor product of the following Kasparov modules:
$$\bra{\{\ca{A}(U_{L^2_k})\}_{g\in U_{L^2_m}},\bbra{\beta_g^{U_{L^2_k}}}_{g\in U_{L^2_m}},\{0\}_{g\in U_{L^2_m}}}\text{ and}$$
$$\bra{\{Cl_\tau(V_x)\}_{x\in \widetilde{M}},\{1\}_{x\in \widetilde{M}},\bbra{\vep^{-1}\Theta_x}_{x\in \widetilde{M}}},$$
where $V_x$ is the $\vep$-neighborhood of $x$ in $\widetilde{M}$. Then, $[\widetilde{\Theta_{\ca{M}_{L^2_k},2}}']$ is homotopic to $[\widetilde{\Theta_{\ca{M}_{L^2_k},2}}]$. 
\end{lem}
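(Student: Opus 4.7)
The plan is to exploit the decomposition $\ca{M}_{L^2_k} \cong U_{L^2_k} \times \widetilde{M}$ of Lemma \ref{decomposition lemma on proper LT space} and the compatible splitting $\ca{A}(\ca{M}_{L^2_k}) \cong \ca{A}(U_{L^2_k}) \grotimes Cl_\tau(\widetilde{M})$, reducing the statement to the finite-dimensional comparison between the Bott-as-homomorphism form and the Bott-as-operator form on the finite-dimensional factor $\widetilde{M}$. More precisely, since at $(g,x) \in U_{L^2_m} \times \widetilde{M}$ the image of $\beta_{(g,x)}^{\ca{M}_{L^2_k}} \colon \ca{S}_\vep \to \ca{A}(\ca{M}_{L^2_k})$ is supported within the $\vep$-ball around $(g,x)$ and hence lies in the ideal $\ca{A}(U_{L^2_k}) \grotimes Cl_\tau(V_x)$, I may replace the Hilbert module in the original representative of $[\widetilde{\Theta_{\ca{M}_{L^2_k},2}}]$ by this smaller ideal without changing the $\ca{R}KK$-class.

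Next I would apply Lemma \ref{Lem technical lemma on S and coprod} to the pair of graded-commuting self-adjoint multipliers $X\grotimes 1 + 1\grotimes C_g^{U_{L^2_k}}$ on $\ca{A}(U_{L^2_k})$ and $C_x^{\widetilde{M}}$ on $Cl_\tau(V_x)$ in order to factor
\[
\beta_{(g,x)}^{\ca{M}_{L^2_k}} \;=\; \bigl(\beta_g^{U_{L^2_k}} \grotimes \widetilde{\beta}_x^{\widetilde{M}}\bigr) \circ \Delta,
\]
where $\widetilde{\beta}_x^{\widetilde{M}}(f) := f(C_x^{\widetilde{M}})$ takes values in $Cl_\tau(V_x)$. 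This identifies $[\widetilde{\Theta_{\ca{M}_{L^2_k},2}}]$ with a Kasparov module built from the $U_{L^2_k}$-Bott-as-homomorphism tensored with a $\widetilde{M}$-Bott-as-homomorphism (composed with $\Delta$ to absorb the extra $\ca{S}_\vep$-factor).

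The last ingredient is the finite-dimensional identity, provided by Proposition \ref{new Bott is classical Bott} and Corollary \ref{cor -theta is theta}, asserting that on the even-dimensional orientable $Spin^c$-manifold $\widetilde{M}$ the two representatives $(Cl_\tau(V_x), \widetilde{\beta}_x^{\widetilde{M}}, 0)$ and $(Cl_\tau(V_x), 1, \vep^{-1}\Theta_x)$ determine the same class in $\ca{R}KK_{T\times\Pi_T}$-theory. Carrying out the corresponding rotation/Bott homotopy fiberwise over $\widetilde{M}$, then tensoring the resulting family externally with the $U_{L^2_k}$-Bott-as-homomorphism family, produces an explicit operator homotopy between $[\widetilde{\Theta_{\ca{M}_{L^2_k},2}}']$ and the factored form of $[\widetilde{\Theta_{\ca{M}_{L^2_k},2}}]$ obtained above, completing the proof.

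The main obstacle will be verifying rigorously that the proposed homotopy defines a Kasparov module in the sense of Definition \ref{dfn homotopy of RKK element} for the infinite-dimensional $\ca{R}\bb{KK}_{LT}$-framework of Section \ref{section RKK}: one must show that the interpolating field of Hilbert $\ca{A}(\ca{M}_{L^2_k})\grotimes\scr{C}(\ca{M}_{L^2_m})$-modules is locally countably generated and upper semi-continuous over $\ca{M}_{L^2_m} \times [0,1]$, that the commutator condition holds uniformly in the family, and that the $LT_{L^2_m}$-equivariance survives the external tensor product with the (merely $T\times\Pi_T$-equivariant) $\widetilde{M}$-factor, which requires propagating the equivariance through the global trivialization $\ca{M}_{L^2_k} \cong \widetilde{M}\times U_{L^2_k}$ of Lemma \ref{decomposition lemma on proper LT space}.
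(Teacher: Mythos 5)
Your overall strategy matches the paper's proof closely: both exploit the splitting $\ca{A}(\ca{M}_{L^2_k}) \cong \ca{A}(U_{L^2_k}) \grotimes Cl_\tau(\widetilde{M})$, factor the local Bott homomorphism via the coproduct $\Delta$ of Lemma \ref{Lem technical lemma on S and coprod}, and convert the $\widetilde{M}$-factor between the Bott-homomorphism picture and the $\Theta_x$-operator picture using Proposition \ref{new Bott is classical Bott} and Corollary \ref{cor -theta is theta} (that is, the content of Proposition \ref{Prop reformulated local Bott element}). The paper organizes this through an intermediate $\ca{R}KK$-element $[\widetilde{\Theta_{\ca{M}_{L^2_k},2}}'']$ and shows both the left- and right-hand sides of the lemma equal the triple Kasparov product $[\Delta]\grotimes [\widetilde{\Theta_{\ca{M}_{L^2_k},2}}'']\grotimes [\ev_0\grotimes\id_{Cl_\tau(\widetilde{M})}]$, which is a mild reorganization of your argument, not a different route.

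However, there is a genuine gap at the step where you propose to ``apply Lemma \ref{Lem technical lemma on S and coprod} to the pair $\cdots$ and $C_x^{\widetilde{M}}$ on $Cl_\tau(V_x)$.'' Lemma \ref{Lem technical lemma on S and coprod}(2) is stated for $\ca{S}$ (not $\ca{S}_\vep$) and requires odd \emph{unbounded} multipliers with compact resolvent; its proof goes through the Gaussian generators $e^{-t^2}$ and $te^{-t^2}$, which do not lie in $\ca{S}_\vep$. But $C_x^{\widetilde{M}}$ acting on $Cl_\tau(V_x)$ is a \emph{bounded} multiplier (its values lie in the $\vep$-ball of $T_x\widetilde{M}$), so functional calculus by a general $f\in\ca{S}$ does not land in $Cl_\tau(V_x)$: for instance $e^{-\|\Theta_x\|^2}$ does not vanish on the boundary sphere of $V_x$. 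The paper fixes this by replacing $V_x$ with a $2\vep$-ball $W_x$ carrying a conformally deformed complete metric agreeing with the original on the $\vep$-ball, so that the Clifford operator on $Cl_\tau(W_x)$ becomes an unbounded multiplier with compact resolvent; Lemma \ref{Lem technical lemma on S and coprod}(2) is then applied in $\ca{S}\to\ca{A}_\HKT(U_{L^2_k})\grotimes Cl_\tau(W_x)$, and the desired statement for $\ca{S}_\vep\to\ca{A}(\ca{M}_{L^2_k})$ is recovered by chasing a commutative diagram through the zero-extensions $\iota:\ca{S}_\vep\hookrightarrow\ca{S}$ and $Cl_\tau(V_x)\hookrightarrow Cl_\tau(W_x)$. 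Without this completion step your factorization $\beta_{(g,x)}^{\ca{M}_{L^2_k}} = (\beta_g^{U_{L^2_k}}\grotimes\widetilde{\beta}_x^{\widetilde{M}})\circ\Delta$ is not justified by the cited lemma. The remaining verifications you flag (upper semi-continuity, local countable generation, equivariance of the tensor product) are genuine but routine; the $W_x$ completion is the substantive piece you are missing.
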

\begin{proof}

We define $[\widetilde{\Theta_{\ca{M}_{L^2_k},2}}'']\in \ca{R}KK_{LT_{L^2_m}}(\ca{M}_{LT_{L^2_m}};\ca{S}_\vep\grotimes\ca{S}_\vep\grotimes \scr{C}(\ca{M}_{L^2_m}),\ca{A}(U_{L^2_m})\grotimes\ca{A}(\widetilde{M})\grotimes \scr{C}(\ca{M}_{L^2_m}))$ by the exterior tensor product of the following Kasparov modules:
$$\bra{\{\ca{A}(U_{L^2_k})\}_{g\in U_{L^2_k}},\bbra{\beta_g^{U_{L^2_k}}}_{g\in U_{L^2_k}},\{0\}_{g\in U_{L^2_k}}}\text{ and}$$
$$\bra{\{\ca{S}_\vep\grotimes Cl_\tau(V_x)\}_{x\in \widetilde{M}},\{1_{\ca{S}_\vep}\grotimes \id\}_{x\in \widetilde{M}},\bbra{1_{\ca{S}_\vep}\grotimes \vep^{-1}\Theta_x}_{x\in \widetilde{M}}},$$
where $1_{\ca{S}_\vep}\grotimes \id(f)(g\grotimes \phi):=fg\grotimes \phi$ for $f,g\in \ca{S}_\vep$ and $\phi\in Cl_\tau(\widetilde{M})$.

We define $\ev_0\grotimes \id_{Cl_\tau(\widetilde{M})}:\ca{A}(\widetilde{M})\ni f\grotimes h\mapsto f(0)h\in Cl_\tau(\widetilde{M})$. It gives a $KK$-element $[\ev_0\grotimes \id]\in KK_{T\times\Pi_T}(\ca{A}(\widetilde{M}), Cl_\tau(\widetilde{M}))$.  It suffices to prove the following formulas in the $\ca{R}KK$-group:
\renewcommand{\labelenumi}{(\arabic{enumi})}
\begin{enumerate}
\item $[\widetilde{\Theta_{\ca{M}_{L^2_k},2}}']=[\Delta]\grotimes [\widetilde{\Theta_{\ca{M}_{L^2_k},2}}'']\grotimes
[\ev_0\grotimes \id_{Cl_\tau(\widetilde{M})}]$.
\item $[\widetilde{\Theta_{\ca{M}_{L^2_k},2}}]=[\Delta]\grotimes [\widetilde{\Theta_{\ca{M}_{L^2_k},2}}'']\grotimes
[\ev_0\grotimes \id_{Cl_\tau(\widetilde{M})}]$.
\end{enumerate}

$(1)$ By Lemma \ref{lemma reformulated Theta 2 is in fact Theta 1},
$[\widetilde{\Theta_{\widetilde{M},2}}]\grotimes [\ev_0\grotimes \id ]=[\ev_0]\grotimes_\bb{C}[ \Theta_{\widetilde{M},1}].$
Thus, $[\widetilde{\Theta_{\ca{M}_{L^2_k},2}}'']\grotimes [\ev_0\grotimes \id_{Cl_\tau(\widetilde{M})}]$ is given by
\begin{align*}
&[\widetilde{\Theta_{\ca{M}_{L^2_k},2}}'']\grotimes [\ev_0\grotimes \id_{Cl_\tau(\widetilde{M})}]=[\widetilde{\Theta_{U_{L^2_k},2}}]\grotimes_\bb{C} \bra{[\ev_0]\grotimes_\bb{C}[ \Theta_{\widetilde{M},1}]} \\
&\ \ \ =\bra{\{\ca{A}(U_{L^2_k})\grotimes Cl_\tau(V_x)\}_{(g,x)\in U_{L^2_k}\times \widetilde{M}},
\bbra{\beta_g^{U_{L^2_k}}\grotimes \ev_0\grotimes \id_{Cl_\tau(\widetilde{M})}}_{(g,x)\in U_{L^2_k}\times \widetilde{M}}
,\{\id\grotimes \vep^{-1}\Theta_x\}_{(g,x)\in U_{L^2_k}\times \widetilde{M}}}.
\end{align*}

Let us compute the triple Kasparov product. First, we notice that $\beta_g^{U_{L^2_k}}\grotimes \ev_0\grotimes \id_{Cl_\tau(\widetilde{M})}\circ \Delta(f)$ is given by $\beta_g^{U_{L^2_k}}(f)\grotimes \id_{Cl_\tau(\widetilde{M})}$. In fact, for $\Delta(f)=\sum_if_i^1\grotimes f_i^2$,
\begin{align*}
\beta_g^{U_{L^2_k}}\grotimes \ev_0\grotimes \id_{Cl_\tau(\widetilde{M})}\circ \Delta(f) 
&=\sum_i\beta_g^{U_{L^2_k}}(f_i^1)\grotimes \ev_0(f_i^2) \id_{Cl_\tau(\widetilde{M})}\\
&=\beta_g^{U_{L^2_k}}\bra{\sum_i f_i^1\ev_0(f_i^2)}\grotimes \id_{Cl_\tau(\widetilde{M})}\\
&= \beta_g^{U_{L^2_k}}(f)\grotimes \id_{Cl_\tau(\widetilde{M})},
\end{align*}
where we have used the formula $(\id\grotimes \ev_0)\circ \Delta=\id_{\ca{S}_\vep}$ at the second equality.
Therefore, the triple Kasparov product is given by $ [\widetilde{\Theta_{\ca{M}_{L^2_k},2}}']$.

$(2)$ By the proof of Proposition \ref{Prop reformulated local Bott element}, $\bra{\{\ca{S}_\vep\grotimes Cl_\tau(V_x)\}_{x\in \widetilde{M}},\{1_{\ca{S}_\vep}\grotimes \id\}_{x\in \widetilde{M}},\bbra{1_{\ca{S}_\vep}\grotimes \vep^{-1}\Theta_x}_{x\in \widetilde{M}}}$ is homotopic to $\bra{\{\ca{A}(V_x)\}_{x\in \widetilde{M}},\{\beta^{\widetilde{M}}_x\}_{x\in \widetilde{M}},\bbra{0}_{x\in \widetilde{M}}}$. Thus, the Kasparov product 
$$\bra{\{\ca{S}_\vep\grotimes Cl_\tau(V_x)\}_{x\in \widetilde{M}},\{1_{\ca{S}_\vep}\grotimes \id\}_{x\in \widetilde{M}},\bbra{1_{\ca{S}_\vep}\grotimes \vep^{-1}\Theta_x}_{x\in \widetilde{M}}}\grotimes [\ev_0\grotimes \id ]$$
 is represented by
$$\bra{\bbra{Cl_\tau(V_x)}_{x\in \widetilde{M}},
\bbra{(\ev_0\grotimes \id)\circ \beta_x^{\widetilde{M}}}_{x\in \widetilde{M}},
\{0\}_{x\in \widetilde{M}}}.$$
It is easy to see that $(\ev_0\grotimes \id)\circ \beta_x^{\widetilde{M}}:\ca{S}_\vep\to Cl_\tau(V_x)$ is given by $f\mapsto [y\mapsto f(C_x(y))]$. We denote it by $\beta_x^{\widetilde{M},0}$.

Thus, the triple Kasparov product $[\Delta]\grotimes [\widetilde{\Theta_{\ca{M}_{L^2_k},2}}'']\grotimes [\ev_0\grotimes \id ]$ is given by
$$\bra{\bbra{\ca{A}(U_{L^2_k})\grotimes Cl_\tau(V_x)}_{(g,x)\in U_{L^2_m}\times\widetilde{M}},
\bbra{\bra{\beta_g^{U_{L^2_k}}\grotimes \beta_x^{\widetilde{M},0}}\circ \Delta}_{(g,x)\in U_{L^2_m}\times\widetilde{M}},
\{0\}_{(g,x)\in U_{L^2_m}\times\widetilde{M}}}.$$
We would like to ``apply'' Lemma \ref{Lem technical lemma on S and coprod} $(2)$ to $\beta_g^{U_{L^2_k}}\grotimes \beta_x^{\widetilde{M},0}\circ \Delta:\ca{S}_\vep\to \ca{S}_\vep\grotimes \ca{S}_\vep\to \ca{A}(U_{L^2_k})\grotimes Cl_\tau(V_x)$. For this aim, we extend this homomorphism to $\ca{S}$ by embedding $Cl_\tau(V_x)$ into another $C^*$-algebra. Let $W_x$ be the $2\vep$-ball of $x$ equipped with the new Riemannian metric $ds_{\rm New}^2$ given by (the old one is denoted by $ds_{\rm Old}^2$)
$$ds_{\rm New}^2(y):= \rho (r(x,y))ds_{\rm Old}^2(y),$$
where $\rho$ is a smooth function $:[0,2\vep)\to \bb{R}_{> 0}$ such that $\rho'(s)\geq 0$ on $s\in [0,2\vep)$, $\rho(s)=1$ on $s\in[0,\vep]$ and $\lim_{s\to 2\vep}\int_{0}^s\sqrt{\rho(s)}ds=\infty$. Then, the new metric is complete, and the Clifford operator on $W_x$ with respect to the new metric is with compact resolvent. Thus, we can define a $*$-homomorphism $\ca{S}\to Cl_\tau(W_x)$.
Since the $\vep$-ball at $x$ in $W_x$ is isometric to $V_x$, we have the following commutative diagram:
$$\xymatrix{
\ca{S}_\vep \ar^-{\Delta}[r] \ar_-{\iota}[d] &
\ca{S}_\vep\grotimes \ca{S}_\vep \ar^-{\beta_g^{U_{L^2_k}}\grotimes \beta_x^{\widetilde{M},0}}[rr] \ar_-{\iota\grotimes \iota}[d]&&
\ca{A}(U_{L^2_k})\grotimes Cl_\tau(V_x) \ar_-{(\iota\grotimes \id)\grotimes \iota}[d]\\
\ca{S} \ar_-{\Delta}[r] &
\ca{S}\grotimes \ca{S} \ar_-{\beta_g^{U_{L^2_k}}\grotimes \beta_x^{W_x,0}}[rr] &&
\ca{A}_\HKT(U_{L^2_k})\grotimes Cl_\tau(W_x),
}$$
where each zero-extension is denoted by $\iota$. Thanks to the following commutative diagram verified by Lemma \ref{Lem technical lemma on S and coprod} $(2)$ and the fact that $C^{U_{L^2_k}}_g\grotimes \id+\id\grotimes C_x^{W_x}= C_{(g,x)}^{\ca{M}_{L^2_k}}$ on $V_x$, we finish the proof:
$$
\xymatrix{
\ca{S} \ar[r]^-{\Delta}\ar_\id[d] & 
\ca{S}\grotimes \ca{S} \ar^-{\beta_g^{U_{L^2_k}}\grotimes \beta_x^{W_x,0}}[rr] &&
\ca{A}_\HKT(U_{L^2_k})\grotimes Cl_\tau(W_x)\ar^{\cong}[d]\\
\ca{S} \ar^-{ \beta^{U_{L^2_k}\times W_x}_{(g,x)}}[rrr]& && 
\ca{A}_\HKT(U_{L^2_k}\times W_x) \\
\ca{S}_\vep\ar^\iota[u]\ar_\id[d] \ar_-{ \beta^{\ca{M}_{L^2_k}}_{(g,x)}}[rrr] &&& 
\ca{A}(U_{L^2_k}\times V_x)\ar_{(\iota\grotimes \id)\grotimes \iota}[u] \ar^{ \id\grotimes \iota}[d]\\
\ca{S}_\vep\ar_-{ \beta^{\ca{M}_{L^2_k}}_{(g,x)}}[rrr] &&& \ca{A}(\ca{M}_{L^2_k}).
}$$

\end{proof}

\begin{rmk}
Since  the Kasparov module $\bra{\{Cl_\tau(V_x)\}_{x\in \widetilde{M}},\{1\}_{x\in \widetilde{M}},\bbra{\vep^{-1}\Theta_x}_{x\in \widetilde{M}}}$ represents\\
 $[\Theta_{X,2}]\grotimes \sigma_{C_0(X)}^1\bra{[S^*]}\grotimes\sigma_{X,C_0(X)}^2\bra{\fgt[S]}$, we write 
the above result as 
$$[\widetilde{\Theta_{\ca{M}_{L^2_k},2}}]=[\widetilde{\Theta_{U_{L^2_k},2}}]\grotimes_\bb{C}\bra{[\Theta_{\widetilde{M},2}]\grotimes \sigma_{C_0(\widetilde{M})}^1\bra{[S^*]}\grotimes\sigma_{\widetilde{M},C_0(\widetilde{M})}^2\bra{\fgt[S]}}.$$
\end{rmk}

Note that the bounded transformation $D\mapsto \frac{D}{\sqrt{1+D^2}}$ commutes with the group action $D\mapsto g(D)=L_{(g,z)}\circ D\circ L_{(-g,z^{-1})}$. By Corollary \ref{Cor positivity of the commutator implies the homotopy invariance}, $\sigma_{U_{L^2_m},\scr{C}(U_{L^2_m})}([\widetilde{\Dirac}])$ is represented by
$$\bra{
\{\ca{S}_\vep\grotimes \ca{H}\}_{g\in U_{L^2_m}},\{\pi\}_{g\in U_{L^2_m}},\{g(\Dirac)\}_{g\in U_{L^2_m}}}.$$

With the preparation so far, we can prove the main theorem of this section.

\begin{thm}\label{Poincare duality for LT manifold}
$\PD([\widetilde{\ca{D}}])=\sigma_{\ca{S}_\vep}([\ca{L}])$.
\end{thm}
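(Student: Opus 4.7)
The plan is to reduce the computation to the finite-dimensional $\widetilde{M}$-direction and the infinite-dimensional $U_{L^2_k}$-direction separately, using the tensor product factorizations.

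First, I would apply Lemma \ref{lemma local Bott element can be factorized} to write
\[
[\widetilde{\Theta_{\ca{M}_{L^2_k},2}}] = [\widetilde{\Theta_{U_{L^2_k},2}}] \grotimes_{\bb{C}} [\Theta_{\widetilde{M},2}'],
\]
where $[\widetilde{\Theta_{U_{L^2_k},2}}]$ is the ``field of Bott homomorphisms'' $\{\beta_g^{U_{L^2_k}}\}_{g\in U_{L^2_m}}$ and $[\Theta_{\widetilde{M},2}']$ is the finite-dimensional local Bott element (the one appearing in $\widetilde{\PD}$ on $\widetilde{M}$). By construction (Definition \ref{dfn index element for the present paper for proper LT space}) the index element also factorizes as the exterior Kasparov product
\[
[\widetilde{\ca{D}}] = [\widetilde{\Dirac}] \grotimes_{\bb{C}} [\widetilde{D_{\widetilde{M}}}].
\]
By the associativity and exterior-product behavior of the Kasparov product (which hold at the $\bb{KK}_{LT}^\tau$-level by Section \ref{subsection statement of the main result}), $\PD([\widetilde{\ca{D}}])$ splits as the exterior product of a $U_{L^2_k}$-factor and a $\widetilde{M}$-factor, and similarly $\sigma_{\ca{S}_\vep}([\ca{L}])$ splits (after identifying $\ca{L}$ with the exterior tensor of the trivial $\tau$-line on $U_{L^2_k}$ and $\ca{L}|_{\widetilde{M}}$). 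Hence it suffices to treat each factor separately.

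For the $\widetilde{M}$-factor, the operator $D_{\widetilde{M}}$ is precisely the $Spin^c$-Dirac operator twisted by the $\tau$-twisted line bundle $\ca{L}|_{\widetilde{M}}$, so Example \ref{ex computation of PD} together with its twisted version (Section \ref{section index theorem twisted equivariant}) gives $\PD([\widetilde{D_{\widetilde{M}}}]) = \sigma_{\ca{S}_\vep}([\ca{L}|_{\widetilde{M}}])$ directly; this is purely finite-dimensional and follows from Fact \ref{finite-dimensional index theorem}.

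The main obstacle is the $U_{L^2_k}$-factor: one must show
\[
[\widetilde{\Theta_{U_{L^2_k},2}}] \grotimes_{\ca{A}(U_{L^2_k})\grotimes \scr{C}(U_{L^2_m})} \sigma^2_{U_{L^2_m},\scr{C}(U_{L^2_m})}([\widetilde{\Dirac}]) = \mathbf{1}_{\ca{S}_\vep \grotimes \scr{C}(U_{L^2_m})},
\]
i.e.\ the ``field version'' of Bott periodicity for $\ca{A}(U_{L^2_k})$ paired against our index element. Concretely, at each fiber $g \in U_{L^2_m}$ the product is represented by $(\ca{S}_\vep \grotimes \ca{H}, \pi \circ \beta_g^{U_{L^2_k}}, g(\Dirac))$, and by Kucera's criterion (Proposition \ref{Kucs criterion}) the correct unbounded representative is obtained by adding the Clifford potential inside $\pi \circ \beta_g$ to $g(\Dirac)$. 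I would verify positivity and the connection condition using the quantitative bounds proved in Lemma \ref{Lemma C is bounded on dom(D) to H} and the commutator computations in the proof of Definition-Proposition \ref{index element}. Then a harmonic-oscillator computation of the Bott--Dirac type (as in the remark after Definition \ref{def of L' and R'}, compare \cite{HK}) identifies the kernel of the combined operator with the one-dimensional space $\bb{C}\cdot \vac \otimes {\bf 1}_f^* \otimes {\bf 1}_f$, rotation-invariant under the group action, giving the unit $\mathbf{1}$ in the $\ca{R}\bb{KK}_{LT}^\tau$-group. The delicate technical point, which will be the real work, is verifying operator-continuity and the positivity condition uniformly in $g \in U_{L^2_m}$; this requires the decay estimates in Lemma \ref{Lemma C is bounded on dom(D) to H} combined with Corollary \ref{Cor positivity of the commutator implies the homotopy invariance} to make the family-version argument rigorous in the non-locally-compact setting via the $\ca{R}\bb{KK}$-formalism of Section \ref{section RKK}.

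Combining the two factors yields $\PD([\widetilde{\ca{D}}]) = \sigma_{\ca{S}_\vep}([\ca{L}|_{U_{L^2_k}}]) \grotimes_{\bb{C}} \sigma_{\ca{S}_\vep}([\ca{L}|_{\widetilde{M}}]) = \sigma_{\ca{S}_\vep}([\ca{L}])$, as desired.
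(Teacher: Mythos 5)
Your overall architecture matches the paper's: factorize $[\widetilde{\ca{D}}]=[\widetilde{\Dirac}]\grotimes_{\bb{C}}[\widetilde{D_{\widetilde{M}}}]$ and the local Bott element via Lemma \ref{lemma local Bott element can be factorized}, dispatch the $\widetilde{M}$-factor with Example \ref{ex computation of PD}, and concentrate on the $U_{L^2_k}$-factor, where at each fiber $g$ the Kasparov product is $(\ca{S}_\vep\grotimes\ca{H},\pi\circ\beta_g,g(\Dirac))$ after applying Corollary \ref{Cor positivity of the commutator implies the homotopy invariance}. However, there are two genuine gaps in the endgame for the $U$-factor.

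\textbf{Gap 1: you cannot conclude from a rank-one kernel.} Your plan is to exhibit a Bott--Dirac operator, compute that its kernel is $\bb{C}\,\vac\otimes{\bf 1}_f^*\otimes{\bf 1}_f$, and conclude. But the module $\ca{S}_\vep\grotimes\ca{H}$ at each fiber carries a left $\ca{S}_\vep$-action through $\pi\circ\beta_g$, which is not the trivial unital action $1$, so there is no \emph{single} Fredholm operator whose kernel you are computing. More fundamentally, the receiving group behaves like $KK(\ca{S},\ca{S})\cong\bb{Z}^4$ (Lemma \ref{Lemma computation of KK(S,S)}); ``the kernel is one-dimensional'' cannot distinguish, say, $[d_+]\grotimes[b_+]$ from $[d_-]\grotimes[b_-]$ or from their sum. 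The paper handles precisely this issue by pairing the candidate cycle with $[b_\pm]$ on the left and $[d_\pm]$ on the right and checking the resulting four $KK(\bb{C},\bb{C})\cong\bb{Z}$ invariants. Only after pairing with $[b_+]$ does the left $\ca{S}_\vep$-action collapse and the operator become $(\epsilon d+X)\grotimes\id+\id\grotimes g(\Dirac)$, at which point the harmonic-oscillator kernel computation you envision actually applies. You would also need the $[b_\pm]\grotimes(\cdots)\grotimes[d_\mp]=0$ vanishing statements, which your proposal does not address.

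\textbf{Gap 2: the $\ca{S}_\vep$-picture is not directly amenable to the unbounded product.} Because $X$ is a \emph{bounded} multiplier on $\ca{S}_\vep$, ``adding the Clifford potential'' there does not produce the self-adjoint regular operator with compact resolvent that Kucerovsky's criterion needs. The paper sidesteps this by transporting the problem to $\ca{A}_{\HKT}(U_{L^2_k})$ via $[\iota\grotimes\id]$ and working in the $\ca{S}$-picture where $X$ is the genuine unbounded multiplier, then conjugating back with $[j]^{-1}$. Your direct approach in the $\ca{S}_\vep$-picture would require a separate argument or a similar transport step that is not spelled out. (Also a cosmetic point: your displayed claim that the $U$-factor product equals $\mathbf{1}_{\ca{S}_\vep\grotimes\scr{C}(U_{L^2_m})}$ is not right — the result carries the twist and is $\sigma_{\ca{S}_\vep}([\ca{L}|_{U}])$, as you do state at the end.)
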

\begin{proof}
Thanks to $[\widetilde{\ca{D}}]=[\widetilde{\Dirac}]\grotimes_{\bb{C}}[\widetilde{D_{\widetilde{M}}}]$ and Lemma \ref{lemma local Bott element can be factorized}, we can divide the problem into the $U_{L^2_m}$-part and the $\widetilde{M}$-part. It suffices to check that
$${[\widetilde{\Theta_{U_{L^2_k},2}}]\grotimes_{\ca{A}(U_{L^2_k})}[\widetilde{\Dirac}]}=
\sigma_{\ca{S}_\vep}\bra{[\ca{L}|_{U_{L^2_k}}]},\text{ and}$$
$$\bra{[\Theta_{\widetilde{M},2}]\grotimes \sigma_{C_0(\widetilde{M})}^1\bra{[S^*]}\grotimes\sigma_{\widetilde{M},C_0(\widetilde{M})}^2\bra{\fgt[S]}}\grotimes_{Cl_\tau(\widetilde{M})}\bra{[\widetilde{D_{\widetilde{M}}}]}=[\ca{L}|_{\widetilde{M}}].$$

For the second one, see Proposition \ref{prop index theorem for Spinc} and Example \ref{ex computation of PD}.

Let us prove the first one. Since we would like to discuss it in the unbounded picture, we rewrite the statement using $\ca{A}_\HKT(U_{L^2_k})$.
We introduce and recall several notations:
\begin{itemize}
\item The $KK$-element corresponding to the Kasparov $(\ca{A}_\HKT(U_{L^2_k}),\ca{S})$-module constructed in Lemma \ref{index element for HKT algebra} is denoted by $[\widetilde{\Dirac}_\HKT]\in KK_{U_{L^2_m}}(\ca{A}_\HKT(U_{L^2_k}),\ca{S})$ in this proof.
\item The canonical embedding $\ca{A}({U_{L^2_k}})\to \ca{A}_\HKT(U_{L^2_k})$ is denoted by $\iota\grotimes \id$.
\item The canonical embedding $\ca{S}_\vep \to \ca{S}$ is denoted by $j$, in order to distinguish it from $\iota$.
\end{itemize}
Then, it is obvious that $[\widetilde{\Dirac}]\grotimes [j]=[\iota\grotimes \id]\grotimes [\widetilde{\Dirac}_\HKT]$. Since $[j]$ is invertible in $KK$-group, we have $[\widetilde{\Dirac}]=[\iota\grotimes \id]\grotimes [\widetilde{\Dirac}_\HKT] \grotimes [j]^{-1}$. Associated to it, we consider 
$$[\widetilde{\Theta}_{U_{L^2_k},2,\HKT}]:= [j]^{-1}\grotimes [\widetilde{\Theta_{U_{L^2_k},2}}]\grotimes [\iota\grotimes \id]\in \ca{R}KK_{U_{L^2_m}}(U_{L^2_m};\ca{S}\grotimes \scr{C}(U_{L^2_m}),\ca{A}_{\HKT}(U_{L^2_k})\grotimes \scr{C}(U_{L^2_m})).$$
Then, we have
$$[\widetilde{\Theta_{U_{L^2_k},2}}]\grotimes [\widetilde{\Dirac}]
=[j]\grotimes [\widetilde{\Theta}_{U_{L^2_k},2,\HKT}]\grotimes[\widetilde{\Dirac}_\HKT]\grotimes  [j]^{-1}.$$
Thus, it suffices to prove that $[\widetilde{\Theta}_{U_{L^2_k},2,\HKT}]\grotimes[\widetilde{\Dirac}_\HKT]=\sigma_{\ca{S}}([\ca{L}|_{U_{L^2_m}}])$.

Note that $[\widetilde{\Theta}_{U_{L^2_k},2,\HKT}]$ is represented by
$$\bra{\{\ca{A}_\HKT(U_{L^2_k})\}_{g\in U_{L^2_m}},\bbra{\beta_g}_{g\in U_{L^2_m}},\{0\}_{g\in U_{L^2_m}}}.$$
In fact, $[j]\grotimes \bra{\{\ca{A}_\HKT(U_{L^2_k})\}_{g\in U_{L^2_m}},\bbra{\beta_g}_{g\in U_{L^2_m}},\{0\}_{g\in U_{L^2_m}}}$ is clearly $[\widetilde{\Theta_{U_{L^2_k},2}}]\grotimes [\iota\grotimes \id]$. Thus, the Kasparov product $[\widetilde{\Theta}_{U_{L^2_k},2,\HKT}]\grotimes\ [\widetilde{\Dirac}_\HKT]$ is given by the field of Kasparov modules
$$\bra{\{\ca{S}\grotimes \ca{H}\}_{ g\in U_{L^2_m}},\{\pi\circ\beta_{g}\}_{ g\in U_{L^2_m}},\{g(\id\grotimes\Dirac)\}_{ g\in U_{L^2_m}}}.$$

We prove this $\ca{R}KK$-element is $\sigma_{\ca{S}}\bra{[\ca{L}|_{U_{L^2_m}}]}$. This is equivalent to the following by Lemma \ref{Lemma computation of KK(S,S)}:
$$[b_\pm]\grotimes \bra{\{\ca{S}_\vep\grotimes \ca{H}\}_{ g\in U_{L^2_m}},\{\pi\circ\beta_{g}\}_{ g\in U_{L^2_m}},\{g(\id\grotimes\Dirac)\}_{ g\in U_{L^2_m}}}\grotimes [d_\pm]=
[\ca{L}|_{U_{L^2_m}}],$$
$$[b_\pm]\grotimes \bra{\{\ca{S}_\vep\grotimes \ca{H}\}_{ g\in U_{L^2_m}},\{\pi\circ\beta_{g}\}_{ g\in U_{L^2_m}},\{g(\id\grotimes\Dirac)\}_{ g\in U_{L^2_m}}}\grotimes [d_\mp]=0$$
(double signs are in the same order). 

We prove only $[b_+]\grotimes \bra{\cdots }\grotimes [d_+]=
[\ca{L}|_{U_{L^2_m}}]$. By a direct computation,
$$\bra{\cdots }\grotimes [d_+]=\bra{\{L^2(\bb{R})_{\rm gr}\grotimes \ca{H}\}_{ g\in U_{L^2_m}},\{f\mapsto f(X\grotimes \id+\id\grotimes C_g)\}_{ g\in U_{L^2_m}},\{\epsilon d\grotimes \id+\id\grotimes g(\Dirac)\}_{ g\in U_{L^2_m}}},$$
where $X\phi(t):=t\phi(t)$ for $\phi \in L^2(\bb{R})_{\rm gr}$.
Next, $\ca{S}\grotimes_{\ca{S}} \bbra{L^2(\bb{R})_{\rm gr}\grotimes \ca{H}}$ is isomorphic to $L^2(\bb{R})_{\rm gr}\grotimes \ca{H}$, by $f\grotimes \phi\grotimes \psi\mapsto f(X\grotimes \id+\id\grotimes C_g)\phi\grotimes \psi$. Under this isomorphism, $X\grotimes_{\ca{S}} \id_{L^2(\bb{R})_{\rm gr}\grotimes \ca{H}}$ is given by $X\grotimes \id+\id\grotimes C_g$. Noticing it, we find that the triple Kasparov product is given by 
$$\bra{\bbra{L^2(\bb{R})_{\rm gr}\grotimes \ca{H}}_{g\in U_{L^2_m}},
\bbra{1}_{g\in U_{L^2_m}},
\bbra{(\epsilon d+X)\grotimes \id+\id\grotimes g(\Dirac)}_{g\in U_{L^2_m}}}.$$
In fact, it is obvious that $\{(\epsilon d+X)\grotimes \id+\id\grotimes g(\Dirac)\}$ is an $(\epsilon d\grotimes \id+\id\grotimes g(\Dirac))$-connection for $\ca{S}$. Moreover, 
$$[X\grotimes \id+\id\grotimes C_g,(\epsilon d+X)\grotimes \id+\id\grotimes g(\Dirac)]=(2X^2+\epsilon)\grotimes \id+g\bra{\id\grotimes \sum_nn^{-l/4}\{c(e_n)c^*(e_n)+c(f_n)c^*(f_n)\}}.$$
The first term is positive modulo bounded, and the second term is bounded. The essential point of this computation is, roughly speaking, that the ``potential term'' of $\Dirac$ commutes with $C$, which is because the vector $x_nJe_n+y_nJf_n$ (the potential of the $n$-th summand of $\Dirac$) is orthogonal to $x_ne_n+y_nf_n$ (the $n$-th summand of $C$)  at each point. This formal argument is justified as follows. The positivity condition to be a Kasparov product is, strictly speaking, the following: The quadratic form
{\small
$$\psi\grotimes \phi\mapsto \inpr{(X\grotimes\id+\id\grotimes C_g)(\psi\grotimes \phi)}{\id\grotimes g(\Dirac) (\psi\grotimes \phi)}{}+\inpr{\id\grotimes g(\Dirac) (\psi\grotimes \phi)}{(X\grotimes\id+\id\grotimes C_g)(\psi\grotimes \phi)}{}$$}
is positive modulo bounded. Since $C_g=g\circ C\circ g^{-1}$, $g(\Dirac)=g\circ \Dirac\circ g^{-1}$ and the $U_{L^2_m}$-action is isometric, it is given by
{\small
$$\lim_{N\to \infty}\bbra{\inpr{(X\grotimes\id+\id\grotimes C_0^N)(\psi\grotimes g(\phi))}{\id\grotimes \Dirac (\psi\grotimes g\cdot\phi)}{}+\inpr{\id\grotimes \Dirac (\psi\grotimes g\cdot\phi)}{(X\grotimes\id+\id\grotimes C_0^N)(\psi\grotimes g\cdot\phi)}{}},$$
}
thanks to the definition of $C$. In this expression,  we can do the above formal argument.

The Kasparov module $\bra{\bbra{L^2(\bb{R})_{\rm gr}\grotimes \ca{H}}_{g\in U_{L^2_m}},
\bbra{1}_{g\in U_{L^2_m}},
\bbra{(\epsilon d+X)\grotimes \id+\id\grotimes g(\Dirac)}_{g\in U_{L^2_m}}}$ is homotopic to 
$$\bra{\bbra{\ker(\epsilon d+X)\grotimes \ker(g(\Dirac))
}_{g\in U_{L^2_m}},
\bbra{1}_{g\in U_{L^2_m}},
\bbra{0}_{g\in U_{L^2_m}}}$$
by the family version of the argument to prove that $KK(\bb{C},\bb{C})\cong \bb{Z}$. Since $\ker(\epsilon d+X)\grotimes \ker(g(\Dirac))\cong \bb{C}$ and the $U_{L^2_m}^\tau$-action is at level $1$, it is $[\ca{L}|_{U_{L^2_m}}]$.

\end{proof}

\section{Topological assembly map for proper $LT$-spaces}\label{section top ass map}

The aim of this section is to construct an infinite-dimensional version of the topological assembly map for proper $LT$-spaces, and to compute it. The main result of this section is the following: {\it The value of the topological assembly map at the value of the Poincar\'e duality of the index element, coincides with the analytic index constructed in \cite{Thesis}}. This is an analogous result of Proposition \ref{prop index theorem for Spinc}. In addition, as a concluding remark, we will explain what we should do after the present paper.

\subsection{Crossed products and descent homomorphisms for proper $LT$-spaces}\label{section top ass map descent}

We have explained the description of crossed products and the descent homomorphism for $\ca{R}KK_G^\tau$-theory in terms of fields of Hilbert modules in Section \ref{section index theorem top ass map}. Imitating it, we define substitutes for the descent homomorphisms for proper $LT$-spaces. 
For this aim, we need to introduce a substitute for ``$L^2(U_{L^2_k},\ca{L}^{\otimes q})$'' for each $q\in \bb{Z}$.

\begin{dfn}\label{L2LT}
$(1)$ For $q\in\bb{Z}$, we define a new $U_{L^2_m}^\tau$-action $R$ at level $-q$ on $\ud{L^2(U_{L^2_k},\ca{L})}$ by 
$$R_{(g,z)}\phi(x):=z^{-q} \phi(x+g)\tau(g,x)^q$$
for $g\in U_{\fin}$ and $\phi \in \ud{L^2(U_{L^2_k},\ca{L})_\fin}$. This representation extends to $U_{L^2_m}^\tau$ as a continuous homomorphism.
This $-q\tau$-twisted $U_{L^2_m}$-representation space is denoted by  $\ud{L^2(U_{L^2_k},\ca{L}^{\otimes q})}$. 

$(2)$ The $-q\tau$-twisted $T\times\Pi_T$-action on $L^2(T\times \Pi_T,\ca{L}^{\otimes q})$ induced by the right regular representation of $(T\times \Pi_T)^\tau$, is denoted by $R$.

$(3)$ The Hilbert space $\ud{L^2(LT_{L^2_k},\ca{L}^{\otimes q})}$ is defined by the tensor product
$$\ud{L^2(LT_{L^2_k},\ca{L}^{\otimes q})}:=\ud{L^2(U_{L^2_k},\ca{L}^{\otimes q})}\grotimes L^2(T\times \Pi_T,\ca{L}^{\otimes q}),$$
and it is equipped with a $-q\tau$-twisted representation $R$ of  $LT_{L^2_m}^\tau=U_{L^2_m}^\tau\boxtimes_{U(1)} (T\times\Pi_T)^\tau$ defined by $R_{(g_1,g_2,z)}\phi_1\grotimes \phi_2:=R_{(g_1,z)}\phi_1\grotimes R_{(g_2,1)}\phi_2$ for $(g_1,g_2,z)\in LT_{L^2_m}^\tau$, $\phi_1\in \ud{L^2(U_{L^2_k},\ca{L}^{\otimes q})}$ and $\phi_2\in L^2(T\times \Pi_T,\ca{L}^{\otimes q})$.
\end{dfn}


\begin{dfn-pro}\label{descent and crossed product for LT theory}
$(1)$ Let $\scr{A}=(\{A_x\}_{x\in \ca{M}_{L^2_m}},\Gamma_{\scr{A}})$ be an $\ca{M}_{L^2_m}\rtimes LT_{L^2_m}$-equivariant u.s.c. field of $C^*$-algebras. 
{\it Then, the set of invariant sections
$$C\bra{\widetilde{M}\times_{ T\times \Pi_T}\bbra{\bb{K}\bra{\ud{L^2(LT_{L^2_k},\ca{L}^{\otimes q})}}\grotimes A_x}_{x\in \widetilde{M}}}$$
for the $T\times\Pi_T$-action defined by the restriction of the $LT_{L^2_m}$-action $\Ad R\grotimes \alpha^{\scr{A}}$, is an $({M}/T)\rtimes\{e\}$-equivariant $C^*$-algebra}.
We regard the constructed $C^*$-algebra as the ``{\bf $q\tau$-twisted crossed product of $\scr{A}$ by $LT_{L^2_m}$}'' and we denote it by $\ud{\scr{A}\rtimes_{q\tau} LT_{L^2_m}}$.

$(2)$ Let $\scr{B}=(\{B_x\}_{x\in \ca{M}_{L^2_m}},\Gamma_{\scr{B}})$ be an $\ca{M}_{L^2_m}\rtimes LT_{L^2_m}$-equivariant u.s.c. field of $C^*$-algebras, and let $\scr{E}=(\{E_x\}_{x\in \ca{M}_{L^2_m}},\Gamma_{\scr{E}})$ be a $p\tau$-twisted $\ca{M}_{L^2_m}\rtimes LT_{L^2_m}$-equivariant u.s.c. field of Hilbert $\scr{B}$-modules.
{\it Then, the set of invariant sections
$$C\bra{\widetilde{M}\times_{ T\times \Pi_T}\bbra{\bb{K}\bra{\ud{L^2(LT_{L^2_k},\ca{L}^{\otimes q})},\ud{L^2(LT_{L^2_k},\ca{L}^{\otimes (q-p)})}}\grotimes E_x}_{x\in \widetilde{M}}}$$
for the $T\times\Pi_T$-action defined by the restriction of the $LT_{L^2_m}$-action $\Ad R\grotimes \alpha^{\scr{E}}$ (this action is untwisted. See the exposition before Proposition \ref{twisted cp and fpa and cp and fpm}), is a Hilbert $\ud{\scr{B}\rtimes_{q\tau} LT_{L^2_m}}$-module.}
We regard the constructed Hilbert module as the ``{\bf $q\tau$-twisted crossed product of $\scr{E}$ by $LT_{L^2_m}$}'', and we denote it by $\ud{\scr{E}\rtimes_{q\tau} LT_{L^2_m}}$.

$(3)$ Let $\bra{\scr{E},\pi,F}$ be a $p\tau$-twisted $\ca{M}_{L^2_m}\rtimes LT_{L^2_m}$-equivariant Kasparov $(\scr{A,B})$-module. We define the $*$-homomorphism 
$$\id\grotimes \pi:\ud{\scr{A}\rtimes_{(q-p)\tau} LT_{L^2_m}}\to \scr{L}_{\ud{\scr{B}\rtimes_{q\tau} LT_{L^2_m}}}(\ud{\scr{E}\rtimes_{q\tau} LT_{L^2_m}})$$
by $\id\grotimes \pi_x(k_1\grotimes a_x)(k_2\grotimes e_x):=k_1\circ k_2\grotimes \pi_x(a_x)(e_x)$ and the operator
$$\id\grotimes F\in \bb{L}_{\ud{\scr{B}\rtimes_{q\tau} LT_{L^2_m}}}(\ud{\scr{E}\rtimes_{q\tau} LT_{L^2_m}})$$
by $\id\grotimes F_x(k_2\grotimes e_x):=k_2\grotimes F_x(e_x)$, for $k_2\in \bb{K}\bra{\ud{L^2(LT_{L^2_k},\ca{L}^{\otimes q})},\ud{L^2(LT_{L^2_k},\ca{L}^{\otimes (q-p)})}}$, $k_1\in \bb{K}\bra{\ud{L^2(LT_{L^2_k},\ca{L}^{\otimes q})}}$, $a_x\in A_x$ and $e_x\in E_x$.
{\it Then, the triple 
$$\bra{\ud{\scr{E}\rtimes_{q\tau} LT_{L^2_m}},\{\id\grotimes \pi_x\}_{x\in \ca{M}_{L^2_m}},\{\id\grotimes F_x\}_{x\in \ca{M}_{L^2_m}}}$$
is an $({M}/T)\rtimes\{e\}$-equivariant Kasparov $(\ud{\scr{A}\rtimes_{(q-p)\tau} LT_{L^2_m}},\ud{\scr{B}\rtimes_{q\tau} LT_{L^2_m}})$-module.} It is denoted by $\ud{j^{q\tau}_{LT_{L^2_m}}}\bra{\scr{E},\pi,F}$ and the correspondence $(\scr{E},\pi,F)\mapsto \ud{j^{q\tau}_{LT_{L^2_m}}}\bra{\scr{E},\pi,F}$ is called the {\bf partial descent homomorphism}.
\end{dfn-pro}
\begin{proof}
$(1)$ This is obvious from the fact that 
\begin{align*}
&C\bra{\widetilde{M}\times_{ T\times \Pi_T}\bbra{\bb{K}\bra{\ud{L^2(LT_{L^2_k},\ca{L}^{\otimes q})}}\grotimes A_x}_{x\in \widetilde{M}}}\\
&\cong C\bra{\widetilde{M}\times_{ T\times \Pi_T}\bbra{\bb{K}\bra{L^2(T\times\Pi_T,\ca{L}^{\otimes q})}\grotimes A_x}_{x\in \widetilde{M}}}\grotimes \bb{K}\bra{\ud{L^2(LT_{L^2_k},\ca{L}^{\otimes q})}} \\
&\cong C_0(\widetilde{M})\rtimes_{q\tau}(T\times\Pi_T)\grotimes \bb{K}\bra{\ud{L^2(LT_{L^2_k},\ca{L}^{\otimes q})}}.
\end{align*}

$(2)$ One can prove it in the same way.

$(3)$ For $a\in \Gamma_{\scr{A}}$, $k\in
\bb{K}\bra{\ud{L^2(LT_{L^2_k},\ca{L}^{\otimes (q-p)})}}$, $e\in \Gamma_{\scr{E}}$ and $k'\in \bb{K}\bra{\ud{L^2(LT_{L^2_k},\ca{L}^{\otimes q})},\ud{L^2(LT_{L^2_k},\ca{L}^{\otimes (q-p)})}}$,
\begin{align*}
[\id\grotimes \pi_x(k\grotimes a_x),\id\grotimes F_x](k'\grotimes e)&=k\circ k'\grotimes [\pi_x(a_x),F_x](e_x) \\
&=(k\circ\star) \grotimes [\pi_x(a_x),F_x](k'\grotimes e_x),
\end{align*}
where $k\circ\star$ means the operator $k'\mapsto k\circ k'$.
The second component $x\mapsto [\pi_x(a_x),F_x]$ is an element of $\Gamma_{\scr{K}(\scr{E})}$ because $(\scr{E},\{\pi_x\},\{F_x\})$ is a Kasparov $(\scr{A,B})$-module. The first one 
$$k\circ\star:\bb{K}\bra{\ud{L^2(LT_{L^2_k},\ca{L}^{\otimes q})},\ud{L^2(LT_{L^2_k},\ca{L}^{\otimes (q-p)})}}\to \bb{K}\bra{\ud{L^2(LT_{L^2_k},\ca{L}^{\otimes q})},\ud{L^2(LT_{L^2_k},\ca{L}^{\otimes (q-p)})}}$$ 
is $\bb{K}\bra{\ud{L^2(LT_{L^2_k},\ca{L}^{\otimes q})}}$-compact as proved in the next paragraph. Consequently, the section $x\mapsto [\id\grotimes \pi_x(k\grotimes a),\id\grotimes F_x]$ is a $\ud{\scr{B}\rtimes LT_{L^2_m}}$-compact operator.

What we need to prove is that $k\circ\star$ is a $\bb{K}\bra{\ud{L^2(LT_{L^2_k},\ca{L}^{\otimes q})}}$-compact operator on\\
$\bb{K}\bra{\ud{L^2(LT_{L^2_k},\ca{L}^{\otimes q})},\ud{L^2(LT_{L^2_k},\ca{L}^{\otimes (q-p)})}}$. For this aim, we may assume that $k$ is a single Schatten form $\phi\grotimes\psi^*$ as an operator on the Hilbert space. Then, the operator $k\circ \star$ can be written as the composition of the following: For a unit vector $\lambda\in \ud{L^2(LT_{L^2_k},\ca{L}^{\otimes q})}$,
$$\xymatrix{
\bb{K}\bra{\ud{L^2(LT_{L^2_k},\ca{L}^{\otimes q})},\ud{L^2(LT_{L^2_k},\ca{L}^{\otimes (q-p)})}} \ar^-{(\psi\grotimes \lambda^*)^*}[d] \\
\bb{K}\bra{\ud{L^2(LT_{L^2_k},\ca{L}^{\otimes q})}} \ar^-{\phi\grotimes \lambda^*}[d] \\
\bb{K}\bra{\ud{L^2(LT_{L^2_k},\ca{L}^{\otimes q})},\ud{L^2(LT_{L^2_k},\ca{L}^{\otimes (q-p)})}} .}$$
This is a single Schatten form on $\bb{K}\bra{\ud{L^2(LT_{L^2_k},\ca{L}^{\otimes q})},\ud{L^2(LT_{L^2_k},\ca{L}^{\otimes (q-p)})}}$ as the Hilbert $\bb{K}\bra{\ud{L^2(LT_{L^2_k},\ca{L}^{\otimes q})}}$-module. In particular, it is compact.

For the same reason, the section
$$x\mapsto \id\grotimes\pi_x(k\grotimes a_x)(1-\id\grotimes F_x^2)=(k\circ\star) \grotimes \pi_x(a_x)(1-F_x^2)$$
defines a $\ud{\scr{B}\rtimes_{q\tau} LT_{L^2_m}}$-compact operator.
\end{proof}

\begin{rmks}
$(1)$ Since we have assumed that the $LT_{L^2_m}$-action on $\ca{M}_{L^2_m}$ is co-compact, the crossed product is not $C_0(\cdots)$ but $C(\cdots)$. If the action were just ``co-locally compact'', the crossed product would become $C_0(\cdots)$ which is the set of continuous sections whose norms vanish at infinity in the quotient space $\ca{M}_{L^2_m}/LT_{L^2_m}$. Note that the norm function is defined on the quotient space because the norm is invariant under the $LT_{L^2_m}$-action.

$(2)$ $\ud{L^2(LT_{L^2_k},\ca{L}^{\otimes q})}$ and $\ud{L^2(LT_{L^2_k},\ca{L}^{\otimes (q-p)})}$ are trivially graded, and hence signs coming from the gradings do not appear.
\end{rmks}

We must prove that $\ud{j^{p\tau}_{LT_{L^2_m}}}$ makes sense at the level of $KK$-theory.

\begin{pro}\label{jLT is well defined}
Let $\scr{A}$ and $\scr{B}$ be $\ca{M}_{L^2_m}\rtimes LT_{L^2_m}$-equivariant u.s.c. fields of $C^*$-algebras. Then, the correspondence $\ud{j^{q\tau}_{LT_{L^2_m}}}$ is homotopy invariant and hence it defines a homomorphism 
$$\ca{R}KK_{LT_{L^2_m}}^{p\tau}(\ca{M}_{L^2_m};\scr{A},\scr{B})\to 
\ca{R}KK\bra{M/T;\ud{\scr{A}\rtimes_{(q-p)\tau} LT_{L^2_m}},\ud{\scr{B}\rtimes_{q\tau} LT_{L^2_m}}}.$$
\end{pro}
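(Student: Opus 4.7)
The plan is to follow the standard pattern of proving that a descent construction is well-defined on $\ca{R}KK$-theory: show that when the recipe of Definition-Proposition \ref{descent and crossed product for LT theory} is applied to a homotopy (i.e., a Kasparov module with coefficient field $\scr{B}I$), the resulting object is a homotopy between the descents of the two endpoints.

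First I would observe that Definition-Proposition \ref{descent and crossed product for LT theory} applies verbatim to the $\ca{M}_{L^2_m}\rtimes LT_{L^2_m}$-equivariant u.s.c.\ field $\scr{B}I=(\{B_x\grotimes C(I)\}_{x\in\ca{M}_{L^2_m}},\Gamma_{\scr{B}I})$, where the $LT_{L^2_m}$-action on the $C(I)$-factor is trivial. Consequently, given a homotopy $(\scr{E},\pi,F)\in\bb{E}_{\ca{M}_{L^2_m}\rtimes LT_{L^2_m}}^{p\tau}(\scr{A},\scr{B}I)$ between two $p\tau$-twisted Kasparov cycles, we obtain an $(M/T)\rtimes\{e\}$-equivariant Kasparov module
$$\ud{j^{q\tau}_{LT_{L^2_m}}}(\scr{E},\pi,F)\in \bb{E}_{\{e\}}\bra{M/T;\ud{\scr{A}\rtimes_{(q-p)\tau}LT_{L^2_m}},\ud{\scr{B}I\rtimes_{q\tau}LT_{L^2_m}}}.$$

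The central step is to establish a natural $C(M/T)$-isomorphism
$$\ud{\scr{B}I\rtimes_{q\tau}LT_{L^2_m}}\;\cong\;\ud{\scr{B}\rtimes_{q\tau}LT_{L^2_m}}\,I,$$
together with a compatible identification at the Hilbert-module level that intertwines $\id\grotimes\pi$ and $\id\grotimes F$ with their pointwise-in-$I$ analogues. The reason this should hold is that the $T\times\Pi_T$-action on $\bb{K}\bra{\ud{L^2(LT_{L^2_k},\ca{L}^{\otimes q})}}\grotimes B_x\grotimes C(I)$ is trivial on the $C(I)$-tensor factor, so $C(I)$ pulls out of the invariant-section functor over $\widetilde{M}$. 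Concretely, since $C(I)$ is nuclear and $I$ is compact, a standard approximation by finite sums $\sum_i f_i\grotimes s_i$ with $f_i\in C(I)$ and $s_i$ equivariant sections shows that
$$C\bra{\widetilde{M}\times_{T\times\Pi_T}\bbra{\bb{K}\grotimes B_x\grotimes C(I)}_{x\in\widetilde{M}}}\cong C\bra{\widetilde{M}\times_{T\times\Pi_T}\bbra{\bb{K}\grotimes B_x}_{x\in\widetilde{M}}}\grotimes C(I),$$
and the analogous statement for the Hilbert-module crossed product of $\scr{E}$. Evaluation at $t\in I$ intertwines with descent, i.e.,
$$\ev_{t,*}\bbbra{\ud{j^{q\tau}_{LT_{L^2_m}}}(\scr{E},\pi,F)}\;=\;\ud{j^{q\tau}_{LT_{L^2_m}}}\bbbra{\ev_{t,*}(\scr{E},\pi,F)},$$
because $\ev_{t,*}$ acts only on the $C(I)$-tensor factor, which is unaffected by the fixed-point/equivariant-section construction. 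Combining these identifications, $\ud{j^{q\tau}_{LT_{L^2_m}}}(\scr{E},\pi,F)$ is a homotopy between $\ud{j^{q\tau}_{LT_{L^2_m}}}(\ev_{0,*}(\scr{E},\pi,F))$ and $\ud{j^{q\tau}_{LT_{L^2_m}}}(\ev_{1,*}(\scr{E},\pi,F))$, proving homotopy invariance. Additivity on direct sums is immediate since all the constructions in Definition-Proposition \ref{descent and crossed product for LT theory} are additive, so $\ud{j^{q\tau}_{LT_{L^2_m}}}$ descends to a group homomorphism on $\ca{R}KK$.

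The main obstacle is the rigorous verification of the isomorphism $\ud{\scr{B}I\rtimes_{q\tau}LT_{L^2_m}}\cong\ud{\scr{B}\rtimes_{q\tau}LT_{L^2_m}}\,I$ at the level of u.s.c.\ fields over the non-locally compact space $\ca{M}_{L^2_m}$. Heuristically one simply ``pulls the $C(I)$-factor out'', but since our $C^*$-algebras are defined as continuous equivariant sections over $\widetilde{M}$ with values in an infinite-dimensional Hilbert space (through $\ud{L^2(LT_{L^2_k},\ca{L}^{\otimes q})}$), one must check that uniform approximation of a continuous $I$-family of equivariant sections by finite elementary tensors respects both the equivariance condition and the continuity in $x\in\widetilde{M}$. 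The compactness of $I$, the nuclearity of $C(I)$, and the local compactness of $\widetilde{M}/ (T\times\Pi_T)\cong M/T$ make this step routine but not entirely automatic, and it is where the bulk of the technical work lies.
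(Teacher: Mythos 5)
Your proof is correct and takes essentially the same route as the paper: apply the descent construction of Definition-Proposition~\ref{descent and crossed product for LT theory} to the homotopy cycle and observe that the result is a Kasparov $\bra{\ud{\scr{A}\rtimes_{(q-p)\tau}LT_{L^2_m}},\ud{\scr{B}\rtimes_{q\tau}LT_{L^2_m}}\grotimes C(I)}$-module interpolating between the descents of the endpoints. The paper compresses this into one sentence, silently invoking the identification $\ud{\scr{B}I\rtimes_{q\tau}LT_{L^2_m}}\cong\ud{\scr{B}\rtimes_{q\tau}LT_{L^2_m}}\grotimes C(I)$ and the compatibility of $\ev_{t,*}$ with descent, which you make explicit.
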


\begin{proof}
Let $\bra{\widetilde{\scr{E}},\{\widetilde{\pi}_x\}_{x\in \ca{M}_{L^2_m}},\{\widetilde{F}_x\}_{x\in \ca{M}_{L^2_m}}}$ be a homotopy between $\bra{\scr{E}_1,\pi_1,F_1}$ and $\bra{\scr{E}_2,\pi_2,F_2}$, which is a $p\tau$-twisted $\ca{M}_{L^2_m}\rtimes LT_{L^2_m}$-equivariant Kasparov $(\scr{A},\scr{B}\grotimes C(I))$-module. The Kasparov \\
$\bra{\ud{\scr{A}\rtimes_{(q-p)\tau} LT_{L^2_m}},\ud{\scr{B}\rtimes_{q\tau} LT_{L^2_m}}\grotimes C(I)}$-module 
$$\bra{\ud{\widetilde{\scr{E}}\rtimes_{q\tau} LT_{L^2_m}},\{\id\grotimes\widetilde{\pi}_x\},\{\id\grotimes\widetilde{F}_x\}}$$
gives a homotopy between $\ud{j_{LT_{L^2_m}}^{q\tau}}\bra{\scr{E}_1,\pi_1,F_1}$ and $\ud{j_{LT_{L^2_m}}^{q\tau}}\bra{\scr{E}_2,\pi_2,F_2}$. 
\end{proof}

Let us introduce an infinite-dimensional version of $[c_X]$, imitating Lemma \ref{lemma operator description of cX}.

Take a cut-off function $\fra{c}_{\widetilde{M}}:\widetilde{M}\to\bb{R}_{\geq 0}$ with respect to the $T\times \Pi_T$-action. It defines a function $\fra{c}_{\widetilde{M},x}:T\times \Pi_T\to \bb{R}_{\geq 0}$ by $\fra{c}_{\widetilde{M},x}(g):=\fra{c}_{\widetilde{M}}(g\cdot x)$. Note that $T\times\Pi_T$ is unimodular and the modular function does not appear here.

The Hilbert space $\ud{L^2(U_{L^2_k})}$ has a unit vector ``$\vac$''. We regard it as the ``square root of a cut-off function on $U_{L^2_k}$''. Recall that a cut-off function on a locally compact group with respect to left translation is just a non-negative-valued $L^1$-unit function.
The rank one projection onto the one-dimensional subspace $\bb{C}\vac$ is denoted by $P_\vac$.

\begin{dfn}
Under the identification 
$$\ud{\scr{C}(\ca{M}_{L^2_m})\rtimes LT_{L^2_m}}
\cong C\bra{\widetilde{M}\times_{T\times\Pi_T}\bbra{\bb{K}\bra{\ud{L^2(U_{L^2_k})}}\grotimes \bb{K}(L^2(T\times\Pi_T)}},$$
the {\bf Mishchenko line bundle} $\ud{\bbbra{c_{\ca{M}_{L^2_m}}}}\in KK\bra{\bb{C},\ud{\scr{C}(\ca{M}_{L^2_m})\rtimes LT_{L^2_m}}}$ is defined by the equivariant family of rank one projections
$$P:x\mapsto P_x:= P_\vac\grotimes P_{\sqrt{\fra{c}_{\widetilde{M},x}}}.$$
\end{dfn}

\begin{pro}\label{Mishchenko line byndle for LTmanifold}
$\ud{\bbbra{c_{\ca{M}_{L^2_m}}}}$ is represented by the following Kasparov module:
$$\bra{C_0\bra{\widetilde{M}\times_{T\times\Pi_T}\ud{L^2\bra{LT_{L^2_k}}}^*},1,0},$$
where the Hilbert module structure is given as follows: For equivariant sections $f,f_1,f_2: \widetilde{M}\to\ud{L^2(LT_{L^2_k})}^*$ and $b: \widetilde{M}\to\bb{K}\bra{\ud{L^2(LT_{L^2_k})}}$,
\begin{itemize}
\item $(f\cdot b)(x):=f(x)\circ b(x)$; and 
\item $\inpr{f_1}{f_2}{\ud{\scr{C}(\ca{M}_{L^2_m})\rtimes LT_{L^2_m}}}(x):=f_1(x)^*\circ f_2(x)$, where $f_1(x)^*$ is an element of $\bbbra{\ud{L^2\bra{LT_{L^2_k}}}^*}^*\cong \ud{L^2(LT_{L^2_k})}$ by the Riesz representation theorem.
\end{itemize}
\end{pro}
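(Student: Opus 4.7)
My strategy is to adapt the proof of Lemma \ref{lemma operator description of cX} to the present infinite-dimensional setting, exploiting the fact that $P_x$ is, for each fixed $x$, a rank one projection onto a unit vector in $\ud{L^2(LT_{L^2_k})}=\ud{L^2(U_{L^2_k})}\grotimes L^2(T\times\Pi_T)$. First I would verify that $v_x:=\vac\grotimes \sqrt{\fra{c}_{\widetilde{M},x}}$ is a unit vector. Since $\vac$ is a unit vector by construction and $T\times\Pi_T$ is unimodular, this reduces to the $L^2$-unit property of $\sqrt{\fra{c}_{\widetilde{M},x}}$, which is exactly the defining cut-off identity $\int_{T\times\Pi_T}\fra{c}_{\widetilde{M}}(g^{-1}\cdot x)\,dg=1$. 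Then $P_x=v_x\grotimes v_x^*$.

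Next I would invoke the standard identification: for any unit vector $v$ in a Hilbert space $H$, the Hilbert $\bb{K}(H)$-module $P_v\bb{K}(H)$ is isomorphic to $H^*$ via $P_vk\mapsto v^*\circ k$, with the right $\bb{K}(H)$-module structure $f\cdot b=f\circ b$ and inner product $\inpr{f_1}{f_2}{\bb{K}(H)}=f_1^*\circ f_2$. The routine verifications are $(P_vk)\cdot b=v\grotimes(v^*\circ k\circ b)\mapsto v^*\circ k\circ b$ and $\inpr{P_vk_1}{P_vk_2}{\bb{K}(H)}=k_1^*P_vk_2=(v^*k_1)^*\circ(v^*k_2)$. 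Applying this fiberwise with $H=\ud{L^2(LT_{L^2_k})}$ gives, for each $x\in\widetilde{M}$, a module isomorphism $P_x\cdot\bb{K}(\ud{L^2(LT_{L^2_k})})\cong\ud{L^2(LT_{L^2_k})}^*$ intertwining precisely the structures listed in the proposition.

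The remaining and main point is to match these fiberwise isomorphisms with the $T\times\Pi_T$-equivariance that defines the crossed product $\ud{\scr{C}(\ca{M}_{L^2_m})\rtimes LT_{L^2_m}}$. Since $T\times\Pi_T$ acts trivially on $\ud{L^2(U_{L^2_k})}$, the projection $P_\vac$ commutes with the restricted action of $R$, so I need only track the $L^2(T\times\Pi_T)$ factor. A direct computation using $\fra{c}_{\widetilde{M},g\cdot x}(h)=\fra{c}_{\widetilde{M}}(hg\cdot x)$ together with unimodularity yields $\sqrt{\fra{c}_{\widetilde{M},g\cdot x}}=R_g\sqrt{\fra{c}_{\widetilde{M},x}}$, whence $v_{g\cdot x}=R_g v_x$ and $P_{g\cdot x}=\Ad R_g(P_x)$. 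Under the fiberwise identification, an equivariant section $x\mapsto P_xk_x$ transforms into a section $x\mapsto f_x:=v_x^*\circ k_x$ obeying $f_{g\cdot x}=f_x\circ R_g^{-1}$, which is exactly the equivariance giving a continuous section of the associated bundle $\widetilde{M}\times_{T\times\Pi_T}\ud{L^2(LT_{L^2_k})}^*$ over $M/T$. I expect this equivariance matching to be the subtlest step, because the two tensor factors of $\ud{L^2(LT_{L^2_k})}$ transform differently and the duality convention for the inner product on $\ud{L^2(LT_{L^2_k})}^*$ has to be tracked carefully; beyond that, everything is a fiberwise application of the rank-one-projection identification, and the resulting Kasparov module is manifestly $(\ud{\scr{C}(\ca{M}_{L^2_m})\rtimes LT_{L^2_m}},1,0)$-type, hence represents $\ud{[c_{\ca{M}_{L^2_m}}]}$.
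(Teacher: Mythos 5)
Your proposal is correct and follows essentially the same route as the paper: the paper writes down the explicit isomorphism $\Phi(P\cdot a)(x)=[\vac\grotimes\sqrt{\fra{c}_{\widetilde{M},x}}]^*\circ a(x)$ and then checks (1) equivariance/well-definedness via exactly your computation $R_g\sqrt{\fra{c}_{\widetilde{M},g^{-1}\cdot x}}=\sqrt{\fra{c}_{\widetilde{M},x}}$, (2) that it is an isometric module map (your rank-one-projection identity), and (3) surjectivity (the inverse $\phi\mapsto[x\mapsto v_x\circ\phi(x)]$). Your write-up packages steps (2)–(3) as the abstract fact $P_v\bb{K}(H)\cong H^*$ applied fiberwise — the one thing you should make explicit is the inverse map giving surjectivity, which the paper spells out, but this is an immediate corollary of the same identification.
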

\begin{proof}
We construct an isomorphism 
$$\Phi:P\cdot\bbbra{ C_0\bra{\ca{M}_{L^2_m}\times_{LT_{L^2_m}}\bb{K}\bra{\ud{L^2(LT_{L^2_k})}}}} \to C_0\bra{\widetilde{M}\times_{T\times\Pi_T}\ud{L^2\bra{LT_{L^2_k}}}^*}$$
by the formula
$$\Phi(P\cdot f)(x):=[\vac\grotimes \sqrt{\fra{c}_{\widetilde{M},x}}]^*\circ f(x),$$
where $[\vac\grotimes \sqrt{\fra{c}_{\widetilde{M},x}}]^*$ is the Riesz dual of $\vac\grotimes \sqrt{\fra{c}_{\widetilde{M},x}}$.
We need to verify the following: $(1)$ $\Phi$ is well-defined; $(2)$ $\Phi$ is isometric right module homomorphism; 
and $(3)$ $\Phi$ is surjective.

$(1)$ We need to check that $\Phi(P\cdot a)$ is $T\times\Pi_T$-equivariant, that is to say,
$ \Phi(P\cdot a)(g^{-1}x)\circ R_g^{-1} = \Phi(P\cdot a)(x).$
By definition, $a$ satisfies the equivariant condition
$$ R_g\circ a(g^{-1}x)\circ R_g^{-1} = a(x).$$
Since $\Phi(P\cdot a)(x)=[\vac\grotimes \sqrt{\fra{c}_{\widetilde{M},x}}]^*\circ a(x)$, it is 
sufficient to verify that  $[\vac\grotimes \sqrt{\fra{c}_{\widetilde{M},x}}]^*$ is $T\times\Pi_T$-invariant.
Since ``$\vac$'' is $T\times\Pi_T$-invariant, it suffices to deal with the $\sqrt{\fra{c}_{\widetilde{M},x}}$-part.
We can prove the invariance as follows:
\begin{align*}
R_g\bra{\sqrt{\fra{c}_{\widetilde{M},g^{-1}\cdot x}}}(h)
&=\sqrt{\fra{c}_{\widetilde{M},g^{-1}\cdot x}}(hg) \\
&= \sqrt{\fra{c}_{\widetilde{M}}(hgg^{-1}\cdot x)} \\
&= \sqrt{\fra{c}_{\widetilde{M}}(hx)} \\
&= \sqrt{\fra{c}_{\widetilde{M},x}}(h).
\end{align*}

$(2)$ $\Phi$ is clearly a module homomorphism.
We check that it is isometric. For \\
$a_1,a_2\in C_0\bra{\ca{M}_{L^2_m}\times_{LT_{L^2_m}}\bb{K}\bra{\ud{L^2(LT_{L^2_k})}}}$, since $P=P^*=P^2$,
\begin{align*}
\inpr{P\cdot a_1}{P\cdot a_2}{\ud{\scr{C}(\ca{M}_{L^2_m})\rtimes LT_{L^2_m}}}(x)
&= [P\cdot a_1(x)]^*\circ P\cdot a_2(x) \\
&= a_1(x)^* \circ P\circ a_2(x)\\
&= a_1(x)^* \circ [\vac\grotimes \sqrt{\fra{c}_{\widetilde{M},x}}] \circ [\vac\grotimes \sqrt{\fra{c}_{\widetilde{M},x}}]^* \circ a_2(x) \\
&= \bra{[\vac\grotimes \sqrt{\fra{c}_{\widetilde{M},x}}]^* \circ a_1(x) }^*\circ \bra{[\vac\grotimes \sqrt{\fra{c}_{\widetilde{M},x}}]^* \circ a_2(x)} \\
&=\inpr{\Phi(P\cdot a_1)}{\Phi(P\cdot a_2)}{}(x).
\end{align*}

$(3)$ For $\phi\in C\bra{\widetilde{M}\times_{T\times\Pi_T}\ud{L^2\bra{LT_{L^2_m}}}^*}$, let us consider the compact operator-valued function $a$ defined by 
$$x\mapsto [\vac\grotimes \sqrt{\fra{c}_{\widetilde{M},x}}]\circ \phi(x).$$
It gives an equivariant section thanks to the same argument for $(1)$, and clearly $\Phi(P\cdot a)=\phi$.
\end{proof}

In the remainder of this subsection, we prove several properties of the crossed products and the descent homomorphism for proper $LT$-spaces, in order to show how appropriate our constructions are. Since these results will not be used in the following, the reader can skip them. For the same reason, we will often skip the detailed proof.

We begin with an infinite-dimensional version of the following fundamental property: The crossed product by a semi-direct product group is obtained by the iterated crossed product. We are interested in this property for the following reason:
$LT_{L^2_m}$ has the canonical decomposition $U_{LT_{L^2_m}}\times T\times \Pi_T$, and this decomposition is partially inherited to the central extension $LT_{L^2_m}^\tau= U_{LT_{L^2_m}}^\tau\boxtimes (T\times \Pi_T)^\tau$; It has an alternative description 
$(U_{L^2_m}^\tau\times\Pi_T)\rtimes T,$ where the $T$-action is defined by
$$\Ad_t((u,z),n):=((u,z\kappa^\tau_{-n}(t)),n)$$
for $(u,z)\in U_{L^2_m}^\tau$, $n\in \Pi_T$ and $t\in T$. This factorization is valid also for $LG$ for a compact Lie group $G$. Although $LG$ does not have a subgroup corresponding to  $U_{L^2_m}$, it does have a subgroup corresponding to $U_{L^2_m}^\tau\times\Pi_T$. Thus, the following observation can be useful even for the case of $LG$.

We can define the concept of (twisted) crossed products by $U_{L^2_m}$ and $U_{L^2_m}\times\Pi_T$ as follows.

\begin{dfn}
$(1)$ For $q\in \bb{Z}$, we define $\ud{L^2(  U_{L^2_k}\times\Pi_T,\ca{L}^{\otimes q})}:=\ud{L^2(U_{L^2_k},\ca{L}^{\otimes q})}\grotimes L^2(\Pi_T,\ca{L}^{\otimes q})$, and we define a $T$-action on it by the following: For $\phi\in \ud{L^2(U_{L^2_k},\ca{L}^{\otimes q})}$, $\psi\in C_c(\Pi_T,\ca{L}^{\otimes q})$, $t\in T$ and $n\in \Pi_T$,
$$t\cdot[\phi\grotimes \psi](n):=[\kappa^\tau_{n}(t)]^q\phi\grotimes \psi(n).$$
It admits the ``right regular representation'' $R: U_{L^2_m}^\tau\times\Pi_T\to \Aut\bra{\ud{L^2(  U_{L^2_k}\times\Pi_T,\ca{L}^{\otimes q})}}$, which is compatible with the above $T$-action.

$(2)$ For an $\ca{M}_{L^2_m}\rtimes LT_{L^2_m}$-equivariant u.s.c. field of $C^*$-algebras $\scr{A}$, we define the {\bf $q\tau$-twisted crossed product of $\scr{A}$ by $  U_{L^2_m}\times\Pi_T$} by
$$\ud{\scr{A}\rtimes_{q\tau}   (U_{L^2_m}\times\Pi_T)}:=C\bra{\widetilde{M}\times_{\Pi_T}\bbra{\bb{K}\bra{\ud{L^2(  U_{L^2_k}\times\Pi_T,\ca{L}^{\otimes q})}}\grotimes A_x}_{x\in \widetilde{M}}}.$$

$(3)$ In the same setting, we define a substitute for the {\bf $q\tau$-twisted crossed product of $\scr{A}$ by $  U_{L^2_m}$}
$$\ud{\scr{A}\rtimes_{q\tau}   U_{L^2_m}}:=C_0\bra{\widetilde{M},\bbra{\bb{K}\bra{\ud{L^2(  U_{L^2_k},\ca{L}^{\otimes q})}}\grotimes A_x}_{x\in \widetilde{M}}}$$
similarly. 
\end{dfn}
\begin{rmk}
Since $\widetilde{M}$ is non-compact, we need to use $C_0\bra{\cdots}$ instead of $C\bra{\cdots}$ in $(3)$. 
\end{rmk}

It is obvious that $\ud{\scr{A}\rtimes_{q\tau}   (U_{L^2_m}\times\Pi_T)}$ is an $M\rtimes T$-equivariant $C^*$-algebra and $\ud{\scr{A}\rtimes_{q\tau}  U_{L^2_m}}$ is an $\widetilde{M}\rtimes (T\times\Pi_T)$-equivariant $C^*$-algebra.
We will prove that these algebras are related in a natural way which is analogous to the following fundamental result. 

\begin{lem}[{See \cite[Proposition 3.11]{Wil}}]\label{Lemma on iterated twisted crossed product}
Let $G=N\rtimes H$ be the semi-direct product of two locally compact amenable groups $N$ and $H$, and let $A$ be a $G$-$C^*$-algebra. 
Suppose that $N$ admits a $U(1)$-central extension $N^\tau$:
$1\to U(1)\xrightarrow{i} N^\tau \xrightarrow{p} N\to 1$, 
and suppose that the $H$-action on $N$ lifts to $N^\tau$. Then, the semi-direct product $N\rtimes H$ admits a $U(1)$-central extension $N^\tau\rtimes H$ by the natural homomorphisms:
$1\to U(1)\xrightarrow{i\rtimes 1} N^\tau\rtimes H \xrightarrow{p\rtimes \id} N\rtimes H\to 1$. Then, we have a $*$-isomorphism 
$$A\rtimes_{k\tau} G\cong (A\rtimes_{k\tau} N)\rtimes H.$$
\end{lem}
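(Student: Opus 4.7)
The plan is to deduce the twisted statement from the corresponding untwisted iterated crossed product formula by extracting the level-$k$ direct summand. Throughout I will use that all groups involved are amenable, so full and reduced crossed products agree and a central extension by $U(1)$ gives a direct sum decomposition $A\rtimes N^\tau=\bigoplus_{q\in\bb{Z}}A\rtimes_{q\tau}N$ (and similarly for $G^\tau$), where the summand at level $q$ consists of $A$-valued functions on $N^\tau$ satisfying $f(i(z)g)=z^q f(g)$.

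First, I would establish the untwisted version
\[
A\rtimes (N^\tau\rtimes H)\;\cong\;(A\rtimes N^\tau)\rtimes H,
\]
which is exactly \cite[Proposition 3.11]{Wil}. The isomorphism is implemented on compactly supported functions by the Fubini-type rearrangement $F\leftrightarrow\widetilde F$, $\widetilde F(h)(n):=F(n,h)\,\delta_H(h)^{1/2}$ (with the appropriate modular correction), and one checks directly that convolution and involution are preserved, that the $H$-action used on the right is the one induced from the lifted action of $H$ on $N^\tau$ and on $A$, and that the two $C^*$-norms agree (using amenability of $N^\tau$ and $H$, inherited from amenability of $N$, $H$, and $U(1)$).

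Second, I would track the level-$k$ decompositions through this isomorphism. The central copy $U(1)\subseteq N^\tau$ sits inside $N^\tau\rtimes H$ as a central subgroup, because the hypothesis that the $H$-action on $N$ lifts to $N^\tau$ precisely means the $H$-action fixes $i(U(1))$ pointwise; in particular $U(1)\subseteq N^\tau\rtimes H$ is central and the quotient is $N\rtimes H=G$, giving the claimed central extension of $G$. On both sides of the untwisted isomorphism this central $U(1)$ acts, and both sides split as direct sums indexed by the weight $q\in\bb{Z}$. The left-hand summand at level $k$ is, by definition, $A\rtimes_{k\tau}G$. On the right-hand side, the $H$-action on $A\rtimes N^\tau$ preserves each level-$q$ summand (again because $H$ fixes $i(U(1))$), so the $U(1)$-weight decomposition commutes with the outer crossed product by $H$, and the level-$k$ summand of $(A\rtimes N^\tau)\rtimes H$ is $(A\rtimes_{k\tau}N)\rtimes H$. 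Restricting the untwisted isomorphism to level $k$ yields the desired identification.

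Third, I would verify the isomorphism on a dense subalgebra by an explicit formula so that it can be reused in the main text: for $f\in C_c(G^\tau,k\tau)$, the image $\widetilde f\in C_c\bigl(H,\,C_c(N^\tau,k\tau)\bigr)$ is $\widetilde f(h)(n):=\Delta(h)^{1/2}f(nh)$, where $\Delta$ is the appropriate modular factor. Routine bookkeeping shows $\widetilde{f_1*f_2}=\widetilde f_1*\widetilde f_2$ and $\widetilde{f^*}=(\widetilde f)^*$, where the convolution on the right uses the $H$-action on $A\rtimes_{k\tau}N$ inherited from the $H$-action on $N^\tau$ and $A$.

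The main obstacle is bookkeeping rather than conceptual: one must keep track of the modular functions of $N$, $H$, and $G$ (all trivial under our amenability/unimodularity hypotheses in the applications we care about, but present in general), of the cocycle defining $N^\tau$ under the $H$-action, and of the fact that the level-$k$ projection is $H$-equivariant. Once these are aligned, everything reduces to the standard Fubini-type argument plus the direct sum decomposition induced by a central $U(1)$.
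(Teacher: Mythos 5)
Your argument is correct. The paper itself supplies no proof for this lemma but merely cites Williams' Proposition 3.11, which only covers the untwisted isomorphism $A\rtimes(N^\tau\rtimes H)\cong(A\rtimes N^\tau)\rtimes H$; the step from there to the $k\tau$-twisted version is left implicit. Your reduction — apply Williams' untwisted Fubini isomorphism to the $U(1)$-central extension $G^\tau=N^\tau\rtimes H$, then restrict to the weight-$k$ summand under the central $U(1)$ — is exactly the missing derivation, and both of the points you flag as needing verification are the right ones: that $i(U(1))$ remains central in $N^\tau\rtimes H$ because the lifted $H$-action fixes it pointwise, and that consequently the $H$-action preserves each level-$q$ summand of $A\rtimes N^\tau$ so the $U(1)$-weight decomposition commutes with $-\rtimes H$. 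The explicit formula $\widetilde f(h)(n)=\Delta(h)^{1/2}f(nh)$ is also compatible with the level-$k$ condition on $N^\tau$ precisely because $i(z)\in N^\tau$ is central in $G^\tau$. In short, what you have written is the proof that the paper's parenthetical citation implicitly defers to the reader.
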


\begin{pro}\label{Prop iterated crossed product}
For an $\ca{M}_{L^2_m}\rtimes LT_{L^2_m}$-equivariant u.s.c. field of $C^*$-algebras $\scr{A}$ and $q\in\bb{Z}$,
$$\ud{\scr{A}\rtimes_{q\tau}( U_{L^2_m}\times \Pi_T)}\cong \ud{\scr{A}\rtimes_{q\tau} U_{L^2_m}}\rtimes_{q\tau} \Pi_T$$
as $M\rtimes T$-equivariant $C^*$-algebras,
and
$$\ud{\scr{A}\rtimes_{q\tau} LT_{L^2_m}}\cong \ud{\scr{A}\rtimes_{q\tau}( U_{L^2_m}\times \Pi_T)}\rtimes T\cong \ud{\scr{A}\rtimes_{q\tau} U_{L^2_m}}\rtimes_{q\tau} (T\times \Pi_T)$$
as $(M/T)\rtimes\{e\}$-$C^*$-algebras.
\end{pro}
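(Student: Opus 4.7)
The plan is to reduce everything to the finite-dimensional iterated-crossed-product result (Lemma \ref{Lemma on iterated twisted crossed product}) by exploiting that the ``hard'' part of the group, namely $U_{L^2_m}$, has already been absorbed into the substitute generalized fixed-point algebra $\ud{\scr{A}\rtimes_{q\tau}U_{L^2_m}}$. Once this is done, the remaining crossed products are by $\Pi_T$ (discrete) and $T$ (compact), so the classical formulas and the field descriptions of Section \ref{section index theorem top ass map} apply verbatim.

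First I would prove the factorization $\ud{\scr{A}\rtimes_{q\tau}(U_{L^2_m}\times\Pi_T)}\cong\ud{\scr{A}\rtimes_{q\tau}U_{L^2_m}}\rtimes_{q\tau}\Pi_T$. Unwinding the definition, the right hand side is
$$C_0\!\bra{\widetilde{M},\bbra{\bb{K}\bra{\ud{L^2(U_{L^2_k},\ca{L}^{\otimes q})}}\grotimes A_x}_{x\in\widetilde{M}}}\rtimes_{q\tau}\Pi_T.$$
Since $\Pi_T$ acts properly (and freely) on $\widetilde{M}$, and $\Pi_T$ is locally compact, Proposition \ref{twisted twisted cp and fpa} (the twisted fixed-point-algebra description of crossed products) gives an isomorphism with
$$C\!\bra{\widetilde{M}\times_{\Pi_T}\bbra{\bb{K}(L^2(\Pi_T,\ca{L}^{\otimes q}))\grotimes\bb{K}\bra{\ud{L^2(U_{L^2_k},\ca{L}^{\otimes q})}}\grotimes A_x}_{x\in\widetilde{M}}}.$$
The exterior tensor product identification $\bb{K}(L^2(\Pi_T,\ca{L}^{\otimes q}))\grotimes\bb{K}(\ud{L^2(U_{L^2_k},\ca{L}^{\otimes q})})\cong\bb{K}(\ud{L^2(U_{L^2_k}\times\Pi_T,\ca{L}^{\otimes q})})$ then matches the definition of $\ud{\scr{A}\rtimes_{q\tau}(U_{L^2_m}\times\Pi_T)}$. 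One must then verify that this isomorphism intertwines the residual $M\rtimes T$-equivariant structures, which reduces to checking that the $T$-action on $L^2(\Pi_T,\ca{L}^{\otimes q})$ by the characters $\kappa^\tau_n$ is the one induced from the conjugation in the semidirect product $(U_{L^2_m}^\tau\times\Pi_T)\rtimes T$ appearing in the discussion preceding Definition \ref{Lemma on iterated twisted crossed product}'s usage. This is a finite-dimensional cocycle computation.

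For the second chain of isomorphisms I would repeat the same strategy, this time crossing with the compact group $T$. Starting from
$$\ud{\scr{A}\rtimes_{q\tau}(U_{L^2_m}\times\Pi_T)}\rtimes T=C\!\bra{\widetilde{M}\times_{\Pi_T}\bbra{\bb{K}(\ud{L^2(U_{L^2_k}\times\Pi_T,\ca{L}^{\otimes q})})\grotimes A_x}}\rtimes T,$$
an application of Lemma \ref{2.9}(1) descends $T$ inside the $\Pi_T$-fiber bundle, yielding
$$C\!\bra{M\times_{T}\bbra{\bb{K}(L^2(T,\ca{L}^{\otimes q}))\grotimes\bb{K}(\ud{L^2(U_{L^2_k}\times\Pi_T,\ca{L}^{\otimes q})})\grotimes A_x}};$$
another exterior tensor identification $\bb{K}(L^2(T,\ca{L}^{\otimes q}))\grotimes\bb{K}(L^2(\Pi_T,\ca{L}^{\otimes q}))\cong\bb{K}(L^2(T\times\Pi_T,\ca{L}^{\otimes q}))$ then delivers $\ud{\scr{A}\rtimes_{q\tau}LT_{L^2_m}}$. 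The last isomorphism in the statement is obtained either by iterating the first one once more (crossing $\ud{\scr{A}\rtimes_{q\tau}U_{L^2_m}}$ first with $\Pi_T$ and then with $T$) or, equivalently, by applying the same fixed-point technique to the product group $T\times\Pi_T$ acting on $\widetilde{M}$.

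The main obstacle, in my view, is not the algebraic unwinding but the verification that the cocycle $\tau$ behaves correctly under each step: both the factorization $\ud{L^2(LT_{L^2_k},\ca{L}^{\otimes q})}\cong\ud{L^2(U_{L^2_k},\ca{L}^{\otimes q})}\grotimes L^2(T\times\Pi_T,\ca{L}^{\otimes q})$ of Definition \ref{L2LT} and the intertwining of the $T$-action via $\kappa^\tau_n$ must be compatible with the semidirect decomposition $LT_{L^2_m}^\tau\cong(U_{L^2_m}^\tau\times\Pi_T)\rtimes T$. Concretely, one has to check that the natural tensor-product identifications of the compact-operator algebras transform the twisted $T$-action on the left factor (via $\kappa^\tau_n$) into the honest conjugation action appearing in Lemma \ref{Lemma on iterated twisted crossed product}. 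Once this cocycle bookkeeping is carried out, the equivariance with respect to the residual $M\rtimes T$- and $(M/T)\rtimes\{e\}$-structures is automatic from the fiber-bundle descriptions.
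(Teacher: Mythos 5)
Your proposal is essentially the same strategy the paper uses: pull out the $\bb{K}\bra{\ud{L^2(U_{L^2_k},\ca{L}^{\otimes q})}}$-factor as a tensor factor once one notices that the residual $T\times\Pi_T$- (respectively $\Pi_T$-, $T$-) action on it is trivial, and then apply the finite-dimensional twisted fixed-point-algebra description of crossed products together with the exterior tensor identification of compact-operator algebras. The paper proves only the direct isomorphism $\ud{\scr{A}\rtimes_{q\tau}LT_{L^2_m}}\cong\ud{\scr{A}\rtimes_{q\tau}U_{L^2_m}}\rtimes_{q\tau}(T\times\Pi_T)$ in one shot, leaving the first isomorphism to the reader and pointing out in the subsequent remark that the middle one follows from the first plus Lemma \ref{Lemma on iterated twisted crossed product}; you build things up step by step, first crossing with $\Pi_T$ and then with $T$, which is the same content in a slightly different order.

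One small cautionary note: your second step invokes Lemma \ref{2.9}(1) (fixed-point algebras by a compact subgroup commute with crossed products) to ``descend $T$ inside the $\Pi_T$-fiber bundle''. What is actually going on is closer to Lemma \ref{2.9}(2) or, equivalently, to the paper's move of recognizing $C_0(\widetilde M,\{A_x\})\rtimes_{q\tau}(T\times\Pi_T)$ as a classical locally compact crossed product and applying Lemma \ref{Lemma on iterated twisted crossed product} to split off $T$. Since $\widetilde M\times_{\Pi_T}\{\cdot\}$ is a generalized fixed-point algebra rather than a literal $K$-fixed-point algebra, the clean statement you need is the iterated crossed-product formula (Lemma \ref{Lemma on iterated twisted crossed product}) applied after the $\bb{K}$-factor has been tensored off — which is exactly what your cocycle-bookkeeping remark at the end sets up. So no real gap, but the cited lemma should be the iterated crossed-product one rather than the compact-subgroup one.
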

\begin{proof}
We prove only $\ud{\scr{A}\rtimes_{q\tau} LT_{L^2_m}}\cong \ud{\scr{A}\rtimes_{q\tau} U_{L^2_m}}\rtimes_{q\tau} (T\times \Pi_T)$, and the others are left to the reader.
Recall that $\ud{\scr{A}\rtimes_{q\tau}   U_{L^2_m}}$ is defined by 
$$C_0\bra{\widetilde{M}, \bbra{\bb{K}\bra{\ud{L^2(  U_{L^2_k},\ca{L}^{\otimes q})}}\grotimes A_x}_{x\in \widetilde{M}}}.$$ 
Since the $T\times \Pi_T$-action on $\bb{K}\bra{\ud{L^2(  U_{L^2_k},\ca{L}^{\otimes q})}}$ is trivial, $\ud{\scr{A}\rtimes_{q\tau}   U_{L^2_m}}$ is isomorphic to
$$C_0\bra{\widetilde{M},\bbra{A_x}_{x\in \widetilde{M}}}\grotimes \bb{K}\bra{\ud{L^2(  U_{L^2_k},\ca{L}^{\otimes q})}}$$
as $\widetilde{M}\rtimes (T\times\Pi_T)$-$C^*$-algebras. 
Thus, the $q\tau$-twisted crossed product of this $C^*$-algebra by $T\times\Pi_T$ is isomorphic to
\begin{align*}
&C_0\bra{\widetilde{M},\bbra{A_x}_{x\in \widetilde{M}}}\rtimes_{q\tau}(T\times\Pi_T)
 \grotimes \bb{K}\bra{\ud{L^2(  U_{L^2_k},\ca{L}^{\otimes q})}} \\
&\ \ \  \cong C\bra{\widetilde{M}\times_{T\times\Pi_T}\bbra{\bb{K}(L^2(T\times\Pi_T,\ca{L}^{\otimes q}))\grotimes A_x}_{x\in \widetilde{M}}} \grotimes \bb{K}\bra{\ud{L^2(  U_{L^2_k},\ca{L}^{\otimes q})}}\\
&\ \ \  \cong C\bra{\widetilde{M}\times_{T\times \Pi_T}\bbra{\bb{K}(\ud{L^2(LT_{L^2_k},\ca{L}^{\otimes q})})\grotimes A_x}_{x\in \widetilde{M}}}\\
&\ \ \ = \ud{\scr{A}\rtimes_{q\tau} LT_{L^2_m}}.
\end{align*}
\end{proof}
\begin{rmks}
$(1)$ $\ud{\scr{A}\rtimes_{q\tau}( U_{L^2_m}\times \Pi_T)}$ carries information of the central extension of $T\times\Pi_T$ as the $T$-action on $\bbra{\bb{K}\bra{\ud{L^2(U_{L^2_k}\times \Pi_T,\ca{L}^{\otimes q})}}\grotimes A_x}_{x\in\widetilde{M}}$. However, $\ud{\scr{A}\rtimes_{q\tau} U_{L^2_m}}$ does not know this kind of information. This is the reason why two kinds of crossed products $\rtimes$ and $\rtimes_{q\tau}$ appear in the statement.

$(2)$ The isomorphism $\ud{\scr{A}\rtimes_{q\tau}( U_{L^2_m}\times \Pi_T)}\rtimes T\cong \ud{\scr{A}\rtimes_{q\tau} U_{L^2_m}}\rtimes_{q\tau} (T\times \Pi_T)$ can be proved by the isomorphism $\ud{\scr{A}\rtimes_{q\tau}( U_{L^2_m}\times \Pi_T)}\cong \ud{\scr{A}\rtimes_{q\tau} U_{L^2_m}}\rtimes_{q\tau} \Pi_T$ and Lemma \ref{Lemma on iterated twisted crossed product}. 
\end{rmks}

In a finite-dimensional setting, a crossed product is a noncommutative analogue of the quotient space. A fixed-point algebra is a more straightforward substitute. It is especially useful for a proper free action. Since the $U_{L^2_m}\times \Pi_T$-action on $\ca{M}_{L^2_m}$ is proper and free, it is worth trying to construct fixed-point algebras for the $U_{L^2_m}\times \Pi_T$-action and those for the $U_{L^2_m}$-action.

\begin{dfn}
$(1)$ For an $\ca{M}_{L^2_m}\rtimes LT_{L^2_m}$-equivariant u.s.c. field of $C^*$-algebras $\scr{A}=\bra{\{A_x\}_{x\in \ca{M}_{L^2_m}},\Gamma_{\scr{A}}}$, we define the {\bf fixed-point algebra} $\ud{\scr{A}^{  U_{L^2_m}}}$ by $C_0(\widetilde{M},\{A_x\}_{x\in \widetilde{M}})$.

$(2)$ In the same setting, we define $\ud{\scr{A}^{  U_{L^2_m}\times\Pi_T}}$ by $C(\widetilde{M}\times_{T\times \Pi_T}\{A_x\}_{x\in \widetilde{M}})$.
\end{dfn}

\begin{lem}
$(1)$ $\ud{\scr{A}^{  U_{L^2_m}\times\Pi_T}}$ is an $M\rtimes T$-equivariant $C^*$-algebra.

$(2)$ $\ud{\scr{A}^{  U_{L^2_m}}}$ is an $\widetilde{M}\rtimes(T\times\Pi_T)$-equivariant $C^*$-algebra.

$(3)$ The generalized fixed-point algebra of $\ud{\scr{A}^{  U_{L^2_m}}}$ with respect to the $\Pi_T$-action $\bra{\ud{\scr{A}^{  U_{L^2_m}}}}^{\Pi_T}$, is isomorphic to $\ud{\scr{A}^{  U_{L^2_m}\times\Pi_T}}$ as $M\rtimes T$-equivariant $C^*$-algebras.

$(4)$ $\ud{\scr{A}^{  U_{L^2_m}}}$ is $\ca{R}KK_{T\times\Pi_T}(\widetilde{M};-,-)$-equivalent to $\ud{\scr{A}\rtimes{  U_{L^2_m}}}$, and similarly for $\ud{\scr{A}^{  U_{L^2_m}\times\Pi_T}}$ and \\
$\ud{\scr{A}\rtimes \bra{  U_{L^2_m}\times\Pi_T}}$.

\end{lem}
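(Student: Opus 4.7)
The plan is as follows. Parts (1) and (2) are essentially bookkeeping: the fiber assignment $x\mapsto A_x$ over $\widetilde{M}$ is $T\times\Pi_T$-equivariant because $\widetilde{M}$ is preserved by $T\times\Pi_T\subseteq LT_{L^2_m}$ (by Lemma \ref{decomposition lemma on proper LT space}, $T$ fixes the constant sections $\fra{t}$ and $\Pi_T$ acts on $\widetilde{M}$ freely as a principal bundle), and $\scr{A}$ is $LT_{L^2_m}$-equivariant by assumption. Pointwise $C^*$-operations on $C_0(\widetilde{M},\{A_x\})$ together with point-norm continuity of the restricted action follow from the continuity of the $LT_{L^2_m}$-action on $\scr{A}$; the $C_0(\widetilde{M})$-module structure is the tautological fiberwise multiplication. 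Part (1) is then obtained from (2) by passing further to $\Pi_T$-equivariant sections, which descend to sections over $M=\widetilde{M}/\Pi_T$, with the residual $T$-action producing the $M\rtimes T$-structure.

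For part (3), I would invoke the general framework of Proposition \ref{crossed product and generalized fixed-point algebra} in the ``$\Pi_T$-acts-properly-and-freely-on-$\widetilde{M}$'' direction. Since $\ud{\scr{A}^{U_{L^2_m}}}$ is a $C_0(\widetilde{M})$-algebra with compatible $\Pi_T$-action and trivial stabilizers, its generalized fixed-point algebra is a $C(M)$-algebra whose fiber at $[x]\in M$ is $A_x$ itself. This is precisely the algebra of $\Pi_T$-equivariant sections of $\{A_x\}_{x\in \widetilde{M}}$, which is the interpretation of $\ud{\scr{A}^{U_{L^2_m}\times\Pi_T}}$. The identification manifestly respects both the $C(M)$-structure and the residual $T$-action, giving an isomorphism of $M\rtimes T$-equivariant $C^*$-algebras.

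The heart of the lemma is part (4). The essential observation is that, by Definition-Proposition \ref{descent and crossed product for LT theory}, $\ud{\scr{A}\rtimes U_{L^2_m}}$ is literally $\ud{\scr{A}^{U_{L^2_m}}}\grotimes \bb{K}\bra{\ud{L^2(U_{L^2_k})}}$: the $U_{L^2_m}$-direction is fully absorbed into $\bb{K}\bra{\ud{L^2(U_{L^2_k})}}$, on which the residual group $T\times\Pi_T$ acts trivially. Since $\bb{K}$ of a separable Hilbert space is $KK$-equivalent to $\bb{C}$, and since the projection $P_\vac$ onto the $T\times\Pi_T$-invariant vector $\vac$ implements such an equivalence equivariantly, the assignment $a\mapsto a\grotimes P_\vac$ yields the desired $\ca{R}KK_{T\times\Pi_T}(\widetilde{M};-,-)$-equivalence. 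The analogous argument, with $\ud{L^2(U_{L^2_k}\times\Pi_T,\ca{L}^{\otimes q})}$ and an invariant reference vector of that space in place of $\vac$, handles the second $\ca{R}KK$-equivalence as $M\rtimes T$-algebras.

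The main obstacle, modest as it is, lies in verifying that the rank-one projections implementing the Morita-type equivalence in (4) really are invariant under the full equivariant structure; this is immediate for $P_\vac$ by the construction of $\ud{L^2(U_{L^2_k})}$, but the $\Pi_T$-twisted case requires choosing the reference vector so that it is invariant under the $T$-action twisted by the central extension, which needs a brief compatibility check against the formulas in Definition \ref{L2LT}.
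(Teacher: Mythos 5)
Your proof of $(4)$ follows the same essential route as the paper: the key observation that $\ud{\scr{A}\rtimes U_{L^2_m}}\cong\ud{\scr{A}^{U_{L^2_m}}}\grotimes\bb{K}(\ud{L^2(U_{L^2_k})})$ with trivial residual $T\times\Pi_T$-action on the $\bb{K}$-factor, followed by a fiberwise Morita equivalence. The paper (which leaves $(1)$--$(3)$ to the reader) makes this Morita equivalence explicit by writing down both $\ca{R}KK$-cycles, with underlying module fields $\{A_x\grotimes\ud{L^2(U_{L^2_k})^*}\}$ and $\{A_x\grotimes\ud{L^2(U_{L^2_k})}\}$, and then cites $V\grotimes_{\bb{C}}V^*\cong\bb{K}(V)$ and $V^*\grotimes_{\bb{K}(V)}V\cong\bb{C}$; your formulation via the full corner projection $P_\vac$ and the $*$-homomorphism $a\mapsto a\grotimes P_\vac$ yields exactly the first of those two bimodules and asserts the inverse rather than constructing it, but the content is identical. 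One small remark: for the second equivalence, the crossed product $\ud{\scr{A}\rtimes(U_{L^2_m}\times\Pi_T)}$ is the untwisted one ($q=0$), so by Definition \ref{L2LT} the $T$-action on $\ud{L^2(U_{L^2_k}\times\Pi_T)}$ is already trivial and the ``compatibility check'' you flag at the end is vacuous; there is no obstruction to the invariance of the reference vector there.
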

\begin{proof}
We leave the proofs of the first three statements to the reader.

For $(4)$, we must specify the $\ca{R}KK$-elements giving the equivalences. We will do that only for $\ud{\scr{A}^{  U_{L^2_m}}}$. We define 
\begin{center}
$[\scr{I}_{\scr{A},U_{L^2_m}}]\in \ca{R}KK_{T\times\Pi_T}(\widetilde{M};\ud{\scr{A}^{  U_{L^2_m}}},\ud{\scr{A}\rtimes{  U_{L^2_m}}})$ and 

$[\scr{J}_{\scr{A},U_{L^2_m}}]\in \ca{R}KK_{T\times\Pi_T}(\widetilde{M};\ud{\scr{A}\rtimes{  U_{L^2_m}}},\ud{\scr{A}^{  U_{L^2_m}}})$ 
\end{center}
by
$$[\scr{I}_{\scr{A},U_{L^2_m}}]:=\bra{C_0(\widetilde{M},\{A_x\grotimes \ud{L^2(U_{L^2_k})^*}\}_{x\in\widetilde{M}}),\{\id\grotimes1\}_{x\in\widetilde{M}},0}\text{ and}$$
$$[\scr{J}_{\scr{A},U_{L^2_m}}]:=\bra{C_0(\widetilde{M},\{A_x\grotimes \ud{L^2(U_{L^2_k})}\}_{x\in\widetilde{M}}),\{\id\grotimes\Op\}_{x\in\widetilde{M}},0},$$
where $\id\grotimes1(a)(a'\grotimes \phi):=aa'\grotimes \phi$ and $\id\grotimes\Op(a\grotimes k)(a'\grotimes \psi):=aa'\grotimes k\psi$ for $a,a'\in A_x$, $\phi\in \ud{L^2(U_{L^2_k})^*}$, $\psi \in \ud{L^2(U_{L^2_k})}$ and $k\in \bb{K}(\ud{L^2(U_{L^2_k})})$. It is clear that $[\scr{I}_{\scr{A},U_{L^2_m}}]$ and $[\scr{J}_{\scr{A},U_{L^2_m}}]$ are mutually inverse, thanks to the isomorphisms $V\grotimes_{\bb{C}}V^*\cong \bb{K}(V)$ and $V^*\grotimes_{\bb{K}(V)}V\cong \bb{C}$ for a Hilbert space $V$.\footnote{Note that $V\grotimes_{\bb{C}}V^*$ is not a Hilbert space but a Hilbert $\bb{K}(V)$-module. The correspondence $\phi\grotimes \psi\mapsto[\lambda\mapsto \phi\innpro{\psi}{\lambda}{}]$ gives an isometric isomorphism. On the other hand, $V^*\grotimes_{\bb{K}(V)}V$ is isomorphic to $\bb{C}$ by the correspondence $\psi\grotimes \phi\mapsto \innpro{\psi}{\phi}{}$. This is isomorphic because the tensor product is taken over $\bb{K}(V)$.}
\end{proof}

\begin{ex}
As everyone expects, we have isomorphisms $\scr{C}\bra{\ca{M}_{L^2_m}}^{  U_{L^2_m}}\cong C_0(\widetilde{M})$
and\\
$\scr{C}\bra{\ca{M}_{L^2_m}}^{  U_{L^2_m}\times\Pi_T}\cong C(M)$.
\end{ex}

With the fixed-point algebra construction, we can define another kind of descent homomorphism following \cite[Theorem 3.4]{Kas88}. This construction looks quite natural, and it is perhaps much more acceptable than $\ud{j_{LT_{L^2_m}}}$.

\begin{dfn}
$(1)$ Let $\scr{B}=\bra{\{B_x\}_{x\in \ca{M}_{L^2_m}},\Gamma_{\scr{B}}}$ be an $\ca{M}_{L^2_m}\rtimes LT_{L^2_m}$-equivariant u.s.c. field of $C^*$-algebras.
For an $\ca{M}_{L^2_m}\rtimes LT_{L^2_m}$-equivariant Kasparov $(\scr{A,B})$-module $(\scr{E},\pi,F)$, we define the {\bf fixed-point module} $\ud{\scr{E}^{  U_{L^2_m}\times\Pi_T}}$ by 
$C(\widetilde{M}\times_{\Pi_T}\{E_x\}_{x\in \widetilde{M}}).$
The restriction of $\{\pi_x:A_x\to \bb{L}_{B_x}(E_x)\}_{x\in \ca{M}_{L^2_m}}$ to the fixed-point algebra $\ud{\scr{A}^{  U_{L^2_m}\times\Pi_T}}$ is denoted by $\pi^{  U_{L^2_m}\times\Pi_T}:\ud{\scr{A}^{  U_{L^2_m}\times\Pi_T}}\to \bb{L}_{\ud{\scr{B}^{  U_{L^2_m}\times\Pi_T}}}\bra{\ud{\scr{E}^{  U_{L^2_m}\times\Pi_T}}}$, and the restriction of $\{F_x\}_{x\in \ca{M}_{L^2_m}}$ to $\ud{\scr{E}^{  U_{L^2_m}\times\Pi_T}}$ is denoted by $F^{  U_{L^2_m}\times\Pi_T}$.
We denote 
$$\bra{\ud{E^{  U_{L^2_m}\times\Pi_T}},\pi^{  U_{L^2_m}\times\Pi_T},F^{  U_{L^2_m}\times\Pi_T}}$$
by $\ud{\lambda^{U_{L^2_m}\times\Pi_T}}(\scr{E},\pi,F)$.

$(2)$ Similarly, we define $\ud{\lambda^{U_{L^2_m}}}(\scr{E},\pi,F)$.
\end{dfn}

We define two homomorphisms $\ud{j_{U_{L^2_m}\times \Pi_T}}$ and $\ud{j_{U_{L^2_m}}}$ in the same way of $\ud{j_{LT_{L^2_m}}}$. Five ``decent homomorphisms'' are related to each other as follows.

\begin{pro}\label{Pro two descent homs are connected}
$(1)$ $\ud{\lambda^{U_{L^2_m}\times\Pi_T}}(\scr{E},\pi,F)$ is an $M\rtimes T$-equivariant Kasparov $\bra{\ud{\scr{A}^{  U_{L^2_m}\times\Pi_T}},\ud{\scr{B}^{  U_{L^2_m}\times\Pi_T}}}$-module. Similarly, $\ud{\lambda^{U_{L^2_m}}}(\scr{E},\pi,F)$ is an $\widetilde{M}\rtimes (T\times\Pi_T)$-equivariant Kasparov $\bra{\ud{\scr{A}^{  U_{L^2_m}}},\ud{\scr{B}^{  U_{L^2_m}}}}$-module.

$(2)$ Both constructions $\ud{\lambda^{U_{L^2_m}\times\Pi_T}}$ and $\ud{\lambda^{U_{L^2_m}}}$ are homotopy invariant. Thus, they define homomorphisms
$$\ud{\lambda^{U_{L^2_m}\times\Pi_T}}:\ca{R}KK_{LT_{L^2_m}}(\ca{M}_{L^2_m};\scr{A},\scr{B})\to \ca{R}KK_{T}(M;\ud{\scr{A}^{  U_{L^2_m}\times\Pi_T}},\ud{\scr{B}^{  U_{L^2_m}\times\Pi_T}}) \text{ and}$$
$$\ud{\lambda^{U_{L^2_m}}}:\ca{R}KK_{LT_{L^2_m}}(\ca{M}_{L^2_m};\scr{A},\scr{B})\to \ca{R}KK_{T\times\Pi_T}(\widetilde{M};\ud{\scr{A}^{  U_{L^2_m}}},\ud{\scr{B}^{  U_{L^2_m}}}).$$

$(3)$ $\ud{\lambda^{U_{L^2_m}\times\Pi_T}}=\lambda^{\Pi_T}\circ \ud{\lambda^{U_{L^2_m}}}$.

$(4)$ Under the $\ca{R}KK$-equivalence of crossed products and fixed-point algebras, $j$'s correspond to $\lambda$'s, that is to say, the following two diagrams commute:
$$
\xymatrix{
\ca{R}KK_{LT_{L^2_m}}(\ca{M}_{L^2_m};\scr{A},\scr{B}) \ar^-{\ud{j_{U_{L^2_m}\times\Pi_T}}}[rr] \ar_-{\ud{\lambda^{U_{L^2_m}\times\Pi_T}}}[drr]&&\ca{R}KK_{T}(M;\ud{\scr{A}\rtimes( U_{L^2_m}\times\Pi_T)},\ud{\scr{B}\rtimes( U_{L^2_m}\times\Pi_T)}) \ar^-\cong[d] \\
&&
\ca{R}KK_{T}(M;\ud{\scr{A}^{  U_{L^2_m}\times\Pi_T}},\ud{\scr{B}^{  U_{L^2_m}\times\Pi_T}}), 
}$$
$$
\xymatrix{
\ca{R}KK_{LT_{L^2_m}}(\ca{M}_{L^2_m};\scr{A},\scr{B}) \ar^-{\ud{j_{U_{L^2_m}}}}[rr] \ar_-{\ud{\lambda^{U_{L^2_m}}}}[drr]&&\ca{R}KK_{T\times\Pi_T}(M;\ud{\scr{A}\rtimes U_{L^2_m}},\ud{\scr{B}\rtimes U_{L^2_m}}) \ar^-\cong[d] \\
&&
\ca{R}KK_{T\times\Pi_T}(M;\ud{\scr{A}^{  U_{L^2_m}}},\ud{\scr{B}^{  U_{L^2_m}}}).
}$$
\end{pro}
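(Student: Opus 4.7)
The plan is to handle the four statements largely independently, with (3) and (4) being formal consequences of (1) and (2) together with the general fixed-point algebra formalism.

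For (1), I would verify that restriction to invariant sections of each data piece preserves the Kasparov module axioms. The key observation is that the family of adjointable operators $\{F_x\}_{x\in \ca{M}_{L^2_m}}$ defined on an $\ca{M}_{L^2_m}\rtimes LT_{L^2_m}$-equivariant field $\scr{E}$ commutes with the $U_{L^2_m}\times\Pi_T$-action on each fiber modulo compacts, so that the restriction $F^{U_{L^2_m}\times\Pi_T}$ sends $\ud{\scr{E}^{U_{L^2_m}\times\Pi_T}}$ to itself. The conditions $[\pi(a),F]$, $\pi(a)(1-F^2)$, $\pi(a)(F-F^*)\in \scr{K}$ transfer directly to the invariant sections, since the $U_{L^2_m}\times\Pi_T$-action on the fibers of $\scr{K}(\scr{E})$ is continuous and preserves the set of continuous sections. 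The residual $T$-equivariance is obvious because the $T$-action commutes with that of $U_{L^2_m}\times\Pi_T$ inside $LT_{L^2_m}$ (see Definition \ref{def of LTtau} and Proposition \ref{Prop iterated crossed product}). The $U_{L^2_m}$ case is identical with $T\times \Pi_T$ acting on the leftover directions.

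For (2), a homotopy $(\widetilde{\scr{E}},\widetilde{\pi},\widetilde{F})\in \bb{E}_{\ca{M}_{L^2_m}\rtimes LT_{L^2_m}}(\scr{A},\scr{B}\grotimes C(I))$ applied to the fixed-point construction yields a Kasparov module over $\ud{\scr{B}^{U_{L^2_m}\times\Pi_T}}\grotimes C(I)$ (here I use that forming fixed-point algebras commutes with tensoring by $C(I)$). Evaluating at the endpoints $t=0,1$ recovers the fixed-point Kasparov modules of the endpoints of the original homotopy, so $\ud{\lambda^{U_{L^2_m}\times\Pi_T}}$ descends to the $\ca{R}KK$-level. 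Everything is identical for $\ud{\lambda^{U_{L^2_m}}}$.

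For (3), the iterated-fixed-point identity $\bigl(\ud{\scr{A}^{U_{L^2_m}}}\bigr)^{\Pi_T}\cong \ud{\scr{A}^{U_{L^2_m}\times \Pi_T}}$ of the preceding lemma also holds module-wise: $\bigl(\ud{\scr{E}^{U_{L^2_m}}}\bigr)^{\Pi_T}\cong \ud{\scr{E}^{U_{L^2_m}\times\Pi_T}}$, because an $U_{L^2_m}$-invariant section of $\scr{E}$ is $\Pi_T$-invariant (in the sense of the second fixed-point construction) iff it is $U_{L^2_m}\times\Pi_T$-invariant. This identification carries $\pi$ and $F$ through compatibly, yielding the asserted equality of $\ca{R}KK$-elements.

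For (4), which I expect to be the main obstacle, the plan is to compute the Kasparov product of $\ud{\lambda^{U_{L^2_m}}}(\scr{E},\pi,F)$ with the equivalence bimodule $[\scr{J}_{\scr{B},U_{L^2_m}}]$ (on the right) and $[\scr{I}_{\scr{A},U_{L^2_m}}]$ (on the left), and show that the resulting triple is unitarily isomorphic to $\ud{j_{U_{L^2_m}}}(\scr{E},\pi,F)$. The key computational ingredient is that the ``integral kernel'' formulas from Proposition \ref{crossed product and generalized fixed-point module} extend to our infinite-dimensional setting via the tensor product $(E_x\grotimes \bb{K}(\ud{L^2(U_{L^2_k})}))^{U_{L^2_m}}\cong E_x\grotimes \bb{K}(\ud{L^2(U_{L^2_k})})$ (since the $U_{L^2_m}$-action is trivial on the fiber $x\in \widetilde{M}$ and diagonal on the tensor factor); chasing these identifications through the construction of $[\scr{I}]\grotimes\ud{\lambda^{U_{L^2_m}}}(\cdots)\grotimes[\scr{J}]$ should produce exactly the bimodule appearing in $\ud{j_{U_{L^2_m}}}(\scr{E},\pi,F)$ with its operator $\{F_x\grotimes\id\}_{x\in\widetilde{M}}$. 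The case of $\ud{j_{U_{L^2_m}\times \Pi_T}}$ versus $\ud{\lambda^{U_{L^2_m}\times\Pi_T}}$ is handled by the same argument, replacing $\ud{L^2(U_{L^2_k})}$ by $\ud{L^2(U_{L^2_k}\times \Pi_T)}$. The main difficulty is bookkeeping: the fields of $C^*$-algebras live over $\widetilde{M}$ or $M$ and their $T\times\Pi_T$-equivariance has to be tracked carefully through the tensor products, but no new analytic input is needed beyond what Proposition \ref{jLT is well defined} already establishes.
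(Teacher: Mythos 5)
Your plan for (1), (2), and (3) coincides with what the paper leaves implicit: (1) and (3) follow essentially by inspection of the fixed-point-module construction, and (2) follows by applying the fixed-point construction to the homotopy field itself, exactly as in the proof of Proposition \ref{jLT is well defined}. For (4), your overall strategy (tensor one of the descent constructions against the equivalence bimodules $[\scr{I}]$, $[\scr{J}]$ and identify the result with the other) is the same as the paper's, but two points are worth flagging. First, there is a type mismatch in your product: $[\scr{I}_{\scr{A}}]$ is a $\bra{\ud{\scr{A}^{U_{L^2_m}}},\ud{\scr{A}\rtimes U_{L^2_m}}}$-bimodule, so $[\scr{I}_{\scr{A}}]\grotimes\ud{\lambda^{U_{L^2_m}}}(\cdots)\grotimes[\scr{J}_{\scr{B}}]$ does not compose; either swap to $[\scr{J}_{\scr{A}}]\grotimes\ud{\lambda^{U_{L^2_m}}}(\cdots)\grotimes[\scr{I}_{\scr{B}}]=\ud{j_{U_{L^2_m}}}(\cdots)$, or, as the paper does, compute $[\scr{I}_{\scr{A}}]\grotimes\ud{j_{U_{L^2_m}}}(\cdots)\grotimes[\scr{J}_{\scr{B}}]=\ud{\lambda^{U_{L^2_m}}}(\cdots)$ (the two are equivalent since $[\scr{I}]$ and $[\scr{J}]$ are mutually inverse). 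Second, you propose chasing the integral-kernel formulas of Proposition \ref{crossed product and generalized fixed-point module} through the tensor product, but this is unnecessary machinery: after noting the non-degeneracy $A_xE_x=E_x$, the modules of $[\scr{I}]$, $\ud{j}$, $[\scr{J}]$ are, fiberwise over $\widetilde M$, the trivial factors $\ud{L^2(U_{L^2_k})}^*$, $\bb{K}(\ud{L^2(U_{L^2_k})})$, $\ud{L^2(U_{L^2_k})}$, so the triple Kasparov product collapses immediately via $V^*\grotimes_{\bb{K}(V)}\bb{K}(V)\grotimes_{\bb{K}(V)}V\cong\bb{C}$ with no kernel bookkeeping required. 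Using that observation turns your (4) into a two-line computation.
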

\begin{proof}
$(1)$ and $(3)$ are clear. For $(2)$, consider the parallel construction of Proposition \ref{jLT is well defined} $(2)$.

$(4)$ We prove only the latter one. Let $(\scr{E},\pi,F)\in \ca{R}KK_{LT_{L^2_m}}(\ca{M}_{L^2_m};\scr{A},\scr{B})$. Then,
$$\ud{j_{U_{L^2_m}}}(\scr{E},\pi,F)=\bra{
C_0(\widetilde{M},\{E_x\grotimes \bb{K}(\ud{L^2(U_{L^2_k})})\}_{x\in \widetilde{M}}),\{\pi_x\grotimes \id\}_{x\in \widetilde{M}},\{F_x\grotimes \id\}_{x\in \widetilde{M}}}\text{ and}$$
$$\ud{\lambda^{U_{L^2_m}}}(\scr{E},\pi,F)=\bra{
C_0(\widetilde{M},\{E_x\}_{x\in \widetilde{M}}),\{\pi_x\}_{x\in \widetilde{M}},\{F_x\}_{x\in \widetilde{M}}}.$$
We may assume the non-degeneracy $A_xE_x=E_x$ for every $x$. Thus, the Kasparov module 
$$\bra{
C_0(\widetilde{M},\{E_x\}_{x\in \widetilde{M}}),\{\pi_x\}_{x\in \widetilde{M}},\{F_x\}_{x\in \widetilde{M}}}$$
represents the Kasparov product $[\scr{I}_{\scr{A},U_{L^2_m}}]\grotimes \ud{j_{U_{L^2_m}}}(\scr{E},\pi,F) \grotimes [\scr{J}_{\scr{B},U_{L^2_m}}]$, because $V^*\grotimes_{\bb{K}(V)} \bb{K}(V)\grotimes_{\bb{K}(V)} V\cong \bb{C}$ for a Hilbert space $V$.
\end{proof}

A parallel property of $(3)$ holds for $j$'s.

\begin{pro}
At the level of Kasparov modules,
$\ud{j_{LT_{L^2_m}}^{p\tau}}=
j_T\circ \ud{j_{U_{L^2_m}\times\Pi_T}^{p\tau}}=
j_T\circ j_{\Pi_T}^{p\tau}\circ \ud{j_{U_{L^2_m}}^{p\tau}}.$
\end{pro}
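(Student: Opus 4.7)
The plan is to verify both equalities directly at the level of the Kasparov cycles, by unfolding all three descent operations and exhibiting canonical isomorphisms between the resulting modules, operators and representations. The structural input is the tensor factorisation
\[
\ud{L^2(LT_{L^2_k},\ca{L}^{\otimes q})} \;\cong\; \ud{L^2(U_{L^2_k},\ca{L}^{\otimes q})}\grotimes L^2(\Pi_T,\ca{L}^{\otimes q})\grotimes L^2(T,\ca{L}^{\otimes q}),
\]
which is built into Definition \ref{L2LT}, together with the matching decomposition of the right regular representation $R$ on $LT_{L^2_m}^{\tau}$ as the exterior tensor product of the regular representations of $U_{L^2_m}^{\tau}$, $\Pi_T$ and $T$, and the resulting decomposition $\bb{K}(\ud{L^2(LT_{L^2_k},\ca{L}^{\otimes q})})\cong \bb{K}(\ud{L^2(U_{L^2_k},\ca{L}^{\otimes q})})\grotimes \bb{K}(L^2(\Pi_T,\ca{L}^{\otimes q}))\grotimes \bb{K}(L^2(T,\ca{L}^{\otimes q}))$.

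For the first equality, I would write $\ud{j_{LT_{L^2_m}}^{p\tau}}(\scr{E},\pi,F)$ as equivariant sections of $\{\bb{K}(\ud{L^2(LT_{L^2_k},\ca{L}^{\otimes q})},\ud{L^2(LT_{L^2_k},\ca{L}^{\otimes (q-p)})})\grotimes E_x\}_{x\in\widetilde{M}}$ for the $T\times\Pi_T$-action $\Ad R\grotimes\alpha^{\scr{E}}$, and then expand $j_T\circ \ud{j_{U_{L^2_m}\times\Pi_T}^{p\tau}}(\scr{E},\pi,F)$ in two stages. The first stage produces equivariant sections of $\{\bb{K}(\ud{L^2(U_{L^2_k}\times\Pi_T,\ca{L}^{\otimes q})},\ud{L^2(U_{L^2_k}\times\Pi_T,\ca{L}^{\otimes (q-p)})})\grotimes E_x\}_{x\in\widetilde{M}}$ for the $\Pi_T$-action, giving an $M\rtimes T$-equivariant cycle; the second stage applies the classical $j_T$ of Proposition \ref{twisted cp and fpa and cp and fpm}, tensoring by $\bb{K}(L^2(T,\ca{L}^{\otimes q}),L^2(T,\ca{L}^{\otimes(q-p)}))$ on the Hilbert module side and averaging over $T$. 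Using the semidirect product description $LT_{L^2_m} = (U_{L^2_m}^{\tau}\times \Pi_T)\rtimes T$ and rearranging the tensor factors by the above factorisation, the two $T\times\Pi_T$-invariance conditions combine into the single $T\times\Pi_T$-invariance defining $\ud{j_{LT_{L^2_m}}^{p\tau}}$. The left actions $\id\grotimes \pi_x$ and the operators $\id\grotimes F_x$ on either side literally coincide under the identification, since both act trivially on the Hilbert-space tensor factors.

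For the second equality one iterates the same argument one level further, replacing $\ud{j_{U_{L^2_m}\times\Pi_T}^{p\tau}}$ with $j_{\Pi_T}^{p\tau}\circ \ud{j_{U_{L^2_m}}^{p\tau}}$ via the factorisation $\ud{L^2(U_{L^2_k}\times\Pi_T,\ca{L}^{\otimes q})}\cong \ud{L^2(U_{L^2_k},\ca{L}^{\otimes q})}\grotimes L^2(\Pi_T,\ca{L}^{\otimes q})$. The formal computation is the Kasparov-module analogue of the $C^*$-algebra isomorphisms already established in Proposition \ref{Prop iterated crossed product}; indeed once the identifications of crossed-product algebras there are promoted to identifications of the corresponding Hilbert-module cycles, both equalities are immediate.

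The main obstacle will be keeping the various twistings consistent. In particular, the $T$-action on $U_{L^2_m}^{\tau}\times\Pi_T$ is $\Ad_t((u,z),n)=((u,z\kappa_{-n}^{\tau}(t)),n)$, so when one restricts the $LT_{L^2_m}^{\tau}$-action on the cycle to the subgroup $T$ after first passing to $\Pi_T$-invariants, one picks up $\kappa_{\pm n}^{\tau}$-cocycles on each $\Pi_T$-fibre; these must be matched precisely with the $n$-dependent phases in the $T$-action on $L^2(\Pi_T,\ca{L}^{\otimes q})$ appearing in Definition \ref{L2LT}, and with the twisting level $q$ (resp.\ $q-p$) of the Hilbert module and algebra. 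Once this bookkeeping is set up carefully, the isomorphisms of Kasparov modules are canonical and no further analytic input is required.
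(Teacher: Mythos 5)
Your proposal is essentially the same as the paper's argument: the proof in the paper is a terse two-line remark that the whole statement ``is clear from the same argument of Proposition~\ref{Prop iterated crossed product}'', and that the inner equality can alternatively be deduced from Proposition~\ref{Pro two descent homs are connected}. You are fleshing out the first of these, promoting the iterated-crossed-product isomorphisms of Proposition~\ref{Prop iterated crossed product} from $C^*$-algebras to the full Hilbert-module cycles, and you correctly identify the sole nontrivial point --- keeping the $\kappa^\tau$-cocycles coupling $T$ and $\Pi_T$ (coming from the semidirect factorisation $LT^\tau_{L^2_m}=(U^\tau_{L^2_m}\times\Pi_T)\rtimes T$) consistent with the $n$-dependent phases in the $T$-action of Definition~\ref{L2LT} --- so that the three-factor tensor decomposition is compatible with the group representations, not just with the underlying Hilbert spaces.

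One small caveat: the paper's alternative route for the inner equality via Proposition~\ref{Pro two descent homs are connected}~(3)--(4) passes through an $\ca{R}KK$-equivalence between crossed products and fixed-point algebras, so it only gives the equality \emph{in $\ca{R}KK$} rather than ``at the level of Kasparov modules'' as the Proposition states; your direct verification of the module isomorphisms is in fact the argument that justifies the stronger claim.
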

\begin{rmks}
$(1)$ This is clear from the same argument of Proposition \ref{Prop iterated crossed product}.

$(2)$ The equality $\ud{j_{U_{L^2_m}\times\Pi_T}}=j_{\Pi_T}\circ \ud{j_{U_{L^2_m}}}$ can be proved as a corollary of Proposition \ref{Pro two descent homs are connected}.
\end{rmks}

As the final result of this subsection, we prove that the descent map preserves the Kasparov product for proper $LT$-spaces. 

\begin{pro}
Let $\scr{A,A}_1,\scr{B}$ be $\ca{M}_{L^2_m}\rtimes LT_{L^2_m}$-equivariant locally separable u.s.c. fields of $C^*$-algebras. 
Then, the following diagram commutes 
$$\begin{CD}
\ca{R}KK_{LT_{L^2_m}}^{q_1\tau}(\ca{M}_{L^2_m};\scr{A},\scr{A}_1)\times \ca{R}KK_{LT_{L^2_m}}^{q_2\tau}(\ca{M}_{L^2_m};\scr{A}_1,\scr{B}) @>\grotimes>>
\ca{R}KK_{LT_{L^2_m}}^{(q_1+q_2)\tau}(\ca{M}_{L^2_m};\scr{A},\scr{B}) \\
@V\ud{j_{LT_{L^2_m}}^{(p-q_2)\tau}}\times \ud{j_{LT_{L^2_m}}^{p\tau}}VV @VV\ud{j_{LT_{L^2_m}}^{p\tau}}V \\
\ca{R}KK(M/T; \star_1,\star_2)\times \ca{R}KK(M/T; \star_2,\star_3) @>\grotimes>>
\ca{R}KK(M/T; \star_1,\star_3),
\end{CD}$$
where the $\star$'s stand for the following:
$$\star_1:=\ud{\scr{A}\rtimes_{(p-q_1-q_2)\tau} LT_{L^2_m}},$$
$$\star_2:=\ud{\scr{A}_1\rtimes_{(p-q_2)\tau} LT_{L^2_m}},$$
$$\star_3:=\ud{\scr{B}\rtimes_{p\tau} LT_{L^2_m}}.$$

The same is true for $\ud{j^{p\tau}_{U_{L^2_m}}}$, $\ud{j^{p\tau}_{U_{L^2_m}\times\Pi_T}}$, $\ud{\lambda^{U_{L^2_m}}}$ and $\ud{\lambda^{U_{L^2_m}\times\Pi_T}}$.
\end{pro}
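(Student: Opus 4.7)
The plan is to use the factorization $\ud{j_{LT_{L^2_m}}^{p\tau}}=j_T\circ j_{\Pi_T}^{p\tau}\circ \ud{j_{U_{L^2_m}}^{p\tau}}$, established in the proposition immediately preceding the statement, to reduce product-preservation to each of the three factors separately. The two classical descent homomorphisms $j_T$ and $j_{\Pi_T}^{p\tau}$ arise from the finite-dimensional amenable groups $T$ and $\Pi_T$, for which compatibility with the Kasparov product is Kasparov's theorem \cite[Theorem 3.11]{Kas88}, applied in the $\ca{R}KK$-parametrized setting over $M/T$; the twisted version for $\Pi_T^\tau$ follows by decomposing the ordinary $\Pi_T^\tau$-equivariant descent into its level summands. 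So the essential content is product-preservation for $\ud{j_{U_{L^2_m}}^{p\tau}}$.

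For the latter, I would argue directly from the fiber-wise definition. By the $U_{L^2_m}$-analogue of Definition-Proposition \ref{descent and crossed product for LT theory}, $\ud{j_{U_{L^2_m}}^{p\tau}}$ applied to a $p'\tau$-twisted Kasparov $(\scr{A},\scr{B})$-module $(\scr{E},\pi,F)$ tensors each fiber $(E_x,\pi_x,F_x)$ with the Hilbert bimodule $\bb{K}(\ud{L^2(U_{L^2_k},\ca{L}^{\otimes q})},\ud{L^2(U_{L^2_k},\ca{L}^{\otimes(q-p')})})$ and then takes $C_0$-sections over $\widetilde{M}$. I would take unbounded representatives $(\scr{E}_i,\pi_i,D_i)$ for the two classes and an unbounded Kasparov product $(\scr{E}_1\grotimes_{\scr{A}_1}\scr{E}_2,\pi,D)$ a la Kucera in the u.s.c. field setting of \cite{NT}. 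The image under $\ud{j_{U_{L^2_m}}^{p\tau}}$ is then verified to satisfy the conditions of Proposition \ref{Kucs criterion} relative to the images of the factors, fiber by fiber over $\widetilde{M}$: since tensoring a Kasparov triple with a fixed Hilbert bimodule is stable under both the connection and the positivity conditions, both conditions descend from the ambient Kasparov product $(\scr{E}_1\grotimes_{\scr{A}_1}\scr{E}_2,\pi,D)$, and passage to $C_0$-sections is automatic because both conditions are pointwise.

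The main obstacle is the level bookkeeping. Concretely, when the Kasparov product is taken over $\ud{\scr{A}_1\rtimes_{(p-q_2)\tau} U_{L^2_m}}$, the natural composition of the fiber-wise bimodules $\bb{K}(\ud{L^2(U_{L^2_k},\ca{L}^{\otimes(p-q_2)})},\ud{L^2(U_{L^2_k},\ca{L}^{\otimes(p-q_1-q_2)})})$ and $\bb{K}(\ud{L^2(U_{L^2_k},\ca{L}^{\otimes p})},\ud{L^2(U_{L^2_k},\ca{L}^{\otimes(p-q_2)})})$ over $\bb{K}(\ud{L^2(U_{L^2_k},\ca{L}^{\otimes(p-q_2)})})$ must be identified with $\bb{K}(\ud{L^2(U_{L^2_k},\ca{L}^{\otimes p})},\ud{L^2(U_{L^2_k},\ca{L}^{\otimes(p-q_1-q_2)})})$ in an $(\alpha^{\scr{E}}\grotimes \Ad R)$-equivariant way. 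This is a formal computation inside the Stone--von Neumann representation, but it is where all the twisting data interact. Once this composition law is checked, the same line of argument handles $\ud{j_{U_{L^2_m}\times \Pi_T}^{p\tau}}$; combining with the $\ca{R}KK$-equivalences of Proposition \ref{Pro two descent homs are connected} $(4)$ in their twisted refinements then transports the result to $\ud{\lambda^{U_{L^2_m}}}$ and $\ud{\lambda^{U_{L^2_m}\times \Pi_T}}$.
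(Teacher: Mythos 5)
Your proposal is correct in outcome but takes a noticeably longer route than the paper, and mis-estimates where the difficulty lies. The paper does \emph{not} factorize $\ud{j_{LT_{L^2_m}}^{p\tau}}$ through $j_T\circ j_{\Pi_T}^{p\tau}\circ\ud{j_{U_{L^2_m}}^{p\tau}}$. Instead it works directly: it takes a bounded Kasparov product representative $(\scr{E},\pi,F)$ with $\scr{E}=\scr{E}_1\grotimes_{\scr{A}_1}\scr{E}_2$, unravels the definition of the three twisted crossed products via Definition-Proposition \ref{descent and crossed product for LT theory}, and observes that the resulting bimodules are isomorphic fiber-by-fiber thanks to the elementary Hilbert-module identity
$$\bb{K}(V_2,V_1)\grotimes_{\bb{K}(V_2)}\bb{K}(V_3,V_2)\cong\bb{K}(V_3,V_1),$$
applied with $V_1=\ud{L^2(LT_{L^2_k},\ca{L}^{\otimes(p-q_1-q_2)})}$, $V_2=\ud{L^2(LT_{L^2_k},\ca{L}^{\otimes(p-q_2)})}$, $V_3=\ud{L^2(LT_{L^2_k},\ca{L}^{\otimes p})}$. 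Once that is written down, the identity is manifestly $(\alpha^{\scr{E}}\grotimes\Ad R)$-equivariant because it is natural, and the connection and positivity conditions for $\{F_x\grotimes\id\}$ descend pointwise from those for $\{F_x\}$. Compared to this, your factorization does not actually simplify the problem: even after reducing to $\ud{j_{U_{L^2_m}}^{p\tau}}$, you still face precisely the same composition law, just with $U_{L^2_k}$ in place of $LT_{L^2_k}$, so the reduction through $j_T$ and $j_{\Pi_T}^{p\tau}$ gains nothing. You also flag the composition law as ``the main obstacle'' requiring ``a formal computation inside the Stone--von Neumann representation,'' but it is really the abstract Hilbert-module fact above, independent of any particular representation. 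Finally, invoking unbounded representatives and Kucerovsky's criterion is an unnecessary complication here: the paper stays in the bounded picture, and the verification that $\{F_x\grotimes\id\}$ is a Kasparov-product representative is formal because all conditions are checked fiberwise over $\widetilde{M}$.
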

\begin{proof}
We deal with only the most important one $\ud{j^{p\tau}_{LT_{L^2_m}}}$. 
Let $\bra{\scr{E}_1,\pi_1,F_1}$ be a $q_1\tau$-twisted $\ca{M}_{L^2_m}\rtimes LT_{L^2_m}$-equivariant Kasparov $(\scr{A,A}_1)$-module,  and let $\bra{\scr{E}_2,\pi_2,F_2}$ be a $q_2\tau$-twisted $\ca{M}_{L^2_m}\rtimes LT_{L^2_m}$-equivariant Kasparov $(\scr{A_1,B})$-module. Let $(\scr{E},\pi,F)$ be a representative of the Kasparov product of these two Kasparov modules, where $\scr{E}=\scr{E}_1\grotimes \scr{E}_2$. We need to prove that
$$\ud{j^{p\tau}_{LT_{L^2_m}}}(\scr{E},\pi,F)=\ud{j^{(p-q_2)\tau}_{LT_{L^2_m}}}\bra{\scr{E}_1,\pi_1,F_1}\grotimes \ud{j^{p\tau}_{LT_{L^2_m}}}\bra{\scr{E}_2,\pi_2,F_2}.$$

Let us prove that the bimodules are isomorphic. Since
$$
\ud{\scr{E}_1\rtimes_{(p-q_2)\tau} LT_{L^2_m}} 
= C\bra{\widetilde{M}\times_{T\times\Pi_T} \bbra{E_{1,x}\grotimes \bb{K}(\ud{L^2(LT_{L^2_k},\ca{L}^{\otimes (p-q_2)})},\ud{L^2(LT_{L^2_k},\ca{L}^{\otimes (p-q_1-q_2)})})}_{x\in\widetilde{M}}};$$
$$
\ud{\scr{E}_2\rtimes_{p\tau} LT_{L^2_m}} 
=C\bra{\widetilde{M}\times_{T\times\Pi_T} \bbra{E_{2,x}\grotimes \bb{K}(\ud{L^2(LT_{L^2_k},\ca{L}^{\otimes p})},\ud{L^2(LT_{L^2_k},\ca{L}^{\otimes (p-q_2)})})}_{x\in\widetilde{M}}}; 
$$
$$
\ud{\scr{A}_1\rtimes_{(p-q_2)\tau} LT_{L^2_m}} 
=C\bra{\widetilde{M}\times_{T\times\Pi_T} \bbra{A_{1,x}\grotimes \bb{K}(\ud{L^2(LT_{L^2_k},\ca{L}^{\otimes (p-q_2)})})}_{x\in\widetilde{M}}};
$$
$$
\ud{\scr{E}\rtimes_{p\tau} LT_{L^2_m}} 
= C\bra{\widetilde{M}\times_{T\times\Pi_T} \bbra{E_{x}\grotimes \bb{K}(\ud{L^2(LT_{L^2_k},\ca{L}^{\otimes p})},\ud{L^2(LT_{L^2_k},\ca{L}^{\otimes (p-q_1-q_2)})})}_{x\in\widetilde{M}}},
$$
we have a natural isomorphism 
$$\ud{\scr{E}\rtimes_{p\tau} LT_{L^2_m}} \cong \ud{\scr{E}_1\rtimes_{(p-q_2)\tau} LT_{L^2_m}} \grotimes_{\ud{\scr{A}_1\rtimes_{(p-q_2)\tau} LT_{L^2_m}} }\ud{\scr{E}_2\rtimes_{p\tau} LT_{L^2_m}} $$
as $(\ud{\scr{A}\rtimes_{(p-q_1-q_2)\tau} LT_{L^2_m}},\ud{\scr{B}\rtimes_{p\tau} LT_{L^2_m}})$-bimodules, thanks to the isomorphism $\bb{K}(V_2,V_1)\grotimes_{\bb{K}(V_2)} \bb{K}(V_3,V_2)\cong \bb{K}(V_3,V_1)$ for Hilbert spaces $V_1,V_2, V_3$

The conditions on $F$ can be easily proved, and we leave the details to the reader.
\end{proof}


\subsection{The ``descent of the Dirac element''}\label{section top ass map Dirac}

The final tool to define the topological assembly map is the ``descent of the reformulated Dirac element''. In this subsection, we define an infinite-dimensional version of it, by imitating Proposition \ref{prop descent of Dirac element for twisted version}.

In this formula, a Hilbert $\bb{C}\rtimes_\tau G$-module $\bb{C}\rtimes_{-\tau} G$ appears. We begin with the $LT_{L^2_m}$-version of it.

\begin{dfn}
$(1)$ We define a Hilbert $\ud{\bb{C}\rtimes_\tau LT_{L^2_m}}$-module $\ud{\bb{C}\rtimes_{-\tau} LT_{L^2_m}}$ by the following:
\begin{itemize}
\item We define $C^*$-algebras $\ud{\bb{C}\rtimes_{-\tau}U_{L^2_m}}$ and $\ud{\bb{C}\rtimes_{-\tau}LT_{L^2_m}}$ in the same way of Definition \ref{definition of twisted group Cstar algebra of LT} by replacing $\bb{K}\bra{\ud{L^2(\bb{R}^\infty)^*}}$ and $\bb{C}\rtimes_\tau (T\times \Pi_T)$ with $\bb{K}\bra{\ud{L^2(\bb{R}^\infty)}}$ and $\bb{C}\rtimes_{-\tau} (T\times \Pi_T)$.
When we regard an element $\phi$ of $\bb{C}\rtimes_{-\tau}U_{L^2_m}$ as an element of $\bb{K}\bra{\ud{L^2(\bb{R}^\infty)}}$, we denote it by $\Op(\phi)$. 
\item For a function $b$ on $(T\times\Pi_T)^\tau$, we define $b^\vee(g):=b(g^{-1})$. This operation exchanges ${\bb{C}\rtimes_{\tau}(T\times\Pi_T)}$ and ${\bb{C}\rtimes_{-\tau}(T\times\Pi_T)}$.
\item Similarly, we define $\phi^\vee:=\Op^{-1}({}^t\Op(\phi))$ for $\phi \in \ud{\bb{C}\rtimes_{-\tau}U_{L^2_m}}$ or $\ud{\bb{C}\rtimes_{\tau}U_{L^2_m}}$. This correspondence exchanges $\ud{\bb{C}\rtimes_{\tau}U_{L^2_m}}$ and $\ud{\bb{C}\rtimes_{-\tau}U_{L^2_m}}$.
\item We define a Hilbert $\ud{\bb{C}\rtimes_\tau LT_{L^2_m}}$-module structure on $\ud{\bb{C}\rtimes_{-\tau} LT_{L^2_m}}$ by the following: For $\phi \grotimes \psi, \phi_1 \grotimes \psi_1, \phi_2 \grotimes \psi_2\in \ud{\bb{C}\rtimes_{-\tau} U_{L^2_m}}\grotimes [\bb{C}\rtimes_{-\tau}(T\times\Pi_T)]= \ud{\bb{C}\rtimes_{-\tau} LT_{L^2_m}}$ and $ a \grotimes b\in \ud{\bb{C}\rtimes_{\tau} U_{L^2_m}}\grotimes [\bb{C}\rtimes_{\tau}(T\times\Pi_T)]= \ud{\bb{C}\rtimes_\tau LT_{L^2_m}}$,
$$\phi \grotimes\psi\cdot  a \grotimes b:=  a^\vee * \phi \grotimes b^\vee* \psi;$$
$$\inpr{\phi_1 \grotimes \psi_1}{\phi_2 \grotimes \psi_2}{}:=(\phi_2* \phi_1^*)^\vee\grotimes (\psi_2*\psi_1^*)^\vee.$$
\end{itemize}

$(2)$ We define a dense subspace $\ud{(\bb{C}\rtimes_{-\tau}LT_{L^2_m})_\fin}$ just like Definition \ref{definition of twisted group Cstar algebra of LT} $(3)$. It has a pre-Hilbert $\ud{(\bb{C}\rtimes_{\tau}LT_{L^2_m})_\fin}$-module structure.
\end{dfn}

The above Hilbert module is equipped with an $LT_{L^2_m}^\tau$-action denoted by ``$\rt$''. See also Definition \ref{dfn of roup Cstar algebra of LT}.

\begin{dfn}
$(1)$ We define an $LT_{L^2_m}^\tau$-action ``$\rt$'' on the Hilbert $\ud{(\bb{C}\rtimes_{\tau}LT_{L^2_m})}$-module $\ud{\bb{C}\rtimes_{-\tau}LT_{L^2_m}}$ by the tensor product of the following actions:
\begin{itemize}
\item For $\phi \in \ud{\bb{C}\rtimes_{-\tau}U_{L^2_m}}$ and $u\in U_{L^2_m}^\tau$, we define $\Op(\rt_u\phi):=\Op(\phi)\circ \rho_{u^{-1}}$, and
\item For $\psi\in C_c(T\times\Pi_T,-\tau)\subseteq \bb{C}\rtimes_{-\tau}(T\times\Pi_T)$ and $\gamma\in (T\times\Pi_T)^\tau$, we define $\rt_\gamma\psi(x):=\psi(x\gamma)$.
\end{itemize}

$(2)$ The infinitesimal version of ``$\rt$'' is denoted by $d\rt_X$ for $X\in \Lie(LT_\fin)$. It is an operator defined on $(\bb{C}\rtimes_{-\tau} LT_{L^2_m})_\fin$. Its extension is also denoted by the same symbol.
\end{dfn}

With this Hilbert module, we define a substitute for ``the descent of the Dirac element''. We denote the exterior tensor product of the Spinor bundles $S_U$ and $S_{\widetilde{M}}$ by $S_{\ca{M}}$. Take a cut-off function $\fra{c}:\widetilde{M}\to \bb{R}_{\geq 0}$ with respect to the $T\times\Pi_T$-action. We refer to \cite[Section 6.2.1]{Thesis} for the ``fiber'' of the following Kasparov module. 
See Remark \ref{rmk exposition of bimodule structure on decent family descr} $(2)$ for the strict definitions of the following symbolic formulas.

\begin{dfn}\label{dfn descent of Dirac element}
$(1)$ We define a Hilbert $\ud{\bb{C}\rtimes_\tau LT_{L^2_m}}$-module
$$\ud{L^2(\ca{M}_{L^2_k},S_{\ca{M}})\rtimes_\tau LT_{L^2_m}}=
L^2\bra{\widetilde{M}\times_{T\times\Pi_T}\bbra{S_U\grotimes S_{\widetilde{M},x}\grotimes\ud{L^2\bra{LT_{L^2_k},\ca{L}}}\grotimes \ud{\bb{C}\rtimes_{-\tau} LT_{L^2_m}}}_{x\in \widetilde{M}}}$$
by the completion of $C\bra{\widetilde{M}\times_{T\times\Pi_T}\bbra{S_U\grotimes S_{\widetilde{M},x}\grotimes\ud{L^2\bra{LT_{L^2_k},\ca{L}}}\grotimes \ud{\bb{C}\rtimes_{-\tau} LT_{L^2_m}}}_{x\in \widetilde{M}}}$
by the following operations: For $\phi,\phi_1,\phi_2:\widetilde{M}\to S_U\grotimes S_{\widetilde{M}}\grotimes \ud{L^2\bra{LT_{L^2_k},\ca{L}}}$ and $\psi,\psi_1,\psi_2:\widetilde{M}\to\ud{\bb{C}\rtimes_{-\tau} LT_{L^2_m}}$ 
such that $k_1=\phi_1\grotimes \psi_1,k_2=\phi_2\grotimes \psi_2\in \ud{L^2(\ca{M}_{L^2_k},S_{\ca{M}})\rtimes_\tau LT_{L^2_m}}$, and
 $b\in \ud{\bb{C}\rtimes_\tau LT_{L^2_m}}$,
\begin{itemize}
\item $[\phi\grotimes \psi\cdot b](x):=\phi(x)\grotimes[\psi(x)\cdot b]$; and
\item $\inpr{\phi_1\grotimes \psi_1}{\phi_2\grotimes \psi_2}{\ud{\bb{C}\rtimes_\tau LT_{L^2_m}}}:=\int_{\widetilde{M}}\inpr{\phi_1(x)}{\phi_2(x)}{}\inpr{\psi_1(x)}{\psi_2(x)}{\ud{\bb{C}\rtimes_\tau LT_{L^2_m}}}\fra{c}(x)dx$.
\end{itemize}

$(2)$ We define a left $\ud{\scr{C}\bra{\ca{M}_{L^2_m}}\rtimes_\tau LT_{L^2_m}}$-module structure 
$$\ud{\pi\rtimes_{\tau} \lt }: \ud{\scr{C}\bra{\ca{M}_{L^2_m}}\rtimes_\tau LT_{L^2_m}}\to \bb{L}_{\ud{\bb{C}\rtimes_\tau LT_{L^2_m}}}\bra{\ud{L^2(\ca{M}_{L^2_k},S_{\ca{M}})\rtimes_\tau LT_{L^2_m}}}$$
as follows: For $a\in \ud{\scr{C}\bra{\ca{M}_{L^2_m}}\rtimes_\tau LT_{L^2_m}}$, $\phi:\widetilde{M}\to S_U\grotimes S_{\widetilde{M}}\grotimes \ud{L^2\bra{LT_{L^2_k},\ca{L}}}$, $\psi:\widetilde{M}\to\ud{\bb{C}\rtimes_{-\tau} LT_{L^2_m}}$ 
such that $k=\phi\grotimes \psi\in \ud{L^2(\ca{M}_{L^2_k},S_{\ca{M}})\rtimes_\tau LT_{L^2_m}}$, 
$$[\ud{\pi\rtimes_\tau \lt }(a)(\phi\grotimes \psi)](x):=[\pi_x(a(x))(\phi(x))]\grotimes\psi(x).$$

$(3)$ By using the identification
\begin{align*}
&\ud{L^2(\ca{M}_{L^2_k},S_{\ca{M}})\rtimes_\tau LT_{L^2_m}}\\
&\ \ \ \cong
L^2\bra{\widetilde{M}\times_{T\times\Pi_T}\bbra{ S_{\widetilde{M},x}\grotimes L^2\bra{T\times\Pi_T,\ca{L}}\grotimes [\bb{C}\rtimes_{-\tau} (T\times\Pi_T)]}_{x\in \widetilde{M}}}
\widehat{\bigotimes} \ \ud{L^2\bra{U_{L^2_k},\ca{L}}}\grotimes \ud{\bb{C}\rtimes_{-\tau} U_{L^2_m}}\grotimes
S_U,
\end{align*}
we define an operator $\widetilde{D}$ by
\begin{align*}
\widetilde{D}&:=\bra{\sum_ic(e_i)\nabla^{S_{\widetilde{M}}}_{e_i}\grotimes \id_{ \ud{\bb{C}\rtimes_{-\tau} (T\times\Pi_T)}}}\widehat{\bigotimes}\ \id\\
&\ \ \  +\id\ \widehat{\bigotimes}\bra{\sum_n\bbra{\bra{n^{-2l}dR'(z_n)\grotimes \id+\id\grotimes \sqrt{n}d\rt_{z_n}}\grotimes \gamma\bra{\overline{z_n}}+\bra{n^{-2l}dR'(\overline{z_n})\grotimes \id+\id\grotimes \sqrt{n}d\rt_{\overline{z_n}}\grotimes \gamma\bra{z_n}}}}.
\end{align*}
We denote $\sum_ic(e_i)\nabla^{S_{\widetilde{M}}}_{e_i}\grotimes \id_{ \ud{\bb{C}\rtimes_{-\tau} (T\times\Pi_T)}}$ by $\widetilde{D_{\rm base}}$.

$(4)$ The $KK$-element corresponding to the unbounded Kasparov $(\ud{\scr{C}(\ca{M}_{L^2_m})\rtimes_\tau LT_{L^2_m}},\ud{\bb{C}\rtimes_\tau LT_{L^2_m}} )$-module
$$\bra{\ud{L^2(\ca{M}_{L^2_k},S_{\ca{M}})\rtimes_\tau LT_{L^2_m}},\ud{\pi\rtimes_\tau \lt },\widetilde{D}}$$
is denoted by
$$\ud{j_{LT_{L^2_m}}^\tau\bra{\fgt[S]\grotimes \bbbra{\widetilde{d_{\ca{M}_{L^2_m}}}}}}\in KK(\ud{\scr{C}(\ca{M}_{L^2_m})\rtimes_\tau LT_{L^2_m}},\ud{\bb{C}\rtimes_\tau LT_{L^2_m}} )$$
(we will prove that the above triple is actually an unbounded Kasparov module later).
We regard it as the ``{\bf descent of the reformulated Dirac element}''.
\end{dfn}

\begin{rmks}\label{Remark on the descent of Dirac}
$(1)$ The ``fiber'' of this $KK$-element is almost the same with $[\widetilde{\Dirac}_R]$ discussed in \cite[Theorem 6.8]{Thesis}. 

$(2)$ The tensor product between the $\widetilde{M}$-part and the $U_{L^2_m}$-part is denoted by $\widehat{\bigotimes}$, and others are denoted by $\grotimes$.

$(3)$ We have defined neither $j_{LT_{L^2_m}}^\tau$ nor $\fgt[S]$. Although we have defined a $KK$-element ``$[\widetilde{d_{\ca{M}_{L^2_m}}}]$'' in \cite{T4}, we have not proved any relationships between ``$[\widetilde{d_{\ca{M}_{L^2_m}}}]$'' and ``$\ud{j_{LT_{L^2_m}}^\tau\bra{\fgt[S]\grotimes \bbbra{\widetilde{d_{\ca{M}_{L^2_m}}}}}}$''. We will give a comment on this issue in Section \ref{section unsolved}.
\end{rmks}

\begin{thm}\label{jLTDirac}
The triple $\bra{\ud{L^2(\ca{M}_{L^2_k},S_{\ca{M}})\rtimes_\tau LT_{L^2_m}},\ud{\pi\rtimes_\tau \lt },\widetilde{D}}$ is an unbounded Kasparov \\
$\bra{\ud{\scr{C}(\ca{M}_{L^2_m})\rtimes_\tau LT_{L^2_m}}, \ud{\bb{C}\rtimes_\tau LT_{L^2_m}}}$-module.
\end{thm}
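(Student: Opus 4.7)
The plan is to verify the three unbounded-Kasparov-module axioms (regularity and essential self-adjointness of $\widetilde{D}$; density of $C^{1}$-elements in the left algebra; compactness of $\ud{\pi\rtimes_\tau\lt}(a)(1+\widetilde{D}^{2})^{-1}$) by splitting $\widetilde{D}$ into two anti-commuting summands and treating the base direction by imitating Proposition~\ref{twisted descented Dirac for principal bundle} and the fiber direction by a direct spectral computation mirroring Lemma~\ref{Lemma C is bounded on dom(D) to H} and Lemma~\ref{index element for HKT algebra}. First I would set up notation: write $\widetilde{D}=\widetilde{D_{\mathrm{base}}}\widehat{\otimes}\id+\id\widehat{\otimes}\widetilde{D_{\mathrm{fib}}}$ where $\widetilde{D_{\mathrm{fib}}}$ is the displayed double sum in Definition~\ref{dfn descent of Dirac element}(3). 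Both summands are odd, they act on different tensor factors, and the Clifford relations on $S_{\widetilde{M}}\widehat{\otimes}S_{U}$ ensure $\widetilde{D_{\mathrm{base}}}$ and $\widetilde{D_{\mathrm{fib}}}$ graded-anticommute, so $\widetilde{D}^{2}=\widetilde{D_{\mathrm{base}}}^{2}\widehat{\otimes}\id+\id\widehat{\otimes}\widetilde{D_{\mathrm{fib}}}^{2}$ on a common core.

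Next I would identify a natural dense core $\mathcal{E}_{\mathrm{fin}}\subseteq \ud{L^{2}(\ca{M}_{L^2_k},S_{\ca{M}})\rtimes_\tau LT_{L^2_m}}$ consisting of smooth, $T\times\Pi_{T}$-equivariant, compactly-supported-in-$\widetilde{M}/(T\times\Pi_{T})$ sections with values in the algebraic tensor product of $S_{\widetilde{M},x}\grotimes L^{2}(T\times\Pi_{T},\ca{L})\grotimes[\bb{C}\rtimes_{-\tau}(T\times\Pi_{T})]$ with $S_{U,\fin}\grotimes\ud{L^{2}(U_{L^2_k},\ca{L})_{\fin}}\grotimes\ud{(\bb{C}\rtimes_{-\tau}U_{L^2_m})_{\fin}}$. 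On this core $\widetilde{D_{\mathrm{fib}}}$ acts as a finite sum of standard harmonic-oscillator-type operators (after noting, as in Lemma~\ref{Lemma base differential is fiber differential}, that the combination $n^{-2l}dR'(z_{n})\grotimes\id+\id\grotimes\sqrt{n}\,d\rt_{z_{n}}$ is a bounded perturbation of a square-integrable creation/annihilation operator), while $\widetilde{D_{\mathrm{base}}}$ is the descent of a classical $Spin^{c}$-Dirac on $\widetilde{M}$, already known to be essentially self-adjoint with compact resolvent after averaging against a cut-off from Proposition~\ref{descented Dirac for principal bundle}. Essential self-adjointness of $\widetilde{D}$ on $\mathcal{E}_{\mathrm{fin}}$ then follows from the anticommutation and Nelson-type commutator arguments, and regularity as an $\ud{\bb{C}\rtimes_{\tau}LT_{L^{2}_{m}}}$-operator follows because each summand is manifestly regular (the base summand from the finite-dimensional result; the fiber summand because it is diagonalisable on an orthonormal frame).

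For the dense $C^{1}$-subalgebra I would take the image under $\ud{\pi\rtimes_{\tau}\lt}$ of the algebraic crossed product of $C_c^\infty(\widetilde{M})\grotimes [\bb{C}\rtimes_\tau(T\times\Pi_T)]\grotimes\ud{[\bb{C}\rtimes_\tau U_{L^2_m}]_\fin}$ inside $\ud{\scr{C}(\ca{M}_{L^2_m})\rtimes_\tau LT_{L^2_m}}$. A generator acts by a combination of a smooth compactly supported multiplier in the $\widetilde{M}$-direction and a polynomial-times-Gaussian convolution kernel in the $LT$-direction; its commutator with $\widetilde{D_{\mathrm{base}}}$ is bounded by the usual Clifford/derivative computation on $\widetilde{M}$, and its commutator with $\widetilde{D_{\mathrm{fib}}}$ reduces to a finite sum of products of $c(\overline{z_{n}}),c(z_{n})$ with first-order monomials in $dR'$ and $d\rt$ applied to a Schwartz-class kernel, hence is bounded by the quantitative estimate used to prove boundedness of $\fra{C}_{l}$ in Lemma~\ref{Lemma C is bounded on dom(D) to H} (the coefficients $n^{-2l}$ and $\sqrt{n}$ still give a convergent series because the kernel is Schwartz in the Heisenberg variables). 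Density of this subalgebra follows from the tensor-product description of $\ud{\scr{C}(\ca{M}_{L^2_m})\rtimes_\tau LT_{L^2_m}}$ given by Proposition~\ref{Prop iterated crossed product} together with Proposition~\ref{Mishchenko line byndle for LTmanifold}.

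The compact-resolvent condition is where I expect the main difficulty, and it is worth stating separately. By the anticommutation, $(1+\widetilde{D}^{2})^{-1}=(1+\widetilde{D_{\mathrm{base}}}^{2}\widehat{\otimes}\id+\id\widehat{\otimes}\widetilde{D_{\mathrm{fib}}}^{2})^{-1}$, and for $a$ in the dense subalgebra above, $\ud{\pi\rtimes_{\tau}\lt}(a)$ factors through a finite-propagation operator in the $U_{L^{2}_{m}}$-direction and a compactly supported multiplier in $\widetilde{M}$. The base factor $\ud{\pi\rtimes_{\tau}\lt}(a_{\mathrm{base}})(1+\widetilde{D_{\mathrm{base}}}^{2})^{-1/2}$ is $\ud{\bb{C}\rtimes_{\tau}LT_{L^{2}_{m}}}$-compact by the descended finite-dimensional Rellich argument of Proposition~\ref{descented Dirac for principal bundle} together with the tensor decomposition from Proposition~\ref{Prop iterated crossed product}. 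For the fiber factor $(1+\widetilde{D_{\mathrm{fib}}}^{2})^{-1/2}$ I would diagonalise on the CONS built from $\{dR'(z_{n})^{\alpha_{n}}dL'(\overline{z_{n}})^{\beta_{n}}\vac\}$ tensored with $S_{U}$ and with the analogous basis for $\ud{\bb{C}\rtimes_{-\tau}U_{L^{2}_{m}}}$ coming from the creation/annihilation operators in $\Op$; the spectrum of $\widetilde{D_{\mathrm{fib}}}^{2}$ is then a sum $\sum_{n}\lambda_{n}(\alpha_{n}+\beta_{n}+\cdots)$ with $\lambda_{n}\sim n^{1-2l}+n$, whose eigenvalue-counting function is finite on every bounded interval, so that $(1+\widetilde{D_{\mathrm{fib}}}^{2})^{-1/2}$ is a Hilbert module compact operator on the fiber factor after cutting by any finite-rank projection in the $\ud{\bb{C}\rtimes_{-\tau}U_{L^{2}_{m}}}$-direction coming from $a_{\mathrm{fib}}$. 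Combining these two facts via the standard identity $a\,b(1+X^{2}+Y^{2})^{-1}=\bigl[a(1+X^{2})^{-1/2}\bigr]\bigl[b(1+Y^{2})^{-1/2}\bigr](1+X^{2}+Y^{2})^{-1/2}(1+X^{2})^{1/2}(1+Y^{2})^{1/2}$ modulo bounded terms gives $\ud{\pi\rtimes_{\tau}\lt}(a)(1+\widetilde{D}^{2})^{-1}\in\bb{K}_{\ud{\bb{C}\rtimes_{\tau}LT_{L^{2}_{m}}}}\bigl(\ud{L^{2}(\ca{M}_{L^2_k},S_{\ca{M}})\rtimes_\tau LT_{L^{2}_{m}}}\bigr)$. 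The hard part will be justifying this combination rigorously at the level of Hilbert modules, since the two directions live over different base spaces; I would do it by invoking the tensor-product identification of Hilbert modules over $\ud{\bb{C}\rtimes_{\tau}LT_{L^{2}_{m}}}\cong\ud{\bb{C}\rtimes_{\tau}U_{L^{2}_{m}}}\grotimes[\bb{C}\rtimes_{\tau}(T\times\Pi_{T})]$ and reducing compactness to the finite-dimensional/Heisenberg factor separately, using the same approximation-by-locally-finite-rank arguments underlying Definition~\ref{dfn:  filed of scrB-compact operators}.
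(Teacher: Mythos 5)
Your overall skeleton (three Kasparov-module axioms, isolate a base Dirac and a fiber operator, reduce compactness to a Rellich statement on the fiber) matches the paper's, and your treatment of essential self-adjointness and of the commutator/$C^1$-density step is in the right spirit. But the compactness step contains a genuine gap, and it is precisely the place where the paper's proof uses a \emph{three}-way decomposition $\widetilde{D}=\widetilde{D_{\rm base}}\widehat{\otimes}\id+\id\widehat{\otimes}\widetilde{D_2}+\id\widehat{\otimes}\widetilde{D_3}$ rather than your two-way one $\widetilde{D_{\rm base}}\widehat{\otimes}\id+\id\widehat{\otimes}\widetilde{D_{\rm fib}}$.

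The problem is your claim that $\widetilde{D_{\rm fib}}^{2}$ can be diagonalised on the product CONS with eigenvalues $\sum_n\lambda_n(\alpha_n+\beta_n+\cdots)$, $\lambda_n\sim n^{1-2l}+n$, and that the eigenvalue-counting function is finite on bounded intervals. Write $\widetilde{D_{\rm fib}}=\widetilde{D_2}+\widetilde{D_3}$ with $\widetilde{D_2}$ the $n^{-2l}\,dR'$-part acting on $\ud{L^2(U_{L^2_k},\ca{L})}$ and $\widetilde{D_3}$ the $\sqrt{n}\,d\rt$-part acting on $\ud{L^2(\bb{R}^\infty)^*}$. Two things go wrong. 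First, $\widetilde{D_2}$ and $\widetilde{D_3}$ act on different Hilbert-space factors but share the same Clifford factor $\gamma(\overline{z_n}),\gamma(z_n)$ on $S_U$, so they do not graded-anticommute; $(\widetilde{D_2}+\widetilde{D_3})^2$ has non-zero cross-terms and is not diagonal on your product basis. One can absorb them into a Bogoliubov-rotated annihilation operator $a_n=n^{-2l}dR'(z_n)\grotimes\id+\id\grotimes\sqrt{n}\,d\rt_{z_n}$, but then the orthogonal rotated oscillator is left untouched, so $a_n^\dagger a_n$ has infinite-dimensional kernel for every $n$; $\widetilde{D_{\rm fib}}^2$ therefore does \emph{not} have discrete spectrum with finite multiplicity. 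Second, even ignoring the cross-terms, the $\widetilde{D_2}$ modes carry coefficients $n^{-2l}\to 0$ rather than $\to\infty$, so the number of one-particle eigenstates below any fixed level is infinite; the eigenvalue-counting is finite only for $\widetilde{D_3}$ alone (where the coefficients $\sqrt{n}$ grow). The paper therefore first shows compactness of $a\widehat{\otimes}k\grotimes\id\cdot(1+[\widetilde{D_{\rm base}}\widehat{\otimes}\id+\id\widehat{\otimes}\widetilde{D_3}]^2)^{-1}$ using the genuine spectral decomposition $\widetilde{D_3}^2=\sum_n\lambda_nP_n$ with finite-rank $P_n$ on $\ud{L^2(\bb R^\infty)^*}\grotimes S_{U,\fin}$ and norm-convergent sum $\sum_n\|a\|\,\|k\|\,(1+\lambda_n)^{-1}$, and only afterwards establishes that putting $\widetilde{D_2}$ back changes the resolvent by a compact operator (step (b) of the paper's proof). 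Your plan has no analogue of step (b), and its single-step spectral argument cannot work.

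A secondary confusion: you speak of the left action $\ud{\pi\rtimes_\tau\lt}(a)$ providing ``a finite-rank projection in the $\ud{\bb C\rtimes_{-\tau}U_{L^2_m}}$-direction''. In fact $\ud{\pi\rtimes_\tau\lt}(a)$ acts through compact operators on the $\ud{L^2(LT_{L^2_k},\ca L)}$-factor (Definition \ref{dfn descent of Dirac element}(2)), \emph{not} on $\ud{\bb C\rtimes_{-\tau}LT_{L^2_m}}$. Under the identification $\ud{\bb C\rtimes_{-\tau}U}\cong\ud{L^2(\bb R^\infty)}\grotimes\ud{L^2(\bb R^\infty)^*}$, the right $\ud{\bb C\rtimes_\tau U}$-module structure sits on the $\ud{L^2(\bb R^\infty)}$-factor (so Hilbert-module compact operators must be $\id$ there), while $\widetilde{D_3}$ and its finite-rank spectral projections $P_n$ act on $\ud{L^2(\bb R^\infty)^*}\grotimes S_U$. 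The finite-rank cut-offs in the paper's argument come from those $P_n$, not from $a$; $a$ contributes compactness only on the $\ud{L^2(U,\ca L)}$-factor, which is why the ingredient you need is specifically the finite-multiplicity spectrum of $\widetilde{D_3}$ alone and not of the whole $\widetilde{D_{\rm fib}}$.
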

\begin{proof}
We need to prove the following properties: $(1)$ The operator $\widetilde{D}$ is well-defined and essentially self-adjoint; $(2)$ For ``smooth'' $a \in \ud{\scr{C}(\ca{M}_{L^2_m})\rtimes_\tau LT_{L^2_m}}$, the commutator $\bbbra{\ud{\pi\rtimes_{\tau}\lt }(a),\widetilde{D}}$ is bounded; $(3)$ For any $a \in \ud{\scr{C}(\ca{M}_{L^2_m})\rtimes_\tau LT_{L^2_m}}$, $\ud{\pi\rtimes_{\tau}\lt }(a)\bra{1+\widetilde{D}^2}^{-1}$ is $\ud{\bb{C}\rtimes_\tau LT_{L^2_m}}$-compact. We refer to \cite[Theorem 6.8]{Thesis} for several estimates of this proof.

$(1)$ Let us consider the dense subspace
{\small
$$C^\infty\bra{\widetilde{M}\times_{T\times\Pi_T}\bbra{S_{\widetilde{M}}\grotimes L^2(T\times\Pi_T,\ca{L})\grotimes [\bb{C}\rtimes_{-\tau} (T\times\Pi_T)]}}\widehat{\bigotimes}^\alg \ud{L^2\bra{U_{L^2_k},\ca{L}}_\fin}\grotimes^\alg \ud{[\bb{C}\rtimes_{-\tau} U_{L^2_m}]_\fin}\grotimes^\alg S_{U,\fin}$$}
of $\ud{L^2(\ca{M}_{L^2_k},S_{\ca{M}})\rtimes_\tau LT_{L^2_m}}$. Note that the $(T\times\Pi_T)$-action on $\ud{L^2\bra{U_{L^2_k},\ca{L}}_\fin}\grotimes \ud{[\bb{C}\rtimes_{-\tau} U_{L^2_m}]_\fin}\grotimes S_{U,\fin}$ is trivial, and hence the above is actually a subspace of $\ud{L^2(\ca{M}_{L^2_k},S_{\ca{M}})\rtimes_\tau LT_{L^2_m}}$. On this subspace, $\widetilde{D}$ acts as
\begin{align*}
&\widetilde{D_{\rm base}}\widehat{\bigotimes}\id+
\id\widehat{\bigotimes} \sum_n\bra{n^{-2l}dR'(z_n)\grotimes \id\grotimes \gamma\bra{\overline{z_n}}+n^{-2l}dR'(\overline{z_n})\grotimes \id\grotimes \gamma\bra{z_n}}\\
&\ \ \ +\id\widehat{\bigotimes}\sum_n\bbra{\id\grotimes \sqrt{n}d\rt_{z_n}\grotimes \gamma\bra{\overline{z_n}}+\id\grotimes \sqrt{n}d\rt_{\overline{z_n}}\grotimes \gamma\bra{z_n}}\\
&=:\widetilde{D_{\rm base}}\widehat{\bigotimes}\id+\id\widehat{\bigotimes} \widetilde{D_2}+\id\widehat{\bigotimes}\widetilde{D_3}.
\end{align*}
$\widetilde{D_{\rm base}}$ is well-defined and essentially self-adjoint by a finite-dimensional argument.
$\widetilde{D_2}$ is well-defined and essentially self-adjoint just like Definition-Theorem \ref{index element}. For $\widetilde{D_3}$, see  \cite[Lemma 6.9]{Thesis}.

$(2)$ The ``smooth algebra'' is the dense subspace 
$$C^\infty\bra{\widetilde{M}\times_{T\times\Pi_T} \bb{F}\bra{C_c^\infty(T\times\Pi_T,\ca{L})}}\widehat{\bigotimes}^\alg\bb{F}\bra{\ud{L^2\bra{U_{L^2_k},\ca{L}}_\fin}},$$
where $\bb{F}\bra{V'}$ for a Hilbert space $V$ and its dense subspace $V'\subseteq V$ is the set of finite rank operators on $V$ preserving $V'$.
Let us verify that the commutator of 
$$a\widehat{\bigotimes} k\in C^\infty\bra{\widetilde{M}\times_{T\times \Pi_T} \bb{F}\bra{C^\infty_c(T\times\Pi_T,\ca{L})}}\widehat{\bigotimes}^\alg\bb{F}\bra{\ud{L^2\bra{U_{L^2_k},\ca{L}}_\fin}}$$
and $\widetilde{D}$ is a bounded operator. Since $d\rt$ commutes with $k$, and $a $ commutes with the Clifford actions on $S_{\widetilde{M}}$ and $S_U$, we obtain
$$[a\widehat{\bigotimes} k,\widetilde{D}]=[a,\widetilde{D_{\rm base}}]\widehat{\bigotimes} k+ a\widehat{\bigotimes}[k,\widetilde{D_2}].$$

The first term is a bounded operator, thanks to the ordinary argument of the descent homomorphism for unbounded Kasparov modules. For the second one, we put $k=\sum \phi_i\grotimes\psi_i^*$ for $\phi_i,\psi_i\in \ud{L^2\bra{U_{L^2_k},\ca{L}}_\fin}$. Since $[k,\widetilde{D_2}]=\sum_i[\phi_i\grotimes\psi_i^*,\widetilde{D_2}]$, it is sufficient to prove that each commutator is bounded. Thus, we may assume that $k$ is a single Schatten form $k=\phi\grotimes\psi^*$.
For $\xi\grotimes s\grotimes b\in \ud{L^2\bra{U_{L^2_k},\ca{L}}_\fin}\grotimes  S_{U,\fin}\grotimes \ud{[\bb{C}\rtimes_{-\tau}U_{L^2_m}]_\fin}$,
\begin{align*}
&[k,\widetilde{D_2}](\xi\grotimes s\grotimes b)\\
&= k\sum_n \bbra{n^{-2l}dR'(z_n)(\xi)\grotimes \gamma\bra{\overline{z_n}}(s)+n^{-2l}dR'(\overline{z_n})(\xi)\grotimes \gamma\bra{z_n}(s)}\grotimes b\\
&\ \ \ -\sum_n\bbra{n^{-2l}dR'(z_n)\circ k(\xi)\grotimes \gamma\bra{\overline{z_n}}(s)+n^{-2l}dR'(\overline{z_n})\circ k(\xi)\grotimes \gamma\bra{z_n}(s)}\grotimes b \\
&=\sum_n \bbra{\phi\inpr{\psi}{n^{-2l}dR'(z_n)(\xi)}{}\grotimes \gamma\bra{\overline{z_n}}(s)+\phi\inpr{\psi}{n^{-2l}dR'(\overline{z_n})(\xi)}{}\grotimes \gamma\bra{z_n}(s)}\grotimes b\\
&\ \ \ -\sum_n  \bbra{n^{-2l}dR'(z_n)[\phi\inpr{\psi}{\xi}{}]\grotimes \gamma\bra{\overline{z_n}}(s)+n^{-2l}dR'(\overline{z_n})[\phi\inpr{\psi}{\xi}{}]\grotimes \gamma\bra{z_n}(s)} \grotimes b\\
&=\sum_n  \bbra{\phi\inpr{\{n^{-2l}dR'(z_n)\}^*(\psi)}{\xi}{}\grotimes \gamma\bra{\overline{z_n}}(s)+\phi\inpr{\{n^{-2l}dR'(\overline{z_n})\}^*(\psi)}{\xi}{}\grotimes \gamma\bra{z_n}(s)}\grotimes b\\
&\ \ \ -\sum_n \bbra{n^{-2l}dR'(z_n)(\phi)\inpr{\psi}{\xi}{}\grotimes \gamma\bra{\overline{z_n}}(s)+n^{-2l}dR'(\overline{z_n})(\phi)\inpr{\psi}{\xi}{}\grotimes \gamma\bra{z_n}(s)} \grotimes b\\
&=\sum_n 
\bra{
\bbra{
\bbbra{\phi\grotimes [\{n^{-2l}dR'(z_n)\}^*(\psi)]^*}\grotimes \gamma\bra{\overline{z_n}}
+\bbbra{\phi\grotimes [\{n^{-2l}dR'(\overline{z_n})\}^*(\psi)]^*}\grotimes \gamma\bra{z_n} }
\grotimes \id}\xi\grotimes s\grotimes b\\
&\ \ \ -\sum_n \bra{\bbra{\bbbra{n^{-2l}dR'(z_n)(\phi)\grotimes \psi^*}\grotimes \gamma\bra{\overline{z_n}}
+\bbbra{n^{-2l}dR'(\overline{z_n})(\phi)\grotimes \psi^*}\grotimes \gamma\bra{z_n}}\grotimes \id}\xi\grotimes s\grotimes b,
\end{align*}
where $\phi\grotimes [\{n^{-2l}dR'(z_n)\}^*(\psi)]^*$ stands for the Schatten form $\lambda\mapsto \phi\inpr{\{n^{-2l}dR'(z_n)\}^*(\psi)}{\lambda}{}$, and similarly for other terms of the last two lines.
Thus, the commutator can be divided into four parts:
$$\sum_n\phi\grotimes [\{n^{-2l}dR'(z_n)\}^*(\psi)]^*\grotimes\gamma\bra{\overline{z_n}} +\sum_n\phi\grotimes [\{n^{-2l}dR'(\overline{z_n})\}^*(\psi)]^*\grotimes\gamma\bra{z_n}$$
$$-\sum_nn^{-2l}dR'(z_n)(\phi)\grotimes \psi^* \grotimes\gamma\bra{\overline{z_n}} -\sum_nn^{-2l}dR'(\overline{z_n})(\phi)\grotimes \psi^* \grotimes\gamma\bra{z_n}.$$

Since $dR'(\overline{z_n})^*=-dR'(z_n)$ is an ``annihilator'', and since $\phi,\psi\in \ud{L^2(U_{L^2_k},\ca{L})_\fin}$, the second and third terms are finite sums of finite rank operators, which are obviously bounded. 

For the first and fourth terms, we prove that the infinite sums converge in operator norm. We deal with only the first one.
For this aim, we take an orthonormal base of $\ud{L^2(U_{L^2_k},\ca{L})}$. First, we put
\begin{align*}
\widetilde{\phi}_{\overrightarrow{\alpha},\overrightarrow{\beta}}
:=
\bra{dR'(\overline{z_1})^{\alpha_1}dL'(z_1)^{\beta_1}dR'(\overline{z_2})^{\alpha_2}dL'(z_2)^{\beta_2}\cdots }\vac,
\end{align*}
and we put $\phi_{\overrightarrow{\alpha},\overrightarrow{\beta}}:=\|\widetilde{\phi}_{\overrightarrow{\alpha},\overrightarrow{\beta}}\|^{-1}\widetilde{\phi}_{\overrightarrow{\alpha},\overrightarrow{\beta}}$.
Since $\|\widetilde{\phi}_{\overrightarrow{\alpha},\overrightarrow{\beta}}\|^2=\prod_n(2n^l)^{\alpha_n+\beta_n}\alpha_n!\beta_n!$, we obtain
$$dR'(\overline{z_n})\phi_{\overrightarrow{\alpha},\overrightarrow{\beta}}=
\sqrt{2n^l(\alpha_n+1)}\phi_{\overrightarrow{\alpha}+e_n,\overrightarrow{\beta}},$$
where $\overrightarrow{\alpha}+e_n:=(\cdots,\alpha_{n-1},\alpha_n+1,\alpha_{n+1},\cdots)$.

We have assumed that $\psi$ is a finite linear combination of $\phi_{\overrightarrow{\alpha},\overrightarrow{\beta}}$'s. Thus, we may assume that $\psi=\phi_{\overrightarrow{\alpha},\overrightarrow{\beta}}$ from the beginning, since a finite sum of bounded operators is again bounded. We have
$$\|\phi\grotimes [\{n^{-2l}dR'(z_n)\}^*(\phi_{\overrightarrow{\alpha},\overrightarrow{\beta}})]^*\grotimes\gamma\bra{\overline{z_n}}\|\leq 2n^{-3l/2}\|\phi\|\sqrt{\alpha_n+1}.$$
Since $\alpha_n=0$ except for finitely many $n$'s, the infinite sum $\sum_n\phi\grotimes [\{n^{-2l}dR'(z_n)\}^*(\psi)]^*\grotimes\gamma\bra{\overline{z_n}}$ converges in norm.

$(3)$ See also \cite[Lemma 6.12]{Thesis} for details of the following argument. We prove this property by the following two steps: For $a\in C(\widetilde{M}\times_{T\times\Pi_T}\bb{K}(L^2(T\times\Pi_T,\ca{L})))$ and $k\in \bb{K}\bra{ \ud{L^2\bra{U_{L^2_k},\ca{L}}}}$,
$(a)$ $a\widehat{\bigotimes} k\grotimes\id(1+[\widetilde{D_{\rm base}}\widehat{\bigotimes}\id+\id\widehat{\bigotimes}\widetilde{D_3}]^2)^{-1}$ is compact; $(b)$ the difference $a\widehat{\bigotimes} k(1+[\widetilde{D_{\rm base}}\widehat{\bigotimes}\id+\id\widehat{\bigotimes}\widetilde{D_3}]^2)^{-1}-a\widehat{\bigotimes} k(1+[\widetilde{D_{\rm base}}\widehat{\bigotimes}\id+\id\widehat{\bigotimes}\widetilde{D_2}+\id\widehat{\bigotimes}\widetilde{D_3}]^2)^{-1}$ is also compact. 

$(a)$ First, we recall how $\ud{\bb{C}\rtimes_\tau LT_{L^2_m}}$-compact operators look like. 
Note that the tensor product of a $\ud{\bb{C}\rtimes_\tau U_{L^2_m}}$-compact operator and a $\bb{C}\rtimes_\tau (T\times\Pi_T)$-compact operator is a $\ud{\bb{C}\rtimes_\tau LT_{L^2_m}}$-compact operator. 
Since $\ud{\bb{C}\rtimes_\tau U_{L^2_m}}=\bb{K}(\ud{L^2(\bb{R}^\infty)^*})$, a Hilbert $\ud{\bb{C}\rtimes_\tau U_{L^2_m}}$-module is isomorphic to $V\grotimes\ud{L^2(\bb{R}^\infty)}$ for some Hilbert space $V$. Thus, $\ud{\bb{C}\rtimes_\tau U_{L^2_m}}$-compact operator is $k\grotimes\id_{\ud{L^2(\bb{R}^\infty)}}$ for $k\in\bb{K}(V)$. For example, $\ud{\bb{C}\rtimes_{-\tau} U_{L^2_m}}=\bb{K}(\ud{L^2(\bb{R}^\infty)})$ can be regarded as
$\ud{L^2(\bb{R}^\infty)}\grotimes \ud{L^2(\bb{R}^\infty)^*}\cong \ud{L^2(\bb{R}^\infty)^*}\grotimes \ud{L^2(\bb{R}^\infty)}.$

With this observation, we consider the spectral decomposition of $\widetilde{D_3}^2$. 
By the identification $\ud{[\bb{C}\rtimes_{-\tau} U_{L^2_m}]_\fin}\grotimes^\alg S_{U,\fin} \cong\ud{L^2(\bb{R}^\infty)_\fin}\grotimes^\alg \ud{L^2(\bb{R}^\infty)_\fin^*}\grotimes^\alg S_{U,\fin}$, we rewrite $\widetilde{D_3}$ as 
$$\id_{\ud{L^2(U_{L^2_k},\ca{L})}}\grotimes \id_{\ud{L^2(\bb{R}^\infty)}}\grotimes D_{\alg}:=\id_{\ud{L^2(U_{L^2_k},\ca{L})}}\grotimes \id_{\ud{L^2(\bb{R}^\infty)}}\grotimes
\sum_n\bbra{\sqrt{n}d\rho^*_{z_n}\grotimes \gamma\bra{\overline{z_n}}+\sqrt{n}d\rho^*_{\overline{z_n}}\grotimes \gamma\bra{z_n}}.$$
The spectral decomposition of $D_\alg$ can be computed just like the proof of $(2)$ of $(A)$ of Lemma \ref{index element for HKT algebra}. The property we need is that $D_{\alg}^2$ has discrete spectrum bounded below with finite multiplicity, that is to say, $D_{\alg}^2=\sum_n\lambda_nP_n$, where $\lambda_n\geq 0$, $\#\bbra{\lambda_n\mid \lambda_n<K}$ is finite for any $K>0$, $P_n$ is the orthogonal projection onto a finite-dimensional subspace of $\ud{L^2(\bb{R}^\infty)_\fin^*}\grotimes^\alg S_{U,\fin}$, and $P_n$'s are mutually orthogonal.

Thanks to this spectral decomposition, and thanks to the fact that $\widetilde{D_{\rm base}}\widehat{\bigotimes}\id$ anti-commutes with $\id \widehat{\bigotimes}\widetilde{D_3}$, 
\begin{align*}
1+[\widetilde{D_{\rm base}}\widehat{\bigotimes}\id+\id \widehat{\bigotimes}\widetilde{D_3}]^2
&=1+\widetilde{D_{\rm base}}^2\widehat{\bigotimes}\id+\id\widehat{\bigotimes}\id_{\ud{L^2(U_{L^2_k},\ca{L})}}\grotimes \id_{\ud{L^2(\bb{R}^\infty)}}\grotimes\sum_n\lambda_nP_n \\
&=\sum_n\bbra{1+\lambda_n+\widetilde{D_{\rm base}}^2}\widehat{\bigotimes}\id_{\ud{L^2(U_{L^2_k},\ca{L})}}\grotimes \id_{\ud{L^2(\bb{R}^\infty)^*}}\grotimes P_n.
\end{align*}
Thus, the operator $a\widehat{\bigotimes} k(1+\widetilde{D_{\rm base}}^2\widehat{\bigotimes}\id+\id\widehat{\bigotimes} \widetilde{D_3}^2)^{-1}$ is rewritten as
$$\sum_na\bbra{1+\lambda_n+\widetilde{D_{\rm base}}^2}^{-1}\widehat{\bigotimes} k\grotimes \id_{\ud{L^2(\bb{R}^\infty)^*}}\grotimes P_n.
$$
Notice that $\widetilde{D_{\rm base}}$ is the operator for the descent of the index element of a Dirac operator ``$D_{\rm base}$'', and hence $a\bbra{1+\lambda_n+\widetilde{D_{\rm base}}^2}^{-1}$ is $\bb{C}\rtimes_\tau (T\times\Pi_T)$-compact. Moreover, since $k$ is a compact operator and since $P_n$ is a finite rank operator, $k\grotimes \id_{\ud{L^2(\bb{R}^\infty)}}\grotimes P_n$ is $\bb{C}\rtimes_\tau U_{L^2_m}$-compact. Thus, each summand is $\bb{C}\rtimes_\tau LT_{L^2_m}$-compact. Finally, thanks to the estimate
$$\left\|a\bbra{1+\lambda_n+\widetilde{D_{\rm base}}^2}^{-1}\widehat{\bigotimes} k\grotimes \id_{\ud{L^2(\bb{R}^\infty)}}\grotimes P_n\right\|\leq \|a\|\cdot \frac{1}{1+\lambda_n}\cdot \|k\|,$$
and the fact that $P_n$'s are mutually orthogonal,
the above infinite sum converges in norm because $\lambda_n\to \infty$ as $m\to \infty$. Therefore, it is a $\bb{C}\rtimes_\tau LT_{L^2_m}$-compact operator.

$(b)$ One can prove the statement by the argument of the proof of the latter half of \cite[Lemma 6.12]{Thesis}. We leave the details to the reader.
\end{proof}

\subsection{The topological assembly map for proper $LT$-spaces}\label{section top ass map top ass map}

In this subsection, we define an proper $LT$-spaces version of the topological assembly map, and we will prove that the value of it at $[\ca{L}]$ is given by the analytic index constructed in \cite{Thesis}. Combining this result and Theorem \ref{Poincare duality for LT manifold}, we will prove the main result of the present paper.

\begin{dfn}
We define a homomorphism substituting for the topological assembly map 
$$\ud{\nu_{LT_{L^2_m}}^\tau}:\ca{R}KK_{LT_{L^2_m}}^\tau(\ca{M}_{L^2_m};\scr{C}(\ca{M}_{L^2_m}), \scr{C}(\ca{M}_{L^2_m}))\to KK(\bb{C},\ud{\bb{C}\rtimes_\tau LT_{L^2_m}})$$
by 
$$\ud{\nu_{LT_{L^2_m}}^\tau}(y):=\ud{[c_{\ca{M}_{L^2_m}}]}\grotimes 
\fgt\bra{\ud{j^\tau_{LT_{L^2_m}}}(y)}
\grotimes \ud{
j_{LT_{L^2_m}}^\tau\bra{\fgt[S]\grotimes \bbbra{\widetilde{d_{\ca{M}_{L^2_m}}}}}}.$$
\end{dfn}

According to the index theorem explained in Section \ref{section index theorem Spinc 2n}, the value of the topological assembly map at the $\ca{R}KK$-element corresponding to a vector bundle $E$, coincides with the index of the Dirac operator twisted by $E$. In order to prove a parallel result for our case, we recall the definition of the analytic index from \cite[Definition 6.18]{Thesis}

\begin{dfn}
$(1)$ We define a pre-Hilbert $\ud{\bb{C}\rtimes_\tau LT_{L^2_m}}$-module structure on
$$C_c^\infty(\widetilde{M},\ca{L}_{\widetilde{M}}\grotimes S_{\widetilde{M}})\widehat{\bigotimes}^\alg \ud{(\bb{C}\rtimes_{-\tau}U_{L^2_m})_\fin}\grotimes S_{U,\fin}$$
by the following operations: For $f,f_1,f_2\in C_c^\infty(\widetilde{M},\ca{L}_{\widetilde{M}}\grotimes S_{\widetilde{M}})$, $\phi,\phi_1,\phi_2\in \ud{(\bb{C}\rtimes_{-\tau}U)_\fin}$, $s,s_1,s_2\in  S_{U,\fin}$, $b_1\widehat{\bigotimes} b_2\in [\bb{C}\rtimes_\tau (T\times\Pi_T)] \widehat{\bigotimes} \ud{\bb{C}\rtimes_\tau U_{L^2_m}}$ and $\gamma\in (T\times\Pi_T)^\tau$,
\begin{itemize}
\item $(f\widehat{\bigotimes} \phi\grotimes s)\cdot (b_1\widehat{\bigotimes} b_2):= \int_{(T\times\Pi_T)^\tau} \gamma(f)b_1(\gamma^{-1})d\gamma\widehat{\bigotimes}(b_2^\vee* \phi)\grotimes s$; and
\item $\inpr{f_1\widehat{\bigotimes}\phi_1\grotimes s_1}{f_2\widehat{\bigotimes}\phi_2\grotimes s_2}{\ud{\bb{C}\rtimes_\tau LT_{L^2_m}}}(\gamma):= 
\int_{\widetilde{M}}\inpr{f_1(x)}{\gamma.f_2(\gamma^{-1}.x)}{}dx[\phi_2^** \phi_1]^\vee\inpr{s_1}{s_2}{S_U}.$
\end{itemize}
The completion of this pre-Hilbert module is denoted by $\ca{E}_{\ca{L}}$.

$(2)$ On this Hilbert module, we define an unbounded operator $\ca{D}_{\ca{L}}$ by the following:
\begin{align*}
\ca{D}_{\ca{L}}&:=\sum_n c(e_i)\circ \nabla_{e_i}^{\ca{L}_{\widetilde{M}}\grotimes S_{\widetilde{M}}}\widehat{\bigotimes}\id_{\ud{(\bb{C}\rtimes_{-\tau}LT_{L^2_m})_\fin}\grotimes S_{U}} \\
&\ \ \ +\id_{L^2(\widetilde{M},\ca{L}_{\widetilde{M}}\grotimes S_{\widetilde{M}})}\widehat{\bigotimes}\sum_n\bra{\sqrt{n}d\rt_{z_n}\grotimes \gamma\bra{\overline{z_n}}+\sqrt{n}d\rt_{\overline{z_n}}\grotimes \gamma\bra{z_n}}.
\end{align*}
The self-adjoint extension of $\ca{D}_{\ca{L}}$ is also denoted by the same symbol.
The {\bf analytic index} $\ud{\ind(\ca{D}_{\ca{L}})}$ is defined by $[(\ca{E}_{\ca{L}},\ca{D}_{\ca{L}})]\in KK(\bb{C},\ud{\bb{C}\rtimes_\tau LT_{L^2_m}})$.
\end{dfn}
\begin{rmk}
The index is given by the exterior tensor product of the index of $D_{\rm base}$ and the $KK$-element given in \cite[Definition 6.18]{Thesis}.
\end{rmk}

\begin{thm}
$\ud{\nu_{LT_{L^2_m}}^\tau}([\ca{L}])=\ud{\ind(\ca{D}_{\ca{L}})}.$
\end{thm}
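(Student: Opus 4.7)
The plan is to unwind the triple Kasparov product defining $\ud{\nu_{LT_{L^2_m}}^\tau}([\ca{L}])$ step by step, and identify the result, up to compact perturbation of the Dirac-type operator, with the explicit Kasparov module $(\ca{E}_\ca{L},\ca{D}_\ca{L})$ representing $\ud{\ind(\ca{D}_\ca{L})}$. This is the infinite-dimensional analogue of Proposition \ref{prop index theorem for Spinc}, and the three factors—Mishchenko line bundle, descent of $[\ca{L}]$, and descent of the Dirac element—are each represented by explicit Kasparov modules by Proposition \ref{Mishchenko line byndle for LTmanifold}, Definition-Proposition \ref{descent and crossed product for LT theory}, and Definition \ref{dfn descent of Dirac element} respectively.

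First I would compute $\fgt(\ud{j_{LT_{L^2_m}}^\tau}([\ca{L}]))$. Since $[\ca{L}]$ is represented by the trivial-operator Kasparov cycle $(\{\ca{L}_x\},\{1\},\{0\})$, the descent is the bimodule $C(\widetilde{M}\times_{T\times\Pi_T}\{\ca{L}_x\grotimes\bb{K}(\ud{L^2(LT_{L^2_k})},\ud{L^2(LT_{L^2_k},\ca{L})})\}_x)$ with zero operator and left action coming from composition. Then I would Kasparov-product with $\ud{[c_{\ca{M}_{L^2_m}}]}$. Since the latter is a family of rank-one projections onto $\bb{C}\vac\grotimes\bb{C}\sqrt{\fra{c}_{\widetilde{M},x}}$ and yields $C(\widetilde{M}\times_{T\times\Pi_T}\ud{L^2(LT_{L^2_k})}^*)$ by Proposition \ref{Mishchenko line byndle for LTmanifold}, the natural isometric isomorphism $\bb{K}(V,W)\grotimes_{\bb{K}(V)}V^*\cong W^*$ applied fiberwise identifies the product with the zero-operator Kasparov cycle on $C(\widetilde{M}\times_{T\times\Pi_T}\{\ca{L}_x\grotimes\ud{L^2(LT_{L^2_k},\ca{L})}^*\}_x)$, weighted by the cut-off in the inner product.

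Next I would tensor on the right with $\ud{j_{LT_{L^2_m}}^\tau(\fgt[S]\grotimes[\widetilde{d_{\ca{M}_{L^2_m}}}])}$. Using the Riesz duality $\ud{L^2(LT_{L^2_k},\ca{L})}^*\grotimes\ud{L^2(LT_{L^2_k},\ca{L})}\cong\ud{\bb{C}\rtimes_{-\tau}LT_{L^2_m}}$ and absorbing $\ca{L}_x$ into the base Spinor twist, the combined module becomes
$$L^2\bigl(\widetilde{M}\times_{T\times\Pi_T}\{\ca{L}_{\widetilde{M},x}\grotimes S_{\widetilde{M},x}\grotimes\ud{\bb{C}\rtimes_{-\tau}LT_{L^2_m}}\grotimes S_U\}\bigr),$$
weighted by $\fra{c}$, which is manifestly $\ca{E}_\ca{L}$ after collecting the $U$-factor into $\ud{(\bb{C}\rtimes_{-\tau}U_{L^2_m})_\fin}\grotimes S_{U,\fin}$ as in the definition of $\ca{E}_\ca{L}$. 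Moreover, the connection/positivity criterion in the unbounded picture selects $\widetilde{D}$ as the operator for the triple product; the base component $\widetilde{D_\mathrm{base}}$ becomes exactly the $\ca{L}_{\widetilde{M}}\grotimes S_{\widetilde{M}}$-twisted base Dirac by construction, and the component $\widetilde{D_3}$ matches the algebraic term of $\ca{D}_\ca{L}$ verbatim.

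The main obstacle, and the only non-formal step, will be disposing of $\widetilde{D_2}=\sum_n n^{-2l}(dR'(z_n)\grotimes\id\grotimes\gamma(\overline{z_n})+dR'(\overline{z_n})\grotimes\id\grotimes\gamma(z_n))$, which appears in $\widetilde{D}$ but not in $\ca{D}_\ca{L}$. After restricting via the Mishchenko projection to the vacuum in the first $\ud{L^2(U_{L^2_k},\ca{L})}$-factor—which is precisely what the triple Kasparov product achieves—the operators $n^{-2l}dR'(z_n)$ and $n^{-2l}dR'(\overline{z_n})$ become relatively bounded with norm summable in $n$ (this is the content of the bounded-perturbation argument already used in Lemma \ref{Lemma C is bounded on dom(D) to H}, which applies because $l>4$). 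Thus $\widetilde{D_2}$ extends to a globally bounded operator on the Mishchenko-projected module. A straight-line homotopy $t\mapsto\widetilde{D_\mathrm{base}}+\widetilde{D_3}+(1-t)\widetilde{D_2}$ then gives an operator homotopy (via Lemma \ref{Lemma positivity of the commutator implies the homotopy invariance}, since at each $t$ the positivity condition with respect to the constants $1$ of the Mishchenko factor is preserved), showing that the triple Kasparov product coincides with $[(\ca{E}_\ca{L},\ca{D}_\ca{L})]=\ud{\ind(\ca{D}_\ca{L})}$ in $KK(\bb{C},\ud{\bb{C}\rtimes_\tau LT_{L^2_m}})$.
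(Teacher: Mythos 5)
Your first two steps match the paper exactly: the double product $\ud{[c_{\ca{M}_{L^2_m}}]}\grotimes\fgt(\ud{j^\tau_{LT_{L^2_m}}}([\ca{L}]))$ collapses to a zero-operator cycle on $C_0(\widetilde{M}\times_{T\times\Pi_T}\{\ud{L^2(LT_{L^2_k},\ca{L})^*}\grotimes\ca{L}_{\widetilde{M}}\})$, and the module identification of the triple product with $\ca{E}_\ca{L}$ is the same factorization into $T\times\Pi_T$-part and $U_{L^2_m}$-part.

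There is a genuine gap in step three. You propose that ``restricting $\widetilde{D}$ via the Mishchenko projection'' produces an operator $\widetilde{D_\mathrm{base}}+\widetilde{D_3}+\widetilde{D_2}^{\mathrm{proj}}$ on $\ca{E}_\ca{L}$ with $\widetilde{D_2}^{\mathrm{proj}}$ bounded, to be removed by a straight-line homotopy. But $\widetilde{D_2}$ acts on the $\ud{L^2(U_{L^2_k},\ca{L})}$ tensor factor, which is contracted away in forming $\ca{E}_\ca{L}$, so $\widetilde{D_2}$ does not descend to an operator on the product module at all; and compressing the genuinely unbounded $\widetilde{D}$ by a projection $P$ is not a Kasparov-product construction (the commutator $[P,\widetilde{D}]$ involves creation/annihilation and is not bounded). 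Worse, if you do compute the vacuum compression $P\widetilde{D_2}P$, it is exactly zero (the $dR'(z_n)$ annihilate $\vac$ and $dR'(\overline{z_n})\vac$ is orthogonal to $\vac$), so there is nothing to homotope away — the homotopy step is a red herring. The correct mechanism, and the one the paper uses, is to take $\ca{D}_\ca{L}$ itself as the candidate operator and verify Kucerovsky's connection criterion directly: the commutator $[\ca{D}_\ca{L}\oplus\widetilde{D},\begin{pmatrix}0 & T_e\\ T_e^* & 0\end{pmatrix}]$ is computed, the base-derivative and $d\rt$ terms cancel via the Leibniz rule, and the residual $n^{-2l}dR'$ contributions (your $\widetilde{D_2}$) land in a norm-summable series \emph{because they appear paired against a fixed $\phi_2$ from the generating element $e$}, not because a ``projected $\widetilde{D_2}$'' is bounded. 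Your summability instinct is exactly the right ingredient — but it belongs in the connection-condition estimate, not in a post-hoc homotopy; and the actual estimate ($\|n^{-2l}dR'(z_n)^*\phi_2\|\lesssim n^{-3l/2}$) comes from the argument in Theorem \ref{jLTDirac}, not from Lemma \ref{Lemma C is bounded on dom(D) to H}. Finally, since the left factor has zero operator, the positivity condition $[0\grotimes\id,\ca{D}_\ca{L}]=0\geq 0$ is trivial, so no operator homotopy is needed at all.
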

\begin{proof}
We prove the result by the following steps. $(1)$ We will compute the Kasparov product $\ud{\bbbra{c_{\ca{M}_{L^2_m}}}}\grotimes 
\fgt\bra{\ud{j^\tau_{LT_{L^2_m}}}\bra{[\ca{L}]}}$. Then, we will prove $\ud{\nu_{LT_{L^2_m}}^\tau}([\ca{L}])=\ud{\ind(\ca{D}_{\ca{L}})}$. For this aim, we will prove the following: $(2)$ The modules of both sides are isomorphic; $(3)$ $\ca{D}_{\ca{L}}$ satisfies the connection condition; $(4)$ It satisfies the positivity condition. In fact, $(4)$ is immediate from the fact that the operator of the $KK$-element given in $(1)$ is $0$.

$(1)$ Let us compute the Kasparov product $\ud{\bbbra{c_{\ca{M}_{L^2_m}}}}\grotimes 
\fgt\bra{\ud{j^\tau_{LT_{L^2_m}}}\bra{[\ca{L}]}}$. As proved in Proposition \ref{Mishchenko line byndle for LTmanifold}, $\ud{\bbbra{c_{\ca{M}_{L^2_m}}}}$ is represented by
$$\bra{C_0\bra{\widetilde{M}\times_{T\times \Pi_T} \ud{L^2(LT_{L^2_k})}^*},1,0}.$$
As defined in Definition-Proposition \ref{descent and crossed product for LT theory}, $\ud{j^\tau_{LT_{L^2_m}}}\bra{[\ca{L}]}$ is represented by
$$\bra{C_0\bra{\widetilde{M}\times_{T\times\Pi_T}\bbra{\bb{K}\bra{\ud{L^2\bra{LT_{L^2_k},\ca{L}}},\ud{L^2\bra{LT_{L^2_k}}}}\grotimes\ca{L}_{\widetilde{M}}}},\ud{\pi\rtimes_\tau \lt},0}.$$
Therefore, the Kasparov product we are computing is given by
$$\bra{C_0\bra{\widetilde{M}\times_{T\times\Pi_T}\bbra{\ud{L^2\bra{LT_{L^2_k},\ca{L}}^*}\grotimes\ca{L}_{\widetilde{M}}}},1,0}.$$

$(2)$ We would like to prove that the Kasparov product of the above and $\ud{
j_{LT_{L^2_m}}^\tau\bra{\fgt[S]\grotimes\bbbra{\widetilde{d_{\ca{M}_{L^2_m}}}}}}$ coincides with $\ud{\ind(\ca{D}_{\ca{L}})}$.
First, we prove that the modules are isomorphic.
{\small
\begin{align*}
&C_0\bra{\widetilde{M}\times_{T\times\Pi_T}[\ud{L^2\bra{LT_{L^2_k},\ca{L}}^*}\grotimes\ca{L}_{\widetilde{M}}]}\grotimes_{C_0\bra{\widetilde{M}\times_{T\times\Pi_T}\bb{K}\bra{\ud{L^2\bra{LT_{L^2_k},\ca{L}}}}}} \\
&\ \ \ \ \ \ L^2\bra{\widetilde{M}\times_{T\times \Pi_T} \bbra{\ud{L^2\bra{LT_{L^2_k},\ca{L}}}\grotimes \ud{\bb{C}\rtimes_{-\tau} LT_{L^2_m}}\grotimes [S_U\grotimes S_{\widetilde{M}}]
}} \\
&\cong L^2\bra{\widetilde{M}\times_{T\times\Pi_T}\bbra{\bra{\ud{L^2\bra{LT_{L^2_k},\ca{L}}^*}\grotimes\ca{L}_{\widetilde{M},x}}\grotimes_{\bb{K}\bra{\ud{L^2\bra{LT_{L^2_k},\ca{L}}}}} \bra{\ud{L^2\bra{LT_{L^2_k},\ca{L}}}\grotimes \ud{\bb{C}\rtimes_{-\tau} LT_{L^2_m}}\grotimes S_{\ca{M},x}
}}_{x\in \widetilde{M}}}\\
&\cong L^2\bra{\widetilde{M}\times_{T\times \Pi_T}\bbra{\ud{\bb{C}\rtimes_{-\tau} LT_{L^2_m}}\grotimes \ca{L}_{\widetilde{M}}\grotimes S_{\ca{M}}|_{\widetilde{M}}}},
\end{align*}}
thanks to $V^*\grotimes_{\bb{K}(V)}V\cong \bb{C}$ for a Hilbert space $V$.
The isomorphism is given by 
$$[\phi\grotimes l]\grotimes_{C_0\bra{\widetilde{M}\times_{T\times\Pi_T}\bb{K}\bra{\ud{L^2\bra{LT_{L^2_k},\ca{L}}}}}} [\psi\grotimes b\grotimes s]\mapsto \innpro{\phi}{\psi}{}b\grotimes l\grotimes s$$
for $\phi:\widetilde{M}\to \ud{L^2\bra{LT_{L^2_k},\ca{L}}^*}$, $l:\widetilde{M}\to\ca{L}_{\widetilde{M}}$, $\psi:\widetilde{M}\to \ud{L^2\bra{LT_{L^2_k},\ca{L}}}$, $b:\widetilde{M}\to\ud{\bb{C}\rtimes_{-\tau} LT_{L^2_m}}$ and $s:\widetilde{M}\to S_{\ca{M}}|_{\widetilde{M}}$ such that $\phi\grotimes l$ and $\psi\grotimes b\grotimes s$ are equivariant sections. 

For the next step, we  describe this isomorphism in detail.
We can factorize $\ud{L^2\bra{LT_{L^2_k},\ca{L}}}$, $\ud{L^2\bra{LT_{L^2_k},\ca{L}}}^*$ and $\ud{\bb{C}\rtimes_{-\tau} LT_{L^2_m}}$ into the $T\times\Pi_T$-part and the $U_{L^2_m}$-part. 
Since $T\times\Pi_T$ acts on the latter factor trivially, $\phi$ can be approximated by finite sums of functions of the form $x\mapsto \phi_1(x)\grotimes \phi_2$ for $\phi_1:\widetilde{M}\to L^2(T\times\Pi_T,\ca{L})^*$ and $\phi_2\in \ud{L^2\bra{U_{L^2_k},\ca{L}}}^*$. 
Similarly, $\psi$, $b$ and $s$ can be approximated by finite sums of functions of the following forms, respectively: $x\mapsto \psi_1(x)\grotimes \psi_2$, $x\mapsto b_1(x)\grotimes b_2$ and $x\mapsto s_1(x)\grotimes s_2$ for $\psi_1:\widetilde{M}\to L^2(T\times\Pi_T,\ca{L})$, $\psi_2\in \ud{L^2\bra{U_{L^2_k},\ca{L}}}$, $b_1:\widetilde{M}\to \bb{C}\rtimes_{-\tau} (T\times\Pi_T)$, $b_2\in \ud{\bb{C}\rtimes_{-\tau} U_{L^2_m}}$, $s_1:\widetilde{M}\to S_{\widetilde{M}}$ and $s_2\in S_U$. Using these factorizations, we identify the vector spaces as follows:

\begin{align*}
C_0\bra{\widetilde{M}\times_{T\times\Pi_T}\bbra{\ud{L^2\bra{LT_{L^2_k},\ca{L}}^*}\grotimes\ca{L}_{\widetilde{M}}}}\cong C_0\bra{\widetilde{M}\times_{T\times\Pi_T}\bbra{{L^2\bra{T\times\Pi_T,\ca{L}}^*}\grotimes\ca{L}_{\widetilde{M}}}}\widehat{\bigotimes} \ud{L^2\bra{U_{L^2_k},\ca{L}}^*};
\end{align*}
\begin{align*}
&L^2\bra{\widetilde{M}\times_{T\times \Pi_T} \bbra{\ud{L^2\bra{LT_{L^2_k},\ca{L}}}\grotimes \ud{\bb{C}\rtimes_{-\tau} LT_{L^2_m}}\grotimes  S_{\ca{M}}|_{\widetilde{M}}}} \\
&\ \ \ \cong 
L^2\bra{\widetilde{M}\times_{T\times \Pi_T} \bbra{{L^2\bra{T\times\Pi_T,\ca{L}}}\grotimes {\bb{C}\rtimes_{-\tau} (T\times\Pi_T)}\grotimes  S_{\widetilde{M}}}}
\widehat{\bigotimes} \ud{L^2\bra{U_{L^2_k},\ca{L}}}\grotimes \ud{\bb{C}\rtimes_{-\tau} U_{L^2_m}}\grotimes S_U;
\end{align*}
\begin{align*}
&L^2\bra{\widetilde{M}\times_{T\times \Pi_T}\bbra{\ud{\bb{C}\rtimes_{-\tau} LT_{L^2_m}}\grotimes \ca{L}_{\widetilde{M}}\grotimes S_{\ca{M}}|_{\widetilde{M}}}} \\
&\ \ \ \cong L^2\bra{\widetilde{M}\times_{T\times \Pi_T}\bbra{\bra{\bb{C}\rtimes_{-\tau} (T\times\Pi_T)}\grotimes \ca{L}_{\widetilde{M}}\grotimes S_{\widetilde{M}}}}
\widehat{\bigotimes} \ud{\bb{C}\rtimes_{-\tau} U_{L^2_m}}\grotimes  S_{U}.
\end{align*}
Under these identifications, the isomorphism $[\phi\grotimes l]\grotimes [\psi\grotimes b\grotimes s]\mapsto \innpro{\phi}{\psi}{}b\grotimes l\grotimes s$ can be described as 
$$\bra{\phi_1\grotimes l\widehat{\bigotimes} \phi_2}\grotimes
\bra{\psi_1\grotimes b_1\grotimes s_1\widehat{\bigotimes} \psi_2\grotimes b_2\grotimes s_2}\mapsto \bra{\innpro{\phi_1}{\psi_1}{}b_1\grotimes  l\grotimes s_1}\widehat{\bigotimes}\bra{ \innpro{\phi_2}{\phi_2}{} b_2\grotimes s_2}.$$
Note that $\innpro{\phi_2}{\phi_2}{} (b_2\grotimes s_2)$ is independent of $x\in \widetilde{M}$.

$(3)$ Let us check the connection condition. Let us consider
$$C_c^\infty\bra{\widetilde{M}\times_{T\times\Pi_T} \bbbra{L^2(T\times\Pi_T,\ca{L})^*\grotimes\ca{L}_{\widetilde{M}}}}\widehat{\bigotimes}^\alg \ud{L^2\bra{U_{L^2_k},\ca{L}}_\fin^*}.$$
It is a dense subspace of $C_0\bra{\widetilde{M}\times_{T\times\Pi_T}\bbra{\ud{L^2\bra{LT_{L^2_k},\ca{L}}^*}\grotimes\ca{L}_{\widetilde{M}}}}$.
We prove that the commutator
$$
\bbbra{\begin{pmatrix}
\ca{D}_{\ca{L}} & 0 \\ 0 & \widetilde{D} \end{pmatrix},
\begin{pmatrix}
0 & T_e \\ T_e^* & 0 \end{pmatrix}
}=\begin{pmatrix}
0 & \ca{D}_{\ca{L}}\circ T_e-T_e\circ \widetilde{D} \\ 
\widetilde{D}\circ T_e^*-T_e^*\circ\ca{D}_{\ca{L}} & 0 \end{pmatrix}
$$
is bounded for an element $e$ of the above dense subspace. 
We may assume that $e$ is a finite sum of elements of the form $\phi_1\grotimes l\widehat{\bigotimes} \phi_2$ for $\phi_1:\widetilde{M}\to L^2(T\times\Pi_T,\ca{L})^*$, $l:\widetilde{M}\to \ca{L}_{\widetilde{M}}$ and $\phi_2\in \ud{L^2\bra{U_{L^2_k},\ca{L}}_\fin^*}$ satisfying the equivariance condition $g\cdot [\phi_1\grotimes l(g^{-1}x)]=\phi_1\grotimes l(x)$ for $g\in T\times \Pi_T$.  We may assume $e=\phi_1\grotimes l\widehat{\bigotimes} \phi_2$ from the beginning, because a finite sum of bounded operators is again bounded. 
\begin{align*}
& \ca{D}_{\ca{L}}\circ T_{\phi_1\grotimes l\widehat{\bigotimes} \phi_2}\bra{\psi_1\grotimes b_1\grotimes s_1\widehat{\bigotimes} \psi_2\grotimes b_2\grotimes s_2}-T_{\phi_1\grotimes l\widehat{\bigotimes} \phi_2}\circ \widetilde{D}\bra{\psi_1\grotimes b_1\grotimes s_1\widehat{\bigotimes} \psi_2\grotimes b_2\grotimes s_2} \\
&=  \ca{D}_{\ca{L}}\innpro{\phi_1}{\psi_1}{}(b_1\grotimes l\grotimes s_1)\widehat{\bigotimes} \innpro{\phi_2}{\psi_2}{}(b_2\grotimes  s_2)-T_{\phi_1\grotimes l\widehat{\bigotimes} \phi_2}\bra{\sum_jc(e_j)\circ\nabla^{S_{\widetilde{M}}}_{e_j}\bra{\psi_1\grotimes b_1\grotimes s_1\widehat{\bigotimes} \psi_2\grotimes b_2\grotimes s_2}} \\
&\ \ \ -T_{\phi_1\grotimes l\widehat{\bigotimes} \phi_2}\bra{\bra{\psi_1\grotimes b_1\grotimes s_1}\widehat{\bigotimes}\sum_n\bbra{\bra{n^{-2l}dR'(z_n)\psi_2\grotimes b_2+\psi_2\grotimes \sqrt{n}d\rt_{z_n}b_2}\grotimes \gamma\bra{\overline{z_n}}(s_2)}} \\
&\ \ \ \ \ \ -T_{\phi_1\grotimes l\widehat{\bigotimes} \phi_2}\bra{\bra{\psi_1\grotimes b_1\grotimes s_1}\widehat{\bigotimes}\sum_n\bbra{\bra{n^{-2l}dR'(\overline{z_n})\psi_2\grotimes b_2+\psi_2\grotimes \sqrt{n}d\rt_{\overline{z_n}}b_2}\grotimes \gamma\bra{z_n}(s_2)}} \\
&=\bra{\sum_j c(e_j)\circ \nabla_{e_j}^{\ca{L}_{\widetilde{M}}\grotimes S_{\widetilde{M}}}\bbra{\innpro{\phi_1}{\psi_1}{}(b_1\grotimes l\grotimes s_1)}}\widehat{\bigotimes} \innpro{\phi_2}{\psi_2}{}(b_2\grotimes  s_2)\\
&\ \ \ +
\innpro{\phi_1}{\psi_1}{}(b_1\grotimes l\grotimes s_1)\widehat{\bigotimes} \innpro{\phi_2}{\psi_2}{}\sum_n\bra{\sqrt{n}d\rt_{z_n}b_2\grotimes \gamma\bra{\overline{z_n}}(s_2)+\sqrt{n}d\rt_{\overline{z_n}}b_2\grotimes \gamma\bra{z_n}(s_2)} \\
&\ \ \ \ \ \ -T_{\phi_1\grotimes l\widehat{\bigotimes} \phi_2}\bra{\sum_jc(e_j)\circ\nabla^{S_{\widetilde{M}}}_{e_j}\bra{\psi_1\grotimes b_1\grotimes s_1\widehat{\bigotimes} \psi_2\grotimes b_2\grotimes s_2}} \\
&\ \ \ \ \ \ \ \ \ -T_{\phi_1\grotimes l\widehat{\bigotimes} \phi_2}\bra{\bra{\psi_1\grotimes b_1\grotimes s_1}\widehat{\bigotimes}\sum_n\bbra{\bra{n^{-2l}dR'(z_n)\psi_2\grotimes b_2+\psi_2\grotimes \sqrt{n}d\rt_{z_n}b_2}\grotimes \gamma\bra{\overline{z_n}}(s)}} \\
&\ \ \ \ \ \ \ \ \ \ \ \ -T_{\phi_1\grotimes l\widehat{\bigotimes} \phi_2}\bra{\bra{\psi_1\grotimes b_1\grotimes s_1}\widehat{\bigotimes}\sum_n\bbra{\bra{n^{-2l}dR'(\overline{z_n})\psi_2\grotimes b_2+\psi_2\grotimes \sqrt{n}d\rt_{\overline{z_n}}b_2}\grotimes \gamma\bra{z_n}(s_2)}} \\
&=:\Delta_1+\Delta_2+\Delta_3+\Delta_4+\Delta_5.
\end{align*}
We prove that the unbounded terms are cancelled out. Let us begin with the computation of $\Delta_1$. The trivial connection on the trivial bundles $\widetilde{M}\times  \ud{L^2\bra{U_{L^2_k},\ca{L}}}$, $\widetilde{M}\times \ud{\bb{C}\rtimes_{-\tau} U_{L^2_m}}$ or $\widetilde{M}\times S_U$, is denoted by $\nabla$.
Thanks to the Leibniz rule,
\begin{align*}
\Delta_1&=\sum_j c(e_j)\bbra{ 
\innpro{\nabla_{e_j}\phi_1}{\psi_1}{}b_1\grotimes l\grotimes s_1
+\innpro{\phi_1}{\nabla_{e_j}\psi_1}{}b_1\grotimes l\grotimes s_1
}\widehat{\bigotimes} \innpro{\phi_2}{\psi_2}{}b_2\grotimes  s_2\\
&\ \ \ +\sum_j \innpro{\phi_1}{\psi_1}{}c(e_j)\bra{\nabla_{e_j}b_1\grotimes l\grotimes s_1+b_1\grotimes \nabla_{e_j}^{\ca{L}}l\grotimes s_1
+b_1\grotimes l\grotimes \nabla_{e_j}^{S_{\widetilde{M}}}s_1}
\widehat{\bigotimes} \innpro{\phi_2}{\psi_2}{}b_2\grotimes  s_2\\
&=\sum_j c(e_j)\bbra{ 
\innpro{\nabla_{e_j}\phi_1}{\psi_1}{}b_1\grotimes l\grotimes s_1
+\innpro{\phi_1}{\psi_1}{}b_1\grotimes \nabla_{e_j}^{\ca{L}}l\grotimes s_1
}\widehat{\bigotimes} \innpro{\phi_2}{\psi_2}{}b_2\grotimes  s_2\\
&\ \ \ +\sum_j \innpro{\phi_1}{\psi_1}{}c(e_j)\bra{\nabla_{e_j}b_1\grotimes l\grotimes s_1
+b_1\grotimes l\grotimes \nabla_{e_j}^{S_{\widetilde{M}}}s_1}
\widehat{\bigotimes} \innpro{\phi_2}{\psi_2}{}b_2\grotimes  s_2\\
&\ \ \ \ \ \ +\sum_j \innpro{\phi_1}{\nabla_{e_j}\psi_1}{}b_1\grotimes l\grotimes c(e_j)s_1\widehat{\bigotimes} \innpro{\phi_2}{\psi_2}{}b_2\grotimes  s_2.
\end{align*}
Note that the assignment $\psi_1\grotimes b_1\grotimes s_1\widehat{\bigotimes} \psi_2\grotimes b_2\grotimes s_2 \mapsto$ ``the first line of the above''
does not contain derivations. Therefore, it is a bounded operator (notice that a {\it finite} sum of bounded operators is bounded). The remainder terms are canceled by $\Delta_3$ thanks to the Leibniz rule.

Finally, $\Delta_2+\Delta_4+\Delta_5$ is {\small
\begin{align*}
&-T_{\phi_1\grotimes l\widehat{\bigotimes} \phi_2}\bra{[\psi_1\grotimes b_1\grotimes s_1]\widehat{\bigotimes}\sum_n\bbra{n^{-2l}dR'(z_n)\psi_2\grotimes b_2\grotimes \gamma\bra{\overline{z_n}}(s_2)+n^{-2l}dR'(\overline{z_n})\psi_2\grotimes b_2\grotimes \gamma\bra{z_n}(s_2)} }\\
&\ \ \ =-\innpro{\phi_1}{\psi_1}{}b_1 \grotimes l \grotimes s_1\widehat{\bigotimes} \sum_n\bra{
\innpro{\phi_2}{n^{-2l}dR'(z_n)\psi_2}{}b_2\grotimes \gamma\bra{\overline{z_n}}(s_2)+
\innpro{\phi_2}{n^{-2l}dR'(\overline{z_n})\psi_2}{}b_2\grotimes \gamma\bra{z_n}(s_2)}\\ 
&\ \ \ =-\innpro{\phi_1}{\psi_1}{}b_1 \grotimes l \grotimes s_1\widehat{\bigotimes} \sum_n\bra{
\innpro{n^{-2l}[dR'(z_n)]^*\phi_2}{\psi_2}{}b_2\grotimes \gamma\bra{\overline{z_n}}(s_2)+
\innpro{n^{-2l}[dR'(\overline{z_n})]^*\phi_2}{\psi_2}{}b_2\grotimes \gamma\bra{z_n}(s_2)}.
\end{align*}}
For the same reason of Theorem \ref{jLTDirac}, the correspondence
$${\psi_1\grotimes b_1\grotimes s_1\widehat{\bigotimes} \psi_2\grotimes b_2\grotimes s_2}\mapsto \Delta_2+\Delta_4+\Delta_5$$
is bounded.

One can prove that $\widetilde{D}\circ T_e^*-T_e^*\circ\ca{D}_{\ca{L}} $ is bounded in the same way of the above.
Therefore, $\ca{D}_{\ca{L}}$ satisfies the connection condition. 

$(4)$ Since $[0\grotimes \id,\ca{D}_{\ca{L}}]=0\geq0$, $\ca{D}_{\ca{L}}$ satisfies the positivity condition.
\end{proof}

Combining it and Theorem \ref{Poincare duality for LT manifold}, we obtain the following main result of the present paper.

\begin{cor}
$\ud{\nu_{LT}^\tau}(\PD[\widetilde{\ca{D}}])=\sigma_{\ca{S}_\vep}\bra{\ud{\ind(\ca{D}_{\ca{L}})}}.$
\end{cor}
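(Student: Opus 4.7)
The plan is to deduce the corollary as a formal consequence of the two immediately preceding results, combined with the compatibility of $\sigma_{\ca{S}_\vep}$ with the construction of $\ud{\nu_{LT}^\tau}$. First, following the notational convention established just after Proposition~\ref{prop index theorem for Spinc} (where the reformulated Poincar\'e duality homomorphism and assembly maps are denoted without tildes), $\ud{\nu_{LT}^\tau}$ in the statement refers to the reformulated topological assembly map $\widetilde{\ud{\nu_{LT}^\tau}}$, which takes values at $\sigma_{\ca{S}_\vep}$-type $\ca{R}\bb{KK}$-elements and lands in $KK(\ca{S}_\vep,\ca{S}_\vep\grotimes \ud{\bb{C}\rtimes_\tau LT_{L^2_m}})$. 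With this reading, Theorem~\ref{Poincare duality for LT manifold} immediately gives
$$\ud{\nu_{LT}^\tau}(\PD[\widetilde{\ca{D}}])=\widetilde{\ud{\nu_{LT}^\tau}}\bra{\sigma_{\ca{S}_\vep}([\ca{L}])}.$$

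Next, I would establish the key naturality identity
$$\widetilde{\ud{\nu_{LT}^\tau}}\bra{\sigma_{\ca{S}_\vep}(y)}=\sigma_{\ca{S}_\vep}\bra{\ud{\nu_{LT}^\tau}(y)}$$
for $y\in \ca{R}\bb{KK}_{LT}^\tau(\ca{M};\scr{C}(\ca{M}),\scr{C}(\ca{M}))$. This is the analogue, in the infinite-dimensional setting, of the formulas $\sigma_{\ca{S}_\vep}(\mu_G([D]))=\widetilde{\mu_G}([\widetilde{D}])$ and $\sigma_{\ca{S}_\vep}(\nu_G([\sigma_D^{Cl}]))=\widetilde{\nu_G}([\widetilde{\sigma_D^{Cl}}])$ used at the end of the proof of Proposition~\ref{prop index theorem for Spinc}, and it follows from three facts that are purely formal consequences of the definitions in Section~\ref{section top ass map}: (i) the partial descent $\ud{j_{LT_{L^2_m}}^\tau}$ of Definition-Proposition~\ref{descent and crossed product for LT theory} commutes with $\sigma_{\ca{S}_\vep}$, because tensoring a representative Kasparov module by $\ca{S}_\vep$ commutes with the generalized fixed-point construction defining $\ud{\scr{A}\rtimes_{q\tau}LT_{L^2_m}}$ and $\ud{\scr{E}\rtimes_{q\tau}LT_{L^2_m}}$; (ii) $\fgt$ commutes with $\sigma_{\ca{S}_\vep}$; (iii) the Kasparov product is compatible with $\sigma_{\ca{S}_\vep}$ in the sense that $\sigma_{\ca{S}_\vep}(x\grotimes z)=\sigma_{\ca{S}_\vep}(x)\grotimes \sigma_{\ca{S}_\vep}(z)$. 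Since the Mishchenko line bundle $\ud{[c_{\ca{M}_{L^2_m}}]}$ and the descent of the Dirac element $\ud{j_{LT_{L^2_m}}^\tau(\fgt[S]\grotimes[\widetilde{d_{\ca{M}_{L^2_m}}}])}$ are fixed Kasparov elements that are simply carried along by $\sigma_{\ca{S}_\vep}$ in the reformulated map, the three items together yield the claimed naturality.

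Finally, applying the theorem immediately preceding this corollary, $\ud{\nu_{LT_{L^2_m}}^\tau}([\ca{L}])=\ud{\ind(\ca{D}_{\ca{L}})}$, gives
$$\widetilde{\ud{\nu_{LT}^\tau}}\bra{\sigma_{\ca{S}_\vep}([\ca{L}])}=\sigma_{\ca{S}_\vep}\bra{\ud{\nu_{LT}^\tau}([\ca{L}])}=\sigma_{\ca{S}_\vep}\bra{\ud{\ind(\ca{D}_{\ca{L}})}},$$
which chains with the first displayed equality to produce the corollary.

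The main obstacle is the verification of item (i): namely, that in the non-locally-compact $LT$-equivariant setting, the partial descent intertwines $\sigma_{\ca{S}_\vep}$. The issue is that the fibers of the $\ca{M}_{L^2_m}\rtimes LT_{L^2_m}$-equivariant u.s.c.\ field $\sigma_{\ca{S}_\vep}(\scr{E})$ are $\ca{S}_\vep\grotimes E_x$, and one has to check that the resulting $\ud{(\ca{S}_\vep\grotimes\scr{E})\rtimes_\tau LT_{L^2_m}}$ is canonically isomorphic to $\ca{S}_\vep\grotimes\ud{\scr{E}\rtimes_\tau LT_{L^2_m}}$ as Hilbert modules over $\ca{S}_\vep\grotimes \ud{\scr{B}\rtimes_\tau LT_{L^2_m}}$. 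However, because $\ca{S}_\vep$ is nuclear and carries only the trivial $LT_{L^2_m}$-action, this isomorphism is immediate from the description in Definition-Proposition~\ref{descent and crossed product for LT theory}, so the ``obstacle'' is really only a bookkeeping verification rather than a genuine analytic difficulty.
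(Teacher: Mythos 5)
Your proof is correct and follows the same route the paper takes: the corollary is deduced by composing Theorem~\ref{Poincare duality for LT manifold} (which gives $\PD[\widetilde{\ca{D}}]=\sigma_{\ca{S}_\vep}([\ca{L}])$) with the theorem immediately preceding (which gives $\ud{\nu^\tau_{LT}}([\ca{L}])=\ud{\ind(\ca{D}_{\ca{L}})}$), linked by the naturality of $\sigma_{\ca{S}_\vep}$ with respect to the assembly map. The paper's own proof is a single sentence that elides this naturality step entirely; you have usefully spelled out both the implicit reading of $\ud{\nu_{LT}^\tau}$ as the $\ca{S}_\vep$-tensored version (which is forced by the codomain of $\PD$) and the three bookkeeping compatibilities — descent, $\fgt$, Kasparov product each commuting with $\sigma_{\ca{S}_\vep}$ — that make the combination legitimate.
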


\subsection{Next problems}\label{section unsolved}

As a concluding remark of the present paper, we explain what we should do in order to complete the index theory for proper $LT$-spaces. Then, we will give a comment to improve our theory as mentioned in Remarks \ref{rmk ugly Hilbert space}  $(3)$. 
Roughly speaking, this ``improvement'' is to replace $KK_{LT_{L^2_m}}(\ca{A}(\ca{M}_{L^2_k}),\ca{S}_\vep)$ with $KK_{LT_{L^2_k}}(\ca{A}(\ca{M}_{L^2_k}),\ca{S}_\vep)$. We will explain why we needed to use the former one in the present paper, and why we want to replace it.

\vspace{0.3cm}
Recall the big diagram in the proof of Proposition \ref{pro index thm big diagram}. The infinite-dimensional version of it should be of the following form. Dotted arrows have not been defined. Undefined symbols will be explained later as Conjecture \ref{Conjecture}. We omit the subscripts of $LT$ and $\ca{M}$ for simplicity.
$$\xymatrix{
KK_{LT}^{\tau}(\ca{A}(\ca{M}),\ca{S}_\vep) 
\ar@{.>}_-{\ud{\widetilde{j^\tau_{LT}}}}[d]
\ar@/^20pt/[rr]^{\PD}
&
KK_{LT}^{\tau}(\ca{A}(\ca{M}),\ca{A}(\ca{M})) 
\ar^-{-\grotimes\ud{[\widetilde{d_{\ca{M}}}']}}[l] 
\ar@{.>}^-{\ud{\widetilde{j^\tau_{LT}}}}[d] &
\ca{R}KK_{LT}^{\tau}(\ca{M};\ca{S}_\vep\grotimes \scr{C}(\ca{M}),\ca{S}_\vep\grotimes \scr{C}(\ca{M})) 
\ar@{.>}^-{\widetilde{\fgt}}[l]
 \ar^-{\ud{j^\tau_{LT}}}[d] \\
KK(\ca{S}_\vep\grotimes \star_1,\ud{\ca{S}_\vep\rtimes_\tau {LT}})  
\ar_-{\sigma_{\ca{S}_\vep}[\ud{c_{\ca{M}}}]\grotimes-}[d] &
KK(\ca{S}_\vep\grotimes \star_1,\ca{S}_\vep\grotimes \star_2) 
\ar[l]^-{-\grotimes \ud{j_{LT}^\tau(\fgt[S]\grotimes [\widetilde{d_{\ca{M}}}])}} 
\ar^-{\sigma_{\ca{S}_\vep}[\ud{c_{\ca{M}}}]\grotimes-}[d] &
\ca{R}KK(\ca{M}/LT;\ca{S}_\vep\grotimes \star_1,\ca{S}_\vep\grotimes \star_2) 
\ar^-{\fgt }[l]\\
KK(\ca{S}_\vep,\ud{\ca{S}_\vep\rtimes_\tau {LT}})  &
KK(\ca{S}_\vep,\ca{S}_\vep\grotimes \star_2),
\ar^-{-\grotimes \ud{\widetilde{j^\tau_{LT}}(\fgt[S]\grotimes [\widetilde{d_{\ca{M}}}])}}[l]
}$$
where $\star_1$ stands for $\ud{\scr{C}(\ca{M})\rtimes LT}$ and $\star_2$ stands for $\ud{\scr{C}(\ca{M})\rtimes_\tau LT}$.
$\ud{[\widetilde{d_{\ca{M}}}']}\in KK_{LT}(\ca{A}(\ca{M}),\ca{S}_\vep)$ is the reformulated Dirac element defined in \cite{T4} (we change the symbol from this paper).

\begin{conj}\label{Conjecture}
$(1)$ We can define a substitute for the descent homomorphism
$$\ud{\widetilde{j^{p\tau}_{LT,q}}}:KK_{LT}^{q\tau}(\ca{A}(\ca{M}),\ca{S}_\vep)\to 
KK(\ca{S}_\vep\grotimes \ud{\scr{C}(\ca{M})\rtimes_{(p-q)\tau} {LT}},\ud{\ca{S}_\vep\rtimes_{p\tau} {LT}})$$
and satisfies
$$\ud{j_{LT}^\tau(\fgt[S]\grotimes [\widetilde{d_{\ca{M}}}])}=\ud{\widetilde{j^{\tau}_{LT,0}}}(\ud{[\widetilde{d_{\ca{M}}}']}).$$

$(2)$ We can define a substitute for the descent homomorphism
$$\ud{\widetilde{j^{p\tau}_{LT,q}}}:KK_{LT}^{q\tau}(\ca{A}(\ca{M}),\ca{A}(\ca{M}))\to 
KK(\ca{S}_\vep\grotimes \ud{\scr{C}(\ca{M})\rtimes_{(p-q)\tau} {LT}},\ca{S}_\vep\grotimes \ud{\scr{C}(\ca{M})\rtimes_{p\tau} {LT}})$$
satisfying the following: For $x\in KK_{LT}^{q_1\tau}(\ca{A}(\ca{M}),\ca{A}(\ca{M}))$ and $y\in KK_{LT}^{q_2\tau}(\ca{A}(\ca{M}),\ca{S}_\vep)$,
$$j_{LT,q_1+q_2}^{p\tau}(x\grotimes y)= j_{LT,q_1}^{(p-q_2)\tau}(x)\grotimes j_{LT,q_2}^{p\tau}(y).$$
As a corollary, the top left corner of the above big diagram commutes.

$(3)$ We can define a homomorphism 
$$\widetilde{\fgt}:\ca{R}KK_{LT}^{q\tau}(X;\ca{S}_\vep\grotimes \scr{C}(\ca{M}),\ca{S}_\vep\grotimes \scr{C}(\ca{M})) \to 
KK_{LT}^{q\tau}(\ca{A}(\ca{M}),\ca{A}(\ca{M}))$$
and it satisfies the following commutative diagram
$$\begin{CD}
\ca{R}KK_{LT}^{q\tau}(\ca{M};\ca{S}_\vep\grotimes \scr{C}(\ca{M}),\ca{S}_\vep\grotimes \scr{C}(\ca{M}))  
@>\widetilde{\fgt}>>
KK_{LT}^{q\tau}(\ca{A}(\ca{M}),\ca{A}(\ca{M})) \\
@V\widetilde{j^{p\tau}_{LT}}VV @VV\ud{j^{p\tau}_{LT}}V \\
\ca{R}KK(\ca{M}/LT;\ca{S}_\vep\grotimes \star_1,\ca{S}_\vep\grotimes \star_2) 
@>\fgt>>
KK(\ca{S}_\vep\grotimes \star_1,\ca{S}_\vep\grotimes \star_2),
\end{CD}$$
where $\star_1:=\ud{\scr{C}(\ca{M})\rtimes_{(p-q)\tau} {LT}}$ and $\star_2:=\ud{\scr{C}(\ca{M})\rtimes_{p\tau} {LT}}$.
As a corollary, the top right corner of the above big diagram commutes.

$(4)$ The homomorphisms $\PD$ and $y\mapsto \widetilde{\fgt}(y)\grotimes\ud{[\widetilde{d_{\ca{M}}}']}$
are mutually inverse.
\end{conj}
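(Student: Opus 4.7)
My plan is to construct $\ud{\widetilde{j^{p\tau}_{LT,q}}}$ by a direct imitation of the generalized fixed-point module description of the descent homomorphism given in Proposition \ref{twisted cp and fpa and cp and fpm} and Proposition \ref{prop descent of Dirac element for twisted version}. For a $q\tau$-twisted Kasparov $(\ca{A}(\ca{M}_{L^2_k}),\ca{S}_\vep)$-module $(E,\pi,F)$, I would form the Hilbert $\ud{\ca{S}_\vep\rtimes_{p\tau} LT_{L^2_m}}$-module by taking equivariant $L^2$-sections over $\widetilde M$ of the bundle whose fiber at $x$ is $E\grotimes \bb{K}(\ud{L^2(LT_{L^2_k},\ca{L}^{\otimes p})},\ud{L^2(LT_{L^2_k},\ca{L}^{\otimes (p-q)})})$, together with the obvious left action of $\ud{\scr{C}(\ca{M})\rtimes_{(p-q)\tau}LT}$ (the $\scr{C}(\ca{M})$-algebra structure on $\ca{A}(\ca{M}_{L^2_k})$ is implicit via the Bott homomorphisms $\beta_x$). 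The operator is simply $F\grotimes \id$, extended equivariantly. First I would check that this triple is an honest Kasparov module — the analytic conditions reduce, after tensoring in $\bb{K}$-factors, to those satisfied by $(E,\pi,F)$ itself — and that the assignment is homotopy invariant, giving the required homomorphism.

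Next I would verify part (1) by unwinding the construction on the specific cycle representing $\ud{[\widetilde{d_{\ca{M}}}']}$ from \cite{T4} and comparing, term by term, with the explicit bimodule of Definition \ref{dfn descent of Dirac element}: the Gaussian-weighted $L^2$-section spaces match after regrouping the $U_{L^2_m}$- and $(T\times \Pi_T)$-factors via Proposition \ref{Prop iterated crossed product}, and the symbolic form of $\widetilde D$ is exactly what results from descending $\Dirac\grotimes \id+\id\grotimes D_{\widetilde M}$. Part (2) is handled analogously for the two-variable algebra case, and the multiplicativity $j^{p\tau}_{LT,q_1+q_2}(x\grotimes y)=j^{(p-q_2)\tau}_{LT,q_1}(x)\grotimes j^{p\tau}_{LT,q_2}(y)$ is the tensor-product identity $\bb{K}(V_2,V_1)\grotimes_{\bb{K}(V_2)}\bb{K}(V_3,V_2)\cong \bb{K}(V_3,V_1)$ for the auxiliary $L^2$-spaces, exactly as in the last proposition of Section \ref{section top ass map descent}.

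For part (3), I would define $\widetilde{\fgt}$ on a representative $(\scr{E},\pi,F)\in \ca{R}KK^{q\tau}_{LT}(\ca{M};\ca{S}_\vep\grotimes\scr{C}(\ca{M}),\ca{S}_\vep\grotimes\scr{C}(\ca{M}))$ by tensoring against the reformulated local Bott element $[\widetilde{\Theta_{\ca{M},2}}]$ of Definition \ref{dfn reformulated local boot element}: concretely, $\widetilde{\fgt}(y):= [\widetilde{\Theta_{\ca{M},2}}]\grotimes_{\ca{S}_\vep\grotimes \scr{C}(\ca{M})} y\grotimes_{\ca{S}_\vep\grotimes \scr{C}(\ca{M})} $ an inverse local Bott, so that $\widetilde{\fgt}$ factors the forgetful map through the $\ca{A}$-picture. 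The required square $\ud{j^{p\tau}_{LT}}\circ \widetilde{\fgt}=\fgt\circ \ud{j^{p\tau}_{LT}}$ is then the fieldwise statement that descent commutes with tensoring against a single constant factor, which is built into the construction. Finally for part (4), the essential content is an infinite-dimensional Bott-periodicity: the composition in one direction is $\widetilde{\fgt}\bigl([\widetilde{\Theta_{\ca{M},2}}]\grotimes \sigma_{\ca{M},\scr{C}(\ca{M})}^{2}(x)\bigr)\grotimes \ud{[\widetilde{d_{\ca{M}}}']}$, and the main computation, already done locally in Theorem \ref{Poincare duality for LT manifold}, is that $[\widetilde{\Theta_{\ca{M},2}}]\grotimes \sigma_{\ca{A}(\ca{M})}(\ud{[\widetilde{d_{\ca{M}}}']})=\mathbf{1}_{\ca{S}_\vep\grotimes \scr{C}(\ca{M})}$ in $\ca{R}\bb{KK}_{LT}^{\tau-\tau}$; the reverse direction requires the family/rotation argument of Lemma \ref{Lemma family version of the Bott periodicity} adapted to $\ca{A}(\ca{M}_{L^2_k})$.

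The hardest step will be part (4): the rotation homotopy used in Lemma \ref{Lemma family version of the Bott periodicity} relies on the finite-dimensional rotation on $V\oplus V$, and transporting it to the $\ca{A}(\ca{X})$-setting demands (a) a globally defined ``$\flip$'' automorphism on the appropriate $\ca{A}$-tensor algebra over $\ca{M}\times_{\ca{M}}\ca{M}$ and (b) a continuous interpolation to $j^*$ that is compatible with the Bott homomorphisms centered at points varying along geodesics. Establishing (a) and (b) in a form that respects the $LT_{L^2_m}$-action — and that behaves well under the inductive limits inherent to $\ca{A}(U_{L^2_k})$ — is the real obstacle; everything else is a routine, if lengthy, transcription of the finite-dimensional scheme into the field-of-$C^*$-algebras framework of Section \ref{section RKK}.
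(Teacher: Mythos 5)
This is a \emph{conjecture} in the paper, not a theorem: the paper explicitly defers it to future work and offers no proof. Section \ref{section unsolved} introduces the big diagram with \emph{dotted} arrows labelled ``have not been defined'' and says the undefined symbols ``will be explained later as Conjecture \ref{Conjecture}.'' Your proposal is therefore not a reconstruction of a proof that exists in the paper; it is an attempt to resolve an open problem that the authors deliberately left open. That is worth recognizing before assessing the content.

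On the substance, your outline glosses over precisely the two points the paper identifies as the actual obstructions. First, for parts (1) and (2): the descent $\ud{j^{p\tau}_{LT_{L^2_m}}}$ the paper does define (Definition-Proposition \ref{descent and crossed product for LT theory}) acts on $\ca{R}KK$-cycles, i.e.\ on \emph{fields} of Hilbert modules over $\ca{M}_{L^2_m}$, and the fixed-point-module trick needs that fibered structure over $\widetilde M$ to make sense. In the conjectural $\ud{\widetilde{j^{p\tau}_{LT,q}}}$, the input is a single $KK$-cycle $(E,\pi,F)$ over $\ca{A}(\ca{M}_{L^2_k})$ with \emph{no} $\scr{C}(\ca{M})$-structure on $E$ (the $\scr{C}(\ca{M})$-structure on the algebra $\ca{A}(\ca{M})$ through the $\beta_x$'s is not a $C(\widetilde M)$-module structure on $E$, and even ``implicitly'' it is not one), so there is no bundle over $\widetilde M$ of which you can take $L^2$-sections. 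Saying the analytic conditions ``reduce, after tensoring in $\bb{K}$-factors, to those satisfied by $(E,\pi,F)$'' does not address how to produce the geometric object you want to tensor with. This is exactly why the paper only defines descent in the $\ca{R}KK$-picture and why Remark \ref{Section 3 unsolved points} flags the need to pre-compose with $[S^*]\grotimes-\grotimes[S]$ in the finite-dimensional reformulation.

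Second, for part (4): you assert that ``the main computation'' is ``already done locally in Theorem \ref{Poincare duality for LT manifold}.'' It is not. Theorem \ref{Poincare duality for LT manifold} proves the single identity $\PD([\widetilde{\ca{D}}])=\sigma_{\ca{S}_\vep}([\ca{L}])$, i.e.\ one value of $\PD$ on one specific cycle. What part (4) asks for is that $\PD$ and $y\mapsto \widetilde{\fgt}(y)\grotimes \ud{[\widetilde{d_{\ca{M}}}']}$ are \emph{mutually inverse as group homomorphisms}, which in the classical case (Lemma \ref{Lemma family version of the Bott periodicity}) requires the family rotation trick. You correctly identify the rotation homotopy on $\ca{A}(\ca{X})$-level as the real obstacle, but then leave it entirely open; with that step missing, there is no proof here, only a plan whose hardest step coincides exactly with the reason the statement is conjectural in the first place.
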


As a corollary of this conjecture, a substitute for the analytic assembly map will be defined by $\ud{\mu_{LT}^\tau}(x):=\sigma_{\ca{S}_\vep}[\ud{c_{\ca{M}}}]\grotimes\ud{\widetilde{j^\tau_{LT}}}(x)$ for $x\in KK_{LT}(\ca{A}(\ca{M}),\ca{S}_\vep)$, and the index theorem type equality $\ud{\mu_{LT}^\tau}(x)=\ud{\nu_{LT}^\tau}(\PD(x))$ will hold. By the results of the present paper, we have $\ud{\mu_{LT}^\tau}([\widetilde{\ca{D}}])=\sigma_{\ca{S}_\vep}\bra{\ud{\ind(\ca{D}_{\ca{L}})}}$.
If the index theorem is completed, we will have various applications as we have explained in Section \ref{section introduction}.

\vspace{0.3cm}

Next, let us discuss the ``improvement'' we have in mind. We will explain the reason why we need it.
Unfortunately, we do not have concrete ideas on this problem. From now on, we write the subscripts $L^2_k$'s again, because it is essential. 

Seeing Definition \ref{dfn index element for the present paper for proper LT space}, everyone would wish the following.

\begin{conj}
There is a ``collect'' index element $[\widetilde{\ca{D}}]\in KK_{LT_{L^2_k}}(\ca{A}(\ca{M}_{L^2_k}),\ca{S}_\vep)$ such that 
$$i_{m,k}^*[\widetilde{\ca{D}}]\in KK_{LT_{L^2_m}}(\ca{A}(\ca{M}_{L^2_k}),\ca{S}_\vep)$$
is the index element defined in this paper, where $i_{m,k}:LT_{L^2_m}\to LT_{L^2_k}$ is the canonical embedding and $i_{m,k}^*$ is the restriction of the $LT_{L^2_k}$-action to $LT_{L^2_m}$.
\end{conj}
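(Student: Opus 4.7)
The plan is to locate the obstruction to $L^2_k$-equivariance in the construction of Section \ref{section PD Cstar algebra HKT Yu} and redesign the Hilbert space and Dirac operator to remove it. The only place where the stronger topology $L^2_m$ with $m \geq k+l$ actually enters is Proposition \ref{actions L and R are well defined}: after the change of variables $X_j = \sqrt{j^l}\,x_j$ in the shift estimate, the quantity $\sqrt{\sum_j j^l(a_{i,j}^2+b_{i,j}^2)}$ must be dominated by a norm of $g_i$, and this quantity is an $L^2_{k+l/2}$-type norm rather than an $L^2_k$-norm. The entire dependence on $l$, and therefore the hypothesis $m \geq k+l$, traces back to the Gaussian weights $n^l$ in Definition \ref{def of L2LTL}. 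My proposal is to replace those weights by a slower-growing sequence $\{\mu_N\}_{N\in\bb{N}}$ and to reprove every result of Section \ref{section PD Cstar algebra HKT Yu} with this modified weight.

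Concretely, I would first choose $\{\mu_N\}$ so that the shift estimate in Proposition \ref{actions L and R are well defined} is controlled by the $L^2_k$-norm of $g_i$. On dimensional grounds, natural candidates are $\mu_N = N^{2k-1+\epsilon}$ for arbitrarily small $\epsilon > 0$, since this scaling matches both the cocycle $\tau$ and the commutation relation $[e_n,f_n] = n^{1-2k}K$. Then I would redefine the operators $dR',dL',C$, and $\Dirac$ with $\mu_N$-dependent normalizations (replacing $n^{-l/4}$ in Definition \ref{def of L' and R'} by an appropriate power $\mu_n^{-s}$), and reprove essential self-adjointness of $\Dirac$, compactness of the resolvent, and $\Dirac$-boundedness of $C$ (the content of Lemma \ref{Lemma C is bounded on dom(D) to H}). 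Once the resulting Kasparov module defines an element of $KK_{LT_{L^2_k}}^\tau(\ca{A}(\ca{M}_{L^2_k}),\ca{S}_\vep)$, the identification $i_{m,k}^*[\widetilde{\ca{D}}] = [\widetilde{\ca{D}}_l]$ in $KK_{LT_{L^2_m}}^\tau(\ca{A}(\ca{M}_{L^2_k}),\ca{S}_\vep)$ for $m \geq k+l$ should follow from an interpolating homotopy between $\{\mu_N\}$ and $\{N^l\}$, along the same lines as the proof of Proposition \ref{prop KK element is independent of l}.

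The hard part will be simultaneously achieving (i) $L^2_k$-continuity of the left regular representation, which demands broad Gaussians (small $\mu_N$), and (ii) $\Dirac$-boundedness of $C$ together with compact resolvent for $\Dirac$, which in the present argument rests on the decay $n^{2-l}$ with $l > 4$ and therefore prefers large $\mu_N$. These two requirements pull in opposite directions, and it is not obvious that any single sequence of Gaussians satisfies both. If this tension turns out to be irreconcilable, one would have to abandon the inductive-limit-of-Gaussians model altogether in favor of a more representation-theoretic construction, for instance realizing $\ca{H}$ as the carrier of the unique PER of $U_{L^2_k}^\tau$ inside a Fock space of free bosons, where $L^2_k$-continuity is automatic, at the cost of reconstructing the $\ca{A}(U_{L^2_k})$-module structure from scratch. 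Either route appears feasible in principle but requires analytic work beyond the scope of the present paper.
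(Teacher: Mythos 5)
This statement is labeled a \emph{Conjecture} in the paper, and the paper does not prove it; the subsequent discussion (``Probably, our direct and naive method does not work for this problem'') explicitly argues that the kind of modification you propose is unlikely to succeed. So you should not present this as a proof, and you should not expect to find a proof to compare against: what you have written is a proposal for an approach, and the paper offers its own reasons to be sceptical of exactly that approach.

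Your diagnosis of where the $L^2_m$-topology enters is incomplete. Beyond Proposition \ref{actions L and R are well defined}, the condition $m \geq k+l$ is also used in part $(B)$ of the proof of Lemma \ref{index element for HKT algebra}: the commutator $g(\Dirac)-\Dirac$ involves $\bra{|d|^{1-2k-l/4}\pm|d|^{3l/4}}Jg$, and the estimate ``bounded above by the $L^2_m$-norm of $Jg$'' requires $m \geq k+l$. So the strong-topology hypothesis is not localized in a single shift estimate; it also governs the norm-continuity of $g \mapsto g(\Dirac)-\Dirac$. More importantly, the paper's own explanation of the obstruction is different in kind from yours: with $l=0$ the vacuum coefficient ``$(c_{0,0})^\infty$'' of $\pi(\beta_0^\infty(f_e))\vac$ vanishes, i.e. the Bott homomorphism $\pi$ kills the vacuum outright. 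This is an algebraic collapse of the module structure, logically prior to the estimates on $\Dirac$ and on the $\Dirac$-boundedness of $C$ that you emphasize. Any proposed $\{\mu_N\}$ would therefore have to be tested first against the requirement that $\pi(\beta_0^\infty(f))\vac \neq 0$, not only against Lemma \ref{Lemma C is bounded on dom(D) to H}.

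Finally, your proposal does not actually reach a conclusion. You correctly identify that broad Gaussians are needed for $L^2_k$-continuity of the representation while narrow Gaussians are needed for the operator estimates, and you concede that it is ``not obvious that any single sequence of Gaussians satisfies both.'' That is precisely the impasse the paper arrives at, and the paper resolves it only by working with $\bb{KK}_{LT}$ as an inductive limit over $m$ rather than at a fixed Sobolev level. Your fallback suggestion (a representation-theoretic Fock-space model with $L^2_k$-continuity built in) is a reasonable research direction, but you have not carried it out, and in particular you have not addressed how the $\ca{A}(U_{L^2_k})$-module structure would be constructed in that model, which is exactly the hard part. As it stands this is a restatement of the open problem, not a proof.
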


Probably, our direct and naive method does not work for this problem. In order to explain the reason of this, we explain the reason why we needed to use $KK_{LT_{L^2_m}}(\ca{A}(\ca{M}_{L^2_k}),\ca{S}_\vep)$.

Let us attempt to do the same thing of Section \ref{section PD Cstar algebra HKT Yu} for $l=0$ and $m=k$. Then, $\vac\in \ud{L^2(U_{L^2_k})}$ looks like the Gaussian
``$\prod_N \pi^{-1/2}e^{-(x_N^2+y_N^2)/2}$''. Thus, if we try to define $\pi:\ca{A}_\HKT(U_{L^2_k})\to \ca{L}_{\ca{S}}(\ca{S}\grotimes \ca{H})$ in the same way of the present paper, for $f_e(t):=e^{-rt^2}\in \ca{S}$ for arbitrary $r>0$, $\pi(\beta_0^\infty(f_e))\vac$ looks like
$$\pi^{-\infty/2}e^{-(\frac{1}{2}+r)\sum_N(x_N^2+y_N^2)}.$$
By the eigenfunction expansion, $\vac_N^N:=\pi^{-1/2}e^{-(\frac{1}{2}+r)(x_N^2+y_N^2)}$ can be written as 
$$\sum_{\alpha,\beta}c_{\alpha,\beta}\frac{dR'(\overline{z_N})^\alpha dL'({z_N})^\beta\vac_N^N}{\|dR'(\overline{z_N})^\alpha dL'({z_N})^\beta\vac_N^N\|},$$
where $dR'(\overline{z_N})$'s are defined for $l=0$.
Since $c_{\alpha,\beta}$ is independent of $N$ and $|c_{\alpha,\beta}|<1$ (this is because $\|\pi(\beta_0^\infty(f_e))\vac\|\leq \|f_e\|\|\vac\|=1$), the infinite tensor product
\begin{align*}
\pi(\beta_0^\infty(f_e))\vac 
&=\bigotimes_N \sum_{\alpha_N,\beta_N}c_{\alpha_N,\beta_N}\frac{dR'(\overline{z_N})^{\alpha_N}dL'({z_N})^{\beta_N}\vac_N^N}{\|dR'(\overline{z_N})^{\alpha_N}dL'({z_N})^{\beta_N}\vac_N^N\|} \\
&=\sum_{\alpha_1,\beta_1,\alpha_2,\beta_2,\cdots}\bra{\prod_Nc_{\alpha_N,\beta_N}}\frac{dR'(\overline{z_1})^{\alpha_1}dL'({z_1})^{\beta_1}dR'(\overline{z_2})^{\alpha_2}dL'({z_2})^{\beta_2}\cdots \vac}{\|dR'(\overline{z_1})^{\alpha_1}dL'({z_1})^{\beta_1}dR'(\overline{z_2})^{\alpha_2}dL'({z_2})^{\beta_2}\cdots \vac\|}
\end{align*}
must be zero. For example, the coefficient of ``$\vac$'' is ``$(c_{0,0})^\infty$'', which is zero because $|c_{0,0}|<1$.
This is the reason why we need to assume that $l$ is positive to define the $*$-homomorphism $\pi$.

This argument is rather algebraic. From the analytic or geometric point of view, the trouble of this observation is due to the fact that $\pi(\beta_0^\infty(f_e))$ is ``too localized to give an operator on the $L^2$-space''. Therefore, in \cite{T4}, a strange definition of $\pi$ is adopted, so that $\pi(a)$ looks like an ``asymptotically constant function''. Conversely, in the present paper, we adopted a strange Hilbert space so that each element of $\ca{H}$ looks like an ``asymptotically Dirac $\delta$-function''.
The cost we have paid is that the natural $U_{L^2_m}$-action on our Hilbert space $\ud{L^2(U_{L^2_k})}$ does not extend to $U_{L^2_k}$. This made us work in the strange setting: The Poincar\'e duality homomorphism is a homomorphism from the $KK$-group of  $\ca{A}(\ca{M}_{L^2_k})$ to $\ca{R}KK_{LT_{L^2_m}}(\ca{M}_{L^2_m};-,-)$-group.

As long as we regard proper $LT$-spaces as ILH-manifolds, the above problem is not essential. The statement in the form of Section \ref{subsection statement of the main result} looks natural. Moreover, this setting is convenient from the view point of $LT$-equivariant $\bb{KK}$-theory, Definition \ref{definition of LT KK theory}.

However, if one hopes to fix the Sobolev level and regard a proper $LT$-space as a fixed Hilbert manifold, our construction is not satisfying, and the above conjecture must be investigated.


\section*{Acknowledgments}
An essential part of Section \ref{section PD} was done during my stay in Pennsylvania State University. I am deeply grateful to professor Nigel Higson, Yiannis Loizides and Shintaro Nishikawa for discussions held there. Especially, the primitive idea of $\ca{R}KK$-theory for non-locally compact spaces is due to Shintaro Nishikawa. I again thank him for permission to introduce it before our joint paper \cite{NT}.
I am also grateful for the warm hospitality of Penn State during my stay. 

I am supported by JSPS KAKENHI Grant Number 18J00019.

Graduate School of Mathematical Sciences, the University of Tokyo, 3-8-1 Komaba Meguro-ku Tokyo, Japan

E-mail address: {\tt dtakata@ms.u-tokyo.ac.jp}

\end{document}